\newcommand{\oddheader}{Topology of Privacy:  Lattice Structures and Information Bubbles}
\markboth{\oddheader}{\currenttitle}
\newcommand{\vso}{\vspace*{1pt}}
\newcommand{\vst}{\vspace*{2pt}}
\newcommand{\vsr}{\vspace*{3pt}}
\newtheorem{theorem}{Theorem}
\newtheorem{lemma}[theorem]{Lemma}
\newtheorem{corollary}[theorem]{Corollary}
\newtheorem{definition}[theorem]{Definition}
\newcounter{currentThmCount}
\newcommand{\ifig}[2]{\includegraphics[#2]{#1-eps-converted-to.pdf}}
\newcommand{\setdef}[2]{{\left\lbrace #1\;\left\vert\;#2 \right.\right\rbrace}}
\newcommand{\setdefb}[2]{{\big\lbrace #1\;\left\vert\;#2 \right.\big\rbrace}}
\newcommand{\setdefbV}[2]{{\big\lbrace #1\;\Big\vert\;#2 \big\rbrace}}
\newcommand{\vtsp}{\kern .08em}
\newcommand{\join}{\vee}
\newcommand{\meet}{\wedge}
\newcommand{\one}{\bullet}
\newcommand{\tas}{\{{\tt a}\}}
\newcommand{\ta}{{\tt a}}
\newcommand{\tb}{{\tt b}}
\newcommand{\tc}{{\tt c}}
\newcommand{\td}{{\tt d}}
\newcommand{\te}{{\tt e}}
\newcommand{\tf}{{\tt f}}
\newcommand{\tg}{{\tt g}}
\newcommand{\tth}{{\tt h}}
\newcommand{\tone}{{\tt 1}}
\newcommand{\ttwo}{{\tt 2}}
\newcommand{\tthree}{{\tt 3}}
\newcommand{\ic}{{\tt c}}
\newcommand{\ig}{{\tt g}}
\newcommand{\is}{{\tt s}}
\newcommand{\iv}{{\tt v}}
\newcommand{\gc}{{\tt gc}}
\newcommand{\gs}{{\tt gs}}
\newcommand{\gv}{{\tt gv}}
\newcommand{\cs}{{\tt cs}}
\newcommand{\cv}{{\tt cv}}
\newcommand{\sv}{{\tt sv}}
\newcommand{\cityA}{{\sc Athens}}
\newcommand{\cityB}{{\sc Berlin}}
\newcommand{\cityC}{{\sc Caen}}
\newcommand{\cityD}{{\sc Dublin}}
\newcommand{\cityE}{{\sc E{\l}k}}
\newcommand{\cA}{\hbox{\tt A}}
\newcommand{\cB}{\hbox{\tt B}}
\newcommand{\cC}{\hbox{\tt C}}
\newcommand{\cD}{\hbox{\tt D}}
\newcommand{\cE}{\hbox{\tt E}}
\newcommand{\rcityA}{\rotatebox{90}{\hbox{\cityA}}}
\newcommand{\rcityB}{\rotatebox{90}{\hbox{\cityB}}}
\newcommand{\rcityC}{\rotatebox{90}{\hbox{\cityC}}}
\newcommand{\rcityD}{\rotatebox{90}{\hbox{\cityD}}}
\newcommand{\rcityE}{\rotatebox{90}{\hbox{\cityE}}}
\newcommand{\hcityB}{\hbox{\cityB}}
\newcommand{\hcityC}{\hbox{\cityC}}
\newcommand{\hcityD}{\hbox{\cityD}}
\newcommand{\authorA}{{\tt Alice}}
\newcommand{\authorB}{{\tt Ben}}
\newcommand{\authorC}{{\tt Claire}}
\newcommand{\authorD}{{\tt David}}
\newcommand{\authorE}{{\tt Eric}}
\newcommand{\hauthorA}{\hbox{\authorA}}
\newcommand{\hauthorB}{\hbox{\authorB}}
\newcommand{\hauthorC}{\hbox{\authorC}}
\newcommand{\hauthorD}{\hbox{\authorD}}
\newcommand{\hauthorE}{\hbox{\authorE}}
\newcommand{\xadj}{\mskip-1.8mu}
\newcommand{\XxY}{{X}\xadj\times{Y}}
\newcommand{\yadj}{\mskip-2.5mu}
\newcommand{\YxY}{{Y}\yadj\times{Y}}
\newcommand{\pqadj}{\mskip-0.8mu}
\newcommand{\PxQ}{{P}\pqadj\times{Q}}
\newcommand{\Rp}{R^{\prime}}
\newcommand{\Qp}{Q^{\prime}}
\newcommand{\sphere}{\mathbb{S}}
\newcommand{\Smo}{\sphere^{\vtsp{-1}}}
\newcommand{\Szero}{\sphere^{\vtsp{0}}}
\newcommand{\Sone}{\sphere^{1}}
\newcommand{\Stwo}{\sphere^{2}}
\newcommand{\Sko}{\sphere^{\vtsp{k-1}}}
\newcommand{\Skt}{\sphere^{\vtsp{k-2}}}
\newcommand{\Snt}{\sphere^{\vtsp{n-2}}}
\newcommand{\Z}{\mathbb{Z}}
\newcommand{\dowx}{\Psi_R}
\newcommand{\dowy}{\Phi_R}
\newcommand{\dowqx}{\Psi_Q}
\newcommand{\dowqy}{\Phi_Q}
\newcommand{\dowyp}{\Phi_{R^\prime}}
\newcommand{\dowqpx}{\Psi_{Q^\prime}}
\newcommand{\dowqpy}{\Phi_{Q^\prime}}
\newcommand{\dowyone}{\Phi_{R_1}}
\newcommand{\dowytwo}{\Phi_{R_2}}
\newcommand{\qsg}{{Q(\sigma, \gamma)}}
\newcommand{\dqx}{\Psi_\qsg}
\newcommand{\dqy}{\Phi_\qsg}
\newcommand{\kmax}{{k_{max}}}
\newcommand{\Qmax}{{Q_{max}}}
\newcommand{\dowqmx}{\Psi_\Qmax}
\newcommand{\Ximply}{X_{\rm imply}}
\newcommand{\dowcx}{\Psi_C}
\newcommand{\dowcy}{\Phi_C}
\newcommand{\dowby}{\Phi_B}
\newcommand{\dowBx}{\Psi_B}
\newcommand{\dowBy}{\Phi_B}
\newcommand{\dowbpy}{\Phi_{B^\prime}}
\newcommand{\dowfx}{\Psi_F}
\newcommand{\dowfy}{\Phi_F}
\newcommand{\dowdx}{\Psi_D}
\newcommand{\dowdy}{\Phi_D}
\newcommand{\dowMx}{\Psi_M}
\newcommand{\dowMy}{\Phi_M}
\newcommand{\dowTx}{\Psi_T}
\newcommand{\dowTy}{\Phi_T}
\newcommand{\dowGx}{\Psi_G}
\newcommand{\dowMMy}{\Phi_{MM}}
\newcommand{\F}{{\mathfrak{F}}}
\newcommand{\Fdowx}{\F(\Psi_R)}
\newcommand{\Fdowy}{\F(\Phi_R)}
\newcommand{\Fdowqx}{\F(\Psi_Q)}
\newcommand{\Fdowqy}{\F(\Phi_Q)}
\newcommand{\Fdowxp}{\F(\Psi_{R^\prime})}
\newcommand{\Fdowyp}{\F(\Phi_{R^\prime})}
\newcommand{\PR}{P_R}
\newcommand{\PRplus}{P^{+}_R}
\newcommand{\PRp}{P_{R^\prime}}
\newcommand{\PQ}{P_Q}
\newcommand{\PQplus}{P^{+}_Q}
\newcommand{\PM}{P_M}
\newcommand{\PMplus}{P^{+}_M}
\newcommand{\PT}{P_T}
\newcommand{\PTplus}{P^{+}_T}
\newcommand{\PG}{P_G}
\newcommand{\PGplus}{P^{+}_G}
\newcommand{\PC}{P_C}
\newcommand{\PCplus}{P^{+}_C}
\newcommand{\PB}{P_B}
\newcommand{\PBop}{P_B^{\rm op}}
\newcommand{\topone}{\hat{1}}
\newcommand{\botzero}{\hat{0}}
\newcommand{\oneR}{\hat{1}_R}
\newcommand{\zeroR}{\hat{0}_R}
\newcommand{\oneQ}{\hat{1}_Q}
\newcommand{\zeroQ}{\hat{0}_Q}
\newcommand{\oneL}{\hat{1}_L}
\newcommand{\zeroL}{\hat{0}_L}
\newcommand{\oneC}{\hat{1}_C}
\newcommand{\zeroC}{\hat{0}_C}
\newcommand{\Qprop}{\overline{Q}}
\newcommand{\Lprop}{\overline{L}}
\newcommand{\Llt}{<_L}
\newcommand{\Llte}{\leq_L}
\newcommand{\Lor}{\vee_L}
\newcommand{\Land}{\wedge_L}
\newcommand{\comp}{{\mathfrak C}}
\newcommand{\Plte}{\leq_P}
\newcommand{\Qlte}{\leq_Q}
\newcommand{\Qgte}{\geq_Q}
\newcommand\mydefem[1]{{\underline{\em #1}}}
\newcommand{\inter}{\cap}
\newcommand{\union}{\cup}
\newcommand{\biginter}{\bigcap}
\newcommand{\bigunion}{\bigcup}
\newcommand{\sdiff}[2]{#1 \setminus #2}
\newcommand{\tX}[0]{\overline{X}}
\newcommand{\tY}[0]{\overline{Y}}
\newcommand{\xtild}{{\tilde{x}}}
\newcommand{\ytild}{{\tilde{y}}}
\newcommand{\Xp}{{X^\prime}}
\newcommand{\Yp}{{Y^\prime}}
\newcommand{\ys}{\{y\}}
\newcommand{\ysi}{\{y_i\}}
\newcommand{\ysj}{\{y_j\}}
\newcommand{\XR}{X^R}
\newcommand{\YR}{Y^R}
\newcommand{\XQ}{X^Q}
\newcommand{\YQ}{Y^Q}
\newcommand{\XRy}{X^{R}_y}
\newcommand{\YRx}{Y^{R}_x}
\newcommand{\XQy}{X^{Q}_y}
\newcommand{\YQx}{Y^{Q}_x}
\newcommand{\XRyi}{X^{R}_{y_{\scriptstyle i}}}
\newcommand{\XRyj}{X^{R}_{y_{\scriptstyle j}}}
\newcommand{\XQybar}{X^{Q}_{\ybar}}
\newcommand{\fX}{f_X}
\newcommand{\fY}{f_Y}
\newcommand{\fXg}{\fX^g}
\newcommand{\fYg}{\fY^g}
\newcommand{\fYinv}{\fY^{-1}}
\newcommand{\gX}{g_X}
\newcommand{\gY}{g_Y}
\newcommand{\hX}{h_X}
\newcommand{\hY}{h_Y}
\newcommand{\spc}[0]{\hspace*{0.2pt}}
\newcommand{\hspc}[0]{\hbox{\hspace*{0.2pt}}}
\newcommand{\hspt}[0]{\hbox{\hspace*{1pt}}}
\newcommand{\argmap}[1]{\xmapsto{\spc\,{#1}\spc}}
\newcommand{\abs}[1]{\lvert#1\rvert}
\newcommand{\norm}[1]{\lVert#1\rVert}
\newcommand{\supp}[1]{\norm{#1}}
\newcommand{\verts}[1]{\mathop{\rm verts}(#1)}
\newcommand{\Lk}{\mathop{\rm Lk}}
\newcommand{\lk}{\mathop{\rm Lk}}
\newcommand{\dl}{\mathop{\rm dl}}
\newcommand{\st}{\mathop{\rm \overline{St}}}
\newcommand{\homot}{\simeq}
\newcommand{\rslow}{\mathop{{\rm r}_{\rm slow}}}
\newcommand{\rfast}{\mathop{{\rm r}_{\rm fast}}}
\newcommand{\Mlkcard}{\hbox{\# of athletes}}
\newcommand{\Jlkcard}{\hbox{\# of musicians}}
\newcommand{\Mlkmax}[1]{\underset{\hspace*{-1pt}\hbox{\footnotesize{\hbox{athletes}}}}{\max}\,{#1}}
\newcommand{\Jlkmax}[1]{\underset{\hspace*{-1pt}\hbox{\footnotesize{\hbox{musicians}}}}{\max}\,{#1}}
\newcommand{\pelem}[2]{$(\makebox[21pt][l]{{#1}}, \makebox[21pt][r]{{#2}})$}
\newcommand{\frakA}{{\mathfrak{A}}}
\newcommand{\calA}{{\mathcal{A}}}
\newcommand{\src}{\mathop{\rm src}}
\newcommand{\DG}{\Delta_G}
\newcommand{\maxDG}{{\mathfrak{M}}}
\newcommand{\SG}{{\overline{\Delta}_G}}
\newcommand{\wrep}{\diamond}
\newcommand{\dowAx}{\Psi_A}
\newcommand{\dowAy}{\Phi_A}
\newcommand{\PA}{P_A}
\newcommand{\PAplus}{P^{+}_A}
\newcommand{\oneA}{\hat{1}_A}
\newcommand{\zeroA}{\hat{0}_A}
\newcommand{\PAop}{P_A^{\rm op}}
\newcommand{\PAplusop}{(P_A^{+})^{\rm op}}
\newcommand{\sd}{\mathop{\rm sd}}
\newcommand{\cl}{{\rm cl}}
\newcommand{\clAy}{\cl_A}
\newcommand{\clBy}{\cl_B}
\newcommand{\srceq}{\theta}
\newcommand{\Smax}{S_{\max}}
\newcommand{\Ssmax}{{\hbox{\footnotesize $S$}}_{\scriptstyle \max}}
\newcommand{\Sv}{S^{\join}}
\newcommand{\Svmax}{(S^{\join})_{\max}}
\newcommand{\Tv}{T^{\join}}
\newcommand{\bndry}[1]{{\partial{#1}}}
\newcommand{\redbndry}{\widetilde{\partial}}
\newcommand{\emptygen}{{\mathds{1}}}
\newcommand{\clsy}{\phi_R \circ \psi_R}
\newcommand{\clsx}{\psi_R \circ \phi_R}
\newcommand{\clsqy}{\phi_Q \circ \psi_Q}
\newcommand{\clsqx}{\psi_Q \circ \phi_Q}
\newcommand{\clsyp}{\phi_{R^\prime} \circ \psi_{R^\prime}}
\newcommand{\clsxp}{\psi_{R^\prime} \circ \phi_{R^\prime}}
\newcommand{\psimax}{\psi_{\Qmax}}
\newcommand{\xbar}{{\overline{x}}}
\newcommand{\ybar}{{\overline{y}}}
\newcommand{\xhat}{{\hat{x}}}
\newcommand{\yhat}{{\hat{y}}}
\newcommand{\xstar}{x^*}
\newcommand{\ystar}{y^*}
\newcommand{\np}{{$N\!P$}}
\newcommand{\MinInf}{{\sc MinInf\ }}
\newcommand{\MinInfnb}{{\sc MinInf}}
\newcommand{\calU}{{\cal U}}
\newcommand{\calW}{{\cal W}}
\newcommand{\nerveU}{{\cal N}(\calU)}
\newcommand{\roteq}{\hbox{\Huge \rotatebox{90}{$=$}}}
\newcommand{\uddots}{\mkern1mu\raise1pt
    \vbox{\kern7pt\hbox{.}}\mkern2mu
    \raise4pt\hbox{.}\mkern2mu\raise7pt\hbox{.}\mkern1mu}
\newcommand{\vp}{{\bf{p}}}
\newcommand{\vq}{{\bf{q}}}
\title{\vspace*{-0.9in}
          Topology of Privacy:\\[2pt]
  Lattice Structures and Information Bubbles\\[2pt]
               for
      Inference and Obfuscation}
\author{%
\parbox[c]{2.2in}{%
\begin{center}
Michael Erdmann\thanks{This report is based upon work supported in
part by the Air Force Office of Scientific Research under award number
FA9550-14-1-0012 and in part by the National Science Foundation under
award number IIS-1409003.  Any opinions, findings and conclusions or
recommendations expressed in this report are those of the author and
do not necessarily reflect the views of the Government, the
U.S.~Department of Defense, or the National Science Foundation.}\\
Carnegie Mellon University\\
December 12, 2017
\end{center}}}
\date{{\small \copyright\ 2017 Michael Erdmann}}
\begin{document}

\pagenumbering{roman}

\maketitle{}
\thispagestyle{empty}

\vspace*{-0.1in}
\begin{abstract}
Information has intrinsic geometric and topological structure, arising
from relative relationships beyond absolute values or types.  For
instance, the fact that two people did or did not share a meal
describes a relationship independent of the meal's ingredients.
Multiple such relationships give rise to relations and their lattices.
Lattices have topology.  That topology informs the ways in which
information may be observed, hidden, inferred, and dissembled.
Privacy preservation may be understood as finding isotropic
topologies, in which relations appear homogeneous.  Moreover, the
underlying lattice structure of those topologies has a temporal
aspect, which reveals how isotropy may contract over time, thereby
puncturing privacy.

Dowker's Theorem establishes a homotopy equivalence between two
simplicial complexes derived from a relation.  From a privacy
perspective, one complex describes individuals with common attributes,
the other describes attributes shared by individuals.  The homotopy
equivalence is an alignment of certain common cores of those
complexes, effectively interpreting sets of individuals as sets of
attributes, and vice-versa.  That common core has a lattice structure.
An element in the lattice consists of two components, one being a set
of individuals, the other being an equivalent set of attributes.  The
lattice operations join and meet each amount to set intersection in
one component and set union followed by a potentially
privacy-puncturing inference in the other component.

One objective of this research has been to understand the topology of
the Dowker complexes, from a privacy perspective.  First, privacy loss
appears as simplicial collapse of free faces.  Such collapse is local,
but the property of fully preserving both attribute and association
privacy requires a global condition: a particular kind of spherical
hole.  Second, by looking at the link of an identifiable individual in
its encompassing Dowker complex, one can characterize that
individual's attribute privacy via another sphere condition.  This
characterization generalizes to certain groups' attribute privacy.
Third, even when long-term attribute privacy is impossible, homology
provides lower bounds on how an individual may defer identification,
when that individual has control over how to reveal attributes.
Intuitively, the idea is to first reveal information that could
otherwise be inferred.  This last result highlights privacy as a
dynamic process.  Privacy loss may be cast as gradient flow.  Harmonic
flow for privacy preservation may be fertile ground for future
research.
\end{abstract}


\markright{Contents}

\tableofcontents

\cleardoublepage

\pagenumbering{arabic}

\clearpage
\section{Introduction}
\markright{Introduction}
\label{intro}

Privacy is the ability of an individual or entity to control how much
that individual or entity reveals about itself to others.  Fundamental
research into privacy seeks to understand the limits of that ability.

\vspace*{0.05in}

A brief history of privacy should include the following:

\vspace*{-0.05in}

\begin{itemize}

\item {\bf The right} to privacy as a legal principle, appearing in an
  1890 Harvard Law Review article \cite{tpr:warrenbrandeis}.  The
  article was a reaction to the then modern technology of photography
  and the dissemination of gossip via print media.

\item {\bf A demonstration} linking supposedly anonymous public
  information with other more specific public data, thereby revealing
  sensitive attributes \cite{tpr:sweeneykanon}.  The demonstration
  employed zip code, gender, and birth date to link anonymous public
  insurance summaries with voter registration data.  Doing so produced
  the health record of the governor of Massachusetts.  This privacy
  failure suggested a first form of homogenization, called {\em
  $k$-anonymity}.  Roughly, the idea was to structure databases in
  such a way that a database could respond to any query with an answer
  consisting of no fewer than $k$ individuals matching the query
  parameters.

\item {\bf The discovery} that it is impossible to preserve the
  privacy of an individual for even a single attribute in the face of
  repeated statistical queries over a population
  \cite{tpr:DinurNissim}, {\em unless} \,answers to those queries are
  purposefully perturbed with noise of magnitude on the order of at
  least $\sqrt{n}$.  Here $n$ is the size of the population.  The
  significance of this discovery is to underscore how difficult it is
  to preserve privacy while retaining information utility.

\item {\bf Netflix Prize}.  In 2006, Netflix offered a \$1M prize
  for an algorithm that would predict viewer preferences better than
  Netflix's internal algorithm.  Netflix made available some of its
  historical user preferences, in anonymized form, as a basis for the
  competition.  Once again, it turned out that one could link this
  anonymized data with other publicly available databases, resulting
  in the potential (and in some cases actual) identification of
  Netflix viewers, thereby de-anonymizing their viewing history
  \cite{tpr:netflix}.  Whereas in the earlier health example, a few
  specific observables made linking possible (global coordinates, one
  might say, namely zip code, gender, birth date), in the Netflix
  example, the intrinsic geometric structure of the database
  facilitated linking via a wide variety of observables (local
  landmarks, one might say, namely movies that were characteristic for
  each individual).  Key was sparsity of information: 8 movie ratings
  and dates were generally enough to uniquely characterize $99\%$ of
  viewers in the Netflix Prize dataset, even with errors in the
  ratings and dates.

\item {\bf Differential Privacy} \cite{tpr:dworkcacm, tpr:Dwork08}
  seeks to avoid the previous privacy failures by focusing on local
  rather than absolute privacy guarantees.  The underlying approach in
  differential privacy is for a database to answer statistical queries
  with a particular stochastic blurring.  Specifically, the
  probability that an interrogator of the database will make any
  particular inference should depend only in a very small way on
  whether any one individual does or does not have a particular
  attribute (such as even being in the database).  We might call this
  {\em stochastic homogeneity}.

\item {\bf Randomized Response}.  Differential privacy is further
  significant because it makes explicit the dynamic nature of privacy;
  there may be no enduring privacy guarantees but there are
  differential guarantees.  A particular form is {\em randomized
  response}, a technique used in the social sciences to elicit
  reliable aggregate answers to sensitive questions, asking the
  question of many people, but perturbing individual answers
  stochastically so as not to learn much about any one individual from
  any single response \cite{tpr:rrWarner}.  A version has been
  employed by Google to find malware \cite{tpr:rappor}.

\end{itemize}

Privacy has both a combinatorial component and a statistical
component.  Prior research has largely focused on statistical
techniques, both to preserve privacy and to puncture privacy.  One of
the goals of this research is to understand the combinatorial
component of privacy, leading naturally to methods from combinatorial
topology.

\vst

A desire to understand the geometry and topology of the types of
inferences revealed by the Netflix Prize formed the specific
motivation for our research initially.  Subsequently, we realized that
the lattice structure found in that geometry had broader
applicability, providing an ability to model the dynamics of privacy
more generally.

\clearpage
\section{Outline}
\markright{Outline}

The remaining sections and appendices present the following material:

\subsection*{Main Narrative:}

\begin{description}
\item[3:] Toy examples illustrating how a relation may lead to privacy
  loss in the presence of background information.  The section
  introduces the {\em doubly-labeled poset\,} associated with a
  relation, to model such inferences.  The elements of the poset are
  ordered pairs, each a set of individuals and a set of attributes.

  This section also states and discusses assumptions that hold
  throughout the report.

\item[4:] Formal description of the {\em Galois connection} associated
  with a relation.  The section first defines, for any relation, two
  simplicial complexes called {\em Dowker complexes}.  One complex
  represents sets of individuals with shared attributes, the other
  represents sets of attributes shared by individuals.  The Galois
  connection then establishes a homotopy equivalence between the
  Dowker complexes, thereby generating the relation's doubly-labeled
  poset.  The homotopy equivalence gives rise to closure operators,
  with ``closure'' in the poset modeling inference of unobserved
  attributes from observed attributes (or unobserved individuals from
  observed individuals).

  This section also defines {\em attribute privacy\,} and {\em
  association privacy}.

\item[5:] A characterization of privacy in terms of the absence of free
  faces in the relevant Dowker complex.  This section observes as well
  that the only connected relations able to preserve both attribute and
  association privacy must look either like linear cycles or like
  boundary complexes.  In particular, the number of individuals and
  attributes must be the same.

\item[6:] Conditional relations, as models for simplicial links.
  A conditional relation is much like a conditional probability
  distribution.  It might, for instance, represent the possible
  arrangement of remaining attributes among individuals, after some
  attributes have already been observed.

\item[7:] A characterization of individual and group attribute privacy
  in terms of spherical and boundary complexes for the relation that
  models the individual's or group's link in its Dowker complex.

\item[8:] A brief exploration of holes in relations, focusing on
  attribute spaces generated by bits.

\item[9:] A small example exploring the possibility of increasing
  privacy by change-of-coordinate transformations.

\item[10:] A lengthy exploration of how someone can delay identification,
  by releasing attributes selectively in a particular order.  This
  idea leads to the notion of {\em informative attribute release
  sequences}, how to find such sequences in the {\em Galois lattice},
  and the use of homology as a lower bound for the number and
  length of such sequences.

\item[11:] Computation of the homology and maximal informative attribute
  release sequences present in two relations found on the world wide
  web.  One relation describes Olympic athletes and their medals, the
  other describes jazz musicians and their bands.

\item[12:] A more general perspective of inference as motion in
  lattices, not necessarily directly derived from a relation.  This
  perspective suggests connections to randomized response techniques.

\item[13:] An examination of the ability to obfuscate strategies and/or
  goals in graphs where motions may be nondeterministic or stochastic.

\item[14:] A possible category for representing relations, along with
  an analysis of morphism properties.  The morphisms between relations
  in this category induce simplicial and therefore continuous maps
  between the relations' corresponding Dowker complexes.

  This section further shows by example how a morphism of relations,
  when it is surjective at the set level, generates the full lattice
  of the codomain's relation, via closure under lattice operations.
  \ (A general proof appears in Appendix~\ref{morphismappendix}.)

\item[15:] Some thoughts for the future, including an example that
  connects stochastic sensing to the Galois lattice.
\end{description}

\vspace*{0.05in}

\subsection*{Appendices:}

\vspace*{0.1in}

\begin{description}
\item[A:] A summary of the basic notation and definitions used in this
  report.

\item[B:] A summary of the basic tools used in this report, establishing
  the homotopy equivalences and closure operators mentioned previously.

\item[C:] Construction of links and deletions, and examination of the
 privacy properties each inherits from its encompassing relation.
 This appendix explores the significance of free faces in the Dowker
 complexes.  The appendix further proves that a relation with more
 attributes than individuals cannot preserve attribute privacy for
 every individual.

\item[D:] Proof that the problem of finding a minimal set of
 attributes from which another attribute may be inferred is
 \np-complete.  This stands in contrast to the observation that the
 problem of finding {\em some}\, set of attributes from which another
 may be inferred (or reporting that no such set exists) is computable in
 polynomial time.

\item[E:] Detailed proofs of the results claimed in
 Section~\ref{sphereprivacy}.  Also a detailed proof of the assertion
 from Section~\ref{faceshape} regarding relations that preserve
 both attribute and association privacy.

\item[F:] Detailed proofs of the connection between maximal chains in
 a relation's Galois lattice and informative attribute release
 sequences.  When such sequences are order-independent they correspond
 to spherical holes, leading to the concept of an {\em isotropic\,}
 sequence.

\item[G:] Detailed proof that homology establishes a lower bound for the
 number and length of maximal chains in a relation's Galois lattice, and
 thus for the number and length of informative attribute release
 sequences that may be used to delay identification.

\item[H:] An application of the previous results with the aim of
 obfuscating the identification of strategies for attaining goals in
 graphs with uncertain transitions.

\item[I:] Detailed proofs of the assertions of Section~\ref{category}
 regarding morphisms.

\item[J:] Some additional examples: 
   \begin{enumerate}

       \item Dunce Hat:  modeled as a relation for which the Dowker
         attribute complex is contractible but has no free attribute
         faces, meaning the relation preserves attribute privacy.

       \item Disinformation:  An example that glues together two copies
         of the M\"obius strip, thereby removing free faces and creating
         a form of homogeneity that preserves attribute privacy yet
         retains the utility of identifiability.

       \item Insufficient Representation:  If there are insufficiently
         many individuals in a relation generated by bits, attribute
         inference is possible.

       \item A Matching Example: When many individuals are being
         observed, cardinality constraints allow for inferences beyond
         those discussed in this report.

   \end{enumerate}

\end{description}

\clearpage

\vspace*{-0.35in}

\begin{center}
{\Large \bf \underline{List of Primary Symbols}}
\addcontentsline{toc}{section}{List of Primary Symbols}

\vspace*{0.2in}

\begin{tabular}{llr}
{\large \bf \underline{Symbol}} &  {\large \bf \underline{Typical Meaning}} & {\large \bf \underline{Page(s)}}\\[6pt]
$X$              &  discrete space of individuals  & \pageref{basicdefs}, \pageref{reldefApp}\\[2pt]
$Y$              &  discrete space of attributes  & \pageref{basicdefs}, \pageref{reldefApp}\\[2pt]
$R$              &  relation on $\XxY$  & \pageref{basicdefs}, \pageref{reldefApp}\\[2pt]
$X_y$            &  individuals with attribute $y$ (usually in the context of relation $R$)  & \pageref{basicdefs}, \pageref{reldefApp}\\[2pt]
$Y_x$            &  attributes of individual $x$ (usually in the context of relation $R$)   & \pageref{basicdefs}, \pageref{reldefApp}\\[2pt]
$Q$              &  another relation, often representing a link in a simplicial complex & \pageref{linkgamma}, \pageref{yLink}, \pageref{restrictedlink}\\[2pt]
$\Sigma, \Gamma$ &  generic simplicial complexes \ (sometimes merely sets) & \pageref{simpcpxdef}\\[2pt]
$\dowx$          &  complex; simplices are sets of individuals with a common attribute   & \pageref{basicdefs}, \pageref{dowkerXAppdef}\\[2pt]
$\dowy$          &  complex; simplices are sets of attributes shared by some individual  & \pageref{basicdefs}, \pageref{dowkerYAppdef}\\[2pt]
$\sigma$         &  usually a simplex representing individuals in $\dowx$  & \\[2pt]
$\gamma$         &  usually a simplex representing attributes in $\dowy$  & \\[2pt]
$\phi_R$         &  homotopy equivalence from sets of individuals to shared attributes   & \pageref{phipsiDef}, \pageref{phipsiAppdef}\\[2pt]
$\psi_R$         &  homotopy equivalence from sets of attributes to sharing individuals  & \pageref{phipsiDef}, \pageref{phipsiAppdef}\\[2pt]
$P$              &  partially ordered set (poset)   & \pageref{posetAppdef}\\[2pt]
$\F(\Sigma)$     &  face poset of the simplicial complex $\Sigma$  & \pageref{faceposetDef}, \pageref{faceposetAppdef}\\[2pt]
$\Delta(P)$      &  order complex of the poset $P$  & \pageref{ordercpxDef}, \pageref{ordercpxAppdef}\\[2pt]
$\PR$            &  doubly-labeled poset associated with relation $R$  & \pageref{PRsemantics}, \pageref{defPR}, \pageref{PRAppDeff}\\[2pt]
$L$              &  (inference) lattice  & (\pageref{inferencelattice}) \pageref{latticeAppdef}\\[2pt]
$\PRplus$        &  Galois lattice formed from $\PR$  &  \pageref{galoislattice}, \pageref{galoislatticeApp}\\[2pt]
\hspace*{-0.65in}$\{(\sigma_k, \gamma_k) < \cdots < (\sigma_0, \gamma_0)\}$ & chain of length $k$ in the lattice $\PRplus$
                                               & \hspace*{-0.35in}\pageref{chaintoiars}, \pageref{posetchains}, \pageref{posetchainAppdef}\\[2pt]
$y_1, \ldots, y_k$ & informative attribute release sequence (iars) of length $k$ (for relation $R$)\hspace*{-0.3in}  & \pageref{iars}\\[2pt]
$V$              &  set of vertices in a simplicial complex or states in a graph  & \\[2pt]
$\bndry{(V)}$    &  simplicial boundary complex with vertices $V$  & \pageref{bndrycpxDef}, \pageref{bndrycpxAppdefStart}\\[2pt]
$\Smo$           &  sphere of dimension $-1$, modeling the empty complex $\{\emptyset\}$         & \pageref{emptycomplex}\\[2pt]
$\Sone$          &  circle & \pageref{bndrycpxDef}\\[2pt]
$\Snt$           &  sphere of dimension $n\!-\!2$  & \pageref{bndrycpxDef}, \pageref{bndrycpxAppdefEnd}\\[2pt]
$C_k(\Sigma; \Z)$ &  group of simplicial $k$-chains over $\Sigma$, with integer coefficients  & \pageref{kchainAppdef}\\[2pt]
$\redbndry$      &  (family of) reduced boundary map(s) \ $C_k(\Sigma; \Z) \rightarrow C_{k-1}(\Sigma; \Z)$  & \pageref{redbndryAppdef}\\[2pt]
$\widetilde{H}_k(\Sigma; \Z)$ & reduced $k$-dimensional homology group of $\Sigma$, with integer coefficients & \pageref{homolAppDef}\\[2pt]
$G$              &  a graph, generally with nondeterministic and/or stochastic actions  & \pageref{stratcomplex}, \pageref{GandDGdefsApp}\\[2pt]
$\DG$            &  strategy complex of a graph  & \pageref{stratcomplexDescrip}, \pageref{GandDGdefsApp}\\[2pt]
$\SG$            &  source complex of a graph  & \pageref{SrcCmplxAppdef}\\[2pt]
$\homot$         &  homotopy equivalence  & \pageref{homotAppdef}\\[2pt]
$*$              &  simplicial join & \pageref{joinAppdef}\\[2pt]
$\join$          &  either topological wedge sum or lattice join & \pageref{wedgeAppdef}, \pageref{lattices}\\[2pt]
$\meet$          &  lattice meet & \pageref{lattices}\\
\end{tabular}
\end{center}

\clearpage
\section{Privacy: Relations and Partially Ordered Sets}
\markright{Assumptions and a Toy Example}
\label{privacyexamples}

Our investigation of privacy in this report will be in terms of
relations.  As we will see in this section and the next, relations
give rise to simplicial complexes, which give rise to partially
ordered sets, which expose an underlying lattice structure.  That
lattice structure makes explicit how privacy may be preserved or lost
through so-called {\em background knowledge}.  As we will see in
Section~\ref{leveraginglattices}, the lattice structure also makes
explicit how identification may be delayed by careful release of
information.

\subsection{A Toy Example: Health Data and Attribute Privacy}
\label{ToyExample}

Consider the following relation $H$, describing the results of a
hypothetical health study for four patients and three attributes.  The
patients have been anonymized and are represented simply by the set of
numbers $\{1,2,3,4\}$.  The three attributes are drawn from the set
$\{\hbox{\sc smokes}, \hbox{\sc has\_cancer}, \hbox{\sc drinks\_soda}\}$.

\vspace*{0.1in}

\noindent One can describe a relation equivalently either as a matrix
or as a set of ordered pairs:

\vspace*{0.1in}

\noindent \underline{Relation $H$ as a matrix:}

\vspace*{-0.3in}

\[
\qquad\qquad\qquad\begin{array}{c|ccc}
H & \hbox{\sc smokes} & \hbox{\sc has\_cancer} & \hbox{\sc drinks\_soda} \\[2pt]\hline
1 & \one & \one & \\
2 &      & \one & \one \\
3 &      &      & \one \\
4 &      &      & \one \\
\end{array}
\]

\vspace*{0.1in}

\noindent \underline{Relation $H$ as a set of ordered pairs:}

\vspace*{-0.1in}

\[\big\{(1, \hbox{\sc smokes}), \, (1, \hbox{\sc has\_cancer}), \,
    (2, \hbox{\sc has\_cancer}), \, (2, \hbox{\sc drinks\_soda}), \,\]
\[(3, \hbox{\sc drinks\_soda}), \, (4, \hbox{\sc drinks\_soda})\big\}.\]

\vst

\subsection*{Assumptions}
\label{assumptions}

Before discussing privacy further, we make some assumptions that hold
throughout the report:

\vspace*{-0.1in}

\addcontentsline{toc}{subsubsection}{Assumption of Relational Completeness}
\paragraph{Assumption of Relational Completeness:} \ We 
assume that any given relation {\em is not missing any observable
elements}, relative to some external (unspecified) ground
truth. \label{relationalcompleteness}

\vst

For example, if we observe that someone drinks soda and has cancer in
relation $H$, then we would conclude that we are observing individual
\#2.  We would be surprised to see that individual smoke.  If for some
reason we ever do see the individual smoke, then we would deem our
observations to be \spc{\em inconsistent\spc} with relation $H$.  ---
The meaning of inconsistency depends on context.  At top-level, an
inconsistency may mean that the relation or observation is errorful.
When making conditional observations, an inconsistency may actually
supply useful information, as we will see in Lemma~\ref{interplocal}
on page~\pageref{interplocal}.

\vsr

{\bf Comment:}\ A relation may contain extra elements, as may be
useful for disinformation.  A relation could even be missing elements
that represent valid ordered pairs, so long as those elements are
deemed to be unobservable for that relation.  For example, one may
have a time series of relations in which some attributes only become
observable at later times.  In such a setting, one may never know
whether a particular individual had a particular attribute at an
earlier time.

In the example, it could be that individual \#1 drinks soda, but that
it is impossible to observe this fact.  In that case, relation $H$
would still satisfy the assumption of relational completeness, even
though $H$ contains no entry\footnote{Terminology: We often use the
term {\tt{\char13}entry{\char13}} to mean an element of a relation, as
in a matrix, or in one of its rows or columns.} indicating that
individual \#1 drinks soda.

\addcontentsline{toc}{subsubsection}{Assumption of Observational Monotonicity}
\paragraph{Assumption of Observational Monotonicity:} \ Even though we
assume {\em relations} are complete, we do {\em not\,} assume that
{\em observations} are complete.  Instead, we assume: {\em The
observation of a particular attribute for an individual is meaningful;
lack of such an observation does not necessarily imply that the
individual fails to have the unobserved attribute.}  The motivation
for this assumption is that one may yet discover that the individual
has the attribute.
For example, suppose we observe someone (whom we know to be part of
relation $H$) drinking soda.  Even if that is all we observe, we do
{\em not\,} conclude that the individual is cancer free.  It could be
that we might yet observe the individual to have cancer.

\vst

If absence of an attribute is significant {\em and}\, that absence is
observable, then both the attribute and its negation could and perhaps
should appear explicitly in the relation as distinct mutually
exclusive attributes.  For instance, {\sc Prime} versus {\sc
Composite} might be such a pair of attributes for integers greater
than 1.

\addcontentsline{toc}{subsubsection}{Assumption of Observational Accuracy}
\paragraph{Assumption of Observational Accuracy:} \ We assume that
{\em observations are accurate}.  For instance, if we observe an
integer to be either {\sc Prime} or {\sc Composite}, then we do so
correctly.

\paragraph{Comments:}

The three assumptions above are {\em desiderata} for how the
mathematical abstractions of this report fit into the real world.
\ Some comments are in order:

\vspace*{-0.05in}

\begin{itemize}
\addtolength{\itemsep}{-1pt}

\item  In and of itself, a relation defines a particular kind of
world, a bipartite graph, and there is no external ground truth.

\item In such a world, the completeness, monotonicity, and accuracy
assumptions describe a sensor and the meaning of observations made by
the sensor.

\end{itemize}

The purpose of the assumptions in the real world is largely to ensure
consistency between different relations and with possible observations.

\vspace*{-0.05in}

\begin{itemize}
\addtolength{\itemsep}{-1pt}

\item The monotonicity assumption is important because information
generally aggregates asynchronously.  Together with the other
assumptions, this assumption means that one may view relations as
monotone Boolean functions, and thus may leverage methods from
combinatorial topology.

\item One may incorporate some errors into the relational and
observational models, for instance by blurring a relation.  For very
large integers, a relation might allow some integers to have {\em
both\,} {\sc Prime} and {\sc Composite} as attributes.  Although an
integer is one or the other, the relation admits to uncertainty by
allowing both attributes at once.  Indeed, some relations purposefully
introduce such blurring to preserve privacy, as with randomized
response \cite{tpr:rrWarner}.  In robotics, natural relational
blurring arising from noisy but environment-compatible sensors can
actually help establish the topology of a region, for instance by
dualizing sensors and landmarks \cite{tpr:ghristlipsky}.

\end{itemize}

\subsection*{Privacy Implications}
\markright{Privacy Implications}

Making the health study $H$ of page~\pageref{ToyExample} publicly
available has some privacy implications, including the following:

\vspace*{-0.05in}

\begin{itemize}
\item Suppose someone named Bob tells his friend Alice that he was
  part of the study.  Alice knows that Bob smokes everywhere he goes,
  so she can infer that he is Patient \#1 and has cancer. \ 
  (This is an example of inference in a relation using background
  knowledge.)

\item Suppose Cindy is Patient \#2.  She has full attribute privacy as
  far as relation $H$ is concerned.  In particular, as we saw already,
  Cindy can tell her friends that she was part of the health study
  while drinking soda and those friends will not be able to conclude
  that she has cancer.

\item Patients \#3 and \#4 are not only indistinguishable from each
  other but also from Cindy (patient \#2), as far as relation $H$ is
  concerned.  This is a very strong form of anonymity.  Even if one of
  them reveals that s/he drinks soda, s/he will remain
  indistinguishable from the other two patients who drink soda.

\end{itemize}

{\bf Caveat:} In the last case, if Cindy reveals that she has cancer
  and is seen to be different from the other individuals, then one may
  be able to remove her from the relation, narrowing the focus and
  creating a new relation that may allow additional inferences.
  Similar caveats hold for the other bullets.  Deletions are discussed
  further in Appendix~\ref{linksandinference}.

\vspace*{0.1in}

\paragraph{Modifying a Relation to Increase Privacy}
\label{spheres}

We can make a small change in relation $H$ that enhances privacy.
If we artificially give patient \#3 the attribute \hbox{\sc smokes},
then we obtain the following modified relation $H^\prime$:

\[
\begin{array}{c|ccc}
H^\prime & \hbox{\sc smokes} & \hbox{\sc has\_cancer} & \hbox{\sc drinks\_soda} \\[2pt]\hline
1 & \one & \one & \\
2 &      & \one & \one \\
3 & \one &      & \one \\
4 &      &      & \one \\
\end{array}
\]

\vspace*{0.1in}

Now Bob may reveal to Alice that he was part of the health study
without Alice being able to infer that he has cancer, even though she
knows that everyone knows that he smokes.  In fact, more generally,
one can no longer infer cancer from smoking, within the relation.

\vst

Such an artificial entry in the relation is a form of {\em
disinformation}.  It certainly skews statistics and utility.  It also
increases privacy.

\clearpage
\subsection{A Dual Perspective: Payroll Data and Association Privacy}
\label{payroll}

The previous example examined a relation from the perspective of {\em
attribute privacy}: we were interested in understanding how
observation of some attribute(s) implied other attribute(s), possibly
identifying an individual.  A dual perspective is {\em association
privacy}, in which one seeks to understand how some associations
between individuals imply others.

The following hypothetical ``salary'' relation $S$ has the same matrix
structure as relation $H$ did earlier, but with different semantics.
This relation represents employees $\{\hbox{Bob}, \hbox{Mary},
\hbox{Frank}, \hbox{Julie}\}$ working on secret projects
$\{\ta, \tb, \tc\}$.  Now the employee names are visible so that a
payroll clerk can disburse salaries correctly, but the actual projects
are anonymous.

\[
\begin{array}{c|ccc}
    S        & \ta  & \tb  & \tc \\[2pt]\hline
\hbox{Bob}   & \one & \one & \\
\hbox{Mary}  &      & \one & \one \\
\hbox{Frank} &      &      & \one \\
\hbox{Julie} &      &      & \one \\
\end{array}
\]

\vspace*{0.1in}

The salary relation $S$ has some implications for association privacy,
including the following:

\vspace*{-0.05in}

\begin{itemize}

\item If someone tells the payroll clerk that Julie is the lead of a
  very important project with valuable information, then the payroll
  clerk can infer that Mary and Frank have also been exposed to
  valuable information.

\item In contrast, if someone tells the payroll clerk that Bob is
  running a very important project, then the payroll clerk does not
  have enough information to conclude that Mary is also working on an
  important project.

\end{itemize}

Regarding disinformation: Observe how adding the artificial entry
$(\hbox{Julie}, \ta)$ prevents the payroll clerk from using the
relation to infer that Mary and Frank have valuable information, even
if the payroll clerk learns via background information that Julie is
the lead of a very important project with such information:

\[
\begin{array}{c|ccc}
  S^\prime   & \ta  & \tb  & \tc \\[2pt]\hline
\hbox{Bob}   & \one & \one & \\
\hbox{Mary}  &      & \one & \one \\
\hbox{Frank} &      &      & \one \\
\hbox{Julie} & \one &      & \one \\
\end{array}
\]

\clearpage
\subsection{Privacy Preservation and Loss: A Poset Model}
\markright{Privacy Preservation and Loss: A Poset Model}
\label{privacyposet}

\begin{figure}[h]
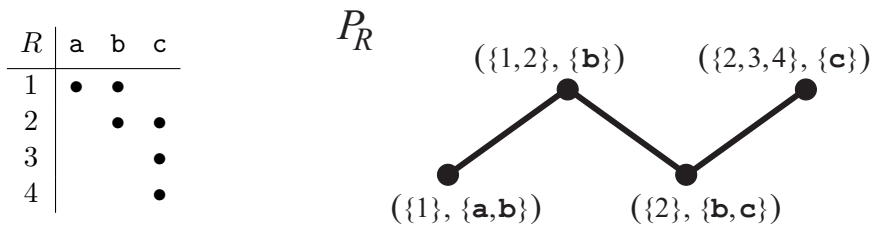

\begin{center}
\hspace*{0.4in}\begin{minipage}{1.5in}{$\begin{array}{c|ccc}
    R        & \ta  & \tb  & \tc \\[2pt]\hline
 1  & \one & \one & \\
 2  &      & \one & \one \\
 3 &      &      & \one \\
 4 &      &      & \one \\
\end{array}$}
\end{minipage}
\hspace*{0.125in}
\begin{minipage}{4in}
\ifig{IntroExPR}{width=2.8in}
\end{minipage}
\end{center}
\vspace*{-0.1in}
\caption[]{Relation $R$ serves as a model for the two examples of
  Sections~\ref{ToyExample} and \ref{payroll}.  The doubly-labeled poset
  $\PR$ describes the inferences facilitated by $R$.}
\label{IntroPR}
\end{figure}

\vspace*{-0.05in}

Figure~\ref{IntroPR} shows a relation $R\mskip2mu$ that serves as a
model for both the health example of Section~\ref{ToyExample} and the
payroll example of Section~\ref{payroll}.  The relation is identical
to those given earlier, but with abstract labels in place of both
individuals and attributes.  The figure also depicts a partially
ordered set (poset) $\PR$, designed to model the inferences discussed
previously.  We refer to that poset as the {\em doubly-labeled poset
associated with $R$}.  We next discuss the semantics of $\PR$.
Section~\ref{galois} discusses the construction of $\PR$.  The
underlying concepts are important throughout the report.

\paragraph{Semantics of the poset $\PR$:} \label{PRsemantics}

\vspace*{-0.075in}

\begin{itemize}
\item Each element in the poset consists of an ordered pair $(\sigma, \gamma)$,
  with $\emptyset \neq \sigma \subseteq \{1,2,3,4\}$ describing a set
  of individuals and $\emptyset \neq \gamma \subseteq \{\ta,\tb,\tc\}$
  describing a set of attributes.  We say that the poset element is
  {\em labeled with $\sigma\!$ and $\gamma$}.  The meaning of such a
  double-labeling (with respect to the information described by
  relation $R$) is:

\vspace*{-0.05in}

   \begin{enumerate}

      \item[(a)] All individuals in $\sigma$ have all attributes in
            $\gamma$.

\vspace*{1pt}

      \item[(b)] If (and only if) an individual has at least all the
            attributes in $\gamma$, then that individual must be in
            $\sigma$.  For example, we see that individual \#2, and
            only individual \#2, has both attributes $\tb$ and $\tc$
            in $R$.

      \item[(c)] If (and only if) an attribute is shared by at least
           all individuals in $\sigma$, then that attribute must be in
           $\gamma$.  For example, individual \#1 has both attributes
           $\ta$ and $\tb$, so $\PR$ cannot contain simply $(\{1\},
           \{\ta\})$, but must contain $(\{1\}, \{\ta, \tb\})$.
           \end{enumerate}

\item The partial order for $\PR$ is described by the edges in the
  figure.  There is an edge between two elements $(\sigma_1,
  \gamma_1)$ and $(\sigma_2, \gamma_2)$ of $\PR$ whenever the
  corresponding sets are subset comparable.  In particular,
  $(\sigma_1, \gamma_1) \leq (\sigma_2, \gamma_2)$ in $\PR$ precisely
  when $\sigma_1 \subseteq \sigma_2$ and $\gamma_1 \supseteq
  \gamma_2$.  [Observe that the comparability ($\subseteq$ versus
  $\supseteq$) is opposite for $\sigma$ versus $\gamma$.]

\end{itemize}

\vspace*{0.05in}

\noindent {\bf Using the poset $\PR$ for attribute inference:}

\vspace*{0.05in}

\noindent Suppose $\gamma$ is {\em any\,} nonempty subset of attributes
in $\{\ta,\tb,\tc\}$.  Then one of (i) or (ii) holds:

\vspace*{-0.05in}

   \begin{enumerate}

      \item[(i)] Perhaps no individual modeled by $R\hspt$ has all the
        attributes $\gamma$.  For example, no individual has
        attributes $\gamma=\{\ta, \tc\}$.  We would not expect to see
        $\gamma$ and so $\gamma$ does not appear in the poset $\PR$.

      \item[(ii)] Alternatively, $\gamma$ is a subset of at least one
        set of attributes that does appear in the poset.  In this
        case, one {\em may} be able to enlarge $\gamma$ nontrivially,
        resulting in privacy loss.

       For example, imagine we discover that a friend with attribute
       $\ta$ is modeled by the given relation (e.g., Bob, who {\sc
       smokes}, says he is part of the health study $H$).\\  Using
       $\gamma = \{\ta\}$, the poset then allows us to infer that Bob
       must also have attribute $\tb$ (that is, {\sc has\_cancer}).  \
       Why?  \ Because $\{\ta, \tb\}$ is a minimal set in $\PR$
       containing $\{\ta\}$.

       We can say yet more: The element labeled with $\{\ta,\tb\}$ is
       also labeled with $\{1\}$.  So now we have {\em de-anonymized\,}
       individual \#1 (identifying him to be Bob).

       Regardless of whether Bob ever actually talks to us, the poset
       tells us that individual \#1 {\em could}\, suffer privacy loss,
       and in fact, is uniquely identifiable in the context of
       relation $R$ without needing to reveal everything about
       himself.

    \end{enumerate}

\noindent Similar reasoning is possible for {\bf association
inference}, as we saw earlier.

\vspace*{0.1in}

\begin{figure}[h]
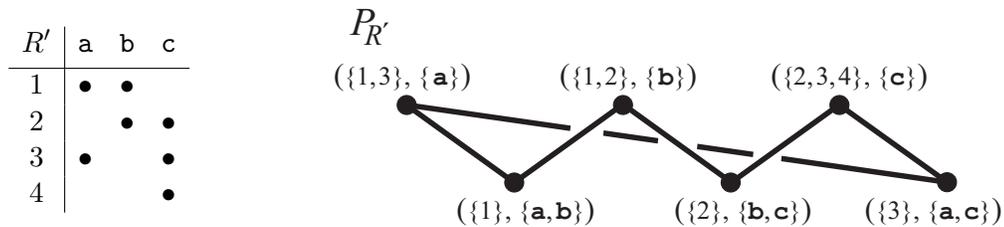

\vspace*{0.25in}
\begin{center}
\qquad\qquad\begin{minipage}{1.5in}{$\begin{array}{c|ccc}
R^\prime & \ta & \tb & \tc \\[2pt]\hline
1 & \one & \one & \\
2 &      & \one & \one \\
3 & \one &      & \one \\
4 &      &      & \one \\
\end{array}$}
\end{minipage}
\quad
\begin{minipage}{3.7in}
\ifig{IntroExPRDis}{width=3.5in}
\end{minipage}
\end{center}
\vspace*{-0.1in}
\caption[]{A relation $\Rp$, along with its doubly-labeled poset
  $\PRp$.  The relation preserves attribute privacy but allows a small
  amount of association inference: If ones sees individual \#4 in some
  context $\tc$, then one can infer that individuals \#2 and \#3 are
  also present in that same context, without needing to observe them
  directly.}
\label{IntroExDis}
\end{figure}

\paragraph{Disinformation Revisited:}  Figure~\ref{IntroExDis} shows
relation $\Rp$, constructed from $R$ by adding an entry of
disinformation, much as we constructed $H^\prime$ from $H$ earlier.
The figure also shows the corresponding doubly-labeled poset $\PRp$.
Observe that it is no longer possible to infer $\{\ta, \tb\}$ from
$\{\ta\}$, because $\{\ta\}$ now appears directly in the poset.  The
added entry $(3, \ta)$ in $\Rp$ has increased attribute privacy
compared to $R$.

There is, however, still some opportunity for making association
inferences.  For instance, knowing that individual \#4 (Julie,
earlier) works on an important secret project still allows the
inference that individuals \#2 and \#3 have valuable information.
That is because the minimal set containing $\{4\}$ in the poset is
$\{2,3,4\}$.  Notice that no such association inference is possible if
someone says that individual \#3 works on an important secret project,
though that would have been possible in the original relation $R$.

\paragraph{Comment:}  Artificial entries can potentially also produce
inferences of disinformation.  For instance, if, in our earlier
relation $H$, the entry $(1, \hbox{\sc has\_cancer})$ is artificial,
then inferring that Bob has cancer from his smoking, when in fact Bob
is healthy, would be disinformation.

\clearpage
\section{The Galois Connection for Modeling Privacy}
\markright{Dowker Complexes and the Galois Connection for Modeling Privacy}
\label{galois}

Section~\ref{privacyexamples} showed by example how a relation
determines a partially ordered set (poset) useful for modeling
privacy.  The elements in the poset are ordered pairs --- a set of
attributes and a set of individuals --- that are equivalent from the
relation's perspective.  Privacy loss occurs when an observer has data
(for example, background knowledge) that is not directly in the poset
but is a proper subset of some set of attributes or individuals in the
poset.  The observer may then infer some additional attributes or
individuals.  This section develops the connection between relations
and posets more precisely, continuing to use the earlier examples for
illustration.  See also Appendices~\ref{prelim} and~\ref{basictools}
for notation and additional material.

\subsection{Dowker Complexes}

\begin{definition}[Dowker Complexes]\label{basicdefs}\quad{Let
$\hspt{}X\!$ and $\hspt{}Y\!$ be finite discrete spaces and let $R$ be
a relation on $\XxY$.  This means $R$ is a set of ordered pairs $(x,y)$,
with $x\in X$ and $y\in Y$.  We frequently view/depict $R$ as a matrix
of $0$s and $1$s, or as a matrix of blank and nonblank entries, with
$X\!$ indexing rows and $Y\!$ indexing columns.

\vspace*{-0.05in}

\begin{itemize}
\item[(a)] We often refer to elements of $X\!$ as
\hspt\mydefem{individuals}{\hspt} and to elements of $\,Y\!$ as
\hspt\mydefem{attributes}.

\item[(b)] For each $x\in X$, let $Y_x = \setdef{y\in Y}{(x,y)\in R}$.
Then $Y_x$ consists of all attributes of individual $x$.  We may view
$Y_x$ as a \mydefem{row} of $R$.  \ We say that the \mydefem{row is
blank} if $Y_x=\,\emptyset$.

\item[(c)] For each $y\in Y$, let $X_y = \setdef{x\in X}{(x,y)\in R}$.
Then $X_y$ consists of all individuals who have attribute $y$.  We may
view $X_y$ as a \mydefem{column} of $R$.  \ The 
\mydefem{column is blank} if $X_y=\,\emptyset$.

\item[(d)] We next define two simplicial complexes $\dowy$ and $\dowx$
(with some special cases below):
\begin{eqnarray*}
   \dowy &\;=\;& \setdef{\gamma\subseteq Y}{\hbox{there exists $x\in
   X$ such that $(x,y)\in R\hspt$ for all $\hspt{}y\in\gamma$}},\\[4pt]
   \dowx &\;=\;& \setdef{\sigma\subseteq X}{\hbox{there exists $y\in
   Y$ such that $(x,y)\in R\hspt$ for all $\hspt{}x\in\sigma$}}.
\end{eqnarray*}

Special cases: \ If $X\!=\emptyset\,$ and/or $\,Y\!=\emptyset$, then
we say \,\mydefem{the relation is void}.  In this case, with some
exceptions discussed later (see Section~\ref{linkscond},
Section~\ref{leveraginglattices}, and
Appendix~\ref{linksandinference}), we let $\dowy$ and $\,\dowx$ each
be an instance of the void complex, containing no simplices.
Otherwise, with $X\!$ and $\mskip1.5mu{}Y\mskip-1.5mu$ both nonempty, each
of $\,\dowy$ and $\dowx$ contains at least the empty simplex $\,\emptyset$.

\vspace*{0.1in}

  We refer to $\,\dowy$ and $\dowx$ as \mydefem{Dowker complexes}, after
  the author of upcoming Theorem~\ref{dowker}.\\[2pt]
  We say that each complex is the \mydefem{Dowker dual} of the other,
  with respect to relation $R$.

\vspace*{0.1in}

  Interpretation: A nonempty set $\gamma$ of attributes is a simplex
  in $\dowy$ precisely when at least one individual has at least all
  the attributes in $\gamma$.  We refer to any such individual as a
  \mydefem{witness for $\gamma$}.

\vspace*{0.1in}

  Similarly, a nonempty set $\sigma$ of individuals is a simplex in
  $\dowx$ precisely when there is at least one attribute that is
  shared by at least all the individuals in $\sigma$.  We refer to any
  such attribute as a \mydefem{witness for $\sigma$}.

\end{itemize}
}
\end{definition}

Figure~\ref{IntroDOW} shows the Dowker complexes for the relation $R$
of Section~\ref{privacyposet}.

\begin{figure}[h]
\begin{center}
\begin{minipage}{1.5in}{$\begin{array}{c|ccc}
R & \ta & \tb & \tc \\[2pt]\hline
1 & \one & \one & \\
2 &      & \one & \one \\
3 &      &      & \one \\
4 &      &      & \one \\
\end{array}$}
\end{minipage}
\begin{minipage}{4in}
\ifig{IntroDOW}{scale=0.6}
\end{minipage}
\end{center}
\vspace*{-0.2in}
\caption[]{Dowker simplicial complexes $\dowy$ and $\dowx$ determined
           by relation $R$.}
\label{IntroDOW}
\end{figure}

\vst

Dowker's Theorem \cite{tpr:dowker, tpr:bjorner} says that the two
simplicial complexes $\dowy$ and $\dowx$ have the same homotopy type.
As we will see, the maps establishing that homotopy equivalence define
the doubly-labeled poset $\PR$ and describe how privacy may be lost.

\begin{theorem}[Dowker Duality \cite{tpr:dowker}]\label{dowker}
Suppose $R$ is a relation on $\XxY$.  Let $\dowy$ and $\dowx$ be
as in Definition~\ref{basicdefs}.  Then $\dowy$ and $\dowx$ are
homotopy equivalent.
\end{theorem}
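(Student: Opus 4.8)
The plan is to prove the theorem by exhibiting $\dowx$ as the nerve of a good cover of $\dowy$ and then invoking the Nerve Lemma (see \cite{tpr:bjorner}). First I would dispose of the void cases immediately from the conventions in Definition~\ref{basicdefs}, since if $X\!=\emptyset$ and/or $Y\!=\emptyset$ both complexes are declared void, hence trivially homotopy equivalent. So assume $X$ and $Y$ are both nonempty. For each individual $x\in X$ with $Y_x\neq\emptyset$, let $\overline{Y_x}$ denote the full simplex on vertex set $Y_x$, i.e.\ the subcomplex of $\dowy$ consisting of all subsets of $Y_x$. Because $x$ witnesses every such subset, $\overline{Y_x}\subseteq\dowy$; conversely every nonempty simplex $\gamma\in\dowy$ has some witness $x$, so $\gamma\subseteq Y_x$ and thus $\gamma\in\overline{Y_x}$. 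Hence $\calU=\setdef{\overline{Y_x}}{x\in X,\ Y_x\neq\emptyset}$ is a cover of $\dowy$ by subcomplexes.

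Next I would verify the hypotheses of the Nerve Lemma. Each $\overline{Y_x}$ is a full simplex and therefore contractible. For any finite collection $x_0,\ldots,x_k$, the intersection $\overline{Y_{x_0}}\inter\cdots\inter\overline{Y_{x_k}}$ consists of all sets that are subsets of every $Y_{x_i}$, which is precisely the full simplex on $Y_{x_0}\inter\cdots\inter Y_{x_k}$. This subcomplex is nonempty (contains a vertex) exactly when $x_0,\ldots,x_k$ share a common attribute, and when nonempty it is again a full simplex, hence contractible. Therefore $\calU$ is a good cover and the Nerve Lemma yields $\dowy\homot\nerveU$.

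Finally I would identify the nerve with $\dowx$. By construction $\nerveU$ has one vertex per individual $x$ with $Y_x\neq\emptyset$, and a set $\{x_0,\ldots,x_k\}$ spans a simplex of $\nerveU$ iff $\overline{Y_{x_0}}\inter\cdots\inter\overline{Y_{x_k}}\neq\emptyset$, i.e.\ iff $x_0,\ldots,x_k$ have a common attribute witness. By Definition~\ref{basicdefs}(d) this is exactly the condition for $\{x_0,\ldots,x_k\}$ to be a simplex of $\dowx$. Hence $\nerveU=\dowx$ and so $\dowy\homot\dowx$, as claimed. The construction is symmetric in $X$ and $Y$: the mirror cover of $\dowx$ by the full simplices on the columns $X_y$ has nerve $\dowy$, confirming consistency.

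I expect the main obstacle to be bookkeeping around degenerate cases and the precise form of the Nerve Lemma invoked. Blank rows ($Y_x=\emptyset$) and blank columns must be excluded from the indexing so that the nerve's vertex set matches $\dowx$ exactly, and the empty simplex must be handled consistently across both complexes; these are precisely the edge cases flagged in Definition~\ref{basicdefs}. A secondary point is that the Nerve Lemma delivers only an \emph{abstract} homotopy equivalence, whereas the doubly-labeled poset and closure operators of Section~\ref{privacyposet} suggest the author may instead want the explicit Galois maps $\phi_R,\psi_R$ as the equivalence. Obtaining those explicitly would call for the alternative route of defining an intermediate relation complex on vertex set $R$ and comparing it with each Dowker complex via the Acyclic Carrier Theorem, at the cost of verifying that the relevant carriers are acyclic.
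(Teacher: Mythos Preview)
Your nerve-lemma argument is correct and is indeed one of the classical proofs of Dowker's theorem. The paper, however, takes a different route: it defines the explicit maps $\phi_R:\Fdowx\to\Fdowy$ and $\psi_R:\Fdowy\to\Fdowx$ via intersections of rows and columns (page~\pageref{mapformulae}), and then shows in Appendix~\ref{basictools} that the compositions $\clsy$ and $\clsx$ are closure operators on the respective face posets. Since a closure operator is homotopic to the identity on its poset, this exhibits $\phi_R$ and $\psi_R$ as mutually inverse homotopy equivalences.

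You anticipated exactly this distinction in your final paragraph. The trade-off is real: your argument is shorter and more geometric for the bare statement, but the paper's approach is not incidental --- the explicit maps $\phi_R,\psi_R$ and their closure compositions are the central inference machinery for everything that follows (Definitions~\ref{defPR} and~\ref{defAttribPriv}, the doubly-labeled poset $\PR$, and the entire privacy analysis). So while your proof establishes the theorem, the paper needs the Galois-connection proof because those specific maps \emph{are} the object of study. One small bookkeeping point you would also need to address: when $R$ is an empty relation (nonvoid $X,Y$ but no pairs), your cover $\calU$ is empty and the Nerve Lemma as stated requires $\calU\neq\emptyset$; both Dowker complexes are then $\{\emptyset\}$ and the equivalence is trivial, but it should be mentioned alongside your void-case disposal.
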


Every nonvoid simplicial complex $\Sigma\,$ determines a partially
ordered set $\F(\Sigma)$ called the {\em face poset} of $\Sigma$.  The
elements of this poset are the {\em nonempty\,} simplices of $\Sigma$,
partially ordered by set inclusion.  \label{faceposetDef}
\ (Recall that {\tt{\char13}poset{\char13}} is short for
{\tt{\char13}partially ordered set{\char13}}.)

For the finite setting, the homotopy equivalence of Dowker's Theorem
may be seen by explicit formulas for maps between the face posets of
the two Dowker complexes.  These maps describe what is known as a {\em
Galois connection}.  [This construction also appears as a core tool
within the field of Formal Concept Analysis \cite{tpr:wille,tpr:ganterwille}.]
\ Here are the formulas:\label{phipsiDef}

\vspace*{-0.05in}

\qquad
\begin{minipage}{2.5in}
\begin{eqnarray*}
\phi_R &:& \Fdowx \rightarrow \Fdowy\\[4pt]
&& \sigma \mapsto \biginter_{x\in\sigma} Y_x \\
\end{eqnarray*}
\end{minipage}\label{mapformulae}
\begin{minipage}{2.5in}
\begin{eqnarray*}
\psi_R &:& \Fdowy \rightarrow \Fdowx\\[4pt]
&&\gamma \mapsto \biginter_{y\in\gamma} X_y \\
\end{eqnarray*}
\end{minipage}

\vspace*{0.0375in}

These two maps are inverse homotopy equivalences.  One sees this by
considering the maps $\clsy$ and $\clsx$.  These compositions turn out
to be what are called {\em closure operators} on the face posets
$\Fdowy$ and $\Fdowx$, respectively, implying that each is homotopic
to an identity map, thereby establishing the desired homotopy
equivalence. See Appendix~\ref{basictools} for detailed computations;
see the next subsection for interpretation.

\subsection{Inference from Closure Operators}
\markright{Inference from Closure Operators}

An order-preserving poset map $f : P \rightarrow P$ is said to be a
{\em closure operator} whenever $x \leq f(x)$ and $f(f(x))=f(x)$ for
all $x\in P$.  If $f$ is a closure operator, then it induces a
homotopy equivalence between $P$ and the image $f(P)$.  \ See
\cite{tpr:bjorner, tpr:wachs, tpr:quillenH, tpr:quillenK} for more
details.

One can think of a closure operator as ``pushing elements up'' in the
poset.  From a privacy perspective, ``pushing up'' amounts to
inference.  Specifically, $\sdiff{(\phi_R \circ
\psi_R)(\gamma)}{\gamma}$ consists of all additional attributes that
may be inferred from observing attributes $\gamma$, while
$\sdiff{(\psi_R \circ \phi_R)(\sigma)}{\sigma}$ consists of all
additional individuals that may be inferred from observing individuals
$\sigma$.

\vspace*{-0.15in}

\paragraph{Comment:} The formulas for $\phi_R$ and $\psi_R$ in
Section~\ref{mapformulae} extend to the empty simplex and to the
spaces $X$ and $Y$, suggesting ``inferences from nothing'': \ Observe
that $\psi_R(\emptyset) = X$, so $(\clsy)(\emptyset)$ consists of all
attributes that every individual in $X$ has.  If $(\clsy)(\emptyset)
\neq \emptyset$, then the attributes $(\phi_R \circ
\psi_R)(\emptyset)$ are inferable ``for free'' from $R$, that is,
without making any observations.  Similarly, $(\clsx)(\emptyset)$
consists of all individuals who have every attribute in $Y$.

Any poset $P$ defines a simplicial complex $\Delta(P)$ called the {\em
\hspc{}order complex\,} of $P$.  The simplices of $\Delta(P)$ are
given by the finite chains $\{p_ 0 < p_1 < \cdots < p_n\}$ in $P$.
Suppose we start with a nonvoid simplicial complex $\Sigma$, construct
its face poset $\F(\Sigma)$, and then construct the order complex
\label{ordercpxDef} $\Delta(\F(\Sigma))$.  The result is isomorphic to
the {\em first barycentric subdivision} of $\Sigma$ \cite{tpr:rotman,
tpr:wachs}.  A convenient visualization of the face posets $\Fdowy$
and $\Fdowx$ therefore is to draw the first barycentric subdivisions
of $\dowy$ and $\dowx$, respectively, as in Figure~\ref{IntroSD}.

\begin{figure}[h]
\vspace*{-0.1in}
\begin{center}
\ifig{IntroSD}{scale=0.75}
\end{center}
\vspace*{-0.3in}
\caption[]{Order complexes of the face posets of the complexes $\dowy$
  and $\dowx$ shown in Figure~\ref{IntroDOW}.}
\label{IntroSD}
\end{figure}

Viewed in the order complexes, functions $\psi_R$ and $\phi_R$ are
easy to visualize.  They are fully determined by their actions on
vertices of the order complexes, as shown in Table \ref{IntroMapsR}.
(Bear in mind that each element of $\Fdowy$ represents a simplex in
$\dowy$ but is a vertex in $\Delta(\Fdowy)$.  Similarly, each element
of $\Fdowx$ represents a simplex in $\dowx$ but is a vertex in
$\Delta(\Fdowx)$.)

\begin{table}[h]
\begin{center}
\[\begin{array}{ccccc}
\gamma & \;\; & \psi_R(\gamma) & \;\; & (\clsy)(\gamma)\\[1.5pt]\hline
\{\ta\} & & \{1\} & & \{\ta,\tb\} \\[1.5pt]
\{\tb\} & & \{1,2\} & & \{\tb\} \\[1.5pt]
\{\tc\} & & \{2,3,4\} & & \{\tc\} \\[1.5pt]
\{\ta,\tb\} & & \{1\} & & \{\ta,\tb\} \\[1.5pt]
\{\tb,\tc\} & & \{2\} & & \{\tb,\tc\} \\[1pt]
\end{array}
\hspace*{0.6in}
\begin{array}{ccccc}
\sigma & \;\; & \phi_R(\sigma) & \;\; & (\clsx)(\sigma)\\[1.5pt]\hline
\{1\} & & \{\ta,\tb\} & & \{1\} \\[1.5pt]
\{2\} & & \{\tb,\tc\} & & \{2\} \\[1.5pt]
\{3\} & & \{\tc\} & & \{2,3,4\} \\[1.5pt]
\{4\} & & \{\tc\} & & \{2,3,4\} \\[1.5pt]
\{1,2\} & & \{\tb\} & & \{1,2\} \\[1.5pt]
\{2,3\} & & \{\tc\} & & \{2,3,4\} \\[1.5pt]
\{3,4\} & & \{\tc\} & & \{2,3,4\} \\[1.5pt]
\{2,4\} & & \{\tc\} & & \{2,3,4\} \\[1.5pt]
\{2,3,4\} & & \{\tc\} & & \{2,3,4\} \\[1pt]
\end{array}\]
\end{center}
\vspace*{-0.15in}
\caption{The maps $\psi_R$ and $\phi_R$, and their compositions, for
 relation $R$ of Figure~\ref{IntroDOW}.}
\label{IntroMapsR}
\end{table}

\vspace*{0.05in}

\noindent Using Table~\ref{IntroMapsR}, one can again see how privacy
loss might occur via $R$.

\vst

\noindent For instance, the map $\clsy$ gives rise to the closure
(i.e., a ``pushing up'')

\vspace*{-0.075in}

$$\{\ta\} \argmap{\psi_R} \{1\} \argmap{\phi_R} \{\ta,\tb\},$$

\noindent telling us how to infer unobserved attribute \tb\ from
observed attribute \ta\ (in the health study example of
Section~\ref{ToyExample}, Alice could infer that Bob {\sc has\_cancer}
from knowing that he {\sc smokes}).

\noindent Similarly, for the map $\clsx$,

\vspace*{-0.075in}

$$\{4\} \argmap{\phi_R} \{\tc\} \argmap{\psi_R} \{2,3,4\},$$

\noindent leading to association inference (in the payroll example
from Section~\ref{payroll}, the payroll clerk could infer Bob and
Mary's exposure to valuable information after learning of Julie's work
on an important project).

\vspace*{0.1in}

\noindent Figure~\ref{IntroHomotopy} indicates the homotopy
deformations produced by the maps $\clsy$ and $\clsx$, while
Figure~\ref{IntroClosure} shows the resulting image of each face poset.

\begin{figure}[h]
\vspace*{0.1in}
\begin{center}
\ifig{IntroHomotopy}{scale=0.75}
\end{center}
\vspace*{-0.3in}
\caption[]{Closure operators $\clsy$ and $\clsx$ produce homotopy
  deformations, indicated by directed edges.  In $\Fdowy$, $\{\ta\}$
  closes up to $\{\ta,\tb\}$.  In $\Fdowx$, most of the subsets of
  $\{2,3,4\}$ close up to $\{2,3,4\}$.  The exception is subset
  $\{2\}$, which does not move.}
\label{IntroHomotopy}
\end{figure}

\begin{figure}[h]
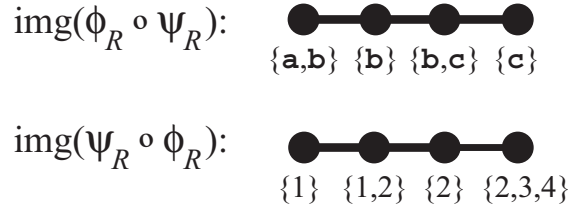

\vspace*{0.2in}
\begin{center}
\ifig{IntroClosure}{scale=0.65}
\end{center}
\vspace*{-0.25in}
\caption[]{Images of the closure operators of
  Figure~\ref{IntroHomotopy}.}
\label{IntroClosure}
\end{figure}

Observe that these two images are isomorphic.  Matching up
corresponding elements produces the poset $\PR$ of Figure~\ref{IntroPR}.

\paragraph{Summary:} A relation $R$ produces two simplicial complexes,
$\dowy$ and $\dowx$, one modeling attributes shared by individuals,
the other modeling individuals with common attributes.  The complexes
are related by two maps, $\phi_R$ and $\psi_R$, that are homotopy
inverses.  The compositions of these maps describe the attribute and
association inferences possible via $R$, leveraging background
information someone may have.  These inferences are summarized by a
poset $\PR$ that pairs sets of individuals with sets of attributes.
We may describe $\PR$ as follows:

\vspace*{0.1in}

\begin{definition}[Doubly-Labeled Poset]
\ Let $R$ be a relation with nonvoid Dowker complexes.

\vst

The \mydefem{doubly-labeled poset $\PR$ associated with $R$} consists
of all ordered pairs of sets $(\sigma, \gamma)$ such that
$\emptyset \neq \sigma \in \dowx$, $\,\emptyset \neq \gamma \in \dowy$,
$\,\sigma = \psi_R(\gamma)$, and $\gamma = \phi_R(\sigma)$. $\phantom{\big|}$

\vsr \label{defPR}

\vst

The partial order on $\PR$ is defined by: $(\sigma_1, \gamma_1) \leq
(\sigma_2, \gamma_2)$ if and only if $\hspt\sigma_1 \subseteq \sigma_2$

(and/or, equivalently, $\gamma_1 \supseteq \gamma_2$).

\vspace*{0.1in}

See Appendix~\ref{PRspecialcases}, specifically
page~\pageref{PRspecialcases}, for some special cases.

\end{definition}

\vspace*{0.05in}

(This definition agrees with the intuition that $\PR$ is both the
image $(\clsx)(\Fdowx)$ and the image $(\clsy)(\Fdowy)$, by
Appendix~\ref{basictools}.)

\vspace*{0.2in}
\subsection{Attribute and Association Privacy}
\markright{Doubly-Labeled Poset and Definitions of Attribute and Association Privacy}
\label{AttribAssocPriv}

Here are formal definitions for the intuition developed via the previous examples:

\begin{definition}[Attribute Privacy]\label{defAttribPriv}
\ Let $R$ be a relation with nonvoid Dowker complexes.

\vso

We say that $R$ \mydefem{preserves attribute privacy} precisely when\\[3pt]
\hspace*{0.25in}$\phi_R \circ \psi_R$ is the identity operator on the poset \ $\Fdowy \union \{\emptyset\}$.
\end{definition}

\vspace*{0.1in}

\begin{definition}[Association Privacy]
\ Let $R$ be a relation with nonvoid Dowker complexes.

\vso

We say that $R$ \mydefem{preserves association privacy} precisely when\\[3pt]
\hspace*{0.25in}$\psi_R \circ \phi_R$ is the identity operator on the poset \ $\Fdowx \union \{\emptyset\}$.
\end{definition}

\paragraph{Comment:} \ For notational simplicity, we frequently say
simply that

\noindent\hspace{1in}\mydefem{$\clsy$ is the identity on $\dowy$}
\ and/or that \ 
\mydefem{$\clsx$ is the identity on $\dowx$}.

\vspace*{0.25in}
\subsection{Disinformation Example Re-Revisited}

\begin{figure}[h]
\begin{center}
\ifig{IntroExDisClosure_updated}{scale=0.6}
\end{center}
\vspace*{-0.25in}
\caption[]{The Dowker complexes, as well as the order complexes of
  their face posets, for the relation $\Rp$ of Figure~\ref{IntroExDis}
  on page~\pageref{IntroExDis}.  The closure operator $\clsyp$ is the
  identity on $\Fdowyp$.  The closure operator $\clsxp$ on $\Fdowxp$
  closes many (but not all) subsets of $\{2,3,4\}$ up to $\{2,3,4\}$,
  as indicated by the directed arrows.  The result is a poset
  isomorphic to the poset $\PRp$ of Figure~\ref{IntroExDis}, drawn
  again slightly differently in Figure~\ref{PRptriangle}.  Also,
  $(\clsyp)(\emptyset) = \emptyset$.  Thus relation $\Rp$ preserves
  attribute privacy but not association privacy.}
\label{IntroExDisClosure}
\end{figure}

\begin{figure}[h]
\begin{center}
\ifig{PRptriangle}{scale=0.75}
\end{center}
\vspace*{-0.25in}
\caption[]{A flattened view of the doubly-labeled poset $\PRp$ from
  Figure~\ref{IntroExDis}.  Combined with
  Figure~\ref{IntroExDisClosure}, this perspective shows how $\PRp$
  arises as the images of $\Fdowyp$ and $\Fdowxp$ under the closure
  operators $\clsyp$ and $\clsxp$, respectively.  (The vertices drawn
  as bigger dots in the current figure were higher up in the poset of
  Figure~\ref{IntroExDis} than those drawn as smaller dots.)}
\label{PRptriangle}
\end{figure}

Recall the relation $R^\prime$ of Figure~\ref{IntroExDis} on
page~\pageref{IntroExDis}, which is relation $R$ of
Figure~\ref{IntroPR} but with an added entry of disinformation.
Figure~\ref{IntroExDisClosure} displays the resulting Dowker complexes
and the actions of the closure operators.  Figure~\ref{PRptriangle}
flattens out the poset $\PRp$ of Figure~\ref{IntroExDis}, so one sees
its triangle structure and how it is the image of the Dowker complexes
under the closure operators for $\Rp$.

\clearpage
\section{The Face Shape of Privacy}
\label{faceshape}

\begin{figure}[h]
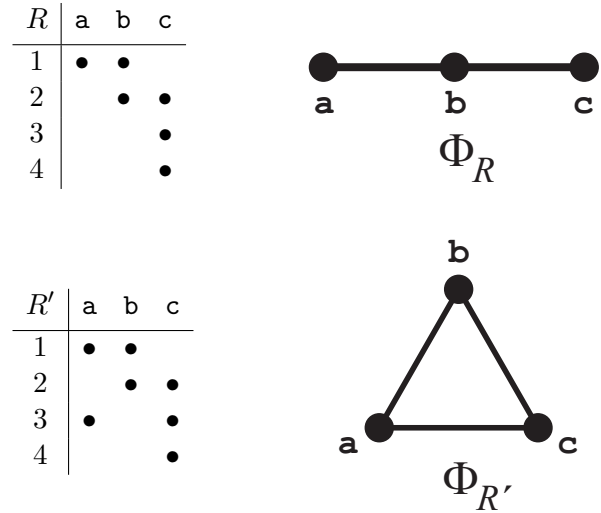

\begin{center}
\begin{minipage}{1.5in}{$\begin{array}{c|ccc}
R & \ta & \tb & \tc \\[2pt]\hline
1 & \one & \one & \\
2 &      & \one & \one \\
3 &      &      & \one \\
4 &      &      & \one \\
\end{array}$}
\end{minipage}
\begin{minipage}{2in}
\ifig{IntroPhiR}{scale=0.6}
\end{minipage}

\vspace*{0.2in}

\begin{minipage}{1.5in}{$\begin{array}{c|ccc}
R^\prime & \ta & \tb & \tc \\[2pt]\hline
1 & \one & \one & \\
2 &      & \one & \one \\
3 & \one &      & \one \\
4 &      &      & \one \\
\end{array}$}
\end{minipage}
\begin{minipage}{2in}
\quad\ifig{IntroPhiRDis}{scale=0.6}
\end{minipage}
\end{center}
\vspace*{-0.2in}
\caption[]{Relations $R$ and $\Rp$ of Section~\ref{privacyexamples},
 along with their attribute complexes $\dowy$ and $\dowyp$.}
\label{freefacefig}
\end{figure}

\subsection{Free Faces}
\markright{Free Faces and Privacy}
\label{freefaces}

Figure~\ref{freefacefig} recapitulates relation $R$ and $\Rp$ from the
previous two sections, along with their Dowker attribute complexes,
$\dowy$ and $\dowyp$, respectively.  Recall that in $R$ one could make
the inference $\ta \Rightarrow \tb$, but no such inference was
possible in $R^\prime$.

The structure of $\dowy$ suggests that the inference $\,\ta \Rightarrow
\tb\,$ {\em might}\, be possible in $R$.  In contrast, the structure of
$\dowyp$ makes clear that such an inference is {\em impossible} in
$\Rp$.  In particular, observe how vertex \ta\ has only one incident
edge in $\dowy$ but has two incident edges in $\dowyp$.  The fact that
there are two edges in $\dowyp$, with those edges being maximal
simplices, means, intuitively, that vertex \ta\ is being ``pulled'' in
two different inference directions, so one cannot conclude anything
additional from attribute \ta.  In contrast, in $\dowy$, vertex \ta\
is being ``pulled'' only toward \tb, so it is plausible that attribute
\ta\ might imply attribute \tb.

The underlying geometry is that of a free face.  A simplex $\gamma$ of
a simplicial complex $\Gamma$ is said to be a {\em free face} of
$\hspt\Gamma\hspt$ if it is a proper subset of exactly one maximal
simplex of $\Gamma$.  That is true for $\{\ta\}$ in $\dowy$ but not
for $\{\ta\}$ in $\dowyp$.

Of course, vertex $\{\tc\}$ also forms a free face in $\dowy$, yet one
cannot make any inferences upon observing just attribute \tc.  What is
going on?  The difference is that \tc\ is also an attribute of
individuals in $R\hspt$ who have {\em only\,} \tc\ as an attribute
(specifically, individuals \#3 and \#4).  Even though $\{\tc\}$ is
technically a free face of $\dowy$, it is not really free to move
under the closure operator $\clsy$, whereas $\{\ta\}$ is.

Observe that individuals \#2, \#3, and \#4 all have attribute \tc, but
only individual \#2 has additional attributes.  This means that
individuals \#3 and \#4 cannot ever be identified uniquely in the
context of relation $R$; they have effectively ``camouflaged''
themselves with individual \#2, as far as relation $R\hspt$ is concerned.
If one disallows or disregards such camouflage, then the idea of a
free face and privacy loss are equivalent.  The following definition
is useful:

\newcounter{DefUniqueID}
\setcounter{DefUniqueID}{\value{theorem}}
\begin{definition}[Unique Identifiability]
\ Let $R$ be a relation on $\XxY$ and suppose $x \in X$.\\
We say that $x$ is \,\mydefem{uniquely identifiable via relation $R$}\, when
$\,\psi_R(Y_x) = \{x\}$. \label{uniqueID}
\end{definition}

Suppose $R\hspt$ is a relation.  Appendix~\ref{uniqueident} proves
that if $\dowy$ has no free faces, then $R\hspc$ preserves attribute
privacy.  For the converse, Appendix~\ref{uniqueident} further proves
that if $R$ preserves attribute privacy \!{\em and} if every
individual is uniquely identifiable, then $\dowy$ has no free faces.
(Dual statements hold for association privacy.)

\subsection{Privacy versus Identifiability}
\label{conformism}

Section~\ref{freefaces} hinted at the difference between privacy and
identifiability.  In relation $I$ below (``I'' for ``individuality''
or ``identity''), every individual has exactly one attribute and that
attribute uniquely identifies the individual.  Relation $I$ {\em
preserves privacy} fully (assuming $n > 1$).  It is impossible to make
any attribute inferences.  If Bob reveals that he has attribute $y_7$,
then Alice cannot infer any additional attributes for Bob.  She now
knows that Bob is individual $x_7$ but cannot infer any additional
attributes.  He has himself revealed everything about himself that
there is to know, as far as relation $I$ is concerned.

$$\begin{array}{c|cccccc}
I       & y_1  & y_2  & \cdots & y_n  \\[2pt]\hline
x_1     & \one &      &        &      \\
x_2     &      & \one &        &      \\
\vdots  &      &      & \ddots &      \\
x_n     &      &      &        & \one \\
\end{array}$$

In contrast, all individuals in relation $C$ (for ``conformism'' or
``confusion'') have exactly the same set of attributes.  As a result,
there is {\em no privacy}: one can predict all the attributes of any
individual in the relation without making any observations.  On the
other hand, no individual is uniquely identifiable (assuming $n > 1$).
\label{relationC}

$$\begin{array}{c|cccccc}
C       & y_1    & y_2    & \cdots & y_n    \\[2pt]\hline
x_1     & \one   & \one   & \cdots & \one   \\
x_2     & \one   & \one   & \cdots & \one   \\
\vdots  & \vdots & \vdots & \ddots & \vdots \\
x_n     & \one   & \one   & \cdots & \one   \\
\end{array}$$

\paragraph{Homogeneity:} Relation $C$ exhibits a form of homogeneity
often sought by anonymization or other privacy techniques.  As we have
suggested before, the utility of relation $C$ is essentially zero,
unless one makes the entries stochastic, so that some utility is
encoded in the distribution.

\vspace*{0.05in}

The discussion of free faces in Section~\ref{freefaces} suggests an
alternative approach to homogeneity: one may preserve privacy and
retain utility by choosing the geometry of the relation appropriately,
for instance, so the space $\dowy$ exhibits sphere-like homogeneity.
There will be considerable discussion of the importance of spheres in
the rest of the report.

\subsection{Spheres and Privacy}
\markright{Spheres and Privacy}
\label{homogeneity}

The attribute complex $\dowyp$ of Figure~\ref{freefacefig} is equal to
a {\em boundary complex}, namely the boundary of the full simplex
consisting of the attributes $\{\ta, \tb, \tc\}$.  We will denote
boundary complexes by $\bndry{(V)}$, with $V$ some nonempty set.  The
simplices of $\bndry{(V)}$ are all proper subsets of $V$.  Boundary
complexes are homotopic to spheres, specifically $\bndry{(V)} \homot
\Snt$, with $n=\abs{V}$.  For $\dowyp$ of Figure~\ref{freefacefig}, we
have that $\dowyp = \bndry{(\{\ta, \tb, \tc\})} \homot \,\Sone$.  (In
English: The Dowker attribute complex is the boundary of a triangle,
so homotopic to a circle.)
\label{bndrycpxDef}

More generally, if for some relation $R$ on $\XxY$, $\dowy =
\bndry{(Y)}$, then $\dowy$ cannot have any free faces and so $R$
preserves attribute privacy.

\vspace*{-0.15in}

\paragraph{Privacy and Utility:}\ An important observation is that
boundary complexes exhibit homogeneity but still permit
identifiability.  If $\dowy = \bndry{(Y)}$, with $\abs{Y} > 1$, and if
no individual's attributes are a subset of another's attributes, then
one can and needs to specify $\abs{Y}-1$ attributes in order to
identify an individual.  The boundary structure ensures that one
cannot infer any attributes by specifying fewer than $\abs{Y}-1$
attributes, yet retains the ability to identify every individual.

Appendix~\ref{duncehat} gives an example of a contractible space that
preserves attribute privacy.  Observe, however, that the number of
attributes needed to identify an individual in that example is
considerably less than the total number of attributes in the space.
For a boundary complex, it is just one less.

\paragraph{Preserving Attribute and Association Privacy:}  A
consequence of these observations is that if one wishes to preserve
both attribute and association privacy with a connected relation, then
one requires both Dowker complexes to look like spheres.  More
specifically, either both Dowker complexes are linear cycles of the
same length or both are boundary complexes of the same dimension.  In
the latter case, the relation is isomorphic to a relation of the
following form, in which the diagonal $\{(x_i, y_i)\}$ is blank but
all other entries are present:

$$\begin{array}{c|cccccc}
R       & y_1    & y_2    & \cdots & \cdots & y_{n-1} & y_n    \\[2pt]\hline
x_1     &        &  \one  &  \one  & \cdots &  \one  & \one   \\
x_2     & \one   &        &  \one  & \cdots &  \one  & \one   \\
\vdots  & \one   &  \one  &        & \ddots  & \vdots & \one   \\
\vdots  & \vdots & \vdots & \ddots &        &  \one  & \vdots \\
x_{n-1} & \one   &  \one  & \cdots &  \one  &        & \one   \\
x_n     & \one   &  \one  &  \one  & \cdots &  \one  &        \\
\end{array}$$

See Appendix~\ref{preservingboth}, starting on
page~\pageref{preservingboth}, for further details.

\subsection{A Spherical Non-Boundary Relation that Preserves Attribute Privacy}

\begin{figure}[h]
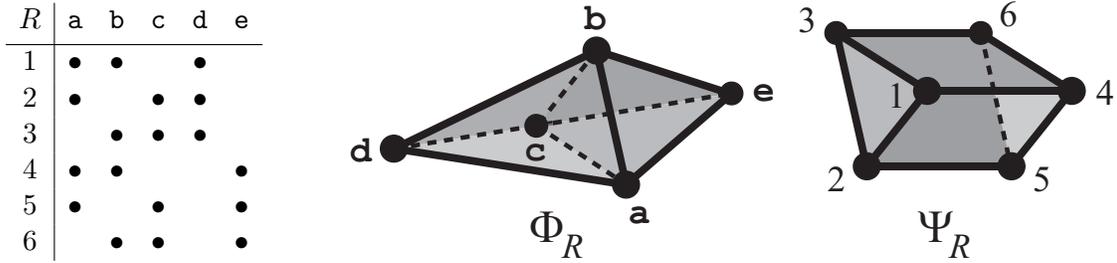

\begin{center}
\hspace*{0.2in}\begin{minipage}{1.5in}{$\begin{array}{c|ccccc}
R & \ta & \tb & \tc & \td & \te \\[2pt]\hline
1 & \one & \one &      & \one &      \\
2 & \one &      & \one & \one &      \\
3 &      & \one & \one & \one &      \\
4 & \one & \one &      &      & \one \\
5 & \one &      & \one &      & \one \\
6 &      & \one & \one &      & \one \\
\end{array}$}
\end{minipage}
\hspace*{0.2in}
\begin{minipage}{4in}
\ifig{Ex20_complexes_updated}{scale=0.575}
\end{minipage}
\end{center}
\vspace*{-0.2in}
\caption[]{A relation $R$ and its Dowker complexes $\dowy$ and
  $\dowx$, each homotopic to the two-dimensional sphere $\Stwo$.
  (One may view $\dowy$ as two party hats glued together.  One may
  view $\dowx$ as a cylinder with a triangular cross-section and
  endcaps.  However, the quadrilaterals drawn for the cylinder
  portion of $\dowx$ are simply flattened sketches of what are
  actually solid tetrahedra.)}
\label{Ex20}
\end{figure}

Consider relation $R$ as in Figure~\ref{Ex20}.
Relation $R$ preserves attribute privacy, since $\dowy$ has no free
faces.  The relation does not preserve association privacy.  In
particular, the quadrilaterals drawn for $\dowx$ in the figure are
actually tetrahedra.  This means that the diagonals of the
quadrilaterals are free faces.  For instance, one would expect to infer
individuals \#1 and \#6 as additional unobserved associates if one
observes individuals \#3 and \#4.  Indeed, computing using the closure
operator $\clsx$, we see that:

\vspace*{-0.1in}

$$(\clsx)(\{3, 4\}) \;=\; \psi_R(\{\tb\}) \;=\; \{1,3,4,6\}.$$

\vspace*{0.05in}

Relation $R$ has another interesting feature.  Even though $\dowy$ is
not itself a boundary complex, it is the {\em simplicial join} (see
page~\pageref{joinAppdef}) of two boundary complexes:

\vspace*{-0.05in}

$$\dowy \;=\; \bndry{(\{\ta, \tb, \tc\})} * \bndry{(\{\td, \te\})}.$$

\vspace*{0.05in}

In fact, we can think of $R$ as $R_1 \union R_2$ and $\dowy$ as
$\dowyone \mskip-1.5mu * \mskip1.5mu \dowytwo$, with $R_1$ the
restriction of $R$ to the attributes $\{\ta, \tb, \tc\}$ and $R_2$ the
restriction of $R$ to the attributes $\{\td, \te\}$.  This join
structure of $\dowy$ means that we can view every individual in $R$ as
being described by two {\em independent} attribute spaces.  The
attribute space $\{\td, \te\}$ acts like a standard bit; every
individual has exactly one of these two attributes.  In contrast, the
attribute space $\{\ta, \tb, \tc\}$ is an ``any 2 of 3'' type of
descriptor.  Every individual has exactly two of these three
attributes.

\vso

Figure~\ref{Ex20decomp} shows the relations $R_1$ and $R_2$ along with
their Dowker attribute complexes.

\begin{figure}[h]
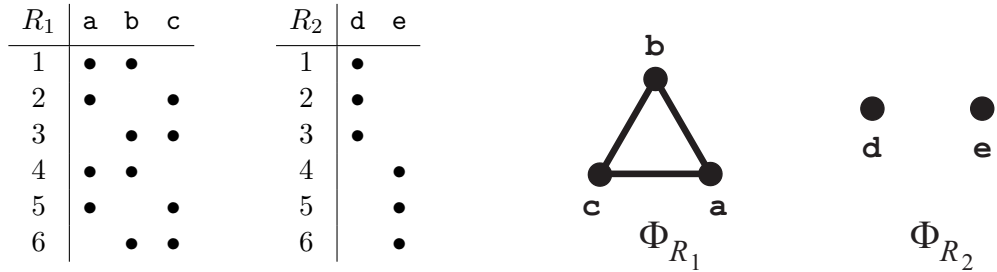

\vspace*{0.05in}
\begin{center}
\hspace*{0.4in}\begin{minipage}{1in}{$\begin{array}{c|ccc}
R_1 & \ta & \tb & \tc \\[2pt]\hline
1 & \one & \one &      \\
2 & \one &      & \one \\
3 &      & \one & \one \\
4 & \one & \one &      \\
5 & \one &      & \one \\
6 &      & \one & \one \\
\end{array}$}
\end{minipage}
\hspace*{0.3in}
\begin{minipage}{1in}{$\begin{array}{c|cc}
R_2 & \td & \te \\[2pt]\hline
1 & \one &      \\
2 & \one &      \\
3 & \one &      \\
4 &      & \one \\
5 &      & \one \\
6 &      & \one \\
\end{array}$}
\end{minipage}
\hspace*{0.5in}
\begin{minipage}{2.5in}
\ifig{Ex20_Phi_factored_updated}{scale=0.5}
\end{minipage}
\end{center}
\vspace*{-0.1in}
\caption[]{Relation $R$ of Figure~\ref{Ex20} decomposes into two
  disjoint relations $R_1$ and $R_2$ such that
  $\dowy = \dowyone \mskip-1.5mu * \mskip1.5mu \dowytwo$,
  with $\dowyone$ the boundary complex of a triangle and $\dowytwo$
  two isolated points.  This means every individual in $R$ has
  attributes that act like two independent coordinates: an ``any 2 of
  3'' component and a bit.}
\label{Ex20decomp}
\end{figure}

\clearpage
\section{Conditional Relations as Simplicial Links}
\markright{Conditional Relations as Simplicial Links}
\label{linkscond}

The decomposition of Figures~\ref{Ex20} and \ref{Ex20decomp} is
reminiscent of stochastic independence expressed as multiplication of
probabilities.  Similarly, there is a combinatorial analogue to the
notion of a {\em conditional probability distribution}.  It appears
as the {\em link\,} of a simplex in a simplicial complex.

Given a relation $R$, suppose we have observed attributes $\gamma$ for
some unknown individual.  The remaining possible combinations of
attributes we might yet observe are described by the simplicial
complex
$\lk(\dowy, \gamma) =
\setdef{\tau\in\dowy}{\tau\inter\gamma=\emptyset\;\hbox{and}\;\tau\union\gamma\in\dowy}$.
Interpretation: $\tau\inter\gamma=\emptyset$ means that $\tau$
consists of as yet unobserved attributes, while
$\tau\union\gamma\in\dowy$ means that there is some individual who has
the attributes $\tau$ in addition to the attributes $\gamma$ that we
have already observed.

\begin{figure}[h]
\begin{center}
\hspace*{0.4in}\begin{minipage}{1in}{$\begin{array}{c|ccc}
Q & \ta & \tb & \tc \\[2pt]\hline
1 & \one & \one &      \\
2 & \one &      & \one \\
3 &      & \one & \one \\
\end{array}$}
\end{minipage}
\hspace*{0.5in}
\begin{minipage}{2.5in}
\ifig{Ex20Qfromd}{scale=0.5}
\end{minipage}
\end{center}
\vspace*{-0.2in}
\caption[]{Relation $Q$ describes the conditional relation resulting
  from $R$ of Figure~\ref{Ex20} upon observing attribute \td.  Note
  that $\dowqy = \Lk(\dowy, \{\td\})$.}
\label{Ex20Qfromd}
\end{figure}

For instance, after observing attribute \td\ in relation $R$ of
Figure~\ref{Ex20}, we may conclude that we are observing one of the
individuals in $\{1, 2, 3\}$ and that the remaining attributes we
might yet observe are any two attributes drawn from $\{\ta, \tb,
\tc\}$.  We can express these conclusions as yet another relation,
namely the relation $Q$ of Figure~\ref{Ex20Qfromd}.  Relation $Q$
describes exactly which individuals could give rise to which
attributes, consistent with the prior observation of attribute \td.
{\bf Thus $\dowy$ plays a role much like a probability distribution,
while $\dowqy$ plays the role of a conditional distribution.}
For another example, suppose we have observed attribute \tb\ in $R$.
Then the resulting conditional relation $Q^\prime$ is as in
Figure~\ref{Ex20Qfromb}.

\begin{figure}[h]
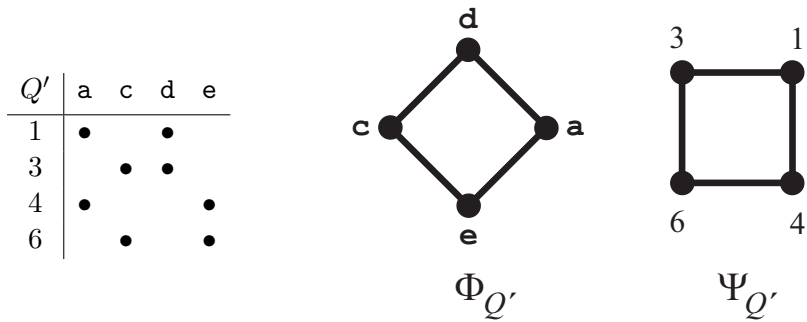

\begin{center}
\hspace*{0.4in}\begin{minipage}{1in}{$\begin{array}{c|cccc}
Q^\prime & \ta  & \tc & \td & \te \\[2pt]\hline
1 & \one &      & \one &      \\
3 &      & \one & \one &      \\
4 & \one &      &      & \one \\
6 &      & \one &      & \one \\
\end{array}$}
\end{minipage}
\hspace*{0.7in}
\begin{minipage}{2.5in}
\ifig{Ex20Qfromb}{scale=0.5}
\end{minipage}
\end{center}
\vspace*{-0.25in}
\caption[]{Relation $Q^\prime$ describes the conditional relation
  resulting from $R$ of Figure~\ref{Ex20} upon observing attribute
  \tb.  Here $\dowqpy = \Lk(\dowy, \{\tb\})$.  Observe that the
  attribute space for $Q^\prime$ now factors into two independent
  bits: $\{\ta, \tc\}$ constitutes one bit, $\{\td, \te\}$ the other.
  This factoring is {\em conditional}\hspt\ on having observed \tb.}
\label{Ex20Qfromb}
\end{figure}

\clearpage

The formal constructions of conditional relations appear below.
\ See also Appendix~\ref{linksanddeletions}.

\paragraph{Notation:}\  A symbol of the form $R|_W$ means ``restrict
$R$ to $W$''.  For instance, if $R$ is a relation on $\XxY$, and
if $A \subseteq X$ and $B \subseteq Y$, then \,$R|_{A \times B} \;=\; R
\,\inter (A \times B)$.

\vspace*{0.1in}

\begin{definition}[Conditional Attribute Relations]\label{linkgamma}
\ Let $R$ be a nonvoid relation on $\XxY$ and suppose $\gamma \subseteq Y$.
\ The following \mydefem{relation $Q$ models $\lk(\dowy, \gamma)$}:

$$Q \;=\; R\,|_{\sigma \times \tY}, \quad \hbox{with} \quad
            \sigma = \psi_R(\gamma) \quad \hbox{and} \quad
	    \tY = \bigunion_{x \in \sigma}\sdiff{Y_x}{\gamma}.\phantom{00000}$$

The Dowker complexes are defined in the standard way, except for this
special case:

\vst

If $\,\tY = \emptyset$ and $\sigma\neq\emptyset$, then we
    let $\dowqy$ and $\dowqx$ be instances of the empty complex $\{\emptyset\}$.

\end{definition}

\paragraph{Observe:} \  $\lk(\dowy,\gamma) = \dowqy$ \ (a proof
appears in Appendix~\ref{linksanddeletions}, on
page~\pageref{linksanddeletions}).

\vspace*{-0.05in}  

\paragraph{Comment:}\ If $\gamma\not\in\dowy$, then $\sigma=\emptyset$
and $Q$ is void, and so $\dowqy$ is an instance of the void complex,
consistent with the standard definition of $\lk(\dowy, \gamma)$ being
void in this situation.
\ (See page~\pageref{complexesApp} in Appendix~\ref{complexesApp} for
the definitions of {\em void simplicial complex\,} and {\em empty
simplicial complex}, and page~\pageref{relationsApp} in
Appendix~\ref{relationsApp} for the definition of {\em void
relation}.)

\vspace*{0.3in}

There is a dual construction for links of individuals $\sigma$ in the
Dowker complex modeling associations:

\vspace*{-0.05in}

\begin{definition}[Conditional Association Relations]\label{linksigma}
\ Let $R$ be a nonvoid relation on $\XxY$ and suppose $\sigma \subseteq X$.
\ The following \mydefem{relation $Q$ models $\lk(\dowx, \sigma)$}:

$$Q \;=\; R\,|_{\tX \times \gamma}, \quad \hbox{with} \quad
            \gamma = \phi_R(\sigma) \quad \hbox{and} \quad
	    \tX = \bigunion_{y \in \gamma}\sdiff{X_y}{\sigma}.\phantom{00000}$$

The Dowker complexes are defined in the standard way, except for this
special case:

\vst

If $\,\tX = \emptyset$ and $\gamma\neq\emptyset$, then we
    let $\dowqx$ and $\dowqy$ be instances of the empty complex $\{\emptyset\}$.

\end{definition}

\paragraph{Observe:} \ $\lk(\dowx,\sigma) = \dowqx$.

\vspace*{0.2in}

As we will see in Section~\ref{sphereprivacy}, the complex
$\lk(\dowx,\{x\})$ is useful for characterizing individual $x$'s
attribute privacy.  If that seems surprising, observe that
$\lk(\dowx,\{x\})$ describes other individuals in $R$ who share
attributes with $x$, with simplices modeling the extent of
commonalities.  These commonalities, or lack thereof, determine
whether in $\dowqy$, and thus back in $\dowy$, there are attributes of
$x$ that are ``free to move'' under the closure operators.

\clearpage
\section{Privacy Characterization via Boundary Complexes}
\markright{Privacy Characterization via Boundary Complexes}
\label{sphereprivacy}

\begin{figure}[h]
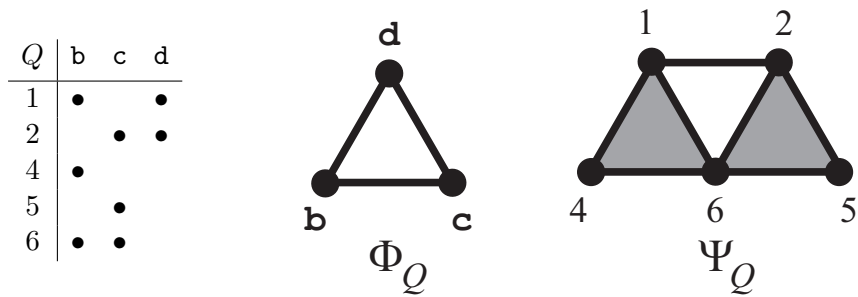

\begin{center}
\hspace*{0.3in}\begin{minipage}{1.25in}{$\begin{array}{c|ccccc}
Q & \tb  & \tc  & \td \\[2pt]\hline
1 & \one &      & \one \\
2 &      & \one & \one \\
4 & \one &      &      \\
5 &      & \one &      \\
6 & \one & \one &      \\
\end{array}$}
\end{minipage}
\hspace*{0.2in}
\begin{minipage}{4in}
\ifig{Ex20lkthree}{scale=0.575}
\end{minipage}
\end{center}
\vspace*{-0.2in}
\caption[]{With $R$ as in Figure~\ref{Ex20}, relation $Q$ describes
  the conditional relation corresponding to $\lk(\dowx, \{3\})$.  Also
  shown are the Dowker complexes of $Q$.  By design, $\dowqx =
  \lk(\dowx, \{3\})$.  Observe that $\dowqy$ is the boundary complex
  $\bndry{(\{\tb,\tc,\td\})}$, with $\{\tb,\tc,\td\}$ being all of
  individual \#3's attributes in relation $R$.  That boundary
  condition characterizes attribute privacy for an identifiable
  individual.  \ Here, it means that individual \#3 has full attribute
  privacy.}
\label{Ex20lkthree}
\end{figure}

We observed earlier that relation $R$ of Figure~\ref{Ex20} preserves
attribute privacy.  We came to that conclusion after observing that
$\dowy$ has no free faces.  In fact, one can focus on the privacy of
any identifiable individual rather than look at the whole relation.
Let us pick one such individual, say \#3, and look at the conditional
relation $Q$ that models the link $\lk(\dowx, \{3\})$, as shown in
Figure~\ref{Ex20lkthree}.
\ (Observe that individual \#3 is indeed uniquely identifiable via $R$.)

\vso

Individual \#3 has attributes $\{\tb,\tc,\td\}$ in $R$.  The attribute
complex $\dowqy$ for $Q$ is the boundary complex on exactly this set.
Interpretation: for any nonempty proper subset of individual \#3's
attributes, some \hspc{\em other\hspt} individual in $R\hspt$ has at
least those attributes but not all of individual \#3's attributes.
Consequently, there is a different such individual for each proper
subset of $\{\tb,\tc,\td\}$ that is missing exactly one of \#3's
attributes.  That diversity of individuals ensures individual \#3's
attribute privacy.

\vst

The previous example suggests the following characterization: {\bf An
identifiable individual has full attribute privacy precisely when the
attribute complex of the individual's link is the boundary complex of
the individual's attributes}.

Observe that this characterization is local to the individual; it does
not depend on other individuals having privacy.  \ We now formalize
this intuition.  \ Proofs appear in Appendix~\ref{privacyspheres}.

\vspace*{0.1in}

First, a definition to make precise the notion of individual privacy:

\vspace*{0.05in}

\newcounter{IndivPrivacyDef}
\setcounter{IndivPrivacyDef}{\value{theorem}}
\begin{definition}[Individual Privacy]\label{individualprivacy}
\ Let $R$ be a relation on $\XxY$ and suppose $x \in X$.

We say that \mydefem{$R$ preserves attribute privacy for $x$} whenever
$(\clsy)(\gamma) = \gamma$ for all $\gamma \subseteq Y_x$.

\vspace*{0.05in}

Informally, we may also say that \mydefem{\,individual $x$ has full
attribute privacy}.
\end{definition}

\vspace*{0.1in}

Recall also Definitions~\ref{defAttribPriv} and \ref{uniqueID}, from
pages~\pageref{defAttribPriv} and \pageref{uniqueID}, respectively,
formalizing the notions of (attribute) privacy preservation and unique
identifiability.  And recall the semantics of $\PR$, for instance from
Definition~\ref{defPR} on page~\pageref{defPR}.

\clearpage

\noindent Here is the characterization of individual attribute privacy formalized:

\vspace*{0.025in}

\newcounter{localprivThm}
\setcounter{localprivThm}{\value{theorem}}
\begin{theorem}[Individual Attribute Privacy]\label{privacysingle}
\ Let $R$ be a relation on $\XxY$, with $\abs{X} > 1$.
Suppose $x\in X$ is uniquely identifiable via $R$.
\ Let $Q$ be the relation modeling $\lk(\dowx, x)$.\\
Then the following three conditions are equivalent:

\vspace*{0.125in}

\hspace*{1in}\begin{minipage}{4in}
\begin{enumerate}
\addtolength{\itemsep}{-1pt}
\item[(a)] $R$ preserves attribute privacy for $x$.
\item[(b)] $\Lk(\dowx, x) \;\homot\;\, \Skt$, with $k = \abs{Y_x}$.
\item[(c)] $\dowqy \;=\; \partial(Y_x)$.
\end{enumerate}
\end{minipage}

\end{theorem}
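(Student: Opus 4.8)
The plan is to prove the cycle of equivalences by splitting it into a combinatorial part, (a)$\iff$(c), and a topological part, (c)$\iff$(b), with Dowker duality as the bridge. First I would record the features of the link relation $Q$ from Definition~\ref{linksigma} with $\sigma=\{x\}$: here $\gamma=\phi_R(\{x\})=Y_x$, the relation is $Q=R|_{\tX\times Y_x}$ with $\tX=\bigcup_{y\in Y_x}(\sdiff{X_y}{\{x\}})$, and $\dowqx=\lk(\dowx,x)$. The crucial consequence of unique identifiability, $\psi_R(Y_x)=\{x\}$, is that no individual other than $x$ possesses all of $Y_x$; hence every simplex of $\dowqy$ is a \emph{proper} subset of $Y_x$, so $\dowqy$ is automatically a subcomplex of $\partial(Y_x)$. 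A nonempty $\tau\subseteq Y_x$ lies in $\dowqy$ exactly when some individual $\neq x$ has all of $\tau$, i.e. when $\psi_R(\tau)\supsetneq\{x\}$. These observations reduce (c) to the single statement: every proper nonempty $\tau\subsetneq Y_x$ satisfies $\psi_R(\tau)\supsetneq\{x\}$.

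For (a)$\iff$(c) I would first note that for $\gamma\subseteq Y_x$ an attribute $y\notin Y_x$ can never be inferred, since $x\in\psi_R(\gamma)$ already witnesses $y\notin(\clsy)(\gamma)$; thus $(\clsy)(\gamma)=\gamma$ holds iff for each $y\in\sdiff{Y_x}{\gamma}$ some individual has all of $\gamma$ but not $y$. To get (c)$\implies$(a), given such $\gamma$ and $y$, I would apply (c) to $\tau=\sdiff{Y_x}{\{y\}}$: this proper subset has a witness $x'\neq x$ with $\tau\subseteq Y_{x'}$, and $x'$ cannot also have $y$, for otherwise $Y_x\subseteq Y_{x'}$ would force $x'\in\psi_R(Y_x)=\{x\}$; hence $x'$ witnesses $\gamma\subseteq Y_{x'}$ with $y\notin Y_{x'}$, giving $(\clsy)(\gamma)=\gamma$ (this covers $\gamma=\emptyset$ as well, yielding $(\clsy)(\emptyset)=\emptyset$). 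For the converse (a)$\implies$(c) I would argue contrapositively: if some proper nonempty $\tau\subsetneq Y_x$ were absent from $\dowqy$, then $\psi_R(\tau)=\{x\}$, so $(\clsy)(\tau)=\phi_R(\{x\})=Y_x\neq\tau$, violating attribute privacy for $x$.

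For (c)$\iff$(b) I would invoke Dowker's Theorem applied to $Q$, giving $\dowqx\homot\dowqy$, together with $\dowqx=\lk(\dowx,x)$ and the standard fact $\partial(Y_x)\homot\Skt$ from page~\pageref{bndrycpxDef}. Thus (c), i.e. $\dowqy=\partial(Y_x)$, immediately yields $\lk(\dowx,x)\homot\Skt$, which is (b). The reverse implication (b)$\implies$(c) is the substantive topological step: I would combine the inclusion $\dowqy\subseteq\partial(Y_x)$ established above with a homological argument. A \emph{proper} subcomplex of $\partial(Y_x)$ must omit some $(k\!-\!2)$-dimensional facet (being downward closed, omitting any simplex forces omitting a facet containing it); since the top reduced homology $\widetilde{H}_{k-2}(\partial(Y_x))\cong\Z$ is generated by the fundamental cycle supported on \emph{all} facets, deleting even one facet kills it, so any proper subcomplex has $\widetilde{H}_{k-2}=0$ and cannot be homotopy equivalent to $\Skt$. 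As (b) together with Dowker duality forces $\dowqy\homot\Skt$, the subcomplex $\dowqy$ cannot be proper, so $\dowqy=\partial(Y_x)$.

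The main obstacle is precisely this last step: upgrading the \emph{homotopy-type} hypothesis (b) to the \emph{combinatorial identity} (c). Homotopy equivalence to a sphere does not by itself pin down a complex, and the argument succeeds only because unique identifiability confines $\dowqy$ inside the specific triangulated sphere $\partial(Y_x)$, where the top homology class is rigid. I would finally dispose of the degenerate case $k=1$: then $\tX=\emptyset$, so the special clause of Definition~\ref{linksigma} makes $\dowqy=\{\emptyset\}=\partial(Y_x)\homot\Smo$, and the hypothesis $\abs{X}>1$ supplies an individual lacking $x$'s single attribute, whence $(\clsy)(\emptyset)=\emptyset$ and all three conditions hold; the same hypothesis $\abs{X}>1$ ensures $\lk(\dowx,x)$ is a genuine link throughout.
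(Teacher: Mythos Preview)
Your proof is correct and follows essentially the same route as the paper's: both split into (a)$\iff$(c) via the characterization $\psi_Q(\chi)=\psi_R(\chi)\setminus\{x\}$ (the paper packages this as Lemma~\ref{IdentPriv}, you reprove it inline), and (b)$\iff$(c) via Dowker duality plus the observation that a proper subcomplex of $\partial(Y_x)$ cannot have the homotopy type of $\Skt$. Your homological justification for that last step is more explicit than the paper's one-line ``no proper subset of a sphere can be homotopic to that same sphere,'' and the paper disposes of the degenerate case $\tX=\emptyset$ (including $k\geq 2$, where all three conditions fail) up front rather than only isolating $k=1$ at the end---but your main argument covers that case vacuously, so there is no gap.
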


\vspace*{0.2in}

The previous theorem generalizes to sets of individuals for sets that
are ``stable'' under the closure operators, i.e., that appear as the
``set of individuals component'' in an element of $\PR$:

\newcounter{multprivThm}
\setcounter{multprivThm}{\value{theorem}}
\begin{theorem}[Group Attribute Privacy]\label{privacymultiple}
\ Let $R$ be a relation on $\XxY$.\\
Suppose $(\sigma, \gamma) \in \PR$, with $\sigma \neq X$.
\ Let $Q$ be the relation modeling $\lk(\dowx, \sigma)$.\\
Then the following three conditions are equivalent:

\vspace*{0.125in}

\hspace*{0.4in}\begin{minipage}{4in}
\begin{enumerate}
\addtolength{\itemsep}{-1pt}
\item[(a)] $(\clsy)(\gamma^\prime) \;=\; \gamma^\prime$,
   for every subset $\gamma^\prime$ of $\gamma$.
\item[(b)] $\Lk(\dowx, \sigma) \;\homot\; \Skt$, with $k = \abs{\gamma}$.
\item[(c)] $\dowqy \;=\; \partial(\gamma)$.
\end{enumerate}
\end{minipage}

\end{theorem}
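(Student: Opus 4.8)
The plan is to prove (a) $\Leftrightarrow$ (c) by a direct combinatorial analysis of the closure operator $\clsy$ on subsets of $\gamma$, and (b) $\Leftrightarrow$ (c) by feeding the conditional relation $Q$ into Dowker's Theorem and then invoking a homological rigidity property of the boundary of a simplex. The whole argument runs parallel to the single-individual Theorem~\ref{privacysingle}, with the closed group $\sigma$ and its shared attributes $\gamma = \phi_R(\sigma)$ playing the roles of $\{x\}$ and $Y_x$; indeed Theorem~\ref{privacysingle} is the special case $\sigma=\{x\}$.

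First I would unwind $Q$. Because $(\sigma,\gamma)\in\PR$ we have $\sigma=\psi_R(\gamma)$ and $\gamma=\phi_R(\sigma)$, so Definition~\ref{linksigma} gives $Q=R|_{\tX\times\gamma}$ with $\tX=\bigunion_{y\in\gamma}(\sdiff{X_y}{\sigma})$, and for each $y\in\gamma$ the corresponding column of $Q$ is $X_y\inter\tX=\sdiff{X_y}{\sigma}$ (using $\sigma\subseteq X_y$). Hence every simplex of $\dowqy$ is a subset of $\gamma$, and for nonempty $\gamma'\subseteq\gamma$,
$$\gamma'\in\dowqy \;\iff\; \biginter_{y\in\gamma'}(\sdiff{X_y}{\sigma})\neq\emptyset \;\iff\; \psi_R(\gamma')\supsetneq\sigma,$$
the last equivalence because $\psi_R(\gamma')\supseteq\psi_R(\gamma)=\sigma$ always holds. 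In particular $\gamma\notin\dowqy$ (since $\psi_R(\gamma)=\sigma$), so $\dowqy$ is automatically a subcomplex of $\bndry{(\gamma)}$, and establishing (c) amounts precisely to showing every proper subset of $\gamma$ lies in $\dowqy$.

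For (a)$\Rightarrow$(c): if some proper $\gamma'\subsetneq\gamma$ had $\psi_R(\gamma')=\sigma$, then $(\clsy)(\gamma')=\phi_R(\sigma)=\gamma\neq\gamma'$, contradicting (a); so $\psi_R(\gamma')\supsetneq\sigma$ and $\gamma'\in\dowqy$. For (c)$\Rightarrow$(a), the key observation is that (c) supplies, for each $y^*\in\gamma$, the codimension-one face $\sdiff{\gamma}{\{y^*\}}\in\dowqy$, i.e.\ an individual $x\notin\sigma$ with $\sdiff{\gamma}{\{y^*\}}\subseteq Y_x$; such an $x$ cannot have $y^*$, else $x\in\psi_R(\gamma)=\sigma$. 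Then for any proper $\gamma'\subsetneq\gamma$ and any $y^*\in\sdiff{\gamma}{\gamma'}$, this witness satisfies $\gamma'\subseteq Y_x$ and $y^*\notin Y_x$, so $y^*\notin(\clsy)(\gamma')$. Since $(\clsy)(\gamma')\subseteq\phi_R(\sigma)=\gamma$ always and contains $\gamma'$, this forces $(\clsy)(\gamma')=\gamma'$; the case $\gamma'=\gamma$ is immediate from $(\sigma,\gamma)\in\PR$. This yields (a). (Note the asymmetry: (a) demands a separate-attribute witness for \emph{every} missing $y^*$, while (c) only names one per proper subset — the point is that (c)'s facet witnesses already cover all of (a)'s demands.)

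Finally, for (b)$\Leftrightarrow$(c): the observation following Definition~\ref{linksigma} gives $\lk(\dowx,\sigma)=\dowqx$, and applying Dowker's Theorem~\ref{dowker} to $Q$ yields $\dowqx\homot\dowqy$, so $\lk(\dowx,\sigma)\homot\dowqy$. Direction (c)$\Rightarrow$(b) is then immediate, as $\dowqy=\bndry{(\gamma)}\homot\Skt$ with $k=\abs{\gamma}$. I expect (b)$\Rightarrow$(c) to be the main obstacle, since a homotopy type does not by itself pin down a complex combinatorially; here I would exploit the rigidity of the simplex boundary. We already know $\dowqy$ is a subcomplex of $\bndry{(\gamma)}$, a triangulation of $\Skt$ whose top reduced homology $\widetilde H_{k-2}(\bndry{(\gamma)};\Z)\cong\Z$ is generated by the fundamental cycle, which is supported on all $k$ facets $\sdiff{\gamma}{\{y_i\}}$. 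Since (b) forces $\widetilde H_{k-2}(\dowqy;\Z)\neq 0$, the complex $\dowqy$ carries a nonzero $(k-2)$-cycle; viewed inside $\bndry{(\gamma)}$ (whose boundary maps restrict, and which has no $(k-1)$-simplices), this cycle is a multiple of the fundamental one and hence involves every facet with nonzero coefficient. Thus $\dowqy$ must contain all $k$ facets, and being downward closed it contains all their faces, giving $\dowqy=\bndry{(\gamma)}$, which is (c). The genuinely topological step is this fundamental-cycle argument, which I would isolate as a small lemma; the only remaining care is the degenerate case $\abs{\gamma}=1$, where $\Skt=\Smo$ and $\bndry{(\gamma)}$ is the empty complex $\{\emptyset\}$, handled by the convention recorded in Definition~\ref{linksigma}.
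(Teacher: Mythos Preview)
Your proposal is correct and follows essentially the same route as the paper: both split into (a)$\Leftrightarrow$(c) via the formula $\psi_Q(\gamma')=\psi_R(\gamma')\setminus\sigma$ and (b)$\Leftrightarrow$(c) via Dowker duality plus the rigidity of $\bndry{(\gamma)}$ under the $\Skt$ homotopy constraint, with the $\abs{\gamma}=1$ / $\tX=\emptyset$ case handled separately. The only noteworthy difference is in (c)$\Rightarrow$(a): the paper invokes Lemma~\ref{propersubsets} to reduce to codimension-one subsets and then argues by contradiction, whereas you handle all proper $\gamma'$ directly by pulling a facet witness for each missing $y^*\in\gamma\setminus\gamma'$; your version is slightly more self-contained, while the paper's version isolates the reduction step as a reusable lemma.
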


\vspace*{0.25in}

\noindent The following lemma relates interpretation and inference in
a link to the encompassing relation:

\newcounter{InterpLocalOperators}
\setcounter{InterpLocalOperators}{\value{theorem}}
\begin{lemma}[Interpreting Local Operators]\label{interplocal}
\ Let $R$ be a relation on $\XxY$.

\vst

Suppose $(\sigma, \gamma) \in \PR$, with $\sigma \neq X$.

Let $Q$ be the relation on $\tX \times \gamma$ that models
$\lk(\dowx, \sigma)$ and suppose $\tX \neq \emptyset$.$\phantom{\Big|}$

\vspace*{0.05in}

Then, for every $\gamma^\prime \subseteq \gamma$:

\vspace*{-0.15in}

\hspace*{1.7in}\begin{minipage}{4in}
\begin{enumerate}

\item[(i)] If $\,\gamma^\prime \not\in\dowqy$,
              then $\psi_R(\gamma^\prime) = \sigma$.

\item[(ii)] If $\,\gamma^\prime \in\dowqy$,
              then $\psi_R(\gamma^\prime) \supsetneq \sigma$.

\vspace*{0.05in}

   Moreover, in this case:

For $\,\gamma^\prime=\emptyset$, \ $(\clsqy)(\emptyset) \supseteq
(\clsy)(\emptyset)$.

\vspace*{0.05in}

If $\,\gamma^\prime\neq\emptyset$, \,then
$(\clsqy)(\gamma^\prime) = (\clsy)(\gamma^\prime)$.

\end{enumerate}
\end{minipage}

\end{lemma}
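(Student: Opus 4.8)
The plan is to push every object attached to $Q = R|_{\tX\times\gamma}$ back down to the level of $R$ and then read all four claims off a single disjoint decomposition of $\psi_R(\gamma^\prime)$. First I would record the translation dictionary. For $x\in\tX$ the attributes of $x$ seen by $Q$ are $Y_x\inter\gamma$, and for $y\in\gamma$ the individuals of $Q$ carrying $y$ are $X_y\inter\tX$; hence for every $\gamma^\prime\subseteq\gamma$,
\[
\psi_Q(\gamma^\prime)=\psi_R(\gamma^\prime)\inter\tX,
\qquad
\gamma^\prime\in\dowqy \iff \psi_R(\gamma^\prime)\inter\tX\neq\emptyset .
\]
Since $\psi_R$ reverses inclusions and $\gamma^\prime\subseteq\gamma$, one always has $\psi_R(\gamma^\prime)\supseteq\psi_R(\gamma)=\sigma$, and by construction $\tX$ is disjoint from $\sigma$. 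The engine for the whole lemma is the set identity for \emph{nonempty} $\gamma^\prime$: any $x\in\sdiff{\psi_R(\gamma^\prime)}{\sigma}$ carries some attribute of $\gamma^\prime\subseteq\gamma$ yet lies outside $\sigma$, hence lies in $\tX$, so
\[
\psi_R(\gamma^\prime)=\sigma\,\union\,\bigl(\psi_R(\gamma^\prime)\inter\tX\bigr),
\]
a disjoint union.

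With the decomposition in hand, claims (i) and (ii) are immediate. Claim (ii) follows directly from the dictionary: if $\gamma^\prime\in\dowqy$ then some witness $x\in\tX\subseteq\sdiff{X}{\sigma}$ lies in $\psi_R(\gamma^\prime)$, so $\psi_R(\gamma^\prime)\supsetneq\sigma$ (for $\gamma^\prime=\emptyset$ this reads $X\supsetneq\sigma$, using $\sigma\neq X$). For claim (i) I would restrict to nonempty $\gamma^\prime$ — the case $\gamma^\prime=\emptyset$ is vacuous, since $\tX\neq\emptyset$ forces $Q$ nonvoid and hence $\emptyset\in\dowqy$ — and then $\gamma^\prime\notin\dowqy$ means $\psi_R(\gamma^\prime)\inter\tX=\emptyset$, which with the disjoint decomposition collapses $\psi_R(\gamma^\prime)$ to $\sigma$.

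The closure statements are where the real computation lives, and I expect them to be the main obstacle: one must keep the empty and nonempty cases cleanly separated and remember to invoke the defining identity $\phi_R(\sigma)=\gamma$ to factor out $\gamma$. For nonempty $\gamma^\prime\in\dowqy$, set $A^\prime=\psi_R(\gamma^\prime)\inter\tX$ (nonempty here); using the disjoint decomposition together with $\phi_R(\sigma)=\gamma$,
\[
(\clsy)(\gamma^\prime)=\biginter_{x\in\psi_R(\gamma^\prime)}Y_x
   =\gamma\inter\biginter_{x\in A^\prime}Y_x,
\qquad
(\clsqy)(\gamma^\prime)=\biginter_{x\in A^\prime}(Y_x\inter\gamma)
   =\gamma\inter\biginter_{x\in A^\prime}Y_x,
\]
so the two closures coincide. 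For $\gamma^\prime=\emptyset$ the decomposition of $\psi_R$ fails, since $\psi_R(\emptyset)=X$ may contain individuals carrying no attribute of $\gamma$, so one only obtains containment: any attribute shared by all of $X$ lies in $\phi_R(\sigma)=\gamma$ (as $\sigma\subseteq X$) and in every $Y_x$, hence survives into $\biginter_{x\in\tX}(Y_x\inter\gamma)=(\clsqy)(\emptyset)$, yielding $(\clsqy)(\emptyset)\supseteq(\clsy)(\emptyset)$. Once the disjoint decomposition is established, the only genuine care needed is this bookkeeping of the empty simplex and the order-reversal of $\psi_R$; everything else is forced.
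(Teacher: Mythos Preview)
Your proof is correct and follows essentially the same approach as the paper's: both hinge on the dual link-witness identity $\psi_Q(\gamma^\prime)=\psi_R(\gamma^\prime)\setminus\sigma$ (your $\psi_R(\gamma^\prime)\inter\tX$, which agrees for nonempty $\gamma^\prime$) and the resulting disjoint decomposition $\psi_R(\gamma^\prime)=\sigma\cup\psi_Q(\gamma^\prime)$. The only cosmetic difference is that the paper dispatches the closure equality via the companion identity $\phi_Q(\kappa)=\phi_R(\kappa\cup\sigma)$ in a single line, whereas you expand both sides to the common expression $\gamma\inter\bigcap_{x\in A^\prime}Y_x$; the content is identical.
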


\vspace*{0.15in}

The lemma says that observations of attributes consistent in $Q$ have
as interpretation more individuals in $R$ than just the individuals
$\sigma$.  However, if ever those observations become inconsistent in
$Q$, then one has identified $\sigma$ in $R$.  Here ``inconsistent in
$Q$'' means that the observed attributes are legitimate attributes for
$Q$ but do not constitute a simplex of $\dowqy$.  (Note: Such observed
attributes necessarily constitute a simplex of $\dowy$ since they are
a subset of $\gamma \in \dowy$).

Moreover, attribute inferences are identical in $Q$ and $R$ for
nonempty simplices of $\dowqy$.

\clearpage
\section{The Meaning of Holes in Relations}
\markright{The Meaning of Holes in Relations}
\label{holemeaning}

We have seen how spheres characterize privacy.  More generally, when
working with topological spaces, holes are significant.  One wonders
what topological holes mean for relations.

\begin{itemize}

\item Some holes arise as a consequence of exclusion between
  attributes, as we saw in the decomposition of Figures~\ref{Ex20} and
  \ref{Ex20decomp}.

  Sticking with binary exclusions, suppose a group of individuals are
  described by $k$ bits.  One can model those individuals via a
  relation containing $2k$ binary attributes (two such attributes per
  bit, one for each possible bit value).  Every individual has exactly
  $k$ of those $2k$ attributes.  If all possible $2^k$ combinations of
  bit values are represented by individuals in the relation, then the
  two Dowker complexes are both homotopic to $\Sko$, the sphere of
  dimension $k-1$.  In fact, $\dowy$ is the simplicial join of $k$
  copies of $\Szero$, while $\dowx$ is visualizable as a hollow
  hypercube in $k$ dimensions, in which solid ($k-1$)-dimensional
  subcubes represent $(2^{k-1}-1)$-dimensional simplices (flattened,
  when $k \geq 3$).  Figures~\ref{onebit1}, \ref{twobit2}, and
  \ref{threebit3} depict the cases $k=1$, 2, and 3, respectively.

  In short, $k$ bits means a hole of dimension $k$$-$$1$, {\em if\,}
  all possible individuals are actually present in the relation.

\vspace*{0.05in}

  (The lack of an expected hole may mean that the capacity of a
  relation has not been exhausted, hinting at possible inference.  See
  Appendix~\ref{insufficientrep}.)

\begin{figure}[h]
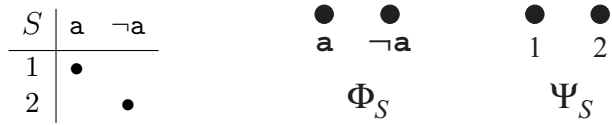

\begin{center}
\vspace*{0.25in}
\qquad
\begin{minipage}{1in}{$\begin{array}{c|cccc}
\hbox{\large $S$} & \ta  & \neg \ta  \\[2pt]\hline
              1   & \one &           \\
              2   &      & \one      \\
\end{array}$}
\end{minipage}
\hspace*{0.5in}
\begin{minipage}{2in}
\ifig{onebit_adjusted}{scale=0.4}
\end{minipage}
\end{center}
\vspace*{-0.2in}
\caption[]{Relation $S$ describes two individuals in terms of a single
  attribute and its negation.  The topology of the Dowker complexes is
  $\Szero$.}
\label{onebit1}
\end{figure}

\begin{figure}[h]
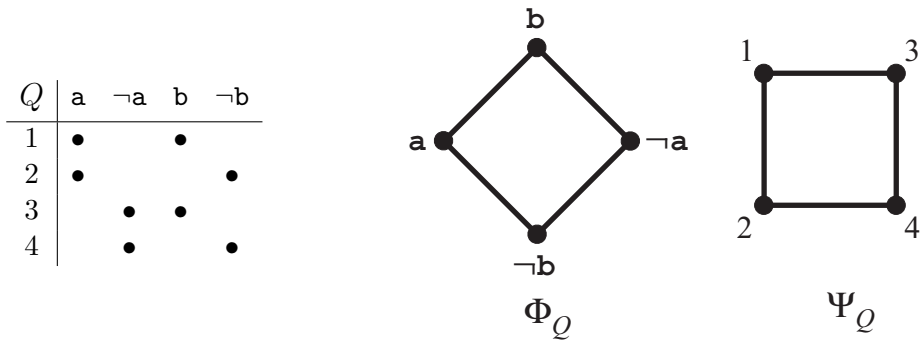

\begin{center}
\qquad\qquad
\begin{minipage}{1in}{$\begin{array}{c|cccc}
\hbox{\large $Q$} & \ta  & \neg\ta & \tb  & \neg\tb  \\[2pt]\hline
              1   & \one &         & \one &           \\
              2   & \one &         &      & \one      \\
              3   &      & \one    & \one &           \\
              4   &      & \one    &      & \one      \\
\end{array}$}
\end{minipage}
\hspace*{1in}
\begin{minipage}{3in}
\ifig{twobit_nogenerators}{scale=0.4}
\end{minipage}
\end{center}
\vspace*{-0.2in}
\caption[]{Relation $Q$ describes four individuals in terms of two
  attributes and their negations.  The topology of the Dowker
  complexes is $\Sone$.}
\label{twobit2}
\end{figure}

\begin{figure}[h]
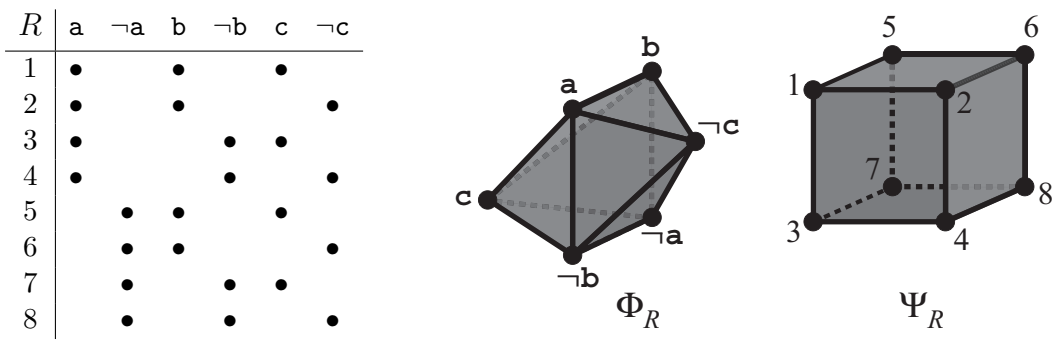

\begin{center}
\quad
\begin{minipage}{1.5in}{$\begin{array}{c|cccccc}
\hbox{\large $R$} & \ta  & \neg\ta & \tb  & \neg\tb & \tc  & \neg\tc \\[2pt]\hline
              1   & \one &         & \one &         & \one &         \\
              2   & \one &         & \one &         &      & \one    \\
              3   & \one &         &      & \one    & \one &         \\
              4   & \one &         &      & \one    &      & \one    \\
              5   &      & \one    & \one &         & \one &         \\
              6   &      & \one    & \one &         &      & \one    \\
              7   &      & \one    &      & \one    & \one &         \\
              8   &      & \one    &      & \one    &      & \one    \\
\end{array}$}
\end{minipage}
\hspace*{0.75in}
\begin{minipage}{3in}
\ifig{threebit}{scale=0.4}
\end{minipage}
\end{center}
\vspace*{-0.2in}
\caption[]{Relation $R$ describes eight individuals in terms of three
  attributes and their negations.  The topology of the Dowker
  complexes is $\Stwo$.  The cube faces are actually tetrahedra,
  flattened to parallelograms in the drawing.}
\label{threebit3}
\end{figure}

\vspace*{0.1in}

\item Suppose $\Sigma$ is a simplicial complex with underlying vertex
  set $X$.  A {\em minimal nonface} of $\Sigma\hspc$ is a subset of $X$
  that is not itself a simplex but all of whose proper subsets are
  simplices in $\Sigma$.  A minimal nonface may or may not be a
  topological hole.  Regardless, a minimal nonface of size two or
  greater in a Dowker complex suggests restricting the relation to
  equal-numbered attributes and individuals for whom there is both
  attribute and association privacy, within the restricted relation.
  This observation dovetails with the following results (here we
  assume that each relation has no blank rows or columns):

\vspace*{-0.075in}

\begin{itemize}
\item A relation with more attributes than individuals cannot fully
  preserve attribute privacy.

\item A relation with more individuals than attributes cannot fully
 preserve association privacy.

\item A relation that preserves both attribute and association privacy
  must have the same number of attributes and individuals.  Moreover,
  if the relation is connected, then both Dowker complexes are either
  linear cycles of the same length or boundary complexes of full
  simplices of the same dimension, as we indicated previously.
\end{itemize}

\vspace*{-0.05in}

  See Appendices~\ref{linksandinference} and \ref{privacyspheres}\,
  for further details and proofs.

\vspace*{0.1in}

\item Minimal nonfaces can have other context-dependent meanings.  For
  instance, in a certain authorship relation, knowing that {\em each
  pair\,} of three individuals has written a paper together appears to
  be a good predictor that {\em all three\,} individuals will
  co-author a paper together \cite{tpr:missedcollab}.  This
  observation suggests the following: if one sees that such an
  authorship hole does {\em not}\, fill over time, then one likely can
  infer some kind of obstruction, perhaps an incompatibility in the
  group as a whole, or the death of an author, for instance.

\vspace*{0.1in}

\item When designing relations or anonymizing relations, these results
  suggest transformations that create ``bubbly spaces'' of some sort,
  in order to retain identifiability but also reduce unwanted
  inference.  Section~\ref{coordchanges} and
  Appendix~\ref{disinformation} discuss examples.

\vspace*{0.1in}

\item Whatever topological holes there are in $\dowy$ and $\dowx$ must
  also show up in the poset $\PR$, since that poset is formed by
  homotopy equivalences from $\dowy$ and $\dowx$.  Interestingly,
  whereas one thinks of $\dowy$ and $\dowx$ simply as spaces, one sees
  a partial order on $\PR$.  Something can move, ``up'' or ``down''.
  The elements of $\PR$ are inference-stable, by design.  So, what is
  this possible motion? \ It is a dynamic process that describes how
  information acquisition changes interpretation.  For instance, as an
  individual reveals information about him- or herself, an observer
  can attempt to identify the individual, by finding interpretations
  in $\PR$ of the information revealed.  As the individual reveals
  additional information, the observer's interpretation moves downward
  in $\PR$, narrowing the set of individuals.

  Topological holes in the spaces $\dowy$ and $\dowx$ (and thus $\PR$)
  constrain how that interpretation moves downward in $\PR$.  The
  greater a hole's dimension, the further a downward path has to move
  before identifying an individual.  One can think of holes in a
  relation much like boulders in a stream.  Eventually, the current of
  information sweeps past the hole, but it is forced to divert its
  motion, covering more distance.  Moreover, there may be many paths
  around the hole, much like a leaf in a stream may divert around a
  boulder in different directions.  The individual can force a
  particular path by choosing to reveal attributes in a particular
  order.

  Much of the rest of the report explores the implications of this
  stream analogy.  The analogy merges with the realization that
  privacy is a dynamic process, certain to flow toward identification
  when attributes are static or persistent, yet subject to channeling
  (perhaps even turbulence in more fluid settings than those discussed
  in this report).  See, in particular,
  Section~\ref{leveraginglattices} onward.

\end{itemize}

\clearpage
\section{Change-of-Attribute Transformations}
\markright{Change-of-Attribute Transformations}
\label{coordchanges}

Free faces and holes in the Dowker complex $\dowy$ can sometimes
suggest changes in attributes that preserve desired information but
reduce inference.  Consider the hypothetical ``ice-cream cone''
relation $C$ of Figure~\ref{icecream} and the corresponding complexes
shown in Figure \ref{icecreamcomplexes}.  The relation describes four
individuals in terms of the two-flavor two-scoop ice-cream cones each
individual enjoys at a particular ice-cream parlor.

\begin{figure}[h]
\begin{center}
\qquad\qquad\quad
\begin{minipage}{2.5in}{$\begin{array}{l|cccccc}
\hbox{\large $\;\;C$} & \gc  & \gs  & \cs  & \cv  & \sv  & \gv  \\[2pt]\hline
      \hbox{Bob}   & \one & \one & \one &      &      &      \\
      \hbox{Alice} &      &      & \one & \one & \one &      \\
      \hbox{David} &      & \one &      &      & \one & \one \\
      \hbox{Cindy} & \one &      &      & \one &      & \one \\
\end{array}$}
\end{minipage}
\qquad\qquad\quad
\begin{minipage}{1.5in}{$\begin{array}{lcl}
& & \\
\ig & = & \hbox{ginger}\\
\ic & = & \hbox{chocolate}\\
\is & = & \hbox{strawberry}\\
\iv & = & \hbox{vanilla}\\
\end{array}$}
\end{minipage}
\end{center}
\vspace*{-0.15in}
\caption[]{Four individuals and their preferences for ice-cream cones
  containing two scoops, with different flavors (each letter
  represents a flavor, as indicated).  See Figure
  \ref{icecreamcomplexes} for the Dowker complexes.}
\label{icecream}
\end{figure}

\begin{figure}[h]
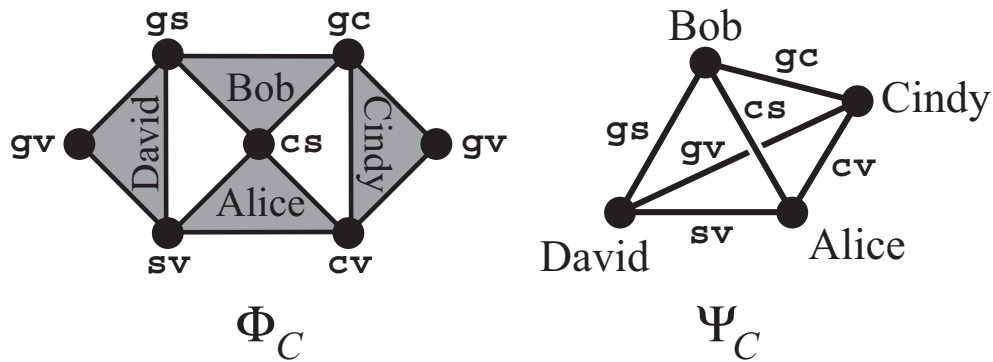

\begin{center}
\ifig{icecream_complexes_updated2}{scale=0.65}
\end{center}
\vspace*{-0.35in}
\caption[]{The Dowker complexes for the relation of Figure
  \ref{icecream}.  $\dowcy$ is a complex whose vertices are ice-cream
  cones (two flavors).  (For visualization purposes, the complex is
  flattened, with the leftmost and rightmost vertices really
  representing the same ice-cream cone.)  Each maximal simplex is a
  triangle, labeled with the individual who enjoys the three types of
  cones comprising the triangle.  $\dowcx$ is a complex whose vertices
  are individuals.  Each maximal simplex is an edge, representing a
  two-flavor two-scoop ice-cream cone that each of two individuals
  enjoys; the edge is labeled with the cone flavors.  The homotopy
  type of each complex is $\Sone \join \Sone \join \Sone$.}
\label{icecreamcomplexes}
\end{figure}

Relation $C$ is a typical ``2-implies-3'' relation: Any two different
ice-cream cones uniquely identify an individual, thereby implying a
third ice-cream cone, as can be seen from either Dowker complex: In
$\dowcy$, every edge is a free face of its encompassing triangle.
Moreover, the edge is not itself generated by any
individual.\footnote{We say that an individual $x$ of a relation
$R\mskip1mu$ {\em generates\,} the simplex $Y_x \in \dowy$.
Similarly, an attribute $y$ generates the simplex $X_y \in \dowx$.
\ Individuals generate triangles in $\dowcy$.
\ Ice-cream cones generate edges in $\dowcx$.}
The closure operator $\phi_C \circ \psi_C$ must therefore map every
edge to a triangle.  Dually, in $\dowcx$, any two edges intersecting
at a vertex imply the third edge incident on that vertex.

This type of relation models, in the small, inferences such as those
reported in \cite{tpr:sweeneykanon, tpr:netflix}.  For instance,
\cite{tpr:sweeneykanon} reported that zip code, gender, and birth date
were likely sufficient in 1990 to identify $87\%$ of individuals in
the U.S.  That is nearly a ``3-implies-all'' type of relation.
Similarly, \cite{tpr:netflix} reported that 8 movie ratings and dates
were enough to uniquely identify $99\%$ of viewers in the Netflix
Prize dataset.  That is essentially an ``8-implies-all'' type of
relation.

Let us focus for a moment on Bob's neighborhood.  That relation, let
us call it $B$, and its complexes are depicted in Figure~\ref{Bob}.
(The relation models $\st(\dowcx, \{\hbox{Bob}\})$; see
Appendix~\ref{subcomplexes}.)

\begin{figure}[h]
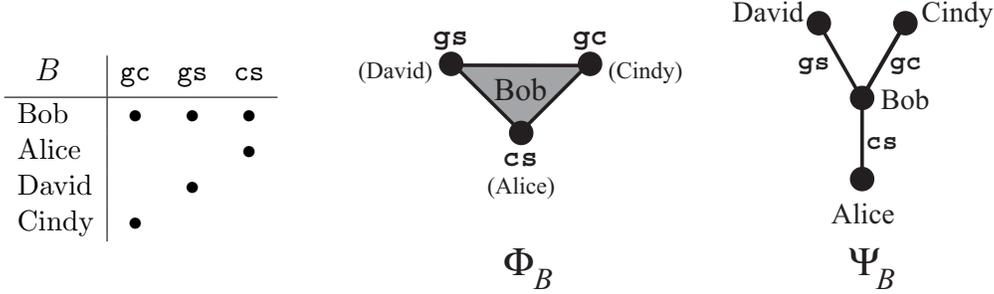

\begin{center}
\begin{minipage}{1.5in}{$\begin{array}{l|ccc}
\hbox{\large $\;\;B$} & \gc  & \gs  & \cs   \\[2pt]\hline
      \hbox{Bob}   & \one & \one & \one \\
      \hbox{Alice} &      &      & \one \\
      \hbox{David} &      & \one &      \\
      \hbox{Cindy} & \one &      &      \\
\end{array}$}
\end{minipage}
\qquad
\begin{minipage}{3.5in}
\ifig{Bob_complexes_updated}{scale=0.5}
\end{minipage}
\end{center}
\vspace*{-0.25in}
\caption[]{Relation $B$ models Bob's neighborhood in the ice-cream
  relation of Figure~\ref{icecream}.
  Each maximal simplex is labeled with its generator.  Generators of
  nonmaximal simplices are indicated in parentheses.}
\label{Bob}
\end{figure}

As in $C$, seeing someone eat one ice-cream cone is not enough to
identify anyone in $B$ uniquely.  Seeing someone (in this case, Bob)
eat two {\em different} types of ice-cream cones is sufficient to
infer the third type of ice-cream cone that individual prefers.  How
might we prevent this?  We observe that the vertices of $\dowby$ are
themselves generated by individuals while the edges are not.
Homotopically, therefore, we want to expand the vertices of $\dowby$
into edges, and contract the edges of $\dowby$ into vertices.  One
possible way to accomplish this is the take logical {\sc or}s of the
existing attributes.
With $\oplus$ meaning Boolean {\sc or}, we define:

\vspace*{-0.1in}

$$\alpha \;=\; \gc \;\oplus\; \gs, \qquad
  \beta  \;=\; \gc \;\oplus\; \cs, \qquad
  \gamma \;=\; \gs \;\oplus\; \cs. $$

Then relation $B$ becomes $B^\prime$ as in Figure~\ref{Bobp}.  The
result is that the free faces of $\dowbpy$ now are generated by other
individuals, so even though they are free, the closure operator does
not move them.  In fact, the closure operator $\phi_{B^\prime} \circ
\psi_{B^\prime}$ is the identity on ${\F(\Phi_{B^\prime}) \union
\{\emptyset\}}$, meaning that no attribute inference is possible in
$B^\prime$.

\begin{figure}[h]
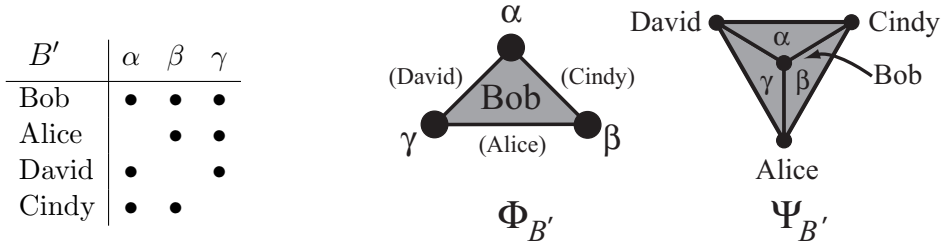

\begin{center}
\qquad
\begin{minipage}{1.5in}{$\begin{array}{l|ccc}
\hbox{\large $\;B^\prime$} & \alpha  & \beta  & \gamma   \\[2pt]\hline
      \hbox{Bob}   & \one & \one & \one \\
      \hbox{Alice} &      & \one & \one \\
      \hbox{David} & \one &      & \one \\
      \hbox{Cindy} & \one & \one &      \\
\end{array}$}
\end{minipage}
\qquad\quad
\begin{minipage}{3.5in}
\ifig{Bobp_complexes_updated}{scale=0.5}
\end{minipage}
\end{center}
\vspace*{-0.35in}
\caption[]{Relation $B^\prime$ represents relation $B$ of Figure
  \ref{Bob}, now with a coordinate transformation for the attributes.
  Simplices are again labeled by generators.}
\label{Bobp}
\end{figure}

\clearpage

Now imagine performing similar operations for all four individuals of
relation $C$ from Figure~\ref{icecream}.  One winds up constructing
four logical {\sc or}s:

\vspace*{-0.2in}

$$\gc \;\oplus\; \gs \;\oplus\; \gv, \qquad
  \gc \;\oplus\; \cs \;\oplus\; \cv, \qquad
  \gs \;\oplus\; \cs \;\oplus\; \sv, \qquad
  \cv \;\oplus\; \sv \;\oplus\; \gv.$$

\vspace*{0.1in}

\noindent Two observations:

\begin{enumerate}
\item Each {\sc or} describes three ice-cream cones that form a hole
  in the complex $\dowcy$ of Fig.~\ref{icecreamcomplexes}.

\item Each such hole may be interpreted as a single flavor, namely the
flavor in common to the three ice-cream cones appearing in the {\sc
or}.  For instance, ``ginger'' (abbreviated as $\ig$) is the common
flavor for the {\sc or}\, $\gc \oplus \gs \oplus \gv$.

\end{enumerate}

In order to describe the resulting relation, it is perhaps easiest to
express those four new coordinates themselves via a relation $S$ that
describes the scoops present in an ice-cream cone:

$$\begin{array}{c|cccc}
\hbox{\large $S$} & \ig  & \ic  & \is  & \iv   \\[2pt]\hline
             \gc  & \one & \one &      &       \\
             \gs  & \one &      & \one &       \\
             \cs  &      & \one & \one &       \\
             \cv  &      & \one &      & \one  \\
             \sv  &      &      & \one & \one  \\
             \gv  & \one &      &      & \one  \\
\end{array}$$

\vspace*{0.1in}

Finally, to perform the coordinate-transformation, one simply
multiplies Boolean matrices, with addition being Boolean {\sc or} and
multiplication being Boolean {\sc and}:  $ F = CS $.  The relation $F$
and its complexes appear in Figure~\ref{flavors}.  

\begin{figure}[h]
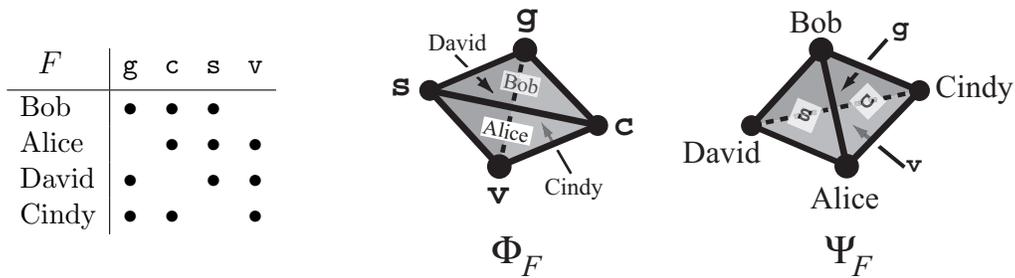

\begin{center}
\qquad
\begin{minipage}{1.5in}{$\begin{array}{l|cccc}
\hbox{\large $\;\;F$} & \ig  & \ic  & \is  & \iv  \\[2pt]\hline
     \hbox{Bob}   & \one & \one & \one &      \\
     \hbox{Alice} &      & \one & \one & \one \\
     \hbox{David} & \one &      & \one & \one \\
     \hbox{Cindy} & \one & \one &      & \one \\
\end{array}$}
\end{minipage}
\qquad\quad
\begin{minipage}{3.5in}
\ifig{flavor_complexes_updated}{scale=0.5}
\end{minipage}
\end{center}
\vspace*{-0.25in}
\caption[]{Relation $F$ describes the ice-cream flavors each
  individual prefers.  $\dowfy$ is the boundary complex of a
  tetrahedron, with flavors as vertices.  $\dowfx$ is the Dowker dual
  of $\dowfy$, with respect to relation $F$.  Consequently, $\dowfx$
  also is the boundary complex of a tetrahedron, now with the roles of
  flavors and individuals interchanged.  \,For both $\dowfy$ and
  $\dowfx$, each maximal simplex is a triangle, labeled with its
  generator.}
\label{flavors}
\end{figure}

Relation $F$ represents a description of the four individuals'
preferences in terms of flavors not cones.  The resulting complexes
$\dowfy$ and $\dowfx$ are now boundary complexes of full simplices,
each homeomorphic to $\Stwo$.  These complexes have no free faces, so
no inference is possible.  Observe further that $\dowfy$ is homotopic
to what one obtains from $\dowcy$ by filling the $\Sone$-holes.
Indeed, this idea implicitly motivated our construction, as a way to
remove free faces.  Similarly, $\dowfx$ is isomorphic to what one
obtains from $\dowcx$ by filling its $\Sone$-holes.

One should ask how this approach might generalize.  The answer is
mixed.  The idea of removing free faces is central.  There are many
ways to accomplish that, with relational composition being but one
method.  One issue with logical {\sc or}s is that it is very easy to
obtain an {\sc or} that is always {\sc True}, at which point the
resulting attribute is of little use.

Even with more general transformations, there remains the issue of
whether the new attributes are grounded in what is actually
observable.  In the ice-cream example, it was fortunate that cones
decomposed naturally into flavors.  It is at least plausible that
someone might merely observe the flavors a customer prefers, not the
combinations of flavors as cones.  If, however, only cones can be
observed, then one is forced to deal with relation $C$ as given.

\clearpage
\section{Leveraging Lattices for Privacy Preservation}
\label{leveraginglattices}

This section examines more carefully the lattice structure of a
relation's poset, leading to the idea of {\em informative attribute
release sequences}.  Such a sequence consists of attributes that an
individual releases in a particular order, so as to prevent inference
of any attributes yet to be released via the sequence.  The length of
the lattice representing the individual's link relation then describes
the extent to which that individual can defer identification.
Homology provides lower bounds on that length.

\subsection{Attribute Release Order}

Relation $G$ of Figure~\ref{travelguides} describes hypothetical
co-authorships among five authors in producing travel guides for five
European cities.  Each collaboration consists of three authors working
together on one of the five travel guides.

\begin{figure}[h]
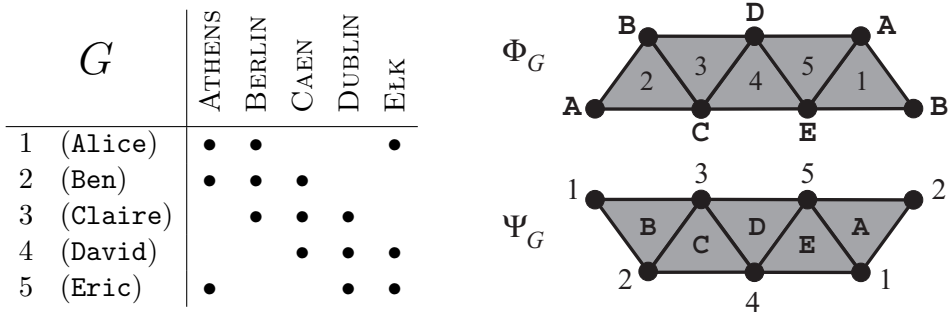

\begin{center}
\qquad
\begin{minipage}{2in}{$\begin{array}{cl|ccccc}
\multicolumn{2}{c|}{\raisebox{0.2in}{\hbox{\LARGE $G$}}}%
                & \rcityA & \rcityB & \rcityC & \rcityD & \rcityE \\[1pt]\hline
1 & (\hauthorA) & \one    & \one    &         &         & \one    \\
2 & (\hauthorB) & \one    & \one    & \one    &         &         \\
3 & (\hauthorC) &         & \one    & \one    & \one    &         \\
4 & (\hauthorD) &         &         & \one    & \one    & \one    \\
5 & (\hauthorE) & \one    &         &         & \one    & \one    \\
\end{array}$}
\end{minipage}
\hspace*{0.5in}
\begin{minipage}{3in}
\ifig{author_complexes_updated}{scale=0.4}
\end{minipage}
\end{center}
\vspace*{-0.1in}
\caption[]{A relation $G$ describing co-authorship of travel guides.
  The Dowker complexes are dual triangulations of the M\"obius strip,
  with $\Sone$ homotopy type.  (Notes: Integers indicate authors,
  letters indicate cities via first letter abbreviations.  Some
  vertices and edges appear twice for ease of viewing.  Each maximal
  simplex is labeled with its generating author or city.)}
\label{travelguides}
\end{figure}

Suppose in casual conversation a person mentions that he/she worked on
producing a travel guide for \cityB.  In the context of relation $G$,
that information means the author is one of $\{\authorA, \authorB,
\authorC\}$.  If the author further mentions working on the travel guide
for \cityD, then that identifies the author uniquely as \authorC.
Equivalently, the listener can infer that the author also helped write
the travel guide for \cityC.  (This form of inference was a source of
privacy problems for the Netflix Prize \cite{tpr:netflix}.)

\begin{figure}[h]
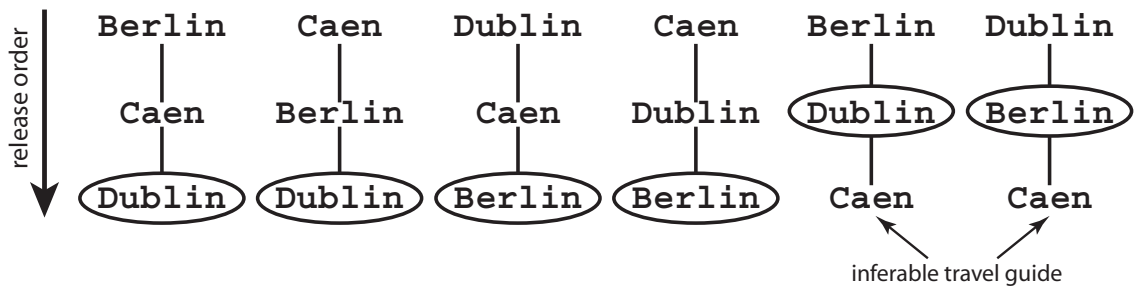

\hspace*{0.1in}\ifig{clairerelease}{scale=0.45}
\vspace*{-0.1in}
\caption[]{This figure shows the six possible sequential ways in which
  author \#3 (\authorC) of Figure~\ref{travelguides} can mention the
  cities for which she co-authored travel guides.  The point at which
  her identity becomes known in any such release sequence is circled.
  If \authorC\ does not mention \cityC, one can infer, via relation
  $G$ of Figure~\ref{travelguides}, that she co-authored a travel
  guide for that city as soon as she mentions the other two cities,
  \cityB\ and \cityD, in either order.}
\label{clairerelease}
\end{figure}

\authorC\ was a co-author on three travel guides, for \cityB, \cityC,
and \cityD.  Now consider the different possible sequential ways in
which \authorC\ might reveal which books she helped co-author, along
with the points at which her identity becomes known (see
Figure~\ref{clairerelease}).

Of the six possible ways, four do not uniquely identify \authorC\
until she has revealed all three books that she co-authored.  However,
two of the possible six release sequences do allow a listener to
identify the author and infer an additional book that she co-authored.

This example shows how inference may be a dynamic process.  While a
consumer of data may wish to identify \authorC\ with as little
information as possible, the author herself may wish to delay that
identification for as long as possible (perhaps for reasons of public
mystery in selling books).  In the example, the {\em minimal length}
of an {\em identifying attribute release sequence} is two, while the
{\em maximal length} is three.  If \authorC\ can control how
information is released, then she can choose to reveal what might
otherwise be inferred, namely that she co-authored a travel guide to
\cityC, thereby delaying her identification.

Finally, we observe that the order of attributes released may or may
not matter.  In the travel guide example, \authorC\ should mention
\cityC\ before the end of her disclosures (if she wants to delay her
identification), but the order of cities mentioned is otherwise
irrelevant.  The topology of the doubly-labeled poset $\PG$ encodes
this order (in)dependence, as we will see shortly.  Indeed, much of
the remainder of this report examines the connection between the
topology of a relation's doubly-labeled poset and the length of
attribute release sequences.

\subsection{Inferences on a Lattice}
\markright{Inferences via the Galois Lattice}

\noindent The doubly-labeled poset of a relation produces a lattice
\cite{tpr:wille}, as follows:

\begin{definition}[Galois Lattice]\label{galoislattice}
Let $R$ be a relation on $\XxY$, with both $\hspt{}X\!$ and
$\hspt{}Y\!$ nonempty.
\quad Let $\PR$ be the associated doubly-labeled poset.

\vst

(Recall from Definition~\ref{defPR} on page~\pageref{defPR} that an
element of $\PR$ is an ordered pair $(\sigma, \gamma)$, with\\[1pt]
\hspace*{0.5in}
$\emptyset \neq \sigma = \psi_R(\gamma) \in \dowx$ and\,
$\emptyset \neq \gamma = \phi_R(\sigma) \in \dowy$.

\vst

\noindent\hspace*{0.3in}We previously defined a partial order on
$\PR$ by $(\sigma_1, \gamma_1) \leq (\sigma_2, \gamma_2)$ iff
$\sigma_1 \subseteq \sigma_2$ (iff $\gamma_1 \supseteq \gamma_2$).)

\vst

$\PR$ may already contain a unique bottom element of the form
$(\sigma, Y)$, with $\sigma$ those individuals in $X\!$ who have all
the attributes in $Y$.  If not, we adjoin $(\hspace*{0.01in}\emptyset,
Y)$ to the bottom of $\PR$.

\vst

$\PR$ may already contain a unique top element of the form $(X, \gamma)$,
with $\gamma$ those attributes in $Y\!$ that every individual in $X\!$
has.  If not, we adjoin $(X, \emptyset)$ to the top of $\PR$.

\vspace*{0.05in}

We refer to the resulting poset as the \mydefem{Galois lattice $\PRplus$}.
It has lattice operations $\join$ and $\meet$:

\vspace*{-0.2in}

\begin{eqnarray*}
(\sigma_1, \gamma_1) \;\join\; (\sigma_2, \gamma_2)   &\;=\;&
  \big((\psi_R \circ \phi_R)(\sigma_1 \union \sigma_2),\;\;
                               \gamma_1 \inter \gamma_2\big),\\[2pt]
(\sigma_1, \gamma_1) \;\meet\; (\sigma_2, \gamma_2) &\;=\;&
  \big(\sigma_1 \inter \sigma_2,\;\;
         (\phi_R \circ \psi_R)(\gamma_1 \union \gamma_2)\big).
\end{eqnarray*}

\vspace*{-0.025in}

We sometimes refer to the bottom element of $\PRplus$ by
\underline{$\zeroR$} and to the top element by \underline{$\oneR$}.

\end{definition}

\begin{figure}[h]
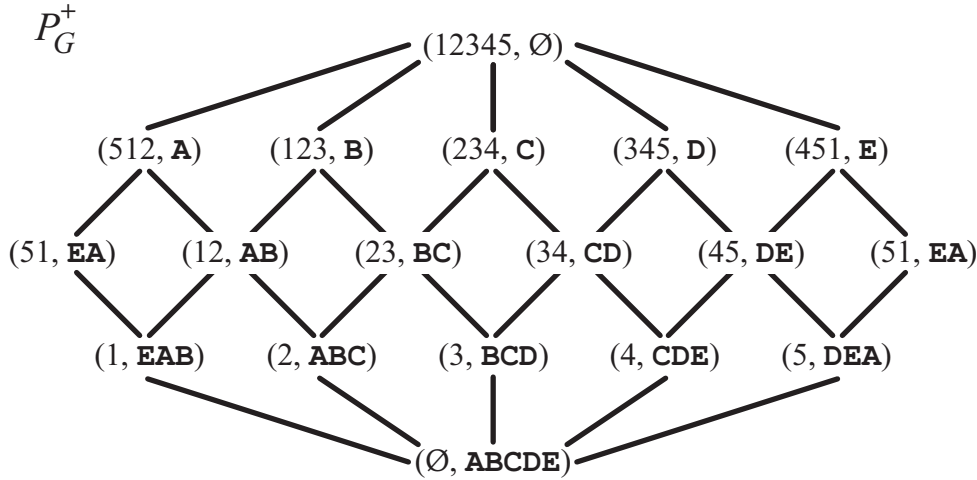

\begin{center}
\ifig{author_lattice}{scale=0.5}
\end{center}
\vspace*{-0.25in}
\caption[]{The lattice $\PGplus$ for the travel guide relation of
  Figure~\ref{travelguides}.  Each element is an ordered pair of sets
  $(\sigma, \gamma)$ such that $\sigma = \psi_G(\gamma)$ and $\gamma =
  \phi_G(\sigma)$.  (We have elided commas and braces in sets, for
  ease of viewing.)  The lattice operations model inferences possible
  from observations.  For instance, $(123, \cB) \wedge (345, \cD)
  \;=\; (3, \cB\cC\cD)$, meaning that observation of attributes $\cB$
  and $\cD$ permits inference of additional attribute $\cC$ and
  identification of author \#3.  (In Figure~\ref{travelguides},
  attribute $\cC$ is the travel guide for \cityC\ and author \#3 is
  \authorC.)  The lattice wraps around, with element $(51, \cE\cA)$
  duplicated for ease of viewing. If one removes the top and bottom
  elements, the remaining poset $\PG$ has $\Sone$ homotopy type, just
  like the M\"obius strip.}
\label{authorlattice}
\end{figure}

\noindent Figure~\ref{authorlattice} shows the lattice $\PGplus$ for
the travel guide relation of Figure~\ref{travelguides}.  Observe how
the lattice encodes attribute and association inferences (or lack
thereof) via its lattice operations.

\paragraph{Special Cases:}  \ 
It can happen that the lattice consists of a single element.  For
example, with relation $C$ as on page~\pageref{relationC}, $\PCplus =
\PC = \{(X, Y)\}$.  In particular, $\zeroC = \oneC$.

\vst

Definition~\ref{galoislattice} ignores the situation in which $R$ is
void.  One possibility is to leave $\PR$ undefined and let
$\PRplus=\emptyset$.  \ See page~\pageref{PRspecialcases} in
Appendix~\ref{PRspecialcases} for additional comments.

\subsection{Preserving Attribute Privacy for Sets of Individuals}
\markright{Preserving Attribute Privacy for Sets of Individuals}

Theorem~\ref{privacysingle} on page~\pageref{privacysingle} described
the conditions under which an individual has full attribute privacy.
For such an individual, the order in which that individual (or anyone)
releases the individual's attributes is irrelevant.  Any order is
fine.  Only once all attributes have been released, can an observer
uniquely identify the individual.  Theorem~\ref{privacymultiple}
described a similar result for certain sets of individuals, including
sets of individuals with whom a given individual is confusable after
only some of his/her attributes have been released.

\begin{figure}[h]
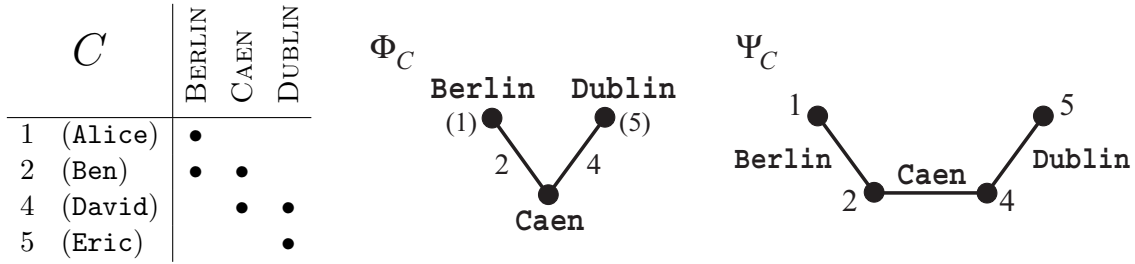

\vspace*{-0.05in}
\begin{center}
\begin{minipage}{1.5in}{$\begin{array}{cl|ccc}
\multicolumn{2}{c|}{\raisebox{0.2in}{\hbox{\LARGE $C$}}}%
                 & \rcityB & \rcityC & \rcityD \\[1pt]\hline
1 & (\hauthorA)  & \one    &         &         \\
2 & (\hauthorB)  & \one    & \one    &         \\
4 & (\hauthorD)  &         & \one    & \one    \\
5 & (\hauthorE)  &         &         & \one    \\
\end{array}$}
\end{minipage}
\hspace*{0.3in}
\begin{minipage}{3.75in}
\ifig{clairelink_complexes_updated}{scale=0.425}
\end{minipage}
\end{center}
\vspace*{-0.175in}
\caption[]{Relation $C$ describes $\Lk(\dowGx, 3)$, the link of
  \authorC\ in the relation of Figure~\ref{travelguides}.  (Each
  maximal simplex in any one complex is labeled with its generating
  attribute or individual from the other complex.  Generators of
  nonmaximal simplices are indicated in parentheses.)}
\label{clairelink}
\end{figure}

Consider $\Lk(\dowGx, 3)$, modeled by relation $C$ as in
Figure~\ref{clairelink}.  This relation describes the authors with
whom \authorC\ has collaborated, via their co-authored books.  The
Dowker complexes are contractible, so by either
Theorem~\ref{privacysingle} or Theorem~\ref{privacymultiple}, we know
that some attribute inference is possible involving \authorC.
Lemma~\ref{interplocal} on page~\pageref{interplocal} tells us to look
for a proper subset of $\{\hcityB, \hcityC, \hcityD\}$ that is {\em
not\,} a simplex of $\dowcy$.  As is apparent from
Figure~\ref{clairelink}, the set $\{\hcityB, \hcityD\}$ satisfies
these conditions, consistent with our earlier observations.
Alternatively, looking at $\PCplus$ in Figure~\ref{clairelattice}, we
see that $(12, \cB) \wedge (45, \cD)
\;=\; (\emptyset, \cB\cC\cD)$, allowing us to draw the same conclusion.
Consequently, \authorC\ should be sure to mention her travel guide for
\cityC\ early on, not leave it for last, if she wants to delay
identification.

\begin{figure}[t]
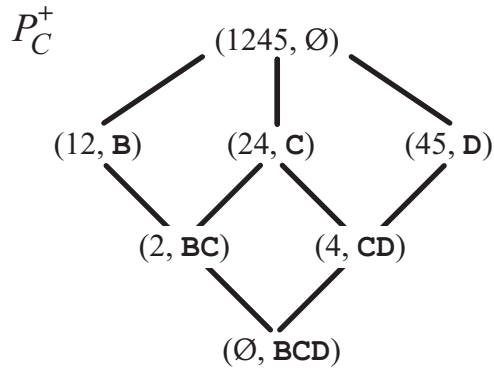

\begin{center}
\ifig{clairelink_lattice}{scale=0.5}
\end{center}
\vspace*{-0.3in}
\caption[]{The lattice $\PCplus$ for the link of \authorC, as given in
  Figure~\ref{clairelink}.  (Here authors appear as integers and city
  names appear as first letter abbreviations.)  Observe that $\PCplus$
  may be viewed as a sublattice of $\PGplus$, consisting of all
  elements that include individual \#3 there, but with that individual
  removed here.  (See Figure~\ref{authorlattice} for $\PGplus$.)}
\label{clairelattice}
\end{figure}

Now let us take this reasoning one step further.  Consider an element
of $\PGplus$ corresponding to some state just prior to identification
of \authorC, for instance $(23, \cB\cC)$.  This element corresponds to
both of the first two release sequences of Figure~\ref{clairerelease}:
\authorC\ has mentioned her work regarding the travel guides for
\cityB\ and \cityC, but has not yet mentioned \cityD.  Thus there is
still some ambiguity as to her identity (it is either author \#2 or
author \#3).  In terms of Theorem~\ref{privacymultiple} on
page~\pageref{privacymultiple}, $\sigma = \{2, 3\}$, $\gamma =
\{\hcityB, \hcityC\}$, and $k=2$.

\begin{figure}[h]
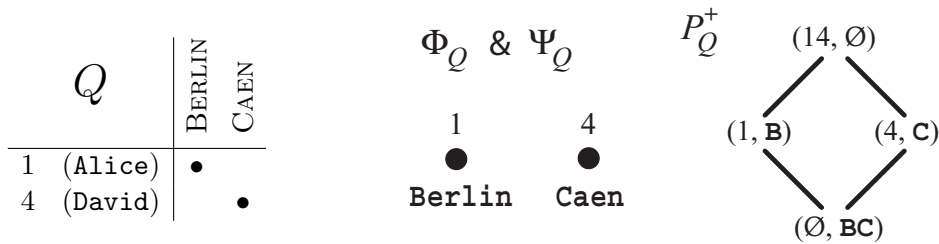

\begin{center}
\qquad
\begin{minipage}{1in}{$\begin{array}{cl|cc}
\multicolumn{2}{c|}{\raisebox{0.2in}{\hbox{\LARGE $Q$}}}%
                 & \rcityB & \rcityC \\[1pt]\hline
1 & (\hauthorA)  & \one    &         \\
4 & (\hauthorD)  &         & \one    \\
\end{array}$}
\end{minipage}
\hspace*{1in}
\begin{minipage}{3.75in}
\ifig{BenClaireLink_complexeslattice_updated}{scale=0.45}
\end{minipage}
\end{center}
\vspace*{-0.25in}
\caption[]{Relation $Q$ describes $\Lk(\dowGx, \{2, 3\})$, the
  combined link of authors \#2 and \#3 (\authorB\ and \authorC) in the
  relation of Figure~\ref{travelguides}.  These two authors have
  together collaborated with each of authors \#1 and \#4 (\authorA\
  and \authorD) but have not both together collaborated with author
  \#5 (\authorE).  The two Dowker complexes are each instances of
  $\Szero$, so essentially the same.  The corresponding lattice
  $\PQplus$ is also very simple.}
\label{BenClaireLink}
\end{figure}

\vst

Figure~\ref{BenClaireLink} shows the relation describing $\Lk(\dowGx,
\{2, 3\})$.  The Dowker complexes have $\Szero$ homotopy type, thus
satisfying the topological conditions of Theorem~\ref{privacymultiple}.
Consequently, there is no attribute inference possible in the
encompassing relation $G$ based on attributes that appear in the link
relation $Q$.  That means the order in which \authorC\ releases the
two attributes \cityB\ and \cityC\ is immaterial.  This conclusion
is consistent with the conclusion one draws upon explicitly
enumerating all release sequences, as in Figure~\ref{clairerelease}.

\subsection{Informative Attribute Release Sequences}
\markright{Informative Attribute Release Sequences, Isotropy, and Minimal Identification}
\label{iarsnarrative}

This subsection defines more precisely the idea of controlled
information release.  These definitions will help us better understand
topological holes in a relation's Dowker complexes.  Subsequently,
Section~\ref{experiments} will explore these insights with data from
the world wide web.

\begin{definition}[Attribute Release Sequence]\label{iars}
Let $R$ be a relation on $\XxY$, with both $\hspt{}X\!$ and
$Y\!$ nonempty.
\quad An \mydefem{attribute release sequence for $R$} is a nonempty
set of attributes from $Y$ released in a particular sequential order:

\vspace*{-0.15in}

$$y_1, y_2, \ldots, y_k, \quad\hbox{with $k \geq 1$}.$$

We say that the sequence has \mydefem{length} $k$.

\vspace*{0.05in}

We say that an attribute release sequence is \mydefem{informative} if

\vspace*{-0.1in}

$$y_i \not\in (\clsy)(\{y_1, \ldots, y_{i-1}\}), \quad
  \hbox{for all $1 \leq i \leq k$}.$$

\vspace*{0.03in}

(Note: for $i=1$, the requirement states that
  $y_1\not\in(\clsy)(\emptyset)=\phi_R(X)$.)

\vspace*{0.05in}

(We sometimes use the abbreviation {\tt{\char13}iars{\char13}} to mean
either  
   {\tt{\char13}informative attribute release sequence{\char13}}
or
   {\tt{\char13}informative attribute release sequences{\char13}}.)
\end{definition}

\vspace*{-0.15in}

\paragraph{Interpretation:}  When $i=1$, the argument to $\phi_R \circ
  \psi_R$ is the empty set, so the condition requires that $y_1
  \not\in \phi_R(X)$.  In other words, $y_1$ may not be any attribute
  that is shared by all individuals in $X$.  Any such attribute could
  be inferred ``for free'' in the context of relation $R$, and thus
  would not be informative.  Thereafter, the condition requires that
  any attribute to be released not be inferable from those already
  released.

\vspace*{0.1in}

\noindent We are interested in understanding the extent to which order
of release matters:

\begin{definition}[Isotropy]\label{isotropydef}
Let $R$ be a relation on $\XxY$, with both $\hspt{}X\!$ and
$\hspt{}Y\!$ nonempty.

Suppose $\emptyset \neq \gamma \subseteq Y$.

We say that $\gamma\!$ is \mydefem{isotropic} if every possible
ordering of all the elements in $\gamma$ forms an informative attribute
release sequence for $R$.
\end{definition}

\noindent We are interested in the minimal and maximal lengths of
informative attribute release sequences:

\begin{definition}[Identification and Minimal Identification]\label{DefMinIdent}
Let $R$ be a relation on $\XxY$, with both $\hspt{}X\!$ and
$\hspt{}Y\!$ nonempty.

We say that a set of attributes $\gamma \subseteq Y$
\mydefem{identifies} a set of individuals $\sigma \subseteq X$ in $R$
when $\psi_R(\gamma) = \sigma$.  \quad (We sometimes alternatively say
that $\gamma$ \mydefem{localizes to} $\sigma$ in $R$.)

\vspace*{0.1in}

We say that $\gamma$ is \mydefem{minimally identifying (for $\sigma$)}
if both the following conditions hold:

\vspace*{0.075in}

\hspace*{0.4in}\begin{minipage}{3.5in}
\begin{enumerate}
\addtolength{\itemsep}{-2pt}
\item[(i)] $\psi_R(\gamma) = \sigma$.
\item[(ii)] $\psi_R(\gamma^\prime) \supsetneq \sigma$ for every
  $\gamma^\prime \subsetneq \gamma$.
\end{enumerate}
\end{minipage}

\end{definition}

\vspace*{0.1in}

\begin{definition}[Identification Lengths]\label{DefIdentLengths}
Let $R$ be a relation on $\XxY$, with both $\hspt{}X\!$ and $\hspt{}Y$
nonempty. \ Suppose $(\sigma, \gamma) \in \PR$.
\ Define the \mydefem{fast} and \mydefem{slow attribute release lengths}
for $\sigma$ as:

\vspace*{0.075in}

\hspace*{0.7in}\begin{minipage}{5in}
$\rfast(\sigma) =
 \min{\setdefbV{\,\abs{\chi}}{\hbox{$\chi\subseteq\gamma$ and $\psi_R(\chi)=\sigma$}}}$.

\vspace*{0.075in}

$\rslow(\sigma) = \max{\setdefbV{\,k}{\hbox{$y_1, \ldots,
  y_k$ is an iars for $R$ and $\psi_R\big(\{y_1, \ldots, y_k\}\big) = \sigma$}}}$.

\end{minipage}
\end{definition}

\vspace*{0.1in}

An argument similar to that in Appendix~\ref{nphardness} shows that
the following problem is \np-complete: \ Given $R$, $\sigma$, and $k$,
is there some minimally identifying $\gamma$ for $\sigma$ with
$\abs{\gamma} \leq k$?

\subsection{Isotropy, Minimal Identification, and Spheres}

There is no requirement in Definition~\ref{iars} that an informative
attribute release sequence be a simplex in $\dowy$.  (Indeed, when
working with links of individuals, it can be useful to create
informative attribute release sequences that are not simplices in the
link, thereby identifying the given individuals in the encompassing
relation, as per Lemma~\ref{interplocal} on
page~\pageref{interplocal}.)  However, it is always the case that any
inconsistency arises only with the last attribute released:

\begin{lemma}[Almost a Simplex]
Let $R$ be a relation on $\XxY$, with both $\hspt{}X\!$ and
$\hspt{}Y\!$ nonempty.

\vspace*{0.02in}

Suppose $\{y_1, \ldots, y_k\}$ is an informative attribute release
sequence for $R$.

\vspace*{0.02in}

Then $\{y_1, \ldots, y_{k-1}\} \in \dowy$.
\end{lemma}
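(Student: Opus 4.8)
The plan is to reduce the claim to a single statement about witnesses: since a nonempty subset of $Y$ is a simplex of $\dowy$ exactly when at least one individual possesses all of its attributes, showing $\{y_1, \ldots, y_{k-1}\} \in \dowy$ amounts to showing that the prefix $\gamma = \{y_1, \ldots, y_{k-1}\}$ has a nonempty witness set, i.e. $\psi_R(\gamma) = \bigcap_{y \in \gamma} X_y \neq \emptyset$. First I would dispose of the degenerate case $k = 1$: here the prefix is the empty set, and since $X$ and $Y$ are both nonempty the complex $\dowy$ is nonvoid and contains the empty simplex, so there is nothing to prove. For $k \geq 2$ the prefix $\gamma$ is genuinely nonempty, and I would argue by contradiction, assuming $\gamma \notin \dowy$.

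The main step is to track how the Galois maps behave on an ``inconsistent'' prefix. If $\gamma \notin \dowy$, then no individual has all the attributes in $\gamma$, so $\psi_R(\gamma) = \bigcap_{y \in \gamma} X_y = \emptyset$. Applying $\phi_R$ and using the empty-intersection convention $\phi_R(\emptyset) = \bigcap_{x \in \emptyset} Y_x = Y$ (the same convention that yields $\psi_R(\emptyset) = X$ in the ``inference from nothing'' discussion), I obtain $(\clsy)(\gamma) = Y$. But then $y_k \in Y = (\clsy)(\{y_1, \ldots, y_{k-1}\})$, which directly violates the informativeness requirement $y_k \notin (\clsy)(\{y_1, \ldots, y_{k-1}\})$ at index $i = k$. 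This contradiction forces $\gamma \in \dowy$, completing the argument.

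The only point that needs care, and the sole place the argument could go astray, is the convention for $\psi_R$ and $\phi_R$ on sets that are not simplices, specifically that an intersection over the empty set of individuals returns all of $Y$. I would state this explicitly, because it is exactly the mechanism behind the lemma's content: an already-inconsistent prefix makes \emph{every} remaining attribute inferable ``for free,'' so it can never be extended to an informative release. Thus inconsistency is forced to occur only at the final attribute. No induction, homotopy, or closure-operator machinery beyond this single observation is required; the whole proof is the remark that informativeness of the last step rules out an empty witness set for the prefix.
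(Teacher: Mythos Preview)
Your proposal is correct and follows essentially the same argument as the paper: if $\{y_1,\ldots,y_{k-1}\}\notin\dowy$, then $(\clsy)(\{y_1,\ldots,y_{k-1}\})=\phi_R(\emptyset)=Y$, so $y_k\in Y$ contradicts informativeness at index $k$. Your version simply adds the explicit $k=1$ case and spells out the empty-intersection convention, which the paper leaves implicit.
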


\vspace*{-0.1in}

\begin{proof}
If $\{y_1, \ldots, y_{k-1}\} \not\in \dowy$, then $(\phi_R \circ
\psi_R)(\{y_1, \ldots, y_{k-1}\}) = \phi_R(\emptyset) = Y$.  Since $y_k
\in Y$, this contradicts the requirement of Definition~\ref{iars}.
\end{proof}

\vspace*{0.05in}

Consequently, a nonempty set of attributes $\gamma \subseteq Y$, with
$\gamma\not\in\dowy$, is isotropic if and only if it is a minimal
nonface of $\dowy$. \ We can view such an isotropic $\gamma$ as
minimally identifying for $\emptyset$.

\vspace*{0.1in}

When a nonempty set of attributes $\gamma$ {\em is\hspt} a simplex in
$\dowy$, then being isotropic is again equivalent to being minimally
identifying, now for some nonempty set of individuals $\sigma$.  \
Moreover, topologically, we can again characterize this isotropy as a
sphere, appearing via a restricted link:

\clearpage

\begin{definition}[Restricted Link]\label{restrictedlink}
Let $R$ be a relation on $\XxY$, with both $\hspt{}X\!$ and
$\hspt{}Y\!$ nonempty.

\vspace*{0.02in}

Suppose $\sigma \in \dowx$ and $\gamma \subseteq \phi_R(\sigma)$.

\vspace*{0.02in}

Define relation $Q(\sigma, \gamma)$ as follows:

\vspace*{-0.1in}

$$Q(\sigma, \gamma) \;=\; R\,|_{\tX \times \gamma}, \quad \hbox{with} \quad
            \tX = \bigunion_{y \in \gamma}\sdiff{X_y}{\sigma}.$$

\vspace*{-0.05in}

The Dowker complexes are defined in the standard way, except for these special cases:

\vspace*{0.05in}

\hspace*{0.2in}\begin{minipage}{6in}

If $\sigma=X$, we let $\dqx$ and $\dqy$ be instances of the void complex $\emptyset$.

If $\sigma \neq X$ but $\tX = \emptyset$, we let $\dqx$ and $\dqy$
be instances of the empty complex $\{\emptyset\}$.
\end{minipage}

\vspace*{0.1in}

We say that $Q(\sigma, \gamma)$ \mydefem{models the link of $\sigma$
restricted to $\gamma$}.

\end{definition}

\begin{figure}[t]
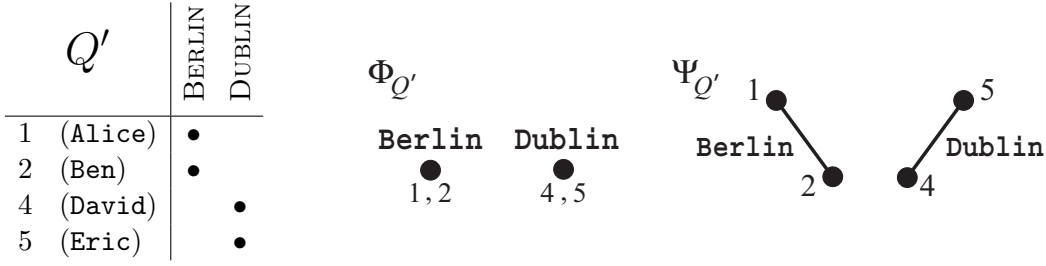

\begin{center}
\begin{minipage}{1.5in}{$\begin{array}{cl|cc}
\multicolumn{2}{c|}{\raisebox{0.2in}{\hbox{\LARGE $\Qp$}}}%
                 & \rcityB & \rcityD \\[1pt]\hline
1 & (\hauthorA)  & \one    &         \\
2 & (\hauthorB)  & \one    &         \\
4 & (\hauthorD)  &         & \one    \\
5 & (\hauthorE)  &         & \one    \\
\end{array}$}
\end{minipage}
\hspace*{0.3in}
\begin{minipage}{3.75in}
\ifig{clairelinkrestricted_complexes_updated}{scale=0.425}
\end{minipage}
\end{center}
\vspace*{-0.15in}
\caption[]{Relation $\Qp=Q(\sigma,\gamma)$, for the book authorship
  example of Figure~\ref{travelguides}, with $\sigma = \{3\}$ and
  $\gamma = \{\hcityB, \hcityD\}$.  Relation $\Qp$ describes the link
  of author \#3 (\authorC) restricted to the attribute set $\{\hcityB,
  \hcityD\}$.  See Figure~\ref{clairelink} for the whole link relation.
  (Each maximal simplex in the Dowker complexes is again labeled with
  its generating individuals or attribute.)}
\label{clairelinkrestricted}
\end{figure}

\vspace*{-0.1in}

\paragraph{Comments:}  Although the previous definition looks similar
to that for $\lk(\dowx, \sigma)$ on page~\pageref{linksigma}, there
are some differences:
(a) Here, we require that $\sigma$ be a simplex in $\dowx$.
(b) Here, we do {\em not}\hspt\ assume $\gamma = \phi_R(\sigma)$,
merely $\gamma \subseteq \phi_R(\sigma)$.
(c) When $\sigma=X\in\dowx$, the current definition creates void
complexes, whereas Definition~\ref{linksigma} on
page~\pageref{linksigma} creates empty complexes.
(d) Finally, when $\sigma\neq{X}$ but $\gamma=\emptyset$, the current
definition creates empty complexes rather than void complexes.
\\[2pt]
Interpretation: When $\sigma\in\dowx$ and $\sigma\neq{X}$,
$\,Q(\sigma, \gamma)$ models those simplices of $\Lk(\dowx, \sigma)$
that are witnessed by attributes in $\gamma$, plus the empty simplex.

\vspace*{0.05in}

\newcounter{IsotropyThm}
\setcounter{IsotropyThm}{\value{theorem}}
\begin{theorem}[Isotropy = Minimal Identification = Sphere]\label{isotropyThm}
Let $R$ be a relation and suppose $\emptyset\neq\gamma\in\dowy$.
\ Let $\sigma = \psi_R(\gamma)$.
Then the following four conditions are equivalent:

\vspace*{0.1in}

\hspace*{0.4in}\begin{minipage}{3.5in}
\begin{enumerate}
\addtolength{\itemsep}{-1pt}
\item[(a)] $\gamma$ is isotropic.
\item[(b)] $\gamma$ is minimally identifying (for $\sigma$).
\item[(c)] $\dqx \;\homot\; \Skt$, with $k = \abs{\gamma}$.
\item[(d)] $\dqy \;=\; \partial(\gamma)$.
\end{enumerate}
\end{minipage}
\end{theorem}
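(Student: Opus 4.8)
The plan is to prove the cycle of implications (a)$\Leftrightarrow$(b), (b)$\Leftrightarrow$(d), and (c)$\Leftrightarrow$(d), after dispatching the degenerate case $\sigma = X$. When $\sigma = \psi_R(\gamma) = X$, every individual carries every attribute of $\gamma$, so for any ordering $y_1,\ldots,y_k$ of $\gamma$ we have $y_1 \in \phi_R(X) = (\clsy)(\emptyset)$; hence $\gamma$ is not isotropic, and since $\psi_R(\emptyset) = X = \sigma$, condition~(ii) of minimal identification also fails. Here $Q(\sigma,\gamma)$ is void, so $\dqy = \emptyset \neq \partial(\gamma)$ and $\dqx$ is void and homotopy equivalent to no sphere. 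Thus all four conditions are false and the equivalence holds vacuously, so I assume henceforth $\sigma \neq X$.

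For (a)$\Leftrightarrow$(b) I would first reformulate informativeness via the identity that $y \in (\clsy)(\chi)$ holds exactly when $\psi_R(\chi) \subseteq X_y$, equivalently when $\psi_R(\chi \cup \{y\}) = \psi_R(\chi)$; so $y \notin (\clsy)(\chi)$ means $\psi_R(\chi \cup \{y\}) \subsetneq \psi_R(\chi)$. Consequently an ordering $y_1,\ldots,y_k$ of $\gamma$ is informative iff the chain $X = \psi_R(\emptyset) \supsetneq \psi_R(\{y_1\}) \supsetneq \cdots \supsetneq \psi_R(\gamma) = \sigma$ is strictly decreasing at every step. If $\gamma$ is minimally identifying and some step of some ordering failed, say $\psi_R(\{y_1,\ldots,y_i\}) = \psi_R(\{y_1,\ldots,y_{i-1}\})$, then using $\psi_R(A \cup B) = \psi_R(A) \cap \psi_R(B)$ together with the resulting inclusion $\psi_R(\{y_1,\ldots,y_{i-1}\}) \subseteq X_{y_i}$ one computes $\psi_R(\gamma \setminus \{y_i\}) = \psi_R(\gamma) = \sigma$, contradicting condition~(ii). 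Conversely, if $\gamma$ is isotropic, placing any chosen $y$ last in an ordering shows $\psi_R(\gamma \setminus \{y\}) \supsetneq \sigma$, and antitonicity of $\psi_R$ extends this to every proper subset, giving minimal identification.

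For (b)$\Leftrightarrow$(d) I would observe that $\dqy$ is always a subcomplex of $\partial(\gamma)$: each individual of $Q$ lies in $\tX$, hence outside $\sigma = \psi_R(\gamma)$, so no individual of $Q$ carries all of $\gamma$ and therefore $\gamma \notin \dqy$. Since $\partial(\gamma)$ is generated by its facets $\gamma \setminus \{y\}$, the equality $\dqy = \partial(\gamma)$ holds iff every such facet lies in $\dqy$. For $k \geq 2$, $\gamma \setminus \{y\} \in \dqy$ means some individual outside $\sigma$ carries all of $\gamma \setminus \{y\}$, i.e. $\psi_R(\gamma \setminus \{y\}) \not\subseteq \sigma$, which (as $\sigma \subseteq \psi_R(\gamma \setminus \{y\})$ always) is exactly $\psi_R(\gamma \setminus \{y\}) \supsetneq \sigma$, the reduced form of minimal identification~(ii). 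The case $k=1$ is a corner case handled directly: here $\tX = X_y \setminus \sigma = \emptyset$, so $\dqy = \{\emptyset\} = \partial(\{y\})$ precisely because $\sigma \neq X$, matching (b).

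Finally, (d)$\Rightarrow$(c) follows from Dowker duality (Theorem~\ref{dowker}) applied to $Q$: $\dqx \homot \dqy = \partial(\gamma) \homot \Skt$ with $k = \abs{\gamma}$, the last step being the standard homotopy type of the boundary of a $(k-1)$-simplex (including $k=1$, where $\partial(\{y\}) = \{\emptyset\} \homot \Smo$). The implication (c)$\Rightarrow$(d) is the crux. Since $\dqx \homot \Skt$ is nonvoid, Dowker gives $\dqy \homot \Skt$, and I would then exploit that $\dqy$ is a subcomplex of $\partial(\gamma) \cong \Skt$ of dimension at most $k-2$. The only nonzero reduced homology a subcomplex of $\partial(\gamma)$ can carry in top degree $k-2$ comes from the fundamental cycle of $\partial(\gamma)$, which involves every facet; hence $\widetilde{H}_{k-2}(\dqy) \neq 0$ forces all facets $\gamma \setminus \{y\}$ to be present, and downward closure then yields $\dqy = \partial(\gamma)$. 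I expect this top-homology / cycle-space argument, together with the bookkeeping of the void-versus-empty-complex conventions in the $k=1$ and $\sigma = X$ corner cases, to be the \emph{main obstacle}; the combinatorial equivalences are comparatively routine.
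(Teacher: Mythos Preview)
Your proof is correct and follows essentially the same route as the paper, with a different organization of the cycle of implications: the paper establishes (c)$\Leftrightarrow$(d) first, then shows (a)$\Rightarrow$(d) and (b), (d)$\Rightarrow$(a), and (b)$\Rightarrow$(d), whereas you prove (a)$\Leftrightarrow$(b) directly via the strict-chain reformulation before linking to (d). The one place you work harder than necessary is (c)$\Rightarrow$(d): the paper dispatches this in a line by observing that $\dqy$ is a subcomplex of $\partial(\gamma)$ on the same $k$-element vertex set, and no proper subcomplex of $\partial(\gamma)$ can have the homotopy type of $\Skt$; your top-cycle argument is a correct way to prove exactly that fact, just more explicit than what the paper records.
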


\vspace*{0.05in}

\noindent See Appendix~\ref{isotropyApp} for a proof.

\vspace*{-0.1in}

\paragraph{Collaboration Example Revisited:}  To illustrate
Theorem~\ref{isotropyThm}, consider again the example of
Figure~\ref{travelguides}.  Recall that together the travel guides for
\cityB\ and \cityD\ identify \authorC.  Indeed, $\{\hcityB, \hcityD\}$
is a minimally identifying set of books for \authorC.  It is
isotropic, as Figure~\ref{clairerelease} shows.
Figure~\ref{clairelinkrestricted} depicts the link of \authorC\
restricted to $\{\hcityB, \hcityD\}$, modeled by relation $Q^\prime$.
Observe that $\dowqpy = \partial(\{\hcityB, \hcityD\})$ and that
$\dowqpx\homot\Szero$, as the theorem asserts.

\subsection{Poset Lengths and Information Release}
\markright{Poset Lengths and Information Release}
\label{informativerelease}

We have seen how minimal identification appears topologically via
spheres.  Spheres are isotropic so perhaps it is not surprising that
they encode isotropic attribute release sequences.  We cannot
therefore expect a spherical characterization for the problem of
finding a maximally long informative attribute release sequence.
Instead, we find an answer in the combinatorial structure of the
doubly-labeled poset $\PR$ and its lattice $\PRplus$.  We summarize
the key results below.  For proofs, see Appendix~\ref{posetchains}.

\newcounter{ChainToIarsLemma}
\setcounter{ChainToIarsLemma}{\value{theorem}}
\begin{lemma}[Informative Attributes from Maximal Chains]\label{chaintoiars}
Let $R$ be a relation on $\XxY$, with both $\hspt{}X\!$ and
$\hspt{}Y\!$ nonempty.
Suppose $\{(\sigma_k, \gamma_k) < \cdots < (\sigma_1, \gamma_1) <
(\sigma_0, \gamma_0)\}$, with $k\geq 1$, is a maximal chain in
$\PRplus$.

\vso

Define $y_1, \ldots, y_k$ by selecting some $y_i \in
\gamma_i\setminus\gamma_{i-1}$, for each $i=1, \ldots, k$.

\vso

Then $y_1, \ldots, y_k$ is an informative attribute release sequence for
$R$.

\vso

Moreover, $(\clsy)(\{y_1, \ldots, y_i\}) = \gamma_i$, for each $i = 0, 1,
\ldots, k$.
\end{lemma}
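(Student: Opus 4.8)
The plan is to treat the two assertions together by induction on $i$, with the ``moreover'' equation $(\clsy)(\{y_1,\dots,y_i\}) = \gamma_i$ carrying the real content. First I would record the geometry of the chain: since $(\sigma_j,\gamma_j) < (\sigma_{j-1},\gamma_{j-1})$ unwinds to $\gamma_j \supsetneq \gamma_{j-1}$, the sets satisfy $\gamma_k \supsetneq \cdots \supsetneq \gamma_1 \supsetneq \gamma_0$. Hence each difference $\gamma_i\setminus\gamma_{i-1}$ is nonempty, so the $y_i$ can indeed be selected, and moreover $\{y_1,\dots,y_i\}\subseteq\gamma_i$ because $y_j\in\gamma_j\subseteq\gamma_i$ for every $j\leq i$. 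I would then observe that the informativeness requirement reduces to the closure equation: informativeness at step $i$ demands $y_i\notin(\clsy)(\{y_1,\dots,y_{i-1}\})$, and once I know inductively that $(\clsy)(\{y_1,\dots,y_{i-1}\})=\gamma_{i-1}$, this is immediate from the defining choice $y_i\in\gamma_i\setminus\gamma_{i-1}$, which gives $y_i\notin\gamma_{i-1}$.

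\textbf{Base case and the easy inclusion.} For $i=0$ the claim is $(\clsy)(\emptyset)=\phi_R(\psi_R(\emptyset))=\phi_R(X)=\gamma_0$. Here I would invoke maximality of the chain: a maximal chain in $\PRplus$ runs from $\zeroR$ to $\oneR$, so its top $(\sigma_0,\gamma_0)$ is the lattice top $\oneR$, whose attribute component is exactly $\phi_R(X)$ (the attributes shared by all of $X$) by Definition~\ref{galoislattice}; this holds whether $\oneR$ is a genuine element $(X,\phi_R(X))$ or the adjoined $(X,\emptyset)$, since in the latter case $\phi_R(X)=\emptyset$ too. For the inductive step, write $\delta_i = (\clsy)(\{y_1,\dots,y_i\})$. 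One inclusion is routine: since $\{y_1,\dots,y_i\}\subseteq\gamma_i$ and $\gamma_i$ is closed (because $(\sigma_i,\gamma_i)\in\PR$ forces $(\clsy)(\gamma_i)=\phi_R(\psi_R(\gamma_i))=\gamma_i$), monotonicity of the closure operator gives $\delta_i\subseteq(\clsy)(\gamma_i)=\gamma_i$. In the other direction, monotonicity against $\{y_1,\dots,y_{i-1}\}$ and the inductive hypothesis give $\delta_i\supseteq\gamma_{i-1}$, and since $y_i\in\delta_i$ but $y_i\notin\gamma_{i-1}$ this inclusion is strict. Thus $\gamma_{i-1}\subsetneq\delta_i\subseteq\gamma_i$.

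\textbf{The main step: maximality forces equality.} The crux is upgrading $\delta_i\subseteq\gamma_i$ to equality, and this is where the \emph{maximality} of the chain is essential. I would note that $\delta_i$ is the closure of a nonempty set (it contains $y_i$), hence a nonempty simplex of $\dowy$, so the pair $(\psi_R(\delta_i),\delta_i)$ is a genuine element of $\PRplus$. The two inclusions $\gamma_{i-1}\subsetneq\delta_i\subseteq\gamma_i$ translate, under the order-reversing dictionary for $\gamma$-components, into $(\sigma_i,\gamma_i)\leq(\psi_R(\delta_i),\delta_i)<(\sigma_{i-1},\gamma_{i-1})$. But $(\sigma_i,\gamma_i)$ and $(\sigma_{i-1},\gamma_{i-1})$ are consecutive in the maximal chain, so nothing lies strictly between them; therefore $(\psi_R(\delta_i),\delta_i)=(\sigma_i,\gamma_i)$, giving $\delta_i=\gamma_i$ and completing the induction. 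I expect this maximality argument to be the main obstacle: the easy inclusion is pure monotonicity, but the reverse inclusion genuinely requires that the chain leave no gaps, and one must be careful to verify that $(\psi_R(\delta_i),\delta_i)$ is a legitimate lattice element (nonemptiness of $\delta_i$, and $\psi_R(\delta_i)\neq X$, which follows since $\delta_i\supsetneq\gamma_{i-1}\supseteq\phi_R(X)$ rules out $\delta_i\subseteq\phi_R(X)$) rather than an artifact of the adjoined top or bottom.
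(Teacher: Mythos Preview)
Your proof is correct and follows essentially the same approach as the paper's: induction on $i$, the ``moreover'' equation carrying the iars assertion, the base case via $\gamma_0=\phi_R(X)$, and the key inductive step sandwiching $\delta_i$ strictly between $\gamma_{i-1}$ and $\gamma_i$ so that maximality of the chain forces equality. The only cosmetic difference is that the paper routes the sandwich through $\psi_R$ (showing $\psi_R(\{y_1,\dots,y_i\})=\psi_R(\gamma_{i-1}\cup\{y_i\})$ via $\clsx\circ\psi_R=\psi_R$ and then applying $\phi_R$), whereas you work directly with monotonicity of $\clsy$ and closedness of $\gamma_i$; your route is arguably more direct.
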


(Notes:
\ (a) For a maximal chain in $\PRplus$, $\gamma_k = Y$ and $\sigma_0 = X$.
\ (b) The hypothesis $k\geq 1$ excludes any relation $R$ for which
$\zeroR = \oneR$.)

\vspace*{0.1in}

Lemma~\ref{chaintoiars} implies that every nontrivial maximal chain in
the doubly-labeled poset associated with a relation gives rise to an
informative attribute release sequence that tracks the chain.
\ A partial converse holds as well:

\newcounter{IarsToChainLemma}
\setcounter{IarsToChainLemma}{\value{theorem}}
\begin{lemma}[Chains from Informative Attributes]\label{iarstochain}
Let $R$ be a relation on $\XxY$, with both $\hspt{}X$ and $\hspt{}Y\!$
nonempty.
Suppose $y_1, \ldots, y_k$ is an informative attribute release sequence
for $R$, with $k \geq 1$.

\vso

Let $\gamma_i = (\clsy)(\{y_1, \ldots, y_i\})$ and $\sigma_i =
\psi_R(\gamma_i)$, for $i=1, \ldots, k$.

\vso

Then $\{(\sigma_k, \gamma_k) < \cdots < (\sigma_1, \gamma_1) < (X,
\gamma_0)\}$ is a (not necessarily maximal) chain in $\PRplus$, with
$\gamma_0 = \phi_R(X)$.
\end{lemma}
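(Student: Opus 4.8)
The plan is to verify two independent things about the listed pairs: first, that each $(\sigma_i, \gamma_i)$ genuinely lies in the Galois lattice $\PRplus$, and second, that consecutive pairs are strictly comparable in the prescribed direction. The membership half is driven entirely by the closure-operator identities attached to the Galois connection, whereas the strictness half is exactly where the informativeness hypothesis does its work. I would treat these two halves separately and then read off the chain.

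For membership, I would first note that $\gamma_i = (\clsy)(\{y_1, \ldots, y_i\})$ is by construction a fixed point of the closure operator $\clsy$, so idempotence gives $\phi_R(\sigma_i) = (\clsy)(\gamma_i) = \gamma_i$; combined with the standard Galois identity $\psi_R \circ \phi_R \circ \psi_R = \psi_R$, this also yields $\sigma_i = \psi_R(\gamma_i) = \psi_R(\{y_1, \ldots, y_i\})$. Hence each $(\sigma_i, \gamma_i)$ is a closed pair satisfying $\sigma_i = \psi_R(\gamma_i)$ and $\gamma_i = \phi_R(\sigma_i)$, which is precisely the defining condition for membership in $\PR$ once nonemptiness is checked. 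Here $\gamma_i \neq \emptyset$ always, since extensivity of the closure operator gives $\{y_1, \ldots, y_i\} \subseteq \gamma_i$. For $\sigma_i$, I would invoke the preceding ``Almost a Simplex'' lemma together with downward closure of $\dowy$: for $i \leq k-1$ the prefix $\{y_1, \ldots, y_i\}$ is a simplex of $\dowy$, so $\sigma_i = \psi_R(\{y_1, \ldots, y_i\}) \neq \emptyset$ and $(\sigma_i, \gamma_i) \in \PR$ (and likewise when $\{y_1,\ldots,y_k\}\in\dowy$).

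For the ordering, monotonicity is immediate: since $\{y_1, \ldots, y_{i-1}\} \subseteq \{y_1, \ldots, y_i\}$ and $\clsy$ is order-preserving, $\gamma_{i-1} \subseteq \gamma_i$, which is exactly $(\sigma_i, \gamma_i) \leq (\sigma_{i-1}, \gamma_{i-1})$. Strictness is where the hypothesis enters: informativeness says $y_i \notin (\clsy)(\{y_1, \ldots, y_{i-1}\}) = \gamma_{i-1}$, while extensivity gives $y_i \in \gamma_i$, so $y_i \in \sdiff{\gamma_i}{\gamma_{i-1}}$ and $\gamma_{i-1} \subsetneq \gamma_i$. Applying this with $i = 1$---where $\gamma_0 = (\clsy)(\emptyset) = \phi_R(X)$ and $\psi_R(\gamma_0) = X$, so $(X, \gamma_0)$ is exactly the top element $\oneR$---gives the strict step $(\sigma_1, \gamma_1) < (X, \gamma_0)$, and the remaining $i = 2, \ldots, k$ give the other strict steps.

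The one point requiring care---and the main obstacle---is the boundary behavior at $i = k$, together with keeping the $\PR$-versus-$\PRplus$ distinction straight. When the full set $\{y_1, \ldots, y_k\}$ fails to be a simplex of $\dowy$, one has $\psi_R(\{y_1, \ldots, y_k\}) = \emptyset$, whence $\sigma_k = \emptyset$ and $\gamma_k = \phi_R(\emptyset) = Y$; I would then argue that this pair $(\emptyset, Y)$ is precisely the adjoined bottom element $\zeroR$ of $\PRplus$, using that $\psi_R(Y) \subseteq \psi_R(\{y_1,\ldots,y_k\}) = \emptyset$. This keeps the terminal pair inside $\PRplus$ and leaves the chain statement intact. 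Confirming that the degenerate terminal pair lands exactly on $\zeroR$, rather than falling outside the lattice, is the only step that is not a direct reading-off of the closure-operator formalism.
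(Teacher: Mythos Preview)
Your proposal is correct and follows essentially the same approach as the paper: membership in $\PRplus$ via the closure-operator identities, then strictness of the ordering via $y_i \in \gamma_i \setminus \gamma_{i-1}$ from the informativeness hypothesis. The paper's proof is terser---it dismisses membership with ``by construction'' and reads strictness off the $\sigma$-component rather than the $\gamma$-component---but the substance is identical; your explicit handling of the $i=k$ boundary case (where $(\sigma_k,\gamma_k)$ may land on $\zeroR$) is a detail the paper leaves implicit.
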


Consequently, one can obtain all informative attribute release
sequences as subsequences of those constructed from maximal chains in
$\PRplus$.

\vspace*{-0.1in}

\paragraph{Comment about ``length'':}\ The {\em length} $\ell(P)$ of a
poset $P$ is defined to be one less than the number of elements
comprising a longest chain in the poset \cite{tpr:wachs}.
The {\em length} of an informative attribute release sequence $y_1,
\ldots, y_k$ is $k$.
These definitions match much like the dimension of a simplex is one
less than the number of its elements. \ Consequently, one obtains:

\vspace*{0.1in}

\begin{corollary}[Maximal Length]\label{MaxLength}
The maximum length of an informative attribute release sequence for a
nonvoid relation $R$ is $\ell(\PRplus)$.
\hspt(If $R$ has no iars, then the maximum length is $0$.)
\end{corollary}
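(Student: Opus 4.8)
The plan is to derive the corollary almost entirely from the two preceding lemmas, treating it as a translation between the length of a longest chain in $\PRplus$ and the length of a longest informative attribute release sequence, with care taken over the off-by-one in the two ``length'' conventions. Throughout, recall that $\ell(\PRplus)$ is one less than the number of elements in a longest chain, whereas an iars $y_1,\ldots,y_k$ has length $k$; so a chain with $k+1$ elements should correspond to an iars of length $k$. I would split the argument into the nondegenerate case (where at least one iars exists) and the degenerate case (where none does).

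For the nondegenerate case I would first establish the upper bound. Given any iars $y_1,\ldots,y_k$ with $k\ge 1$, Lemma~\ref{iarstochain} produces the chain $\{(\sigma_k,\gamma_k)<\cdots<(\sigma_1,\gamma_1)<(X,\gamma_0)\}$ in $\PRplus$. This chain has $k+1$ distinct elements, since the inequalities are strict, so $\ell(\PRplus)\ge k$; taking the supremum over all iars shows the maximum iars length is at most $\ell(\PRplus)$. For the matching lower bound I would take a longest chain $C$ in $\PRplus$, with $\ell(\PRplus)+1$ elements. The key observation is that a longest chain is necessarily a \emph{maximal} chain: if it could be refined by inserting an element, the result would be a strictly longer chain, contradicting maximality of its length. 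Writing $C=\{(\sigma_k,\gamma_k)<\cdots<(\sigma_0,\gamma_0)\}$ with $k=\ell(\PRplus)\ge 1$, Lemma~\ref{chaintoiars} applies and yields an iars of length exactly $k=\ell(\PRplus)$. The two bounds give equality.

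The degenerate case, where $R$ has no iars, is where I expect the only real subtlety, and it is essentially a matter of pinning down when this happens. By Definition~\ref{iars}, a single attribute $y_1$ is already an iars precisely when $y_1\notin(\clsy)(\emptyset)=\phi_R(X)$. Hence $R$ has no iars if and only if every attribute lies in $\phi_R(X)$, i.e.\ $\phi_R(X)=Y$, meaning every individual shares every attribute. In that situation $\psi_R(Y)=X$ as well, so the top $\oneR=(X,\phi_R(X))$ and bottom $\zeroR=(\psi_R(Y),Y)$ coincide, $\PRplus=\{(X,Y)\}$ is a single element, and $\ell(\PRplus)=0$. Conversely, if $R$ has an iars then some attribute lies outside $\phi_R(X)$, so $\phi_R(X)\subsetneq Y$, forcing $\oneR\neq\zeroR$ and $\ell(\PRplus)\ge 1$; this also confirms that the hypothesis $k\ge 1$ in the two lemmas is exactly the condition $\zeroR\neq\oneR$ flagged in the notes. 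Thus ``$R$ has no iars'' is equivalent to $\ell(\PRplus)=0$, and in this case the maximum length is $0$ by convention, again matching $\ell(\PRplus)$.

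Together the two bounds and the degenerate analysis cover every nonvoid $R$, so in all cases the maximum length of an informative attribute release sequence equals $\ell(\PRplus)$. The main thing to get right is not any hard argument but the consistent bookkeeping: that Lemma~\ref{iarstochain}'s chain has one more element than the iars' length, that a longest chain qualifies as a maximal chain so Lemma~\ref{chaintoiars} is applicable, and that the empty-iars boundary case collapses exactly to the one-element lattice $\PRplus=\{(X,Y)\}$.
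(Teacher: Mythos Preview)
Your proposal is correct and follows exactly the approach the paper intends: the corollary is stated immediately after Lemmas~\ref{chaintoiars} and~\ref{iarstochain} as a direct consequence of combining them, with the paper's remark about the two ``length'' conventions matching playing the same role as your off-by-one bookkeeping. Your explicit treatment of the degenerate case $\phi_R(X)=Y$ (equivalently $\zeroR=\oneR$) is more detailed than the paper's brief parenthetical but is entirely consistent with its handling of that boundary situation.
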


\begin{corollary}[Maximal Identification Length]\label{MaxIdent}
Suppose $R$ is a relation such that no attribute is shared by all
individuals.  For any $(\sigma, \gamma) \in \PR$, $\rslow(\sigma) =
\ell(P_{Q(\sigma,\gamma)}) + 2$.
\end{corollary}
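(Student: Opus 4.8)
The plan is to collapse $\rslow(\sigma)$ onto the length of a single interval in the Galois lattice $\PRplus$ and then identify that interval with the doubly-labeled poset of the link relation $Q(\sigma,\gamma)$. Throughout I would exploit the hypothesis that no attribute is shared by all individuals, i.e. $\phi_R(X)=\emptyset$: this forces $\oneR=(X,\emptyset)$ to be the adjoined top of $\PRplus$, and it forces $\sigma\neq X$ and $\gamma\neq\emptyset$ for every $(\sigma,\gamma)\in\PR$ (consistency with the definition of $\PR$). The target equation is
\[
\rslow(\sigma)\;=\;\ell\big([(\sigma,\gamma),\,\oneR]\big),
\]
the length of the interval running from $(\sigma,\gamma)$ up to the top, which I would prove by two inequalities. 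For the upper bound, given any iars $y_1,\dots,y_k$ with $\psi_R(\{y_1,\dots,y_k\})=\sigma$, Lemma~\ref{iarstochain} converts it into a chain $(\sigma_k,\gamma_k)<\cdots<(\sigma_1,\gamma_1)<(X,\gamma_0)$ in $\PRplus$; using $\gamma_0=\phi_R(X)=\emptyset$ together with $\sigma_k=\psi_R(\{y_1,\dots,y_k\})=\sigma$ and $\gamma_k=(\clsy)(\{y_1,\dots,y_k\})=\phi_R(\sigma)=\gamma$, this chain runs from $(\sigma,\gamma)$ to $\oneR$ and has $k+1$ elements, so the interval has length at least $k$. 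For the lower bound I would take a longest chain of the interval, extend it below $(\sigma,\gamma)$ to a maximal chain of all of $\PRplus$, feed that maximal chain to Lemma~\ref{chaintoiars}, and retain the prefix $y_1,\dots,y_m$ of the resulting iars that tracks the descent from $\oneR$ down to $(\sigma,\gamma)$; the closure-tracking clause gives $(\clsy)(\{y_1,\dots,y_m\})=\gamma$, hence $\psi_R(\{y_1,\dots,y_m\})=\psi_R(\gamma)=\sigma$, so this is an informative sequence of the interval's length that identifies $\sigma$.

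Next I would build an order-isomorphism between the \emph{open} interval $((\sigma,\gamma),\oneR)$ and the doubly-labeled poset $P_{Q(\sigma,\gamma)}$. Writing $Q=Q(\sigma,\gamma)$ with individual set $\tX$ and attribute set $\gamma$, the candidate map is $(\sigma',\gamma')\mapsto(\sigma'\setminus\sigma,\gamma')$. To see it is well defined in both directions I would establish the two set identities
\[
\psi_R(\delta)\;=\;\sigma\,\sqcup\,\psi_Q(\delta),
\qquad
\phi_R(\sigma\cup\tau)\;=\;\phi_Q(\tau),
\]
valid for nonempty $\delta\subseteq\gamma$ and nonempty $\tau\subseteq\tX$; both follow directly from the definitions of $\phi,\psi$ as intersections of rows/columns together with $\phi_R(\sigma)=\gamma$, and they are exactly the content certified by Lemma~\ref{interplocal}, which also pins down that the elements strictly inside the interval are precisely those whose $\gamma$-component is a nonempty simplex of $\dowqy$. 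Here the hypothesis $\phi_R(X)=\emptyset$ is used a second time: it guarantees the image lands strictly below $\oneR$, since $\sigma\cup\tau=X$ would force $\phi_Q(\tau)=\phi_R(X)=\emptyset$, which is excluded.

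Since the correspondence is manifestly order preserving in both directions, the open interval is isomorphic to $P_Q=P_{Q(\sigma,\gamma)}$. Adjoining the two \emph{fresh} endpoints $(\sigma,\gamma)$ and $\oneR$ (which lie strictly below and strictly above the entire image, irrespective of whether $P_Q$ has its own extremes) lengthens a longest chain by exactly two elements, so $\ell([(\sigma,\gamma),\oneR])=\ell(P_Q)+2$. Combining this with the interval-length equation of the first paragraph yields $\rslow(\sigma)=\ell(P_{Q(\sigma,\gamma)})+2$.

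The step I expect to be the main obstacle is the lower-bound half of the first equation. Lemma~\ref{chaintoiars} is stated for maximal chains of the whole lattice, so I must argue that a longest chain of the interval, suitably saturated and extended below $(\sigma,\gamma)$ to $\zeroR$, is genuinely maximal in $\PRplus$, and that the closure-tracking identity $(\clsy)(\{y_1,\dots,y_i\})=\gamma_i$ still pins the truncated prefix to $(\sigma,\gamma)$ rather than to a strictly larger element. A secondary nuisance is bookkeeping the degenerate case $\tX=\emptyset$, where the open interval and $P_Q$ are both empty and $\rslow(\sigma)=1$; under the convention $\ell(\emptyset)=-1$ the formula $\ell(P_Q)+2=1$ continues to hold, but this boundary behavior must be stated explicitly.
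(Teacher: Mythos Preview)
Your proposal is correct and follows essentially the same route as the paper. The paper packages your order-isomorphism between the open interval above $(\sigma,\gamma)$ and $P_{Q(\sigma,\gamma)}$ as Lemma~\ref{linkchain} (Chains in Links) together with Corollary~\ref{preservemax} (Maximal Chain Preservation), and then assembles the length count exactly as you do in Corollary~\ref{linklengths} (Longest Localization Sequences); your anticipated obstacle---saturating a longest interval chain to a maximal chain of $\PRplus$ so that Lemma~\ref{chaintoiars} applies---is handled precisely by that saturation step, and your treatment of the degenerate case $\tX=\emptyset$ matches the paper's.
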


\paragraph{Collaboration Example Re-Revisited:}  Returning again to
the travel guide example, observe in Figure~\ref{authorlattice} that
$\ell(\PGplus)=4$.  This tells us, by Corollary~\ref{MaxLength}, that
a longest informative attribute release sequence for relation $G$
contains four attributes.  Indeed, we can pick three attributes to
identify an individual, and then a fourth to form an inconsistency.
How do we know that we can choose three attributes informatively to
identify an individual?  See, for example, $\Lk(\dowx, \hauthorC)$ in
Figure~\ref{clairelink}, with associated lattice $\PCplus$ in
Figure~\ref{clairelattice}.  In this case, $\ell(\PC) + 2 =
\ell(\PCplus) = 3$.  Moreover, by the construction of
Lemma~\ref{chaintoiars}, one can read off four different such
informative sequences, namely the first four sequences appearing in
Figure~\ref{clairerelease}.

We thus see that $\rslow(\{\hauthorC\}) = 3$, and as we have seen
previously, $\rfast(\{\hauthorC\}) = 2$.  In other words, if \authorC\
has control over how to release information, she can draw out
identification for three books, while the fastest anyone can identify
her is via two books.

\vspace*{0.1in}

\begin{figure}[h]
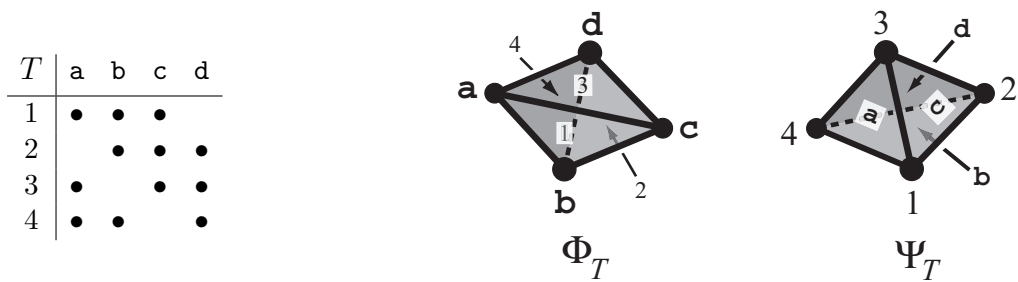

\begin{center}
\quad
\begin{minipage}{1.5in}{$\begin{array}{c|cccc}
\hbox{\large $T$} & \ta  & \tb  & \tc  & \td  \\[2pt]\hline
              1   & \one & \one & \one &      \\
              2   &      & \one & \one & \one \\
              3   & \one &      & \one & \one \\
              4   & \one & \one &      & \one \\
\end{array}$}
\end{minipage}
\hspace*{0.75in}
\begin{minipage}{3in}
\ifig{tetrahedra}{scale=0.5}
\end{minipage}
\end{center}
\vspace*{-0.25in}
\caption[]{Relation $T$ describes four individuals with four
  attributes, with Dowker complexes that are boundary complexes of
  tetrahedra, meaning they have homotopy type $\Stwo$.}
\label{tetrahedra2}
\end{figure}

In contrast, consider the tetrahedral relation of
Figure~\ref{tetrahedra2}.  The Dowker complexes are boundary
complexes, so we know that no attribute or association inference is
possible.  This is evident from the lattice $\PTplus$ depicted in
Figure~\ref{tetralattice2} as well.  It has length 4, just as did the
travel guide lattice, but the inference structure is now different.
For any $(\sigma, \gamma) \in \PT$, with $Q=Q(\sigma, \gamma)$
modeling $\Lk(\dowTx, \sigma)$ on attributes $\gamma$, we see that
$\dowqy = \partial(\gamma)$ and thus that $\ell(\PQplus) =
\ell(\PQ) + 2 = \abs{\gamma}$.  This tells us, by
Theorem~\ref{isotropyThm} and Corollary~\ref{MaxIdent}, that
$\rfast(\sigma) = \rslow(\sigma) = \abs{\gamma}$, as one would expect
in an inference-free world.
For a specific instance, Figure~\ref{tetralink} depicts
$Q=Q(\{3\},\{\ta,\tc,\td\})$ along with $Q$'s Dowker complexes and
the lattice $\PQplus$.

\begin{figure}[h]
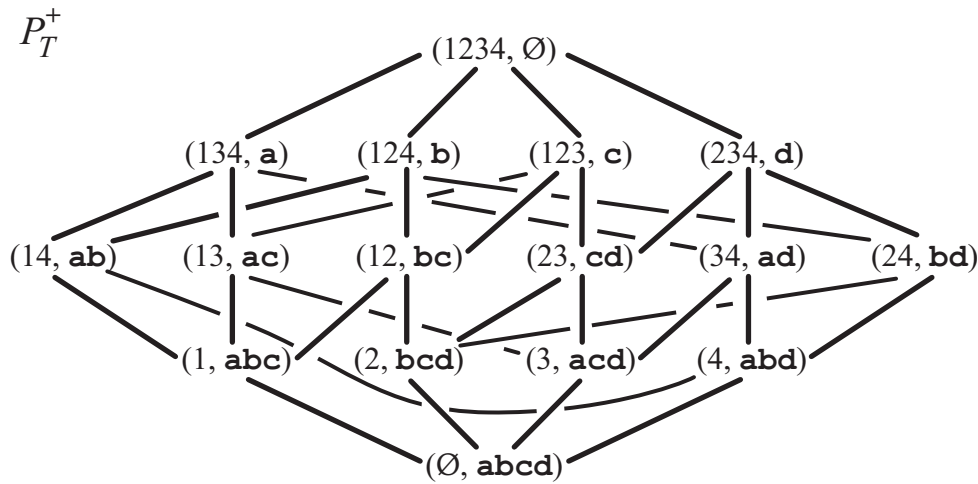

\begin{center}
\ifig{tetra_lattice}{scale=0.5}
\end{center}
\vspace*{-0.2in}
\caption[]{The lattice $\PTplus$ for the tetrahedral relation of
  Figure~\ref{tetrahedra2}.  Each element is an ordered pair of sets
  $(\sigma, \gamma)$ such that $\sigma = \psi_T(\gamma)$ and $\gamma =
  \phi_T(\sigma)$.  (We have elided commas and braces in sets, for
  ease of viewing.)  This lattice is isomorphic to the Boolean lattice
  on four atoms, consistent with the fact that $T$ preserves both
  attribute and association privacy.  If one removes the top and
  bottom elements, the remaining poset $\PT$ has $\Stwo$ homotopy
  type.}
\label{tetralattice2}
\end{figure}

\subsection{Hidden Holes}
\markright{Hidden Holes}
\label{hiddenholes}

We saw via Theorem~\ref{isotropyThm} that whenever a nonempty set of
attributes $\gamma$ minimally identifies some set of individuals
$\sigma$, then the link of $\sigma$, restricted to those simplices
that are witnessed by attributes in $\gamma$, defines a sphere in both
Dowker complexes.  It is a topological hole.

All sets of individuals that are identifiable in some way, in other
words, that appear in the doubly-labeled poset $\PR$ of a relation,
must be minimally identifiable in some way.  That suggests there must
be holes everywhere in a relation's Dowker complexes, and yet we do
not see many holes.  What is going on?

The answer is that the restricted link construction $Q(\sigma,
\gamma)$ focuses on a particular subrelation, thereby
exposing/highlighting a potential hole.  The hole could in fact be
hidden, that is, filled-in by the encompassing relation.  For
instance, we saw that relation $Q$ of Figure~\ref{tetralink} defines
an $\Sone$ hole.  If $Q$ happened to be a subrelation of relation $R$
as in Figure~\ref{tetralinkfilled}, then $Q$ would not appear as a
hole when viewed in $R$, merely a boundary.

Notice that the lattice $\PRplus$ is isomorphic to the lattice
$\PQplus$.  The difference is that for every lattice element $(\sigma,
\gamma)$, the set of individuals $\sigma$ includes $3$ in $\PRplus$
but not in $\PQplus$.  Consequently, the bottom element
$(3,\ta\tc\td)$ of $\PRplus$ is actually an element of the poset
$\PR$, meaning $\Delta(\PR)$ is a cone, hence contractible.  In
contrast, the poset $\PQ$ does not contain the bottom element
$(\emptyset,\ta\tc\td)$ of $\PQplus$ and so $\Delta(\PQ)$ has $\Sone$
homotopy type.

\paragraph{Aside:} Why not always focus on a relation's lattice rather
than its doubly-labeled poset?  Because the lattice is always
contractible.  Any informative topology lies in the poset.  See
\cite{tpr:wachs}.

\paragraph{Conclusion:} Even though $R$ is contractible, it offers the
same choices for informative attribute release sequences as does $Q$.
More generally, the analysis of this subsection suggests that one look
for potential holes in {\em subrelations\,} of a given relation.
Looking at links is one way to focus on subrelations.  Removing
individuals or attributes that represent cone apexes is another, as we
just saw.  More generally, any simplicial cycle may define a useful
hole even though the hole appears to be filled-in.  So long as one can
remove any coboundary of that cycle, {\em by restricting the relation
to a subrelation without destroying the cycle}, the cycle is
informational.  In particular, it offers opportunities for informative
attribute release sequences, as the next subsection makes precise.

\begin{figure}[h]
\vspace*{-0.15in}
\begin{center}
\begin{minipage}{1in}{$\begin{array}{c|ccc}
\hbox{\large $Q$} & \ta  & \tc  & \td  \\[2pt]\hline
              1   & \one & \one &      \\
              2   &      & \one & \one \\
              4   & \one &      & \one \\
\end{array}$}
\end{minipage}
\hspace*{0.3in}
\begin{minipage}{2in}
\ifig{tetralink3}{scale=0.4}
\end{minipage}
\begin{minipage}{2in}
\hspace*{0.3in}
\ifig{tetralink3_lattice}{scale=0.375}
\end{minipage}
\end{center}
\vspace*{-0.225in}
\caption[]{Relation $Q$ models $\Lk(\dowTx, 3)$, with $T$ as in Figure~\ref{tetrahedra2}.}
\label{tetralink}
\end{figure}

\begin{figure}[h]
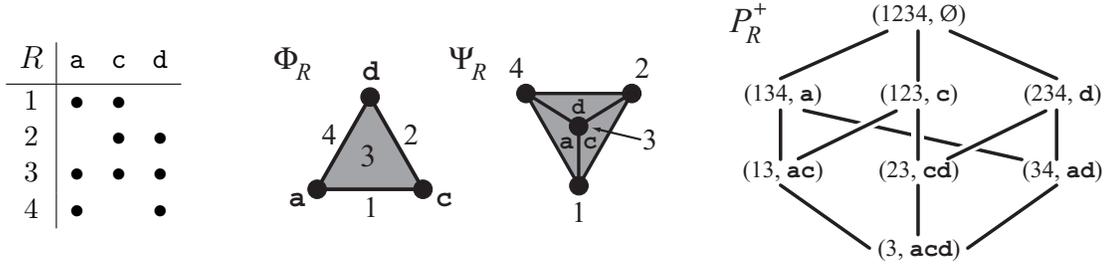

\vspace*{0.2in}
\begin{center}
\begin{minipage}{1in}{$\begin{array}{c|ccc}
\hbox{\large $R$} & \ta  & \tc  & \td  \\[2pt]\hline
              1   & \one & \one &      \\
              2   &      & \one & \one \\
              3   & \one & \one & \one \\
	      4   & \one &      & \one \\
\end{array}$}
\end{minipage}
\hspace*{0.3in}
\begin{minipage}{2in}
\ifig{tetralinkfilled}{scale=0.4}
\end{minipage}
\begin{minipage}{2in}
\hspace*{0.3in}
\ifig{tetralinkfilled_lattice}{scale=0.375}
\end{minipage}
\end{center}
\vspace*{-0.225in}
\caption[]{Relation $R$ fills in the hole of relation $Q$ from
  Figure~\ref{tetralink}.  It is still true that $Q$ models a link,
  namely $\Lk(\dowx, 3)$.  Relations $R$ and $Q$ have the same lattice
  structure, but the bottom element of $\PRplus$ defines the set of
  individuals $\{3\}$, whereas the bottom element of $\PQplus$ defines
  the empty set.  Thus relation $R\hspt$ defines a contractible poset
  for $\PR$, whereas relation $Q$ defines an $\Sone$ hole for $\PQ$.}
\label{tetralinkfilled}
\end{figure}

\subsection{Bubbles are Lower Bounds for Privacy}
\markright{Bubbles are Lower Bounds for Privacy}
\label{bubbles}

We have seen minimal identifiability characterized by holes, via
Theorem~\ref{isotropyThm}.  The previous subsections make clear that
the topological characterization of $\rslow$ is not so direct.  In
this subsection we establish a sufficient condition.  We will see that
holes provide lower bounds for $\rslow$.  We will focus on a relation
and its links, but these results apply more generally to any hidden
holes made visible by focusing on subrelations, as suggested in the
previous subsection.

\vspace*{0.1in}

\noindent The connection between a relation's poset $\PR$ and its
lattice $\PRplus$ suggests the following:

\begin{definition}[Almost a Join-Based Lattice]\label{almostlattice}
  \, Let $P$ be a finite poset.  We say that $P$ is \,\mydefem{almost}
  \mydefem{a join-based lattice} if adjoining a new topmost element
  $\topone$ means $P \union \{\topone\}$ is a join semi-lattice.
\end{definition}

\paragraph{Comments:}  (a) We adjoin a new $\topone$ even if $P$
  already has a unique top (i.e., maximal) element.
  \ (b) Since $P$ is finite, if $P$ is almost a join-based lattice,
  then if we adjoin both a new topmost element $\topone$ and a new
  bottommost element $\botzero$, the result will be a lattice.  See
  also \cite{tpr:wachs}.

\vspace*{0.1in}

\noindent This definition leads to the following result (for a proof, see
  Appendix~\ref{manylongchains}):

\newcounter{manychainThm}
\setcounter{manychainThm}{\value{theorem}}
\begin{theorem}[Many Maximal Chains]\label{manychains}
Let $P$ be almost a join-based lattice.
Suppose $P$ has reduced integral homology in dimension $k \geq 0$,
that is, $\widetilde{H}_k(\Delta(P); \Z) \neq 0$.

Then there are at least $(k+2)!$ maximal chains in $P$ of length at
least $k$.
\end{theorem}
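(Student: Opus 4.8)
The plan is to prove the statement by a double induction: an outer induction on the homology dimension $k$, and, for each fixed $k$, an inner induction on $\abs{P}$. Write $S(k)$ for the assertion of the theorem in dimension $k$. Throughout I work with the finite lattice $\hat{P} = P \cup \{\hat{0}, \hat{1}\}$ guaranteed by comment~(b) following Definition~\ref{almostlattice}; its proper part is exactly $P$, so $\Delta(P)$ is the order complex of the proper part of a finite lattice, and maximal chains of $P$ correspond (by adjoining $\hat{0}$ and $\hat{1}$) to maximal chains of $\hat{P}$. For the base case $k=0$, the hypothesis $\widetilde{H}_0(\Delta(P);\Z)\neq 0$ merely says $\Delta(P)$ is disconnected, so $P$ has at least two components, each containing at least one maximal chain; since every nonempty chain has length $\geq 0$, this already yields the required $2 = (0+2)!$ chains.

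For the inductive step (with $S(k-1)$ assumed) I fix an atom $a$ of $\hat{P}$, i.e.\ a minimal element of $P$, and apply the Mayer--Vietoris sequence to $\Delta(P) = \Delta(P\setminus a) \cup \overline{\mathrm{St}}(a)$, whose intersection is $\mathrm{Lk}(a)$. Because $a$ is minimal, $\mathrm{Lk}(a)=\Delta(P_{>a})$ and the closed star is a contractible cone, so $\widetilde{H}_k(\Delta(P))\neq 0$ forces one of two cases. In the first, $\widetilde{H}_k(\Delta(P\setminus a))\neq 0$; here $P\setminus a$ is again almost a join-based lattice, since removing a minimal element from the join-semilattice $P\cup\{\hat{1}\}$ cannot destroy a join (a join lies above its arguments, hence is never equal to $a$), so the inner induction on $\abs{P}$ supplies $(k+2)!$ maximal chains of $P\setminus a$ of length $\geq k$, and each injects into a maximal chain of $P$ of length $\geq k$ (it is either already maximal in $P$ or extends by prepending $a$). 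In the second case the connecting map is injective, so $\widetilde{H}_{k-1}(\Delta(P_{>a}))\neq 0$; the interval $(a,\hat{1}]$ is itself a join-semilattice, so $P_{>a}$ is almost a join-based lattice, and $S(k-1)$ produces $(k+1)!$ maximal chains of $P_{>a}$ of length $\geq k-1$. Prepending $a$ (using minimality of $a$ to check saturation at the bottom, and maximality of the top of each chain in $P_{>a}$ to check it is maximal in $P$) turns each into a maximal chain of $P$ of length $\geq k$ whose minimum is $a$.

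To conclude I split on whether \emph{some} atom falls in the first case, in which case I am done immediately by the size induction, or \emph{every} atom falls in the second case. In the latter situation each atom $a$ contributes at least $(k+1)!$ maximal chains of $P$ whose bottom element is $a$, and chains with distinct bottoms are distinct. It therefore remains to show that $P$ has at least $k+2$ minimal elements, so that these contributions multiply to $(k+2)\cdot(k+1)! = (k+2)!$.

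I expect this atom-count step to be the main obstacle, and I would settle it with the Crosscut Theorem \cite{tpr:bjorner}: $\Delta(P)$ is homotopy equivalent to the crosscut complex on the atom set $A$ of $\hat{P}$, and every atom is a vertex of that complex. A simplicial complex on at most $k+1$ vertices has vanishing reduced homology in dimension $k$ (its only potential $k$-simplex cannot be a cycle), so $\widetilde{H}_k(\Delta(P))\neq 0$ forces $\abs{A}\geq k+2$. Multiplying through then gives at least $(k+2)!$ distinct maximal chains of length $\geq k$, closing the induction. Aside from this atom bound, the one routine-but-essential bookkeeping is verifying that each reduced poset ($P\setminus a$ and $P_{>a}$) is again almost a join-based lattice, and that the two prepending maps are injections onto maximal chains of the required length; both follow cleanly from the minimality of $a$.
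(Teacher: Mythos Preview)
Your proof is correct and takes a genuinely different route from the paper's. The paper works from the \emph{top}: it fixes a reduced $k$-cycle $z$, runs an iterative ``cycle tightening'' procedure (Lemma~\ref{tighten}) that replaces $z$ by a homologous cycle until every maximal element $p$ of the support satisfies $\widetilde{H}_{k-1}(\lk(K,p))\neq 0$, and then applies the outer induction to the open lower interval $Q_p=\{s\in S^\vee:s<p\}$; the count $|S_{\max}|\geq k+2$ comes from a Nerve Lemma argument covering $\Delta(S^\vee)$ by the contractible pieces $\Delta(S_{\{t\}})$ for $t\in S_{\max}$ (Lemma~\ref{maxelemcard}). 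You instead work from the \emph{bottom}: a single Mayer--Vietoris step at an atom $a$ replaces the cycle-tightening loop by a clean size induction on $|P|$, the recursive object is the open \emph{upper} interval $P_{>a}$, and the Crosscut/Nerve bound on the number of atoms plays the same role as the paper's bound on $|S_{\max}|$. Your argument is shorter and uses only off-the-shelf tools; what the paper's more explicit construction buys is that the chains are located inside the support of a specific homology generator, so that for each $p\in S_{\max}$ one gets $(k+1)!$ chains \emph{passing through} $p$. The paper exploits this refinement downstream (see the proof of Theorem~\ref{stratdelay} on page~\pageref{manylongsubchains}), so if you later need that localisation you would have to dualise your argument or redo the paper's tightening step.
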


\vspace*{-0.15in}

\paragraph{Interpretation:}  The theorem says that a homology hole
acts like a spherical hole, from the perspective of producing
informative attribute release sequences.  Consider again the
tetrahedral relation of Figure~\ref{tetrahedra2}.  The Dowker
complexes form two-dimensional spherical holes, so $k=2$ and $(k+2)! =
24$.  The poset $\PT$ is the proper part of the lattice shown in
Figure~\ref{tetralattice2}, that is, all the elements except the
topmost and bottommost.  There are indeed 24 different chains of
length $2$, i.e., containing three elements, in $\PT$.

These chains represent the 24 different ways in which one might start
at a vertex of one of the Dowker complexes, walk from that vertex to
the middle of an incident edge, then walk from the middle of that edge
to the centroid of an encompassing triangle.  For instance: the walk
from the vertex $\{\ta\}$ to the edge $\{\ta, \tc\}$ to the triangle
$\{\ta, \tc, \td\}$ in $\dowTy$.  One can think of this walk as
sequential acquisition of attribute information about an individual in
a particular order.  The order may perhaps be determined by chance or
perhaps by an individual purposefully releasing information in a
particular order.  Once (and only once) one has arrived at the
centroid of the triangle, one has identified the individual uniquely
(in this case, as individual \#3).

With that observation, we finally see how the global geometry/topology
of the Dowker complexes, as encoded in their doubly-labeled poset,
affects inference, beyond the local simplicial collapses of the
closure operators.  We will presently formalize this insight via two
corollaries to Theorem~\ref{manychains}.

\begin{figure}[h]
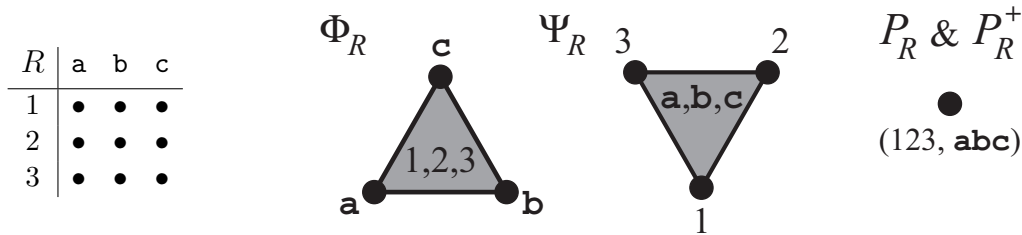

\begin{center}
\begin{minipage}{1in}{$\begin{array}{c|ccc}
\hbox{\large $R$} & \ta  & \tb  & \tc  \\[2pt]\hline
              1   & \one & \one & \one \\
              2   & \one & \one & \one \\
              3   & \one & \one & \one \\
\end{array}$}
\end{minipage}
\hspace*{0.3in}
\begin{minipage}{3.75in}
\hspace*{0.2in}
\ifig{trivial3}{scale=0.5}
\end{minipage}
\end{center}
\vspace*{-0.25in}
\caption[]{Relation $R$ describes three individuals all of whom have
  the exact same three attributes.  The Dowker complexes are both
  triangles, but the poset $\PR$ is a single point.  This single point
  captures the indistinguishability of the individuals and the
  attributes.  In fact, $\PRplus=\PR$, meaning one can infer
  everything from nothing (in the context of relation $R$).}
\label{trivialthree}
\end{figure}

We caution that the dimension of a simplex in a Dowker complex is not
meaningful in and of itself, since the simplex may collapse under the
closure operators.  (Consider the example of
Figure~\ref{trivialthree}, in which the Dowker complexes are fully
filled-in triangles, but the doubly-labeled poset is a single point.)
Instead, the length of chains in a relation's poset is significant.
Holes prevent these chains from being short, summarized as follows
(proofs appear in Appendix~\ref{manylongchains}):

\newcounter{holeinferCor}
\setcounter{holeinferCor}{\value{theorem}}
\begin{corollary}[Holes Reduce Inference]\label{holesreduceinference}
Let $R$ be a nonvoid relation.  Suppose $\PR$ has reduced integral
homology in dimension $k \geq 0$.  Then there are at least $(k+2)!$
maximal chains in $\PR$ of length at least $k$.
\end{corollary}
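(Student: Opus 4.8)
The plan is to deduce this corollary directly from Theorem~\ref{manychains} by verifying its sole hypothesis for the poset $P = \PR$, namely that $\PR$ is almost a join-based lattice in the sense of Definition~\ref{almostlattice}. Once that is established, applying Theorem~\ref{manychains} with $P = \PR$ immediately yields at least $(k+2)!$ maximal chains of length at least $k$, since $\widetilde{H}_k(\Delta(\PR); \Z) \neq 0$ by assumption.

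The crux is therefore to show that $\PR$ is almost a join-based lattice, and here the homology hypothesis does the essential work. First I would observe that nontrivial reduced homology forces $\PR$ to have neither a unique maximal element nor a unique minimal element: were there a unique top (respectively, a unique bottom), the order complex $\Delta(\PR)$ would be a cone with that element as apex, hence contractible, so every reduced homology group would vanish, contradicting $\widetilde{H}_k(\Delta(\PR); \Z) \neq 0$. Consulting Definition~\ref{galoislattice}, the absence of a unique top of the form $(X, \gamma)$ and of a unique bottom of the form $(\sigma, Y)$ means that the Galois lattice $\PRplus$ is formed by adjoining a genuinely new top $\oneR$ and a genuinely new bottom $\zeroR$ to $\PR$. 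Consequently $\PR$ is exactly the proper part $\PRplus \setminus \{\zeroR, \oneR\}$.

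Next I would exploit the lattice structure of $\PRplus$. Because $R$ is nonvoid, $X$ and $Y$ are nonempty, so by Definition~\ref{galoislattice} the poset $\PRplus$ is a finite lattice with operations $\join$ and $\meet$. Deleting the bottom element leaves a join semi-lattice: for any $a, b \in \PRplus \setminus \{\zeroR\}$ one has $a \join b \geq a > \zeroR$, so $a \join b \neq \zeroR$, and the set is closed under join. But $\PRplus \setminus \{\zeroR\} = \PR \cup \{\oneR\}$, and $\oneR$ lies above every element of $\PR$ while not belonging to $\PR$. Identifying $\oneR$ with the adjoined top $\topone$ of Definition~\ref{almostlattice} thus exhibits $\PR \cup \{\topone\}$ as a join semi-lattice, so $\PR$ is almost a join-based lattice. (One should check that the join inherited from $\PRplus$ matches the join of Definition~\ref{almostlattice}: if $a \join b \in \PR$ it is the least upper bound of $a, b$ inside $\PR$, and if $a \join b = \oneR$ then $a, b$ can have no common upper bound in $\PR$, so their join is correctly $\topone$.)

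With $\PR$ shown to be almost a join-based lattice, Theorem~\ref{manychains} applies verbatim and delivers the claimed $(k+2)!$ maximal chains of length at least $k$ in $\PR$. I expect the only genuinely delicate step to be the first one, translating the homology hypothesis into the combinatorial statement that $\PR$ possesses no extremal element; everything after that is bookkeeping about how $\PRplus$ sits above $\PR$, and the substantive combinatorics is entirely contained in Theorem~\ref{manychains}, which we are free to assume.
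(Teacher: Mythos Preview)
Your proposal is correct and follows the same approach as the paper: verify that $\PR$ is almost a join-based lattice (using the homology hypothesis to note that $\oneR \notin \PR$, so that $\oneR$ may serve as the adjoined $\topone$), then invoke Theorem~\ref{manychains}. Your discussion of the bottom element $\zeroR$ is unnecessary for the ``almost a join-based lattice'' condition (only the top matters there), but it does no harm; the paper's proof is essentially a terser version of yours.
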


\newcounter{holerecCor}
\setcounter{holerecCor}{\value{theorem}}
\begin{corollary}[Holes Defer Recognition]\label{holesdeferrecog}
Let $R$ be a nonvoid relation and let $(\sigma, \gamma) \in \PR$.

\vst

Define $Q = Q(\sigma,\gamma)$ as per Definition~\ref{restrictedlink}
and recall Definition~\ref{DefIdentLengths}, from
pages~\pageref{DefIdentLengths}--\pageref{restrictedlink}.

\vst

Suppose $\PQ$ is well-defined and has reduced integral homology in
dimension $k \geq 0$.

\vst

Then there are at least $(k+2)!$ distinct informative attribute
release sequences $y_1, \ldots, y_\ell$ for $R$, each with $\ell \geq
k+2$, such that $\psi_R(\{y_1, \ldots, y_\ell\}) = \sigma$.
Consequently, $\rslow(\sigma) \geq k+2$.

\end{corollary}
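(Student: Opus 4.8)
The plan is to push the homology of $\PQ$ into many long maximal chains, convert those chains into informative attribute release sequences for $Q$ via Lemma~\ref{chaintoiars}, and then reinterpret each sequence inside the encompassing relation $R$ using Lemma~\ref{interplocal}. First I would record three consequences of the hypotheses. Since $(\sigma,\gamma)\in\PR$ we have $\gamma=\phi_R(\sigma)$, so $Q=Q(\sigma,\gamma)$ is exactly the relation modelling $\lk(\dowx,\sigma)$ of Definition~\ref{linksigma}; hence Lemma~\ref{interplocal} applies verbatim to $Q$. Moreover, a finite poset with a unique minimum (or, dually, a unique maximum) has a contractible order complex, namely a cone, so $\widetilde{H}_k(\Delta(\PQ);\Z)\neq 0$ with $k\geq 0$ forces $\PQ$ to have neither a unique top nor a unique bottom; in particular $\psi_Q(\gamma)=\emptyset$, so the adjoined bottom of the Galois lattice is $\zeroQ=(\emptyset,\gamma)$. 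The same nonvanishing rules out the degenerate cases $\sigma=X$ and $\tX=\emptyset$ (which would make $\dowqy$ void or empty, hence with vanishing reduced homology in nonnegative degrees), so $Q$ is a nonvoid relation on $\tX\times\gamma$ with both factors nonempty.

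Next I would produce the chains. Applying Corollary~\ref{holesreduceinference} to the nonvoid relation $Q$ yields at least $(k+2)!$ maximal chains in $\PQ$ of length at least $k$. Because $\PQ$ has no unique extreme elements, adjoining $\zeroQ$ and the top $\oneQ$ extends each maximal chain of $\PQ$ (whose endpoints are minimal and maximal in $\PQ$) to a maximal chain of $\PQplus$, raising its length by exactly two; the extension map is injective, so I obtain at least $(k+2)!$ maximal chains in $\PQplus$, each of length $\ell\geq k+2$. Lemma~\ref{chaintoiars}, applied to $Q$, turns each such chain $\{(\sigma_\ell,\gamma_\ell)<\cdots<(\sigma_0,\gamma_0)\}$ into an informative attribute release sequence $y_1,\ldots,y_\ell$ for $Q$ with $(\clsqy)(\{y_1,\ldots,y_i\})=\gamma_i$ for all $i$, where $\gamma_\ell=\gamma$.

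The crux is reinterpreting these sequences in $R$. Every $y_i\in\gamma$, so all prefixes lie in $\gamma$ and Lemma~\ref{interplocal} is available. For $1\leq i\leq\ell$ the prefix $\{y_1,\ldots,y_{i-1}\}$ is contained in $\gamma_{i-1}\in\dowqy$, hence is itself a simplex of $\dowqy$; Lemma~\ref{interplocal}(ii) then gives $(\clsqy)(\{y_1,\ldots,y_{i-1}\})=(\clsy)(\{y_1,\ldots,y_{i-1}\})$ for nonempty prefixes and $(\clsqy)(\emptyset)\supseteq(\clsy)(\emptyset)$ for the empty prefix. In either case $y_i\notin(\clsqy)(\{y_1,\ldots,y_{i-1}\})$ forces $y_i\notin(\clsy)(\{y_1,\ldots,y_{i-1}\})$, so $y_1,\ldots,y_\ell$ is informative for $R$ as well. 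For the localization, since $(\clsqy)$ sends $\{y_1,\ldots,y_\ell\}$ to the full attribute set $\gamma$ of $Q$, the identity $\psi_Q=\psi_Q\phi_Q\psi_Q$ gives $\psi_Q(\{y_1,\ldots,y_\ell\})=\psi_Q(\gamma)=\emptyset$, so $\{y_1,\ldots,y_\ell\}\notin\dowqy$; Lemma~\ref{interplocal}(i) then yields $\psi_R(\{y_1,\ldots,y_\ell\})=\sigma$, as required.

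Finally I would count and conclude. By the ``moreover'' clause of Lemma~\ref{chaintoiars} (equivalently Lemma~\ref{iarstochain}), the originating chain is recovered from its sequence via $\gamma_i=(\clsqy)(\{y_1,\ldots,y_i\})$, so distinct chains yield distinct sequences; thus the $(k+2)!$ chains give $(k+2)!$ distinct informative attribute release sequences for $R$, each of length $\ell\geq k+2$ and each localizing to $\sigma$. Since $\rslow(\sigma)$ is the maximal length of such a sequence, $\rslow(\sigma)\geq k+2$ follows at once. I expect the main obstacle to be the reinterpretation step of the third paragraph: one must cleanly separate the genuine simplices (the prefixes, where the $Q$- and $R$-closures agree) from the terminal nonface (where the collapse $\psi_Q(\gamma)=\emptyset$ triggers the jump to $\psi_R=\sigma$), and verify that the homology hypothesis genuinely eliminates the unique extreme elements that would otherwise cost two units of chain length.
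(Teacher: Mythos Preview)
Your proof is correct and follows essentially the same strategy as the paper: invoke Corollary~\ref{holesreduceinference} on $\PQ$ to obtain $(k+2)!$ long maximal chains, then transfer to $R$ to produce the required informative attribute release sequences identifying $\sigma$, with distinctness handled via the ``Moreover'' clause of Lemma~\ref{chaintoiars}.

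The only noteworthy difference is the transfer mechanism. The paper lifts chains directly from $\PQ$ to maximal chains at and above $(\sigma,\gamma)$ in $\PRplus$ using the chain correspondence of Lemma~\ref{linkchain}/Corollary~\ref{preservemax}, and then applies Lemma~\ref{chaintoiars} once, to $R$. You instead apply Lemma~\ref{chaintoiars} to $Q$ first and then push the resulting sequences into $R$ attribute-by-attribute via Lemma~\ref{interplocal}, carefully splitting the ``prefix in $\dowqy$'' case (where the two closures agree) from the terminal ``nonface'' case (where $\psi_Q$ collapses and $\psi_R$ lands on $\sigma$). Both routes are short and rely on the same circle of lemmas; the paper's route avoids the case split, while yours makes the role of Lemma~\ref{interplocal} more explicit. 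Your contractibility argument that $\PQ$ can have no unique extremum is a clean way to see simultaneously that $\zeroQ$ and $\oneQ$ are genuinely adjoined (the paper argues the analogous fact for $\oneR$ by observing that no attribute can be shared by all individuals).
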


\paragraph{Comment:}  Since $(\sigma, \gamma) \in \PR$ and by the
assumptions about $\PQ$, relation $Q(\sigma,\gamma)$ models the link
$\Lk(\dowx, \sigma)$.

\paragraph{Terminology:} \ Here and elsewhere, the term
{\tt{\char13}distinct{\char13}} means {\tt{\char13}different{\char13}}
or {\tt{\char13}distinguishable{\char13}}, as determined by the given
context.  For instance, the two sequences \ta, \tb, \tc\ \ and \ \ta,
\tc, \tb\ \ are distinct sequences even though the underlying set is
$\{\ta, \tb, \tc\}$ in both cases.

\paragraph{Collaboration Example Once Again:}  The Dowker complexes
for the travel guide example of Figure~\ref{travelguides} have $\Sone$
homotopy type, meaning $\PG$ has homology in dimension $k=1$.
Corollary~\ref{holesreduceinference} therefore says that there are at
least 6 maximal informative attribute release sequences in $\PG$.
Being maximal, each such sequence must identify some author, since
each author is uniquely identifiable via relation $G$.  In fact, we
saw that there were 4 different maximal informative attribute release
sequences for identifying any one author.  Since there are 5 authors,
$\PG$ actually contains at least 20 distinct maximal informative
attribute release sequences.  Indeed, one can readily see, via
$\PGplus$ in Figure~\ref{authorlattice} on
page~\pageref{authorlattice}, that $\PG$ contains exactly 20 maximal
informative attribute release sequences.

\vspace*{0.05in}

Can we find these 20 sequences via our corollaries?  Not by looking at
individual authors, since, as we saw via Figure~\ref{clairelink}, the
link of any one author is contractible, meaning that
Corollary~\ref{holesdeferrecog} does not help us directly.

\vspace*{0.05in}

There is more to be said, however: The proof of
Theorem~\ref{manychains} actually establishes that, for certain
representatives of a homology class, the maximal elements in the
support of that representative each give rise to $(k+1)!$ many chains.
In the collaboration example, by choosing the homology generator
appropriately, this implies that for each author there are at least
two informative attribute release sequences for identifying the
author.  That gives us 10 sequences overall for relation $G$.  To find
20, we would likely want to examine links of pairs of co-authors.
There are 10 such links, 5 of which\footnote{For the curious reader:
Each of the remaining 5 links is a singleton.  For instance,
$\Lk(\dowGx, \{2,4\})$ is the simplicial complex consisting of the
single vertex $\{3\}$.  It is generated in the corresponding link
relation by attribute \cityC.  These comments are another way of
saying that the only author who has co-authored a book together with
both \authorB\ and \authorD\ is \authorC, producing the travel guide
for \cityC.  Observe as well that the pair of co-authors $\{\authorB,
\authorD\}$ does {\em not}\, appear as the $\sigma$ component of an
element $(\sigma, \gamma)$ in $\PG$ or $\PGplus$.  This means one
cannot identify just the pair of co-authors $\{\authorB, \authorD\}$,
but invariably infers the full triple $\{\authorB, \authorC,
\authorD\}$ of collaborators, given relation $G$.}
look similar to the one in Figure~\ref{BenClaireLink} on
page~\pageref{BenClaireLink}.  Each of those is an instance of
$\Szero$, meaning each has two different iars for identifying the pair
of co-authors.  That therefore gives us 10 iars for identifying
certain pairs of co-authors, and thus 20 iars for identifying
individual authors (each author participates in two of the
identifiable pairs).

Corollary~\ref{holesdeferrecog} further allows us to conclude that the
maximal length of an informative attribute release sequence for
identifying an identifiable pair of co-authors is at least two.
Consequently, the maximal length of an informative attribute release
sequence that identifies a given individual author must be at least
(and thus exactly) three.

\clearpage
\section{Experiments}
\markright{Experiments}
\label{experiments}

An individual may wish to reveal information about himself/herself
while delaying full identification.  We saw in Section~\ref{bubbles}
that homology provides a lower bound on the number and length of such
informative attribute release sequences.  The lower bound need not be
tight.  In order to explore these results experimentally, we examined
two datasets of different character:

\vspace*{0.05in}

\begin{description}

\item[Medals:] We obtained this dataset in August 2014 from

  \hspace*{0.4in} {\tt http://www.tableausoftware.com/public/community/sample-data-sets}.

  The dataset contained information about athletes who participated in
  the Olympics during the years 2000--2012.  \ The attribute fields
  that we considered were:

  \hspace*{0.15in} {\tt Age, Country, Year, Sport, Gold Medals, Silver Medals, Bronze Medals}

  (The last three fields counted the number of medals won by an athlete.)

  Every athlete therefore had exactly 7 attributes, with each
  attribute taking on one of a finite discrete set of pairwise
  exclusive values.  We represented these 7 dimensions of multivalent
  attributes as a collection of 223 binary attributes.

  There were 8613 individuals (we regarded the same physical person in
  different years as distinct athletes), who partitioned into 6955
  equivalence classes (for team sports, athletes often were
  indistinguishable).

  The result was a binary relation $M$ with 6955 rows and 223 columns.

\vspace*{0.15in}

\item[Jazz:] We assembled this relation in June 2015 by examining the
  website

  \hspace*{0.4in} {\tt http://www.redhotjazz.com}.

  The website contained information about jazz musicians and bands,
  mainly from the early to late-mid 20th century.

  We assembled a relation $J$ whose rows were indexed by musicians and
  whose columns were indexed by bands, with $(m,b) \in J$ meaning that
  musician $m$ played in band $b$.

  The result was a binary relation $J$ with 4896 rows and 990 columns.

  Caution: \ We were somewhat but not particularly careful in
  determining whether similar names constituted different spellings of
  the same musician's actual name.  For some bands, the website listed
  one or more bandmembers as ``unknown''.  We ignored those
  bandmembers.  We ignored bands for whom we could not determine any
  bandmembers.  Since our goal was to examine and compare homology and
  informative attribute release sequences, merely constructing a
  relation was sufficient for our purposes.  However, it is unlikely
  that the resulting relation satisfied the assumption of relational
  completeness stated on page~\pageref{relationalcompleteness},
  relative to data obtainable from other sources.

\vspace*{0.1in}

  We encountered the jazz website because it was the source of data
  for a paper on collaboration networks \cite{tpr:gleiserdanon} that
  explored the dual nature of individuals and attributes.  The paper
  constructed two graphs, one with musicians as vertices and bands as
  edges, the other with those roles reversed, then analyzed the
  information each representation highlighted.  \ We may view those
  graphs as the 1-skeleta of our Dowker complexes.
\end{description}

\subsection{Compare and Contrast}
\label{CompareAndContrast}

We review some key differences between the two relations $M$ and $J$.

\vspace*{-0.1in}

\begin{description}

\item[Identifiability:] The original 8613 individuals in the Olympic
  Medals dataset were not all uniquely identifiable.  For some
  athletes, even knowing an athlete's full set of 7 attributes left
  ambiguity as to the athlete's identity.  This was true for 2810 of
  the athletes.  Fortuitously, an athlete's ambiguity was fully
  symmetric, meaning that one could in fact partition the set of all
  athletes into equivalence classes.  This symmetry was likely due to
  the fact that some competitions involved teams, with team members
  indistinguishable from each other.  Each equivalence class then
  formed a uniquely identifiable ``individual'' in relation $M$.

  For the Jazz relation, 863 of the 4896 musicians were uniquely
  identifiable, but 4033 were not.  Unfortunately, this time the
  ambiguity was not fully symmetric.  One could again partition the
  4033 individuals into 1022 equivalence classes based on having
  identical rows in $J$.  However, some rows remained subsets of other
  rows, giving a directionality to the ambiguity.   For this reason,
  we did not pass to equivalence classes.

\item[Attribute Size:] In the medals relation $M$, every individual
  had exactly 7 binary attributes, describing one value for each of
  the 7 possible fields: \ {\tt Age, Country, Year, Sport, Gold
  Medals, Silver Medals, Bronze Medals}.  Consequently, there were
  also always exactly 7 binary attributes in each relation modeling
  the link of an athlete in $\dowMx$.

  In the Jazz dataset, there was no structural bound to the number of
  bands in which a musician might have played, so a musician's
  attributes could be many.  The largest number of bands in which any
  one musician played was in fact 44.  The average was a little over 2
  and the median 1.  Dually, the largest band had 288 musicians, with
  an average of 10.4 and a median of 7.

\item[Link Size:] For $M$, the number of other athletes in any given
  athlete's link was always close to the entire set of possible
  athletes.  With only 7 attribute fields and few distinct values, any
  two athletes shared almost certainly some attribute value (for
  instance, winning zero gold medals).

  In contrast, for the 767 musicians in $J$ for whom we computed links
  (described further in Section~\ref{Jprime}), the number of other
  musicians in any given musician's link was relatively small.  The
  average was 55.3, the median 37, with a maximum of 301.  With
  musicians generally playing in few bands, each collaborated
  artistically on average with only a few score fellow musicians of
  the 4895 musicians in the database.

\end{description}

\subsection{Homology Computations}

For each of the link relations discussed below, we computed homology
of the Dowker complex $\dowqy$, with relation $Q$ modeling the
link.\footnote{Formally, the link is equal to $\dowqx$.  By Dowker's
Theorem, $\dowqx$ and $\dowqy$ have the same homology.} Since our goal
was to find lower bounds for informative attribute release sequences,
we modified $\dowqy$ slightly, as suggested by
Section~\ref{hiddenholes}.  Specifically, whenever $\dowqy$ was a cone
with more than one maximal simplex, we removed all its cone apexes.

\vst

Comment: \ The homology lower bound results of
Section~\ref{leveraginglattices} and Appendix~\ref{manylongchains} do
not depend directly on the chain coefficients being integers (of
course, the actual homology observed may depend on the type of
coefficients).  We therefore computed homology with $\Z_2$
coefficients, using the {\tt Perseus} software previously written at
the University of Pennsylvania.  We downloaded an executable version
in 2014 from {\tt http://www.sas.upenn.edu/{$\sim$}vnanda/perseus/}.

\subsection{Homology and Release Sequences in the Olympic Medals Dataset}
\markright{Homology and Release Sequences in the Olympic Medals Dataset}
\label{olympic_homology}

\paragraph{Overall Homology:}

  A collection of $k$ attributes, each taking on one of a finite
  discrete set of pairwise exclusive values, produces Dowker complexes
  with homotopy types that are wedge sums of $\Sko$s, assuming all
  possible combinations of attributes are represented by individuals.

  Consequently, with every individual having exactly 7 attributes, one
  might expect to see some homology in dimension 6.  But of course,
  not every combination is possible.  For instance, no one athlete is
  going to simultaneously win the gold, silver, and bronze medals in
  the same event.
  {\em From this perspective, real-world constraints show up as
  absence of potential homology.}
  In fact, relation $M$ had the Betti numbers described in
  Table~\ref{medalsbetti}, computed using $\Z_2$ coefficients:

\begin{table}[h]
\begin{center}
{$\begin{array}{c|ccccc}
         d   &  0 & 1 &  2 &  3  &  4  \\[1pt]\hline
   \beta_d   &  1 & 0 & 23 & 757 & 503 \\
\end{array}$}
\end{center}
\vspace*{-0.2in}
\caption[]{Betti numbers for the topology of the Olympic Medals
         relation $M$.}
\label{medalsbetti}
\end{table}

\vspace*{-0.1in}

The table does suggest that there could be quite a few informative
attribute release sequences of length at least 5 for identifying
athletes ($\beta_4 \neq 0$ in $\PM$ implies length 5 iars).

\vspace*{-0.1in}

\paragraph{Link Homology:} We computed the link of each athlete in
$M$ (or more precisely, of each equivalence class), and determined
homology for the resulting relation, with the modifications mentioned
before.  Specifically, we removed all cone apexes from an athlete's
Dowker complex $\dowqy$ (assuming it contained more than one maximal
simplex) before computing homology, with $Q$ being the link relation.
Of the 6955 links, 3822 contained attribute cone apexes in $\dowqy$.

Table~\ref{medalslinkhomol} summarizes the results.  One may conclude
more strongly now that (at least) 2198 athletes could find (at least)
120 different ways of releasing (at least) 5 of their 7 attributes
without identifying themselves uniquely prior to having released all 5
attributes ($\beta_3 \neq 0$ in $\PQ$ minimally implies $5!$ many iars
of length 5 for relation $M$, by Corollary~\ref{holesdeferrecog} on
page~\pageref{holesdeferrecog}).

\begin{table}[h]
\begin{center}
\vspace*{0.05in}
{$\begin{array}{c|ccccc}
      d           &  0  &   1  &   2  &   3  &  4  \\[1pt]\hline
 \Mlkcard         & 229 & 1355 & 2773 & 2198 & 57  \\[0.5pt]
 \Mlkmax{\beta_d} &  2  &   4  &   7  &   4  &  2 \\
\end{array}$}
\end{center}
\vspace*{-0.2in}
\caption[]{Histogram indexed by dimension $d$, describing athletes
  whose links $\Lk(\Psi_M, \hbox{athlete})$ had reduced homology in
  dimension $d$ (after removal of attribute cone apexes from the dual
  complexes), for the 6955 athletes in the Olympic Medals relation
  $M$.  \ (525 of the 6955 links had no reduced homology; they do not
  appear in the histogram.)  \ Also shown are the maximum Betti
  numbers seen in each dimension, with the maximum taken over all
  possible athletes.}
\label{medalslinkhomol}
\end{table}

\vspace*{-0.1in}

\paragraph{Informative Attribute Release Sequences:}  We computed
  a maximal length informative attribute release sequence for each
  link relation.  One can find such a sequence by searching for a
  least-cost path from $\oneQ$ to $\zeroQ$ in $\PQplus$, picking
  attributes along the way as per the construction of
  Lemma~\ref{chaintoiars} on page~\pageref{chaintoiars}, with cost
  being the number of attributes inferred as one traverses the path.
  Here $Q$ is again the link relation.  Of the 6955 athletes, 6229
  actually had a maximal informative attribute release sequence of
  length 7.  Each such athlete could order his/her attributes in such
  a way that his/her identity would not become fully known until s/he
  had released all 7 attributes.  Of the remaining athletes, 719 had a
  maximal informative attribute release sequence of length 6, and 7
  had a maximal length of 5.

  Of course, Corollary~\ref{holesdeferrecog} on
  page~\pageref{holesdeferrecog} makes a stronger claim, suggesting possible
  permutability of some attributes.  Consequently, we computed for each
  link relation all possible isotropic sets of attributes (see again
  Definition~\ref{isotropydef} on page~\pageref{iars}, now with
  $Q$ in place of $R$).  Table~\ref{medalslinkiars} summarizes the
  results:

\begin{table}[h]
\begin{center}
{$\begin{array}{c|ccccc}
 \abs{\kappa} &   2  &   3  &   4  &   5  &  6  \\[2pt]\hline
 \Mlkcard               & 6955 & 6955 & 6955 & 5568 & 171 \\[1pt]
 \Mlkmax{\abs{\{\kappa\}}} &  21  &  35  &  35  &  21  &  5  \\
\end{array}$}
\end{center}
\vspace*{-0.175in}
\caption[]{Histogram indexed by size $\abs{\kappa}$, describing
  athletes whose link relations contained isotropic attribute sets
  $\kappa$.  An athlete could have several distinct (possibly
  overlapping) such sets for any given size.  Also shown therefore are
  the maximum numbers of such sets, with the maximum taken over all
  possible athletes.  For example: 171 athletes had at least one
  isotropic set of size $6$, and the maximum number of isotropic sets
  of size $6$ any one athlete had was 5.}
\label{medalslinkiars}
\end{table}

\begin{figure}[h]
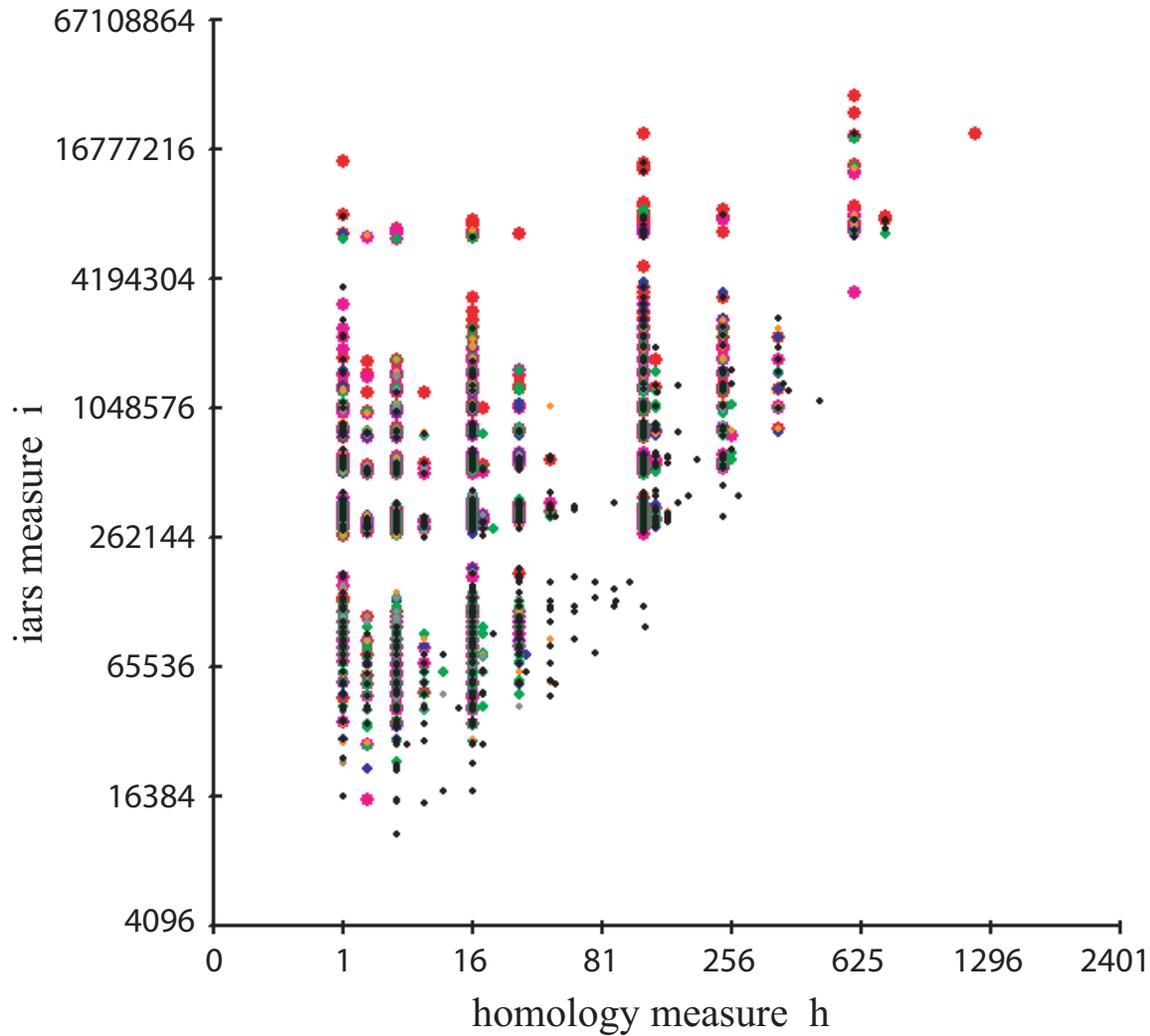

\begin{center}
\ifig{olympics_scatter}{scale=0.65}
\end{center}
\vspace*{-0.15in}
\caption[]{Scatterplot describing each athlete's link in the medals
  relation $M$.  The scatterplot shows for each link a point $(h, i)$,
  with $h$ a measure of the link's homology (after removal of
  attribute cone apexes) and $i$ a measure of how many significant
  informative attribute release sequences exist for the link relation.
  The scatterplot suggests that the homology measure $h$ serves as a
  loose lower bound for the iars measure $i$.  \ See also
  Corollary~\ref{holesdeferrecog} on
  page~\pageref{holesdeferrecog}.\\[3pt]
  (The colors and radii indicate the numbers of athletes in the links.
   \ The color ordering and size boundaries are:\\[2pt]
\hspace*{0.5in} {\sc black}--$_{6821}$--{\sc silver}--$_{6831}$--{\sc orange}--$_{6851}$--{\sc green}--$_{6859}$--{\sc blue}--$_{6865}$--{\sc magenta}--$_{6872}$--{\sc red}.\\[4pt]
  In this figure, the boundaries between colors were chosen so that
  each bucket would hold roughly 1000 links.  As one can see, the
  number of athletes in a link was generally large.)}
\label{medalsscatter}
\end{figure}

\vspace*{-0.15in}

\paragraph{Scatterplot:}  Finally, we computed for each link a pair of
  numbers $(h, i)$, with $h$ representing a measure of link homology
  and $i$ representing a measure of informative attribute release
  sequences for the link relation.  The resulting scatterplot appears
  in Figure~\ref{medalsscatter}.

The exact formulas for $h$ and $i$ are not that significant, but we
mention them here for completeness.  To obtain a measure of homology,
we assembled for each link a vector with the Betti numbers computed
earlier: $(\beta_0, \beta_1, \beta_2, \beta_3, \beta_4)$.  We
determined maximum values for each component (as given in
Table~\ref{medalslinkhomol}).  We could then think of any such vector
as defining, in reverse order, a varying-radix numeral.  We converted
that numeral to an integer.  For example, a link with Betti vector
$(2,3,6,0,0)$ would have $h$ value ${2 + 3{\cdot}(2+1) +
6{\cdot}(4+1){\cdot}(2+1)} = {101}$.  A link relation that remains
contractible after removal of attribute cone apexes would have $h$
value $1$.  In order to graph the scatterplot nicely, we scaled the
$h$-axis by taking a fourth root.

We computed a link's $i$ value similarly, now from the following
vector of data: $(\ell_{\rm max}, c_2, c_3, c_4, c_5, c_6)$.  Here
$\ell_{\rm max}$ is the largest $\ell$ in an informative attribute
release sequence $y_1, \ldots, y_\ell$ for the link relation, while
$c_k$ is the number of different isotropic attribute sets $\kappa$ in
the link relation such that $\abs{\kappa} = k$.  We scaled the
$i$-axis by taking a logarithm.

\clearpage
\subsection{Homology and Release Sequences in the Jazz Dataset}
\markright{Homology and Release Sequences in the Jazz Dataset}
\label{jazz_homology}

\paragraph{Overall Homology:}  

Given the large number of bands in which some musicians played, and
given memory constraints of our machines, we did not compute homology
for the whole Jazz relation $J$.  Instead, we computed homology for
restricted relations consisting of musicians who played in fewer than
20 bands.  This covered 4856 of the 4896 musicians in the overall
relation.  Since we did not see any homology above dimension 2 after
considering several of these restricted cases, we used the 4-skeleton
(all simplices of dimension 4 or less)
of $\Phi_{\!J}$ as proxy for the topology of the whole relation $J$
and computed its homology.  Table~\ref{jazzbetti} summarizes the
results.  Using a graph algorithm, we verified that the whole relation
$J$ did indeed have 107 components, as indicated by $\beta_0$ for the
4-skeleton $\Phi^{(4)}_{\!J}$.  Given the low dimension of homology
for the restricted relations, conceivably even $J$ might not tell us
much about the length of informative attribute release sequences for
the various musicians, suggesting we look at links.

\begin{table}[h]
\begin{center}
{$\begin{array}{cc|c|ccccc}
  b & \Sigma &   m  & \beta_0 & \beta_1 & \beta_2 & \beta_3 & \beta_4 \\[3pt]\hline
 14 & \Phi_{\!J|b} & 4819 & 111 & 613  &    20    &     0   & 0 \\[2pt]
 15 & \Phi_{\!J|b} & 4831 & 111 & 613  &    32    &     0   & 0 \\[2pt]
 16 & \Phi_{\!J|b} & 4838 & 111 & 605  &    42    &     0   & 0 \\[2pt]
 17 & \Phi_{\!J|b} & 4848 & 110 & 603  &    58    &     0   & 0 \\[2pt]
 18 & \Phi_{\!J|b} & 4851 & 110 & 603  &    65    &     0   & 0 \\[2pt]
 19 & \Phi_{\!J|b} & 4856 & 109 & 596  &    75    &     0   & 0 \\[5pt]
\infty & \Phi^{(4)}_{\!J} & 4896 & 107 & 550 & 93 &    10   & - \\[4pt]
 15 & \Phi_{\!J^\prime}   &  767 &  18 & 595 & 32 &     0   & 0 \\
\end{array}$}
\end{center}
\vspace*{-0.15in}
\caption[]{Betti numbers for subcomplexes $\Sigma$ of $\Phi_{\!J}$,
  with $J$ being the Jazz relation.  The first six rows correspond to
  restrictions of $J$ to musicians who played in at most $b$ bands.
  For each row, $m$ indicates the number of musicians in the relation.
  The penultimate row describes the 4-skeleton of $\Phi_{\!J}$.  The
  last row refers to a relation $J^\prime$ described further in the
  text.}
\label{jazzbetti}
\end{table}

\vspace*{-0.2in}

\paragraph{Link Homology:} We computed the link of some of the
musicians in $J$, and determined homology for the resulting relations
(again after removal of attribute cone apexes, when appropriate).
\label{Jprime} Table~\ref{jazzlinkhomol} summarizes the results.
Given the inability to uniquely identify some musicians even knowing all
their bands (as described in Section~\ref{CompareAndContrast}) and the
difficulty of computing homology when musicians played in many bands, we
computed links only for a subset of the musicians.  We required each
musician to be uniquely identifiable, to have played in at most 15
bands, and to have a nontrivial link.  There were 767 such musicians.
Betti numbers for the relation $J^\prime$ representing the restriction
of $J$ to these 767 musicians also appear in Table~\ref{jazzbetti}.
(Note, however, that we computed the complete link $\Lk(\Psi_{\!J},
\hbox{musician})$ for each of the 767 musicians, not merely
$\lk(\Psi_{\!J^\prime}, \hbox{musician})$.)  \ We removed attribute cone
apexes from the link relation for 106 of these 767 musicians.

\begin{table}[h]
\begin{center}
{$\begin{array}{c|cccc}
    d             &  0  &  1  &  2 &  3  \\[2pt]\hline
 \Jlkcard         & 604 & 145 & 20 &  1  \\[1pt]
 \Jlkmax{\beta_d} &  7  &  6  &  3 &  1  \\
\end{array}$}
\end{center}
\vspace*{-0.2in}
\caption[]{Histogram indexed by dimension $d$, describing musicians
  whose links $\Lk(\Psi_{\!J}, \hbox{musician})$ had reduced homology
  in dimension $d$ (after removal of attribute cone apexes from the
  dual complexes), for the 767 musicians who were uniquely identifiable
  in $J$, played in at most 15 bands, and had nontrivial link.  \ (52
  of the 767 links had no reduced homology; they do not appear in the
  histogram.)  \ Also shown are the maximum Betti numbers seen in each
  dimension, with the maximum taken over the 767 possible musicians.
  For $d=0$, this means that 604 of the 767 musicians had
  collaborations with other musicians that split into pairwise
  disjoint groups. The maximum number of such components for any one
  musician was 7.}
\label{jazzlinkhomol}
\end{table}

These results suggest that the relationships to other musicians do
indeed $\!${\em not} have many high-dimensional holes in them.
Recall, by Corollary~\ref{holesdeferrecog} on
page~\pageref{holesdeferrecog}, one can assert the existence of at
least $(k+2)!$ distinct informative attribute release sequences of
length at least $k+2$ for any musician with a $k$-dimensional hole.
For almost all musicians this lower bound means 2 sequences of length
2, for some it means 6 sequences of length 3, for a few it means 24
sequences of length 4, and for one musician it means 120 sequences of
length 5.  These implications are roughly consistent with the data for
informative attribute release sequences described next, though, as
expected for the theoretical reasons discussed earlier, they
constitute lower bounds.

\vspace*{-0.1in}

\paragraph{Informative Attribute Release Sequences:}  We computed
a maximal length informative attribute release sequence for each link
relation.  Table~\ref{jazzlinkiarsMaxlen} summarizes the results.  We
mention in passing: Any attribute release sequence that was
informative for a musician's link relation was also informative for
the encompassing relation $J$ (by Lemma~\ref{interplocal}(ii) on
page~\pageref{interplocal}).  For a few musicians, the maximal
sequence found within the link relation $Q$ could be further extended
in the encompassing relation $J$, with a prefix of one attribute,
namely an attribute shared by all members of the link, yet remain
informative and identifying within $J$.  This occurred for the 17
musicians whose maximum sequence length $\ell$ in the musician's link
was 1.

We also computed for each link relation all possible isotropic sets of
attributes.  Table~\ref{jazzlinkiarsPerm} summarizes those results.

\begin{table}[h]
\vspace*{-0.05in}
\begin{center}
{$\begin{array}{c|ccccccccccc}
 \ell      &  1 &  2  &  3  &  4  &  5 &  6 &  7 &  8 &  9 & 10 & 11 \\[2pt]\hline
 \Jlkcard  & 17 & 248 & 218 & 125 & 72 & 35 & 23 & 15 & 11 &  2 &  1 \\
\end{array}$}
\end{center}
\vspace*{-0.2in}
\caption[]{Histogram of musicians, indexed by length $\ell$ of a
  longest informative attribute release sequence for the musician's
  link relation, for the 767 musicians described in the text.}
\label{jazzlinkiarsMaxlen}
\end{table}

\begin{table}[h]
\vspace*{-0.15in}
\begin{center}
{$\begin{array}{c|cccc}
 \abs{\kappa}   &  2  &  3  &  4 &  5  \\\hline
 \Jlkcard           & 750 & 219 & 49 &  3  \\[1pt]
 \Jlkmax{\abs{\{\kappa\}}} & 105 & 202 & 40 &  2  \\
\end{array}$}
\end{center}
\vspace*{-0.2in}
\caption[]{Histogram indexed by size $\abs{\kappa}$, describing
  musicians whose link relations contained isotropic attribute sets
  $\kappa$.  Also shown are the maximum numbers of such sets, with the
  maximum taken over the 767 possible musicians described in the
  text.}
\label{jazzlinkiarsPerm}
\end{table}

\begin{figure}[h]
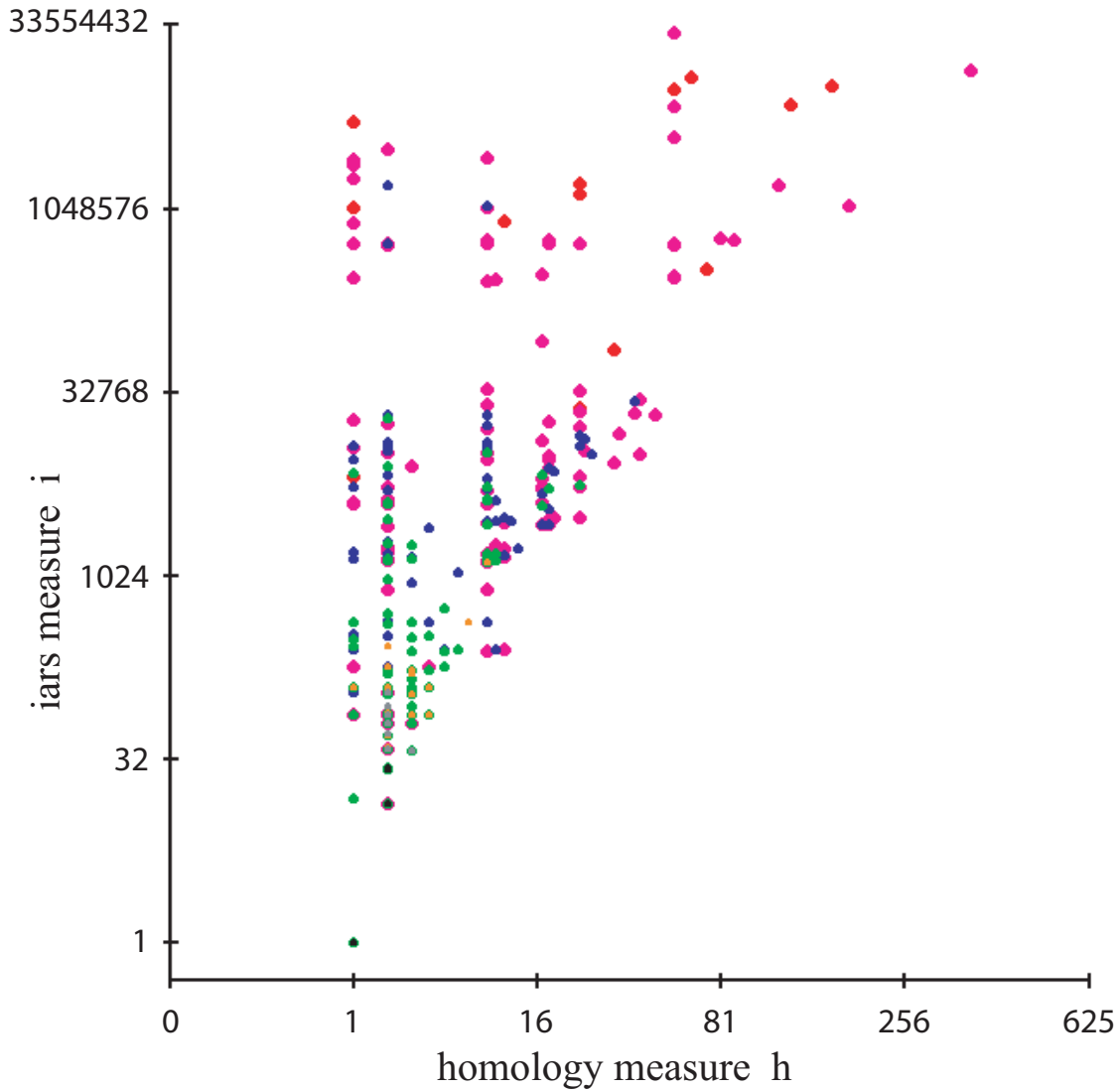

\begin{center}
\ifig{jazz_scatter}{scale=0.65}
\end{center}
\vspace*{-0.15in}
\caption[]{Scatterplot describing the links computed for 767 of the
  musicians in the Jazz relation $J$.  The scatterplot shows for each
  link a point $(h, i)$, with $h$ a measure of the link's homology
  (after removal of attribute cone apexes) and $i$ a measure of the
  link's informative attribute release sequences.\\[2pt]
  (The colors and radii indicate the numbers of musicians in the links.
  Link sizes were fairly small.
  \ The color ordering and size boundaries are:\\[2pt]
\hspace*{0.5in} {\sc black}--$_{5}$--{\sc silver}--$_{10}$--{\sc orange}--$_{20}$--{\sc green}--$_{50}$--{\sc blue}--$_{100}$--{\sc magenta}--$_{200}$--{\sc red}.\\[4pt]
  In this figure, the buckets could hold noticeably varying numbers of links.)}

\label{jazzscatter}
\end{figure}

\vspace*{-0.2in}

\paragraph{Scatterplot:}  We computed for each link a pair of
  numbers $(h, i)$, with $h$ representing a measure of homology and
  $i$ representing a measure of the link's informative attribute
  release sequences, much as for the medals relation $M$ of
  Section~\ref{olympic_homology}.  Figure~\ref{jazzscatter} depicts
  the scatterplot.

\clearpage
\section{Inference in Sequence Lattices}
\markright{Inference in Sequence Lattices}
\label{sequencelattices}

We have seen how a relation gives rise to a lattice via the Galois
connection, as per Definition~\ref{galoislattice} on
page~\pageref{galoislattice}.  The lattice structure describes the
ways in which privacy may be preserved or lost.  Consequently, when
thinking about privacy, perhaps one can also start with lattices that
do not necessarily arise initially from relations.

This section will look at inferences from sequences of observations.
The next section examines strategy obfuscation in planning with
uncertainty.

\vspace*{0.02in}

We should mention some equivalences: Lattices are particular kinds of
partially ordered sets (posets).  Posets and simplicial complexes are
topologically identical; one can move back and forth between these
representations while preserving homeomorphism type (see
\cite{tpr:wachs} and Appendix~\ref{prelim}).  Furthermore, one may
describe a finite simplicial complex by a relation in several
different ways that preserve homotopy type, including ways in which
one of the two resulting Dowker complexes is identical to the original
simplicial complex.  For instance, maximal simplices can play the role
of individuals and vertices can play the role of attributes.  In
short, one has three different categories of structures with which to
think about privacy: relations, simplicial complexes, and lattices.
One may start with any one representation and build the other two from
that.

\subsection{Sequence Lattices for Dynamic Attribute Observations}

\begin{figure}[h]
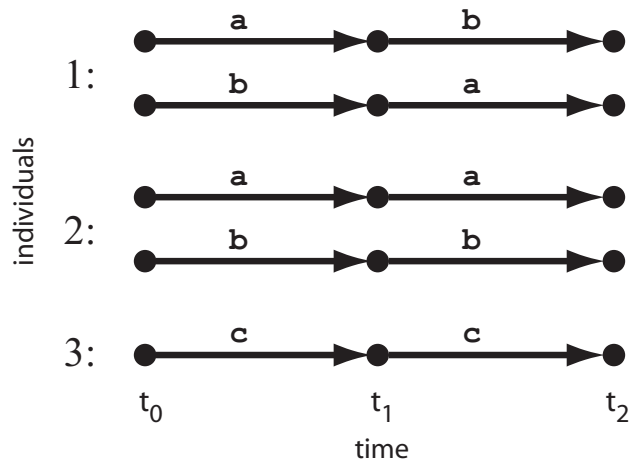

\begin{center}
\ifig{dfaprocess}{width=3.25in}
\end{center}
\vspace*{-0.25in}
\caption[]{Three types of individuals and the attributes each might
  reveal in two successive time intervals.}
\label{dfaprocess}
\end{figure}

Consider the dynamic process of Figure~\ref{dfaprocess}.  The process
models observations of individuals who reveal attributes over
successive time steps.  There are three possible individuals (or more
generally, types of individuals).  The first individual emits
attributes ``\ta'' and ``\tb'' alternatingly at successive times, but
one does not know which of those attributes one might see first.  The
second individual always emits the same attribute, either ``\ta'' or
``\tb'', but one does not know {\em a priori} which it is.  The third
individual always emits the same attribute ``\tc''.

\begin{figure}[h]
\vspace*{-0.05in}
\begin{center}
\begin{minipage}{1.5in}{$\begin{array}{c|ccc}
\hbox{\Large $S$} & \ta & \tb & \tc \\[2pt]\hline
1 & \one & \one &      \\
2 & \one & \one &      \\
3 &      &      & \one \\
\end{array}$}
\end{minipage}
\hspace*{0.25in}
\begin{minipage}{1.5in}{$\begin{array}{c|ccccc}
\hbox{\Large $T$} & \ta\ta & \tb\tb & \ta\tb & \tb\ta & \tc\tc \\[2pt]\hline
1 &      &      & \one & \one &      \\
2 & \one & \one &      &      &      \\
3 &      &      &      &      & \one \\
\end{array}$}
\end{minipage}
\end{center}
\vspace*{-0.05in}
\caption[]{Relation $S$ describes individuals and single attributes,
  while $T$ describes individuals and sequences of two attributes.}
\label{dfarelations}
\end{figure}

A relation for these (types of) individuals that models the
individuals in terms of single attributes appears as relation $S$ in
Figure~\ref{dfarelations}.  Individual \#3 is distinguishable from the
other two individuals, but the relation provides no means for
distinguishing those two individuals from each other.  The relation is
homogeneous with regard to single attributes for individuals \#1 and
\#2.  Of course, we can see from the dynamic process of
Figure~\ref{dfaprocess}, that distinguishing information appears via
sequences of two attributes.  Relation $T$ of
Figure~\ref{dfarelations} models such sequences.  Now all three
individuals are uniquely identifiable.  Should one wish to model
inferences based on both one and two observations, one could use the
relation $S \union T$.

\begin{figure}[h]
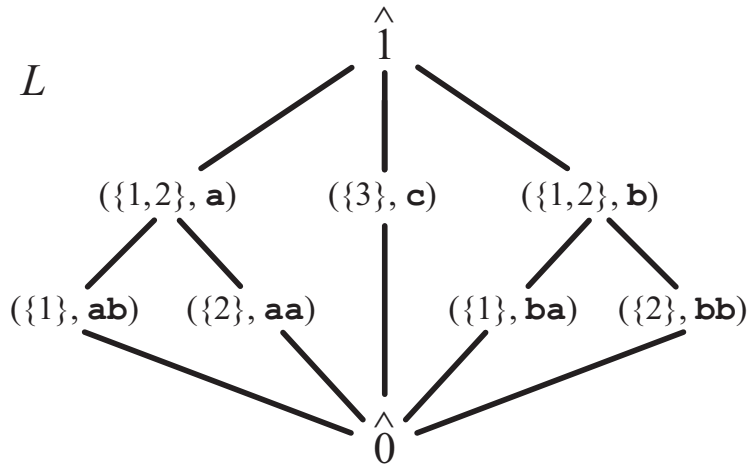

\begin{center}
\vspace*{-0.05in}
\ifig{dfalattice}{height=2.5in}
\end{center}
\vspace*{-0.25in}
\caption[]{Lattice representing the dynamic process of Figure~\ref{dfaprocess}.}
\label{dfalattice}
\end{figure}

That jump from single to double attributes is useful, but where does
it come from intrinsically?  After all, without additional knowledge,
we might simply consider infinitely long sequences, even though those
would not add anything in this example.  In fact, the dynamic process
of Figure~\ref{dfaprocess} gives us the information.  It is itself
basically a decision tree that amounts to the lattice of
Figure~\ref{dfalattice}.  In that figure, we have annotated each
internal node of the lattice with an ordered pair, consisting of a set
of individuals and either a single attribute or a sequence of two
attributes. This lattice differs from previous ones in this report in
that a set of individuals (or attributes, more generally) is no longer
constrained to appear in at most one node of the lattice.  By allowing
multiple nodes, we enhance our ability to encode state in the lattice.
For example, observing attribute ``\ta'' carries different meaning
depending on whether one has already seen attribute ``\ta'' or
attribute ``\tb'' or no attribute at all.  Also: While we could have
included $(\{3\}, \tc\tc)$ in the lattice, we did not need that
element.

In the lattice of Figure~\ref{dfalattice} it is tempting to merge the
two identifying nodes for individual \#1 into one node and to merge
the two identifying nodes for individual \#2 into one node.  There is
apparently no harm in doing so, in that the decision process would
still be correct.  However, the resulting structure would no longer be
a lattice but merely a poset.  That may or may not be desirable in a
given application.  For instance, using homology to estimate lower
bounds for how long one can delay identification suggests using almost
a join-based lattice, if one wishes to fulfill the hypotheses of
Theorem~\ref{manychains} on page~\pageref{manychains}.

If we did want to merge nodes as just described, while maintaining a
lattice, then we would perhaps also merge the two nodes containing the
set $\{1, 2\}$, giving us the lattice of Figure~\ref{dfalatticealt}.
This lattice is similar to the lattice $P^{+}_{S \union T}$ that one
would construct from the relation $S \union T$, except that it does
not include singleton attributes in the nodes identifying individuals
\#1 and \#2 and it does not include the sequence ``\tc\tc'' in the node
identifying individual \#3.

\begin{figure}[h]
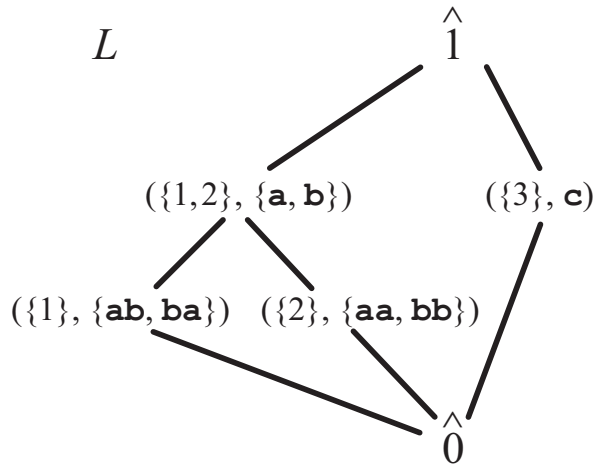

\begin{center}
\vspace*{-0.1in}
\ifig{dfalatticealt}{height=2.5in}
\end{center}
\vspace*{-0.25in}
\caption[]{Modified lattice of Figure~\ref{dfalattice}, after merging some
nodes.}
\label{dfalatticealt}
\end{figure}

Regardless, the lattices of Figures~\ref{dfalattice} and
\ref{dfalatticealt} encode the inferences possible for the dynamic
process of Figure~\ref{dfaprocess}. In particular, if we observe
either attribute ``\ta'' or attribute ``\tb'', then we know the set of
possible individuals is $\{1, 2\}$; we have excluded individual \#3.
Moreover, if we observe any two-attribute sequence, with attributes
drawn from $\{\ta, \tb\}$, then we can identify the observed
individual uniquely as either \#1 or \#2.  Thus the required sequences
come directly from the dynamic process, not requiring an explicit
intermediate representation as a relation.  (One might argue, however,
that a relation is implicit in our reasoning.)

\subsection{Lattices of Stochastic Observations}

The dynamic sequence perspective incorporates repeated randomized
response within the lattice framework.  Instead of arising via a
(non)deterministic process as in Figure~\ref{dfaprocess}, the
attributes ``\ta'' and ``\tb'' for two (types of) individuals could
flow from a stochastic process.  One obtains an infinite lattice
determined by increasingly longer sequences of observations.
Depending on the confidence intervals one wishes to set, one obtains
stochastic decision regions such as those sketched in
Figure~\ref{rrlattice}, with a central region of ambiguity, bounded by
regions of exclusion, for identifying individuals.

\begin{figure}[h]
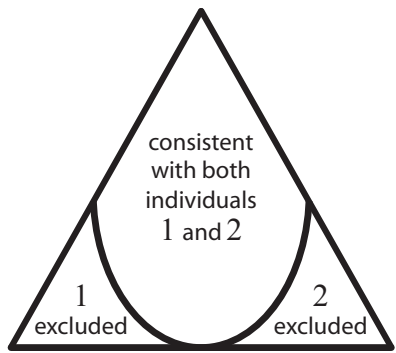

\begin{center}
\vspace*{-0.2in}
\ifig{rrlattice}{height=2in}
\end{center}
\vspace*{-0.2in}
\caption[]{Sketch of an inference lattice for sequences of
  randomized response queries.}
\label{rrlattice}
\end{figure}

\subsection{General Inference Lattices}
\markright{General Inference Lattices}

Lattices are useful tools for inference.  Rather than work with
completely arbitrary lattices, we give here a definition that makes
explicit the existence of two underlying structures over which we wish
to perform inferences.  However, we no longer assume a pair of
underlying discrete spaces $X$ and $Y$ for individuals and attributes,
but instead posit posets $P$ and $Q$.  The connection to our earlier
relational perspective is that $P$ would be the powerset of $X$ and
$Q$ the powerset of $Y$.  By allowing potentially different posets $P$
and $Q$ for a given lattice $L$, one can in some instances obtain
different ``views'' of that lattice, thereby increasing flexibility in
the interpretation process.  For instance, $Q$ might consist of all
sequences up to a specified length or it might consist of {\em sets}
of such sequences.

\vspace*{0.1in}

\begin{definition}[Inference Lattice]\label{inferencelattice}
\ Let $P$ and $Q$ be finite posets.

\vst

An \underline{{\em inference lattice $L$ with respect to $P$ and $Q$}}
  is a bounded lattice whose proper part $\Lprop$ consists of ordered
  pairs $(p,q)$, with $p\in P$ and $q\in Q$, satisfying the following
  conditions:

\vspace*{0.1in}

\noindent For all $(p_1, q_1)$ and $(p_2, q_2)$ in $\Lprop$:

\vspace*{-0.05in}

\begin{enumerate}
\item[(i)] $(p_1,q_1) \Llte (p_2,q_2)$ if and only if $p_1 \Plte p_2$ and
  $q_1 \Qgte q_2$;
\item[(ii)] $(p_1,q_1) \Lor (p_2,q_2)$ is either $\oneL$ or a pair
  $(p,q)\in\Lprop$ such that $p$ is an upper bound for both $p_1$ and
  $p_2$ in $P$ and $q$ is a lower bound for both $q_1$ and $q_2$ in $Q$;
\item[(iii)] $(p_1,q_1) \Land (p_2,q_2)$ is either $\zeroL$ or a pair
  $(p,q)\in\Lprop$ such that $p$ is a lower bound for both $p_1$ and
  $p_2$ in $P$ and $q$ is an upper bound for both $q_1$ and $q_2$ in
  $Q$.
\end{enumerate}
(Note that $\,\,\zeroL \Llt (p, q) \Llt \oneL\,$ for every
$(p,q)\in\Lprop$, when $\Lprop \neq \emptyset$.\\[2pt]
Also, be aware that $\Lprop$ need {\em not}, and generally will not,
contain all possible pairs $(p,q)$ of $\,\PxQ$.)

\end{definition}

\paragraph{Inference Protocol:}  Suppose we have observed some
$q\in Q$.  \ How should we interpret that observation in terms of the
lattice $L$?  \ \ Here is a possible protocol:

\noindent (In terms of our earlier relational model, one may view this protocol
as inferring sets of individuals from sets of attributes.)

\begin{itemize}

\item Let $\Gamma = \setdef{(p^{\prime}, q^{\prime}) \in \Lprop}{q
    \;\Qlte\; q^{\prime}}$.

\item If $\Gamma = \emptyset$, then we view $q$ as inconsistent,
  producing interpretation $\zeroL \in L$.

\item Otherwise, let $\Gamma_{\rm max}$ consist of all the maximal
  elements of $\Gamma$ (maximal with respect to the partial order on
  $L$).  We view $q$ as implying this set of elements in $L$.  One can
  project each of those elements onto its $P$ coordinate, if that is
  useful.

\end{itemize}

\noindent There is a dual protocol for interpreting an observation $p\in P$:

\noindent (In terms of our earlier relational model, one may view this
  protocol as inferring sets of attributes from sets of individuals.)

\begin{itemize}

\item Let $\Sigma = \setdef{(p^{\prime}, q^{\prime}) \in \Lprop}{p
    \;\Plte\; p^{\prime}}$.

\item If $\Sigma = \emptyset$, then we view $p$ as inconsistent,
  producing interpretation $\oneL \in L$.

\item Otherwise, let $\Sigma_{\rm min}$ consist of all the minimal
  elements of $\Sigma$ (minimal with respect to the partial order on
  $L$).  We view $p$ as implying this set of elements in $L$.  Again,
  one can project each of those elements onto its $Q$ coordinate, if
  that is useful.

\end{itemize}

\vspace*{-0.2in}

\paragraph{Comments:} \ (1) In our previous relational setting, the
structure of Galois lattices ensured that, for nonempty observations,
each of $\Gamma_{\rm max}$ and $\Sigma_{\rm min}$ never contained more
than one element.  That need not be true for general inference
lattices. \ (2) One may augment the previous protocols, so as to
regard some element(s) of $Q$ much like the empty attribute simplex,
giving interpretation $\oneL \in L$.  Similarly, some element(s) of
$P$ might have interpretation $\zeroL \in L$.

\paragraph{Example:}\ Consider Figure~\ref{dfaPQ}.  Poset $P$ models
subsets drawn from the set of two individuals $\{1, 2\}$, while poset
$Q$ models sequential observations of ``\ta'' and ``\tb'', of lengths
one and two, as in our earlier example of Figure~\ref{dfaprocess}.
(For presentational simplicity, $P$ and $Q$ ignore individual \#3 and
attribute ``\tc'', instead focusing on individuals $\{1, 2\}$ and
attributes $\{\ta, \tb\}$.)  Let lattice $L$ be as in
Figure~\ref{dfalattice}.  \ Assume the interpretation of $\emptyset\in
P$ is $\zeroL$ in $L$, and that of $\,\botzero\in Q$ is $\oneL$.

\begin{figure}[h]
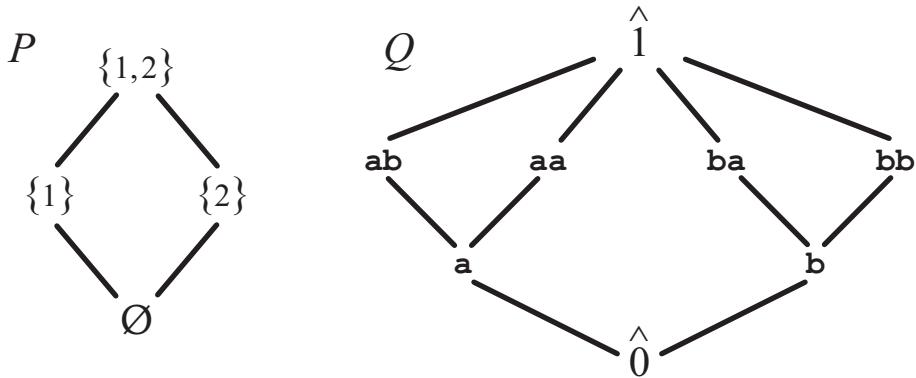

\begin{center}
\vspace*{0.1in}
\ifig{dfaPQ}{width=4.75in}
\end{center}
\vspace*{-0.3in}
\caption[]{Poset $P$ models some sets of individuals; poset $Q$
  models some sequences of attributes.}
\label{dfaPQ}
\end{figure}

\paragraph{Observing an attribute:}\ Suppose we have observed
attribute ``\tb'', i.e., $q=\tb$.  What can we infer from $q$ in $P$
via $L$?  \quad Let us follow the protocol given earlier:

\vspace*{-0.2in}

\begin{itemize}
\item The subposet of $Q$ consisting of elements $q^{\prime}$
  greater than or equal to $q$ is:
\begin{minipage}{0.8in}
$$\begin{diagram}[size=0.125in]
       &         & \topone &         &        \\ 
       & \ruLine &     & \luLine &        \\ 
\tb\ta &         &     &         & \tb\tb \\
       & \luLine &     & \ruLine &        \\ 
       &         & \tb &         &        \\ 
\end{diagram}$$
\end{minipage}.

\item Consequently, $\Gamma$ is the following subposet of $L$:
\hspace*{0.1in}\begin{minipage}{2in}
$$\begin{diagram}[size=0.2in]
                   &         & (\{1, 2\}, \tb) &         &        \\ 
                   & \ruLine &                 & \luLine &        \\ 
(\{1\}, \tb\ta) &         &                 &         & (\{2\}, \tb\tb) \\
\end{diagram}$$
\end{minipage}.

\vspace*{0.25in}

\item There is one maximal element in $\Gamma$,
  so $\Gamma_{\rm max} = \{(\{1, 2\}, \tb)\}$.

\end{itemize}

Projecting onto the $P$ component tells us how to interpret $q$: The
observation ``\tb'' must have come from either individual \#1 or individual
\#2, as one would hope.  (This conclusion would hold as well if $P$
had modeled individual \#3 and if $Q$ had modeled attribute ``\tc''.)

\paragraph{Observing an individual:}\ Suppose we have observed
individual \#1, i.e., $p=\{1\}$.  What can we infer from $p$ in $Q$ via
$L$?  \quad Again, let us follow the inference protocol given earlier:

\vspace*{-0.2in}

\begin{itemize}
\item The subposet of $P$ consisting of elements $p^{\prime}$
  greater than or equal to $p$ is:
\hspace*{0.1in}\begin{minipage}{0.75in}
$$\begin{diagram}[size=0.2in]
\{1,2\}\\
\vLine\\
\{1\}\\
\end{diagram}$$
\end{minipage}.

\item Consequently, $\Sigma$ is the following subposet of $L$:
\hspace*{0.1in}\begin{minipage}{2in}
$$\begin{diagram}[size=0.2in]
(\{1,2\}, \ta)   & \qquad  &  (\{1,2\},\tb)  \\
   \vLine        &         &     \vLine      \\
(\{1\}, \ta\tb)  &         &  (\{1\},\tb\ta) \\
\end{diagram}$$
\end{minipage}.

\vspace*{0.25in}

\item The minimal elements of $\Sigma$ give us
      $\Sigma_{\rm min} = \{(\{1\}, \ta\tb),\; (\{1\}, \tb\ta)\}$.
\end{itemize}

Projecting onto the $Q$ component tells us how to interpret $p$: The
individual observed can or did reveal one of the two-attribute sequences
``\ta\tb'' or ``\tb\ta''.

\begin{figure}[t]
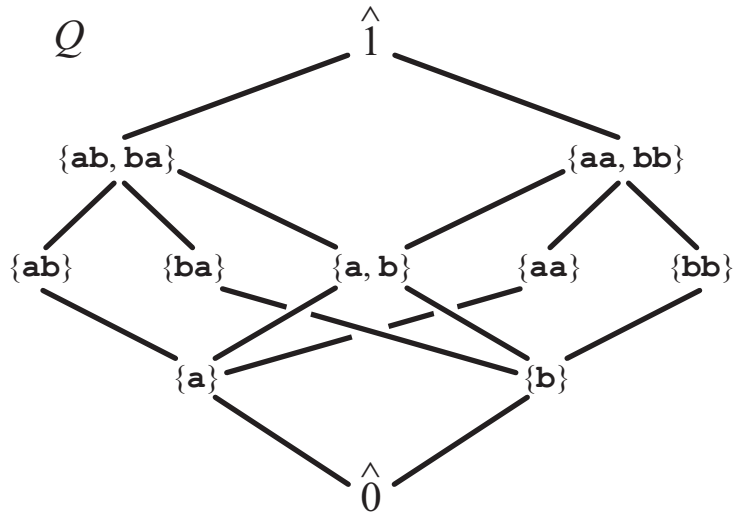

\begin{center}
\vspace*{-0.1in}
\ifig{dfaQalt}{height=2.75in}
\end{center}
\vspace*{-0.25in}
\caption[]{Poset $Q$ modeling {\em sets} of attribute sequences, for
  inferences in the lattice of Figure~\ref{dfalatticealt}.}
\label{dfaQalt}
\end{figure}

\paragraph{Comment:}\ The poset $Q$ of Figure~\ref{dfaPQ} would not
be relevant for inferences in the lattice of
Figure~\ref{dfalatticealt}, since that lattice now models attribute
observations involving ``\ta'' and/or ``\tb'' as sets of sequences
rather than merely as sequences.  We would instead probably want $Q$
to be something like the poset of Figure~\ref{dfaQalt}.  So even
though $L$ has become simpler than in Figure~\ref{dfalattice}, $Q$ has
become more complicated.  On the other hand, the new $(L,P,Q)$ triple
means that one can infer $(\{1, 2\}, \{\ta, \tb\})$ from the
observation ``\tb''.  As before, that says the observation ``\tb''
must have come from individual \#1 or \#2, but it also says directly
that the individual could alternatively have produced attribute
``\ta''.  In summary, by altering the triple $(L,P,Q)$, one changes
the possible inferences.

\paragraph{Aside:}\ The poset $Q$ of Figure~\ref{dfaQalt} is a
conveniently chosen finite subposet of a particular infinite poset
modeling sets of sequences.  In that model, each set is required to be
finite and {\em prefix-free}, meaning that if two distinct sequences
appear in an element of $\Qprop$, neither may be a prefix of the
other.  The partial order on $\Qprop$ is defined by: $q_1 \,\leq_Q\,
q_2$ precisely when every sequence in $q_1$ is a prefix of (possibly
equal to) some sequence in $q_2$.
\ \ (Notation: $\Qprop$ is the proper part of $Q$, that is, $\Qprop =
\sdiff{Q}{\{\botzero, \topone\}}$, and $\botzero < q < \topone$ for
every $q\in\Qprop$.)

\clearpage
\section{Lattices for Strategy Obfuscation}
\label{obfuscation}

In Section~\ref{sequencelattices}, we saw sublattices of powerset
lattices, those being prototypical examples of Boolean lattices.  A
related example is given by {\em strategy complexes}
\cite{tpr:strategies, tpr:plans}, which may be viewed as lattices of
(stochastic) partial orders formed from potentially nondeterministic
or stochastic transitions in a graph.  The basic elements in such a
lattice are {\em strategies} for attaining various goals.  Our work on
privacy now raises the question of strategy obfuscation: How can
someone reveal the {\em actions} of a strategy in a fashion that
delays identification of the strategy?

\subsection{Strategies for Nondeterministic Graphs}
\markright{Strategies for Nondeterministic Graphs}
\label{stratcomplex}

\begin{figure}[h]
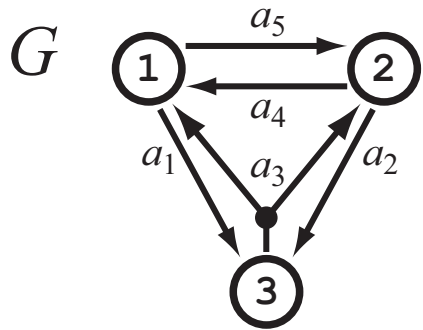

\begin{center}
\vspace*{-0.1in}
\hspace*{-0.6in}\ifig{graph214}{height=1.8in}
\end{center}
\vspace*{-0.25in}
\caption[]{A graph $G$ with three states, four deterministic actions,
  and one nondeterministic action ($a_3$).}
\label{graph214}
\end{figure}

For a very simple example, consider the graph of
Figure~\ref{graph214}.  We might think of this graph as modeling some
kind of dynamic system, for instance, a person driving between three
shopping malls or a robot moving among clutter in a warehouse or an
intruder in a server network.

There are three states in the graph, along with five actions.  Each
action has a {\em source\,} state and one or more {\em target\,}
states, indicated in the figure by arrows.  An action may be {\em
executed\,} when the system is at the source state of the action,
causing the system to move from the action's source state to one of
its target states.

Four of the actions, $\{a_1, a_2, a_4, a_5\}$, are standard
deterministic directed edges, leading for certain from one state to
another.  The remaining action, $a_3$, is nondeterministic.
Nondeterminism of $a_3$ means that if the system is at state \tthree\
and executes action $a_3$, then the precise outcome is uncertain: The
system might move either to state \tone\ or to state \ttwo.
Nondeterminism is potentially adversarial: The precise target state
attained is unpredictable and could vary nonstochastically on
different executions of the action, perhaps determined by an adversary
outside the graph.  One may generalize this idea to include stochastic
actions along with deterministic and nondeterministic actions, thus
modeling adversarial combinations of Markov chains
\cite{tpr:strategies, tpr:plans}.

In the nondeterministic setting, a {\em strategy\hspt} is a set of
actions whose underlying directed edge set contains no directed
cycles.  The semantics of a strategy are: If the system is at the
source state of an action in the strategy, then the system executes
that action.  If the strategy contains multiple actions with that same
source state, then the actual action executed is again determined
nondeterministically.  For instance, in the example, if actions $a_1$
and $a_5$ both appear in a strategy, then the strategy is indifferent
as to whether the system will transition to state \ttwo\ or to state
\tthree\ from state \tone.  One or the other will occur.  If a
strategy does not contain any action with a given source state, then
the system will stop moving if it is ever in that state.

\vso

The lattice operations for strategies are set union and set
intersection, with one proviso: Suppose $\sigma_1$ and $\sigma_2$ are
two strategies.  Each strategy is a set of actions with no directed
cycles in its underlying directed edge set.  If the union of the two
strategies, $\sigma_1 \union\mskip1mu \sigma_2$, contains a directed
cycle in its underlying directed edge set, then the lattice operation
becomes $\sigma_1 \join \sigma_2 = \topone$, with $\topone$ the top
element of the lattice.  That top element represents cyclicity.  The
bottom element $\botzero$ of the lattice is equivalent to the empty
strategy $\emptyset$, amounting to no motion.

\begin{figure}[h]
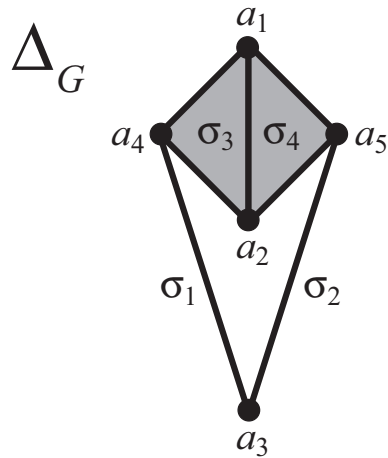

\begin{center}
\vspace*{-0.1in}
\hspace*{-0.6in}\ifig{strat214}{height=2.5in}
\end{center}
\vspace*{-0.25in}
\caption[]{The strategy complex for the graph of
  Figure~\ref{graph214}.  Each vertex represents an action, as
  indicated by the labels.  Each maximal simplex also has a label, for
  the purposes of Figure~\ref{Arel214}.}
\label{strat214}
\end{figure}

Rather than draw a lattice of strategies $L$, it is more convenient to
draw an equivalent simplicial complex whose vertices are the (acyclic)
actions of the graph.  This simplicial complex is denoted by $\DG$ and
is called the {\em strategy complex}\, of $G$.
\label{stratcomplexDescrip}
The connection is that the proper part of the lattice is the face
poset of the simplicial complex, that is $\sdiff{L}{\{\botzero,
\topone\}} = \F(\DG)$.  Figure~\ref{strat214} shows the strategy
complex for the graph of Figure~\ref{graph214}.  The constituent
simplices of the strategy complex are strategies, that is, all sets of
actions whose underlying directed edge sets are acyclic.

\vspace*{0.05in}

Now that we have a simplicial complex, we can form a relation, whose
``individuals'' are all maximal strategies of the complex and whose
``attributes'' are the underlying actions, as shown in
Figure~\ref{Arel214}.  The figure also shows each maximal strategy's
{\em goal}, that is, the state at which the strategy would stop
moving.  \ (In general, a strategy, even a maximal strategy, may have a
multi-state goal set, but in this example the goals of all maximal
strategies are singleton states.)
\ Of course, a system could employ nonmaximal strategies, but for
identifiability purposes it is natural to consider maximal strategies.

\begin{figure}[h]
\begin{center}
\begin{minipage}{1.8in}{$\begin{array}{c|ccccc}
\hbox{\Large $A$} & a_1 & a_2 & a_3 & a_4 & a_5 \\[2pt]\hline
\sigma_1 &      &      & \one & \one &      \\
\sigma_2 &      &      & \one &      & \one \\
\sigma_3 & \one & \one &      & \one &      \\
\sigma_4 & \one & \one &      &      & \one \\
\end{array}$}
\end{minipage}
\quad
\begin{minipage}{0.5in}{$\begin{array}{c}
\hbox{Goal} \\[2pt]\hline
1 \\
2 \\
3 \\
3 \\
\end{array}$}
\end{minipage}
\end{center}
\vspace*{-0.15in}
\caption[]{Relation $A$ describes the strategy complex of
  Figure~\ref{strat214} in terms of its maximal simplices and their
  constituent actions.  The rightmost column shows each maximal
  strategy's goal, i.e., that state at which motion ceases.}
\label{Arel214}
\end{figure}

\vspace*{0.05in}

We make the following observations:

\vspace*{-0.05in}

\begin{itemize}
\item There is at least one strategy for attaining each state in the
  graph, meaning it is possible to move from every state to every
  other state, despite uncertainty in the outcome of one of the
  actions.  Such graphs are called {\em fully controllable} in
  \cite{tpr:strategies, tpr:plans}, and have properties similar to
  those of strongly connected directed graphs.

\item Each maximal strategy contains {\em \,two\,} informative
  attribute (i.e., action) release sequences, with each sequence
  consisting of two actions that together identify the strategy.  For
  instance, for $\sigma_1$, one could reveal actions $a_3$ and $a_4$
  in either order, identifying $\sigma_1$ only after revealing both
  actions.  For $\sigma_3$, one could reveal actions $a_1$ and $a_4$
  in either order, now identifying $\sigma_3$ only after revealing
  both actions.

\item Some actions reveal the goal even though they do not identify
  the maximal strategy.  In particular, actions $a_1$ and $a_2$ each
  individually reveal the goal to be 3.  (The two actions are in fact
  equivalent in $A$, in that either one implies the other.)  For
  instance, if one knows that $a_1$ is in a maximal strategy $\sigma$,
  then one knows that the strategy cannot also contain $a_3$, as
  adding $a_3$ would create a directed cycle in the underlying
  directed edge set.  Action $a_2$ must therefore also be in the
  strategy, since the strategy is maximal.  Consequently, the goal is
  state \tthree\ and $\sigma$ is either $\sigma_3$ or $\sigma_4$.  The
  difference between these two maximal strategies is a choice between
  $a_4$ and $a_5$.  That choice does not affect the final goal, but
  could affect intermediate motions and the time to reach the goal.  A
  rough analogy is knowing that a car on a freeway must continue on
  the freeway until at least the next exit but has a choice between
  lanes enroute.

\item Each maximal strategy contains at least {\em one\,} informative
  action release sequence consisting of two actions that do not reveal
  the goal until the second action has been released.  For instance,
  for $\sigma_3$, one could first release $a_4$, leaving open the
  possibility of either state \tone\ or state \tthree\ being the goal,
  then subsequently release either $a_1$ or $a_2$.

\end{itemize}

The rest of this section and Appendix~\ref{obfuscatingstrategies}
explore these observations more generally.

\subsection{Connecting the Topologies of Strategy Complexes and Privacy}
\markright{Connecting the Topologies of Strategy Complexes and Privacy}

{\bf Notation:}

\vspace*{-0.1in}

\begin{itemize}
\item $G=(V,\frakA)$ denotes a graph with states $V\!$ and actions
  $\frakA$.  An action may be deterministic, nondeterministic, or
  stochastic. (For simplicity, we assume here that $V\!\neq\emptyset$
  and $\frakA\neq\emptyset$.)

\label{GandDGdefsApp}

\item $\DG$ denotes the strategy complex of $G$; it includes the empty
  strategy $\emptyset$.

\end{itemize}

\begin{lemma}\label{dowkerstrats}
Let $G=(V,\frakA)$ be a graph as above and $\,\maxDG$ the set of
maximal simplices of $\DG$.

Define relation $A$ on $\maxDG \times \frakA$ by
$A = \setdef{(\sigma, a)}{a \in \sigma \in \maxDG}.$\quad
Then $\dowAy = \DG$.  In other words,
the Dowker complex over the set of actions is the same as
the graph's strategy complex.
\end{lemma}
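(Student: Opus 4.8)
The plan is to unwind both sides of the claimed equality $\dowAy = \DG$ into explicit set-theoretic descriptions and observe that they coincide. First I would apply Definition~\ref{basicdefs} to the relation $A$, taking the individual set to be $X = \maxDG$ and the attribute set to be $Y = \frakA$. By that definition, a set of actions $\gamma \subseteq \frakA$ is a simplex of $\dowAy$ precisely when some $\sigma \in \maxDG$ witnesses $\gamma$, i.e.\ $(\sigma, a) \in A$ for every $a \in \gamma$. Since $A = \setdef{(\sigma, a)}{a \in \sigma \in \maxDG}$, the condition $(\sigma, a) \in A$ is simply $a \in \sigma$, so the witnessing condition reduces to $\gamma \subseteq \sigma$. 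This gives
\[
\dowAy \;=\; \setdef{\gamma \subseteq \frakA}{\gamma \subseteq \sigma \;\text{ for some }\; \sigma \in \maxDG}.
\]

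Next I would identify this right-hand side with $\DG$ using the standard fact that a finite simplicial complex equals the downward closure of its set of maximal faces. The strategy complex $\DG$ is a genuine simplicial complex: every subset of an acyclic action set is again acyclic, since deleting edges from an acyclic directed edge set cannot create a directed cycle. Thus $\DG$ is closed under taking subsets. In a finite complex closed under subsets, every simplex lies in some maximal simplex and, conversely, every subset of a maximal simplex is itself a simplex; hence $\DG = \setdef{\gamma}{\gamma \subseteq \sigma \text{ for some } \sigma \in \maxDG}$, which is exactly the description of $\dowAy$ derived above. Combining the two displays yields $\dowAy = \DG$.

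I do not anticipate a substantive obstacle here: the argument is a direct verification, and the only points needing care are the edge cases. In particular, I would confirm that the empty strategy and the empty simplex live in both complexes --- the empty action set is vacuously acyclic, so $\emptyset \in \DG$ and $\maxDG \neq \emptyset$, which in turn (with $\frakA \neq \emptyset$) forces $\emptyset \in \dowAy$ by the vacuous witnessing condition --- and that no maximal strategy is lost in passing from $\DG$ to the rows of $A$. Having established $\dowAy = \DG$, one then gets the dual complex $\dowAx$ for free via Theorem~\ref{dowker}, which is the perspective that lets the earlier inference and privacy machinery be transported to the setting of strategy obfuscation.
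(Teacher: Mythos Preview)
Your proposal is correct and matches the paper's approach exactly: the paper simply remarks that ``the proof is nearly definitional'' and that the lemma ``holds more generally for simplicial complexes,'' which is precisely the downward-closure-of-maximal-faces argument you spell out. Your handling of the empty simplex via the standing assumption $\frakA\neq\emptyset$ (so that $A$ is nonvoid) is the right way to dispatch the edge case.
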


\vspace*{-0.07in}

(The lemma holds more generally for simplicial complexes.
 The proof is nearly definitional.)

\vspace*{0.025in}

(The ``$A$'' stands for ``Action'' and we refer to relation $A$ as
$G$'s {\em action relation}.)

\vspace*{0.1in}

One of the fundamental results from \cite{tpr:strategies, tpr:plans}
is that a graph is fully controllable if and only if its strategy
complex is homotopic to a sphere of dimension two less than the number
of states in the graph:
\quad (Recall that ``$\homot$'' denotes a homotopy equivalence.)

\begin{theorem}
\label{controllability}
A graph $G=(V,\frakA)$ is fully controllable if and only if $\DG \homot
\Snt$, with $n=\abs{V}$.
\end{theorem}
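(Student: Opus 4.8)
The plan is to prove the biconditional by induction, using the star--link--deletion decomposition of $\DG$ with respect to a single action together with Mayer--Vietoris. The base case is $\abs{V}=1$: a lone state admits no acyclic action (any self-loop is a directed cycle), so $\DG=\{\emptyset\}\homot\Smo$, which matches $n-2=-1$; and full controllability holds trivially, since the system already sits at every goal. I would then induct on $\abs{V}+\abs{\frakA}$, also dispatching the degenerate cases (a state with no outgoing action, or an action lying in no acyclic strategy) directly.

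For the inductive step, fix an action $a\in\frakA$ that appears as a vertex of $\DG$, with source state $s$, and decompose $\DG=\st(\DG,a)\union\dl(\DG,a)$. The closed star $\st(\DG,a)$ is a cone with apex $a$, hence contractible, and $\st(\DG,a)\inter\dl(\DG,a)=\lk(\DG,a)$. Consequently $\DG$ is homotopy equivalent to the mapping cone of the inclusion $\lk(\DG,a)\hookrightarrow\dl(\DG,a)$, and Mayer--Vietoris yields a long exact sequence relating the reduced homologies of $\lk(\DG,a)$, $\dl(\DG,a)$, and $\DG$. The heart of the argument is to identify these two pieces as strategy complexes of derived graphs. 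The deletion is exactly $\dl(\DG,a)=\Delta_{G\setminus a}$, the strategy complex of the graph with $a$ removed. The link $\lk(\DG,a)$ consists of the strategies compatible with and disjoint from $a$; committing to $a$ resolves the source state $s$, so $\lk(\DG,a)$ should be the strategy complex of a contracted graph on $\abs{V}-1=n-1$ effective states, for which the inductive hypothesis predicts $\sphere^{\,n-3}$.

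I would then show that full controllability of $G$ is equivalent to the precise pair of conditions on $G\setminus a$ and on the contracted graph needed to make this mapping cone come out as $\Snt$: roughly, $a$ must be removable without destroying controllability in one piece while remaining usable to reach the remaining goals in the other. Chasing the Mayer--Vietoris sequence then forces $\widetilde{H}_*(\DG)$ to agree with that of $\Snt$ in the controllable case and to fail to be a single sphere otherwise. Simple connectivity (which holds for $n\geq 4$ because the relevant subcomplexes are themselves highly connected by induction) upgrades the homology statement to a genuine homotopy equivalence, with the low-dimensional cases $n\in\{2,3\}$ checked by hand.

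The main obstacle I anticipate is the clean identification of $\lk(\DG,a)$ with the strategy complex of a graph on one fewer state, especially when $a$ is \emph{nondeterministic}: contracting $s$ along a nondeterministic action is not a literal edge contraction, so one must verify that the acyclicity constraint and the goal/reachability semantics transport correctly, so that the inductive hypothesis genuinely applies. Making the controllability bookkeeping match the homotopy-cofiber dimension count on \emph{both} pieces simultaneously is where the real care lies; the cone and Mayer--Vietoris machinery around it is routine.
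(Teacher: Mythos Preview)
The paper does not prove this theorem at all: it is quoted as ``one of the fundamental results from \cite{tpr:strategies, tpr:plans}'' and simply cited. So there is no in-paper proof to compare against in the strict sense.

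That said, the paper does indicate (in the appendix on obfuscating strategies) the mechanism used in those references, and it is quite different from your Mayer--Vietoris plan. The cited argument passes through the \emph{source complex} $\SG$, whose vertices are the states $V$ rather than the actions $\frakA$: one shows that the map $\src:\F(\DG)\to\F(\SG)$ sending a strategy to its set of source states is a homotopy equivalence, and then that $\SG=\bndry{(V)}$ exactly when $G$ is fully controllable. This sidesteps precisely the obstacle you flagged: because $\SG$ lives over $V$, nondeterminism in the actions is absorbed by $\src$ before any inductive decomposition happens, and the sphere appears as a literal boundary complex rather than as the output of a mapping-cone computation.

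Your star--link--deletion plan is plausible in outline, but the step you correctly worry about---identifying $\lk(\DG,a)$ with the strategy complex of a graph on $n-1$ states when $a$ is nondeterministic---is genuinely delicate and is where a direct induction on $\DG$ tends to bog down. Committing to a nondeterministic $a$ does not collapse a single edge; it constrains cycles through $s$ via \emph{all} of $a$'s targets simultaneously, so the link need not be the strategy complex of any single contracted graph in the obvious sense. The source-complex route avoids this by first quotienting out the multiplicity of actions at each state; if you want to push your approach through, you would likely end up reinventing that equivalence along the way.
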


Now recall our fundamental privacy result,
Corollary~\ref{holesreduceinference} from
page~\pageref{holesreduceinference}.  That corollary, along with
Theorem~\ref{controllability}, tells us that if a graph $G=(V,\frakA)$
is fully controllable, then the poset $\PA$, formed from relation $A$
of Lemma~\ref{dowkerstrats}, must contain at least $n!$ maximal
chains, each consisting of at least $n-1$ elements, with $n=\abs{V}$
(recall that the number of elements in a chain is one more than its
length).

We actually want a stronger result, speaking to individual strategies
and we can get that by looking into the details of the proof of
Theorem~\ref{manychains}.  The proof is an induction that recursively
considers links, giving us the following (see
Appendices~\ref{manylongchains} and \ref{obfuscatingstrategies}):

\newcounter{stratIDdelay}
\setcounter{stratIDdelay}{\value{theorem}}
\begin{theorem}[Delaying Strategy Identification]
\label{stratdelay}
Let $G=(V,\frakA)$ be a fully controllable graph, with $n=\abs{V} > 1$.
Let $A$ be the relation constructed as in Lemma~\ref{dowkerstrats} and
let $\PA$ be its associated doubly-labeled poset.  Then:

For each $v\in V$, there exists a maximal strategy $\sigma_v \in \DG$
for attaining singleton goal state $v$ such that $\PA$ contains at least
$(n-1)!$ distinct maximal chains for identifying $\sigma_v$, with each
chain consisting of at least $n-1$ elements.
\end{theorem}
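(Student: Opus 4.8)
The plan is to specialize the general homology bounds already proved to the top dimension of a sphere, and then refine the global count so that each goal state receives its own share. First I would convert full controllability into a homology statement about $\PA$. By Lemma~\ref{dowkerstrats} we have $\dowAy = \DG$, and Theorem~\ref{controllability} gives $\DG \homot \Snt$ with $n = \abs{V}$. Since $\PA$ is the image of the closure operator $\phi_A \circ \psi_A$ on $\F(\dowAy)$, that closure operator induces a homotopy equivalence, so $\Delta(\PA) \homot \Delta(\F(\dowAy)) = \sd(\dowAy) \cong \dowAy \homot \Snt$; hence $\widetilde{H}_{n-2}(\Delta(\PA); \Z) \cong \Z$. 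With $k = n-2$, Corollary~\ref{holesreduceinference} immediately gives the global statement quoted before the theorem: at least $n! = (k+2)!$ maximal chains of length $\geq n-2$, i.e.\ of at least $n-1$ elements. The real task is to attach $(n-1)!$ of these chains to each of the $n$ goal states, which a pigeonhole split of the $n!$ total chains cannot do.

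For the per-goal bound I would, for each state $v$, exhibit a uniquely identifiable maximal strategy $\sigma_v \in \DG$ with singleton goal $v$, together with a minimally identifying subset $\gamma_v \subseteq \phi_A(\{\sigma_v\})$ of size exactly $n-1$ that is isotropic. Crucially $\gamma_v$ need not be all of $\sigma_v$: when several actions of $\sigma_v$ are inter-inferable (as $a_1$ and $a_2$ are in Figure~\ref{Arel214}), a proper subset already identifies the strategy, which is why the relevant identifier has size $n-1$ rather than $\abs{\sigma_v}$. Granting such a $\gamma_v$, let $Q = Q(\{\sigma_v\}, \gamma_v)$ be the restricted link of Definition~\ref{restrictedlink}. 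Theorem~\ref{isotropyThm} makes the isotropy of $\gamma_v$ equivalent to $\dowqy = \partial(\gamma_v)$, equivalently $\dowqx \homot \sphere^{\,n-3}$, so $\PQ$ carries reduced homology in dimension $n-3$. For $n \geq 3$, Corollary~\ref{holesdeferrecog} applied to $(\{\sigma_v\}, \gamma_v) \in \PA$ with $k = n-3$ then yields at least $(k+2)! = (n-1)!$ distinct informative attribute release sequences of length $\geq n-1$ localizing to $\{\sigma_v\}$; via Lemmas~\ref{chaintoiars} and~\ref{iarstochain} these correspond to distinct maximal chains of $\PA$ whose bottom element $(\{\sigma_v\}, \gamma_v)$ identifies $\sigma_v$ and which consist of $n-1$ elements. (The case $n = 2$ is immediate: each goal has a one-action strategy identified by that single action, giving the required $(n-1)! = 1$ one-element chain.) Because $\gamma_v$ is minimally identifying, distinct orderings induce distinct sequences of partial closures, so no two of these sequences collapse to the same chain, and $\sigma_v$ genuinely receives $(n-1)!$ chains.

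The main obstacle is the existence claim: that for every goal $v$ there is a maximal strategy carrying an isotropic identifier of size exactly $n-1$, equivalently that the restricted link of $\sigma_v$ is the sphere $\sphere^{\,n-3}$. This is the genuinely graph-theoretic content, and it is exactly where I expect to unpack the inductive, link-recursive proof of Theorem~\ref{manychains}. I would argue by induction on $n$: invoking the structural results on strategy complexes from \cite{tpr:strategies, tpr:plans}, the link in $\DG$ of a suitably chosen action directed toward $v$ is homotopy equivalent to the strategy complex of a fully controllable graph on $n-1$ states that still admits $v$ as a goal; applying Theorem~\ref{controllability} to that reduced graph gives the $\sphere^{\,n-3}$ homotopy type, and the induction hypothesis supplies an isotropic identifier of size $n-2$ there, which extends by the peeled action to the desired $\gamma_v$ of size $n-1$. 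The delicate points I would need to verify are that the chosen reduction both preserves full controllability and keeps $v$ achievable, that a vertex-link of $\Snt$ is indeed $\sphere^{\,n-3}$ so the isotropy dimension matches, and that the constructed chains are maximal in $\PA$ — the last holding because $(\{\sigma_v\}, \gamma_v)$ is a minimal element of $\PA$ (every maximal strategy is uniquely identifiable), while the top of each chain is maximal in $\PA$ since the next element up would be the adjoined top $\oneA$ of the Galois lattice $\PAplus$, which lies outside $\PA$.
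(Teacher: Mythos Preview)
Your global setup is fine, but the per-goal argument has a genuine gap at exactly the point you flag, and the paper closes it by a quite different mechanism that avoids isotropic identifiers entirely. The paper works in the opposite poset $\PAop$ (which is almost a join-based lattice via the meet of $\PAplus$) and opens up the \emph{proof} of Theorem~\ref{manychains} rather than just citing its conclusion: after cycle-tightening, one obtains a homology generator $z$ for $\Delta(\PAop)\homot\Snt$ such that every $p\in\Smax$ (each of the form $(\{\sigma\},\sigma)$ for a maximal strategy $\sigma$) sits above at least $(n-1)!$ distinct long chains. The link to specific goals is then made through the source relation $B$ of Lemma~\ref{dowkerbegin} and the homotopy equivalence $\srceq:\Delta(\PAop)\to\sd(\bndry{(V)})$ of Lemma~\ref{posetequiv} and Corollary~\ref{posetformula}. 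Since $\srceq_*(z)$ generates $\widetilde{H}_{n-2}$ of a triangulated $(n{-}2)$-sphere, its support must be the entire vertex set of $\sd(\bndry{(V)})$; hence for each $v$ there is some $q\in\supp{z}$ with $\src(\sigma_q)=V\setminus\{v\}$, and any $p\in\Smax$ lying above that $q$ corresponds to a strategy with singleton goal $v$.

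Your proposed induction requires the link in $\DG$ of a chosen action to be the strategy complex of a fully controllable graph on $n-1$ states. That is not true in general: membership of $\tau$ in $\lk(\DG,a)$ is the acyclicity condition on $\tau\cup\{a\}$, which depends on the global cycle structure through $a$ rather than on deleting or contracting a state. Relatedly, $\DG$ is only \emph{homotopy} equivalent to $\Snt$, so a vertex link need not have the homotopy type $\sphere^{\,n-3}$ that a genuine simplicial sphere would guarantee. Even the target of your induction may be too strong: the paper explicitly remarks that the $(n-1)!$ chains can arise from ``creative sequencing of more than $n-1$ actions'' rather than from permutations of a single isotropic $(n{-}1)$-set, so demanding such a $\gamma_v$ may ask for more than Theorem~\ref{stratdelay} actually asserts. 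A smaller mismatch: Corollary~\ref{holesdeferrecog} as stated requires $(\{\sigma_v\},\gamma_v)\in\PA$, which forces $\gamma_v=\phi_A(\{\sigma_v\})=\sigma_v$, so a proper subset $\gamma_v\subsetneq\sigma_v$ cannot be plugged in directly.
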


{\bf Clarifying Observation:}\ Each maximal chain for identifying
$\sigma_v$ specifies, via the construction of Lemma~\ref{chaintoiars}
on page~\pageref{chaintoiars}, at least $n-1$ actions and an order for
releasing them, such that no action is implied by those previously
released.  In particular, the sequence of actions does not identify
$\sigma_v$ until all actions have been released.

\vspace*{0.05in}

{\bf Comments:}\ Theorem~\ref{stratdelay} does {\em not\hspt} assert that
{\em every}\, maximal strategy in $\DG$ has $(n-1)!$ many ``long''
identifying chains, merely that, for every possible singleton goal $v$,
there is {\em some}\, strategy for attaining $v$ with $(n-1)!$ many
``long'' identifying chains.  It is not hard to construct examples for
which some maximal strategy has fewer than $(n-1)!$ identifying chains
(see Section~\ref{multiexample}).  \ This fact raises further
questions ($G$ is assumed fully controllable throughout):

\begin{itemize}

\vspace*{-0.05in}

\label{cycletheorems}

\item Given an arbitrary maximal strategy $\sigma_v$ for attaining a
  singleton goal state $v$, can we find at least {\em one}\, chain in
  $\PA$ that identifies $\sigma_v$ but requires release of at least
  $n-1$ actions before doing so?  The answer in general is ``no'', but
  ``yes'' for certain kinds of graphs.

  One can construct counterexamples, in which the strategy $\sigma_v$
  is always inferable before $n-1$ of its actions have been revealed,
  regardless of the order in which one reveals the actions.
  Appendix~\ref{shortiars} describes one such example, containing a
  mix of stochastic and nondeterministic actions.
  Nonetheless, even for such mixed graphs one can describe situations
  in which the answer is ``yes''.  This occurs for instance when the
  graph contains a Hamiltonian cycle consisting of directed edges that
  arise from deterministic or stochastic actions (see
  Lemma~\ref{hamiltonianhelp} on page~\pageref{hamiltonianhelp}, in
  Appendix~\ref{hamiltoniangraph}).
  Leveraging that insight, one can prove that, for \spc{\em pure\,}
  nondeterministic or stochastic graphs (defined on
  page~\pageref{puregraphs} in Appendix~\ref{puregraphs}), every
  maximal strategy (even one with a multi-state goal) has an
  informative action release sequence of length at least $n$$-$$1$.

\vspace*{-0.425in}

\paragraph{}$\phantom{0}$

\item \label{singletongoaldelay} Given a singleton goal state $v$, can
  we find at least one maximal strategy $\tau_v$ and at least one
  chain in $\PA$ that eventually identifies $\tau_v$, but does not
  reveal the goal $v$ before releasing at least $n-1$ actions?  The
  answer to this question is ``yes''.  The proof operates by
  repeatedly creating quotient graphs.  In forming a quotient graph,
  the proof regards as equivalent a certain set of states that are
  connected by a cycle of directed edges, with each edge coming from
  some deterministic or stochastic action.  For instance, in the graph
  of Figure~\ref{graph214}, the proof would regard states
  \tone\ and \ttwo\ as equivalent.  The resulting quotient graph would
  then consist of two states with deterministic actions between them,
  since action $a_3$ becomes a deterministic transition in the
  quotient graph.  Inductively, one therefore sees that an entity can
  hide its true goal until at least two actions in the original graph
  $G$ have been revealed.  \ (See Appendix~\ref{delaygoalrecog} for
  further details.)

\label{cycletheoremsend}

\end{itemize}

\vspace*{-0.15in}

\paragraph{\hspace*{0.25in}A comment/caution regarding the availability of many chains:}\ The
\label{chaininferencecaution}
$(n-1)!$ chains mentioned above may come from all possible
permutations of the same underling set of $\mskip1.5mu{}n-1$ actions.
Alternatively, these $(n-1)!$ chains may involve creative sequencing
of more than $n-1$ actions.  The precise makeup of the chains depends
on the underlying homology generators.  However, even if the chains
are merely reordering the same $n-1$ actions, there is good reason to
take advantage of that capability, rather than pick one particular
sequence via a deterministic algorithm.  The reason is that knowledge
of how an algorithm releases actions may leak information to an
adversary.  Such leakage may be understood as changing the effective
relation.  For instance, despite thinking one is working with relation
$A$, a particular release protocol may simply be focusing on some
proper subset of $A$ or some proper subset of the poset $\PA$,
possibly resulting in very different inference characteristics.  A
good release strategy may be to choose randomly from among the
$(n-1)!$ possible chains.  In that way, one is taking good advantage
of the spherical homogeneity suggested by homology.

\subsection{Example: Multi-State Goals and Multi-Strategy Singleton Goals}
\markright{Example: Multi-State Goals and Multi-Strategy Singleton Goals}
\label{multiexample}

\begin{figure}[h]
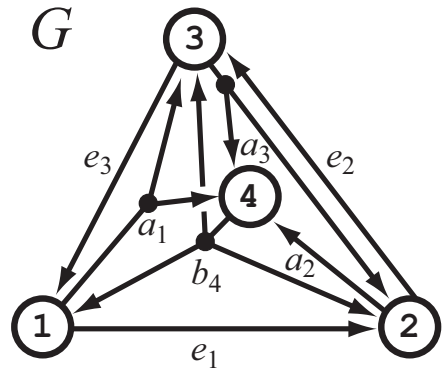

\begin{center}
\vspace*{-0.1in}
\ifig{graph202}{height=2in}
\end{center}
\vspace*{-0.3in}
\caption[]{A graph with four states $\{1,2,3,4\}$, four deterministic
  actions $\{e_1, e_2, e_3, a_2\}$, and three nondeterministic actions
  $\{a_1, a_3, b_4\}$.}
\label{graph202}
\end{figure}

Figure~\ref{graph202} shows a fully controllable nondeterministic
graph on four states.  The graph contains four deterministic actions
and three nondeterministic actions.

\vspace*{0.025in}

Three of the deterministic actions form a directed cycle: $1
\xrightarrow{e_1} 2 \xrightarrow{e_2} 3 \xrightarrow{e_3} 1$.  The
remaining deterministic action, $a_2$, moves from state 2 to state 4.

Actions $a_1$, $a_3$, and $b_4$ are nondeterministic.  Action $a_1$
moves nondeterministically from state 1 to either state 3 or state 4,
while action $a_3$ moves nondeterministically from state 3 to either
state 2 or state 4.  Finally, action $b_4$ moves nondeterministically
back from state 4 to any of the other three states.

\vspace*{0.025in}

Figure~\ref{Arel202} shows relation $A$ for the graph of
Figure~\ref{graph202}, with $A$ as defined in Lemma~\ref{dowkerstrats}
on page~\pageref{dowkerstrats}.  As indicated in the figure, some
maximal strategies have two-state goals.  In addition, two of the
maximal strategies converge to the same singleton goal, namely state
$4$.

\begin{figure}[h]
\begin{center}
\begin{minipage}{2.5in}{$\begin{array}{c|ccccccc}
       A & e_1  & e_2  & e_3  & a_1  & a_2  & a_3  & b_4 \\[2pt]\hline
\sigma_1 &      & \one & \one &      &      &      & \one \\[2pt]
\sigma_2 & \one &      & \one &      &      &      & \one \\[2pt]
\sigma_3 & \one & \one &      &      &      &      & \one \\[2pt]
\sigma_4 & \one &      & \one &      & \one & \one &      \\[2pt]
\sigma_5 & \one &      &      & \one & \one & \one &      \\[2pt]
\sigma_{14} &      & \one & \one &      & \one &      &      \\[2pt]
\sigma_{34} & \one & \one &      & \one & \one &      &      \\[2pt]
\end{array}$}
\end{minipage}
\quad
\begin{minipage}{0.5in}{$\begin{array}{c}
\hbox{Goal} \\[2pt]\hline
1 \\[2pt]
2 \\[2pt]
3 \\[2pt]
4 \\[2pt]
4 \\[2pt]
\{1,4\} \\[2pt]
\{3,4\} \\[2pt]
\end{array}$}
\end{minipage}
\end{center}
\vspace*{-0.15in}
\caption[]{Relation $A$ describes the strategy complex for the graph
  of Figure~\ref{graph202} in terms of its maximal strategies and
  their constituent actions.  The rightmost column further shows each
  maximal strategy's goal.  Observe that some strategies converge to
  multi-state goals.}
\label{Arel202}
\end{figure}

\begin{figure}[h]
\begin{center}
\begin{minipage}{2.5in}{$\begin{array}{c|cccc}
          Q & e_1  & a_1  & a_2  & a_3  \\[2pt]\hline
\sigma_2    & \one &      &      &      \\[2pt]
\sigma_3    & \one &      &      &      \\[2pt]
\sigma_4    & \one &      & \one & \one \\[2pt]
\sigma_{14} &      &      & \one &      \\[2pt]
\sigma_{34} & \one & \one & \one &      \\[2pt]
\end{array}$}
\end{minipage}
\quad
\begin{minipage}{0.5in}{$\begin{array}{c}
\hbox{Goal} \\[2pt]\hline
2 \\[2pt]
3 \\[2pt]
4 \\[2pt]
\{1,4\} \\[2pt]
\{3,4\} \\[2pt]
\end{array}$}
\end{minipage}
\end{center}
\vspace*{-0.1in}
\caption[]{Relation $Q$ models $\lk(\dowAx, \sigma_5)$, with $A$ as in
  Figure~\ref{Arel202}.}
\label{Qrel202}
\end{figure}

\begin{figure}[h]
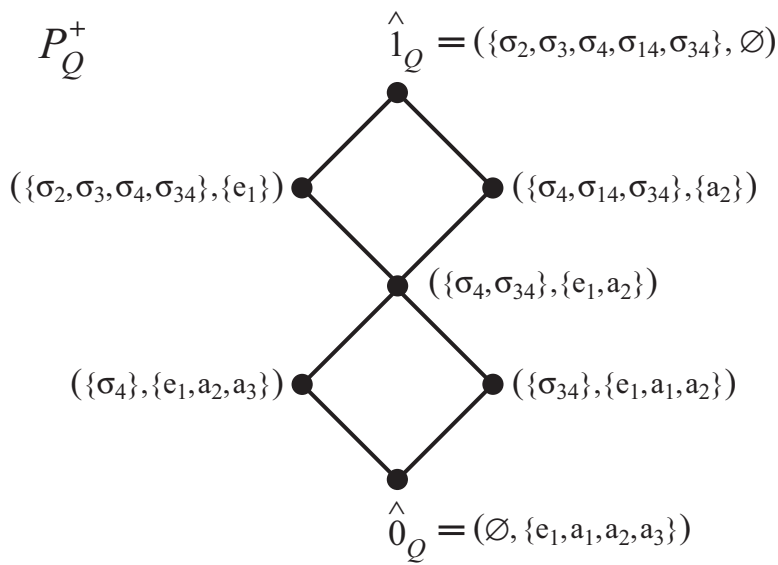

\vspace*{0.2in}
\begin{center}
\ifig{Qlattice202}{scale=0.475}
\end{center}
\vspace*{-0.2in}
\caption[]{The Galois lattice $\PQplus$ has length 4, with $Q$ as in
  Figure~\ref{Qrel202}.}
\label{Qlattice202}
\end{figure}

Strategies $\sigma_1$, $\sigma_2$, $\sigma_3$, and $\sigma_4$ each
have (at least) six different informative action release sequences of
length (at least) 3, as guaranteed to exist in $\DG$ by
Theorem~\ref{stratdelay}.  Strategy $\sigma_5$ also has informative
action release sequences of length at least 3 (in fact, length 4), but
it only has four such sequences, consistent with the comments on
page~\pageref{cycletheorems}.  We can see this by constructing the
Galois lattice $\PQplus$, with $Q$ modeling $\lk(\dowAx, \sigma_5)$.
Figures~\ref{Qrel202} and \ref{Qlattice202} depict $Q$ and $\PQplus$,
respectively.  There are indeed four downward paths of length 4 from
$\oneQ$ to $\zeroQ$.

In order to delay identification of $\sigma_5$ as long as possible,
the lattice $\PQplus$ further tells us that one should reveal action
$a_2$ either right away or right after first revealing action $e_1$.
However, as soon as one has revealed action $a_2$, an observer knows
that the goal is either state 4 or a two-state set containing state 4.
If the observer has adversarial control over the outcome of
nondeterministic actions in $G$, then as soon as one has revealed two
of $\sigma_5$'s actions, the observer-adversary can lie in wait for
the system at state 4.

Of course, as indicated on page~\pageref{singletongoaldelay}, one can
delay goal recognition for 3 steps if one is free to choose any
strategy for that goal.  For goal state 4, one should choose strategy
$\sigma_4$ rather than $\sigma_5$, and reveal actions $e_1$, $e_3$,
$a_2$, either in that order or in the order $e_3$, $e_1$, $a_2$.

\vspace*{-0.05in}

\subsection{Randomization}
\markright{Randomization}

Suppose $G=(V,\frakA)$ is a fully controllable graph with $\abs{V} =
\abs{\frakA} = n > 1$.  These conditions imply that there is exactly
one action at each state and that the maximal simplices of $\DG$
consist of all subsets of $\frakA$ of size $n-1$.  Consequently, $\DG$
is a boundary complex, specifically $\DG=\bndry{(\frakA)}$.  As we
have seen, such complexes preserve attribute privacy, meaning it is
impossible to infer any additional actions from actions already
revealed.  The existence of $(n-1)!$ different informative action
release sequences of length $n-1$ for any given maximal strategy here
simply means that one may reveal the actions of that strategy in any
order.

The technical complications discussed previously arise when there are
multiple actions at some or all states of $V$.  One may circumvent
such complications by creating a single ``effective action'' at each
state, for instance by choosing stochastically among the given actions
available at a state when the system is in that state.  The precise
probabilities are not significant from the perspective of
combinatorial strategy obfuscation, so long as the probability of
choosing any given original action is greater than 0, and all such
probabilities sum to 1.  (Of course, the actual probabilities will
affect the expected time to attain the goal.)

A related issue concerns execution order versus release order.  The
privacy results in this report assume that a system can control the
order in which it reveals actions.  If instead actions are revealed as
they are executed, then an observer may be able to infer the
underlying strategy more quickly than desired.  In order to obfuscate
the strategy, the system may need to be willing to ignore early
arrival at the goal and instead continue moving.  The precise criteria
determining whether the system stops or continues could be stochastic,
or could reflect a protocol determined by the desired action release
sequence.

\clearpage
\section{Relations as a Category}
\label{category}

We have discussed disinformation, obfuscation, and other manipulation
of relations.  The goal of such transformations has been to preserve
privacy by removing or hiding free faces.  We have not yet discussed
such transformations formally.  For instance, the coordinate
transformations of Section~\ref{coordchanges} raise the question:

\begin{center}
How should one think about maps between relations?
\end{center}

\subsection{Relationship-Preserving Morphisms}
\markright{Relationship-Preserving Morphisms}

Traditionally, relations are themselves morphisms between sets (with
functions a special case).  In thinking about privacy, it is useful to
define a category in which relations are the objects.
We have some choices in defining morphisms for this category.  Bearing
in mind our Dowker constructions (see again Definition~\ref{basicdefs}
on page~\pageref{basicdefs}), we adopt the following standard definition:

\paragraph{Notation:}\ (1) We frequently will be working with two
relations: $R$ is a relation on $\XR \times \YR$ and $Q$ is a relation
on $\XQ \times \YQ$ (the superscripts are just indices to indicate the
underlying relation).  In order to distinguish rows and columns
between the two relations, we will also use notation of the form
$\XRy$, $\YRx$, $\XQy$, and $\YQx$. \quad (2) By a {\em set map\,} we
mean a function between two sets.

\vspace*{0.05in}

\newcounter{morphismDef}
\setcounter{morphismDef}{\value{theorem}}
\begin{definition}[Morphism]\label{morphism}
Let $R$ be a relation on $\XR \times \YR$ and let $Q$ be a relation on
$\XQ \times \YQ$.
\ A \mydefem{morphism of relations} \ $f : R \rightarrow Q$ is a pair
of set maps:

\vspace*{-0.2in}

\begin{eqnarray*}
\fX &:& \XR \rightarrow \XQ\\[2pt]
\fY &:& \YR \rightarrow \YQ
\end{eqnarray*}

\noindent such that $\big(\fX(x), \,\fY(y)\big) \in Q$ whenever $(x,y) \in R$.
\end{definition}

\noindent In other words, a morphism of relations maps individuals to
individuals and attributes to attributes in a way that preserves
relationships.

\vspace*{0.1in}

\noindent The following lemma follows from the definitions (a proof
appears in Appendix~\ref{morphismpropertiesApp}):

\newcounter{SimplicialMorphisms}
\setcounter{SimplicialMorphisms}{\value{theorem}}
\begin{lemma}[Induced Simplicial Maps]\label{simpmaps}
A morphism $f : R \rightarrow Q$ between nonvoid relations induces
simplicial maps between the Dowker complexes:

\vspace*{-0.2in}

\begin{eqnarray*}
\fX &:& \dowx \rightarrow \dowqx\\[2pt]
\fY &:& \dowy \rightarrow \dowqy
\end{eqnarray*}
\end{lemma}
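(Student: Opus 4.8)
The plan is to verify directly from Definition~\ref{morphism} and Definition~\ref{basicdefs} that each set map carries simplices to simplices, which is precisely what it means to induce a simplicial map. Recall that a simplicial map $g : K \to L$ is a map on vertex sets such that $g(\sigma) = \{g(v) : v \in \sigma\}$ is a simplex of $L$ for every simplex $\sigma$ of $K$; it need not be injective, so it may collapse a simplex to one of lower dimension, and that is permitted. Thus there are really only two things to establish for each of $\fX$ and $\fY$: that it sends vertices to vertices, and that it sends higher simplices to simplices.

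First I would check that $\fX$ restricts to a map of vertex sets $\verts{\dowx} \to \verts{\dowqx}$. A vertex $\{x\}$ of $\dowx$ is an individual $x\in\XR$ with a nonblank row, i.e.\ with some witness $y\in\YR$ satisfying $(x,y)\in R$. The morphism condition of Definition~\ref{morphism} then gives $(\fX(x),\fY(y))\in Q$, so $\fX(x)$ has a nonblank row witnessed by $\fY(y)$, whence $\{\fX(x)\}\in\dowqx$. The symmetric argument (interchanging the roles of individuals and attributes) shows that $\fY$ sends vertices of $\dowy$ to vertices of $\dowqy$.

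The crux is the transport of witnesses, and the one point worth emphasizing is that the two components of $f$ must be used in tandem: a simplex of $\dowx$ is a set of individuals certified by an \emph{attribute} witness, so to move it into $\dowqx$ one applies $\fX$ to the simplex but $\fY$ to its witness. Concretely, let $\sigma\in\dowx$. By Definition~\ref{basicdefs} there is an attribute $y\in\YR$ with $(x,y)\in R$ for every $x\in\sigma$. By Definition~\ref{morphism}, $(\fX(x),\fY(y))\in Q$ for every $x\in\sigma$, so $\fY(y)\in\YQ$ is a common attribute shared by all individuals in $\fX(\sigma)$. Hence $\fX(\sigma)\in\dowqx$, as required. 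The empty simplex maps to the empty simplex, covering that degenerate case.

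Finally I would give the dual argument for $\fY:\dowy\to\dowqy$, which is identical after interchanging individuals and attributes: a simplex $\gamma\in\dowy$ has an \emph{individual} witness $x\in\XR$ with $(x,y)\in R$ for all $y\in\gamma$, and applying the morphism shows $\fX(x)$ witnesses $\fY(\gamma)\in\dowqy$. There is no serious obstacle here — as the excerpt itself notes, the proof is nearly definitional — so the only care needed is to respect the asymmetry of the Dowker construction (witnesses for $\dowx$ live in $Y$, and vice versa, forcing the cross-use of the two set maps) and to remember that non-injectivity of $\fX$ or $\fY$ is harmless, since simplicial maps are allowed to identify vertices and thereby collapse simplices.
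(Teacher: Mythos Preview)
Your proof is correct and follows essentially the same approach as the paper: transport the attribute witness $y$ for $\sigma\in\dowx$ via $\fY$ to obtain a witness $\fY(y)$ for $\fX(\sigma)\in\dowqx$, with the dual argument for $\fY$. Your separate treatment of vertices is redundant (they are just $0$-simplices covered by the general argument), but harmless.
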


\paragraph{Notational comment: } The symbols $f_X$ and $f_Y$ are
overloaded intentionally.  The simplicial map $f_X$ is precisely the
set map $f_X$ applied to the vertices of any simplex: \quad If
$\sigma=\{x_0, \ldots, x_k\} \in \dowx$, then $f_X(\sigma) =
\{f_X(x_0), \ldots, f_X(x_k)\} \in \dowqx$.  Similarly for $f_Y$.

\vspace*{0.1in}

\noindent {\bf Connectivity Implication:} \ Intuitively, one cannot
partition the individuals of a connected relation into two or more
pairwise disjoint classes without misclassifying some individuals or
ignoring some relationships.  A graph connectivity argument provides a
possible proof.  Lemma~\ref{simpmaps} provides another, with
additional insight.  \ Let us look at some examples:

\paragraph{Two Bits onto One:}
Consider again the relations $S$ and $Q$ of Figures \ref{onebit1} and
\ref{twobit2}, respectively, on page~\pageref{onebit1}.
Relation $S$ models a one-bit relation --- an attribute and its
negation.  Relation $Q$ models a two-bit relation --- two attributes
and their negations.  The Dowker complexes for $S$ have $\Szero$
homotopy type, while those for $Q$ have $\Sone$ homotopy type.  We can
think of $S$ as a classification, splitting individuals into those
that have some attribute $\ta$ and those that do not.

By Lemma~\ref{simpmaps}, a morphism $f : Q \rightarrow S$ induces
simplicial (hence continuous) maps between the corresponding Dowker
complexes of $S$ and $Q$.  Since $\Sone$ is connected but $\Szero$ is
not, there is no surjective continuous function from $\Sone$ to
$\Szero$.  Consequently, no morphism $f : Q \rightarrow S$ can truly
be a classification: $f_Y$ can map all four attributes $\{\ta,
\neg\ta, \tb, \neg\tb\}$ of $Q$ to the single attribute $\ta$ or all
four attributes to $\neg\ta$, but $f_Y$ cannot map to both $\ta$ and
$\neg\ta$.

\begin{figure}[h]
\begin{center}
\qquad\qquad
\begin{minipage}{1in}{$\begin{array}{c|cccc}
\hbox{\large $\;Q^\prime$} & \ta  & \neg\ta  \\[2pt]\hline
              1          & \one &          \\
              2          & \one &          \\
              3          &      & \one     \\
              4          &      & \one     \\
\end{array}$}
\end{minipage}
\hspace*{1in}
\begin{minipage}{3in}
\ifig{onebitFromTwo_nogenerators}{scale=0.4}
\end{minipage}
\end{center}
\vspace*{-0.2in}
\caption[]{Relation $Q^\prime$ obtained from relation $Q$ of
  Fig.~\ref{twobit2} by discarding attributes $\tb$ and $\neg\tb$.}
\label{twobitcut}
\end{figure}

This impossibility may at first seem paradoxical.  After all, one can
simply cut relation $Q$ down the middle and throw away the columns
involving attributes $\tb$ and $\neg\tb$, as shown in Figure
\ref{twobitcut}.  After that, a surjective morphism $f^\prime :
Q^\prime \rightarrow S$ is immediate.  Indeed, that is possible.
However, in so doing, one has discarded some relationships, perhaps
purposefully, perhaps accidentally.  In particular, the relationship
between individuals \#1 and \#3 of $Q$ via attribute $\tb$ is lost, as
is the relationship between individuals \#2 and \#4 via attribute
$\neg\tb$.  This reasoning simply underscores the fact that morphisms
of relations preserve relationships.  Lack of continuity in a function
therefore is a sign that one is discarding some relationships.
Whether such discard is desirable depends on one's goals in a
particular application.

\paragraph{Three Bits onto Two:}
Recall as well Figure~\ref{threebit3} on page~\pageref{threebit3},
which depicts a three-bit relation $R$ --- three attributes and their
negations, capable of distinguishing between eight individuals.  The
homotopy type of the Dowker complexes is $\Stwo$.  With $Q$ as above,
the following question arises naturally when trying to reduce
complexity of data yet preserve information:

\begin{center}
Does there exist a surjective morphism $f : R \rightarrow Q$ ?
\end{center}

Unlike the previous example, there do exist continuous maps from
$\Stwo$ onto $\Sone$, so perhaps one can find a surjective morphism $f
: R \rightarrow Q$.  \ In fact, one can not.  Intuitively, the issue
is that the two-dimensional relationships of $R$ try to fill the
one-dimensional hole of relation $Q$.
\ Here is a simplex-based argument:

\vspace*{-0.05in}

\begin{itemize}

\addtolength{\itemsep}{-2pt}

\item Suppose surjective $f : R \rightarrow Q$ exists.  As will be
  discussed later (see page \pageref{morphismprop}), this means the
  component functions $f_X : \XR \rightarrow \XQ$ and $f_Y :
  \YR \rightarrow \YQ$ are surjective as set maps.

\item One may therefore assume without loss of generality that
      $f_Y(\ta) = \ta$ and $f_Y(\tb) = \tb$.

\item The triangles $\{\ta, \tb, \tc\}$ and $\{\ta, \tb, \neg\tc\}$
  are both simplices in $\dowy$.  The maximal simplices of $\dowqy$
  are edges.

\item By Lemma~\ref{simpmaps}, this means that $f_Y(\tc)$ and
  $f_Y(\neg\tc)$ are both elements of $\{\ta, \tb\}$ in $\dowqy$.

\item Again by surjectivity, we therefore see that $\{f_Y(\neg\ta),
  f_Y(\neg\tb)\} = \{\neg\ta, \neg\tb\}$.

\item Another triangle-versus-edge argument then says that $f_Y(\tc)$
  and $f_Y(\neg\tc)$ are both elements of $\{\neg\ta, \neg\tb\}$,
  giving us a contradiction.

\end{itemize}

Of course, as in constructing $Q^\prime$ of Figure~\ref{twobitcut}, if
we are willing to tolerate discontinuities, we could discard one
attribute and its negation to obtain $Q$ from $R$.  As before,
discontinuity means losing awareness of some relationship(s).  For
instance, if we omit attribute $\tc$, we would become unaware in $Q$
of the relationship that exists in $R$ among the set of individuals
$\{1,3,5,7\}$.

\subsection{Privacy-Establishing Morphisms}
\markright{Privacy-Establishing Morphisms}

\begin{figure}[h]
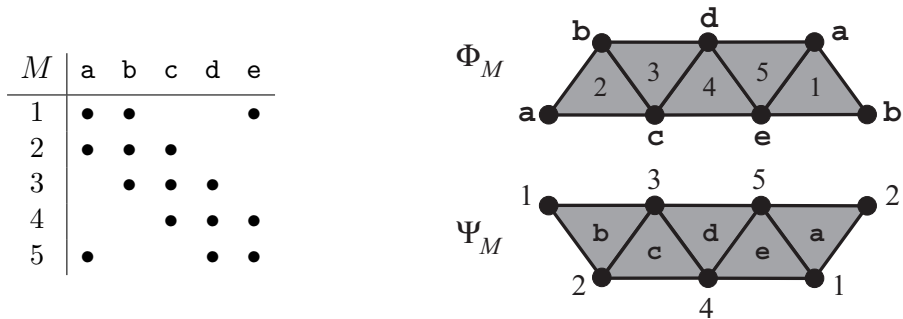

\begin{center}
\quad
\begin{minipage}{1.5in}{$\begin{array}{c|ccccc}
\hbox{\large $M$} & \ta  & \tb  & \tc  & \td  & \te  \\[2pt]\hline
              1   & \one & \one &      &      & \one \\
              2   & \one & \one & \one &      &      \\
              3   &      & \one & \one & \one &      \\
              4   &      &      & \one & \one & \one \\
              5   & \one &      &      & \one & \one \\
\end{array}$}
\end{minipage}
\hspace*{0.75in}
\begin{minipage}{3in}
\ifig{moebius_complexes_updated}{scale=0.4}
\end{minipage}
\end{center}
\vspace*{-0.1in}
\caption[]{Relation $M$ is isomorphic to relation $G$ of
  Figure~\ref{travelguides} on page~\pageref{travelguides}, now
  without the author-book semantics.  The Dowker complexes are dual
  triangulations of the M\"obius strip, with $\Sone$ homotopy type.}
\label{moebius}
\end{figure}

Relations involve two spaces.  Looking at just $\dowy$ or just $\dowx$
may hide some interesting properties.  For instance, consider the
M\"obius strip relation $M$ of Figure~\ref{moebius}.  We encountered
this relation previously, in Section~\ref{leveraginglattices}.

We might wish to remove some of the inferences discussed in
Section~\ref{leveraginglattices} by reshaping the underlying relation
without discarding any relationships.  Doing so leads to the following
question:

\vspace*{-0.05in}

\begin{center}
\begin{minipage}{4.2in}
\label{surjqueststart}
Does there exist a surjective morphism $f : M \rightarrow T$, with $T$
a relation that preserves both attribute and association privacy ?
\end{minipage}
\end{center}

\vspace*{0.05in}

If such a morphism $f$ exists, then, as we mentioned in
Sections~\ref{faceshape} and \ref{holemeaning}, relation $T$ must have
the topology of either a linear cycle or a spherical boundary complex.
It turns out that the answer to the question above is ``yes'', with
$T$ being a relation whose Dowker complexes are boundaries of
tetrahedra (see Figure~\ref{tetrahedra2} on
page~\pageref{tetrahedra2}).

This construction is not immediately obvious from the complexes
$\dowMy$ and $\dowMx$.  Although those simplicial complexes are
2-dimensional, suggesting that their triangles can be wrapped around a
tetrahedron, doing so actually collapses two of the five triangles to
edges.  Indeed, the component functions for one such surjective
morphism $f : M \rightarrow T$ are:

\vspace*{-0.1in}

\begin{center}
\begin{minipage}{2.5in}{%
\begin{eqnarray*}
f_X \;:\;  X^M   & \rightarrow & X^T  \\[2pt]
           1     & \mapsto     & 4    \\[-1pt]
           2     & \mapsto     & 1    \\[-1pt]
           3     & \mapsto     & 2    \\[-1pt]
           4     & \mapsto     & 3    \\[-1pt]
           5     & \mapsto     & 4    \\[-1pt]
\end{eqnarray*}}
\end{minipage}
\begin{minipage}{2.5in}{%
\begin{eqnarray*}
f_Y \;:\; Y^M   & \rightarrow & Y^T  \\[2pt]
          \ta   & \mapsto     & \ta  \\[-1pt]
          \tb   & \mapsto     & \tb  \\[-1pt]
          \tc   & \mapsto     & \tc  \\[-1pt]
          \td   & \mapsto     & \td  \\[-1pt]
          \te   & \mapsto     & \ta  \\
\end{eqnarray*}}
\end{minipage}
\end{center}

\vspace*{-0.35in}

\noindent The induced simplicial maps act on the five maximal
simplices of $\dowMx$ and $\dowMy$ as follows:

\vspace*{-0.15in}

\begin{center}
\label{MTproj}
\begin{minipage}{2.5in}{%
\begin{eqnarray*}
f_X \;:\; \dowMx & \rightarrow & \dowTx    \\[2pt]
      \{1,2,3\} & \mapsto     & \{1,2,4\} \\[-1pt]
      \{2,3,4\} & \mapsto     & \{1,2,3\} \\[-1pt]
      \{3,4,5\} & \mapsto     & \{2,3,4\} \\[-1pt]
      \{1,4,5\} & \mapsto     & \{3,4\}   \\[-1pt]
      \{1,2,5\} & \mapsto     & \{1,4\}   \\[-1pt]
\end{eqnarray*}}
\end{minipage}
\begin{minipage}{2.5in}{%
\begin{eqnarray*}
f_Y \;:\; \dowMy        & \rightarrow & \dowTy             \\[2pt]
     \{\ta, \tb, \tc\} & \mapsto     & \{\ta, \tb, \tc\}  \\[-1pt]
     \{\tb, \tc, \td\} & \mapsto     & \{\tb, \tc, \td\}  \\[-1pt]
     \{\tc, \td, \te\} & \mapsto     & \{\ta, \tc, \td\}  \\[-1pt]
     \{\ta, \td, \te\} & \mapsto     & \{\ta, \td\}       \\[-1pt]
     \{\ta, \tb, \te\} & \mapsto     & \{\ta, \tb\}       \\[-1pt]
\end{eqnarray*}}
\end{minipage}
\end{center}

\vspace*{-0.3in}

Even though $f_X$ and $f_Y$ are surjective as set maps on the vertices
of the Dowker complexes, they are {\em not\,} surjective as simplicial
maps on the complexes themselves.  Each only covers three of the four
triangles comprising the tetrahedron in its codomain.  At first glance
it may therefore seem that the morphism $f: M \rightarrow T$ resulting
from $f_X$ and $f_Y$ does not achieve the desired privacy
preservation.  A closer look, however, reveals that $f$ is actually
surjective as a map of relations: it maps the elements of $M\!$ {\em
onto} the elements of $T$.  Therefore, it does represent a
transformation that achieves privacy preservation.

In order to understand this paradox, imagine again that $M$ represents
an authorship database.  Think of the maps $f_X$ and $f_Y$ as quotient
maps, in this case equating authors 1 and 5 and books $\ta$ and $\te$.
The equivalencing of authors might constitute a recognition of
pseudonyms.  The equivalencing of books might represent a
generalization from titles to genres.  Such changes of resolution,
carefully chosen, perhaps based on external structure, can preserve
relationships while reducing recognition and inference granularity.
\label{surjqueststop}

\subsection{Summary of Morphism Properties}

Definition \ref{morphism} defines a morphism of relations $f : R
\rightarrow Q$ in terms of underlying set functions $f_X : \XR
\rightarrow \XQ$ and $f_Y : \YR \rightarrow \YQ$.  These set functions
further induce simplicial maps $f_X : \dowx \rightarrow \dowqx$ and
$f_Y : \dowy \rightarrow \dowqy$.  The previous subsections spoke of
surjectivity in varying contexts.  Similarly, one could speak of maps
as being injective in varying contexts.  Finally, one also speaks of
morphisms as being epimorphisms and monomorphisms.  This subsection
summarizes how these properties relate for the various maps.  \ See
Appendix~\ref{morphismpropertiesApp} for proofs.

\vspace*{0.1in}

First, some definitional context and reminders:

\begin{itemize}

\item A morphism of relations $f : R \rightarrow Q$ is also a set map
  between the set of pairs comprising $R$ and the set of pairs
  comprising $Q$.  Specifically, $f(x,y) = (f_X(x), f_Y(y))$ for all
  $(x,y)\in R$.

  One may speak of $f$ as being {\em surjective} and/or {\em
  injective}, meaning as a set map.

\item We say that two morphisms of relations $g,h : R \rightarrow Q$
  are {\em equal}, written $g = h$, when they are equal as set maps of
  ordered pairs, meaning $g(x,y)=h(x,y)$ for all $(x,y)\in R$.

  (Note: If $R$ contains blank rows and/or columns, then $g=h$ is
  possible even though $g_X \neq h_X$ and/or $g_Y \neq h_Y$, as set
  maps.  This will not cause us problems; one could pass to
  equivalence classes in Definition~\ref{morphism}.)

\item The functions $f_X : \XR \rightarrow \XQ$ and $f_Y : \YR
\rightarrow \YQ$ are set maps.  One may speak of them as being
surjective and/or injective.

\item One may also ask whether the induced simplicial maps $f_X :
  \dowx \rightarrow \dowqx$ and $f_Y : \dowy \rightarrow \dowqy$ are
  surjective and/or injective as maps between simplicial complexes
  viewed as sets.

\item Suppose $f : R \rightarrow Q$ is a morphism of relations.
 Recall from category theory that $f$ is an {\em epimorphism} if, for
 any pair of morphisms $g, h : Q \rightarrow S$, \ $g \circ f = h
 \circ f$ implies $g = h$.

  Recall further that a morphism $f : R \rightarrow Q$ is a {\em
  monomorphism} if, for any pair of morphisms $g, h : S \rightarrow
  R$, $f \circ g = f \circ h$ implies $g = h$.

\end{itemize}

\newcounter{morphismproperties}
\setcounter{morphismproperties}{\value{theorem}}
\begin{lemma}[Morphism Properties]\label{morphismprop}
Assume the notation from above and that all relevant relations are nonvoid.
Let $f : R \rightarrow Q$ be a morphism of relations (as per
Definition~\ref{morphism}).  Then:

\begin{itemize}

\item[(i)] $\fX$ {\em and} $\fY$ are injective set maps $\implies$ $f$ is
            injective $\iff$ $f$ is a monomorphism.

  \vspace*{0.05in}

\item [(ii)] $f$ surjective $\implies$ $f$ epimorphism $\iff$
  $\fX$ {\em and} $\fY$ are surjective set maps.

  \vspace*{0.05in}

  (Additional conditions for that last $\iff$: The $\Longrightarrow$
  direction assumes that $Q$ has no blank rows or columns, while the
  $\Longleftarrow$ direction assumes that $R$ has no blank rows or
  columns.)

\end{itemize}

\vspace*{-0.05in}

\noindent The two uni-directional implications $\implies$ above are strict.

\vspace*{0.075in}

\begin{itemize}

\item[(iii)] If $\fX : \dowx \rightarrow \dowqx$ is surjective and $Q$
  has no blank rows, then $\fX : \XR \rightarrow \XQ$ is surjective.

  Similarly for $\fY$, now assuming that $Q$ has no blank columns.

The converses need not hold.  Indeed, $f$ itself can be surjective but
the maps of simplicial complexes need not be (as we saw with the maps
of page~\pageref{MTproj}).

\item[(iv)] If $\fX : \XR \rightarrow \XQ$ is injective, then $\fX :
  \dowx \rightarrow \dowqx$ is injective.   The converse holds if $R$
  has no blank rows.

  Similarly for $\fY$, now assuming that $R$ has no blank columns for
  the converse.

\end{itemize}

\end{lemma}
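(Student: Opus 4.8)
The plan is to prove (i) and (ii) by separating the elementary set-theoretic implications from the two categorical characterizations, and then to handle (iii)--(iv) by passing to the vertices of the Dowker complexes. Throughout I exploit the fact, recorded after Definition~\ref{morphism}, that two morphisms agree precisely when they agree on the \emph{pairs} of the domain relation; this is exactly what forces the blank-row and blank-column hypotheses. The routine directions go first. If $\fX$ and $\fY$ are injective set maps and $f(x_1,y_1)=f(x_2,y_2)$ for pairs of $R$, then $\fX(x_1)=\fX(x_2)$ and $\fY(y_1)=\fY(y_2)$ give $(x_1,y_1)=(x_2,y_2)$, so $f$ is injective. Dually, if $f$ is surjective as a set map of pairs it is an epimorphism, since any $g,h\colon Q\to S$ with $g\circ f=h\circ f$ must agree on every pair of $Q$, each being $f$ of a pair of $R$. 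Equally direct: an injective $f$ is a monomorphism (cancel on pairs).

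The real work lies in the converse categorical directions, which I would settle by constructing probe relations. For ``$f$ monomorphism $\implies f$ injective'', suppose $f(x_1,y_1)=f(x_2,y_2)$ with $(x_1,y_1)\neq(x_2,y_2)$; take $S$ the single-pair relation $\{(\ast,\bullet)\}$ and let $g,h\colon S\to R$ send $(\ast,\bullet)$ to $(x_1,y_1)$ and $(x_2,y_2)$ respectively. Then $g\neq h$ while $f\circ g=f\circ h$, contradicting monicity. For ``$f$ epimorphism $\implies \fX,\fY$ surjective'', assuming $Q$ has no blank rows, suppose $x'_0\in\XQ$ is missed by $\fX$. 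Build $S$ from $Q$ by adjoining a duplicate row $z$ whose pairs are exactly those of $x'_0$, let $g\colon Q\to S$ be the inclusion and $h$ the inclusion modified by $h_X(x'_0)=z$; both are morphisms, they differ on a genuine pair of $Q$ (here ``no blank rows'' guarantees $x'_0$ occurs in some pair), yet agree after precomposition with $f$ because $\fX$ never hits $x'_0$. The column version yields $\fY$ surjective. Conversely, if $\fX,\fY$ are surjective set maps and $R$ has no blank rows or columns, then $g\circ f=h\circ f$ forces $g_X\circ\fX=h_X\circ\fX$ on all of $\XR$ (every individual appears in a pair), and surjectivity of $\fX$ propagates this to all of $\XQ$; similarly on $Y$, so $g=h$ and $f$ is epi.

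For (iii) and (iv) I argue at the vertex level. A vertex $\{x'\}$ lies in $\dowqx$ iff $x'$ has a witnessing attribute, i.e.\ iff row $x'$ of $Q$ is nonblank; so when $Q$ has no blank rows, surjectivity of the simplicial map $\fX\colon\dowx\to\dowqx$ forces every $x'$ to be the image of some vertex, hence $\fX$ is surjective as a set map---this is (iii). For (iv), injectivity of the set map $\fX$ makes $\fX$ injective on the vertices of any pair of simplices, so equal images force equal simplices; and if $R$ has no blank rows, distinct individuals $x_1\neq x_2$ give distinct vertices $\{x_1\},\{x_2\}$ of $\dowx$, so simplicial injectivity returns set-map injectivity. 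The failure of the converse in (iii), and the strictness of the forward implication in (ii), are both witnessed by the surjective-but-not-simplicially-surjective morphism $f\colon M\to T$ of page~\pageref{MTproj}: there $\fX,\fY$ are surjective set maps (so $f$ is epi) while $f$ is not surjective on pairs and the induced simplicial maps omit triangles. Strictness in (i) is witnessed by any relation with a blank row, where $\fX$ may identify the unused individual with another without disturbing $f$ on pairs.

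The main obstacle I anticipate is the epimorphism characterization. Both the ``epi $\implies$ surjective components'' probe (the row/column-doubling relation $S$) and the precise placement of the no-blank-row and no-blank-column hypotheses must be gotten exactly right, since those hypotheses are what absorb the slack between equality of morphisms and equality of their underlying component set maps. Once that is pinned down, everything else reduces to chasing pairs and vertices.
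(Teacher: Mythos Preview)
Your argument is essentially correct and tracks the paper's proof closely for parts (i), (iii), and (iv). For the direction ``$f$ epimorphism $\Longrightarrow$ $\fX,\fY$ surjective'' in (ii), you take a genuinely different route: you duplicate the missed row of $Q$ to build the probe relation $S$, whereas the paper collapses everything to a tiny two-attribute relation $S=\{(I,\alpha),(I,\beta)\}$, sending all of $\XQ$ to $I$ and all of $\YQ$ to $\alpha$ except that one of the two maps sends the missed attribute $\ystar$ to $\beta$. Both constructions work; yours stays closer to $Q$ and may feel more natural, while the paper's is more economical and makes no structural demands on $S$ beyond having two distinguishable attributes.

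There is one factual slip in your strictness witnesses. You assert that the morphism $f:M\to T$ of page~\pageref{MTproj} ``is not surjective on pairs'', and offer this as a witness that $f$ epi need not imply $f$ surjective. But the paper explicitly records (see the discussion on pages~\pageref{surjqueststart}--\pageref{surjqueststop}) that this $f$ \emph{is} surjective as a set map of ordered pairs; what fails is surjectivity of the induced \emph{simplicial} maps. So $M\to T$ correctly witnesses the failure of the converse in (iii), but not the strictness of the forward implication in (ii). For the latter you need a separate example, e.g.\ $R=\{(1,\ta),(2,\tb)\}$ and $Q=\{(1,\ta),(1,\tb),(2,\ta),(2,\tb)\}$ with $f$ the identity on both coordinates: then $\fX,\fY$ are bijective (hence $f$ is epi) yet $f$ misses $(1,\tb)$ and $(2,\ta)$.
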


\subsection{G-Morphisms}
\markright{G-Morphisms}
\label{gmorphismsection}

Since a relation $R$ defines a poset $\PR$, rather than merely create
morphisms from set maps between individuals and attributes as in
Definition~\ref{morphism}, we may broaden the definition by
considering maps between posets:

\vspace*{0.15in}

\begin{definition}[G-Morphism]\label{Gmorphism}
Let $R$ and $Q$ be nonvoid relations.\\
\hspace*{0.5in}A \,\mydefem{G-morphism}\ \ $f : R \rightarrow Q$ \ is
any poset map \ $f : \PR \rightarrow \PQ$.
\end{definition}

\paragraph{Comments: } The ``G'' stands for ``Galois''.  We might
have insisted that a G-morphism $R \rightarrow Q$ be a lattice
morphism $\PRplus \rightarrow \PQplus$ rather than merely a poset map
$\PR \rightarrow \PQ$, but that might be too restrictive.  Instead, as
subsequent lemmas will describe, we view a G-morphism as providing
homotopy flexibility.  In particular, a morphism between relations as
per Definition~\ref{morphism} induces two homotopic G-morphisms.  The
lattice structure of the codomain is relevant, in that it allows one
to fill in elements not directly in the image of any one \hbox{G-morphism},
as will become apparent in Theorem \ref{latsurj}.

\begin{figure}[h]
\begin{center}
{\LARGE
$$
\begin{CD}
@. \hspace*{-0.275in}\Fdowx  @>{\hspace*{0.3in}f_X\hspace*{0.3in}}>> \hspace*{0.05in}\Fdowqx\\[3pt]
@V\phi_RVV \hspace*{-0.1in}@AA\psi_RA           \hspace*{-0.2in}@V\phi_QVV \hspace*{-0.32in}@AA\psi_QA\\
@. \hspace*{-0.275in}\Fdowy  @>{\hspace*{0.3in}f_Y\hspace*{0.3in}}>> \hspace*{0.05in}\Fdowqy\\[3pt]
\end{CD}
$$
}
\end{center}
\vspace*{-0.1in}
\caption[]{Diagram showing the poset maps $f_X$ and $f_Y$ induced by a
morphism $f : R \rightarrow Q$, along with the homotopy equivalences
between each relation's face posets.  (The diagram need not be
commutative, but is almost so; see Lemma~\ref{containment}.)}
\label{morphismdiagram}
\end{figure}

\vspace*{0.1in}

Recall that a morphism $f : R \rightarrow Q$ as per
Definition~\ref{morphism} is built from two set maps $f_X$ and $f_Y$
and that these set maps induce simplicial maps between the Dowker
complexes, as per Lemma~\ref{simpmaps}.  We may therefore further
regard $f_X$ and $f_Y$ as order-preserving poset maps between the face
posets of the Dowker complexes: $f_X : \Fdowx \rightarrow \Fdowqx$ and
$f_Y : \Fdowy \rightarrow \Fdowqy$.  Consequently, we have a diagram
of maps as in Figure~\ref{morphismdiagram}.  The diagram need not be
commutative, but the following containments hold:

\newcounter{containmentlemma}
\setcounter{containmentlemma}{\value{theorem}}
\begin{lemma}[Witness Containment]\label{containment}
Let $f : R \rightarrow Q$ be a morphism of nonvoid relations.  Then:

\vspace*{0.1in}

\qquad
\begin{minipage}{3.7in}
\begin{itemize}

\item[(a)] $(\fY \circ \phi_R)(\sigma) \;\subseteq\; (\phi_Q \circ \fX)(\sigma)$,
           \ for every $\sigma \in \dowx$,

\item[(b)] $(\fX \circ \psi_R)(\gamma) \;\subseteq\; (\psi_Q \circ \fY)(\gamma)$,
           \ for every $\gamma \in \dowy$.

\end{itemize}
\end{minipage}
\end{lemma}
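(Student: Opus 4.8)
The plan is to prove both containments by a direct element-chase, unwinding the definitions of the Galois maps $\phi$ and $\psi$ and then invoking the single defining property of a morphism, namely that $(\fX(x),\fY(y))\in Q$ whenever $(x,y)\in R$. I would treat (a) in full and obtain (b) by the evident row/column duality, since the two statements are mirror images under interchanging the roles of individuals and attributes.

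For (a), fix $\sigma\in\dowx$. First I would unwind the left-hand side: since $\phi_R(\sigma)=\biginter_{x\in\sigma}Y^R_x$ and $\fY$ acts here as a set-image map, $(\fY\circ\phi_R)(\sigma)=\setdef{\fY(y)}{(x,y)\in R\ \text{for all}\ x\in\sigma}$. Next I would unwind the right-hand side: by Lemma~\ref{simpmaps} the image $\fX(\sigma)$ is a simplex of $\dowqx$, so $(\phi_Q\circ\fX)(\sigma)=\biginter_{x\in\sigma}Y^Q_{\fX(x)}=\setdef{y'\in\YQ}{(\fX(x),y')\in Q\ \text{for all}\ x\in\sigma}$ is a well-defined element of $\F(\dowqy)$. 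The heart of the argument is then one line: given an element $\fY(y)$ of the left side, the condition $(x,y)\in R$ for every $x\in\sigma$ yields, via the morphism property applied pointwise over $\sigma$, that $(\fX(x),\fY(y))\in Q$ for every $x\in\sigma$; hence $\fY(y)$ lies in the intersection defining the right side. This establishes (a). (The universal quantifier ``for all $x\in\sigma$'' makes the chase uniform: when $\sigma=\emptyset$ both empty intersections are the full attribute space and the containment is vacuous.)

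For (b), I would run the identical argument with the roles of $X$ and $Y$ (equivalently, rows and columns) interchanged: fix $\gamma\in\dowy$, write $(\fX\circ\psi_R)(\gamma)=\setdef{\fX(x)}{(x,y)\in R\ \text{for all}\ y\in\gamma}$ and, using Lemma~\ref{simpmaps} to see $\fY(\gamma)\in\dowqy$, write $(\psi_Q\circ\fY)(\gamma)=\biginter_{y\in\gamma}X^Q_{\fY(y)}$, and again push an arbitrary element $\fX(x)$ of the left side through the morphism property applied pointwise over $\gamma$.

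There is no genuine obstacle here; the lemma is essentially definitional, and the only points requiring care are notational. The symbols $\fX,\fY$ are overloaded as set maps, set-image maps, and simplicial maps, so I must keep straight that on each side $\fY$ (resp.\ $\fX$) denotes the image of a \emph{set} of attributes (resp.\ individuals), and I should confirm the right-hand expressions are legitimate face-poset elements, which is exactly what Lemma~\ref{simpmaps} supplies. It is also worth remarking why only containment, and not equality, holds: the set $\fX(\sigma)$ may share in $Q$ an attribute $y'$ that is either outside the image of $\fY$ or arises from a relationship present in $Q$ but absent in $R$, so such a $y'$ belongs to the right side of (a) without belonging to the left. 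This strictness is the Galois-map manifestation of the fact that the codomain relation $Q$ may carry more relationships than $R$, which is precisely the homotopy flexibility that motivates the G-morphism discussion to follow.
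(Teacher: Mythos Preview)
Your proposal is correct and follows essentially the same approach as the paper: a direct element chase that takes $y\in\phi_R(\sigma)$, notes $(x,y)\in R$ for all $x\in\sigma$, applies the morphism property to get $(\fX(x),\fY(y))\in Q$ for all $x\in\sigma$, and concludes $\fY(y)\in\phi_Q(\fX(\sigma))$; part (b) is then declared dual. The paper treats $\sigma=\emptyset$ as a separate explicit line ($\fY(\YR)\subseteq\YQ$) rather than folding it into the vacuous-quantifier remark, but this is a presentational difference only.
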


\vspace*{0.15in}

(See Appendix~\ref{gmorphisms} for a proof of the previous lemma and
its upcoming corollaries.)

\clearpage

\noindent As a corollary, we see that the diagram of
Figure~\ref{morphismdiagram} describes two pairs of homotopic maps:

\newcounter{homotopicfacemaps}
\setcounter{homotopicfacemaps}{\value{theorem}}
\begin{corollary}[Homotopic Face Maps]\label{facehomotopy}
Let $f : R \rightarrow Q$ be a morphism of nonvoid relations.  Then:

\qquad
\begin{minipage}{4.75in}
\begin{itemize}

\item[(a)] $\fX$ and $\,\psi_Q \circ \fY \circ \phi_R$ are homotopic
  poset maps $\,\Fdowx \rightarrow \Fdowqx$,

\item[(b)] $\fY$ and $\,\phi_Q \circ \fX \circ \psi_R$ are homotopic
  poset maps $\,\Fdowy \rightarrow \Fdowqy$.

\end{itemize}
\end{minipage}
\end{corollary}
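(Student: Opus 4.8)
The plan is to invoke the standard comparability criterion for homotopy of poset maps: if $P$ and $P'$ are finite posets and $g, h : P \to P'$ are order-preserving maps with $g(p) \leq h(p)$ for every $p \in P$, then the induced maps of order complexes are homotopic (see \cite{tpr:bjorner, tpr:wachs, tpr:quillenH}; this is the same mechanism that makes a closure operator homotopic to an identity map, as noted in the excerpt). Both parts of the corollary then reduce to checking that the two maps in question are order-preserving and pointwise comparable, with comparability coming directly from Lemma~\ref{containment}.

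First I would record the order behavior of the constituent maps. The simplicial maps of Lemma~\ref{simpmaps} induce order-preserving maps $\fX : \Fdowx \to \Fdowqx$ and $\fY : \Fdowy \to \Fdowqy$, while the Galois maps $\phi_R$ and $\psi_Q$ are order-reversing (enlarging a simplex shrinks the intersection defining its image). Hence $\psi_Q \circ \fY \circ \phi_R$, being reverse-preserve-reverse, is order-preserving, matching $\fX$; nonvoidness of $R$ and $Q$ also guarantees that this composite carries nonempty simplices to nonempty simplices, so it is a well-defined map $\Fdowx \to \Fdowqx$. Dually, $\phi_Q \circ \fX \circ \psi_R$ is an order-preserving map $\Fdowy \to \Fdowqy$.

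Next I would establish the pointwise comparison for part (a). Fix $\sigma \in \dowx$. Lemma~\ref{containment}(a) gives $(\fY \circ \phi_R)(\sigma) \subseteq (\phi_Q \circ \fX)(\sigma)$. Applying the order-reversing map $\psi_Q$ and then using that $\clsqx$ is a closure operator on $\Fdowqx$ (so it dominates the identity) yields
$$(\psi_Q \circ \fY \circ \phi_R)(\sigma) \;\supseteq\; (\psi_Q \circ \phi_Q \circ \fX)(\sigma) \;\supseteq\; \fX(\sigma).$$
Thus $\fX(\sigma) \leq (\psi_Q \circ \fY \circ \phi_R)(\sigma)$ in $\Fdowqx$ for every $\sigma$, and the comparability criterion delivers the homotopy asserted in (a). Part (b) is entirely dual: starting from Lemma~\ref{containment}(b), apply the order-reversing $\phi_Q$ together with the closure operator $\clsqy$ to obtain $\fY(\gamma) \leq (\phi_Q \circ \fX \circ \psi_R)(\gamma)$ for every $\gamma \in \dowy$, whence those two maps are homotopic as well.

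The routine parts are the bookkeeping of which maps reverse order and the verification that every composite remains within the face poset of nonempty simplices. The only point demanding genuine care is keeping the inclusion directions straight through the two order-reversals and the single application of a closure operator; flipping any one of these would reverse the comparison and collapse the argument. Beyond that I anticipate no real obstacle, since the substantive content — the witness containments — is already furnished by Lemma~\ref{containment}.
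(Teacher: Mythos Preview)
Your proposal is correct and follows essentially the same approach as the paper: apply $\psi_Q$ to the inclusion of Lemma~\ref{containment}(a), use that $\clsqx$ dominates the identity, and invoke the comparability criterion for homotopy of poset maps (the paper cites \cite{tpr:bjorner}, Theorem~10.11). The only cosmetic difference is that the paper phrases this as two homotopies---first $\psi_Q\circ\fY\circ\phi_R \simeq \psi_Q\circ\phi_Q\circ\fX$ by comparability, then $\psi_Q\circ\phi_Q\circ\fX \simeq \fX$ because the closure operator is homotopic to the identity---whereas you chain the two inclusions into a single pointwise comparison $\fX(\sigma)\subseteq(\psi_Q\circ\fY\circ\phi_R)(\sigma)$; the content is identical.
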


\vspace*{0.1in}

The images of the compositions that appear in
Corollary~\ref{facehomotopy} may be regarded as lying in $\PQ$.  We may
further restrict the domain of these maps to be $\PR$, giving us the
following G-morphisms:

\begin{definition}[Induced G-Morphisms]\label{inducedgmorphisms}
A morphism of nonvoid relations $f : R \rightarrow Q$ induces two
G-morphisms $R \rightarrow Q$, defined by the following poset maps
$\PR \rightarrow \PQ$:

\vspace*{-0.05in}

$$
\fXg \;=\; (\psi_Q \circ f_Y \circ \phi_R) |_{\PR}
\qquad\qquad
\fYg \;=\; (\phi_Q \circ f_X \circ \psi_R) |_{\PR}.
$$
\end{definition}
(The ``$g$'' superscript stands for ``Galois'' while the vertical bar
``$|$'' means ``restricted to''.  See also Appendix~\ref{gmorphisms}.)

\vspace*{0.1in}

\newcounter{homotopicposetmaps}
\setcounter{homotopicposetmaps}{\value{theorem}}
\begin{corollary}[Homotopic G-Morphisms]\label{posethomotopy}
Let $f : R \rightarrow Q$ be a morphism of nonvoid relations.  The
induced G-morphisms given by the poset maps $\fXg, \fYg : \PR
\rightarrow \PQ$ are homotopic.
\end{corollary}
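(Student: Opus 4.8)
The goal is to show that the two induced G-morphisms $\fXg = (\psi_Q \circ f_Y \circ \phi_R)|_{\PR}$ and $\fYg = (\phi_Q \circ f_X \circ \psi_R)|_{\PR}$, regarded as poset maps $\PR \rightarrow \PQ$, are homotopic. The plan is to leverage the two corollaries that immediately precede this statement, together with the standard fact (already invoked throughout the paper via closure operators) that two order-preserving poset maps $g, h : P \rightarrow P'$ which are comparable pointwise — i.e. $g(p) \leq h(p)$ for all $p$, or $g(p) \geq h(p)$ for all $p$ — induce homotopic maps on order complexes. This is the standard ``poset homotopy lemma'' from \cite{tpr:bjorner, tpr:wachs}, and it is the engine behind every homotopy claim in the excerpt.

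\textbf{The main line of argument.} First I would recall, from Lemma~\ref{containment} restricted to elements of $\PR$, that for each $(\sigma, \gamma) \in \PR$ we have $\sigma = \psi_R(\gamma)$ and $\gamma = \phi_R(\sigma)$; substituting these identities into the two containments (a) and (b) should let me compare $\fXg$ and $\fYg$ at the level of their $X$-components and $Y$-components directly. Specifically, applying $\psi_Q$ (which is order-\emph{reversing} on the $Y$-side but order-preserving when we track the $X$-component of a $\PQ$ element) to containment (a), and using that elements of $\PQ$ are determined by either coordinate, I expect to obtain a pointwise comparability $\fXg(\sigma,\gamma) \leq \fYg(\sigma,\gamma)$ in $\PQ$ (or the reverse), for every $(\sigma,\gamma) \in \PR$. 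The key observation is that both $\fXg$ and $\fYg$ land in $\PQ$, where an element's $\sigma$-component and $\gamma$-component determine each other, so a containment in one component translates cleanly into the partial-order relation $\leq$ on $\PQ$. Once pointwise comparability is established, the poset homotopy lemma gives the homotopy equivalence of the induced simplicial maps on $\Delta(\PR) \rightarrow \Delta(\PQ)$, which is exactly the assertion.

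\textbf{Executing the coordinate bookkeeping.} Concretely, I would take $(\sigma,\gamma)\in\PR$ and write out both images: $\fXg(\sigma,\gamma)$ has $X$-component $(\psi_Q \circ f_Y \circ \phi_R)(\sigma) = (\psi_Q \circ f_Y)(\gamma)$, while $\fYg(\sigma,\gamma)$ has $X$-component obtained by applying $\psi_Q$ to its $Y$-component $(\phi_Q \circ f_X \circ \psi_R)(\sigma) = (\phi_Q \circ f_X)(\sigma)$. Containment~(b) of Lemma~\ref{containment}, namely $(f_X \circ \psi_R)(\gamma) \subseteq (\psi_Q \circ f_Y)(\gamma)$, then reads as a subset relation between the $X$-components after I push everything into $\PQ$ via the closure operators $\psi_Q \circ \phi_Q$; since these closure operators fix elements already in $\PQ$, and since both images genuinely lie in $\PQ$, the subset relation on the $X$-coordinate becomes precisely the relation $\fYg(\sigma,\gamma) \leq \fXg(\sigma,\gamma)$ in the poset $\PQ$. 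The same move applied to containment~(a) would confirm consistency (it should give the matching statement on the $Y$-coordinate, which reverses the inclusion, preserving the same $\leq$ in $\PQ$).

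\textbf{Where the difficulty lies.} The routine parts are the poset homotopy lemma (quoted, not reproved) and verifying order-preservation of the composed maps. The genuine obstacle is the careful coordinate translation: Lemma~\ref{containment} is stated for arbitrary $\sigma \in \dowx$ and $\gamma \in \dowy$, whereas here I need it only on the stable elements of $\PR$, and I must verify that restricting to $\PR$ and then landing in $\PQ$ (rather than merely in $\Fdowqx$ or $\Fdowqy$) is legitimate — this is where Corollary~\ref{facehomotopy} and the remark that ``the images of the compositions may be regarded as lying in $\PQ$'' do the essential work. I would need to check that $\psi_Q \circ f_Y \circ \phi_R$ and $\phi_Q \circ f_X \circ \psi_R$, restricted to $\PR$, really do output genuine elements $(\sigma',\gamma')$ satisfying $\sigma' = \psi_Q(\gamma')$ and $\gamma' = \phi_Q(\sigma')$, so that the single subset-containment of Lemma~\ref{containment} determines the full partial-order comparison in $\PQ$. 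Granting that — which follows because applying a closure operator's constituent map lands in the closure-stable poset — the pointwise comparability is immediate and the homotopy follows from the poset homotopy lemma.
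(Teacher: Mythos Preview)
Your proposal is correct and follows essentially the same approach as the paper: establish pointwise comparability of the two poset maps using the Witness Containment lemma, then invoke the standard poset homotopy lemma (Bj\"orner, Theorem~10.11). The only minor difference is that the paper uses containment~(a), applying $\psi_Q$ to $(\fY\circ\phi_R)(\sigma)\subseteq(\phi_Q\circ\fX)(\sigma)=\gamma''$ to obtain $\sigma'\supseteq\psi_Q(\gamma'')=\sigma''$ directly, whereas you emphasize containment~(b) and then push through the closure operator $\clsqx$; both routes yield the same inequality $\fYg\leq\fXg$ in $\PQ$.
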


\vspace*{0.1in}

The proof of Corollary~\ref{posethomotopy} on
page~\pageref{posethomotopyAppPage} says that we may view the underlying
maps $f_X$ and $f_Y$ of a morphism $f$ as mapping any inference-closed
set (viewed either as a set of individuals or as a set of attributes)
from the domain of $f$ to an interval (in the poset sense) of
inference-closed sets in the codomain of $f$.

\vspace*{0.15in}

\begin{figure}[h]
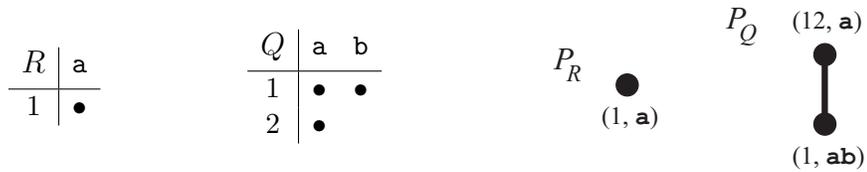

\begin{center}
\qquad
\begin{minipage}{1in}{$\begin{array}{c|c}
\hbox{\large $R$} & \ta  \\[2pt]\hline
              1   & \one \\
\end{array}$}
\end{minipage}
\hspace*{0.15in}
\begin{minipage}{1in}{$\begin{array}{c|cc}
\hbox{\large $Q$} & \ta  & \tb  \\[2pt]\hline
              1   & \one & \one \\
              2   & \one &      \\
\end{array}$}
\end{minipage}
\hspace*{0.5in}
\begin{minipage}{2.5in}
\ifig{Ex58_PRQ}{scale=0.475}
\end{minipage}
\end{center}
\vspace*{-0.2in}
\caption[]{Relation $R$ is a subrelation of $Q$.  How should one embed
  $\PR$ into $\PQ$?  There are two possible embeddings, related by a
  homotopy.}
\label{Ex58}
\end{figure}

For a simple example, see Figure~\ref{Ex58}.  One may regard relation
$R$ as a subrelation of $Q$, then define $f : R \rightarrow Q$ to be
inclusion.  For instance, maybe $R$ and $Q$ represent individuals \#1
and \#2 at two parties $\ta$ and $\tb$, with $R$ representing known
parties and party-attendees at some time and $Q$ representing an
update of that information at a later time.  Observe that:

\vspace*{-0.1in}

\begin{eqnarray*}
\fXg((1,\ta)) \!&=&\! (\psi_Q \circ f_Y \circ \phi_R)(\{1\}) \;=\; (\psi_Q \circ f_Y)(\{\ta\})
   \;=\; \psi_Q(\{\ta\}) \;=\; \{1, 2\} \;\;\hbox{``$=$''}\;\;(12, \ta), \\[4pt]
\fYg((1,\ta)) \!&=&\! (\phi_Q \circ f_X \circ \psi_R)(\{\ta\}) \;=\; (\phi_Q \circ f_X)(\{1\})
   \;=\; \phi_Q(\{1\}) \;=\; \{\ta, \tb\} \;\;\hbox{``$=$''}\;\;(1, \ta\tb). \\
\end{eqnarray*}

\vspace*{-0.1in}

\noindent The last equality in each row indicates how to view the image
element on the left of the ``$=$'' as an element of the poset $\PQ$.

\vspace*{0.1in}

Both $\fXg$ and $\fYg$ tell us how to update inference-closed sets
from $\PR$ into inference-closed sets within $\PQ$:

\begin{itemize}

\item The map $\fXg$ updates associations while holding
observed attributes fixed.  In this example, based on initial
information (relation $R$), we know that person \#1 attended party
$\ta$.  Once we update that information (relation $Q$) we can conclude
that person \#2 also attended a party at which person \#1 was present.

\item Similarly, the map $\fYg$ updates attributes while
holding observed individuals fixed.  In this example, updated
information allows us to conclude that person \#1 attended not only
party $\ta$ but also party $\tb$.

\end{itemize}

In general, for any fixed element of $\PR$, the two maps may give
different results, but those results are comparable in $\PQ$.  Here $f$
was inclusion, so we could speak of holding attributes or individuals
``fixed''.  More generally, ``fixed'' is replaced by whatever $f$ does.

\subsection{Surjectivity Revisited}
\markright{Surjectivity of Morphisms}
\label{surjectivity}

A paradox: We saw on page
\pageref{surjqueststop} a surjective morphism $f$, from the M\"obius
strip relation of Figure~\ref{moebius} to the tetrahedral relation of
Figure~\ref{tetrahedra2}, whose induced simplicial maps $f_X : \dowMx
\rightarrow \dowTx$ and $f_Y : \dowMy \rightarrow \dowTy$ were not
surjective.   This raises some questions:

\vspace*{0.1in}

\hspace{0.25in}\begin{minipage}{5.3in}
\begin{enumerate}

\addtolength{\itemsep}{-2pt}

\item Are the induced poset maps $\fXg, \fYg : \PM \rightarrow \PT$
  surjective?

\item If not, how can one speak of a surjective morphism?

\end{enumerate}
\end{minipage}

\vspace*{0.1in}

(Note that $\PMplus$ is isomorphic to $\PGplus$ as shown in
Figure~\ref{authorlattice} on page~\pageref{authorlattice}.  A rendering
would be identical, except for lowercase letters in place of uppercase
ones.  The lattice $\PTplus$ appears in Figure~\ref{tetralattice2} on
page~\pageref{tetralattice2}.)

\vspace*{0.1in}

The answer to Question 1 is that the two poset maps are {\em not\,}
surjective.  Observe in Table~\ref{MTposetmaps} on
page~\pageref{MTposetmaps}, for instance, that the image of $\fXg$
does not include $(4, \ta\tb\td)$. Similarly, the image of $\fYg$ does
not include $(134, \ta)$.

These missing elements {\em are} in the image of both maps {\em
together}, viewed as a pair of homotopic maps, as per
Corollary~\ref{posethomotopy}.  Unfortunately, that explanation is not a
full answer to Question 2.  For instance, neither map's image includes
the element $(13, \ta\tc)$ of $\PT$, nor does that element appear in any
interval $[\fYg(p), \fXg(p)]$ as $p$ varies throughout $\PM$.

\vspace*{0.1in}

To answer question 2, the lattice structure of $\PT$ is useful.  In the
example, the image of $\fXg$ includes all elements of $\PT$ that
correspond to maximal simplices of $\dowTx$.  Similarly, the image of
$\fYg$ includes all elements of $\PT$ that correspond to maximal
simplices of $\dowTy$.  Intuitively, we therefore expect that the
lattice operations (which correspond to intersection in either $\dowTx$
or $\dowTy$) will generate all the elements of $\PT$.  In that sense,
the surjectivity of $f$ appears as surjectivity of each of $\fXg$ and
$\fYg$, once one {\em completes} their images under lattice operations.

\begin{table}[t]
\begin{center}
\fbox{$\begin{array}{ccccc}
           p        & \quad &       \fXg(p)  & \quad &      \fYg(p)         \\[3pt]\hline
  \pelem{12}{\ta\tb}   & & \pelem{14}{\ta\tb}   & & \pelem{14}{\ta\tb}    \\[2pt]
  \pelem{2}{\ta\tb\tc} & & \pelem{1}{\ta\tb\tc} & & \pelem{1}{\ta\tb\tc}  \\[2pt]
  \pelem{123}{\tb}     & & \pelem{124}{\tb}     & & \pelem{124}{\tb}      \\[2pt]
  \pelem{23}{\tb\tc}   & & \pelem{12}{\tb\tc}   & & \pelem{12}{\tb\tc}    \\[2pt]
  \pelem{3}{\tb\tc\td} & & \pelem{2}{\tb\tc\td} & & \pelem{2}{\tb\tc\td}  \\[2pt]
  \pelem{234}{\tc}     & & \pelem{123}{\tc}     & & \pelem{123}{\tc}      \\[2pt]
  \pelem{34}{\tc\td}   & & \pelem{23}{\tc\td}   & & \pelem{23}{\tc\td}    \\[2pt]
  \pelem{4}{\tc\td\te} & & \pelem{3}{\ta\tc\td} & & \pelem{3}{\ta\tc\td}  \\[2pt]
  \pelem{345}{\td}     & & \pelem{234}{\td}     & & \pelem{234}{\td}      \\[2pt]
  \pelem{45}{\td\te}   & & \pelem{34}{\ta\td}   & & \pelem{34}{\ta\td}    \\[2pt]
  \pelem{5}{\ta\td\te} & & \pelem{34}{\ta\td}   & & \pelem{4}{\ta\tb\td}  \\[2pt]
  \pelem{145}{\te}     & & \pelem{134}{\ta}     & & \pelem{34}{\ta\td}    \\[2pt]
  \pelem{15}{\ta\te}   & & \pelem{134}{\ta}     & & \pelem{4}{\ta\tb\td}  \\[2pt]
  \pelem{1}{\ta\tb\te} & & \pelem{14}{\ta\tb}   & & \pelem{4}{\ta\tb\td}  \\[2pt]
  \pelem{125}{\ta}     & & \pelem{134}{\ta}     & & \pelem{14}{\ta\tb}    \\[2pt]
\end{array}$}
\end{center}
\vspace*{-0.2in}
\caption[]{Each $p$ is of the form $(\sigma, \gamma) \in \PM$.  The
   elements $\fXg(p)$ and $\fYg(p)$ lie in $\PT$.  See also Figures
   \ref{authorlattice} and \ref{tetralattice2}, on
   pages~\pageref{authorlattice} and \pageref{tetralattice2},
   respectively. \ (As in those figures, the table elides commas and
   braces from set notation.  \ For Figure~\ref{authorlattice},
   recall that $M$ and $G$ are isomorphic relations.)}
\label{MTposetmaps}
\end{table}

\vspace*{0.25in}

\noindent The following theorem summarizes the intuition of the previous pages:

\vspace*{0.05in}

\newcounter{LatticeGenerators}
\setcounter{LatticeGenerators}{\value{theorem}}
\begin{theorem}[Lattice Surjectivity]\label{latsurj}
Let $R$ and $Q$ be nonvoid relations with no blank rows or columns.
Suppose $f : R \rightarrow Q$ is a surjective morphism (in the sense
of Definition~\ref{morphism}).
 \ For any $q \in \PQ$:

\vspace*{-0.05in}

$$ q \;=\; \bigwedge_j \bigvee_i q_{ji}, \quad \hbox{with each
  $q_{ji}$ in the image of $\fXg : \PR \rightarrow \PQ$,}$$

\vspace*{-0.05in}

$$ q \;=\; \bigvee_k \bigwedge_\ell q^{\prime}_{k\ell}, \quad \hbox{with each
  $q^\prime_{k\ell}$ in the image of $\fYg : \PR \rightarrow \PQ$.}$$

\vspace*{0.05in}

(Here, $\bigvee$ and $\bigwedge$ are the lattice operations of $\PQplus$.)

\end{theorem}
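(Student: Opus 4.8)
The plan is to reduce the theorem to two independent facts and then combine them using the lattice operations of $\PQplus$, whose action on components is recorded in Definition~\ref{galoislattice}: the join intersects attribute-components (and closes the union of the individual-components), while the meet intersects individual-components (and closes the union of the attribute-components).

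First I would establish a purely intrinsic decomposition of an arbitrary $q = (\sigma, \gamma) \in \PQ$ that makes no reference to $f$. For each attribute $y \in Y^Q$ the column $X^Q_y = \psi_Q(\{y\})$ is nonempty (as $Q$ has no blank columns) and inference-closed, since $\psi_Q\phi_Q\psi_Q = \psi_Q$; hence the \emph{column element} $c_y = (X^Q_y,\, \phi_Q(X^Q_y))$ lies in $\PQ$. Because $\sigma = \psi_Q(\gamma) = \bigcap_{y \in \gamma} X^Q_y$ and the meet acts by intersection on individual-components, I obtain $q = \bigwedge_{y \in \gamma} c_y$. Dually, each \emph{row element} $r_x = (\psi_Q(Y^Q_x),\, Y^Q_x)$, with $Y^Q_x = \phi_Q(\{x\})$ nonempty and inference-closed, lies in $\PQ$, and since $\gamma = \phi_Q(\sigma) = \bigcap_{x \in \sigma} Y^Q_x$, I obtain $q = \bigvee_{x \in \sigma} r_x$.

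Second I would show that each column element is a join of elements from the image of $\fXg$, and dually that each row element is a meet of elements from the image of $\fYg$; substituting these into the decompositions above yields the two claimed formulas. Fix $y \in Y^Q$ and a point $x \in X^Q_y$, so $(x,y) \in Q$. Surjectivity of $f$ supplies a \emph{matched} preimage $(x', y') \in R$ with $f_X(x') = x$ and $f_Y(y') = y$. Set $\gamma' = Y^R_{x'}$ (a single row, hence inference-closed) and $\sigma' = \psi_R(\gamma')$, so $(\sigma', \gamma') \in \PR$ with $x' \in \sigma'$ and $y' \in \gamma'$. Writing $e = \fXg(\sigma', \gamma')$, its individual-component is $\psi_Q(f_Y(\gamma'))$, and I claim $x \in \psi_Q(f_Y(\gamma')) \subseteq X^Q_y$. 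The inclusion holds because $y = f_Y(y') \in f_Y(\gamma')$ forces $\psi_Q(f_Y(\gamma')) \subseteq \psi_Q(\{y\}) = X^Q_y$; the membership holds because every $\eta \in f_Y(\gamma')$ is $f_Y(\zeta)$ with $(x',\zeta) \in R$, so relation-preservation (Definition~\ref{morphism}) gives $(x,\eta) \in Q$, whence $x$ carries all of $f_Y(\gamma')$. Ranging over all $x \in X^Q_y$ produces $\fXg$-images whose individual-components cover $X^Q_y$; taking their join and invoking $\psi_Q\phi_Q\psi_Q = \psi_Q$ recovers exactly $c_y$. The argument for $\fYg$ and row elements is strictly dual, using single columns $X^R_{y'}$ (inference-closed) and the identity $\phi_Q\psi_Q\phi_Q = \phi_Q$.

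The main obstacle is this covering step: one must verify both that the chosen $\fXg$-images land inside the target column $X^Q_y$ and that together they exhaust it. Both directions rest essentially on surjectivity of $f$ as a morphism \emph{on pairs} (to obtain matched preimages $(x',y')$ rather than independent preimages of $x$ and $y$) and on the relation-preserving property of a morphism; the closure identities then upgrade the set-level covering to an exact identity in the lattice. The remaining bookkeeping — that every element produced genuinely lies in $\PR$ or $\PQ$ with nonempty components, guaranteed by the no-blank-row/column hypotheses — is routine.
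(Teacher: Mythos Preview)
Your proof is correct and follows the same two-step architecture as the paper: first decompose an arbitrary $q\in\PQ$ as a meet of column elements $c_y=(X^Q_y,\phi_Q(X^Q_y))$ (dually, a join of row elements), then express each column element as a join of $\fXg$-images. The paper packages the second step as Lemma~\ref{generatorimages} and the first as the short proof of Theorem~\ref{latsurj}.

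There is one tactical difference worth noting. To cover $X^Q_y$ by $\fXg$-images, the paper indexes over the \emph{attribute} preimages $y_1,\ldots,y_k\in f_Y^{-1}(\{y\})$ and feeds the \emph{column} elements $\big(X^R_{y_i},(\clsy)(\{y_i\})\big)$ of $\PR$ into $\fXg$; it then uses surjectivity on pairs to show the resulting individual-components union to $X^Q_y$. You instead index over the \emph{individuals} $x\in X^Q_y$, pick a matched preimage $(x',y')\in R$, and feed the \emph{row} element $\big(\psi_R(Y^R_{x'}),Y^R_{x'}\big)$ into $\fXg$. Your choice makes the covering step slightly more transparent (each $x$ visibly lies in the image it produces), while the paper's choice yields a finite indexing set independent of $|X^Q_y|$ and admits the clean Corollary~\ref{directimage}: if some $y_i\in f_Y^{-1}(\{y\})$ is already closure-fixed in $R$, then $c_y$ itself lies in the image of $\fXg$ with no join needed. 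Both rely in exactly the same way on surjectivity of $f$ as a map of \emph{pairs}, which you correctly identify as the crux.
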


\vspace*{0.2in}

See Appendix~\ref{latticegen} for a proof.

\clearpage
\section{Future Thoughts}
\markright{Future Thoughts}
\label{future}

Throughout this report, one senses the inevitability of privacy loss,
that the topology of relations necessarily converts attribute
information into revealing gradient flow.  Gradient flow appears both
in the collapse of free faces \cite{tpr:forman-guide} and in the
lattice structure of information acquisition: Collapse of free faces
infers unobserved attributes from observed attributes.  The meet
operation of a relation's Galois lattice propels observed attributes
into downward motion, toward minima of identification. \ Still,
alternatives exist.

\subsection{Relaxing Assumptions}

Gradient flow is a natural consequence of the assumptions stated in
Section~\ref{assumptions}.  Let us discuss briefly how to relax those
assumptions, while leaving detailed explorations for the future.

\begin{enumerate}
\item We could drop the assumption of relational completeness.  We
might then observe a set of attributes $\gamma$ inconsistent with the
given relation $R$, meaning $\gamma\not\in\dowy$.  One possibility is
that some individual in $R$ has attributes $\gamma$, but the relation
does not capture this fact.  There is another possibility, that
$\gamma$ represents the attributes of some individual external to $R$.
For instance, recall that in Lemma~\ref{interplocal} a set of
attributes inconsistent with a link's relation identifies the linking
set of individuals.  Deciding between these two scenarios (new
attributes for given individuals versus wholly new individuals)
requires additional information, not so unlike the decisions faced in
mapping unknown environments.

\item We could drop the assumption of observational monotonicity.  We
   do wish to retain the ability to observe attributes asynchronously.
   However, we might be able to place algebraic structure on some
   attributes, so that certain newly observed attributes can cancel
   previously observed ones.  Spending a dollar versus earning a
   dollar for instance.  Such an algebraic structure would then permit
   upward motion in a relation's Galois lattice.  (This is not always
   possible, e.g., if a relation encodes history by time-indexing.)

\item We could drop the assumption of observational accuracy.
  Attributes frequently are measured by noisy sensors, whether based
  on physical instruments or errorful databases.  Existing privacy
  work has frequently assumed noise intrinsically (e.g.,
  identification in \cite{tpr:netflix} was successful despite database
  errors).  This report ignored noise in order to focus on the
  combinatorial structure of privacy.  Presently, we will sketch a
  possible noise model, in which a sensor reports attributes
  stochastically.  A sensor is a physical device and an interpretation
  algorithm, producing observed attributes $\gamma(t)$ as functions of
  time.  Thus, as $t$ varies, $\gamma(t)$ may move either up or down
  in a relation's Galois lattice, not just down.

\vso

  A caution: Moving from a purely combinatorial system to a stochastic
  system need not turn gradient flow into harmonic flow.  Reasonable
  but noisy sensors create a (stochastic) gradient flow, by the
  Central Limit Theorem.  Rather, a noisy sensor model in the
  observation of attributes facilitates the connection to other
  privacy work.  For instance, one view of Differential Privacy
  \cite{tpr:dworkcacm} is that it injects noise into a sensor, with
  the noise magnitude chosen in part as a function of the time
  interval allotted for observations, thus preventing gradient flow
  from reaching a minimum.  Moreover, adversarial control over
  $\gamma(t)$, perhaps by sensor disinformation, may be able to create
  more general flows.

\end{enumerate}

\subsection{Sensing Attributes Stochastically}
\markright{Combinatorial Model for Stochastic Sensing}

We briefly explore a model for stochastic sensors within combinatorial
relations, via a simple example.  Consider relation $R$ of
Figure~\ref{S1Complexes}.  This relation produces Dowker complexes
with $\Sone$ homotopy type, as indicated in the figure.  The
relation's Galois lattice $\PRplus$ appears in Figure~\ref{S1Lattice}.

\begin{figure}[h]
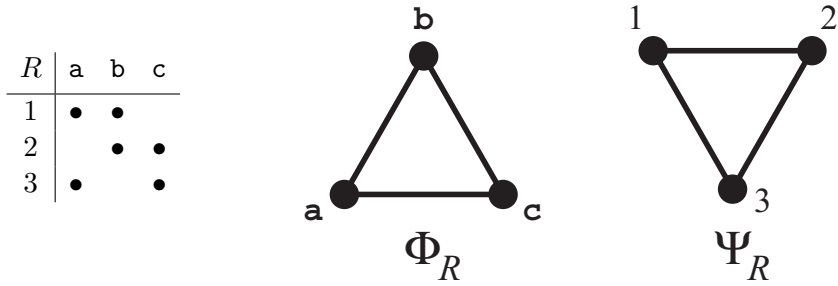

\begin{center}
\vspace*{-0.08in}
\begin{minipage}{1.5in}{$\begin{array}{c|ccc}
R & \ta & \tb & \tc \\[2pt]\hline
1 & \one & \one & \\
2 &      & \one & \one \\
3 & \one &      & \one \\
\end{array}$}
\vspace*{0.2in}
\end{minipage}
\begin{minipage}{4in}
\ifig{S1Complexes}{scale=0.6}
\end{minipage}
\end{center}
\vspace*{-0.3in}
\caption[]{A relation whose Dowker complexes are dual triangulations
  of the circle.  See Figure~\ref{S1Lattice} for the associated Galois
  lattice.}
\label{S1Complexes}
\end{figure}

\begin{figure}[h]
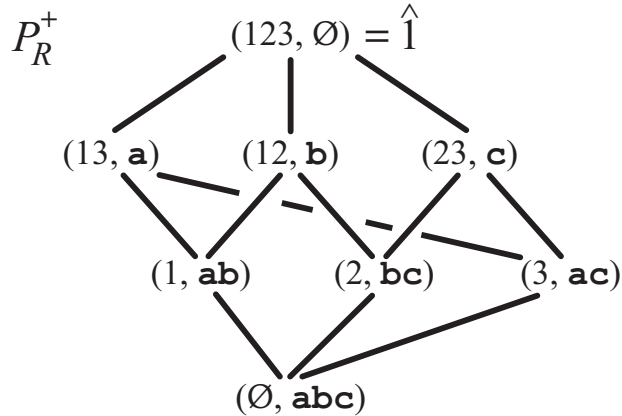

\begin{center}
\vspace*{-0.08in}
\ifig{S1Lattice}{scale=0.55}
\end{center}
\vspace*{-0.3in}
\caption[]{The lattice $\PRplus$ for relation $R$ of
  Figure~\ref{S1Complexes}.  (We have elided commas and braces in
  sets.)  --- For later reference: The poset $\PR \union \{\topone\}$
  consists of all elements in $\PRplus$ except for the bottom element,
  $(\emptyset, \ta\tb\tc)$.}
\label{S1Lattice}
\end{figure}

Relation $R$'s space of attributes is $Y = \{\ta, \tb, \tc\}$.
Suppose that a sensor reports these attributes by observing the world
and performing some computation.  The report may be inaccurate.  Such
inaccuracy could be either adversarial or stochastic.  We focus here
on the stochastic case, and on one particular model: The sensor
computes three probabilities, $p_\ta$, $p_\tb$, and $p_\tc$, with
$p_y$ being the probability that the sensor's observation came from
actual attribute $y\in Y$.  These three probabilities constitute a
point $\vp = (p_\ta, p_\tb, p_\tc)$ in the full simplex $\{\ta, \tb,
\tc\}$, with the point's barycentric coordinates being the three
probabilities.  Subsequently, the sensor reports an attribute by
interpreting $\vp$, perhaps by maximum probability.  In order to
reduce false positives, the sensor sets a confidence threshold below
which it interprets $\vp$ as too ambiguous.  This thresholding carves
the simplex $\{\ta, \tb, \tc\}$ into four regions, one for each
attribute, plus a {\em zone of indecision}.  A sketch of this process
appears in Figure~\ref{SensorProbabilities}.

\begin{figure}[t]
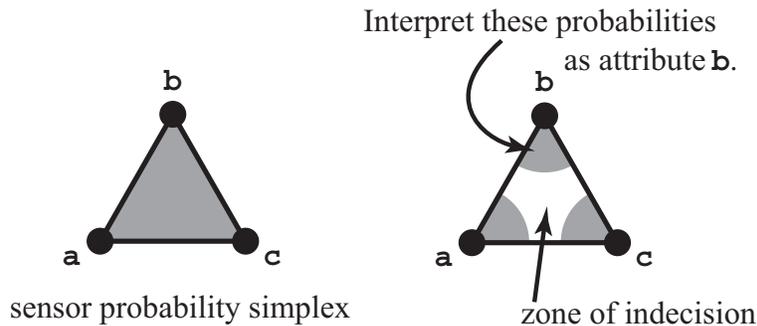

\begin{center}
\ifig{SensorProbabilities}{scale=0.55}
\end{center}
\vspace*{-0.25in}
\caption[]{A sensor computes probabilities over a
  set of attributes $\{\ta, \tb, \tc\}$.
  Left panel: A simplex whose vertices are those attributes models the
  possible distributions, with barycentric coordinates being
  probabilities.
  Right panel: An interpretation of probabilities as attributes, along
  with a zone of indecision, partitions the simplex into four
  regions.}
\label{SensorProbabilities}
\end{figure}

Individuals in relation $R$ of Figure~\ref{S1Complexes} have two
attributes.  One could model two observations as a probability
distribution over $\YxY$.  However, since the first observation
may lead to an indecision, it is convenient to model the second
observation as conditional on the first having produced a particular
attribute.  The second observation may: (i) repeat the just seen
attribute, (ii) report an as yet unseen attribute, or (iii) fail to
report anything.  The net effect for (i) and (iii) is observation of
one attribute.  Consequently, the space of two observations may be
interpreted as the first barycentric subdivision of the boundary of
the attribute simplex, i.e., as $\sd(\bndry{(Y)})$, with the empty
simplex modeling a zone of indecision.  This process appears in
Figure~\ref{TwoSensorSpace}.  One obtains a map from a stochastic
sensor's observations to the poset $\PR \union \{\topone\}$, with
$\topone$ representing an inability to decipher any attribute.
\ These calculations suggest that existing stochastic results
fit naturally into this report's combinatorial framework.

\begin{figure}[h]
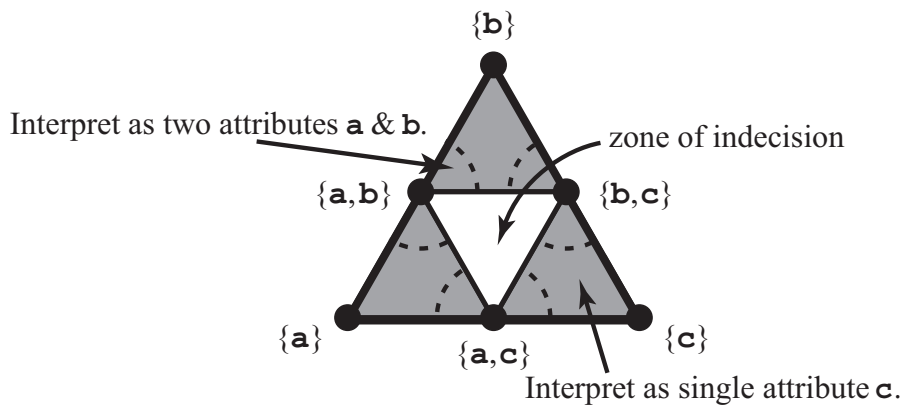

\begin{center}
\ifig{TwoSensorSpace}{scale=0.55}
\end{center}
\vspace*{-0.2in}
\caption[]{Two stochastic sensor observations may be modeled as a
  first observation followed by a conditional second observation.  The
  resulting decision space has a representation isomorphic to the
  first barycentric subdivision of the boundary of the original
  probability simplex, with the empty simplex representing a zone of
  indecision.  (The two observations may be understood as two points,
  $\vp$ and $\vq$, with $\vp$ in the encompassing triangle and $\vq$
  in a shaded subtriangle.)}
\label{TwoSensorSpace}
\end{figure}

\clearpage
\section*{Acknowledgments}
\addcontentsline{toc}{section}{Acknowledgments}

We are grateful to Professor Robert Ghrist and his students and
post-docs, particularly Pawe{\l} D{\l}otko, Vidit Nanda, and Greg
Henselman, for their topological advice and suggestions, as well as
their discussions related to this project.

We are also grateful for the publicly available {\tt Perseus} software
previously written at the University of Pennsylvania, which we used
for our homology computations:
{\tt http://www.sas.upenn.edu/{$\sim$}vnanda/perseus/}.

\vspace*{0.2in}

\markright{References}
\addcontentsline{toc}{section}{References}

\clearpage
\appendix

\section{Preliminaries}
\label{prelim}

{\bf Assumption:} All simplicial complexes, relations, posets, and
lattices in this report are finite.

\subsection{Simplicial Complexes}
\markright{Simplicial Complexes}
\label{complexesApp}

We largely follow the notation and definitions in \cite{tpr:munkres}
and \cite{tpr:bjorner}.

\begin{itemize}

\item An {\em (abstract) simplicial complex} $\Sigma$ {\em with
  underlying vertex set} $X$ is a collection of finite subsets of $X$,
  such that if $\sigma$ is in $\Sigma$, then so is every subset of
  $\sigma$.  An element of $\Sigma$ is called a {\em simplex}.  We
  allow the empty set $\emptyset$ to be a simplex in $\Sigma$, for
  combinatorial reasons.  An element of a simplex is called a {\em
  vertex}.  It is also convenient to refer indistinguishably to any
  singleton simplex as a {\em vertex}.  Not all elements of $X$ need
  to be vertices of $\Sigma$. \label{simpcpxdef}

\item The {\em dimension} of a simplex $\sigma$ is one less than its
  cardinality.  The empty simplex $\emptyset$ has dimension $-1$.  \ If
  a simplex has dimension $k$ we sometimes call it a {\em $k$-simplex}.

\item If $\Sigma$ is a simplicial complex with underlying vertex set
  $X$, we let $\verts{\Sigma}$ denote the set of elements in $X$ that
  actually appear as vertices in $\Sigma$.  Viewed as a set of
  0-simplices, this set is called the {\em zero-skeleton} of $\Sigma$.
  \ [The standard notation for the zero-skeleton is $\Sigma^{(0)}$ but
  that conflicts with some iterative notation in the proof of
  Theorem~\ref{manychains}.]

\item The {\em void complex\,} $\emptyset$ has no simplices in it.  We
  view it as a {\em degenerate} space.  The {\em empty complex}
  $\{\emptyset\}$ consists solely of the empty simplex.  The empty
  complex represents the empty topological space.  It is also the
  sphere of dimension $-1$, written $\Smo$.
  \label{emptycomplex} (There could be be different instances of the
  void or empty complex, depending on the underlying vertex set $X$,
  though frequently one takes $X$ to be empty in these situations.)

\item A simplex $\sigma$ of a simplicial complex $\hspt\Sigma\hspt$ is
  a {\em\hspt free face\hspt} of $\hspt\Sigma\hspt$ if it is a proper
  subset of exactly one maximal simplex $\tau$ of $\hspt\Sigma$.
  \ (The empty simplex $\emptyset$ can sometimes be a free face.)

\vspace*{-0.425in}

\paragraph{}$\phantom{0}$

\item Suppose $\Sigma$ is a simplicial complex.  Then $C_k(\Sigma;
  \Z)$ denotes the group of simplicial \hbox{{\em $k$-chains}} over
  $\Sigma$, with integer coefficients.  A $k$-chain $c\in C_k(\Sigma;
  \Z)$ is a function that assigns to each oriented $k$-simplex $\tau$
  of $\hspt\Sigma\hspt$ an integer.  When $k>0$, one further requires
  that $c(-\tau)=-c(\tau)$.  \ Here $-\tau$ refers to the same
  combinatorial set as $\tau$ but with opposite orientation.  (When
  $k=0$ or $k=-1$, each $k$-simplex has only one possible
  orientation.)

  Caution: We later also use the word ``chain'' in the poset sense;
  there should be no ambiguity given context.
  \label{kchainAppdef}

\item Suppose $\Sigma$ is a simplicial complex and $c \in C_k(\Sigma;
  \Z)$.  Assume all simplices have been assigned an orientation in
  $\Sigma$.  One can write $c=\sum_in_i\tau_i$ uniquely, for some
  subcollection $\{\tau_i\}$ of the $k$-simplices in $\Sigma$,
  such that $n_i \neq 0$ for each $i$.  (Any $k$-simplex $\tau$ of
  $\Sigma$ may appear at most once in the sum, with its assigned
  orientation.)  This means $c(\tau_i)=n_i$ for each $\tau_i$ that
  appears in the sum and, if $k>0$, $c(-\tau_i)=-n_i$.  For all other
  oriented $k$-simplices $\tau$ of $\Sigma$, $c(\tau)=0$.

  We define the {\em support} of $c$ as $\supp{c} = \union_i\tau_i$.
  [This is not standard notation.]  The support is the set of all
  vertices that appear in any of the simplices $\tau$ for which
  $c(\tau)$ is nonzero.

\item We let $\bndry{}$ and $\redbndry$ stand for ``boundary''.  There
  are two contexts:

  \begin{enumerate}
  \item When $V$ is a nonempty finite set, then \label{bndrycpxAppdefStart}
    $\bndry{(V)}$ means the simplicial complex whose underlying vertex
    set is $\mskip1mu{}V\mskip-1mu$ and whose simplices are all the
    proper subsets of $V$.  We refer to this complex as the {\em
    boundary complex of the full simplex on vertex set $\,V$}.  It has
    the homotopy type of a sphere, specifically $\Snt$, with
    $n=\abs{V}$, for all $n \geq 1$.
    \label{bndrycpxAppdefEnd}

\vspace*{-0.075in}

\addtolength{\baselineskip}{1pt}

  \item We also designate the {\em simplicial boundary operator} by
    $\partial{}$ and the {\em reduced boundary operator}\hspt\ by
    \hspace*{-6pt}
    $\phantom{\Big|}\redbndry$.  \ These operators are families of
    maps, describing for each dimension $k$ a group homomorphism
    $C_k(\Sigma; \Z) \rightarrow C_{k-1}(\Sigma; \Z)$, defined on
    basis elements by:

    \vspace*{0.05in}

    When $\sigma = \{x_0, \ldots, x_k\}$ is an oriented $k$-simplex,
    with $k \geq 1$, $\redbndry_k(\sigma) = \partial_k(\sigma) =
    \sum_{i=0}^k(-1)^i\tau_i$, where $\tau_i$ is the oriented
    $(k-1)$-simplex formed from $\sigma$ by removing vertex $x_i$ and
    using the induced orientation of $\sigma$ on $\tau_i$.
    \ \ (See \cite{tpr:munkres, tpr:hatcher} for details.)

    \vspace*{0.05in}

    For $k=0$, \ $\partial_0 : C_0(\Sigma; \Z) \rightarrow 0$, while
    $\redbndry_0 : C_0(\Sigma; \Z) \rightarrow C_{-1}(\Sigma; \Z)$, with
    $\redbndry_0(\{v\}) = \emptygen$, for each vertex $\{v\}\in\Sigma$.
    (Here $\emptygen$ represents the generator of $C_{-1}(\Sigma; \Z)$
    when $\Sigma$ is nonvoid.  If $\Sigma$ is void, then $\redbndry_0 = 0$.)
    \ There is also a map $\redbndry_{-1} :
    C_{-1}(\Sigma; \Z) \rightarrow 0$.\label{redbndryAppdef}

\addtolength{\baselineskip}{-1pt}

    \vspace*{0.1in}

    We are mainly interested in the reduced boundary operator $\redbndry$.

    \vso

    We may write $\redbndry$ in place of $\redbndry_k$ when the
    dimensional context $k$ is clear.

    \vspace*{0.1in}

    Elements of the subgroup ${\rm ker}(\redbndry_k)$ are called {\em
      reduced $k$-cycles}.

    \vst

    Elements of the subgroup ${\rm img}(\redbndry_{k+1})$ are called
    {\em reduced $k$-boundaries}.

  \end{enumerate}

\vspace*{0.045in}

\item Given a simplicial complex $\Sigma$, $\widetilde{H}_k(\Sigma;
  \Z)$ is the {\em reduced homology group in dimension} $k$ based on
  simplicial chains over $\Sigma$ with integer coefficients.  It is a
  quotient group, measuring the reduced $k$-cycles that are not
  reduced $k$-boundaries.

  Formally, $\widetilde{H}_k(\Sigma; \Z) = {\rm ker}(\redbndry_k)/{\rm
    img}(\redbndry_{k+1})$.  \label{homolAppDef}
  \quad (That makes sense since $\redbndry_k \circ \redbndry_{k+1} = 0$.)

\vspace*{-0.3in}

\paragraph{}$\phantom{0}$

\item Given a simplicial complex $\Sigma$ and a set $\sigma$, we
  define the following three simplicial subcomplexes of $\Sigma$ in
  the standard way:\label{subcomplexes}

  \vspace*{-0.04in}

  \begin{itemize}
  \item The {\em link} of $\sigma$ in $\Sigma$:\quad
    $\lk(\Sigma,\sigma) =
    \setdef{\tau\in\Sigma}{\tau\inter\sigma=\emptyset\,\;\hbox{and}\;\tau\union\sigma\in\Sigma}$.
  \item The {\em deletion} of $\sigma$ in $\Sigma$:\quad
    $\dl(\Sigma,\sigma) =
    \setdef{\tau\in\Sigma}{\tau\inter\sigma=\emptyset}$.
  \item The {\em closed star} of $\sigma$ in $\Sigma$:\quad
    $\st(\Sigma,\sigma) =
    \setdef{\tau\in\Sigma}{\tau\union\sigma\in\Sigma}$.
  \end{itemize}

  The definitions make sense even when $\sigma$ is not itself a simplex
  in $\Sigma$, though in that case both $\lk(\Sigma,\sigma)$ and 
  $\st(\Sigma,\sigma)$ are instances of the void complex $\emptyset$.

  \vspace*{0.05in}

  Observe that $\dl(\Sigma,\sigma) \,\inter\; \st(\Sigma, \sigma) \;=\;
  \lk(\Sigma,\sigma)$ \ and \ $\st(\Sigma, \sigma) \;=\;
  \lk(\Sigma,\sigma) * \hbox{$<$$\sigma$$>$}$.

  Here $*$ means simplicial join (described on
  page~\pageref{joinAppdef}) and \hbox{$<$$\sigma$$>$} is the {\em
  simplicial complex generated by} $\sigma$ (defined to be the
  collection of all subsets of $\sigma$).

  \vspace*{0.05in}

  When $\sigma$ consists of a single element $v$, i.e., $\sigma=\{v\}$,
  we tend simply to write $\lk(\Sigma,v)$, $\dl(\Sigma,v)$, $\st(\Sigma,v)$.
  \ Aside: For a singleton $v$, it is further true that $\dl(\Sigma,v)
  \union \st(\Sigma,v) = \Sigma$.

\paragraph{}$\phantom{0}$

\vspace*{-0.3in}

\item One may associate a {\em geometric realization} to a finite
  nonvoid abstract simplicial complex $\Sigma$ by embedding $\Sigma$
  into a finite-dimensional Euclidean space.  One may therefore think
  of $\Sigma$ as a topological space in a well-defined way
  \cite{tpr:munkres, tpr:bjorner}.\label{geomrealiz}

\paragraph{}$\phantom{0}$

\vspace*{-0.45in}

\item Suppose $\Sigma$ and $\Gamma$ are two simplicial complexes with
  underlying vertex sets $X$ and $Y$, respectively.  A set function $f
  : X \rightarrow Y$ is said to be a {\em simplicial map} if it
  satisfies the following condition: \ If $\sigma\in\Sigma$, then
  $f(\sigma)\in\Gamma$.

  In that case, one may view $f$ as a map of simplicial complexes, $f :
  \Sigma \rightarrow \Gamma$.

  A simplicial map may further be viewed as a continuous function
  between the geometric realizations of $\Sigma$ and $\Gamma$
  \cite{tpr:munkres}.

\item When $X_1$ and $X_2$ are topological spaces, the notation $X_1
  \homot X_2$ means that $X_1$ and $X_2$ have the same {\em homotopy
  type} \cite{tpr:bjorner, tpr:hatcher}.  One may also say that $X_1$
  and $X_2$ are {\em homotopic} or {\em homotopy equivalent}.  A
  topological space homotopic to a point is said to be {\em
  contractible}.\label{homotAppdef}

\item When $X_1$ and $X_2$ are topological spaces, $X_1 \join X_2$
  means a {\em wedge sum} of $X_1$ and $X_2$
  \cite{tpr:hatcher}.\label{wedgeAppdef}

\paragraph{}$\phantom{0}$

\vspace*{-0.45in}

\item Suppose $\calU$ is a nonempty collection of (not necessarily
  distinct) topological subspaces of some nonempty ambient topological
  space.  One may define a simplicial complex $\nerveU$, called the
  {\em nerve\!  of $\,\calU$}, whose simplices are the finite
  subcollections $\calW$ of $\calU$ for which
  $\biginter_{\,W\in\,\calW}W$ is not the empty space.
  If $\biginter_{\,W\in\,\calW}W$ is contractible for each nonempty
  simplex $\calW$ of $\nerveU$, then, under a variety of additional
  finiteness conditions \cite{tpr:bjorner, tpr:hatcher}, the nerve has
  the same homotopy type as the union of all the spaces in $\calU$:
  \
  $\nerveU \, \homot\; \bigunion_{\,U\in\,\calU}U$.

\item Suppose $\Sigma$ and $\Gamma$ are simplicial complexes with
  disjoint underlying vertex sets.  The {\em simplicial join}
  \cite{tpr:wachs} of $\Sigma$ and $\Gamma$ is the simplicial complex
  $$\Sigma * \Gamma \;=\; \setdef{\spc\sigma \union \gamma}{\sigma\in\Sigma
    \ \,\hbox{and}\ \gamma\in\Gamma}.$$
  The underlying vertex set of $\Sigma * \Gamma$ is the union of the
  underlying vertex sets of $\Sigma$ and $\Gamma$.\label{joinAppdef}

\end{itemize}

\subsection{Partially Ordered Sets (Posets)}
\markright{Posets, Semi-Lattices, and Lattices}

We largely follow the notation of \cite{tpr:wachs}.

\begin{itemize}

\item A {\em poset} $P$ is a set of elements with a partial order,
  sometimes written simply as ``$\leq$'' other times as ``$\leq_P$''.
  The symbols ``$\geq$'', ``$<$'', ``$>$'' and ``$=$'' are defined
  accordingly.\label{posetAppdef}

\item A {\em chain} $c$ in a poset $P$ is a totally ordered subset of
  $P$, which we often write as $c = \{p_0 < p_1 < \cdots < p_\ell\}$.
  The {\em length} $\ell(c)$ of chain $c$ is $\ell$, one less than the
  number of elements in the chain (analogous to simplex dimension).
  The length of the empty chain is $-1$.  The length $\ell(P)$ of a
  poset $P$ is the maximum length of any chain in $P$.\label{posetchainAppdef}

\item The {\em face poset\,} $\F(\Sigma)$ of a nonvoid simplicial
  complex $\Sigma$ consists of all nonempty simplices of $\Sigma$,
  partially ordered by set inclusion. \label{faceposetAppdef}
  \ (If $\Sigma$ is void, we leave $\F(\Sigma)$ {\em undefined}.)

\item The {\em order complex\,} $\Delta(P)$ of a poset $P$ is the
  simplicial complex whose simplices are given by all finite chains
  $\{p_0 < p_1 < \cdots < p_\ell\}$ in $P$.  \label{ordercpxAppdef}
  \ (If $P=\emptyset$, then $\Delta(P)=\{\emptyset\}$.)

\item One may speak of the {\em topology of a poset}: One says that a
  poset $P$ has a topological property when its order complex
  $\Delta(P)$ has that property and the property is an invariant of
  homeomorphism type.  For instance, to say that a poset is contractible
  means that its order complex is contractible.  To say that two posets
  $P$ and $Q$ are homotopic means that $\Delta(P)$ and $\Delta(Q)$ have
  the same homotopy type.  Etc.

\item For nonvoid $\Sigma$, it is a fact that $\Delta(\F(\Sigma))$ is
  homeomorphic to $\Sigma$.  Indeed, $\Delta(\F(\Sigma))$ may be
  viewed as the {\em first barycentric subdivision of $\hspt\Sigma$},
  which we write as $\sd(\Sigma)$.  See \cite{tpr:rotman, tpr:wachs}.

\item A set function $\theta : P \rightarrow Q$ between two posets $P$
  and $Q$ is said to be a {\em poset map} if it is either {\em
  order-preserving\,} or {\em order-reversing}.
  \quad That means:

  \vspace*{-0.2in}

  \begin{eqnarray*}
    \hbox{order-preserving:}\ \ & \hbox{For all $x,y \in P$, \ \ if \ $x \,\leq_P\, y$,} & \!\hbox{then \ $\theta(x) \,\leq_Q\, \theta(y).$} \\
    \hbox{order-reversing:}\ \  & \hbox{For all $x,y \in P$, \ \ if \ $x \,\leq_P\, y$,} & \!\hbox{then \ $\theta(x) \,\geq_Q\, \theta(y).$} \\
  \end{eqnarray*}

  \vspace*{-0.2in}

\item A poset map $\theta : P \rightarrow Q$ between two posets $P$ and
  $Q$ induces a simplicial map between the associated order complexes
  $\theta : \Delta(P) \rightarrow \Delta(Q)$.

\item An order-preserving poset self-map $\theta : P \rightarrow P$ is
  said to be a {\em closure operator} when $x \leq_P \theta(x)$, for all
  $x\in P$, and $\theta \circ \theta = \theta$.  A closure operator
  $\theta$ induces a homotopy equivalence between $P$ and the image
  $\theta(P)$.  \ See \cite{tpr:bjorner, tpr:wachs, tpr:quillenH,
  tpr:quillenK}.

\end{itemize}

\subsection{Semi-Lattices and Lattices}
\label{lattices}

We largely follow the development in \cite{tpr:wachs} and \cite{tpr:bjorner}.
\ \ Let $L$ be a partially ordered set:

\begin{itemize}

\item Suppose $p,q \in L$.  If $p$ and $q$ have a unique least upper
  bound, then one writes $p \join q$ to mean that least upper bound.
  (One may also write $p \,{\join_{\!L}}\, q$.)  If every pair of
  elements in $L$ has a unique least upper bound in $L$, then one
  refers to $L$ as a {\em join semi-lattice}.

\item Suppose $p,q \in L$.  If $p$ and $q$ have a unique greatest
  lower bound, then one writes $p \meet q$ (or possibly 
  $p \,{\meet_L}\, q$) to mean that greatest lower bound.  If every
  pair of elements in $L$ has a unique greatest lower bound in $L$,
  then one refers to $L$ as a {\em meet semi-lattice}.

\item A poset that is both a join semi-lattice and a meet semi-lattice
  is known as a {\em lattice}.  \label{latticeAppdef}

\item If $L$ has a unique top (i.e., maximal) element, we may
  designate that element by $\topone$ or $\oneL$.

\item If $L$ has a unique bottom (minimal) element, we may designate
  that element by $\botzero$ or $\zeroL$.

\item If $L$ is a finite join semi-lattice with a unique bottom
  element, then $L$ is a lattice.  Similarly, if $L$ is a finite meet
  semi-lattice with a unique top element, then $L$ is a lattice.

\item A lattice $L$ is called {\em bounded\hspt} if it has a unique
  top element $\topone$ and a unique bottom element $\botzero$.
  \ (These are same element if $L$ is a singleton.)

\item When $L$ is a bounded lattice, the {\em proper part of $L$} is
  the poset $\Lprop = \sdiff{L}{\{\botzero, \topone\}}$.

\paragraph{}$\phantom{0}$

\vspace*{-0.31in}

\item Suppose $L$ is a bounded lattice and $p\in L$.  Then the {\em
  complements of $p$} are given by the set $\comp(p)=\setdef{q\in
  L}{\hbox{$q\join{p}=\topone$ and $q\meet{p}=\botzero$}}$.

\item A bounded lattice $L$ is said to be {\em noncomplemented}\hspt\
  if $\comp(p)=\emptyset$ for at least one $p\in L$.  If $L$ is a
  noncomplemented bounded lattice with $\Lprop\neq\emptyset$, then
  $\Lprop$ is contractible \label{noncomplemented}\cite{tpr:bjorner}.

\item Suppose $L$ is a bounded lattice with $\Lprop\neq\emptyset$.
  The elements of $L$ immediately below $\topone$ are called {\em
  co-atoms}.  These are the maximal elements of $\Lprop$.  The
  elements immediately above $\botzero$ are called {\em atoms}.  These
  are the minimal elements of $\Lprop$.

\end{itemize}

\subsection{Relations}
\markright{Relations}
\label{relationsApp}

Let $R$ be a relation on $\XxY$, with $X$ and $Y$ finite
discrete spaces.\\[1pt]
We use the following notation and conventions (see also
page~\pageref{basicdefs}):

\vspace*{-0.025in}

\begin{itemize}

\item $R$ is a set of ordered pairs, namely a subset of the cross
  product $\XxY$.  It is convenient sometimes to view $R$ as a
  matrix of $0$s and $1$s, perhaps drawn as a matrix of blank and
  nonblank entries, representing the characteristic function of this
  set of ordered pairs.\label{reldefApp}

\item Even if $X\!\neq\emptyset$ and $Y\!\neq\emptyset$, it is possible
  that $R=\emptyset$, in which case we say that $R$ is an {\em empty
  relation}.

\item If $X\!=\emptyset\,$ and/or $\,Y\!=\emptyset$, then we say that
  $R\hspc\hspc$ is a {\em void relation}.\label{voidrelation}

  On some occasions, we may treat a void relation $R$ much like an
  empty relation, in the sense that we will let the {\em Dowker
  complexes} defined below (and on page~\pageref{basicdefs}) be empty
  rather than void.  That view will sometimes be convenient when $R$
  is derived from some encompassing relation as a link or deletion in
  a simplicial complex.

\item We often refer to elements of $\hspt{}X\mskip-1.5mu$ as {\em
  individuals} and elements of $\hspt{}Y\mskip-2.3mu$ as {\em attributes}.

\item For each $x\in{X}$, $Y_x$ is the set of attributes that individual
  $x$ has (in relation $R$).  Viewing $R$ as a matrix, one may think of
  $Y_x$ as the row of $R$ indexed by $x$.  We say that {\em the row is
  blank\,} when $Y_x=\emptyset$.

\paragraph{}$\phantom{0}$

\vspace*{-0.5in}

\item For each $y\in{Y}$, $X_y$ is the set of individuals who have
  attribute $y$ (in relation $R$).  Viewing $R$ as a matrix, one may
  think of $X_y$ as the column of $R$ indexed by $y$.  We say that {\em
  the column is blank\,} when $X_y=\emptyset$.

\vspace*{0.04in}

\item $\dowy$ is the {\em Dowker attribute complex\,} determined by
  $R$.  It is a simplicial complex with underlying vertex set $Y$. \ A
  nonempty subset $\gamma$ of $Y$ is a simplex in $\dowy$ precisely
  when there exists $x\in X$ such that $(x,y)\in R\hspt$ for all
  $y\in\gamma$.  \ We refer to $x$ as a {\em witness for
  $\gamma$}.\label{dowkerYAppdef}

  When $R\hspt$ is void, we let $\dowy$ be void as well, except as
  otherwise indicated in the text.

  When $R\hspt$ is nonvoid, $\dowy$ contains at least the empty simplex.
  We then may view $\dowy$ as generated by the rows of $R$, so
  $\dowy=\bigunion_{x\in X}\hbox{$<$$Y_x$$>$}$.
  \ Thus, $Y_x \in \dowy$ for each $x\in X$.

  \vspace*{0.05in}

\paragraph{}$\phantom{0}$

\vspace*{-0.31in}

\item $\dowx$ is the {\em Dowker association complex\,} determined by
  $R$.  It is a simplicial complex with underlying vertex set $X$. \ A
  nonempty subset $\sigma$ of $X$ is a simplex in $\dowx$ precisely
  when there exists $y\in Y$ such that $(x,y)\in R\hspt$ for all
  $x\in\sigma$.  \ We refer to $y$ as a {\em witness for
  $\sigma$}.\label{dowkerXAppdef}

\vso

  When $R\hspt$ is void, we let $\dowx$ be void as well, except as
  otherwise indicated in the text.

\vso

  When $R\hspt$ is nonvoid, $\dowx$ contains at least the empty simplex.
  We then may view $\dowx$ as generated by the columns of $R$, so
  $\dowx=\bigunion_{y\in Y}\hbox{$<$$X_y$$>$}$.
  \ Thus, $X_y \in \dowx$ for each $y\in Y$.

\paragraph{}$\phantom{0}$

\vspace*{-0.36in}

\item There exist homotopy equivalences \label{phipsiAppdef}$\phi_R :
  \dowx \rightarrow \dowy$ and $\psi_R : \dowy \rightarrow \dowx$.

  Viewed as (order-reversing) poset maps $\phi_R : \F(\dowx)
  \rightarrow \F(\dowy)$ and $\psi_R : \F(\dowy) \rightarrow
  \F(\dowx)$, one obtains explicit formulas, sending nonempty
  simplices to nonempty simplices:

\vspace*{-0.1in}

  $$\phi_R(\sigma) = \biginter_{x\in\sigma}Y_x
\quad\hbox{and}\quad
    \psi_R(\gamma) = \biginter_{y\in\gamma}X_y.$$

\vspace*{-0.05in}

  Suppose $X\!\neq\emptyset$ and $Y\!\neq\emptyset$.  Then the
  intersections appearing in the previous formulas comprise the
  witnesses for the respective simplex arguments.  Consequently, one
  may use the formulas more generally as tests for membership in the
  Dowker complexes:

\vspace*{-0.03in}

\begin{itemize}
\item For any $\sigma\subseteq X$, $\sigma\in\dowx$ if and only if $\phi_R(\sigma)\neq\emptyset.$
\item For any $\gamma\subseteq Y$, $\gamma\in\dowy$ if and only if $\psi_R(\gamma)\neq\emptyset.$
\end{itemize}

\vspace*{-0.03in}

 These tests also make sense for the empty set, that is, when
 $\sigma=\emptyset$\, or $\gamma=\emptyset$.  In particular,
 $\phi_R(\emptyset) = Y$ and $\psi_R(\emptyset) = X$.

\paragraph{}$\phantom{0}$

\vspace*{-0.385in}

\item Composing $\phi_R$ and $\psi_R$ as $\clsx : \F(\dowx) \rightarrow
  \F(\dowx)$ and $\clsy : \F(\dowy) \rightarrow \F(\dowy)$ produces
  closure operators.  See Appendix~\ref{basictools} for further details.

\vspace*{0.05in}

\item $\PR$ is the {\em doubly-labeled poset} associated with $R$ as
  per Definition~\ref{defPR} on page~\pageref{defPR}.  Each element in
  $\PR$ is of the form $(\sigma, \gamma)$, with $\sigma\neq\emptyset$
  and $\gamma\neq\emptyset$, such that $\sigma=\psi_R(\gamma)$ and
  $\gamma=\phi_R(\sigma)$.

  \vst

  One may view $\PR$ either as the image $(\clsx)(\F(\dowx))$ or as the
  image $(\clsy)(\F(\dowy))$.\label{PRAppDeff}

  \vst

  We mention some special cases:\label{PRspecialcases}

\vspace*{-0.05in}

\begin{itemize}

  \item If $\dowx$ and $\dowy$ are instances of the empty complex
  $\{\emptyset\}$, then $\PR=\emptyset$.  This occurs when $R$ is an
  empty relation, or when $R$ is void but we let $\dowx=\{\emptyset\}$
  and $\dowy=\{\emptyset\}$.

  \item If $\dowx$ and $\dowy$ are instances of the void complex
  $\emptyset$, then $\PR$ is {\em undefined}.

\end{itemize}

\vspace*{0.05in}

\item $\PRplus$ is the {\em Galois lattice} formed from $\PR$ as per
  Definition~\ref{galoislattice} on
  page~\pageref{galoislattice}.\label{galoislatticeApp}

\vst

  If $R$ is an empty relation, then $\PR=\emptyset$ and so $\PRplus$
  consists simply of $\zeroR$ and $\oneR$.

\vst

  Definition~\ref{galoislattice} assumes that the underlying spaces
  $\hspt{}X$ and $Y$ of $R\hspt$ are both nonempty.  One could imagine
  extending the definition, perhaps as follows: (i) When the Dowker
  complexes are void, leave $\PR$ undefined and let
  $\PRplus=\emptyset$.  (ii) When $R$ is technically void but the
  Dowker complexes are artificially empty, with one of $\hspt{}X$ or
  $Y\!$ empty, let $\PR=\emptyset$ and $\PRplus=\{(X, Y)\}$.
  \ Fortunately, we will not need these boundary cases.

  (Different perspectives often suggest conflicting interpretations in
  null situations \cite{tpr:nullgraph}.  This report chooses to
  preserve the validity of Dowker's Theorem, meaning
  $\dowx\homot\dowy$.)

\vspace*{0.05in}

\addtolength{\baselineskip}{0.75pt}

\item We sometimes view $\PR$ as ``almost a join-based lattice'', as per
  Definition~\ref{almostlattice} on page~\pageref{almostlattice}.  That
  amounts to adjoining a single new element $\topone$ above $\PR$, then
  inducing a join operation on $\PR \union \{\topone\}$ from the join
  operation on $\PRplus$.  Thus $\PR \union \{\topone\}$ is a join
  semi-lattice.  If we further adjoin a new bottom element $\botzero$,
  then $\PR \union \{\botzero, \topone\}$ is a lattice.

\addtolength{\baselineskip}{-0.75pt}

\vspace*{0.05in}

\item One may speak of the {\em topology of a relation (modulo
  homotopy equivalence)}: \ One says that a relation $R$ has a
  topological property when any and all of $\dowy$, $\dowx$, and
  $\Delta(\PR)$ have that property and the property is an invariant of
  homotopy type.  (This convention makes sense by Dowker's Theorem on
  page~\pageref{dowker} and the nature of $\PR$.)  Connectivity is an
  example of such a property.

\end{itemize}

\clearpage
\section{Basic Tools}
\markright{Basic Tools}
\label{basictools}

This appendix reviews some basic facts about relations, their Dowker
complexes, and the Galois connection.  \ Recall the formulas for
$\phi_R$ and $\psi_R$ from page~\pageref{phipsiAppdef}.

\vspace*{0.05in}

Although we do not always say so explicitly, there are dual statements
for the lemmas and corollaries in this appendix, for each of the two
perspectives offered by Dowker's Theorem, by inverting the roles of
individuals and attributes.

\begin{lemma}\label{orderreversing}
Let $R$ be a relation on $\XxY$.
Then $\phi_R$ is inclusion-reversing.
\end{lemma}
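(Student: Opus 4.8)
The plan is to argue directly from the explicit formula for $\phi_R$ recorded on page~\pageref{phipsiAppdef}, namely $\phi_R(\sigma) = \biginter_{x\in\sigma} Y_x$, together with the elementary observation that intersecting a family of sets over a larger index set can only shrink the result. Concretely, to say that $\phi_R$ is inclusion-reversing means: whenever $\sigma_1 \subseteq \sigma_2$, one has $\phi_R(\sigma_2) \subseteq \phi_R(\sigma_1)$. So the first step is simply to fix two subsets $\sigma_1 \subseteq \sigma_2 \subseteq X$ and reduce the claim to this containment.

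The core step is a one-line membership chase. Take any attribute $y \in \phi_R(\sigma_2)$. By definition of the intersection, $y \in Y_x$ for every $x \in \sigma_2$. Since $\sigma_1 \subseteq \sigma_2$, in particular $y \in Y_x$ for every $x \in \sigma_1$, whence $y \in \biginter_{x\in\sigma_1} Y_x = \phi_R(\sigma_1)$. Equivalently, one may display the decomposition
\[
\phi_R(\sigma_2) \;=\; \biginter_{x\in\sigma_2} Y_x
   \;=\; \Big(\biginter_{x\in\sigma_1} Y_x\Big) \inter \Big(\biginter_{x\in\sdiff{\sigma_2}{\sigma_1}} Y_x\Big)
   \;\subseteq\; \biginter_{x\in\sigma_1} Y_x \;=\; \phi_R(\sigma_1),
\]
which exhibits $\phi_R(\sigma_2)$ as $\phi_R(\sigma_1)$ intersected with a further family, hence contained in it.

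There is essentially no substantive obstacle here; the only point requiring care is the boundary convention for the empty argument. Recall from page~\pageref{phipsiAppdef} that $\phi_R(\emptyset) = Y$ (the empty intersection being the full attribute space). With this convention the argument above covers the case $\sigma_1 = \emptyset$ as well, since $\phi_R(\sigma_2) \subseteq Y = \phi_R(\emptyset)$ holds trivially, and it is consistent with $Y$ sitting at the top of the attribute side while $\emptyset$ sits at the bottom of the individual side. Thus the same reasoning shows $\phi_R$ is inclusion-reversing whether one regards it as a map on all subsets of $X$ or, restricted to nonempty simplices, as the order-reversing poset map $\phi_R : \F(\dowx) \rightarrow \F(\dowy)$. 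The dual statement for $\psi_R$, with $X_y$ in place of $Y_x$, follows by the identical argument with the roles of individuals and attributes interchanged.
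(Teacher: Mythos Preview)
Your proof is correct and follows essentially the same approach as the paper: both argue directly from the intersection formula $\phi_R(\sigma) = \biginter_{x\in\sigma} Y_x$, observe that enlarging the index set of an intersection can only shrink it, and separately note the convention $\phi_R(\emptyset) = Y$ to cover the empty case.
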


\vspace*{-0.16in}

\begin{proof}
Let $\sigma^\prime\subseteq\sigma\subseteq{X}$.  \quad Then:

\vspace*{-0.35in}

$$\hspace*{1.4in}\phi_R(\sigma^\prime)
  \;=\;
  \biginter_{x\in\sigma^\prime}Y_x
  \;\supseteq\;
  \biginter_{x\in\sigma}Y_x
  \;=\;
  \phi_R(\sigma).$$

Just to be careful: if $\sigma^\prime = \emptyset$, then
$\phi_R(\sigma^\prime)=Y$, which does indeed contain $\phi_R(\sigma)$.
\end{proof}

\vspace*{0.05in}

Each of $\phi_R$ and $\psi_R$ is inclusion-reversing, so $\clsy$ is
inclusion-preserving.  Lemmas~\ref{upward} and \ref{idempotent}
establish that $\clsy$ is a closure operator when viewed as a poset map
$\F(\dowy) \rightarrow \F(\dowy)$:

\begin{lemma}\label{upward}
Let $R$ be a relation on $\XxY$.
For all $\gamma\subseteq{Y}$, \ $\gamma\subseteq(\clsy)(\gamma)$.
\end{lemma}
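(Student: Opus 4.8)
The plan is to prove the \emph{extensivity} inequality $\gamma \subseteq (\clsy)(\gamma)$ directly from the witness interpretations of $\phi_R$ and $\psi_R$, unwinding the composition $\clsy = \phi_R \circ \psi_R$ via the formulas recalled on page~\pageref{phipsiAppdef}. Conceptually this is just the characteristic inequality of a Galois connection: writing $\sigma = \psi_R(\gamma)$, the adjunction $\sigma \subseteq \psi_R(\gamma) \Leftrightarrow \gamma \subseteq \phi_R(\sigma)$ applied to the (trivially true) left-hand side yields the claim at once. Rather than isolate the adjunction as a separate fact, I would give the self-contained one-line membership argument below.

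First I would set $\sigma = \psi_R(\gamma) = \biginter_{y\in\gamma} X_y$ and recall that $(\clsy)(\gamma) = \phi_R(\sigma) = \biginter_{x\in\sigma} Y_x$, namely the set of attributes shared by every individual in $\sigma$. Then I would fix an arbitrary $y_0 \in \gamma$ and show $y_0 \in \phi_R(\sigma)$. For any $x \in \sigma$, membership in the intersection $\biginter_{y\in\gamma} X_y$ forces $x \in X_{y_0}$ (since $y_0 \in \gamma$), i.e. $(x, y_0) \in R$. Thus $y_0$ is an attribute of every $x \in \sigma$, so $y_0 \in \biginter_{x\in\sigma} Y_x = \phi_R(\sigma)$. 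As $y_0 \in \gamma$ was arbitrary, $\gamma \subseteq (\clsy)(\gamma)$.

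The only care needed is with the empty-set conventions, which I would dispatch explicitly. If $\gamma = \emptyset$ the inclusion is vacuous. If $\gamma \neq \emptyset$ but $\sigma = \psi_R(\gamma) = \emptyset$ (no individual carries all of $\gamma$), then the universal quantifier ``for all $x \in \sigma$'' is vacuous and the formula convention gives $\phi_R(\emptyset) = Y$, which still contains $\gamma$; so the claim holds in that boundary case as well. There is essentially no hard step here: the statement is precisely the extensivity axiom of the closure operator $\clsy$, and once the definitions are expanded it reduces to tracing membership through the two intersections. The genuine content of ``closure operator'' lies instead in the companion idempotence property $\clsy \circ \clsy = \clsy$ of Lemma~\ref{idempotent}.
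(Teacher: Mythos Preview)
Your proof is correct and follows essentially the same approach as the paper: set $\sigma=\psi_R(\gamma)$, then for arbitrary $y\in\gamma$ and $x\in\sigma$ use $x\in X_y$ to get $y\in Y_x$, handling the empty cases separately. The added Galois-connection framing is extra commentary, but the membership argument itself matches the paper's proof line for line.
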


\vspace*{-0.2in}

\begin{proof}
$$(\clsy)(\gamma) = \biginter_{x\in\sigma}Y_x,
   \quad \hbox{with} \;
  \sigma = \biginter_{y\in\gamma}X_y.$$

The assertion is clear if $\gamma=\emptyset$ or $\sigma=\emptyset$.
Otherwise, let $y\in\gamma$ and $x\in\sigma$.
Then $x\in X_y$, so $y\in Y_x$.  Since $x$ is arbitrary in $\sigma$, we
see that $y\in(\clsy)(\gamma)$ and thus $\gamma\subseteq(\clsy)(\gamma)$.
\end{proof}

\begin{corollary}\label{maximal}
Let $R$ be a relation on $\XxY$, with both $\hspt{}X\!$ and
$\hspt{}Y\!$ nonempty.

If $\gamma$ is a maximal simplex of $\hspt\dowy$, then
$(\clsy)(\gamma)=\gamma$.
\end{corollary}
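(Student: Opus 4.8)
The plan is to exploit the closure-operator structure of $\clsy$ together with the maximality hypothesis. The essential observation is that $(\clsy)(\gamma)$ is itself a simplex of $\dowy$ that contains $\gamma$; once both facts are in hand, maximality leaves no room for a proper containment and forces equality. So the proof reduces to two short verifications plus an appeal to maximality.

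First I would invoke Lemma~\ref{upward} to obtain $\gamma \subseteq (\clsy)(\gamma)$. Next I would verify that $(\clsy)(\gamma) \in \dowy$. Since $\gamma$ is a nonempty simplex of $\dowy$, it has a witness, i.e.\ $\psi_R(\gamma) = \biginter_{y\in\gamma} X_y \neq \emptyset$; pick any $x_0 \in \psi_R(\gamma)$. Then $(\clsy)(\gamma) = \phi_R(\psi_R(\gamma)) = \biginter_{x \in \psi_R(\gamma)} Y_x \subseteq Y_{x_0}$, and because $Y_{x_0} \in \dowy$ and $\dowy$ is closed under passing to subsets, $(\clsy)(\gamma)$ is a simplex of $\dowy$. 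Combining $\gamma \subseteq (\clsy)(\gamma) \in \dowy$ with the maximality of $\gamma$ rules out $\gamma \subsetneq (\clsy)(\gamma)$, whence $(\clsy)(\gamma) = \gamma$.

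The argument is essentially immediate, and the only point requiring care—hence the main (mild) obstacle—is confirming that the composite image genuinely lands in $\dowy$, together with the degenerate case in which $\gamma = \emptyset$ is itself maximal (i.e.\ $\dowy = \{\emptyset\}$). There one has $\psi_R(\emptyset) = X$ and $(\clsy)(\emptyset) = \phi_R(X) = \biginter_{x\in X} Y_x$, which is a subset of each $Y_x$ and so again a simplex of $\dowy$; the same maximality reasoning then closes the case. No other subtleties arise.
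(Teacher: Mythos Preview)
Your proposal is correct and follows essentially the same route as the paper: invoke Lemma~\ref{upward} to get $\gamma \subseteq (\clsy)(\gamma)$, then use maximality to force equality, with the degenerate case $\gamma=\emptyset$ handled separately. You are simply more explicit than the paper in verifying that $(\clsy)(\gamma)$ lands in $\dowy$ (the paper takes this for granted since $\clsy$ is by construction a self-map of $\F(\dowy)$, and handles the empty case by noting directly that $\phi_R(X)$ must lie in $\dowy=\{\emptyset\}$).
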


\vspace*{-0.16in}

\begin{proof}
When $\gamma\mskip-0.75mu{}\neq\emptyset$, this assertion follows from
Lemma~\ref{upward} and maximality of $\gamma$.  Otherwise, apparently
$\dowy=\{\emptyset\}$ and so $(\clsy)(\emptyset)=\phi_R(X)=\emptyset$
(since $\phi_R$ must map $X$ into $\dowy$).
\end{proof}

\begin{lemma}\label{idempotent}
Let $R$ be a relation on $\XxY$.

For all $\gamma\subseteq{Y}$, \ $\big((\clsy)\circ(\clsy)\big)(\gamma)=(\clsy)(\gamma)$.
\end{lemma}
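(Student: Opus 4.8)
The plan is to recognize that $\clsy = \phi_R\circ\psi_R$ is the attribute-side composite of the Galois connection $(\phi_R,\psi_R)$, and that idempotence of such a composite follows formally from the one ``triangle identity'' $\psi_R\circ\phi_R\circ\psi_R = \psi_R$. So I would first reduce the claim to that identity: given any $\gamma\subseteq Y$, writing $c=\clsy$, we have
$$
(c\circ c)(\gamma)
 = \phi_R\big(\psi_R(\phi_R(\psi_R(\gamma)))\big),
$$
and if $\psi_R\circ\phi_R\circ\psi_R$ agrees with $\psi_R$ at $\gamma$, then the inner three maps collapse to $\psi_R(\gamma)$, leaving $\phi_R(\psi_R(\gamma)) = c(\gamma)$, which is exactly what we want.

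Next I would prove the triangle identity pointwise by a double inclusion. Fix $\gamma\subseteq Y$ and set $\delta = \psi_R(\gamma)\subseteq X$. For one inclusion, the dual of Lemma~\ref{upward} (extensivity of $\clsx=\psi_R\circ\phi_R$, available since this appendix's lemmas have dual statements) gives $\delta \subseteq \psi_R(\phi_R(\delta))$, i.e.\ $\psi_R(\gamma)\subseteq \psi_R(\phi_R(\psi_R(\gamma)))$. For the reverse inclusion, Lemma~\ref{upward} applied to $\gamma$ yields $\gamma \subseteq \phi_R(\psi_R(\gamma)) = \phi_R(\delta)$; applying the inclusion-reversing map $\psi_R$ (Lemma~\ref{orderreversing}, dual form) flips this to $\psi_R(\phi_R(\delta)) \subseteq \psi_R(\gamma) = \delta$. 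Combining the two inclusions gives $\psi_R(\phi_R(\psi_R(\gamma))) = \psi_R(\gamma)$, establishing the identity at $\gamma$. Substituting back into the displayed computation finishes the proof.

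The argument is short and formal; the only thing to watch is bookkeeping of inclusion directions, since $\phi_R$ and $\psi_R$ each reverse inclusions while their composite $\clsy$ preserves them, so I expect the main ``obstacle'' to be simply applying Lemma~\ref{orderreversing} on the correct side and not confusing an extensivity step with a monotonicity step. I would also note that no separate treatment of degenerate cases is needed: the statement quantifies over all $\gamma\subseteq Y$ including $\gamma=\emptyset$, and both Lemma~\ref{upward} and Lemma~\ref{orderreversing} are already stated for arbitrary subsets, with the boundary behavior covered by the extended formulas $\phi_R(\emptyset)=Y$ and $\psi_R(\emptyset)=X$ recorded on page~\pageref{phipsiAppdef}. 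One could alternatively bypass the triangle identity and prove the two inclusions $c(\gamma)\subseteq c(c(\gamma))$ (directly from Lemma~\ref{upward}) and $c(c(\gamma))\subseteq c(\gamma)$ separately, but routing through $\psi_R\circ\phi_R\circ\psi_R=\psi_R$ keeps the reverse inclusion cleanest and mirrors the dual proof for $\clsx$.
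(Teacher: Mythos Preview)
Your proof is correct and uses essentially the same ingredients as the paper's: extensivity (Lemma~\ref{upward} and its dual) together with inclusion-reversal (Lemma~\ref{orderreversing}). The only cosmetic difference is that you first establish the triangle identity $\psi_R\circ\phi_R\circ\psi_R=\psi_R$ (i.e., $\sigma=\sigma'$ in the paper's notation) and then apply $\phi_R$, whereas the paper argues directly that $\gamma'=\gamma''$ and records $\sigma=\sigma'$ as a closing comment.
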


\vspace*{-0.16in}

\begin{proof}
Consider: \quad $\gamma
  \;\xmapsto{\phantom{0}\psi_R\phantom{0}}\;
  \sigma
  \;\xmapsto{\phantom{0}\phi_R\phantom{0}}\;
  \gamma^\prime
  \;\xmapsto{\phantom{0}\psi_R\phantom{0}}\;
  \sigma^\prime
  \;\xmapsto{\phantom{0}\phi_R\phantom{0}}\;
  \gamma^{\prime\prime}.$

\vspace*{0.1in}

We need to show that $\gamma^\prime=\gamma^{\prime\prime}$.

By Lemma~\ref{upward} and its dualization, 
$\gamma\subseteq\gamma^\prime\subseteq\gamma^{\prime\prime}$
and $\sigma\subseteq\sigma^\prime$.

By Lemma~\ref{orderreversing}, $\phi_R$ is inclusion-reversing, so
$\sigma\subseteq\sigma^\prime$ implies
$\gamma^\prime\supseteq\gamma^{\prime\prime}$, and thus
$\gamma^\prime=\gamma^{\prime\prime}$.

Comment: By the dual of Lemma~\ref{orderreversing}, $\psi_R$ is
inclusion-reversing, so in fact also $\sigma=\sigma^\prime$.
\end{proof}

\begin{corollary}\label{mappedsimplex}
Let $R$ be a relation on $\XxY$.   \ For all $\sigma\subseteq{X}$,
\ $(\clsy)(\phi_R(\sigma)) = \phi_R(\sigma)$.
\end{corollary}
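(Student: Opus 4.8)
The plan is to read this corollary as the familiar Galois-connection identity $\phi_R \circ \psi_R \circ \phi_R = \phi_R$, specialized to the claim that $\phi_R(\sigma)$ is a fixed point of the closure operator $\clsy$. I would fix an arbitrary $\sigma \subseteq X$, write $\gamma = \phi_R(\sigma)$, and prove the two set inclusions $\gamma \subseteq (\clsy)(\gamma)$ and $(\clsy)(\gamma) \subseteq \gamma$ independently; together they give $(\clsy)(\phi_R(\sigma)) = \phi_R(\sigma)$.

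The forward inclusion requires no new work: it is exactly Lemma~\ref{upward} applied to the particular subset $\gamma = \phi_R(\sigma)$, yielding $\phi_R(\sigma) \subseteq (\clsy)(\phi_R(\sigma))$.

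For the reverse inclusion I would begin from the dualized form of Lemma~\ref{upward}, i.e.\ the expansivity of $\clsx = \psi_R \circ \phi_R$, which this appendix licenses by the symmetry between individuals and attributes; it states $\sigma \subseteq (\psi_R \circ \phi_R)(\sigma)$. Applying the inclusion-reversing map $\phi_R$ of Lemma~\ref{orderreversing} to this containment flips the direction and produces $\phi_R(\sigma) \supseteq \phi_R\big((\psi_R \circ \phi_R)(\sigma)\big)$, whose right-hand side is precisely $(\phi_R \circ \psi_R \circ \phi_R)(\sigma) = (\clsy)(\phi_R(\sigma))$. Combining the two inclusions closes the argument.

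I do not expect a genuine obstacle; the only point meriting a moment's care is that the equality must hold uniformly over all $\sigma \subseteq X$, including $\sigma = \emptyset$ (where $\phi_R(\emptyset) = Y$) and the degenerate case $\phi_R(\sigma) = \emptyset$. Both are handled automatically, since the two inclusions are purely set-theoretic and the cited lemmas already cover empty arguments in their own proofs; in particular, when $\phi_R(\sigma) = \emptyset$ the inclusion-reversing step forces $\phi_R(X) \subseteq \phi_R(\sigma) = \emptyset$ (because $\sigma \subseteq X$), so $(\clsy)(\emptyset) = \phi_R(X) = \emptyset$ as required. As an alternative route, one could instead invoke Lemma~\ref{idempotent}, whose proof already establishes the dual identity $\psi_R \circ \phi_R \circ \psi_R = \psi_R$, the present corollary being its mirror image; but the two-inclusion argument above is more direct and self-contained.
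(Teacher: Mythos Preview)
Your proof is correct and is essentially the same as the paper's. The paper's one-line proof simply cites ``a dual version of the comment at the end of the proof of Lemma~\ref{idempotent}'', and that comment, once dualized, is precisely your two-inclusion argument: use the dual of Lemma~\ref{upward} to get $\sigma \subseteq (\psi_R\circ\phi_R)(\sigma)$, then apply the inclusion-reversing $\phi_R$ from Lemma~\ref{orderreversing} to obtain the nontrivial containment $(\clsy)(\phi_R(\sigma)) \subseteq \phi_R(\sigma)$. Your ``alternative route'' via Lemma~\ref{idempotent} is in fact literally what the paper does, so both of your paths coincide with the paper's.
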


\vspace*{-0.16in}

\begin{proof}
This follows from a dual version of the comment at the end of the
proof of Lemma~\ref{idempotent}.
\end{proof}

\begin{corollary}\label{generatorface}
Let $R$ be a relation on $\XxY$.
\ For all $x\in X$, $(\clsy)(Y_x)=Y_x$.
\end{corollary}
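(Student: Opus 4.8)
The plan is to recognize $Y_x$ as a value of the map $\phi_R$ and then invoke the already-established Corollary~\ref{mappedsimplex}. Concretely, recall the formula for $\phi_R$ from page~\pageref{phipsiAppdef}, namely $\phi_R(\sigma) = \biginter_{x^\prime\in\sigma}Y_{x^\prime}$. Applying this with the singleton $\sigma = \{x\}$ gives an intersection over a single index, so
$$\phi_R(\{x\}) \;=\; \biginter_{x^\prime\in\{x\}}Y_{x^\prime} \;=\; Y_x.$$
Thus $Y_x$ is literally the image of the simplex $\{x\}$ under $\phi_R$. (As noted in Appendix~\ref{relationsApp}, $\{x\} \in \dowx$ whenever $Y_x \neq \emptyset$, so this is a legitimate application of $\phi_R$ as a poset map; the degenerate case $Y_x = \emptyset$ is handled by the convention that $\phi_R(\emptyset)$ and closures of the empty simplex behave as stipulated there.)

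With that identification in hand, the result is immediate: Corollary~\ref{mappedsimplex} states that $(\clsy)(\phi_R(\sigma)) = \phi_R(\sigma)$ for every $\sigma \subseteq X$. Specializing to $\sigma = \{x\}$ and substituting $\phi_R(\{x\}) = Y_x$ yields
$$(\clsy)(Y_x) \;=\; (\clsy)(\phi_R(\{x\})) \;=\; \phi_R(\{x\}) \;=\; Y_x,$$
which is exactly the claim.

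There is no genuine obstacle here; the corollary is purely a matter of packaging. The only thing to be careful about is confirming that the singleton-intersection step is valid as written and that it respects the conventions for empty rows, but these are routine bookkeeping rather than substantive difficulties. In short, the entire content of the statement is that a row $Y_x$ is a $\phi_R$-image and hence a fixed point of the closure operator $\clsy$, inheriting this directly from the fact that images of $\phi_R$ are inference-closed.
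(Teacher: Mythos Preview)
Your proof is correct and takes essentially the same approach as the paper: both invoke Corollary~\ref{mappedsimplex} with $\sigma = \{x\}$, using the identification $\phi_R(\{x\}) = Y_x$. The paper's proof is terser (one line plus a parenthetical noting that the case $Y_x = \emptyset$ is included), but the substance is identical.
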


\vspace*{-0.16in}

\begin{proof}
The assertion follows from Corollary~\ref{mappedsimplex}, with
$\sigma=\{x\}$.

\hspace*{0.25in}(This includes the case $Y_x=\emptyset$.)
\end{proof}

\begin{lemma}\label{propersubsets}
Let $R$ be a relation on $\XxY$ and suppose $\eta\subseteq Y$.\\
Then the following two conditions are equivalent:

\vspace*{0.1in}

\hspace*{0.4in}\begin{minipage}{5in}
\begin{enumerate}
\addtolength{\itemsep}{-1pt}
\item[(a)] $(\clsy)(\chi) = \chi$, for every proper subset $\chi$ of $\eta$.
\item[(b)] $(\clsy)(\gamma) = \gamma$, for all $\gamma$ of the form $\gamma =
\eta\setminus\ys$ with $y\in\eta$.
\end{enumerate}
\end{minipage}
\end{lemma}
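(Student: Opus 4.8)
The plan is to dispatch the easy implication first and then concentrate on the reverse. The direction (a)$\,\Rightarrow\,$(b) is immediate: for each $y\in\eta$ the set $\eta\setminus\ys$ is a proper subset of $\eta$, so it is among the $\chi$ covered by (a), and (b) follows at once. (If $\eta=\emptyset$ there are neither proper subsets nor elements $y\in\eta$, so both statements hold vacuously and the equivalence is trivial.) All of the content therefore lies in proving (b)$\,\Rightarrow\,$(a).

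For that direction I would first assemble the relevant properties of the operator $\clsy$. By Lemma~\ref{upward}, $\gamma\subseteq(\clsy)(\gamma)$ for every $\gamma\subseteq Y$. By Lemma~\ref{orderreversing} and its dual, both $\phi_R$ and $\psi_R$ are inclusion-reversing, so their composite $\clsy$ is inclusion-preserving (order-preserving). These two facts --- upward containment and monotonicity --- are all I expect to need; idempotence (Lemma~\ref{idempotent}) will not be required here.

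The heart of the argument is a squeezing step. Fix a proper subset $\chi\subsetneq\eta$, so that $\eta\setminus\chi\neq\emptyset$. For each $y\in\eta\setminus\chi$ we have $\chi\subseteq\eta\setminus\ys$, whence monotonicity gives $(\clsy)(\chi)\subseteq(\clsy)(\eta\setminus\ys)$, and hypothesis (b) identifies the right-hand side as $\eta\setminus\ys$ itself. Two consequences follow: first, from a single such $y$, $(\clsy)(\chi)\subseteq\eta\setminus\ys\subseteq\eta$, so the closure introduces no elements outside $\eta$; second, ranging over all $y\in\eta\setminus\chi$, the closure omits every element of $\eta\setminus\chi$. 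Intersecting these containments yields $(\clsy)(\chi)\subseteq\eta\setminus(\eta\setminus\chi)=\chi$, and combining with $\chi\subseteq(\clsy)(\chi)$ from Lemma~\ref{upward} gives $(\clsy)(\chi)=\chi$, as desired.

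I do not anticipate a genuine obstacle: the lemma amounts to the observation that a closure operator fixing all the one-element-removed subsets of $\eta$ must fix every proper subset. The one point meriting care is verifying that $(\clsy)(\chi)$ cannot acquire stray elements lying outside $\eta$; this is exactly what the containment $\eta\setminus\ys\subseteq\eta$ rules out, which is why the nonemptiness of $\eta\setminus\chi$ --- i.e. the properness of $\chi$ --- is essential. The boundary case $\chi=\emptyset$ is then covered automatically by the same reasoning.
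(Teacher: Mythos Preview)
Your proof is correct and uses essentially the same idea as the paper's: both exploit monotonicity of $\clsy$ together with hypothesis (b) to bound $(\clsy)(\chi)$ inside $\eta\setminus\ys$ for suitable $y$. The only cosmetic difference is that you argue directly by intersecting these bounds over all $y\in\eta\setminus\chi$, whereas the paper argues by contradiction, picking a single $y\in(\clsy)(\chi)\setminus\chi$ and deriving $\eta\subsetneq\eta$; the underlying mechanism is identical.
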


\vso

\begin{proof}
Certainly (a) implies (b).  Suppose (b) holds, but there is some
$\chi\subsetneq\eta$ such that $\chi\subsetneq(\clsy)(\chi)$.
\ Since (b) holds, $(\clsy)(\chi)\subseteq\eta$.
\ Let $y\in(\clsy)(\chi)\setminus\chi$ and consider $\gamma=\eta\setminus\ys$.

\vspace*{0.03in}

Observe that $\chi\subseteq\gamma$, so
$y\in(\clsy)(\chi)\subseteq(\clsy)(\gamma)$.
Consequently,

\vspace*{-0.1in}

$$\eta
  \;=\; \gamma \union \ys 
  \;\subseteq\; (\clsy)(\gamma)
  \;=\; \gamma
  \;\subsetneq\; \eta, \hspace{0.4in}\hbox{which is a contradiction.}$$

\vspace*{-0.24in}

\end{proof}

\vspace*{0.025in}

\begin{definition}[Connected]\label{connected}
A relation $R$ on $\,\XxY$ is \,\mydefem{connected} if $R$ is
connected when viewed as an undirected bipartite graph on the vertex
sets $X\!$ and $Y$. \ (This definition regards $X$ and $\,Y\!$ as
disjoint.)
\end{definition}

\vspace*{0.025in}

\begin{definition}[Tight]\label{tight}
A relation $R$ on $\,\XxY$ is \,\mydefem{tight}\, if it has no
blank rows or columns.
\end{definition}

\vspace*{-0.175in}

\paragraph{Comment:} \ As mentioned on page~\pageref{geomrealiz}, one
can view an abstract simplicial complex as a topological space, via
its geometric realization.  In particular, one may ask whether a
simplicial complex is {\em path-connected}.

\vspace*{0.025in}

\begin{lemma}[Connectedness]\label{connectedcplx}
Let $R$ be a tight relation on $\XxY$, with both $\hspc{}X\!$
and $\hspt{}Y\!$ nonempty.\\
Then the following three conditions are equivalent:

\vspace*{0.1in}

\hspace*{1.2in}\begin{minipage}{4in}
\begin{enumerate}
\addtolength{\itemsep}{-1pt}
\item[(a)] $R$ is connected.
\item[(b)] $\dowx$ is path-connected.
\item[(c)] $\dowy$ is path-connected.
\end{enumerate}
\end{minipage}
\end{lemma}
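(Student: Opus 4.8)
The plan is to reduce everything to the elementary fact that a nonvoid finite simplicial complex is path-connected if and only if its $1$-skeleton is connected as a graph, and then to pass back and forth between walks in the bipartite graph $R$ and edge-walks in the Dowker complexes. Since $X$ and $Y$ are both nonempty, the complexes $\dowx$ and $\dowy$ are nonvoid, so this $1$-skeleton criterion applies. Crucially, the hypothesis ``$R$ is connected'' is symmetric under interchanging the roles of $X$ and $Y$, and that interchange swaps $\dowx$ and $\dowy$; hence it suffices to prove the equivalence of (a) and (b), after which (a) $\iff$ (c) follows verbatim by the dual argument. I would first record two consequences of tightness: no blank rows means every $x \in X$ has $Y_x \neq \emptyset$, so every individual is a vertex of $\dowx$; dually, no blank columns means every $y \in Y$ has $X_y \neq \emptyset$, so every attribute is adjacent in the bipartite graph $R$ to at least one individual. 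The key dictionary is that $\{x, x'\}$ is an edge of $\dowx$ exactly when $x$ and $x'$ share some attribute $y$, which is precisely a length-two walk $x$--$y$--$x'$ in $R$.

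For (a) $\Rightarrow$ (b), I would take two individuals $x, x'$ and a walk between them in the connected bipartite graph $R$. Such a walk necessarily alternates between individuals and attributes,
$$x = x_0,\; y_1,\; x_1,\; y_2,\; \ldots,\; y_n,\; x_n = x',$$
and for each $i$ the pair $x_{i-1}, x_i$ shares the attribute $y_i$, so $\{x_{i-1}, x_i\} \in \dowx$. Stringing these edges together gives an edge-walk in the $1$-skeleton of $\dowx$ from $x$ to $x'$, so $\dowx$ is path-connected.

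For (b) $\Rightarrow$ (a), I would reverse the dictionary: given any two individuals joined by an edge-walk $x_0, x_1, \ldots, x_n$ in $\dowx$, each edge $\{x_{i-1}, x_i\}$ admits a witnessing attribute $y_i$, and replacing it by the two bipartite edges $x_{i-1}$--$y_i$--$x_i$ produces a walk in $R$. Hence all individuals lie in a single connected component of $R$. Tightness then finishes the job: every attribute $y$ has $X_y \neq \emptyset$, so $y$ is adjacent to some individual and thus lies in that same component, whence $R$ is connected. This establishes (a) $\iff$ (b); applying the identical reasoning with $X$ and $Y$ exchanged yields (a) $\iff$ (c), completing the three-way equivalence.

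I do not expect any genuine obstacle here, since the correspondence between bipartite walks and Dowker edge-walks is exact; the only points demanding care are bookkeeping ones. Specifically, I must invoke tightness explicitly in the (b) $\Rightarrow$ (a) direction to absorb the attributes into the individuals' component (an isolated attribute column would otherwise produce a disconnected $R$ with path-connected $\dowx$), and I must note that the $1$-skeleton criterion is being used for nonvoid complexes, which is guaranteed by $X, Y \neq \emptyset$.
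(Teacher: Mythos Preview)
Your proposal is correct and follows essentially the same approach as the paper: both arguments translate alternating walks in the bipartite graph $R$ into edge-walks in $\dowx$ via the witness dictionary, use tightness in the (b) $\Rightarrow$ (a) direction to absorb attributes into the individuals' component, and then obtain (a) $\iff$ (c) by the dual argument.
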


\vspace*{0.01in}

\begin{proof}
We will show that (a) and (b) are equivalent.  The proof for (a) and
(c) is similar, or one can simply invoke Dowker duality.

\vspace*{0.05in}

I.  Suppose $R$ is connected.  Consider two vertices $x_0$ and $x_f$
of $\dowx$.  Since $R$ is connected as a bipartite graph, there exists
a path $x_0, y_1, x_1, y_2, \ldots, y_n, x_n=x_f$ in this graph.
Observe that each $y_i$ is a witness for the simplex $\{x_{i-1},
x_i\}\in\dowx$.  We can assume without loss of generality that
$x_{i-1} \neq x_i$, for each relevant $i$.  So in $\dowx$ there exist
edges $\{x_0, x_1\}, \ldots, \{x_{n-1}, x_n\}$.  Since $\dowx$ is a
simplicial complex, we see that it is path-connected.

\vspace*{0.05in}

II.  Suppose $\dowx$ is path-connected.  Since $R$ is tight, each
$y\in{Y}$ appears as the vertex of an edge $(x,y)$ in the bipartite
graph $R$.  To show that $R$ is connected, it therefore is enough to
show that any two elements $x_0$ and $x_f$ of $X$ may be connected by a
path in the bipartite graph.  Since $R$ is tight, $x_0$ and $x_f$ are
each vertices of $\dowx$.  Since $\dowx$ is path-connected, there exists
a path between $x_0$ and $x_f$ in $\dowx$.  Since $\dowx$ is a finite
simplicial complex, we can deform that path so that it consists of
finitely many edges $\{x_0, x_1\}, \ldots, \{x_{n-1}, x_n\}$, with each
$x_i$ a vertex of $\dowx$ and $x_n=x_f$.  Each edge $\{x_{i-1}, x_i\}$
has some witness $y_i\in Y$.  So $x_0, y_1, x_1, y_2, \ldots, y_n, x_f$
is a path connecting $x_0$ and $x_f$ in the bipartite graph $R$.
\end{proof}

\clearpage

\begin{lemma}[Components]\label{components}
Let $R$ be a tight relation on $\,\XxY$, with both $\hspt{}X\!$ and
$\mskip1mu\hspt{}Y\!$ nonempty.  Suppose $R = R_1 \union \cdots \union
R_\ell$, with the $\{R_i\}$ pairwise disjoint and each $R_i$ a
connected component of $R$ viewed as a bipartite graph on $X\!$ and $Y$.
Then $X$, $Y$, $\dowx$, and $\dowy$ decompose as follows:

\vspace*{0.125in}

\begin{minipage}{5.9in}
\begin{enumerate}
\addtolength{\itemsep}{-1.5pt}
\item[(a)] $X = X_1 \union \cdots \union\, X_\ell$, with the $\{X_i\}$ pairwise disjoint and each $X_i$ not empty.
\item[(b)] $Y = Y_1 \union \cdots \union\, Y_\ell$, with the $\{Y_i\}$ pairwise disjoint and each $Y_i$ not empty.
\item[(c)] $R_i$ is the restriction of $R$ to $X_i \times Y_i$, and is tight, for $i=1, \ldots, \ell$.
\item[(d)] $\dowx = \Psi_{R_{\scriptstyle 1}} \union \cdots \union\, \Psi_{R_{\scriptstyle \ell}}$, with pairwise disjoint face posets and each $\Psi_{R_{\scriptstyle i}}$ path-connected.
\item[(e)] $\dowy = \Phi_{R_{\scriptstyle 1}} \union \cdots \union\, \Phi_{R_{\scriptstyle \ell}}$, with pairwise disjoint face posets and each $\Phi_{R_{\scriptstyle i}}$ path-connected.
\end{enumerate}
\end{minipage}

\end{lemma}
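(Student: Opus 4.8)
The plan is to derive the decomposition of the complexes from the decomposition of the underlying bipartite graph, using the witness characterization of Dowker simplices together with the connectivity result already in hand. First I would dispose of parts (a)--(c), which are essentially graph theory. Viewing $R$ as a bipartite graph on the disjoint vertex sets $X$ and $Y$, its connected components partition both the edge set and the vertex set; I define $X_i$ (resp.\ $Y_i$) to be the $X$-vertices (resp.\ $Y$-vertices) lying in component $R_i$. Tightness (Definition~\ref{tight}) guarantees that every vertex is incident to at least one edge, so no vertex is isolated: hence $X = X_1 \union \cdots \union X_\ell$ and $Y = Y_1 \union \cdots \union Y_\ell$ with the pieces pairwise disjoint and, because each component contains an edge, each $X_i$ and each $Y_i$ nonempty. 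For (c), the key observation is that the component of an edge is determined by the component of either endpoint, so an edge $(x,y) \in R$ with $x \in X_i$ forces $y \in Y_i$ and conversely; this gives $R_i = R|_{X_i \times Y_i}$. Tightness of each $R_i$ then follows because any witnessing edge for a row or column of $R$ incident to a vertex of $R_i$ must itself belong to $R_i$.

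The heart of the argument is (d) (part (e) being identical after swapping the roles of $X$ and $Y$, or by Dowker duality). I would prove the set equality $\Psi_R = \Psi_{R_1} \union \cdots \union \Psi_{R_\ell}$ by the following witness argument. Let $\emptyset \neq \sigma \in \Psi_R$ and pick a witness $y \in Y$, so that $(x,y) \in R$ for every $x \in \sigma$. All of these edges share the endpoint $y$, hence lie in the single component containing $y$, say $R_i$; consequently $\sigma \subseteq X_i$ and $y \in Y_i$, which exhibits $\sigma$ as a simplex of $\Psi_{R_i}$ witnessed by $y$ inside $R_i$. The reverse inclusion is immediate, since $R_i \subseteq R$ means any witness in $R_i$ is a witness in $R$. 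The empty simplex lies in every complex here (all relations are nonvoid), so the union is exact.

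It remains to verify the two qualifiers in (d). Path-connectedness of each $\Psi_{R_i}$ is a direct application of Lemma~\ref{connectedcplx} to $R_i$, which by (a)--(c) is tight, connected, and supported on nonempty $X_i$ and $Y_i$. Disjointness of the face posets follows from (a): a nonempty simplex of $\Psi_{R_i}$ is a nonempty subset of $X_i$, and since the $X_i$ are pairwise disjoint, no nonempty simplex can lie in two of the $\F(\Psi_{R_i})$ simultaneously (the shared empty simplex is excluded from the face poset by definition).

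I do not expect a serious obstacle here: the only place requiring care is the witness argument of the second paragraph, where one must notice that a single witness simultaneously certifies membership and pins the whole simplex into one component. Everything else is bookkeeping with the definitions and an invocation of the connectivity lemma.
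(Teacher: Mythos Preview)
Your proposal is correct and follows essentially the same route as the paper: define $X_i,Y_i$ from the bipartite components, use tightness to ensure coverage and nonemptiness, run the witness argument (a single witness $y$ pins the whole simplex into one component) to decompose $\Psi_R$, and invoke Lemma~\ref{connectedcplx} for path-connectedness, with face-poset disjointness coming from the disjointness of the $X_i$. The paper's proof is organized identically, with only cosmetic differences in phrasing.
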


\vspace*{0.025in}

\begin{proof}
Let
$X_i = \setdef{x}{(x,y)\in R_i\; \hbox{for some}\; y\in Y}$ and 
$Y_i = \setdef{y}{(x,y)\in R_i\; \hbox{for some}\; x\in X}$,
for $i = 1, \ldots, \ell$.
These sets are nonempty since the components of $R$ are necessarily nonempty.

To see that $X_i \inter X_j = \emptyset$ unless $i=j$, suppose $x \in
X_i \inter X_j$.  Then $(x, y)\in R_i$ for some $y\in Y$ and $(x,
y^\prime)\in R_j$ for some $y^\prime\in Y$.  Since $R_i$ and $R_j$ are
connected components of $R$, $i=j$.  Next observe that each $x$ of $X$
must appear in some $X_i$ since $R$ has no blank rows.  Point (a)
follows.  \quad Point (b) is similar.

For (c), observe that if $(x,y)\in R_i \subseteq R$ then $x\in X_i$ and
$y\in Y_i$, so $(x,y)$ is in the restriction of $R$ to $X_i \times Y_i$.
Conversely, if $(x,y)\in R$ with $x\in X_i$ and $y\in Y_i$, then
$(x,y)\in R_j$ for some $j$.  By the previous reasoning, $i=j$.
Tightness follows by definition of $X_i$ and $Y_i$.

For (d), $\Psi_{R_{\scriptstyle i}} \subseteq \dowx$ since $R_i
\subseteq R$, for each $i=1, \ldots, \ell$.  \ Now let
$\emptyset\neq\sigma\in \dowx$.  Then there exists $y\in Y$ such that
$(x,y)\in R$ for every $x\in \sigma$.  For some $i$, $y\in Y_i$.  Since
$R_i$ is a connected component of $R$, $(x,y)\in R_i$ for every $x\in
\sigma$, so $\sigma\in \Psi_{R_{\scriptstyle i}}$.
The collections $\{\F(\Psi_{R_{\scriptstyle i}})\}$ are pairwise
disjoint since the underlying vertex sets $\{X_i\}$ are pairwise
disjoint.
Path-connectedness follows from Lemma~\ref{connectedcplx}, since each
$R_i$ is tight and connected.
\quad Point (e) is similar.
\end{proof}

\begin{corollary}[Component Maps]\label{componentmaps}
Assume the hypotheses and constructions as in Lemma~\ref{components} and
its proof.  \ Then:

\vspace*{-0.4in}

\begin{eqnarray*}
\hspace*{1in}\psi_{R_{\scriptstyle i}}(\gamma) &=& \psi_R(\gamma), \quad \hbox{for each $\;\emptyset\neq\gamma\in\Phi_{R_{\scriptstyle i}}$},\\[2pt]
\hspace*{1in}\phi_{R_{\scriptstyle i}}(\sigma) &=& \phi_R(\sigma), \quad \hbox{for each $\;\emptyset\neq\sigma\in\Psi_{R_{\scriptstyle i}}$},
       \quad i=1, \ldots, \ell.
\end{eqnarray*}
\end{corollary}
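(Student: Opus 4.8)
The plan is to reduce both displayed equalities to a single \emph{edge-locality} property of connected components: any edge of $R$ incident to a vertex lying in component $i$ belongs wholly to $R_i$. Concretely, I would first establish that for every attribute $y\in Y_i$ the column of $y$ is the same whether computed in $R$ or in $R_i$, i.e.\ $X_y = X_y^{R_i}$, and dually that for every individual $x\in X_i$ the row of $x$ satisfies $Y_x = Y_x^{R_i}$. Once this locality is isolated, the corollary is essentially definitional, via the intersection formulas for $\phi_R$ and $\psi_R$ recalled on page~\pageref{phipsiAppdef}.

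To prove $X_y = X_y^{R_i}$ for $y\in Y_i$: the inclusion $X_y^{R_i}\subseteq X_y$ is immediate from $R_i\subseteq R$. For the reverse, suppose $(x,y)\in R$ with $y\in Y_i$. Since $R = R_1\union\cdots\union R_\ell$ partitions $R$ into connected components, the edge $(x,y)$ lies in exactly one $R_j$; but $y$ is a vertex of the component $R_i$ by Lemma~\ref{components}(b), and a vertex belongs to a single component, so the edge incident to $y$ must satisfy $j=i$, giving $(x,y)\in R_i$ and hence $x\in X_y^{R_i}$. Thus $X_y\subseteq X_y^{R_i}$. The row statement $Y_x = Y_x^{R_i}$ for $x\in X_i$ is identical after interchanging the roles of $X$ and $Y$ and invoking Lemma~\ref{components}(a).

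With locality in hand, both equalities follow at once. For nonempty $\gamma\in\Phi_{R_{\scriptstyle i}}$ we have $\gamma\subseteq Y_i$, since $Y_i$ is the underlying vertex set of $\Phi_{R_{\scriptstyle i}}$, so each factor of the defining intersection agrees:
$$\psi_{R_{\scriptstyle i}}(\gamma) \;=\; \biginter_{y\in\gamma} X_y^{R_i} \;=\; \biginter_{y\in\gamma} X_y \;=\; \psi_R(\gamma).$$
Dually, for nonempty $\sigma\in\Psi_{R_{\scriptstyle i}}$ we have $\sigma\subseteq X_i$, whence $\phi_{R_{\scriptstyle i}}(\sigma) = \biginter_{x\in\sigma} Y_x^{R_i} = \biginter_{x\in\sigma} Y_x = \phi_R(\sigma)$.

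I expect no serious obstacle here; the argument is short and almost mechanical once edge-locality is stated. The only point requiring a little care is matching the locality claim to the correct endpoint --- column locality relies on the shared \emph{attribute} $y$ determining a single component $Y_i$, while row locality relies on the shared \emph{individual} $x$ determining a single component $X_i$ --- and observing that the hypotheses $\emptyset\neq\gamma$ and $\emptyset\neq\sigma$ keep the intersection formulas applicable, so no empty-set special cases intrude.
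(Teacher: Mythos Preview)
Your proof is correct and follows essentially the same route as the paper: both arguments reduce to the observation that for $y\in Y_i$ the full column $X_y$ already lies inside $X_i$ (the paper phrases this as ``each $X_y$ can touch only $X_i$, since $R_i$ is a connected component''), and then apply the intersection formula for $\psi$ termwise. Your explicit isolation of the edge-locality step is a bit more verbose than the paper's one-line direct computation, but the content is identical.
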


\begin{proof}
By direct computation (be aware, the subscripts in $X_y$ and $X_i$
have different meanings):

\vspace*{-0.1in}

$$\psi_{R_{\scriptstyle i}}(\gamma)
  \;=\;
  \biginter_{y\in\gamma}(X_y \inter X_i)
  \;=\;
  \biginter_{y\in\gamma}X_y
  \;=\;
  \psi_R(\gamma).$$

The second equality comes from the fact that each $X_y$ can touch only
$X_i$, since $R_i$ is a connected component of $R$.
\quad
The argument for the $\phi_{\ldots}$ maps is similar.
\end{proof}

\begin{corollary}[Component Privacy]\label{componentprivacy}
Assume the hypotheses and constructions as in Lemma~\ref{components} and
its proof.  Let $i\in\{1, \ldots, \ell\}$.

If $\clsx$ is the identity on $\dowx$ and
$Y_i\not\in\Phi_{R_{\scriptstyle i}}$,
then $\psi_{R_{\scriptstyle i}} \circ \phi_{R_{\scriptstyle i}}$ is the identity on
$\Psi_{R_{\scriptstyle i}}$.

If $\clsy$ is the identity on $\dowy$ and
$X_i\not\in\Psi_{R_{\scriptstyle i}}$,
then $\phi_{R_{\scriptstyle i}} \circ \psi_{R_{\scriptstyle i}}$ is the identity on
$\Phi_{R_{\scriptstyle i}}$.

\end{corollary}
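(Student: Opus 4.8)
The plan is to prove the first implication directly and then obtain the second by Dowker duality (interchanging the roles of individuals and attributes), exactly as is done for the other lemmas in this appendix. For the first implication I would show that $\psi_{R_i} \circ \phi_{R_i}$ fixes every element of $\Psi_{R_i} \cup \{\emptyset\}$, handling nonempty simplices and the empty simplex separately. The nonempty case will follow almost mechanically from Corollary~\ref{componentmaps}, and the role of the extra hypothesis $Y_i \notin \Phi_{R_i}$ will be confined entirely to the empty-simplex case.

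First I would treat a nonempty simplex $\sigma \in \Psi_{R_i}$. Since $\sigma \subseteq X_i$, and since all edges of $R$ incident to a vertex $x \in X_i$ lie in the single connected component $R_i$, each $x \in \sigma$ satisfies $Y_x \subseteq Y_i$. Hence $\gamma := \phi_R(\sigma) = \bigcap_{x\in\sigma} Y_x \subseteq Y_i$. Because $\sigma$ is a nonempty simplex of $\dowx$, the membership test for $R$ gives $\gamma \neq \emptyset$; being a nonempty simplex of $\dowy$ contained in $Y_i$, the decomposition in Lemma~\ref{components}(e) (with pairwise disjoint vertex sets $Y_j$) places $\gamma \in \Phi_{R_i}$. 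Now I would apply Corollary~\ref{componentmaps} twice: once to the nonempty $\sigma \in \Psi_{R_i}$ to get $\phi_{R_i}(\sigma) = \phi_R(\sigma) = \gamma$, and once to the nonempty $\gamma \in \Phi_{R_i}$ to get $\psi_{R_i}(\gamma) = \psi_R(\gamma)$. Composing, $(\psi_{R_i}\circ\phi_{R_i})(\sigma) = (\clsx)(\sigma) = \sigma$, where the last equality is the hypothesis that $\clsx$ is the identity on $\dowx$. Thus every nonempty simplex is fixed, inheriting privacy ``for free'' from $R$.

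The empty simplex is where the hypothesis $Y_i \notin \Phi_{R_i}$ enters, and this is the crux of the argument. Here $\phi_{R_i}(\emptyset) = Y_i$ (the empty intersection taken over the attributes of the nonvoid subrelation $R_i$), so $(\psi_{R_i}\circ\phi_{R_i})(\emptyset) = \psi_{R_i}(Y_i)$. Applying the membership test to $R_i$ (whose underlying spaces $X_i$ and $Y_i$ are nonempty), one has $\psi_{R_i}(Y_i) = \emptyset$ precisely when $Y_i \notin \Phi_{R_i}$, which is exactly the assumed condition. Hence the empty simplex is fixed as well, completing the first implication.

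The main point to flag, rather than a genuine obstacle, is why $Y_i \notin \Phi_{R_i}$ cannot simply be deduced from the global hypothesis. The assumption that $\clsx$ is the identity on $\dowx$ does include $(\clsx)(\emptyset) = \psi_R(\phi_R(\emptyset)) = \psi_R(Y) = \emptyset$, i.e.\ no individual of $X$ possesses \emph{every} attribute of $Y$. But this says nothing about whether some individual of $X_i$ possesses every attribute of its \emph{own} component $Y_i$; such an individual would witness $Y_i \in \Phi_{R_i}$ and force $(\psi_{R_i}\circ\phi_{R_i})(\emptyset) \neq \emptyset$, breaking privacy for the empty observation within $R_i$ even though $R$ is fine. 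So the condition $Y_i \notin \Phi_{R_i}$ must be posited separately, and it precisely governs the empty-simplex case while the nonempty simplices require no further hypothesis. The dual statement for $\phi_{R_i}\circ\psi_{R_i}$ and the condition $X_i \notin \Psi_{R_i}$ is then immediate by the symmetric argument.
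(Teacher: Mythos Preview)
Your proof is correct and follows essentially the same approach as the paper: apply Corollary~\ref{componentmaps} to reduce the nonempty-simplex case to the global closure operator, handle the empty simplex separately via the hypothesis $Y_i \notin \Phi_{R_i}$, and obtain the second implication by duality. Your write-up is more explicit about why $\phi_R(\sigma) \in \Phi_{R_i}$ and why the extra hypothesis cannot be omitted, but the underlying argument is the same.
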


\begin{proof}
Suppose $\emptyset\neq\sigma\in\Psi_{R_{\scriptstyle i}}$. Then
$\emptyset\neq\phi_{R_{\scriptstyle i}}(\sigma)\in\Phi_{R_{\scriptstyle
i}}$, so by Corollary~\ref{componentmaps},
$(\psi_{R_{\scriptstyle i}} \circ \phi_{R_{\scriptstyle i}})(\sigma) =
(\clsx)(\sigma) = \sigma$.\quad
And $(\psi_{R_{\scriptstyle i}} \circ \phi_{R_{\scriptstyle
i}})(\emptyset) = \psi_{R_{\scriptstyle i}}(Y_i) = \emptyset$, since
$Y_i\not\in\Phi_{R_{\scriptstyle i}}$.

The argument for $\phi_{R_{\scriptstyle i}} \circ \psi_{R_{\scriptstyle
i}}$ is similar.
\end{proof}

\clearpage
\section{Links, Deletions, and Inference}
\markright{Links, Deletions, and Inference}
\label{linksandinference}

This appendix provides some technical tools for modeling inference,
particularly in links, ending with some instances in which inference
is unavoidable.

\subsection{Links, Deletions, and Induced Maps}
\label{linksanddeletions}

\vspace*{0.1in}

{\bf Intuition:} The link $\lk(\dowy,\gamma)$ of a set of attributes
$\gamma$ in the Dowker complex $\dowy$ can be understood as a
description of what may yet be observed or inferred, {\em conditional}
on having already observed $\gamma$.

\begin{lemma}\label{yLink}
Let $R$ be a relation on $\XxY$, with both $\hspt{}X\!$ and
$\hspt{}Y\!$ nonempty.
Suppose $\gamma\in\dowy$.
Define relation $Q$ as a restriction of $R$ by

\vspace*{-0.1in}

$$\qquad\qquad\quad
    Q \;=\; R\,|_{\sigma \times \tY}, \quad \hbox{with} \quad
            \sigma = \psi_R(\gamma) \quad \hbox{and} \quad
	    \tY = \bigunion_{x \in \sigma}\sdiff{Y_x}{\gamma}.$$

(See the comments below for the case in which $\,\tY=\emptyset$.)

\vspace*{0.1in}

Then \ $\lk(\dowy,\gamma) = \dowqy$, \ as collections of simplices
(i.e., ignoring underlying vertex sets).

\end{lemma}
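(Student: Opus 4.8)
The plan is to reduce both sides to their defining witness conditions and then check that a set $\tau$ of attributes satisfies one condition exactly when it satisfies the other, handling the empty simplex and the degenerate case $\tY=\emptyset$ separately at the end. Recall that, for a nonempty $\tau$, membership $\tau\in\lk(\dowy,\gamma)$ unwinds to the two requirements $\tau\inter\gamma=\emptyset$ and $\tau\union\gamma\in\dowy$, where the latter means there is some individual witnessing all of $\tau\union\gamma$. On the other side, $\tau\in\dowqy$ means $\tau\subseteq\tY$ together with the existence of some $x\in\sigma$ satisfying $(x,y)\in R$ for every $y\in\tau$ (since $Q=R|_{\sigma\times\tY}$). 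The whole argument hinges on the observation, available from the formula for $\psi_R$, that $\sigma=\psi_R(\gamma)$ is precisely the set of witnesses for $\gamma$.

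For the inclusion $\lk(\dowy,\gamma)\subseteq\dowqy$, I would start from a nonempty $\tau$ in the link, pick a witness $x$ for $\tau\union\gamma$, and note that since $x$ has every attribute of $\gamma$ it lies in $\psi_R(\gamma)=\sigma$. Then for each $y\in\tau$ we have $y\in Y_x$ and $y\notin\gamma$ (by disjointness), so $y\in\sdiff{Y_x}{\gamma}\subseteq\tY$; thus $\tau\subseteq\tY$ and $(x,y)\in R|_{\sigma\times\tY}=Q$ for all $y\in\tau$, giving $\tau\in\dowqy$. For the reverse inclusion I would run the same reasoning backwards: a nonempty $\tau\in\dowqy$ comes with a witness $x\in\sigma$; because $\tau\subseteq\tY$ every element of $\tau$ lies outside $\gamma$, so $\tau\inter\gamma=\emptyset$, and because $x\in\sigma$ witnesses $\gamma$ while simultaneously witnessing $\tau$, the same $x$ witnesses $\gamma\union\tau$, whence $\gamma\union\tau\in\dowy$ and $\tau\in\lk(\dowy,\gamma)$.

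Two bookkeeping points then remain, and these are where care is needed rather than any genuine difficulty. First, because the lemma asserts equality ``as collections of simplices,'' I would emphasize that the comparison ignores ambient vertex sets (the link is a priori a complex on $Y$ while $\dowqy$ lives on $\tY$); the witness argument above already shows that every simplex of the link lies inside $\tY$, so the two simplex collections literally agree. Second, the empty-simplex and $\tY=\emptyset$ conventions must be matched: since $\gamma\in\dowy$ forces $\sigma\neq\emptyset$, the only degenerate possibility is $\tY=\emptyset$, i.e.\ every witness of $\gamma$ has exactly the attributes $\gamma$. In that case no nonempty $\tau$ satisfies either membership condition, so $\lk(\dowy,\gamma)=\{\emptyset\}$, which is exactly the empty complex assigned to $\dowqy$ in Definition~\ref{linkgamma}; when $\tY\neq\emptyset$, both complexes contain the empty simplex, completing the match.

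The main obstacle I anticipate is not the core equivalence, which is essentially a definitional chase once $\sigma$ is identified as the witness set of $\gamma$, but rather keeping these special cases consistent with the void-versus-empty conventions of Definitions~\ref{basicdefs} and~\ref{linkgamma}, and being explicit that the claimed equality is one of simplex collections taken modulo their underlying vertex sets.
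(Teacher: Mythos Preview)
Your proposal is correct and follows essentially the same approach as the paper: a witness-chasing double inclusion that identifies $\sigma=\psi_R(\gamma)$ as the set of witnesses for $\gamma$, followed by a separate treatment of the degenerate case $\tY=\emptyset$ and the empty simplex. The paper's proof is organized slightly differently (it handles $\tY=\emptyset$ first and proves both directions of that case), but the substance of both arguments is the same.
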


\paragraph{Comments:}\  (a) Observe that $\sigma\neq\emptyset$.
\ (b) If $\,\tY=\emptyset$, then technically $Q$ is void, but it is
convenient to let both $\dowqy$ and $\dowqx$ be instances of the empty
complex $\{\emptyset\}$, as in Definition~\ref{linkgamma} on
page~\pageref{linkgamma}.
\ (c) In a standard link, one might define $\tY = \sdiff{Y}{\gamma}$.
$\phantom{R^{1^1}}\hspace*{-18pt}$
With $\tY\!$ as above, $Q$ always discards blank columns of $R$, even
when $\gamma=\emptyset$.

\begin{proof}
Observe that $\gamma \subseteq Y_x$ if and only if $x\in\sigma$.

\vst

We discuss the case $\tY=\emptyset$ separately, for clarity.  We need
to show that $\lk(\dowy,\gamma) = \{\emptyset\}$.
If $\lk(\dowy,\gamma) \neq \{\emptyset\}$, then there exists some
$\ybar\in\verts{\lk(\dowy,\gamma)}$.  By definition of link,
$\ybar\not\in\gamma$ and there exists $\xbar\in{X}$ such that
$(\xbar,y)\in{R}$ for all $y\in\gamma\union\{\ybar\}$.  That means
$\xbar\in\sigma$, so $\ybar\in\tY$, a contradiction.

\vspace*{0.025in}

The converse is true as well: If $\lk(\dowy,\gamma) = \{\emptyset\}$,
then $\tY=\emptyset$.  For if some $x\in\sigma$ has an attribute $y$ in
addition to all those in $\gamma$, then $y$ would be a vertex in the
link.

\vspace*{0.1in}

Now suppose $\tY\neq\emptyset$:

\vspace*{0.05in}

I. If $\xi\in\lk(\dowy,\gamma)$, then $\xi\inter\gamma=\emptyset$ and
there exists $x\in X$ such that $(x,y)\in R$ for every
$y\in\xi\union\gamma$.  So $\xi\subseteq{Y_x}\setminus\gamma$ and
$x\in\psi_R(\gamma)=\sigma$.  Thus $(x,y)\in Q$ for every $y\in\xi$,
meaning $\xi\in\dowqy$.

\vspace*{0.05in}

II. Conversely, if $\xi\in\dowqy$, then there exists $x\in\sigma$ such
that $(x,y)\in Q \subseteq R$ for every $y\in\xi$.  By definition of
$\sigma$, $(x,y)\in R$ for every $y\in\gamma$.  Combining these two
assertions, we see that $(x,y)\in R$ for every $y\in\xi\union\gamma$.
And $\xi\inter\gamma=\emptyset$, since $\xi\subseteq\tY$.  So
$\xi\in\lk(\dowy,\gamma)$.
\end{proof}

\paragraph{Additional Comment:}\ There is a dual version of this lemma
for links of individuals $\sigma$, modeling $\lk(\dowx, \sigma)$ by
$\dowqx$, for an appropriate relation $Q$.  We see instances of that
construction in Theorems~\ref{privacysingle} and
\ref{privacymultiple}, as well as in Lemma~\ref{interplocal}, on
pages~\pageref{privacysingleAppPage}--\pageref{localOperAppPage}
(previously stated on page~\pageref{privacysingle}), including the
case in which $\sigma$ consists of a single individual $x$.

\clearpage

\paragraph{Link Witness Formulas.}\ With notation and construction as
in Lemma~\ref{yLink}, the following formulas hold, assuming
$\tY\neq\emptyset$:

\vspace*{-0.05in}

\begin{itemize}
\item Suppose $\xi\subseteq\tY$ and define $\tau=\xi\union\gamma$.  Then
\label{psiLinkformulas}
$$\psi_Q(\xi) 
  \;=\; \biginter_{y\in\xi}(X_y\inter\sigma)
  \;=\; \Big(\biginter_{y\in\xi}X_y\Big) \;\biginter\; \Big(\biginter_{y\in\gamma}X_y\Big)
  \;=\; \biginter_{y\in(\xi\union\gamma)}X_y
  \;=\; \psi_R(\tau).$$

Notes: We allow $\xi = \emptyset$, since $\psi_Q(\emptyset) = \sigma =
\psi_R(\gamma)$.  We do not require $\xi\in\dowqy$.  The equalities
hold regardless.  Of course, $\xi\in\dowqy$ if and only if
$\psi_Q(\xi)\neq\emptyset$.

\item Suppose $\emptyset\neq\kappa\subseteq\sigma$.  Then
\label{phiLinkformulas}
$$\phi_Q(\kappa)
  \;=\; \biginter_{x\in\kappa}(Y_x\inter\tY)
  \;=\; \Big(\biginter_{x\in\kappa}Y_x\Big)\setminus\gamma
  \;=\; \phi_R(\kappa)\setminus\gamma.$$
And thus also $\phi_R(\kappa) = \phi_Q(\kappa) \union \gamma$, since
  $\gamma\subseteq Y_x$ for all $x\in\sigma$.

Notes: Here we do {\em not\,} allow $\kappa = \emptyset$, since
$\phi_Q(\emptyset) = \tY$ whereas $\phi_R(\emptyset) = Y$.  It need
not be true that $Y = \tY \union \gamma$. \ Again, $\kappa\in\dowqx$
if and only if
$\phi_Q(\kappa)\neq\emptyset$, this valid also for $\kappa=\emptyset$.
\end{itemize}

Comment: If $\tY=\emptyset$, the previous formulas still hold, albeit
trivially.  However, testing for membership in $\dowqx$ via the question
``Is $\phi_Q(\kappa)$ nonempty?'' no longer makes sense.

\vspace*{0.15in}

\begin{lemma}\label{yDel}
Let $R$ be a relation on $\XxY$, with both $\hspt{}X\!$ and
$\hspt{}Y\!$ nonempty.
Suppose $\gamma\subseteq Y$.
Then $\dl(\dowy,\gamma) = \dowqpy$, with $Q^\prime$ formed from $R$ by
removing the columns corresponding to $\gamma$, that is,
$Q^\prime \;=\; R\,|_{X \times (Y\setminus\gamma)}.$
\quad (Here we let $\dowqpx$ and $\dowqpy$ each be an empty complex if
$\gamma=Y$.)
\end{lemma}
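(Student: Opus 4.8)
The plan is to prove the set equality $\dl(\dowy,\gamma) = \dowqpy$ by a direct double-inclusion argument at the level of simplices, since both sides are collections of subsets of $Y$ and the claim (as with Lemma~\ref{yLink}) is understood modulo underlying vertex sets. First I would unwind both definitions. A set $\tau\subseteq Y$ lies in $\dl(\dowy,\gamma)$ precisely when $\tau\in\dowy$ and $\tau\inter\gamma=\emptyset$; that is, $\tau\inter\gamma=\emptyset$ and there exists $x\in X$ with $(x,y)\in R$ for every $y\in\tau$. On the other hand, writing $Q^\prime = R\,|_{X\times(Y\setminus\gamma)}$, a set $\tau$ lies in $\dowqpy$ precisely when $\tau\subseteq Y\setminus\gamma$ and there exists $x\in X$ with $(x,y)\in Q^\prime$ for every $y\in\tau$.

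The key observation is that these two membership conditions coincide. The condition $\tau\inter\gamma=\emptyset$ is literally the condition $\tau\subseteq Y\setminus\gamma$. And once $\tau\subseteq Y\setminus\gamma$, a witness in $Q^\prime$ is the same thing as a witness in $R$: since $Q^\prime = R\inter(X\times(Y\setminus\gamma))$, we have $(x,y)\in Q^\prime$ if and only if $(x,y)\in R$ and $y\in Y\setminus\gamma$, so for $y\in\tau\subseteq Y\setminus\gamma$ the clause $y\in Y\setminus\gamma$ is automatic and $(x,y)\in Q^\prime \iff (x,y)\in R$. Hence the existence of an $x$ witnessing $\tau$ in $Q^\prime$ is equivalent to the existence of an $x$ witnessing $\tau$ in $R$. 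Chaining these equivalences yields $\tau\in\dl(\dowy,\gamma)$ if and only if $\tau\in\dowqpy$, which is the desired equality.

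The only point requiring care is the degenerate case $\gamma=Y$, which I would dispatch separately to match the stated convention. When $\gamma=Y$, the relation $Q^\prime$ is void (its attribute space $Y\setminus\gamma$ is empty), and by the convention in the statement we set $\dowqpy=\{\emptyset\}$. On the deletion side, every simplex $\tau\in\dowy$ satisfies $\tau\subseteq Y$, so $\tau\inter Y=\emptyset$ forces $\tau=\emptyset$; since $\dowy$ is nonvoid (as $X$ and $Y$ are nonempty, $\emptyset\in\dowy$), we get $\dl(\dowy,Y)=\{\emptyset\}$ as well. Thus both sides equal the empty complex, and the identification holds in this case too.

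I do not anticipate a genuine obstacle here: the argument is essentially definitional, and the only bookkeeping is matching the restriction's effect on witnesses and reconciling the void/empty-complex conventions in the boundary case $\gamma=Y$. The step I would watch most closely is the witness equivalence, since that is where the specific form $Q^\prime = R\,|_{X\times(Y\setminus\gamma)}$ (rather than a restriction that also removes rows) is what makes witnesses in $Q^\prime$ and $R$ literally interchangeable for simplices avoiding $\gamma$.
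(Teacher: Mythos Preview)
Your proposal is correct and follows essentially the same approach as the paper: both arguments reduce to the observation that an $x\in X$ witnesses $\tau\subseteq Y\setminus\gamma$ in $R$ if and only if it witnesses $\tau$ in $Q^\prime$, with the $\gamma=Y$ case handled separately by the stated convention. The paper states this in a single sentence, whereas you spell out the double inclusion more explicitly, but the content is identical.
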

\begin{proof}
An individual $x\in X$ is a witness to a set of attributes
$\xi\subseteq{Y}\setminus\gamma$ in $R$ if and only if $x$ is a
witness to $\xi$ in $Q^\prime$. \ (If $\gamma=Y$, then
$\dl(\dowy,\gamma) = \{\emptyset\} = \dowqpy$.)
\end{proof}

\paragraph{Deletion Witness Formulas.}\ With notation and construction
as in Lemma~\ref{yDel}, the following formulas hold, assuming
$\gamma\neq Y$:

\vspace*{-0.05in}

\begin{itemize}
\item If $\xi\subseteq(Y\setminus\gamma)$,  then $\psi_{Q^\prime}(\xi) =
  \biginter_{y\in\xi}X_y = \psi_R(\xi)$.
\label{delformulas}

\item If $\kappa\subseteq X$,  then
  $\phi_{Q^\prime}(\kappa) =
  \biginter_{x\in\kappa}(Y_x\setminus\gamma) =
  \phi_R(\kappa)\setminus\gamma$.

\vst

  Caution:  It need {\em not} be true that $\phi_R(\kappa) =
  \phi_{Q^\prime}(\kappa) \union \gamma$.

\vspace*{0.05in}

\end{itemize}

Comments: (1) The first formula holds for $\xi=\emptyset$ and
the second formula holds for $\kappa=\emptyset$.
(2) The simplex tests hold: For $\xi\subseteq(Y\setminus\gamma)$,
$\xi\in\dowqpy$ if and only if $\psi_{Q^\prime}(\xi)\neq\emptyset$;
and, for $\kappa\subseteq X$, $\kappa\in\dowqpx$ if and only if
$\phi_{Q^\prime}(\kappa)\neq\emptyset$.
\ (3) If $\gamma=Y$, the formulas still hold, but testing for membership
in $\dowqpx$ via the question ``Is $\phi_{Q^\prime}(\kappa)$ nonempty?''
no longer makes sense.

\subsection{Privacy Preservation in Links and Deletions}
\markright{Privacy Preservation in Links and Deletions}

\vspace*{0.05in}

{\bf Recall:}\ A relation $R$ preserves attribute privacy when the
closure operator $\clsy$ is the identity on $\dowy$ and it preserves
association privacy when the closure operator $\clsx$ is the identity
on $\dowx$ \,(see page~\pageref{defAttribPriv}).

\begin{lemma}\label{ClIdLinkDel}
Let $R$ be a relation on $\XxY$, with both $\hspt{}X\!$ and
$\hspt{}Y\!$ nonempty.  \ Suppose $\gamma\in\dowy$.

If $\hspt\clsy$ is the identity on $\dowy$, then the corresponding closure
operators for the relations modeling $\mskip2mu\lk(\dowy,\gamma)$ and
$\mskip2.5mu\dl(\dowy,\gamma)$ are also identities.

\vso

(The assertion for $\mskip2mu\dl(\dowy,\gamma)$ holds even if $\gamma$
is merely a subset of $Y\!$.)
\end{lemma}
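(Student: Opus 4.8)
The plan is to reduce each closure-operator computation in the conditional relation to a single application of $\clsy$ in the encompassing complex $\dowy$, by substituting the witness formulas already established for links (pages~\pageref{psiLinkformulas}--\pageref{phiLinkformulas}) and for deletions (page~\pageref{delformulas}). In each case the point is that the composite $\phi\circ\psi$ in the sub-relation equals $\clsy$ applied to an argument enlarged by $\gamma$, with $\gamma$ subsequently subtracted off; the hypothesis that $\clsy$ is the identity on $\dowy$ then does all the work.

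First I would treat the link. Let $Q = R|_{\sigma\times\tY}$ model $\lk(\dowy,\gamma)=\dowqy$ as in Lemma~\ref{yLink}, with $\sigma=\psi_R(\gamma)$. For a nonempty $\xi\in\dowqy$, membership in the link gives $\xi\inter\gamma=\emptyset$ and $\xi\union\gamma\in\dowy$. The link witness formula yields $\psi_Q(\xi)=\psi_R(\xi\union\gamma)$; writing $\kappa=\psi_Q(\xi)$, note $\kappa$ is nonempty and, since $\psi_R$ is inclusion-reversing and $\xi\union\gamma\supseteq\gamma$, we have $\kappa\subseteq\psi_R(\gamma)=\sigma$, so the formula $\phi_Q(\kappa)=\phi_R(\kappa)\setminus\gamma$ applies. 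Composing, $(\clsqy)(\xi)=(\clsy)(\xi\union\gamma)\setminus\gamma$. As $\xi\union\gamma\in\dowy$ and $\clsy$ is the identity there, this equals $(\xi\union\gamma)\setminus\gamma=\xi$, using $\xi\inter\gamma=\emptyset$. The empty simplex I would handle directly: $\psi_Q(\emptyset)=\sigma$, which is nonempty, so $\phi_Q(\sigma)=\phi_R(\sigma)\setminus\gamma=(\clsy)(\gamma)\setminus\gamma=\gamma\setminus\gamma=\emptyset$, again because $\clsy$ fixes $\gamma\in\dowy$ (or fixes $\emptyset$ when $\gamma=\emptyset$, which is included in the hypothesis via Definition~\ref{defAttribPriv}).

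The deletion is the same argument, slightly simpler. With $Q^\prime=R|_{X\times(\sdiff{Y}{\gamma})}$ modeling $\dl(\dowy,\gamma)=\dowqpy$ (Lemma~\ref{yDel}), any $\xi\in\dowqpy$ satisfies $\xi\in\dowy$ and $\xi\inter\gamma=\emptyset$. The deletion witness formulas give $\psi_{Q^\prime}(\xi)=\psi_R(\xi)$ and $\phi_{Q^\prime}(\kappa)=\phi_R(\kappa)\setminus\gamma$, so that $(\phi_{Q^\prime}\circ\psi_{Q^\prime})(\xi)=(\clsy)(\xi)\setminus\gamma=\xi\setminus\gamma=\xi$. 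Here nothing about $\gamma$ is needed beyond $\xi\inter\gamma=\emptyset$, which is exactly why the deletion assertion survives for an arbitrary subset $\gamma\subseteq Y$ (not merely $\gamma\in\dowy$). For the empty simplex, $\psi_{Q^\prime}(\emptyset)=X$ and $\phi_{Q^\prime}(X)=\phi_R(X)\setminus\gamma=(\clsy)(\emptyset)\setminus\gamma=\emptyset$.

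The main thing to watch is degenerate bookkeeping rather than any genuine difficulty: I must confirm the boundary conventions under which the witness formulas stay valid, in particular the case $\tY=\emptyset$ for the link (where $\dowqy=\{\emptyset\}$) and the case $\gamma=Y$ for the deletion (where $\dowqpy$ is an empty complex). I would dispatch these by invoking the conventions fixed in Definition~\ref{linkgamma} and Lemma~\ref{yDel}, namely that such sub-relations are assigned the empty complex $\{\emptyset\}$, on which the sole value to verify is $(\phi\circ\psi)(\emptyset)=\emptyset$, settled by the same computation as above. Once those conventions are pinned down, the proof is a direct substitution of the link and deletion witness formulas followed by one appeal to the privacy hypothesis.
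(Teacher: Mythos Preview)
Your proposal is correct and follows essentially the same route as the paper: substitute the link and deletion witness formulas to rewrite $(\phi\circ\psi)(\xi)$ as $(\clsy)(\xi\cup\gamma)\setminus\gamma$ (respectively $(\clsy)(\xi)\setminus\gamma$), then invoke the hypothesis that $\clsy$ is the identity on $\dowy$. The paper's version is terser --- it folds the empty-simplex case into the single displayed calculation and leaves the degenerate conventions ($\tY=\emptyset$, $\gamma=Y$) to the surrounding remarks --- but the substance is identical.
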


Technical reminder: The operators are formally defined as self-maps on
the face posets of the simplicial complexes mentioned in the lemma,
but we can extend each operator to the empty simplex and therefore
think of it as a self-map on a simplicial complex viewed as a
collection of simplices. \ See again
pages~\pageref{dowker}--\pageref{AttribAssocPriv} and
page~\pageref{phipsiAppdef}.

\begin{proof}
Define $Q$ as in Lemma~\ref{yLink}.
That lemma tells us $\dowqy=\lk(\dowy,\gamma)$.

\vst

Given $\xi\in\dowqy$, let
$\tau=\xi\union\gamma$ and calculate:

\vspace*{-0.07in}

$$(\clsqy)(\xi)
     \;=\; \phi_Q(\psi_R(\tau))
     \;=\; \phi_R(\psi_R(\tau))\setminus\gamma
     \;=\; \tau\setminus\gamma
     \;=\; \xi.$$

\vspace*{0.05in}

Define $Q^\prime$ as in Lemma~\ref{yDel}.
That lemma tells us $\dowqpy=\dl(\dowy,\gamma)$.

Given $\xi\in\dowqpy$, calculate:

\vspace*{-0.07in}

$$(\phi_{Q^\prime} \circ \psi_{Q^\prime})(\xi)
     \;=\; \phi_{Q^\prime}(\psi_R(\xi))
     \;=\; \phi_R(\psi_R(\xi))\setminus\gamma
     \;=\; \xi\setminus\gamma
     \;=\; \xi.$$

\vspace*{-0.25in}
\end{proof}

\vspace*{0.1in}

Here is a variation, in which one again computes a link of attributes,
but then considers the closure operator on the dual association
complex, modeling individuals consistent with the attributes:

\begin{lemma}\label{CldualLink}
Let $R$ be a tight relation on $\XxY$, with both $\hspt{}X\!$
and $\hspt{}Y\!$ nonempty.
\,Let $\gamma\in\dowy$.

Define $Q$, $\sigma$, and $\,\tY\!$ as in the construction of
Lemma~\ref{yLink}.  \ Assume $\abs{\sigma} > 1$ and
$\,\tY\mskip-1mu\neq\emptyset$.

If $\clsx$ is the identity on $\dowx$, then $\clsqx$ is the identity on
$\dowqx$.
\end{lemma}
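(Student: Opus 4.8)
The plan is to verify the closure identity on $\dowqx$ directly from the construction in Lemma~\ref{yLink} and its accompanying link witness formulas (page~\pageref{psiLinkformulas}), splitting into the nonempty simplices of $\dowqx$ and the empty simplex. Throughout I would use that $\sigma = \psi_R(\gamma)$, $\tY = \bigunion_{x\in\sigma}\sdiff{Y_x}{\gamma}$, $Q = R|_{\sigma\times\tY}$, and that every $x\in\sigma$ satisfies $\gamma\subseteq Y_x$ (which is exactly what $x\in\psi_R(\gamma)$ means).

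First I would treat a nonempty simplex $\kappa\in\dowqx$, so $\emptyset\neq\kappa\subseteq\sigma$ and $\phi_Q(\kappa)\neq\emptyset$. The witness formulas give $\phi_Q(\kappa) = \sdiff{\phi_R(\kappa)}{\gamma}$ together with $\phi_R(\kappa) = \phi_Q(\kappa)\union\gamma$, so $\phi_R(\kappa)\supseteq\phi_Q(\kappa)\neq\emptyset$ and hence $\kappa\in\dowx$. Setting $\xi = \phi_Q(\kappa)\subseteq\tY$, the dual witness formula gives $\psi_Q(\xi) = \psi_R(\xi\union\gamma)$, and since $\gamma\subseteq\phi_R(\kappa)$ we have $\xi\union\gamma = (\sdiff{\phi_R(\kappa)}{\gamma})\union\gamma = \phi_R(\kappa)$. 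Therefore $(\clsqx)(\kappa) = \psi_Q(\phi_Q(\kappa)) = \psi_R(\phi_R(\kappa)) = (\clsx)(\kappa) = \kappa$, the final equality being the hypothesis that $\clsx$ is the identity on $\dowx$. This step is a routine transport of the closure identity from $R$ to $Q$ through the witness formulas.

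The remaining, and more delicate, step is the empty simplex, which is where $\abs{\sigma}>1$ is essential. Here $\phi_Q(\emptyset) = \tY$, so I must show $\psi_Q(\tY) = \emptyset$. Since $\psi_Q(\tY) = \psi_R(\tY\union\gamma)$ and $\tY\union\gamma = \bigunion_{x\in\sigma}Y_x$ (because $Y_x\supseteq\gamma$ for every $x\in\sigma$), it suffices to show $\psi_R\big(\bigunion_{x\in\sigma}Y_x\big) = \emptyset$. Suppose instead that some $\hat{x}$ lies in this set, i.e.\ $Y_{\hat{x}}\supseteq\bigunion_{x\in\sigma}Y_x$. Then for every $x_0\in\sigma$ we have $Y_{\hat{x}}\supseteq Y_{x_0}$, so $\hat{x}\in\psi_R(Y_{x_0}) = \psi_R(\phi_R(\{x_0\})) = (\clsx)(\{x_0\}) = \{x_0\}$, using tightness (so that $\{x_0\}\in\dowx$) and association privacy. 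Thus $\hat{x} = x_0$ for every $x_0\in\sigma$, which is impossible when $\abs{\sigma}>1$. Hence no such $\hat{x}$ exists, and $(\clsqx)(\emptyset) = \psi_Q(\tY) = \emptyset$.

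I expect this empty-simplex step to be the main obstacle: it is the only place that association privacy is used in its ``no extra individuals are pulled in'' form, applied to the singletons $\{x_0\}$, and it is precisely where the hypothesis $\abs{\sigma}>1$ is indispensable. Everything else follows mechanically from the link witness formulas once the nonempty and empty cases are separated.
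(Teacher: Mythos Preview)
Your proof is correct and follows essentially the same approach as the paper: the nonempty case is handled identically via the link witness formulas, and for the empty simplex both you and the paper reduce to $\psi_R\big(\bigunion_{x\in\sigma}Y_x\big)$ and then use $(\clsx)(\{x_0\})=\{x_0\}$ for each $x_0\in\sigma$ together with $\abs{\sigma}>1$. The only difference is cosmetic---the paper writes this last step as a direct intersection $\biginter_{x\in\sigma}\psi_R(Y_x)=\biginter_{x\in\sigma}\{x\}=\emptyset$, whereas you phrase it as a contradiction argument.
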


\begin{proof}
Suppose $\emptyset\neq\kappa\in\dowqx$.
Observe that $\gamma\subseteq\phi_R(\kappa)$ and calculate:

\vspace*{-0.1in}

$$(\clsqx)(\kappa)
   \;=\; \psi_Q(\phi_R(\kappa)\setminus\gamma)
   \;=\; \psi_R(\phi_R(\kappa))
   \;=\; \kappa.$$

Additionally,

\vspace*{-0.12in}

$$(\clsqx)(\emptyset)
   \;=\; \psi_Q(\tY)
   \;=\; \psi_R(\tY \union \gamma)
   \;=\; \psi_R\Big(\bigunion_{x\in\sigma}Y_x\Big) \;=\;$$

$$ \;=\; \biginter_{x\in\sigma}\psi_R(Y_x)
   \;=\; \biginter_{x\in\sigma}(\clsx)(\{x\})
   \;=\; \biginter_{x\in\sigma}\{x\}
   \;=\; \emptyset.$$

\vspace*{0.05in}

The last equality holds since $\abs{\sigma} > 1$.
\ The equality before that holds since $\clsx$ is the identity on
$\dowx$ and since $R\hspt$ has no blank rows.

So we see that $(\clsqx)(\kappa) = \kappa$ for all $\kappa\in\dowqx$.
\end{proof}

\noindent{\bf Comment:}\ Assume the setting of the previous two
lemmas, but suppose $\tY\!=\emptyset$.  We would then take $\dowqy$
and $\dowqx$ to be instances of the empty simplicial complex
$\{\emptyset\}$.  It is sensible to say that $\clsqy$ is the identity
on $\dowqy$, since
$\phi_Q(\psi_Q(\emptyset))=\phi_Q(\sigma)=\emptyset$.  It could be
confusing to say that $\clsqx$ is the identity on $\dowqx$, since
$\psi_Q(\phi_Q(\emptyset))=\psi_Q(\tY)=\psi_Q(\emptyset)=\sigma$.  On
the other hand, one could argue that one may nonetheless say that
there is no association inference in $Q$, since there are no
attributes that could witness associations.

\vspace*{0.2in}

\begin{corollary}\label{linkprivacy}
Let $R$ be a tight relation on $\XxY$, with both $\hspt{}X\!$
and $\hspt{}Y\!$ nonempty.
\,Let $\gamma\in\dowy$.

Define $Q$ and $\,\tY\!$ as in the construction of Lemma~\ref{yLink}.
\ Assume $\,\tY\mskip-2mu\neq\emptyset$.

If $R$ preserves both attribute and association privacy, then so does
$Q$.

\end{corollary}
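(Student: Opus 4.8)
The plan is to prove the two privacy properties of $Q$ separately, each by invoking one of the two immediately preceding lemmas; the only real content is checking that a cardinality hypothesis needed for the second lemma is automatically met.

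First I would dispose of attribute privacy, which is nearly free. Since $R$ preserves attribute privacy, $\clsy$ is the identity on $\dowy$. By the construction of $Q$ (Lemma~\ref{yLink}), the relation $Q$ models $\lk(\dowy,\gamma)$, so Lemma~\ref{ClIdLinkDel} applies directly and tells us that $\clsqy$ is the identity on $\dowqy$. Hence $Q$ preserves attribute privacy, with no computation required.

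For association privacy I would like to invoke Lemma~\ref{CldualLink}: if $\clsx$ is the identity on $\dowx$, then $\clsqx$ is the identity on $\dowqx$. That lemma, however, carries the extra hypothesis $\abs{\sigma} > 1$, where $\sigma = \psi_R(\gamma)$, and supplying this bound is the heart of the argument. Since $\gamma \in \dowy$, we have $\sigma \neq \emptyset$. Suppose toward a contradiction that $\sigma = \{x\}$ is a singleton. Attribute privacy of $R$ gives $\phi_R(\sigma) = (\phi_R \circ \psi_R)(\gamma) = \gamma$, while from the defining formula $\phi_R(\{x\}) = Y_x$; hence $\gamma = Y_x$. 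But then $\tY = \bigcup_{x' \in \sigma}(Y_{x'} \setminus \gamma) = Y_x \setminus Y_x = \emptyset$, contradicting the standing assumption $\tY \neq \emptyset$. Therefore $\abs{\sigma} > 1$. With this bound and $\tY \neq \emptyset$ in hand, and using that $R$ preserves association privacy (so $\clsx$ is the identity on $\dowx$), Lemma~\ref{CldualLink} yields that $\clsqx$ is the identity on $\dowqx$, and the proof is complete.

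The main obstacle is precisely the cardinality step $\abs{\sigma} > 1$, which is the hinge linking the attribute-privacy hypothesis to the association-privacy conclusion. It is worth noting that the assumption $\tY \neq \emptyset$ is exactly what rules out the problematic singleton case: if $\sigma = \{x\}$ with $\tY = Y_x \setminus \gamma$ nonempty, one computes $\clsqx(\emptyset) = \psi_Q(\tY) = \sigma \neq \emptyset$, so $Q$ would genuinely fail to preserve association privacy. Thus the singleton obstruction cannot be waved away, and the attribute-privacy assumption together with $\tY \neq \emptyset$ does the essential work of excluding it.
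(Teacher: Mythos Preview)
Your proposal is correct and follows essentially the same approach as the paper: invoke Lemma~\ref{ClIdLinkDel} for attribute privacy, then establish $\abs{\sigma} > 1$ (via $\gamma = Y_x$ forcing $\tY = \emptyset$) in order to apply Lemma~\ref{CldualLink} for association privacy. The paper phrases the contradiction as $\gamma = Y_x = \tY \cup \gamma$ with $\tY \cap \gamma = \emptyset$, but this is the same computation as yours.
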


\begin{proof}
Relation $Q$ preserves attribute privacy by Lemma~\ref{ClIdLinkDel}.
Let $\sigma = \psi_R(\gamma)$.  If we can show that $\abs{\sigma} > 1$,
then $Q$ preserves association privacy by Lemma~\ref{CldualLink}.

\vst

Observe that $\abs{\sigma} > 0$, since $\gamma\in\dowy$.  If
$\psi_R(\gamma)$ consists of a single individual $x\in X$, then

\vspace*{-0.075in}

$$\gamma
  \;=\; (\clsy)(\gamma)
  \;=\; \phi_R(\sigma)
  \;=\; Y_x
  \;=\; \tY \union \gamma.$$

\vspace*{0.02in}

That is impossible for nonempty $\tY$, since $\tY \inter \gamma = \emptyset$.
\end{proof}

\vspace*{0.3in}

The following lemma formalizes the intuition that a set of attributes
$\gamma$ implies another attribute $y$ precisely when the columns
corresponding to $\gamma$ have nonempty intersection and that
intersection is a subset of the column corresponding to $y$.

\begin{lemma}\label{privacycolumns}
Let $R$ be a relation on $\XxY$, with both $\hspt{}X\!$ and
$\hspt{}Y\!$ nonempty.

$R$ preserves attribute privacy if and only if the following condition
is true:

For all $\gamma\in\dowy$ and all $y\in Y$, if $\psi_R(\gamma)\subseteq
\psi_R(\ys)$ then $y\in\gamma$.
\end{lemma}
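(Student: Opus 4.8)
The plan is to reduce the biconditional to a single explicit computation of the closure operator $\clsy$ and then invoke Lemma~\ref{upward}. First I would establish the key formula: for any $\gamma \in \dowy$,
$$(\clsy)(\gamma) \;=\; \setdef{y \in Y}{\psi_R(\gamma) \subseteq \psi_R(\ys)}.$$
To see this, set $\sigma = \psi_R(\gamma)$; since $\gamma \in \dowy$ we have $\sigma \neq \emptyset$, so $(\clsy)(\gamma) = \phi_R(\sigma) = \biginter_{x\in\sigma}Y_x$. An attribute $y$ lies in this intersection exactly when $y \in Y_x$ for every $x \in \sigma$, i.e.\ when $\sigma \subseteq X_y$. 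Because $X_y = \psi_R(\ys)$ and $\sigma = \psi_R(\gamma)$, this membership is precisely the statement $\psi_R(\gamma) \subseteq \psi_R(\ys)$, which gives the formula.

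Second, I would recall from Lemma~\ref{upward} (and its dual) that $\gamma \subseteq (\clsy)(\gamma)$ for every $\gamma$. Hence the equality $(\clsy)(\gamma) = \gamma$ is equivalent to the reverse inclusion $(\clsy)(\gamma) \subseteq \gamma$, which in turn says: for every $y \in Y$, if $y \in (\clsy)(\gamma)$ then $y \in \gamma$. Substituting the formula from the first step, the hypothesis $y \in (\clsy)(\gamma)$ is the same as $\psi_R(\gamma) \subseteq \psi_R(\ys)$. Thus for a fixed $\gamma \in \dowy$, the identity $(\clsy)(\gamma) = \gamma$ holds if and only if for all $y \in Y$, $\psi_R(\gamma) \subseteq \psi_R(\ys)$ implies $y \in \gamma$.

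Finally, I would conclude by quantifying over $\gamma$. By Definition~\ref{defAttribPriv}, $R$ preserves attribute privacy exactly when $\clsy$ is the identity on $\Fdowy \union \{\emptyset\}$, i.e.\ when $(\clsy)(\gamma) = \gamma$ for every $\gamma \in \dowy$. Combining this with the previous step yields precisely the stated condition: for all $\gamma \in \dowy$ and all $y \in Y$, $\psi_R(\gamma) \subseteq \psi_R(\ys)$ implies $y \in \gamma$. The one point requiring care is the empty simplex $\gamma = \emptyset$, which belongs to $\dowy$ since $X, Y \neq \emptyset$: here $\psi_R(\emptyset) = X$, so the condition reads that no $y$ satisfies $X \subseteq X_y$, i.e.\ no attribute is shared by all individuals, which is exactly $(\clsy)(\emptyset) = \emptyset$; this matches the ``inference-from-nothing'' discussion and causes no difficulty. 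I expect no serious obstacle here, as the result is a direct unpacking of the definitions of $\phi_R$ and $\psi_R$ together with Lemma~\ref{upward}; the only subtlety is remembering that $\gamma \in \dowy$ guarantees $\psi_R(\gamma) \neq \emptyset$, so that the closure formula of the first step applies without the degenerate $\phi_R(\emptyset) = Y$ case intruding.
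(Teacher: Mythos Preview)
Your proof is correct and follows essentially the same approach as the paper: both arguments amount to recognizing that $y \in (\clsy)(\gamma)$ precisely when $\psi_R(\gamma) \subseteq \psi_R(\ys)$. You state this as an explicit formula for $(\clsy)(\gamma)$ and apply it uniformly to both directions, whereas the paper argues the two contrapositive directions separately via the order-reversal of $\phi_R$ and $\psi_R$; the underlying content is identical.
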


\begin{proof}
I. Suppose there exist $\gamma\in\dowy$ and $y\in Y$ such that
$\psi_R(\gamma)\subseteq \psi_R(\ys)$ but $y\not\in\gamma$.  Since
$\clsy$ is a closure operator, $y\in(\clsy)(\ys)$ and
$\gamma\subseteq(\clsy)(\gamma)$.  Now observe that $(\clsy)(\ys)
\subseteq (\clsy)(\gamma)$ by supposition and because $\phi_R$ is
inclusion-reversing.  Consequently, $(\clsy)(\gamma)$ must be a proper
superset of $\gamma$, telling us there is attribute inference.

\vspace*{0.1in}

II. If there is attribute inference, then for some $\gamma\in\dowy$,
$\gamma\subsetneq(\clsy)(\gamma)$.  Pick some
$y\in(\clsy)(\gamma)\setminus\gamma$.  Then $y\not\in\gamma$ but

\vspace*{-0.1in}

$$\psi_R(\gamma)
      \;=\;  \psi_R\big((\clsy)(\gamma)\big)
      \;\subseteq\;  \psi_R\big(\,(\clsy)(\gamma)\,\setminus\, \gamma\,\big)
      \;\subseteq\;  \psi_R(\ys).$$

(The equality holds by associativity of $\circ$ and the dual version
of Corollary~\ref{mappedsimplex} on page~\pageref{mappedsimplex}.  The
two subset relations hold by inclusion-reversal of $\psi_R$.)

\vspace*{0.1in}
\noindent (Technical comment: In both parts above, $\gamma=\emptyset$ is permissible.)
\end{proof}

\clearpage
\subsection{Unique Identifiability, Free Faces, and Privacy Preservation}
\markright{Unique Identifiability, Free Faces, and Privacy Preservation}
\label{uniqueident}

Recall the following definition:\label{uniqueIDAppPage}

\setcounter{currentThmCount}{\value{theorem}}
\setcounter{theorem}{\value{DefUniqueID}}
\begin{definition}[Unique Identifiability]
\ Let $R$ be a relation on $\XxY$ and suppose $x \in X$.\\
We say that $x$ is \,\mydefem{uniquely identifiable via relation $R$}\, when
$\,\psi_R(Y_x) = \{x\}$.
\end{definition}
\setcounter{theorem}{\value{currentThmCount}}

\paragraph{Comment:}\ It is entirely possible that one or more proper subsets
 $\gamma$ of $Y_x$ already {\em identifies\,} $x$, meaning
 $\psi_R(\gamma) = \{x\}$.  Certainly $x$ is uniquely identifiable in
 that case.  Moreover, the attributes $Y_x\setminus\gamma$ can be
 inferred from $\gamma$.

\vspace*{0.1in}

\begin{lemma}\label{noproperident}
Let $R$ be a relation on $\XxY$ that preserves attribute
privacy.  Let $x\in X$.  Then no proper subset of $Y_x$ identifies
$x$.
\end{lemma}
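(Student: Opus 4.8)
The plan is to argue by contradiction, reading the conclusion directly off the attribute-privacy hypothesis. Recall from Definition~\ref{DefMinIdent} that a set $\gamma \subseteq Y$ identifies $x$ exactly when $\psi_R(\gamma) = \{x\}$, and recall from Definition~\ref{defAttribPriv} that $R$ preserving attribute privacy means the closure operator $\clsy$ is the identity on $\F(\dowy) \union \{\emptyset\}$. So suppose, toward a contradiction, that some proper subset $\gamma \subsetneq Y_x$ identifies $x$, i.e.\ $\psi_R(\gamma) = \{x\}$. The whole strategy is to evaluate $(\clsy)(\gamma)$ in two ways and observe that the two results cannot coexist while $\gamma$ remains proper in $Y_x$.

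First I would check that $\gamma$ lies in the domain on which the identity hypothesis is asserted. If $\gamma \neq \emptyset$, then $\psi_R(\gamma) = \{x\} \neq \emptyset$, so by the membership test on page~\pageref{phipsiAppdef} we have $\gamma \in \dowy$, hence $\gamma \in \F(\dowy)$; and if $\gamma = \emptyset$ it lies in the domain by fiat. Either way the attribute-privacy hypothesis gives $(\clsy)(\gamma) = \gamma$. On the other hand, unwinding the composition and using the explicit formula $\phi_R(\{x\}) = \biginter_{x^\prime \in \{x\}} Y_{x^\prime} = Y_x$, I compute $(\clsy)(\gamma) = \phi_R(\psi_R(\gamma)) = \phi_R(\{x\}) = Y_x$. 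Comparing the two evaluations forces $\gamma = Y_x$, contradicting $\gamma \subsetneq Y_x$, and the lemma follows.

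I do not anticipate any genuine obstacle: the argument is essentially a one-line collision between the expanding, idempotent behaviour of the closure operator and the identification hypothesis. The only spots deserving a moment of care are the trivial boundary case $\gamma = \emptyset$ (handled uniformly above, since the identity operator is asserted on $\F(\dowy) \union \{\emptyset\}$, and the same computation then forces $Y_x = \emptyset$, so that $\emptyset$ fails to be a \emph{proper} subset of $Y_x$) and the confirmation that $\gamma \in \dowy$ so that the hypothesis is actually applicable. Both reduce to the membership criterion $\gamma \in \dowy \iff \psi_R(\gamma) \neq \emptyset$, after which the contradiction is immediate.
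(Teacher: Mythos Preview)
Your proof is correct and follows essentially the same route as the paper's own argument: assume $\gamma \subsetneq Y_x$ with $\psi_R(\gamma) = \{x\}$, compute $(\clsy)(\gamma) = \phi_R(\{x\}) = Y_x$, and compare with $(\clsy)(\gamma) = \gamma$ from the attribute-privacy hypothesis to obtain the contradiction $\gamma = Y_x$. Your extra care in verifying that $\gamma$ lies in the domain of the identity hypothesis and in handling the boundary case $\gamma = \emptyset$ is sound and slightly more explicit than the paper, but the underlying argument is the same.
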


\begin{proof}
Suppose, for some $\gamma\subsetneq Y_x$, $\psi_R(\gamma) = \{x\}$.
\ We obtain a contradiction as follows:

$$\gamma
  \;\subsetneq\;  Y_x
  \;=\;  \phi_R(\{x\})
  \;=\;  (\clsy)(\gamma)
  \;=\;  \gamma.$$

\vspace*{-0.25in}
\end{proof}

\vspace*{0.25in}

We turn now to proving the assertions of Section~\ref{faceshape}
regarding free faces.

\begin{lemma}\label{nofreefacesPrivacy}
Let $R$ be a relation on $\XxY$, with both $\hspt{}X\!$ and
$\hspt{}Y\!$ nonempty.  If $\hspt\dowy\mskip-1mu$ contains no free
faces, then $R$ preserves attribute privacy.
\end{lemma}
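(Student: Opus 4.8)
The plan is to argue the contrapositive: assuming $R$ fails to preserve attribute privacy, I will exhibit a free face of $\dowy$. By Definition~\ref{defAttribPriv}, failure means the closure operator $\clsy$ is not the identity on $\F(\dowy)\union\{\emptyset\}$; since $\clsy$ is a closure operator, there is a simplex $\gamma$ (possibly the empty simplex) with $\gamma\subsetneq(\clsy)(\gamma)$. I fix any attribute $y\in\sdiff{(\clsy)(\gamma)}{\gamma}$. Because $(\clsy)(\gamma)=\phi_R(\psi_R(\gamma))$ is itself a simplex of $\dowy$, it is contained in some maximal simplex $\tau$, and this $\tau$ satisfies $\gamma\subseteq\tau$ and $y\in\tau$.

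The structural fact I rely on is that \emph{every} maximal simplex containing $\gamma$ must contain $y$. This follows by combining two observations: $\clsy$ is inclusion-preserving, being the composite of the two inclusion-reversing maps $\phi_R$ and $\psi_R$ of Lemma~\ref{orderreversing} and its dual; and every maximal simplex is fixed by $\clsy$ (Corollary~\ref{maximal}). Hence if $\tau'$ is maximal with $\tau'\supseteq\gamma$, then $\tau'=(\clsy)(\tau')\supseteq(\clsy)(\gamma)\ni y$.

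With this in hand, I claim the codimension-one face $\eta:=\sdiff{\tau}{\ys}$ is a free face of $\dowy$, giving the desired contradiction. It is a proper face of the maximal simplex $\tau$ (since $y\in\sdiff{\tau}{\eta}$) and satisfies $\gamma\subseteq\eta$ (as $\gamma\subseteq\tau$ and $y\notin\gamma$). To see $\tau$ is its \emph{only} maximal coface, let $\tau'$ be any maximal simplex with $\eta\subseteq\tau'$; then $\tau'\supseteq\gamma$, so by the structural fact $y\in\tau'$, whence $\tau'\supseteq\eta\union\ys=\tau$ and therefore $\tau'=\tau$ by maximality of $\tau$. Thus $\eta$ is contained in exactly one maximal simplex, i.e., it is a free face, contradicting the hypothesis that $\dowy$ has none.

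The subtlety worth flagging — where a naive attempt goes wrong — is the choice of free face. The simplex $\gamma$ that actually \emph{moves} under $\clsy$ need not itself be free: it can lie in several maximal simplices, as with a ridge shared by two facets. The resolution is to pass instead to the facet $\eta=\sdiff{\tau}{\ys}$ of a maximal simplex realizing the inference of $y$; the inference forces every maximal coface of $\eta$ to contain $y$ and hence to absorb all of $\tau$, pinning down $\tau$ as the unique one. I will also check the boundary case $\gamma=\emptyset$, where $(\clsy)(\emptyset)=\phi_R(X)$: there the same argument applies, since if every individual has $y$ then every maximal simplex (a full row $Y_x$) contains $y$.
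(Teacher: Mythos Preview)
Your proof is correct and follows essentially the same approach as the paper: both argue the contrapositive by taking a simplex $\gamma$ that moves under $\clsy$, picking an inferred attribute $y$, and exhibiting a codimension-one face $\tau\setminus\{y\}$ of a maximal simplex $\tau$ as the required free face, using that maximal simplices are fixed by $\clsy$ (Corollary~\ref{maximal}) and that $\clsy$ is inclusion-preserving. The only tactical difference is that the paper first invokes Lemma~\ref{propersubsets} to reduce to the case where $\gamma$ itself already has the form $\eta\setminus\{y\}$ for some maximal $\eta$ (and treats $\gamma=\emptyset$ separately), whereas you construct the free face directly from an arbitrary moving $\gamma$; this lets you handle the empty case uniformly and avoids appealing to Lemma~\ref{propersubsets}, a mild streamlining.
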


\begin{proof}
We will show that $\clsy$ is the identity on $\dowy$.

\vst

To build intuition, we treat the empty simplex separately.
As usual, $(\clsy)(\emptyset) = \phi_R(X)$.  Therefore, we will show
that $\phi_R(X)=\emptyset$.  Observe that every maximal simplex of
$\dowy$ contains $\phi_R(X)$, since any witness for such a simplex must
have all the attributes in $\phi_R(X)$.
Pick some maximal simplex $\eta$ of $\dowy$ and consider $\gamma =
\eta \setminus \phi_R(X)$.  Let $\eta^\prime$ be any maximal
simplex of $\dowy$ containing $\gamma$.  Then

\vspace*{-0.15in}

$$\eta
  \;=\;  \gamma \union \phi_R(X)
  \;\subseteq\; \eta^\prime \union \phi_R(X)
  \;=\; \eta^\prime.$$

So $\eta=\eta^\prime$ by maximality.  Since $\dowy$ has no free faces,
$\gamma$ cannot be a proper subset of $\eta$, meaning
$\phi_R(X)=\emptyset$, as desired.

\vspace*{0.1in}

Now consider $\emptyset \neq \gamma \in \dowy$.  Suppose $\gamma$ is a
proper subset of $(\clsy)(\gamma)$.
By Corollary~\ref{maximal} and Lemma~\ref{propersubsets} on
pages~\pageref{maximal} and \pageref{propersubsets}, respectively, we can
assume without loss of generality that $\gamma = \eta\setminus\ys$ for
some maximal $\eta$ of $\dowy$ and some $y\in\eta$.  Observe that

\vspace*{-0.1in}

$$\sdiff{\eta}{\{y\}} = \gamma \subsetneq (\clsy)(\gamma) \subseteq
  (\clsy)(\eta) = \eta,$$

so $\eta = (\clsy)(\gamma)$.
\ Now let $\eta^\prime$ be any maximal simplex of $\dowy$ containing
$\gamma$.  Then

\vspace*{-0.1in}

$$\eta
  \;=\;  (\clsy)(\gamma)
  \;\subseteq\; (\clsy)(\eta^\prime)
  \;=\; \eta^\prime.$$

(Note: The last equality in each of the lines of comparisons above
follows from Corollary~\ref{maximal} by maximality.)

So $\eta=\eta^\prime$ by maximality.  That says $\gamma$ is a free
face of $\dowy$, a contradiction.
\end{proof}

\clearpage

The converse of Lemma~\ref{nofreefacesPrivacy} need not hold if there
exists an individual who can hide, with attributes that form a strict
subset of some other individual's attributes.  However:

\begin{lemma}\label{privacyNofreefaces}
Let $R$ be a relation on $\XxY$, with both $\hspt{}X\!$ and $\hspt{}Y\!$
nonempty.  If $R$ preserves attribute privacy and if every $x\in X$ is
uniquely identifiable via $R$, then $\dowy$ contains no free faces.
\end{lemma}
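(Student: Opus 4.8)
The plan is to argue by contradiction: suppose $\dowy$ has a free face $\gamma$, so that $\gamma$ is a proper subset of exactly one maximal simplex $\eta$ of $\dowy$. Note that $\eta\neq\emptyset$, since $\gamma\subsetneq\eta$. The goal is to show that attribute privacy forces $\gamma$ to sit below \emph{two} distinct maximal simplices, contradicting freeness, and that unique identifiability is exactly what rules out the degenerate possibility that would collapse those two maximal simplices into one.

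First I would pin down the unique maximal simplex $\eta$. Since $\eta\in\dowy$, it has a witness $x_0\in X$ with $\eta\subseteq Y_{x_0}\in\dowy$; maximality of $\eta$ then gives $\eta = Y_{x_0}$. Unique identifiability of $x_0$ (Definition~\ref{uniqueID}) yields $\psi_R(\eta)=\psi_R(Y_{x_0})=\{x_0\}$. Next I would invoke attribute privacy (Definition~\ref{defAttribPriv}): the closure operator $\clsy=\phi_R\circ\psi_R$ is the identity on $\dowy$, so $\gamma=\phi_R(\psi_R(\gamma))$. Writing $\sigma=\psi_R(\gamma)$, this says $\gamma=\phi_R(\sigma)=\bigcap_{x\in\sigma}Y_x$, with $\sigma\neq\emptyset$.

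The contradiction then comes out as follows. Because $\gamma\subseteq\eta=Y_{x_0}$, the dual of Lemma~\ref{orderreversing} (inclusion-reversal of $\psi_R$) gives $\sigma=\psi_R(\gamma)\supseteq\psi_R(Y_{x_0})=\{x_0\}$, so $x_0\in\sigma$. If $\sigma=\{x_0\}$, then $\gamma=\phi_R(\{x_0\})=Y_{x_0}=\eta$, contradicting $\gamma\subsetneq\eta$; hence there is some $x_1\in\sigma$ with $x_1\neq x_0$. For this $x_1$ we have $\gamma\subseteq Y_{x_1}$, and $Y_{x_1}$ lies in some maximal simplex $\mu$ of $\dowy$ with $\gamma\subsetneq\mu$ (proper, since a free face is never itself maximal). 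Freeness forces $\mu=\eta=Y_{x_0}$, so $Y_{x_1}\subseteq Y_{x_0}$. Inclusion-reversal of $\psi_R$ together with unique identifiability of both individuals then gives $\{x_1\}=\psi_R(Y_{x_1})\supseteq\psi_R(Y_{x_0})=\{x_0\}$, whence $x_0=x_1$ --- the desired contradiction.

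I expect the only delicate points to be bookkeeping rather than substance: verifying that the argument survives the degenerate case $\gamma=\emptyset$ (admissible, since $\clsy$ is assumed identity also on $\emptyset$ and $\psi_R(\emptyset)=X$), and carefully justifying that $\gamma\subsetneq\mu$ is \emph{proper} so that freeness applies. The conceptual heart, and the one place where both hypotheses are genuinely needed, is the last step: privacy produces the second witness $x_1$, while unique identifiability is precisely what prevents $Y_{x_1}\subseteq Y_{x_0}$ from holding with $x_1\neq x_0$.
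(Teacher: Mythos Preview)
Your proof is correct and follows essentially the same approach as the paper's: both argue by contradiction from a free face $\gamma\subsetneq\eta=Y_{x_0}$, use attribute privacy to produce a second witness $x_1\neq x_0$ with $\gamma\subseteq Y_{x_1}$, then use freeness to force $Y_{x_1}\subseteq Y_{x_0}$, contradicting unique identifiability of $x_1$. The only cosmetic differences are that the paper first reduces to the codimension-one case $\gamma=\eta\setminus\{y\}$ and invokes Lemma~\ref{noproperident} to obtain $x_1$, whereas you inline that step directly via $\sigma=\psi_R(\gamma)\neq\{x_0\}$.
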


\begin{proof}
Suppose that $\gamma$ is a free face of $\dowy$.  We can assume without
loss of generality that $\gamma=\eta\setminus\ys$ for some maximal
$\eta\in\dowy$ and $y\in\eta$.  Since a Dowker attribute complex is
generated by the rows of the underlying relation, it must be that
$\eta=Y_x$ for at least one $x\in X$.  By Lemma~\ref{noproperident},
there is at least one $x^\prime$ besides $x$ in $\psi_R(\gamma)$.  Then

\vspace*{-0.1in}

$$\gamma
   \;=\; (\clsy)(\gamma)
   \;\subseteq\; \phi_R(\{x, x^\prime\})
   \;=\; Y_x \inter Y_{x^\prime}.$$

Since we have assumed that $\gamma$ is free and $Y_x$ is maximal, we
see that $Y_{x^\prime}$ must be a subset of $Y_x$.  That means
$x^\prime$ is not uniquely identifiable, a contradiction.

\vst

(Technical comment: $\gamma=\emptyset$ is permissible throughout this argument.)
\end{proof}

\vspace*{0.4in}

The following lemma will help us later in Appendix~\ref{privacyspheres},
to establish the assertions of Sections~\ref{faceshape} and
\ref{holemeaning} regarding relations that preserve both attribute and
association privacy:

\begin{lemma}\label{EqIdent}
Let $R$ be a relation on $\XxY$ such that $\abs{X} = \abs{Y} > 1$.
If $R$ has no blank columns and preserves attribute privacy, then
every $x\in X$ is uniquely identifiable via $R$.
\end{lemma}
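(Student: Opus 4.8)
The plan is to restate the conclusion purely in terms of the columns $X_y$ and then force it with a counting argument driven by $\abs{X}=\abs{Y}$. First I would extract the one completely elementary consequence of attribute privacy. Because $R$ has no blank columns, each singleton $\{y\}$ is a simplex of $\dowy$, so $(\clsy)(\{y\})=\{y\}$, i.e. $\phi_R(X_y)=\bigcap_{x\in X_y}Y_x=\{y\}$. Hence if $X_y\subseteq X_{y'}$ then $y'\in\phi_R(X_y)=\{y\}$, so $y'=y$: the $n=\abs{Y}$ columns are pairwise incomparable and distinct, forming an antichain of $n$ distinct nonempty subsets of the $n$-element set $X$.

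Next I would reformulate the goal. An individual $x$ is uniquely identifiable exactly when $\psi_R(Y_x)=\bigcap_{y\in Y_x}X_y=\{x\}$, i.e. when the columns through $x$ meet in $\{x\}$; equivalently, for every $x'\neq x$ some column contains $x$ but not $x'$. So the lemma is the assertion that the family of columns \emph{separates the points} of $X$, equivalently that $x\mapsto Y_x$ is injective with antichain image (no blank, no repeated, no nested rows). To force separation I would pass to the Galois lattice $\PRplus$. Attribute privacy makes the context attribute-reduced and attribute-clarified: the attribute concepts are the distinct pairs $(X_y,\{y\})$, and the antichain property shows no column is an intersection of others, so the $n$ attributes are exactly the meet-irreducible elements of $\PRplus$. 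Each individual contributes an object concept whose intent is $(\clsy)(Y_x)=Y_x$, so the only way some $x$ can fail separation is for a row to be blank, repeated, or strictly contained in another. I would then argue that any such redundancy, together with full attribute privacy, forces the reduced (standard) context of $\PRplus$ to be a relation on $m$ individuals and $n$ attributes with $m<n$, still free of blank rows and columns and still preserving attribute privacy — i.e. an attribute-private relation with strictly more attributes than individuals. Excluding that possibility yields $m=n$ and hence separation of all points.

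The first two steps are routine; the real content is the last reduction, and that is where I expect the main obstacle. The antichain condition on the columns is by itself far too weak — for $n\ge 5$ there are antichains of $n$ distinct subsets of an $n$-set with a non-separated pair of points — so one cannot avoid using the full strength of attribute privacy (closure of \emph{every} face, not merely of vertices) together with the equality $\abs{X}=\abs{Y}$ and the absence of blank columns. Concretely, the crux is the companion impossibility that no relation with strictly more attributes than individuals (and no blank rows or columns) can preserve attribute privacy; equivalently, that under these hypotheses the number of join-irreducibles of $\PRplus$ is at least the number of meet-irreducibles. Since the latter inequality is false for general finite lattices, its proof must genuinely exploit the no-blank-column hypothesis and the square shape — most cleanly by tracking, for each inference that full attribute privacy would otherwise create, a distinct private witness whose existence inflates the individual count past the attribute count, contradicting $\abs{X}=\abs{Y}$.
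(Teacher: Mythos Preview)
Your first two steps are correct and cleanly done: the antichain property of the columns follows exactly as you say, and the reformulation of unique identifiability as ``the columns separate points'' is right. The gap is entirely in the third step, and you have essentially identified it yourself.

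You reduce the lemma to the statement that no tight relation with strictly more attributes than individuals can preserve attribute privacy. In this paper that statement is Theorem~\ref{toomanyattrib}, and it is proved \emph{as a corollary of the present lemma}: one deletes columns to make the relation square, invokes Lemma~\ref{EqIdent} to get unique identifiability, and then reads off a contradiction. So invoking it here is circular within the paper's development, and your sketch of an independent proof (``tracking, for each inference \ldots, a distinct private witness'') is only a slogan. There is also a second, quieter gap: you assert that passing to the reduced context yields a relation that is ``still free of blank rows and columns and still preserving attribute privacy'' with $m<n$ objects. That is not automatic. If some $x$ has $Y_x\subsetneq Y_{x'}$, the individual $x$ need not be reducible in the Formal Concept Analysis sense (its row need not be an intersection of other rows), so the standard reduction need not drop it; and if you drop it anyway, the closure of $Y_x$ in the smaller relation can strictly enlarge, breaking attribute privacy there. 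The blank-row case is similarly delicate at the empty simplex. These are exactly the subtleties the paper's Subclaim on page~\pageref{subclaim} works through.

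The paper avoids all of this by a direct induction on $n=\abs{X}=\abs{Y}$, with no appeal to the rectangular impossibility. After first proving (as a Subclaim, using the inductive hypothesis on a carefully chosen $(n-1)\times(n-1)$ deletion) that $R$ has no blank rows, it fixes $\bar{x}$, picks $\bar{y}\in Y_{\bar{x}}$, and decomposes $R$ into blocks around the link $\lk(\dowy,\bar{y})$ (see Figure~\ref{Rblocks}). Attribute privacy is inherited by the link relation $Q$ and by the complementary deletion $B$ via Lemma~\ref{ClIdLinkDel}, and a short case analysis on the block sizes $\abs{X_1},\abs{X_2},\abs{Y_1},\abs{Y_2}$ lets the induction hypothesis apply to one of these strictly smaller square pieces, yielding identifiability of $\bar{x}$. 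The rectangular theorem then falls out afterward, not before.
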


\begin{proof}
The proof is by induction on $n=\abs{X}=\abs{Y}$.

\vspace*{0.1in}

I. The base case $n=2$ implies that $R$ is isomorphic to

$$\begin{array}{c|cc}
R   & y_1 & y_2 \\[2pt]\hline
x_1 & \one &       \\
x_2 &      & \one  \\
\end{array}$$

\vspace*{0.1in}

(Any other type of $2 \times 2$ relation without blank columns would
allow for attribute inference.)

\vspace*{0.1in}

Each $x_i$ is uniquely identifiable in $R$ above.

\vspace*{0.2in}

II.  For the induction step, assume that, for some $n > 2$, the lemma
holds for all relations with $X$ and $Y$ spaces of size strictly less
than $n$ (and bigger than 1).  We need to establish the lemma for all
relations with $X$ and $Y$ spaces of size $n$.

\paragraph{}$\phantom{0}$

\vspace*{-0.3in}

\ \begin{minipage}[t]{5.5in}\label{subclaim}
{\bf Subclaim:}\ $R$ has no blank rows.

\vspace*{0.1in}

To see this, suppose that $Y_{\xtild} = \emptyset$ for some $\xtild\in
X$.  Let $Q$ be the restriction of $R$ to $\Xp\mskip-0.7mu\times{Y}$,
with $\Xp = \sdiff{X}{\{\xtild\}}$.  There is no significant difference
between $R$ and $Q$; in particular, $Q$ also preserves attribute
privacy.

\vspace*{0.1in}

(Perhaps the empty simplex is slightly tricky: $(\clsqy)(\emptyset) =
\biginter_{x\in\Xp}Y_x$.  If this intersection is nonempty, it
contains some $y_1\in Y$.  Pick $y_2\in Y$ with $y_2 \neq y_1$;
this is possible since $\abs{Y} > 2$.  Note that $\emptyset \neq
X_{y_2} \subseteq X^\prime$, so
$y_1\in\biginter_{x\in\Xp}Y_x \subseteq \biginter_{x\in
  X_{\scriptstyle y_2}}Y_x = (\clsy)(\{y_2\}) = \{y_2\}$,
a contradiction.  So $(\clsqy)(\emptyset) = \emptyset$.)

\vspace*{0.1in}

Now let $Q^\prime$ be the further restriction of $R$ to
$\Xp\mskip-0.8mu\times\Yp$, where $\Yp = \sdiff{Y}{\{\ytild\}}$, with
$\ytild$ {\em any\,} attribute in $Y$.  By Lemma~\ref{ClIdLinkDel} on
page~\pageref{ClIdLinkDel}, $Q^\prime$ preserves attribute privacy.  The
underlying spaces $\Xp$ and $\Yp$ of $Q^\prime$ each have size $n-1$ and
$Q^\prime$ has no blank columns.  The induction hypothesis therefore
tells us that every individual in $\Xp$ is uniquely identifiable via
$Q^\prime$.  Bearing in mind that $\xtild$ does not appear in any $X_y$,
one sees that for each $x\in\Xp$, there is some $\gamma\subseteq\Yp$
such that $\biginter_{y\in\gamma}X_y = \{x\}$. That intersection is a
column vector all of whose entries are $0\phantom{\big|}$(blank) except
for the entry indexed by $x$.  Since $R$ preserves attribute privacy and
$x$ is arbitrary in $\Xp$, Lemma~\ref{privacycolumns} implies that in
fact $X_{\ytild}=\emptyset$, contradicting the assumption that $R$ has
no blank columns.
\end{minipage}

\vspace*{0.2in}

Next, pick $\xbar\in X$.  We will show that $\xbar$ is uniquely
identifiable via $R$.  Without loss of generality, write $R$ as in
Figure~\ref{Rblocks} (the figure indicates blank entries by ``$0$''s):

\begin{figure}[h]
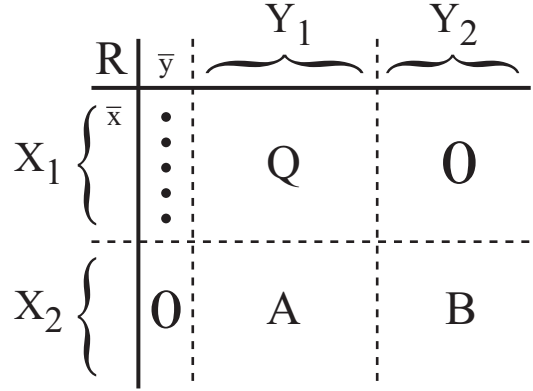

\vspace*{-0.1in}
\begin{center} 
\ifig{Rblocks}{scale=0.5}
\end{center}
\vspace*{-0.2in}
\caption[]{Relation $R$ decomposed into blocks for the proof of
  Lemma~\ref{EqIdent}, as described below.}
\label{Rblocks}
\end{figure}

\vst

Specifically, pick some $\ybar\in Y$ such that $(\xbar,\ybar)\in R$.
This is possible since $R$ has no blank rows.  Then decompose $X=X_1
\union X_2$, with $X_1=X_\ybar$ and $X_2 = X\setminus{X_1}$.  Since
$R$ preserves attribute privacy, $X_2 \neq \emptyset$.

Let $Q$ model $\lk(\dowy, \ybar)$.  So $Q$ is $R$ restricted to $X_1
\times Y_1$, with $Y_1 = \bigunion_{x\in{X_1}}Y_x\setminus\{\ybar\}$.
If $Y_1\neq\emptyset$, then $Q$ preserves attribute privacy, by
Lemma~\ref{ClIdLinkDel}, and $Q$ has no blank columns.

\vspace*{0.1in}

Now write $Y$ as the disjoint union $Y = \{\ybar\} \union Y_1 \union
Y_2$, with $Y_2 = Y\setminus(Y_1\union\{\ybar\})$. 

\clearpage

\paragraph{}$\phantom{0}$

\vspace*{-0.17in}

\label{uniqueIdentCases}

Observe that no individual in $X_2$ has attribute $\ybar$.  Observe
further that every individual in $X_1$ has attribute $\ybar$ but has no
attributes in $Y_2$, by construction.

\vso

Let $A$ be the restriction of $R$ to $X_2 \times Y_1$ and let $B$ be
the restriction of $R$ to $X_2 \times Y_2$.

\vso

If $Y_2\neq\emptyset$, then $B$ has no blank columns and $\dowby =
\dl(\dowy, Y_1 \union \{\ybar\})$.  If $\abs{Y_2}\geq 2$, then the
blank rows indexed by $X_1$ that remain after deleting from $R$ the
columns indexed by $Y_1 \union \{\ybar\}$ are irrelevant and so $B$
preserves attribute privacy (by Lemma~\ref{ClIdLinkDel} and by an
argument similar to that appearing in the proof of the Subclaim on
page~\pageref{subclaim}).

\vspace*{0.05in}

Let us look at some cases:

\vspace*{-0.05in}

\begin{itemize}
\item \underline{$\abs{Y_2} \geq \abs{X_2} = 1$}: \ Then any attribute
  of $Y_2$ identifies the one individual in $X_2$.  Since $R$ preserves
  attribute privacy, this implies both that $\abs{Y_2}=1$ and that
  relation $A$ is blank.  Consequently, every attribute in $Y_1$ implies
  $\ybar$ in $R$. Since $R$ preserves attribute privacy, we conclude
  that $Y_1=\emptyset$.  That means we are actually in the base case,
  with $n=2$.

\item \underline{$\abs{Y_2} > \abs{X_2} \geq 2$}: \ By removing some
  columns of $B$, we obtain a square relation to which we can apply
  the induction hypothesis.  That means every $x\in{X_2}$ is uniquely
  identifiable by the remaining columns.  Since $B$ preserves
  attribute privacy that means the columns removed must have been
  blank, a contradiction.

\item \underline{$\abs{Y_2} = \abs{X_2} \geq 2$}: \ We can apply the
  induction hypothesis directly to $B$.  That again tells us that
  every $x\in{X_2}$ is uniquely identifiable by $Y_2\,$-indexed columns,
  both in $B$ and in $R$.  We conclude that relation $A$ must be blank
  and so $Y_1=\emptyset$, arguing as above.  Thus $\abs{X_2} =
  \abs{Y_2} = n - 1$, implying $\abs{X_1} = 1$.  So $\ybar$ uniquely
  identifies $\xbar$, as desired.

\item \underline{$\abs{Y_2} < \abs{X_2}$}: \ This means $\abs{Y_1}
  \geq \abs{X_1}$.  Additionally, $\abs{X_1} \geq 2$, as otherwise
  $\ybar$ implies all the attributes $Y_1$.  If actually $\abs{Y_1}
  > \abs{X_1}$, then we could argue as above to see that some columns
  of $Q$ are blank, contrary to the construction of $Q$.  So we have
  that $n > \abs{Y_1} = \abs{X_1} \geq 2$ and the induction hypothesis
  applies.  Consequently, $\xbar$ is uniquely identifiable via $Q$, say
  as $\{\xbar\} = \psi_Q(\gamma)$, for some $\gamma \subseteq Y_1$.
  If we adjoin $\ybar$, we get that $\psi_R(\gamma \union \{\ybar\}) =
  \psi_Q(\gamma) = \{\xbar\}$, as desired.
\end{itemize}

\vspace*{-0.33in}

\end{proof}

\vspace*{0.05in}

\begin{theorem}[Too Many Attributes]\label{toomanyattrib}
Let $R$ be a relation on $\mskip1mu\XxY\mskip-2mu$ with no blank columns.

Suppose $\abs{Y} > \abs{X} \geq 1$.
\quad Then $R$ does not preserve attribute privacy.
\end{theorem}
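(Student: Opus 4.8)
The plan is to argue by contradiction and reduce the strict inequality $\abs{Y}>\abs{X}$ to the square case already settled by Lemma~\ref{EqIdent}. So I would suppose, for contradiction, that $R$ preserves attribute privacy. First I would dispose of the degenerate case $\abs{X}=1$ directly: with no blank columns every column is $X_y=X=\{x\}$, so for any attribute $y$ we get $\psi_R(\{y\})=\{x\}$ and hence $(\clsy)(\{y\})=\phi_R(\{x\})=Y_x=Y\supsetneq\{y\}$ (using $\abs{Y}\ge2$), contradicting privacy. Thus I may assume $\abs{X}\ge2$.

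For the main case I would carve out a square subrelation by column deletion. Since $\abs{Y}>\abs{X}$, choose $Y_Q\subseteq Y$ with $\abs{Y_Q}=\abs{X}$ and fix some deleted attribute $y_0\in\sdiff{Y}{Y_Q}$ (nonempty because $\abs{Y_Q}=\abs{X}<\abs{Y}$); set $Q=R\,|_{X\times Y_Q}$. By Lemma~\ref{yDel}, $\dowqy=\dl(\dowy,\sdiff{Y}{Y_Q})$, and by Lemma~\ref{ClIdLinkDel} deletion of columns preserves the identity closure operator, so $Q$ again preserves attribute privacy. Moreover $Q$ inherits ``no blank columns'' (only originally nonblank columns were kept) and satisfies $\abs{X}=\abs{Y_Q}>1$. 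Lemma~\ref{EqIdent} then applies and tells us that every individual is uniquely identifiable via $Q$. Note that this conclusion already forces $Q$ to have no blank rows, since $\abs{X}\ge2$ means a blank row $x$ would give $\psi_Q(Y_x)=\psi_Q(\emptyset)=X\neq\{x\}$, contradicting unique identifiability.

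The contradiction then comes from the deleted column $y_0$. Pick any $x_0\in X_{y_0}$, nonempty because $R$ has no blank columns, and let $\gamma_0=Y_{x_0}\cap Y_Q$ be the surviving attributes of $x_0$ in $Q$. This set is nonempty (no blank rows in $Q$), so $\gamma_0\in\dowy$. Unique identifiability of $x_0$ in $Q$ gives $\psi_Q(\gamma_0)=\{x_0\}$, and since each kept column denotes literally the same set of individuals in $Q$ as in $R$, I also have $\psi_R(\gamma_0)=\psi_Q(\gamma_0)=\{x_0\}$. But $x_0\in X_{y_0}=\psi_R(\{y_0\})$, so $\psi_R(\gamma_0)\subseteq\psi_R(\{y_0\})$ while $y_0\notin\gamma_0\subseteq Y_Q$. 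This directly contradicts the column characterization of attribute privacy in Lemma~\ref{privacycolumns}, completing the proof.

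As for where the difficulty lies: essentially all of it has been absorbed into Lemma~\ref{EqIdent}, whose delicate induction on square relations does the real work. The present argument is merely a packaging step, and the only points requiring care are (i) checking that attribute privacy and nonblankness of columns survive the column deletion, and (ii) verifying that the identifying set $\gamma_0$ computed inside $Q$ has the same $\psi$-image in the ambient relation $R$, so that Lemma~\ref{privacycolumns} can be invoked back in $R$.
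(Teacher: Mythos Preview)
Your proof is correct and follows essentially the same approach as the paper's: both reduce to Lemma~\ref{EqIdent} by deleting columns down to a square relation, then use the resulting unique identifiability together with a discarded (nonblank) column to exhibit an attribute inference in $R$. The paper's proof is a terse two-sentence sketch; you have filled in the details carefully, in particular invoking Lemma~\ref{privacycolumns} explicitly where the paper just says ``either there is attribute inference in $R$ or the discarded columns were blank.''
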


\vspace*{-0.15in}

\begin{proof}
The proof is a corollary to Lemma~\ref{EqIdent}:

If $\abs{Y} > \abs{X} = 1$, then all attributes in
$\mskip1mu{}Y\mskip-1.6mu$ are inferable from nothing (in the context
of $R$).

Otherwise, suppose $R$ preserves attribute privacy.  We have $\abs{Y}
> \abs{X} > 1$, so we can delete some columns of $R$ and apply
Lemmas~\ref{ClIdLinkDel} and \ref{EqIdent} to a resulting square
relation.  Every individual in $X$ is therefore uniquely identifiable
via the columns retained.  Consequently, either there is attribute
inference in $R$ or the discarded columns were blank, a contradiction.
\end{proof}

\vspace*{0.025in}

\noindent{\bf Comment:}\ One implication of this result and those in
Appendix~\ref{privacyspheres} is the old detective show mantra
``eliminate suspects'': Reduce the number of relevant individuals
sufficiently, and some attribute inference is assured.  This amounts
to moving from relation $R$ to a subrelation $Q$ representing
$\dl(\dowx, \sigma)$, with $\sigma$ a {\em set}\, of ``eliminated
suspects''.

\vspace*{0.075in}

\noindent{\bf An additional conclusion:}\ The proof of
Lemma~\ref{EqIdent} suggests that perhaps $R$ also preserves
association privacy.  Indeed, we will see that to be true in
Appendix~\ref{symmetric}.

\clearpage
\section{Inference Hardness}
\markright{Inference Hardness}
\label{nphardness}

Recall the following definition:\label{individualprivacyAppPage}

\setcounter{currentThmCount}{\value{theorem}}
\setcounter{theorem}{\value{IndivPrivacyDef}}
\begin{definition}[Individual Privacy]
\ Let $R$ be a relation on $\XxY$ and suppose $x \in X$.

We say that \mydefem{$R$ preserves attribute privacy for $x$} whenever
$(\clsy)(\gamma) = \gamma$ for all $\gamma \subseteq Y_x$.
\end{definition}
\setcounter{theorem}{\value{currentThmCount}}

\vspace*{0.15in}

\noindent We have seen the following basic result within the proofs of other lemmas:

\begin{lemma}\label{privacyindiv}
Let $R$ be a relation on $\XxY$, with both $\hspc{}X\!$ and
$\hspt{}Y\!$ nonempty.   Let $x \in X$.
Then:

\vspace*{-0.15in}

\begin{center}
$R$ preserves attribute privacy for $x$

if and only if

$(\clsy)(\gamma) = \gamma$,\ for all $\gamma$ of the form $\gamma =
Y_x\setminus\ys$, with $y\in Y_x$.
\end{center}
\end{lemma}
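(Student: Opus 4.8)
The plan is to observe that this statement is essentially Lemma~\ref{propersubsets} applied to the set $\eta = Y_x$, once the full-set case is dispatched by Corollary~\ref{generatorface}. Concretely, I would first unwind the Definition of Individual Privacy (Definition~\ref{individualprivacy}): $R$ preserves attribute privacy for $x$ precisely when $(\clsy)(\gamma) = \gamma$ for \emph{every} $\gamma \subseteq Y_x$, including $\gamma = Y_x$ itself.

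The first substantive step is to remove the top case $\gamma = Y_x$ from consideration. By Corollary~\ref{generatorface}, $(\clsy)(Y_x) = Y_x$ holds automatically for every $x \in X$ (and that corollary explicitly covers the degenerate situation $Y_x = \emptyset$). Hence the quantifier ``for all $\gamma \subseteq Y_x$'' in the definition is equivalent to ``for all proper subsets $\chi \subsetneq Y_x$'': adjoining the full set $Y_x$ adds no new constraint. This reduces the left-hand condition of the lemma to condition (a) of Lemma~\ref{propersubsets} with $\eta = Y_x$.

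The second step is a direct invocation of Lemma~\ref{propersubsets}. With $\eta = Y_x$, that lemma states that $(\clsy)(\chi) = \chi$ for every proper subset $\chi$ of $Y_x$ (condition (a)) if and only if $(\clsy)(\gamma) = \gamma$ for all $\gamma$ of the form $\gamma = Y_x \setminus \{y\}$ with $y \in Y_x$ (condition (b)). Combining the two steps, $R$ preserves attribute privacy for $x$ iff condition (a) holds iff condition (b) holds, which is exactly the asserted equivalence.

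I do not expect a genuine obstacle here; the content is carried entirely by the two cited results. The only points requiring a moment of care are the boundary cases, and I would check them explicitly: if $Y_x = \emptyset$ then condition (b) is vacuous and the only subset of $Y_x$ is $\emptyset = Y_x$, handled by Corollary~\ref{generatorface}, so both sides of the equivalence hold trivially; and if $Y_x$ is a singleton $\{y\}$, then $Y_x \setminus \{y\} = \emptyset$ is the unique proper subset, so conditions (a) and (b) literally coincide. Thus no additional argument beyond the reduction and the application of Lemma~\ref{propersubsets} is needed.
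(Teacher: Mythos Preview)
Your proposal is correct and matches the paper's own proof: both use Corollary~\ref{generatorface} to dispose of the full set $Y_x$ and then invoke Lemma~\ref{propersubsets} with $\eta = Y_x$ to reduce the quantifier over all subsets to the codimension-one subsets $Y_x\setminus\{y\}$. The paper phrases it as a forward direction (trivial) plus a contrapositive, while you chain the equivalences directly, but the substance is identical.
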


\begin{proof}
I.  If $R$ preserves attribute privacy for $x$, then the condition is
satisfied by definition.

\vspace*{0.1in}

II. Suppose $R$ does not preserve attribute privacy for $x$.  Then for
some $\gamma\subseteq Y_x$, $\gamma\subsetneq(\clsy)(\gamma)$.  We know
$(\clsy)(Y_x) = Y_x$ by Corollary~\ref{generatorface} on
page~\pageref{generatorface}, so by Lemma~\ref{propersubsets} on
page~\pageref{propersubsets} we can assume that
$\gamma=Y_x\setminus\ys$, for some $y\in Y_x$.
\end{proof}

\vspace*{0.1in}

Lemma~\ref{privacyindiv} tells us that it is fairly easy to check
whether an individual's attribute privacy is preserved.  One merely
needs to check whether any one attribute is implied by all the
remaining attributes.  That may be done quickly since the maps
$\phi_R$ and $\psi_R$ amount to set intersections.  Harder is finding
a smallest set of attributes that implies another of the individual's
attributes.

\vspace*{0.1in}

Influenced by Lemma~\ref{privacycolumns} on
page~\pageref{privacycolumns}, we formulate the following problem:

\vspace*{-0.05in}

\begin{definition}[Minimal Inference]\label{mininf}

\ \underline{\MinInfnb} is the following decision problem:

\vst

Given relation $R$ on $\XxY$, $x\in X$, $y\in Y$, and $k \geq
0$, is there a simplex $\gamma\in\dowy$, with $\gamma\,\subseteq
Y_x\setminus\ys$, such that $\abs{\gamma} \leq k$ and $\psi_R(\gamma)
\subseteq \psi_R(\ys)$?
\end{definition}

\begin{lemma}\label{mininfhard}
\quad \MinInf is \np-complete
\end{lemma}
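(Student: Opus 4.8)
The plan is to prove \textbf{\np}-completeness in two parts: membership in \np\ and \np-hardness via a reduction from a known \np-complete problem. Membership is the easy direction. Given a candidate simplex $\gamma$, one can verify in polynomial time that $\abs{\gamma} \leq k$, that $\gamma \subseteq Y_x \setminus \ys$, and that $\psi_R(\gamma) \subseteq \psi_R(\ys)$, since the map $\psi_R$ is just a column intersection $\biginter_{y' \in \gamma} X_{y'}$, computable directly from the matrix for $R$. So a witness $\gamma$ gives a polynomial-time-checkable certificate, placing \MinInfnb\ in \np.

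For hardness, I would reduce from a standard covering problem. The natural candidate is \textsc{Set Cover} (or equivalently \textsc{Minimum Dominating Set} / \textsc{Hitting Set}), because the condition $\psi_R(\gamma) \subseteq \psi_R(\ys)$ is fundamentally a covering statement. Observe that $\psi_R(\ys) = X_y$ and $\psi_R(\gamma) = \biginter_{y' \in \gamma} X_{y'}$, so the requirement $\psi_R(\gamma) \subseteq X_y$ says that the attributes in $\gamma$ collectively \emph{exclude} every individual outside $X_y$: for each individual $x' \notin X_y$, some attribute $y' \in \gamma$ must fail to be shared by $x'$, i.e., $x' \notin X_{y'}$. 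Reading this through complementation, each attribute $y'$ ``covers'' the set of individuals it excludes, namely $\sdiff{X}{X_{y'}}$, and we need $\gamma$ to cover all of $\sdiff{X}{X_y}$ using at most $k$ attributes. This is exactly a set cover condition, with the extra constraints that $\gamma \subseteq Y_x \setminus \ys$ and $\gamma \in \dowy$.

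The key steps, in order, would be: (1) start from a \textsc{Set Cover} instance with universe $U$ and sets $S_1, \ldots, S_m$; (2) construct a relation $R$ whose individuals encode $U$ (plus auxiliary individuals to pin down $x$ and $y$) and whose attributes encode the sets $S_j$; (3) arrange the incidence so that the column $X_{y_j}$ for attribute $y_j$ is precisely $\sdiff{X}{S_j}$ relative to the $U$-individuals, making ``$\gamma$ covers $\sdiff{X}{X_y}$'' equivalent to ``the chosen sets cover $U$''; (4) add a distinguished individual $x$ having all the set-attributes together with the target attribute $y$, so that $Y_x \setminus \ys$ is exactly the collection of set-attributes and the simplex condition $\gamma \in \dowy$ is automatically satisfied by $x$ being a common witness; (5) verify that a size-$\leq k$ cover exists if and only if there is a valid $\gamma$ of size $\leq k$. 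The reduction is clearly polynomial.

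The main obstacle I anticipate is getting the gadget exactly right so that the three side conditions do not interfere: ensuring that $\gamma$ is forced to be a \emph{simplex} of $\dowy$ (which the common witness $x$ handles, since any subset of $Y_x$ is automatically in $\dowy$), ensuring $y$ itself cannot be used to trivially satisfy the inclusion (handled by requiring $\gamma \subseteq Y_x \setminus \ys$), and ensuring that the individuals outside $X_y$ that must be excluded correspond precisely to the covering requirement rather than being excluded ``for free'' by the structure of $R$. I would verify the correctness of the equivalence carefully in both directions, checking in particular that no spurious small $\gamma$ can satisfy $\psi_R(\gamma) \subseteq X_y$ without corresponding to a genuine cover. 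Since the paper states this mirrors an argument in Appendix~\ref{nphardness} (the one for minimally identifying sets), I expect the same covering-reduction skeleton to transfer, with the target attribute $y$ playing the role analogous to the target set of individuals $\sigma$.
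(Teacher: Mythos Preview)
Your proposal is correct and follows essentially the same route as the paper: membership in \np\ via a polynomial-time check of the witness $\gamma$, and hardness via a reduction from \textsc{Set Cover} using exactly the complementation insight you describe (columns $X_{y_j} = X \setminus S_j$, one distinguished individual $x_0$ having all attributes so that $Y_{x_0}\setminus\{y\}$ is the set of cover-attributes and any $\gamma \subseteq Y_{x_0}$ is automatically a simplex, and target attribute $y$ whose column is $\{x_0\}$). The paper's construction is precisely the gadget you sketched in steps (1)--(5), with a single auxiliary individual $x_0$ rather than several.
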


\begin{proof}
(A) Observe that the problem lies in \np:  Given some $\gamma$, one
  can verify the stated conditions in polynomial time.  The
  verifications amount to set intersection, cardinality, and subset
  computations, drawn from the columns and one row of $R$.

\vspace*{0.1in}

(B) We will establish \np-hardness by a reduction from {\sc Set
  Cover}.  Recall: Given a collection of sets $\{S_1, \ldots, S_m\}$,
  {\sc Set Cover} asks whether there is some subcollection of size at
  most $k$ such that the union of the subcollection is the overall
  union (often called the {\em universe}).

\vspace*{0.1in}

  Given an instance of the {\sc Set Cover} problem, we define the
  following relation:

\vspace*{-0.05in}

\begin{itemize}
\item $X \;=\; \{x_0\} \;\union\; \bigunion_{i=1}^mS_i$, with $x_0$ a
  new element distinct from any elements in the sets $S_i$.
\item $Y \;=\; \{0, 1, \ldots, m\}$.
\item $R\; \;= \big(\{x_0\} \times Y\big) \;\union\;
               \bigunion_{i=1}^m\setdefb{(x,i)\in \XxY}{x \in X\setminus{S_i}}.$
           
\end{itemize}

In words: The $0^{\hbox{\footnotesize th}}$ column of $R$ is the singleton set
$\{x_0\}$ and the $i^{\hbox{\footnotesize th}}$ column of $R$, for $i=1, \ldots, m$,
is $X\setminus{S_i}$, i.e., the complement of $S_i$ in the original
set cover universe, but now with $x_0$ added.  The row for $x_0$ has
entries for all possible attributes.  All other rows have no entry in
column 0.

\vspace*{0.1in}

\noindent {\bf Reduction:}\ Given an instance of {\sc Set Cover}, we
transform it into an instance of \MinInf using the relation
$R$ given above and by letting $x=x_0$ and $y=0$.  The parameter $k$
is the same for both problems.  Observe that $Y_x\setminus\ys = \{1,
\ldots, m\}$.

\vspace*{0.1in}

Observe further that $\abs{X} = \abs{\bigunion_{i=1}^mS_i} + 1 = n+1$
and $\abs{Y} = m + 1$, with $n$ the number of elements in the set cover
universe and $m$ the number of subsets specified for the set cover
problem.  The reduction can therefore be computed in polynomial time.

\vspace*{0.1in}

\noindent To avoid trivialities, we assume that $n > 0$ and $m > 0$.

\vspace*{0.1in}

\noindent To complete the proof, we will establish the following:

\vspace*{0.1in}

\noindent {\bf Claim:}\ The answer to {\sc Set Cover} is ``yes'' if
and only if the answer to \MinInf is ``yes''.

\vspace*{0.1in}

I. A ``yes'' answer to {\sc Set Cover} means that there is some set of
indices $\gamma\subseteq\{1, \ldots, m\}$, with $\abs{\gamma} \leq k$,
such that $\bigunion_{j\in\gamma}S_j = \bigunion_{i=1}^mS_i$.
Therefore, since $0\not\in\gamma$,

\vspace*{-0.15in}

$$\psi_R(\gamma)
  = \biginter_{j\in\gamma}X_j
  = \biginter_{j\in\gamma}(X\setminus{S_j})
  = X\setminus(\bigunion_{j\in\gamma}S_j)
  = X\setminus(\bigunion_{i=1}^mS_i)
  = \{x_0\}
  = \psi_R(\{0\})
  = \psi_R(\ys).$$

Consequently, $\emptyset\neq\psi_R(\gamma)\subseteq\psi_R(\ys)$
with $\gamma\subseteq{Y_x\setminus\ys}$ and $\abs{\gamma} \leq k$,
meaning that the answer to \MinInf is ``yes'' as well.

\vspace*{0.1in}

II.  A ``yes'' answer to \MinInf means there is some
$\gamma\subseteq\{1, \ldots, m\}$ such that $\abs{\gamma} \leq k$ and
$\emptyset\neq\psi_R(\gamma)\subseteq\psi_R(\ys)$.
\quad Observe that $\psi_R(\ys) = \psi_R(\{0\}) = \{x_0\}$ and that

\vspace*{-0.05in}

$$\psi_R(\gamma)
   \;=\; \biginter_{j\in\gamma}X_j
   \;=\; \biginter_{j\in\gamma}(X\setminus{S_j})
   \;=\; X\setminus(\bigunion_{j\in\gamma}S_j).$$

The middle equality holds as before because $0\not\in\gamma$.

So we see that $x_0 \in X\setminus(\bigunion_{j\in\gamma}S_j)
\subseteq \{x_0\}$, telling us

\vspace*{-0.05in}

$$\bigunion_{j\in\gamma}S_j \;=\; X\setminus\{x_0\} \;=\; \bigunion_{i=1}^mS_i.$$

That means $\gamma$ describes a set of indices sought for by {\sc Set
  Cover}, with $\abs{\gamma} \leq k$, so the answer to {\sc Set Cover} is
  also ``yes''.
\end{proof}

\clearpage
\section{Privacy Spheres}
\label{privacyspheres}

The aim of this appendix is to characterize privacy and inference in
terms of spheres.  Spheres exhibit homogeneity, which is good for
privacy, while still admitting a coordinate system for
identifiability.

We first prove a theorem characterizing individual attribute privacy,
then a generalization that holds for arbitrary elements of a
relation's doubly-labeled poset, and finally a characterization of
those relations that preserve both attribute and association privacy.

\subsection{Individual Attribute Privacy}
\markright{Individual Attribute Privacy}

\noindent We first state a lemma as a tool.  Recall also
Definitions~\ref{uniqueID} and \ref{individualprivacy} (see
pages~\pageref{uniqueIDAppPage} and \pageref{individualprivacyAppPage}).

\begin{lemma}\label{IdentPriv}
Let $R$ be a relation on $\XxY$, with both $\hspt{}X\!$ and
$\hspt{}Y\!$ nonempty.\\
Let $x\in X$ be uniquely identifiable via $R$.  \ Then:

$$\Big(\biginter_{y\in Y_x}X_y\Big)\setminus\{x\} \;=\; \emptyset.$$

\vspace*{0.1in}

Moreover, $R$ preserves attribute privacy for $x$ if and only if

$$\Big(\biginter_{y\in\gamma}X_y\Big)\setminus\{x\} \;\neq\; \emptyset,
\quad\hbox{for all $\;\gamma \subsetneq Y_x$}.$$

\end{lemma}

\begin{proof}
The first statement follows from the definition of unique
identifiability:  $\biginter_{y\in Y_x}X_y=\psi_R(Y_x) = \{x\}$.

\vspace*{0.1in}

For the second statement:

I.  Assume that $R$ preserves attribute privacy for $x$.
Let $\gamma\subsetneq{Y_x}$.  If
$\big(\biginter_{y\in\gamma}X_y\big)\setminus\{x\} = \emptyset$, then
$\psi_R(\gamma)=\biginter_{y\in\gamma}X_y = \{x\}$, since $x\in{X_y}$
whenever $y\in\gamma\subseteq{Y_x}$ (when $\gamma=\emptyset$, the
vacuous intersection is all of $X$, containing $x$).  That says a
proper subset of $Y_x$ identifies $x$, contradicting the proof of
Lemma~\ref{noproperident} on page~\pageref{noproperident}.

\vspace*{0.1in}

II.  Assume $\big(\biginter_{y\in\gamma}X_y\big)\setminus\{x\} \neq
\emptyset$ for all proper subsets $\gamma$ of $Y_x$.
If $R$ fails to preserve attribute privacy for $x$, then by
Lemma~\ref{privacyindiv} on page~\pageref{privacyindiv}
there is some $\gamma$ of the form
$Y_x\setminus\ys$, with $y\in{Y_x}$, such that
$\gamma\subsetneq(\clsy)(\gamma)=Y_x$.  Applying $\psi_R$ to both
sides of that last equality gives $\psi_R(\gamma)=\psi_R(Y_x)=\{x\}$,
by unique identifiability.  That is a contradiction, since
$\psi_R(\gamma)= \biginter_{y\in\gamma}X_y$.
\end{proof}

\vspace*{0.1in}

\noindent We now address our characterization of individual privacy,
proving a theorem stated previously:\label{privacysingleAppPage}

\setcounter{currentThmCount}{\value{theorem}}
\setcounter{theorem}{\value{localprivThm}}
\begin{theorem}[Individual Attribute Privacy]
\ Let $R$ be a relation on $\XxY$, with $\abs{X} > 1$.
Suppose $x\in X$ is uniquely identifiable via $R$.
\ Let $Q$ be the relation modeling $\lk(\dowx, x)$.\\
Then the following three conditions are equivalent:

\vspace*{0.125in}

\hspace*{1in}\begin{minipage}{4in}
\begin{enumerate}
\addtolength{\itemsep}{-1pt}
\item[(a)] $R$ preserves attribute privacy for $x$.
\item[(b)] $\Lk(\dowx, x) \;\homot\;\, \Skt$, with $k = \abs{Y_x}$.
\item[(c)] $\dowqy \;=\; \partial(Y_x)$.
\end{enumerate}
\end{minipage}

\end{theorem}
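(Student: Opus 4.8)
The plan is to run the cycle of implications (a) $\Rightarrow$ (c) $\Rightarrow$ (b) $\Rightarrow$ (a), with the last implication carrying the real content. First I would make the link relation explicit. By Definition~\ref{linksigma} applied to $\sigma=\{x\}$, the relation $Q$ is the restriction of $R$ to $\tX \times Y_x$, where $\phi_R(\{x\}) = Y_x$ and $\tX = \bigunion_{y\in Y_x}(\sdiff{X_y}{\{x\}})$; moreover $\dowqx = \lk(\dowx, x)$, the identity recorded just after that definition. The essential bookkeeping is the witness description of $\dowqy$: for nonempty $\xi \subseteq Y_x$, one has $\xi \in \dowqy$ exactly when some individual $x'\neq x$ satisfies $\xi \subseteq Y_{x'}$, i.e. when $\sdiff{\psi_R(\xi)}{\{x\}} \neq \emptyset$. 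Since $x$ is uniquely identifiable, $\psi_R(Y_x)=\{x\}$, so $Y_x$ has no witness other than $x$; hence $Y_x \not\in \dowqy$ and every simplex of $\dowqy$ is a proper subset of $Y_x$, giving the inclusion $\dowqy \subseteq \partial(Y_x)$ for free.

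Granting this, (a) $\Rightarrow$ (c) is immediate from Lemma~\ref{IdentPriv}: attribute privacy for $x$ says precisely that $\sdiff{\psi_R(\gamma)}{\{x\}}\neq\emptyset$ for every proper $\gamma \subsetneq Y_x$, which by the witness description means every proper subset of $Y_x$ lies in $\dowqy$, that is, $\partial(Y_x) \subseteq \dowqy$; combined with the reverse inclusion above this yields $\dowqy = \partial(Y_x)$. For (c) $\Rightarrow$ (b) I would invoke Dowker's Theorem (Theorem~\ref{dowker}) applied to $Q$, together with the fact recorded on page~\pageref{bndrycpxDef} that a boundary complex is a sphere: $\lk(\dowx, x) = \dowqx \homot \dowqy = \partial(Y_x) \homot \Skt$, with $k=\abs{Y_x}$.

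The main obstacle is (b) $\Rightarrow$ (a), which I would prove in contrapositive form via a homology computation, since homotopy equivalence to a sphere does not by itself return the full boundary complex. Assume $R$ fails to preserve attribute privacy for $x$ and that $k \geq 2$. By Lemma~\ref{privacyindiv} there is a facet $\gamma = \sdiff{Y_x}{\{y\}}$ with $(\clsy)(\gamma)=Y_x$; applying $\psi_R$ and using unique identifiability gives $\psi_R(\gamma)=\{x\}$, so by the witness description this facet is absent, $\gamma \not\in \dowqy$. Thus $\dowqy$ is a subcomplex of $\partial(Y_x)$ missing at least one of the $k$ top faces $F_i=\sdiff{Y_x}{\{y_i\}}$. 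Now $\partial(Y_x)$ has no $(k-1)$-simplices, so neither does $\dowqy$, whence $\widetilde{H}_{k-2}(\dowqy;\Z) = \ker \redbndry_{k-2}$. Any reduced $(k-2)$-cycle of $\dowqy$ is also one of $\partial(Y_x)$, whose top reduced cycle group is the rank-one group generated by the fundamental class $z=\sum_i (-1)^i F_i$; since the coefficient of the missing facet in $z$ is $\pm 1$ but must vanish in any chain supported on $\dowqy$, the only such cycle is $0$. Hence $\widetilde{H}_{k-2}(\dowqy;\Z)=0$, while $\widetilde{H}_{k-2}(\Skt;\Z)=\Z$, so by Dowker duality $\dowqx \homot \dowqy \not\homot \Skt$, contradicting (b).

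Finally I would dispose of the degenerate case $k=\abs{Y_x}=1$ separately, where the homology argument above does not apply. Here unique identifiability forces $X_{y_0}=\{x\}$ for the single attribute $y_0\in Y_x$, so $\tX=\emptyset$ and the special case of Definition~\ref{linksigma} makes $\dowqy=\dowqx=\{\emptyset\}=\Smo=\partial(Y_x)$; moreover $\phi_R(X)\subseteq Y_x=\{y_0\}$ cannot equal $\{y_0\}$ (else $\psi_R(\{y_0\})=X$ with $\abs{X}>1$), so $\phi_R(X)=\emptyset$ and attribute privacy for $x$ holds automatically. Thus all three conditions hold at once when $k=1$, so the cycle is vacuously maintained. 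The one point requiring care throughout is the empty-simplex and empty-complex bookkeeping, which the standing hypothesis $\abs{X}>1$ keeps consistent; it is exactly what renders the $\gamma=\emptyset$ instance of Lemma~\ref{IdentPriv} automatic (namely $\sdiff{X}{\{x\}}\neq\emptyset$).
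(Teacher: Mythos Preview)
Your proof is correct and uses essentially the same ingredients as the paper's: the witness formula $\psi_Q(\chi)=\psi_R(\chi)\setminus\{x\}$, Lemma~\ref{IdentPriv}, and the fact that a proper subcomplex of $\partial(Y_x)$ cannot carry the top homology of $\Skt$. The paper organizes this slightly differently---it proves (b)$\iff$(c) directly via the one-line observation that no proper subcomplex of a triangulated sphere is homotopic to that sphere (rather than routing through not-(a)$\Rightarrow$not-(b)), and it splits off the degenerate case as $\tX=\emptyset$ rather than $k=1$---but the content is the same.
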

\setcounter{theorem}{\value{currentThmCount}}

\begin{proof}
The hypotheses ensure that $Y_x\neq\emptyset$ (and so also
$Y\!\neq\emptyset$).  They also ensure that $x$ is a vertex of $\dowx$,
so the link is not void.  It could be an empty complex
$\{\emptyset\}$, of course.

\vspace*{0.1in}

Observe that $Q$ is the restriction of $R$ to $\tX \times Y_x$, with
$\tX = \bigunion_{y \in Y_x}\sdiff{X_y}{\{x\}}$.

\vspace*{0.1in}

If $\tX=\emptyset$, then, reasoning as in the proof of
Lemma~\ref{yLink} on page~\pageref{yLink}, we see that $\lk(\dowx, x)
= \{\emptyset\} = \Smo$.  Furthermore, $x$ does not share any of its
attributes with any other individuals in $X$.  By convention, $\dowqy
= \{\emptyset\}$ as well.  If $k=\abs{Y_x}=1$, meaning $x$ has a
single attribute, then $R$ preserves attribute privacy for $x$, since
$\abs{X} > 1$.  Also, $\Skt=\Smo=\{\emptyset\}=\partial(Y_x)$.  So
conditions (a), (b), (c) all hold. If $k=\abs{Y_x}\geq 2$, then any
one attribute of $Y_x$ implies all the others, so condition (a) does not
hold.  Moreover, conditions (b) and (c) also do not hold.  In short,
the theorem holds when $\tX=\emptyset$.

\vspace*{0.1in}

We now assume that $\tX\neq\emptyset$.  We then know that
$\lk(\dowx,x)=\dowqx\homot\dowqy$ by a dual version of
Lemma~\ref{yLink} and by Dowker duality.  Definitionally,
$\partial(Y_x) \homot \Skt$, with $k = \abs{Y_x} > 0$.  We therefore
see that (c) implies (b).  To see that (b) implies (c), observe that
the underlying vertex set of $\dowqy$ is $Y_x$, so $\dowqy \homot
\Skt$ means $\dowqy = \partial(Y_x)$, since no proper subset of a
sphere can be homotopic to that same sphere.  To prove the theorem we
therefore only need to establish that conditions (a) and (c) are
equivalent.

\vspace*{0.15in}

Recall the formulas relating $\phi_Q$ and $\phi_R$ from
page~\pageref{phiLinkformulas} and dualize them here.  We see that:

\vspace*{-0.05in}

$$\psi_Q(\chi)
  \;=\; \psi_R(\chi)\setminus\{x\}
  \;=\; \Big(\biginter_{y\in\chi}X_y\Big)\setminus\{x\},
  \quad \hbox{for all $\emptyset\neq\chi\subseteq{Y_x}$}.$$

I. Assume that $R$ preserves attribute privacy for $x$.
By Lemma~\ref{IdentPriv} and the formula above we see that
$\psi_Q(\chi) \neq \emptyset$ for all nonempty proper subsets $\chi$
of $Y_x$ and that $\psi_Q(Y_x)=\emptyset$, since $x$ is uniquely
identifiable via $R$.  Consequently, $\dowqy$ contains every nonempty
proper subset of $Y_x$ as a simplex, but does not contain $Y_x$.
(Also, $\dowqy$ contains the empty simplex since the complex is not
void.)  Thus $\dowqy = \partial(Y_x)$.

\vspace*{0.2in}

II.  Assume that $\dowqy = \partial(Y_x)$.
Then $\psi_Q(\chi)\neq\emptyset$ for every nonempty proper subset
$\chi$ of $Y_x$.  By the formula above,
$\big(\biginter_{y\in\chi}X_y\big)\setminus\{x\} \;\neq\;
\emptyset$, for each such $\chi$.
Now suppose $\chi=\emptyset\subsetneq{Y_x}$.  Then:

\vspace*{-0.075in}

$$\emptyset
  \;\neq\; \tX 
  \;=\; \psi_Q(\emptyset)
  \;\subseteq\; X\setminus\{x\}
  \;=\; \Big(\biginter_{y\in\emptyset}X_y\Big)\setminus\{x\}.$$

So we see that $\;\big(\biginter_{y\in\chi}X_y\big)\setminus\{x\} \;\neq\;
\emptyset\;$ for every proper subset $\chi$ of $Y_x$, implying that $R$
preserves attribute privacy for $x$, by Lemma~\ref{IdentPriv}.
\end{proof}

Comment: It is impossible to satisfy the following three conditions
simultaneously:

\vso

(1) $x$ is uniquely identifiable,
(2) $\abs{Y_x} = 1$,
(3) $\tX\neq\emptyset$.

\clearpage
\subsection{Group Attribute Privacy}
\markright{Group Attribute Privacy}
\label{grppriv}

We now generalize the previous theorem to arbitrary elements $(\sigma,
\gamma)$ of the doubly-labeled poset $\PR$ associated with a relation
$R$.  We stated the generalized theorem previously in the report, as
Theorem~\ref{privacymultiple}, and replicate that below.  One may view
this generalized theorem as a characterization of the conditions under
which a set $\sigma$ of individuals (i.e., a {\em group} of
individuals, in the non-mathematical sense) has its attribute privacy
preserved, as a whole, not necessarily individually.
\ Theorem~\ref{privacysingle} is a special case of
Theorem~\ref{privacymultiple}, with the ``group'' a single individual
$x$, since $(\{x\}, Y_x) \in \PR$ whenever $x$ is uniquely
identifiable via $R$ and $Y_x \neq \emptyset$.

\vspace*{0.1in}

\setcounter{currentThmCount}{\value{theorem}}
\setcounter{theorem}{\value{multprivThm}}
\begin{theorem}[Group Attribute Privacy]
\ Let $R$ be a relation on $\XxY$.\\
Suppose $(\sigma, \gamma) \in \PR$, with $\sigma \neq X$.
\ Let $Q$ be the relation modeling $\lk(\dowx, \sigma)$.\\
Then the following three conditions are equivalent:

\vspace*{0.125in}

\hspace*{0.4in}\begin{minipage}{4in}
\begin{enumerate}
\addtolength{\itemsep}{-1pt}
\item[(a)] $(\clsy)(\gamma^\prime) \;=\; \gamma^\prime$,
   for every subset $\gamma^\prime$ of $\gamma$.
\item[(b)] $\Lk(\dowx, \sigma) \;\homot\; \Skt$, with $k = \abs{\gamma}$.
\item[(c)] $\dowqy \;=\; \partial(\gamma)$.
\end{enumerate}
\end{minipage}

\end{theorem}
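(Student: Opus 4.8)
The plan is to prove this by reducing it to the machinery behind the single‑individual case (Theorem~\ref{privacysingle}), with the set $\sigma$ playing the role there played by the uniquely identifiable individual $\{x\}$, and the hypothesis $\sigma\neq X$ replacing $\abs{X}>1$. First I would record the witness formulas for the link relation. Dualizing Lemma~\ref{yLink} (as its Additional Comment permits), the relation $Q$ modelling $\lk(\dowx,\sigma)$ is $R|_{\tX\times\gamma}$ with $\tX=\bigcup_{y\in\gamma}\sdiff{X_y}{\sigma}$, and it satisfies $\dowqx=\lk(\dowx,\sigma)$ together with $\psi_Q(\chi)=\sdiff{\psi_R(\chi)}{\sigma}$ for every $\emptyset\neq\chi\subseteq\gamma$. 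Because $(\sigma,\gamma)\in\PR$ we have $\sigma=\psi_R(\gamma)$ and $\gamma=\phi_R(\sigma)$, both nonempty; in particular $\psi_Q(\gamma)=\sdiff{\sigma}{\sigma}=\emptyset$, so $\gamma\notin\dowqy$ automatically.

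The degenerate case $\tX=\emptyset$ I would dispatch first, exactly as in the proof of Theorem~\ref{privacysingle}. Here $X_y=\sigma$ for every $y\in\gamma$ (since $\sigma=\psi_R(\gamma)\subseteq X_y$ and $\sdiff{X_y}{\sigma}=\emptyset$), so $\lk(\dowx,\sigma)=\{\emptyset\}=\Smo$ and $\dowqy=\{\emptyset\}$ by convention. When $k=\abs{\gamma}=1$ all three conditions hold — the one point needing a line is that $\phi_R(X)=\emptyset$, which follows because a universal attribute would lie in $\gamma=\ys$ while $X_y=\sigma\neq X$ — whereas when $k\geq 2$ all three fail, since any single $y\in\gamma$ gives $(\clsy)(\ys)=\phi_R(\sigma)=\gamma\supsetneq\ys$, and $\{\emptyset\}$ is neither $\Skt$ nor $\bndry{(\gamma)}$.

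For the main case $\tX\neq\emptyset$, the equivalence of (b) and (c) is purely topological. By Dowker duality (Theorem~\ref{dowker}) applied to $Q$, $\dowqx\homot\dowqy$, and $\bndry{(\gamma)}\homot\Skt$ with $k=\abs{\gamma}$, so (c) gives (b) at once. Conversely, since $\gamma\notin\dowqy$ every simplex of $\dowqy$ is a proper subset of $\gamma$, whence $\dowqy\subseteq\bndry{(\gamma)}$; as no proper subcomplex of the sphere $\bndry{(\gamma)}$ can be homotopy equivalent to that sphere (the same reasoning invoked in Theorem~\ref{privacysingle}), (b) forces $\dowqy=\bndry{(\gamma)}$. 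Using the witness formula, (c) is then equivalent to asserting $\sdiff{\psi_R(\chi)}{\sigma}\neq\emptyset$ for every nonempty proper $\chi\subsetneq\gamma$; the empty case $\sdiff{X}{\sigma}\neq\emptyset$ holds precisely because $\sigma\neq X$.

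The heart of the argument is therefore a group analogue of Lemma~\ref{IdentPriv}: condition (a) holds if and only if $\sdiff{\psi_R(\chi)}{\sigma}\neq\emptyset$ for all $\chi\subsetneq\gamma$. For the forward direction, if some $\chi\subsetneq\gamma$ had $\sdiff{\psi_R(\chi)}{\sigma}=\emptyset$ then, since $\psi_R$ is inclusion-reversing and $\sigma=\psi_R(\gamma)\subseteq\psi_R(\chi)$, we would get $\psi_R(\chi)=\sigma$ and hence $(\clsy)(\chi)=\phi_R(\sigma)=\gamma\supsetneq\chi$, contradicting (a). For the converse I would invoke Lemma~\ref{propersubsets} with $\eta=\gamma$ (the full-set instance $(\clsy)(\gamma)=\phi_R(\sigma)=\gamma$ being automatic) to reduce (a) to checking the sets $\sdiff{\gamma}{\ys}$; if $(\clsy)(\sdiff{\gamma}{\ys})\supsetneq\sdiff{\gamma}{\ys}$, then, being a closure bounded above by $(\clsy)(\gamma)=\gamma$, it equals $\gamma$, and applying $\psi_R$ together with idempotence $\psi_R\circ\clsy=\psi_R$ (Lemma~\ref{idempotent}) yields $\psi_R(\sdiff{\gamma}{\ys})=\sigma$, i.e. $\sdiff{\psi_R(\sdiff{\gamma}{\ys})}{\sigma}=\emptyset$, contradicting the hypothesis. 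Chaining this equivalence with the reformulation of (c) closes the loop (a)$\iff$(c)$\iff$(b). I expect the group analogue of Lemma~\ref{IdentPriv} and the bookkeeping of the empty and degenerate cases to be where the real work lies; the (b)$\iff$(c) step is routine given Dowker duality and the sphere/subcomplex fact.
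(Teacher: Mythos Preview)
Your proposal is correct and follows essentially the same route as the paper's proof: first dispatch the degenerate case $\tX=\emptyset$ (splitting on $k=1$ versus $k\geq 2$), then for $\tX\neq\emptyset$ obtain (b)$\iff$(c) via Dowker duality and the no-proper-subcomplex-of-a-sphere fact, and finally connect (a) with (c) through the dual witness formula $\psi_Q(\chi)=\sdiff{\psi_R(\chi)}{\sigma}$, invoking Lemma~\ref{propersubsets} to reduce to the sets $\sdiff{\gamma}{\ys}$. One minor citation quibble: the identity $\psi_R\circ(\clsy)=\psi_R$ you use is really the dual of Corollary~\ref{mappedsimplex} (equivalently, the comment at the end of the proof of Lemma~\ref{idempotent}), not Lemma~\ref{idempotent} itself.
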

\setcounter{theorem}{\value{currentThmCount}}

\vspace*{0.00in}

\begin{proof}
Reminder: Since $(\sigma, \gamma) \in \PR$,
\ $\emptyset\neq\sigma\in\dowx$, $\,\emptyset\neq\gamma\in\dowy$,
$\,\phi_R(\sigma) = \gamma$, and $\psi_R(\gamma) = \sigma$.

\vspace*{0.05in}

Thus also $(\clsy)(\gamma)=\gamma$, meaning we can focus on proper
subsets of $\gamma$ for part (a).

\vspace*{0.05in}

Recall also that $Q$ is the restriction of $R$ to $\tX \times \gamma$,
with $\tX = \bigunion_{y \in \gamma}\sdiff{X_y}{\sigma}$.

\vspace*{0.05in}

If $\tX=\emptyset$, then $\lk(\dowx, \sigma) = \{\emptyset\} = \Smo$.
By convention, $\dowqy = \{\emptyset\}$ as well.  If
$k=\abs{\gamma}=1$, then $\Skt=\Smo=\{\emptyset\}=\partial(\gamma)$.
The only proper subset of $\gamma$ in this case is
$\gamma^\prime=\emptyset$, and
$(\clsy)(\emptyset)=\phi_R(X)=\emptyset$.  (Reason: If
$y\in\phi_R(X)$, then $y\in\gamma$, so $\gamma=\ys$, implying
$\sigma=X$, which is disallowed.)  Thus conditions (a), (b), (c) all
hold. If $k = \abs{\gamma} \geq 2$, then conditions (b) and (c) cannot
hold.  Also, condition (a) does not hold since $(\clsy)(\ys)=\gamma$
for each $y\in\gamma$, bearing in mind that $\tX=\emptyset$ means
$X_y=\sigma$ for each $y\in\gamma$.  In short, the theorem holds when
$\tX=\emptyset$.

\vspace*{0.1in}

We now assume that $\tX\neq\emptyset$.  As in the proof of
Theorem~\ref{privacysingle}, we see readily that conditions (b) and (c)
are equivalent, so we will prove that conditions (a) and (c) are
equivalent.  And, as in the previous proof, dualizing a formula from
page~\pageref{phiLinkformulas} gives this formula:

\vspace*{-0.05in}

  $$\psi_Q(\chi)
    \;=\; \psi_R(\chi)\setminus\sigma,
    \quad \hbox{for all $\emptyset\neq\chi\subseteq{\gamma}$}.$$

\begin{itemize}

\item[I.] Assume that $(\phi_R \circ \psi_R)(\gamma^\prime) \;=\;
\gamma^\prime$, for every subset $\gamma^\prime$ of $\gamma$.\\
   We will establish that $\dowqy$ contains all proper subsets of
   $\gamma$ but not $\gamma$, telling us $\dowqy=\partial(\gamma)$.

   Since $\dowqy$ is not void, it contains the empty simplex.

   Pick some $\emptyset\neq\gamma^\prime\subsetneq\gamma$. \ 
   Since $(\phi_R \circ \psi_R)(\gamma^\prime) \;=\; \gamma^\prime$,
   $\psi_R(\gamma^\prime) \supsetneq \sigma$.

   The formula above therefore says
   $\psi_Q(\gamma^\prime)\neq\emptyset$, telling us $\gamma^\prime\in\dowqy$.

   Similarly, $\psi_Q(\gamma) \;=\; \psi_R(\gamma)\setminus\sigma
   \;=\; \sigma\setminus\sigma \;=\; \emptyset$,\ so $\gamma\not\in\dowqy$.

\vspace*{0.1in}

\item[II.]  Assume that $\dowqy = \partial(\gamma)$.

   Recall that $k = \abs{\gamma} > 0$.
   We look at two cases based on the value of $k$:

\begin{itemize}

   \item[$k=1$:] In this case, $\gamma = \ys$, for some $y\in Y$, so
   $\sigma = X_y$ and $\tX = \emptyset$, which we discussed above.

\vspace*{0.1in}

   \item[$k>1$:] Suppose, for the sake of contradiction, that
   $\gamma^\prime\subsetneq(\clsy)(\gamma^\prime)$, for some
   $\gamma^\prime \subsetneq \gamma$.  By Lemma~\ref{propersubsets} on
   page~\pageref{propersubsets}, we can assume $\gamma^\prime =
   \gamma\setminus\ys$, for some $y\in\gamma$.  Consequently,
   $(\clsy)(\gamma^\prime)=\gamma$, which implies
   $\psi_R(\gamma^\prime) = \sigma$.  The formula on the previous page
   then says $\psi_Q(\gamma^\prime)=\emptyset$, whereas
   $\gamma^\prime\in\dowqy$ means
   $\psi_Q(\gamma^\prime)\neq\emptyset$, a contradiction.

\end{itemize}
\end{itemize}

\vspace*{-0.25in}

\end{proof}

\vspace*{0.1in}

\paragraph{}$\phantom{0}$

\vspace*{-0.45in}

\label{localOperAppPage}

The following lemma, previously stated on page~\pageref{interplocal},
relates privacy preservation in a link to privacy preservation in the
encompassing relation.

\setcounter{currentThmCount}{\value{theorem}}
\setcounter{theorem}{\value{InterpLocalOperators}}
\begin{lemma}[Interpreting Local Operators]
\ Let $R$ be a relation on $\XxY$.

\vst

Suppose $(\sigma, \gamma) \in \PR$, with $\sigma \neq X$.

Let $Q$ be the relation on $\tX \times \gamma$ that models
$\lk(\dowx, \sigma)$ and suppose $\tX \neq \emptyset$.$\phantom{\Big|}$

\vspace*{0.05in}

Then, for every $\gamma^\prime \subseteq \gamma$:

\vspace*{-0.15in}

\hspace*{1.7in}\begin{minipage}{4in}
\begin{enumerate}

\item[(i)] If $\,\gamma^\prime \not\in\dowqy$,
              then $\psi_R(\gamma^\prime) = \sigma$.

\item[(ii)] If $\,\gamma^\prime \in\dowqy$,
              then $\psi_R(\gamma^\prime) \supsetneq \sigma$.

\vspace*{0.05in}

   Moreover, in this case:

For $\,\gamma^\prime=\emptyset$, \ $(\clsqy)(\emptyset) \supseteq
(\clsy)(\emptyset)$.

\vspace*{0.05in}

If $\,\gamma^\prime\neq\emptyset$, \,then
$(\clsqy)(\gamma^\prime) = (\clsy)(\gamma^\prime)$.

\end{enumerate}
\end{minipage}

\end{lemma}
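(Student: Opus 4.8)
The plan is to exploit the link witness formulas from page~\pageref{phiLinkformulas}, dualized to the association-link setting, exactly as was done in the proofs of Theorems~\ref{privacysingle} and~\ref{privacymultiple}. The key dual formula is
$$\psi_Q(\gamma^\prime) \;=\; \psi_R(\gamma^\prime)\setminus\sigma,
  \quad\hbox{for all $\emptyset\neq\gamma^\prime\subseteq\gamma$},$$
together with the boundary case $\psi_Q(\emptyset) = \tX$. Since $(\sigma,\gamma)\in\PR$, we have $\psi_R(\gamma)=\sigma$ and $\gamma\subseteq Y_x$ for every $x\in\sigma$, so $\sigma\subseteq\psi_R(\gamma^\prime)$ for every $\gamma^\prime\subseteq\gamma$ by inclusion-reversal of $\psi_R$ (Lemma~\ref{orderreversing}, dualized). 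Thus for any $\gamma^\prime\subseteq\gamma$, membership $\gamma^\prime\in\dowqy$ is equivalent to $\psi_Q(\gamma^\prime)\neq\emptyset$, which by the formula is equivalent to $\psi_R(\gamma^\prime)\supsetneq\sigma$. This immediately gives both (i) and (ii): if $\gamma^\prime\not\in\dowqy$ then $\psi_Q(\gamma^\prime)=\emptyset$, forcing $\psi_R(\gamma^\prime)=\sigma$ (the containment $\sigma\subseteq\psi_R(\gamma^\prime)$ cannot be strict); and if $\gamma^\prime\in\dowqy$ then $\psi_R(\gamma^\prime)\supsetneq\sigma$.

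For the ``moreover'' clauses of (ii), I would treat the empty and nonempty cases separately, since the witness formula behaves differently on $\emptyset$. For $\gamma^\prime=\emptyset$, note $(\clsqy)(\emptyset)=\phi_Q(\psi_Q(\emptyset))=\phi_Q(\tX)$, and using the dual $\phi$-link formula $\phi_Q(\kappa)=\phi_R(\kappa)\setminus\sigma$ (page~\pageref{phiLinkformulas}) I would compute $\phi_Q(\tX)$ and compare with $(\clsy)(\emptyset)=\phi_R(X)$. The containment $(\clsqy)(\emptyset)\supseteq(\clsy)(\emptyset)$ should fall out because $\phi_R(X)\subseteq\gamma$ (every attribute shared by all of $X$ is shared by all of $\sigma$, hence lies in $\phi_R(\sigma)=\gamma$), and restricting the domain of intersection from $X$ to the smaller witness set $\tX$ only enlarges the intersection. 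The careful point is to track which attributes survive the removal of $\sigma$ and of $\gamma$ in passing from $R$ to $Q$.

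For $\gamma^\prime\neq\emptyset$ with $\gamma^\prime\in\dowqy$, I would show $(\clsqy)(\gamma^\prime)=(\clsy)(\gamma^\prime)$ by direct computation: $(\clsqy)(\gamma^\prime)=\phi_Q(\psi_Q(\gamma^\prime))=\phi_R(\psi_Q(\gamma^\prime))\setminus\gamma$, and since $\psi_Q(\gamma^\prime)=\psi_R(\gamma^\prime)\setminus\sigma$ with $\psi_R(\gamma^\prime)\supsetneq\sigma$, the individuals removed (those in $\sigma$) contribute no new constraint to the intersection defining $\phi_R$ beyond what the surviving individuals already impose — this is the same cancellation used in Lemma~\ref{ClIdLinkDel}. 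One then checks $\phi_R(\psi_R(\gamma^\prime))\setminus\gamma$ reassembles into $(\clsy)(\gamma^\prime)$, using that $\gamma^\prime\subseteq\gamma\subseteq(\clsy)(\gamma^\prime)$. The main obstacle I anticipate is precisely this last step: verifying that deleting the $\sigma$-rows does not perturb the closure, i.e.\ that $\phi_R\big(\psi_R(\gamma^\prime)\setminus\sigma\big)=\phi_R(\psi_R(\gamma^\prime))$. This requires knowing that each $x\in\sigma$ already has all attributes in $\phi_R(\psi_R(\gamma^\prime))$ — which holds because $x\in\sigma=\psi_R(\gamma)\subseteq\psi_R(\gamma^\prime)$ — so no attribute is lost by dropping $\sigma$ from the intersection. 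Once that row-deletion invariance is pinned down, the two composite operators agree on $\dowqy$ and the lemma follows.
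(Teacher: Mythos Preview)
Your handling of parts (i) and (ii) is correct and is exactly the paper's argument: the dual witness formula $\psi_Q(\gamma')=\psi_R(\gamma')\setminus\sigma$ together with the observation $\sigma\subseteq\psi_R(\gamma')$ (from $\gamma'\subseteq\gamma$) gives both conclusions immediately.

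The gap is in the ``moreover'' clause for $\gamma'\neq\emptyset$. You invoke $\phi_Q(\kappa)=\phi_R(\kappa)\setminus\gamma$, but that is the \emph{undualized} formula from the attribute-link setting of Lemma~\ref{yLink}, where the link's attribute space \emph{excludes} the linking simplex. Here $Q$ models $\lk(\dowx,\sigma)$: the linking simplex is $\sigma\subseteq X$, and the attribute space of $Q$ is $\gamma$ itself, not its complement. The correct dual of the pair on page~\pageref{phiLinkformulas} is
\[
\psi_Q(\chi)=\psi_R(\chi)\setminus\sigma,\qquad
\phi_Q(\kappa)=\phi_R(\kappa\cup\sigma)\quad(\text{equivalently }\phi_R(\kappa)\cap\gamma).
\]
With your formula the final step cannot close: since $(\clsy)(\gamma')\subseteq(\clsy)(\gamma)=\gamma$, subtracting $\gamma$ from $\phi_R(\psi_R(\gamma'))$ annihilates the closure rather than recovering it (and your asserted containment $\gamma\subseteq(\clsy)(\gamma')$ is reversed). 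With the correct formula the ``row-deletion invariance'' you flagged as the main obstacle evaporates in one line,
\[
(\clsqy)(\gamma')
=\phi_Q\bigl(\psi_R(\gamma')\setminus\sigma\bigr)
=\phi_R\bigl((\psi_R(\gamma')\setminus\sigma)\cup\sigma\bigr)
=\phi_R(\psi_R(\gamma'))
=(\clsy)(\gamma'),
\]
using only $\sigma\subseteq\psi_R(\gamma')$. This is precisely the paper's proof; no separate bookkeeping of which attributes survive is needed.
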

\setcounter{theorem}{\value{currentThmCount}}

\vspace*{0.1in}

\begin{proof}

Observe that for every $\gamma^\prime \subseteq \gamma$, one has 
$\gamma^\prime \in \dowy$ and $\psi_R(\gamma^\prime) \supseteq
\psi_R(\gamma) = \sigma$.

\vspace*{0.1in}

By formulas from page~\pageref{phiLinkformulas} dualized,
\ $\psi_Q(\gamma^\prime) = \psi_R(\gamma^\prime)\setminus\sigma$
\;and\;
$\psi_R(\gamma^\prime) = \psi_Q(\gamma^\prime) \union \sigma$,
when $\emptyset\neq \gamma^\prime \subseteq \gamma$.

\vspace*{0.1in}

(i) Suppose $\gamma^\prime \not\in \dowqy$.  Then
$\gamma^\prime\neq\emptyset$, since $\emptyset\in\dowqy$.  Also,
$\psi_Q(\gamma^\prime)=\emptyset$, so by the second formula above,
$\psi_R(\gamma^\prime)=\sigma$.

\vspace*{0.1in}

(ii) Suppose $\gamma^\prime \in \dowqy$.
If $\gamma^\prime=\emptyset$, then $\psi_R(\emptyset) = X \supsetneq
\sigma$, by hypothesis.
If $\gamma^\prime\neq\emptyset$, then
$\psi_Q(\gamma^\prime)\neq\emptyset$, so by the formulas
above, $\psi_R(\gamma^\prime)\supsetneq\sigma$.

\vspace*{0.1in}

Turning to the ``Moreover'':

\vst

If $y\in(\clsy)(\emptyset)$, then $y$ is an attribute for all
individuals in $X$, so $y\in\gamma$ and
$y\in\phi_Q(\tX)=(\clsqy)(\emptyset)$.

\vspace*{0.1in}

Let $\emptyset\neq\gamma^\prime \!\in \dowqy$.
By another formula on page~\pageref{psiLinkformulas} dualized, if
$\kappa\subseteq\tX$, then $\phi_Q(\kappa) = \phi_R(\kappa\union\sigma)$.

\vspace*{0.1in}

Therefore, using the first formula above: $(\clsqy)(\gamma^\prime) =
\phi_Q(\psi_R(\gamma^\prime)\setminus\sigma)= (\clsy)(\gamma^\prime)$.
\end{proof}

{\bf Comment:} \ Also, $(\clsqy)(\emptyset) = \phi_Q(\tX) =
\phi_R\big(\bigunion_{y\in\gamma}X_y\,\big) = \biginter_{y\in\gamma}(\clsy)(\ys)$.

\clearpage
\subsection{Preserving Attribute and Association Privacy}
\markright{Preserving Attribute and Association Privacy}
\label{preservingboth}

In this subsection, we are interested in understanding relations that
preserve {\em \,both\,} attribute and association privacy.  We will
discover that this requirement is severely limiting.  As we
already see from Theorem~\ref{toomanyattrib} on
page~\pageref{toomanyattrib}, if $R$ is a nonvoid tight relation on
$\XxY$ that preserves both attribute and association privacy, then
$\abs{X}=\abs{Y}=n$.  \ What are the possibilities?

\begin{itemize}

\item[$n=0$:] Not relevant; this is a void relation.

\item[$n=1$:] Not possible; such a relation does not preserve privacy;
  one can infer the single individual or single attribute ``for free''
  (e.g., merely by knowing someone is covered by the relation).

\item[$n=2$:]  As we have seen before, such a relation must
be isomorphic to the following relation:\label{twoelementdiag}

\vspace*{-0.1in}

$$\begin{array}{c|cc}
R   & y_1 & y_2 \\[2pt]\hline
x_1 & \one &       \\
x_2 &      & \one  \\
\end{array}$$

Then both $\dowx$ and $\dowy$ are instances of the 0-sphere $\Szero$.

\item[$n\geq{3}$:]  Now there are several possibilities:

\begin{itemize}
\item The relation could be isomorphic to a {\em cyclic staircase relation}:

\vspace*{-0.1in}

$$\begin{array}{c|cccccc} R & y_1 & y_2 & \cdots & \cdots & y_{n-1} &
y_n \\[2pt]\hline x_1 & \one & \one & & & & \\ x_2 & & \one & \one & & &
\\
\vdots &   &      & \ddots & \ddots &         &      \\
\vdots &   &      &        & \ddots & \one    &      \\
x_{n-1} &  &      &        &        & \one    & \one \\
x_n & \one &      &        &        &         & \one \\
\end{array}$$

Then both $\dowx$ and $\dowy$ are homotopic to the 1-sphere $\Sone$.
Each is simply a linear cycle of edges, with vertices in one complex
dualizing to edges in the other.

\label{staircase}

\item The relation could be isomorphic to a {\em spherical boundary
relation}, in which every entry is present except that a diagonal is
blank.  For example, in the following relation all entries are present
except those of the form $(x_i, y_{n-i+1})$, $i=1, \ldots, n$:

\vspace*{-0.1in}

$$\begin{array}{c|cccccc}
R       & y_1    & y_2    & \cdots  & \cdots  & y_{n-1} & y_n    \\[2pt]\hline
x_1     & \one   & \one   & \one    & \cdots  & \one    &        \\
x_2     & \one   & \one   & \cdots  & \one    &         & \one   \\
\vdots  & \vdots & \vdots & \uddots &         & \one    & \vdots \\
\vdots  & \one   & \one   &         & \uddots & \vdots  & \one   \\
x_{n-1} & \one   &        & \one    & \cdots  & \one    & \one   \\
x_n     &        & \one   & \one    & \cdots  & \one    & \one   \\
\end{array}$$

Then $\dowx$ and $\dowy$ are each boundary complexes, namely
$\dowx=\partial(X)$ and $\dowy=\partial(Y)$.  Thus both are homotopic to
the $(n-2)$-sphere $\Snt$.

\item Finally, $R$ could have multiple components, each of which is
isomorphic to one of the following: A singleton, a cyclic staircase
relation, or a spherical boundary relation, all as above.  (Observe
that even though a nonblank $1 \times 1$ relation in and of itself
preserves no privacy, a relation containing a nonblank $1 \times 1$
subrelation can preserve privacy when that subrelation is one of
several components.)
\end{itemize}

(Comment: the staircase and spherical relations are isomorphic when $n=3$.)

\end{itemize}

The aim of this subsection is to prove that these are the only
possibilities.

\vspace*{0.2in}

\begin{lemma}\label{PrivacyLinks}
Let $R$ be a connected tight relation on $\XxY$, with $\abs{X} =
\abs{Y} \geq 3$, that preserves both attribute and association privacy.

Let $x\in X$ and define $Q$ to be the relation on $\tX \times Y_x$
that models $\lk(\dowx,x)$.

Then $\dowqx=\partial(\tX)$ and \,$\dowqy=\partial(Y_x)$, with
$\abs{\tX}=\abs{Y_x}$.
\end{lemma}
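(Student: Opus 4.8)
The plan is to combine the individual attribute-privacy characterization (Theorem~\ref{privacysingle}) on the attribute side with a cardinality-matching argument on the association side, and then to pin down $\dowqx$ as a boundary complex by a minimal-triangulation argument. First I would record the consequences of the hypotheses. Since $R$ is tight (hence has no blank columns), preserves attribute privacy, and satisfies $\abs{X}=\abs{Y}=n>1$, Lemma~\ref{EqIdent} shows every individual, in particular $x$, is uniquely identifiable via $R$. Connectedness with $n\geq 3$ then forces $\abs{Y_x}\geq 2$ and $\tX\neq\emptyset$: if $Y_x=\{y\}$ then unique identifiability gives $X_y=\psi_R(Y_x)=\{x\}$, so $\{x,y\}$ is an isolated two-element component, contradicting connectedness for $n\geq 3$; and if $\tX=\emptyset$ then $X_y=\{x\}$ for every $y\in Y_x$, so the component of $x$ contains the single individual $x$, again contradicting connectedness. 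Because $R$ preserves attribute privacy globally it preserves attribute privacy for $x$, so the implication (a)$\Rightarrow$(c) of Theorem~\ref{privacysingle}, applied to the relation $Q$ modeling $\lk(\dowx,x)$, yields $\dowqy=\partial(Y_x)$. Writing $k=\abs{Y_x}$, this gives $\dowqy\homot\Skt$.

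Next I would establish the cardinality equality $\abs{\tX}=\abs{Y_x}$, which is the crux. I would first check that $Q$ is tight: every $\bar x\in\tX$ shares some attribute $y\in Y_x$ with $x$, so its row in $Q$ is nonempty; and each column $y\in Y_x$ is nonempty because $X_y\setminus\{x\}=\emptyset$ would force $X_y=\{x\}$, i.e. $\psi_R(\{y\})=\{x\}$ with $\{y\}\subsetneq Y_x$, contradicting Lemma~\ref{noproperident}. With $Q$ tight, $\{x\}\in\dowx$, and $\tX\neq\emptyset$, the dual of Corollary~\ref{linkprivacy} shows that $Q$ preserves both attribute and association privacy. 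Theorem~\ref{toomanyattrib} applied to $Q$ then gives $\abs{Y_x}\leq\abs{\tX}$, and its dual gives $\abs{\tX}\leq\abs{Y_x}$, whence $\abs{\tX}=\abs{Y_x}=k$.

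Finally I would identify $\dowqx$. By Dowker duality (Theorem~\ref{dowker}) applied to $Q$, $\dowqx\homot\dowqy=\partial(Y_x)\homot\Skt$. Since $Q$ is tight, the underlying vertex set of $\dowqx$ is exactly $\tX$, which now has $k$ elements. If $\tX$ itself were a simplex of $\dowqx$, then $\dowqx$ would be the full simplex on $\tX$, hence contractible, contradicting $\dowqx\homot\Skt$ with $k\geq 2$; therefore every simplex of $\dowqx$ is a proper subset of $\tX$, i.e. $\dowqx\subseteq\partial(\tX)$. As $\partial(\tX)$ is the minimal triangulation of $\Skt$, no proper subcomplex of it can be homotopy equivalent to $\Skt$ (deleting any facet makes $\widetilde{H}_{k-2}$ vanish), exactly the reasoning used in the proof of Theorem~\ref{privacysingle}; hence $\dowqx=\partial(\tX)$, completing the argument. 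The main obstacle is the middle step: the homotopy statement $\dowqx\homot\Skt$ alone does not force $\dowqx=\partial(\tX)$ unless one knows $\abs{\tX}=\abs{Y_x}$, and securing that equality is precisely where the association privacy of $Q$ (via the dual of Corollary~\ref{linkprivacy} and Theorem~\ref{toomanyattrib}) is essential.
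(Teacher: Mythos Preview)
Your proof is correct, but your route to the key equality $\abs{\tX}=\abs{Y_x}$ differs from the paper's. The paper argues directly: since $\dowqy=\partial(Y_x)$ has exactly $k$ maximal simplices, there are $k$ distinguished vertices $x_1,\ldots,x_k\in\tX$ generating them; for an arbitrary $\xtild\in\tX$, the row $Y_{\xtild}\cap Y_x$ lies in some $\tY_i$, giving $\emptyset\neq\phi_R(\{\xtild,x\})\subseteq\phi_R(\{x_i\})$, and the dual of Lemma~\ref{privacycolumns} (association privacy of $R$) forces $\xtild=x_i$. You instead show $Q$ is tight, invoke the dual of Corollary~\ref{linkprivacy} to conclude $Q$ inherits both privacy properties, and then apply Theorem~\ref{toomanyattrib} and its dual to $Q$ to squeeze $\abs{\tX}=\abs{Y_x}$. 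Your approach is more structural and reuses higher-level machinery; the paper's is more hands-on and makes visible exactly which elements of $\tX$ account for the $k$ facets of $\dowqy$, information that is reused in the subsequent Corollary~\ref{rowandcolsizesequal} and Theorem~\ref{privbndrycmplx}. Both are valid; the final deduction that $\dowqx=\partial(\tX)$ from $\dowqx\homot\Skt$ on $k$ vertices is the same in both.
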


\begin{proof}
Observe that $Y_x\neq\emptyset$ since $R$ is tight.  Recall that
$\tX=\bigunion_{y \in Y_x}\sdiff{X_y}{\{x\}}$, which is nonempty
since $R$ is connected and $X$ contains not just $x$.

By Lemma~\ref{EqIdent} on page~\pageref{EqIdent}, $x$ is uniquely
identifiable via $R$, so Theorem~\ref{privacysingle} on
page~\pageref{privacysingleAppPage} says that $\dowqx\homot\Skt$ and
$\dowqy=\partial(Y_x)$, with $k=\abs{Y_x}$.  If we can show that
$\abs{\tX}=k$, then we can conclude that $\dowqx=\partial(\tX)$.
$\phantom{R^{1^{-}}}$(We also see that $k \geq 2$, since
$\tX\neq\emptyset$.)

\vso

The vertices of $\dowqx$ generate the maximal simplices of $\dowqy$.
In particular, there exist distinct $x_1, \ldots, x_k \in \tX$ such
that $\tY_{\!1}, \ldots, \tY_{\!k}$ are the maximal simplices of
$\dowqy$, with $\tY_{\!i} = Y_{x_{\scriptstyle i}} \inter Y_{x}$, and
$\abs{\tY_{\!i}}=k-1$, for $i=1, \ldots, k$.

\vst

Let $\xtild\in\tX$.  Then $Y_\xtild \inter Y_x \subseteq \tY_{\!i}
\subseteq Y_{x_{\scriptstyle i}}$, for some $i\in\{1, \ldots, k\}$.

\vso

That says $\emptyset\neq\phi_R(\{\xtild, x\}) \subseteq \phi_R(\{x_i\})$.

\vso

Since $R$ preserves association privacy, the dualization of
Lemma~\ref{privacycolumns} on page~\pageref{privacycolumns}
implies $\xtild=x_i$.
\quad Thus $\abs{\tX}=k$.
\end{proof}

Comment: Where did we use the assumption that each of $X$ and $Y$ has
at least three elements?  In fact, for much of the proof it is enough
to assume that $\abs{X} = \abs{Y} \geq 2$.  However, there is no
connected tight relation that preserves privacy when $\abs{X} =
\abs{Y} = 2$.

\vspace*{0.1in}

\begin{corollary}\label{notlinearcycle}
Let $R$ be a connected tight relation on $\XxY$, with $\abs{X} =
\abs{Y}$, that preserves both attribute and association privacy.

Let $y\in Y$ and suppose $\abs{X_y}\geq 4$.

Then $\lk(\dowy,y)$ is not a linear cycle.
\quad (In other words, the relation $Q$ that models $\lk(\dowy,y)$ is
\underline{{\em not}} isomorphic to a cyclic staircase relation.)
\end{corollary}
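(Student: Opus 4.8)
The plan is to read off the structure of the link $\lk(\dowy, y)$ directly from the dual of Lemma~\ref{PrivacyLinks}, and then observe that a boundary complex on at least four vertices is simply too high-dimensional to be a linear cycle. The whole argument reduces to a dimension count once the link is identified as a boundary complex.

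First I would note that the hypothesis $\abs{X_y} \geq 4$ together with $X_y \subseteq X$ forces $\abs{X} = \abs{Y} \geq 4 \geq 3$, so we are in the regime covered by Lemma~\ref{PrivacyLinks}. Since the hypotheses on $R$ — connected, tight, $\abs{X} = \abs{Y}$, and preservation of both attribute and association privacy — are all symmetric under the interchange of individuals and attributes, Dowker duality lets me apply Lemma~\ref{PrivacyLinks} with the roles of $X$ and $Y$ (and of $\dowx$ and $\dowy$) swapped. Applied to the vertex $y \in Y$, the dual lemma yields that the relation $Q$ modeling $\lk(\dowy, y)$ satisfies $\dowqy = \bndry{(\tY)}$ and $\dowqx = \bndry{(X_y)}$, with $\abs{\tY} = \abs{X_y}$, where $\tY = \bigcup_{x \in X_y} \sdiff{Y_x}{\ys}$.

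Next, recalling from Definition~\ref{linkgamma} that $\lk(\dowy, y) = \dowqy$, I would identify the link in question as the boundary complex $\bndry{(\tY)}$ on $\abs{\tY} = \abs{X_y} \geq 4$ vertices. Such a complex has as its maximal simplices exactly the $(\abs{\tY}-1)$-element subsets of $\tY$, which have dimension $\abs{\tY} - 2 \geq 2$; equivalently $\bndry{(\tY)} \homot \Snt$ with $n = \abs{\tY} \geq 4$, so the sphere dimension $n-2$ is at least $2$. A linear cycle, by contrast, is a one-dimensional complex: a cycle of edges, homotopic to $\Sone$. Since $\bndry{(\tY)}$ contains simplices of dimension at least two, it is not one-dimensional, and hence cannot be a linear cycle — which is exactly the claim (equivalently, $Q$ is not isomorphic to a cyclic staircase relation).

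The only real care needed is in justifying the dualization: I must confirm that the link-modeling relation $Q$ is genuinely non-void, i.e. $\tY \neq \emptyset$, so that Lemma~\ref{PrivacyLinks} applies in its stated form rather than through an empty-complex convention. This follows because $R$ is connected with $\abs{X} \geq 2$, so $X_y \subsetneq X$ (attribute privacy forces $(\clsy)(\emptyset) = \emptyset$, meaning no attribute is shared by all of $X$), and connectedness then guarantees that some individual in $X_y$ carries an attribute other than $y$. With that in hand the argument is otherwise a pure dimension count on the boundary complex, so I expect no further obstruction.
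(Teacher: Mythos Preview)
Your proposal is correct and follows essentially the same approach as the paper: invoke the dual of Lemma~\ref{PrivacyLinks} to identify the link as a boundary complex (equivalently, a sphere $\Skt$ with $k=\abs{X_y}$), then observe that $k-2\geq 2$ rules out a linear cycle. Your additional checks that $\abs{X}=\abs{Y}\geq 3$ and that $\tY\neq\emptyset$ are sound and simply make explicit what the paper's proof of Lemma~\ref{PrivacyLinks} already establishes internally.
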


\begin{proof}
Arguing as in the proof of Lemma~\ref{PrivacyLinks}, now in dual form,
we see that $\lk(\dowy,y) \homot \Skt$, with $k=\abs{X_y}$.  \ Since
$k-2 \;\geq\; 2$, \ $\lk(\dowy,y)$ is not a linear cycle.
\end{proof}

\begin{corollary}\label{columnsizeequal}
Let $R$ be a connected tight relation on $\XxY$, with $\abs{X} =
\abs{Y} \geq 3$, that preserves both attribute and association privacy.

Suppose $\{x, x^\prime\}$, with $x\neq x^\prime$, is an edge (1-simplex) in $\dowx$.

Then $\abs{Y_x} = \abs{Y_{x^\prime}}$.
\end{corollary}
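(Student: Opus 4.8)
The plan is to apply Lemma~\ref{PrivacyLinks} to each of the two endpoints $x$ and $x'$ and then to compare the size of the common attribute set $Y_x \inter Y_{x'}$ as measured from each side. First I would unpack the edge hypothesis: since $\{x,x'\}$ is a $1$-simplex of $\dowx$, the two individuals share a witnessing attribute, so $Y_x \inter Y_{x'} \neq \emptyset$. Writing $Q$ for the relation modeling $\lk(\dowx, x)$ (as in Definition~\ref{linksigma}), defined on $\tX \times Y_x$ with $\tX = \bigunion_{y \in Y_x}\sdiff{X_y}{\{x\}}$, this nonempty intersection forces $x' \in \tX$; symmetrically, with $Q'$ modeling $\lk(\dowx, x')$ on $\tX' \times Y_{x'}$, we get $x \in \tX'$. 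The hypotheses of the corollary are precisely those of Lemma~\ref{PrivacyLinks}, so that lemma applies verbatim to both endpoints, yielding $\dowqy = \partial(Y_x)$ with $\abs{\tX} = \abs{Y_x}$, and $\Phi_{Q'} = \partial(Y_{x'})$ with $\abs{\tX'} = \abs{Y_{x'}}$.

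The central step I would carry out is a counting argument showing that every neighbor of $x$ shares all but exactly one of $x$'s attributes. The row of $Q$ indexed by an individual $x'' \in \tX$ is exactly $Y_{x''} \inter Y_x$, and $\dowqy$ is generated by these rows. Since $\dowqy = \partial(Y_x)$, its maximal simplices are the $k$ facets $\sdiff{Y_x}{\{y\}}$ for $y \in Y_x$, where $k = \abs{Y_x}$. I would invoke the standard fact that each facet of a complex generated by a family of sets must coincide with one of the generating rows: a facet $F$ lies in some row $r$, which in turn lies in some facet, forcing $r = F$ by maximality. The assignment of facets to realizing rows is therefore an injection from the $k$ facets into the $k$-element set $\tX$, hence a bijection, so every row of $Q$ equals a facet, of size $k-1$. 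In particular $\abs{Y_{x'} \inter Y_x} = \abs{Y_x} - 1$.

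Running the identical argument for $Q'$, now using $x \in \tX'$, will give $\abs{Y_x \inter Y_{x'}} = \abs{Y_{x'}} - 1$. Since $Y_{x'} \inter Y_x = Y_x \inter Y_{x'}$, equating the two expressions yields $\abs{Y_x} - 1 = \abs{Y_{x'}} - 1$, whence $\abs{Y_x} = \abs{Y_{x'}}$, as desired.

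The main obstacle, and really the only nontrivial point, is justifying that each neighbor realizes a full facet rather than a smaller face, i.e.\ that every row of $Q$ has size exactly $k-1$. This is where both halves of the conclusion of Lemma~\ref{PrivacyLinks} are needed: the boundary-complex shape $\dowqy = \partial(Y_x)$ fixes the facets, while the cardinality equality $\abs{\tX} = \abs{Y_x}$ supplies exactly enough individuals to force the facet-to-row assignment to be a bijection. I would take care to note that $\tX$ has no blank rows—every $x'' \in \tX$ witnesses some $y \in Y_x$ by definition of $\tX$—so the pigeonhole count is clean, and that the argument is perfectly symmetric in $x$ and $x'$, so no extra case analysis is required.
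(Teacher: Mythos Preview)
Your proposal is correct and follows essentially the same route as the paper: apply Lemma~\ref{PrivacyLinks} at each endpoint, show that $\abs{Y_x \inter Y_{x'}} = \abs{Y_x}-1$ and symmetrically $\abs{Y_{x'}}-1$, then equate. The only cosmetic difference is that the paper cites ``the proof of Lemma~\ref{PrivacyLinks}'' directly for the fact that every neighbor generates a facet of $\partial(Y_x)$, while you re-derive this from the stated cardinality $\abs{\tX}=\abs{Y_x}$ via a pigeonhole/bijection argument; both are valid.
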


\begin{proof}

Let $k=\abs{Y_x}$ and $k^\prime=\abs{Y_{x^\prime}}$.

Observe that $x^\prime$ is a vertex of $\lk(\dowx,x)$ and $x$ is a
vertex of $\lk(\dowx,x^\prime)$.

By the proof of Lemma~\ref{PrivacyLinks}, each of $x^\prime$ and $x$
generates a maximal simplex in the attribute complex associated with the
other's link.  That simplex is ${Y_x \inter Y_{x^\prime}}$ in both
complexes.

So $k-1 = \abs{Y_x \inter Y_{x^\prime}} = k^\prime - 1$, hence $k=k^\prime$.
\end{proof}

\begin{corollary}\label{rowandcolsizesequal}
Let $R$ be a connected tight relation on $\XxY$, with $\abs{X} =
\abs{Y} \geq 3$, that preserves both attribute and association privacy.

Then all rows and columns have the same number of nonblank entries.
\end{corollary}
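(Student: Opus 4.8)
The plan is to prove the statement in three steps: first show that all rows of $R$ have the same number of nonblank entries, then, by the dual argument, that all columns do, and finally use a double-counting identity to conclude that the common row size equals the common column size.

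First I would establish that all rows have a common size. Since $R$ is tight, every $x \in X$ satisfies $Y_x \neq \emptyset$, so each $x$ is a vertex of $\dowx$. By Lemma~\ref{connectedcplx}, connectedness of $R$ makes $\dowx$ path-connected. Given arbitrary individuals $x, x' \in X$, one can then (exactly as in part~II of the proof of Lemma~\ref{connectedcplx}) deform a path between them into a finite sequence of edges $\{x_0, x_1\}, \ldots, \{x_{m-1}, x_m\}$ in $\dowx$, with $x_0 = x$, $x_m = x'$, and consecutive vertices distinct. Corollary~\ref{columnsizeequal} applies to each edge $\{x_{i-1}, x_i\}$, giving $\abs{Y_{x_{i-1}}} = \abs{Y_{x_i}}$, and chaining these equalities along the path yields $\abs{Y_x} = \abs{Y_{x'}}$. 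As $x, x'$ were arbitrary, there is a common value $r$ with $\abs{Y_x} = r$ for all $x \in X$. Invoking the dual of this argument (using the dual of Corollary~\ref{columnsizeequal} together with the path-connectedness of $\dowy$ from Lemma~\ref{connectedcplx}), all columns likewise share a common size $c$, so $\abs{X_y} = c$ for every $y \in Y$.

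It then remains only to show $r = c$. Set $n = \abs{X} = \abs{Y}$ and count the nonblank entries of $R$ two ways: summing over rows gives $\abs{R} = \sum_{x \in X} \abs{Y_x} = n\mskip2mu r$, while summing over columns gives $\abs{R} = \sum_{y \in Y} \abs{X_y} = n\mskip2mu c$. Hence $n\mskip2mu r = n\mskip2mu c$, and since $n \geq 3 > 0$ we conclude $r = c$. Thus all rows and all columns have the same number of nonblank entries, as claimed.

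I expect the only delicate point to be the reduction of path-connectedness to a chain of edges, so that Corollary~\ref{columnsizeequal} can be applied one vertex at a time; this is handled by reusing the deformation argument already present in the proof of Lemma~\ref{connectedcplx}, and it is the reason tightness and connectedness are both needed in the hypotheses. The remaining double-counting step is immediate once the row sizes and the column sizes are each known to be constant.
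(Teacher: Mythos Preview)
Your proof is correct. The first step, establishing that all rows share a common size $r$ (and dually that all columns share a common size $c$), matches the paper's approach exactly: both invoke Lemma~\ref{connectedcplx} to get path-connectedness of $\dowx$ and then chain Corollary~\ref{columnsizeequal} along edges.

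Where you diverge is in showing $r = c$. The paper argues via Lemma~\ref{PrivacyLinks}: picking $x \in X$ and letting $Q$ model $\lk(\dowx, x)$, that lemma says $\dowqx = \partial(\tX)$ with $\abs{\tX} = \abs{Y_x} = r$, so each attribute $y \in Y_x$ generates a maximal simplex $X_y \cap \tX$ of size $r-1$ in $\dowqx$; since $X_y$ contains exactly one more individual (namely $x$ itself), $c = \abs{X_y} = r$. Your double-counting argument, $n\mskip2mu r = \abs{R} = n\mskip2mu c$, is more elementary and avoids invoking the link structure altogether. It is shorter and uses only the hypothesis $\abs{X} = \abs{Y}$, whereas the paper's route reuses the boundary-complex machinery already in hand. (The paper does use exactly your double-counting trick later, in the proof of Lemma~\ref{uniform}, so both techniques are present in the development.)
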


\begin{proof}
By Lemma~\ref{connectedcplx} on page~\pageref{connectedcplx} and
Corollary~\ref{columnsizeequal} above, all rows have the same number,
$k_r$, of nonblank entries.  Dualizing, one sees that all columns have
the same number, $k_c$\spc, of nonblank entries.  We claim that $k_c=k_r$.
This assertion follows from Lemma~\ref{PrivacyLinks} and its proof as follows:

Pick some $x\in X$ and let $Q$ be the relation modeling $\lk(\dowx,x)$.
By Lemma~\ref{PrivacyLinks}, $\dowqx$ and $\dowqy$ are each boundary
complexes, with $k_r = \abs{Y_x}$ vertices.  Moreover, each attribute
$y\in Y_x$ generates a maximal simplex $X_y \inter \tX$ in $\dowqx$,
which must have size $k_r - 1$.  The column $X_y$ contains one
additional individual, namely $x$. \quad
So $k_c = \abs{X_y} = (k_r -1 ) + 1 = k_r$.
\end{proof}

\begin{theorem}[Privacy as Sphere]\label{privbndrycmplx}
Let $R$ be a nonvoid connected tight relation on $\hspt{}\XxY\mskip-2mu$
that preserves both attribute and association privacy.

Then $\abs{X} = \abs{Y} \geq 3$ and $R$ is isomorphic to either a cyclic
staircase relation or a spherical boundary relation (each described on
page~\pageref{staircase}).
\end{theorem}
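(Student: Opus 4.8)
The plan is to combine the structural corollaries already in hand—most importantly Lemma~\ref{PrivacyLinks} and Corollary~\ref{rowandcolsizesequal}—with an induction that peels off the link of a single individual. By Theorem~\ref{toomanyattrib} (applied to both $R$ and its Dowker dual), privacy on both sides forces $\abs{X}=\abs{Y}=n$, and the cases $n=0,1,2$ have already been dispatched in the enumeration on page~\pageref{twoelementdiag}, so I would assume $n\geq 3$. Corollary~\ref{rowandcolsizesequal} then tells us every row and every column has the same number $k$ of nonblank entries. The argument splits on the value of this common degree $k$: I expect $k=2$ to yield cyclic staircase relations and the ``co-degree-one'' case (each row blank in exactly one column, i.e.\ $k=n-1$) to yield spherical boundary relations, with the heart of the proof being to rule out all intermediate values of $k$.

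First I would treat the two extreme cases directly. If $k=n-1$, then each row $Y_x$ omits exactly one attribute; tightness plus preservation of attribute privacy (via Lemma~\ref{privacycolumns}) forces distinct rows to omit distinct attributes, giving a permutation between individuals and their missing attributes, which after relabeling is exactly the spherical boundary relation $\dowy=\partial(Y)$, $\dowx=\partial(X)$. If $k=2$, each individual shares attributes with exactly the right number of neighbors to force a single linear cycle: I would use Lemma~\ref{PrivacyLinks} to compute that $\lk(\dowx,x)$ is the boundary of $Y_x$, observe $\abs{Y_x}=2$ means this boundary is $\Szero$ (two points), conclude each individual has exactly two ``attribute-neighbors,'' and then invoke connectedness (Lemma~\ref{connectedcplx}) to assemble these degree-two incidences into one cyclic staircase. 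Corollary~\ref{notlinearcycle} is the tool that prevents the cycle from being spoiled by a high-degree column.

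The main obstacle—and the step I would spend the most care on—is excluding the intermediate regime $3\leq k\leq n-2$. Here the strategy is the induction on $n$ embedded in Lemma~\ref{PrivacyLinks}: for any $x$, the link relation $Q$ modeling $\lk(\dowx,x)$ satisfies $\dowqx=\partial(\tX)$ and $\dowqy=\partial(Y_x)$ with $\abs{\tX}=\abs{Y_x}=k$. Thus $Q$ itself is a spherical boundary relation on $k$ individuals and $k$ attributes. I would extract a contradiction by counting: each of the $k$ attributes in $Y_x$ is, by Corollary~\ref{rowandcolsizesequal}, shared by exactly $k$ individuals, one of whom is $x$, so it contributes $k-1$ witnesses into $\tX$; matching this against the boundary-complex structure $\dowqx=\partial(\tX)$ (where every vertex of $\tX$ lies in maximal simplices of size $k-1$) pins down the incidence pattern so tightly that the only consistent global relations are those where $k\in\{2,n-1\}$. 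Concretely, I would show that whenever $3\le k\le n-2$ the link forces a column of $R$ outside the $k$ attributes touching $x$ to violate Lemma~\ref{privacycolumns}, i.e.\ some attribute set would imply another attribute, contradicting attribute privacy.

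Finally, having shown that a connected such relation is forced to be either a cyclic staircase or a spherical boundary relation, I would note that the multi-component and $n=3$ coincidence remarks already recorded on page~\pageref{staircase} complete the classification; since the theorem statement assumes $R$ is connected, the component bookkeeping is not even needed here. The cleanest write-up would state the degree trichotomy as a short preliminary claim (degree $=2$, degree $=n-1$, or contradiction), prove the contradiction for intermediate degrees via the link-counting argument, and then identify the two surviving degrees with the two named relation families by a direct relabeling, so that the bulk of the labor is the incidence count inside $\lk(\dowx,x)$ rather than any new topology.
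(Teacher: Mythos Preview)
Your setup and the $k=2$ case are fine and match the paper. The organizational difference is that the paper does not treat $k=n-1$ as a separate case to be verified and then rule out $3\le k\le n-2$ by contradiction; instead it shows directly that any $k\ge 3$ forces the spherical boundary structure. The mechanism is a block decomposition around a chosen $\ybar\in Y$: by the dual of Lemma~\ref{PrivacyLinks}, the link relation for $\lk(\dowy,\ybar)$ has $\dowqx=\partial(X_\ybar)$ and $\dowqy=\partial(Y_1)$ with $|X_\ybar|=|Y_1|=k$, so each column $y\in Y_1$ has exactly $k-1$ entries inside $X_\ybar$ and one outside. The key step is that this single outside entry is the \emph{same} individual $\xbar$ for every $y\in Y_1$: if two such columns had different outside entries, their intersection (nonempty since $k\ge 3$) would lie entirely in $X_\ybar$, and Lemma~\ref{privacycolumns} would let one infer $\ybar$, contradicting attribute privacy. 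Once all of $Y_1$ funnels through $\xbar$, row and column cardinality plus connectedness force the complementary block to vanish, giving $n=k+1$ and the spherical boundary relation.

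Your proposal is missing exactly this ``common extra neighbor'' step. The incidence count you describe (each $y\in Y_x$ contributes $k-1$ witnesses into a $k$-element $\tX$) is correct, but it only recovers the boundary-complex structure already asserted by Lemma~\ref{PrivacyLinks}; it does not by itself produce a contradiction for $3\le k\le n-2$. Moreover, since you chose to work with $\lk(\dowx,x)$ rather than $\lk(\dowy,\ybar)$, the dualized argument would show that every $x'\in\tX$ has the same single extra attribute outside $Y_x$, and the contradiction you need comes from \emph{association} privacy (the dual of Lemma~\ref{privacycolumns}): if two neighbors $x_1,x_2\in\tX$ had distinct extra attributes, then $Y_{x_1}\cap Y_{x_2}\subseteq Y_x$ would let one infer $x$ from $\{x_1,x_2\}$. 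Your stated plan to find an attribute column outside $Y_x$ violating attribute privacy does not line up with this, and no induction on $n$ is actually used.
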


\begin{proof}
As we commented previously, Theorem~\ref{toomanyattrib} on
page~\pageref{toomanyattrib} implies that $\abs{X} = \abs{Y} = n$, for
some $n \geq 2$.  Connectedness further means that $n \geq 3$.

By Corollary~\ref{rowandcolsizesequal}, all rows and columns in $R$ have
the same number of nonblank entries.  In other words, $\abs{X_y} =
\abs{Y_x} = k$, for all $x\in X$ and all $y\in Y$, for some fixed $k$.
By connectedness, $k \geq 2$.

By Lemma~\ref{EqIdent} on page~\pageref{EqIdent}, each $x\in X$ is
uniquely identifiable via $R$.  Dualized, each $y\in Y$ is uniquely
identifiable via $R$ as well.

If $k=2$, then $\dowx$ and $\dowy$ contain vertices and edges but no
higher-dimensional simplices.  By duality, each vertex therefore has at
most two incident edges.  By unique identifiability, each vertex has
exactly two incident edges.  Thus, by connectedness, each complex is a
linear cycle.  So $R$ is isomorphic to a cyclic staircase relation.

Now assume that $k \geq 3$.

Pick a $\ybar\in Y$ and consider the decomposition of
Figure~\ref{Rblocksy}, similar to the one we saw in the proof of
Lemma~\ref{EqIdent}.

\begin{figure}[h]
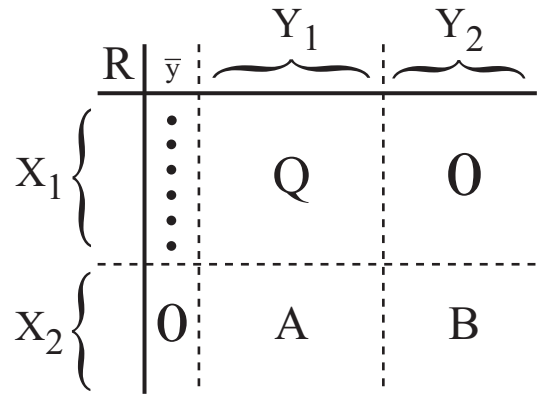

\vspace*{-0.1in}
\begin{center} 
\ifig{Rblocksy}{scale=0.5}\hbox{\phantom{000000}}
\end{center}
\vspace*{-0.2in}
\caption[]{Relation $R$ decomposed into blocks for the proof of Theorem~\ref{privbndrycmplx}.}
\label{Rblocksy}
\end{figure}

Let $X_1=X_\ybar$ and write $X=X_1 \union X_2$ with $X_2 =
X\setminus{X_1}$.  $X_1\neq\emptyset$ since every column of $R$ has
$k$ nonblank entries and $X_2\neq\emptyset$ since $R$ preserves
attribute privacy.

Let $Q$ model $\lk(\dowy, \ybar)$.  So $Q$ is $R$ restricted to $X_1
\times Y_1$, with $Y_1 = \bigunion_{x\in{X_1}}Y_x\setminus\{\ybar\}$.
$Y_1\neq\emptyset$ because every row of $R$ has $k$ nonblank entries.
In particular, there are exactly $k-1$ entries in each row of $Q$, so at
least two entries in each row.

Now write $Y$ as the disjoint union $Y = \{\ybar\} \union Y_1 \union
Y_2$, with $Y_2 = Y\setminus(Y_1\union\{\ybar\})$.  Observe that every
individual in $X_1$ has attribute $\ybar$ but has no attributes in $Y_2$,
by construction.

By the dual to Lemma~\ref{PrivacyLinks}, we know that
$\dowqx=\partial(X_1)$ and $\dowqy=\partial(Y_1)$, with
$k=\abs{X_1}=\abs{Y_1}$.  Therefore, for each each $y\in Y_1$, column
$X_y$ of $R$ has $k-1$ entries that lie in $X_1$ and one entry that lies
in $X_2$.  We claim that the $X_2$ entry is the same across all columns
$X_y$ as $y$ varies over $Y_1$.  For otherwise, at least two such
columns would have an intersection (nonempty, since $k-2\geq 1$)
contained wholly within $X_\ybar$, implying that $R$ permits attribute
inference after all, by Lemma~\ref{privacycolumns} on
page~\pageref{privacycolumns}.  Call that common individual $\xbar$.
Observe that $Y_\xbar = Y_1$ since every row of $R$ has exactly $k$
attributes.  Consequently, the block diagram for $R$ becomes as in
Figure~\ref{Rblocksz}. \ (The figures now indicate blank entries either
by blanks or by explicit ``$0$''s.)

\begin{figure}[h]
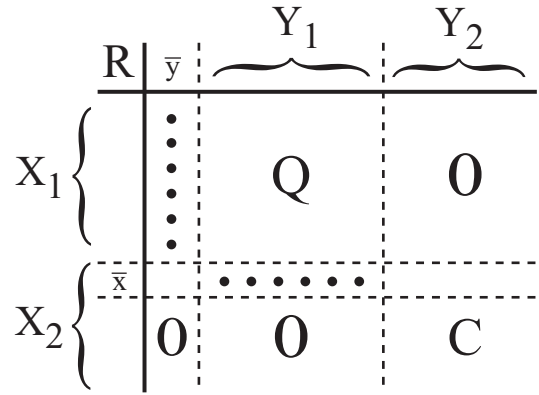

\begin{center} 
\ifig{Rblocksz}{scale=0.5}\hbox{\phantom{000000}}
\end{center}
\vspace*{-0.2in}
\caption[]{Relation $R$ decomposed further.}
\label{Rblocksz}
\end{figure}

Observe that no individual of $X_1 \union \{\xbar\}$ has any
attributes in $Y_2$ and that no individual of $X_2\setminus\{\xbar\}$
has any attributes in $Y_1 \union \{\ybar\}$, by the row and column
cardinality constraints.  That means relation $C$, which is the
restriction of $R$ to $(X_2\setminus\{\xbar\}) \times Y_2$, would be
disconnected from the rest of $R$, if $C$ were to exist.  We conclude
that $Y_2 = \emptyset$ and that $X_2 = \{\xbar\}$.  Thus, finally, $R$
must decompose as in Figure~\ref{Rblocksdiag}.  As we have seen, $Q$
is nearly a full relation, missing only a diagonal.  We now see that
$R$ is also nearly a full relation, missing only a diagonal.  Thus
$\dowx = \partial(X)$ and $\dowy = \partial(Y)$, meaning $R$ is
isomorphic to a spherical boundary relation, as claimed.

\begin{figure}[h]
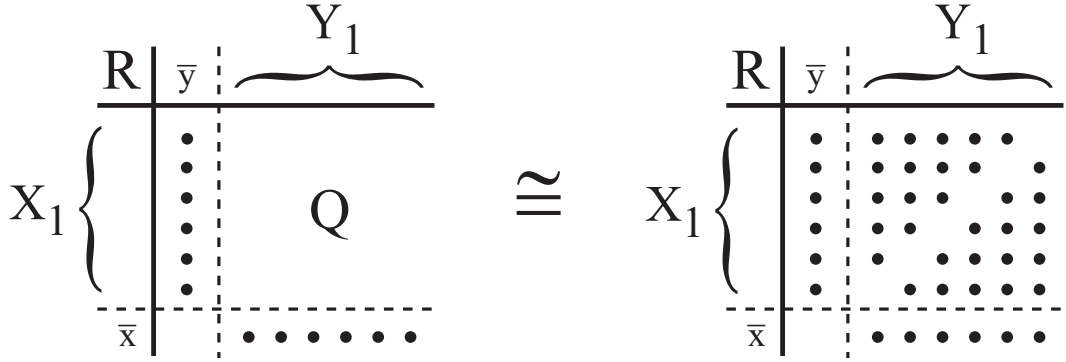

\begin{center} 
\ifig{Rblocksdiag}{scale=0.6}
\end{center}
\caption[]{Relation $R$ decomposes diagonally.}
\label{Rblocksdiag}
\end{figure}

\vspace*{-0.25in}

\end{proof}

\vspace*{1in}

\begin{corollary}\label{privbndrycmplxcomponents}
Let $R$ be a nonvoid tight relation that preserves both attribute and
association privacy.
Decompose $R$ into its connected components as $R = R_1 \union \cdots
\union R_\ell$, with each $\hspt{}R_i$ a nonvoid tight relation on
$X_i \times Y_i$, as per the proof of Lemma~\ref{components} on
page~\pageref{components}.  \ Then, for each $i \in \{1, \ldots,
\ell\}$,\, $R_i$ is isomorphic to a singleton or a cyclic staircase
relation or a spherical boundary relation, and $\abs{X_i}=\abs{Y_i}$.
\end{corollary}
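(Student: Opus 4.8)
The plan is to reduce the statement to the connected case already established in Theorem~\ref{privbndrycmplx}. Since $R$ is nonvoid and tight, Lemma~\ref{components} supplies the component decomposition $R = R_1 \union \cdots \union R_\ell$, with each $R_i$ a nonvoid tight relation on $X_i \times Y_i$ whose Dowker complexes $\Psi_{R_i}$ and $\Phi_{R_i}$ are the corresponding components of $\Psi_R$ and $\Phi_R$. The goal is then to show that each individual component $R_i$ itself preserves both attribute and association privacy, so that Theorem~\ref{privbndrycmplx} applies to classify $R_i$ as a cyclic staircase relation or a spherical boundary relation with $\abs{X_i} = \abs{Y_i} \geq 3$. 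Singleton components need separate, but trivial, handling: a singleton fails to preserve privacy in isolation, yet it manifestly satisfies $\abs{X_i} = \abs{Y_i} = 1$ and is of an allowed form, so I would dispatch it by an explicit case split at the start.

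First I would transfer privacy from $R$ to each non-singleton component via Corollary~\ref{componentprivacy}. That corollary yields association privacy for $R_i$ provided $Y_i \notin \Phi_{R_i}$, and attribute privacy for $R_i$ provided $X_i \notin \Psi_{R_i}$; these two side conditions say precisely that $R_i$ has no \emph{full row} (a row equal to all of $Y_i$) and no \emph{full column} (a column equal to all of $X_i$). Verifying these conditions for non-singleton components is the crux of the argument and the step I expect to be the main obstacle, since the hypotheses of Corollary~\ref{componentprivacy} do \emph{not} hold for singletons and must be re-derived from the privacy of the whole relation $R$.

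To rule out a full column $X_y = X_i$ in a component with $\abs{X_i} \geq 2$, I would argue by contradiction. If $Y_i = \ys$, then every individual $x \in X_i$ has $Y_x = \ys$ by tightness, so $\psi_R(\phi_R(\{x\})) = \psi_R(\ys) = X_y = X_i$, contradicting association privacy because $\abs{X_i} \geq 2$. Otherwise pick $y^{\prime\prime} \in Y_i$ with $y^{\prime\prime} \neq y$; then $X_{y^{\prime\prime}} \subseteq X_i = X_y$ gives $\psi_R(\{y^{\prime\prime}\}) \subseteq \psi_R(\ys)$, and the attribute-privacy criterion of Lemma~\ref{privacycolumns} forces $y \in \{y^{\prime\prime}\}$, a contradiction. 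Hence $X_i \notin \Psi_{R_i}$; and since tightness together with connectedness would make the sole column full were $Y_i$ a singleton, this simultaneously shows $\abs{Y_i} \geq 2$. The dual argument, using association privacy and the dual of Lemma~\ref{privacycolumns}, then gives $Y_i \notin \Phi_{R_i}$. The one delicate point here is the mutual dependence between ``no full column'' and $\abs{Y_i} \geq 2$ (and dually), so I would order the presentation so that ruling out a full column is done first and forces $\abs{Y_i} \geq 2$, after which the dual statement about full rows becomes available.

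With both side conditions verified, Corollary~\ref{componentprivacy} shows that each non-singleton $R_i$ preserves both attribute and association privacy. Being nonvoid, connected, and tight, such an $R_i$ satisfies the hypotheses of Theorem~\ref{privbndrycmplx}, which delivers $\abs{X_i} = \abs{Y_i} \geq 3$ and the isomorphism to a cyclic staircase relation or a spherical boundary relation. Combining this with the singleton case, where $\abs{X_i} = \abs{Y_i} = 1$ and $R_i$ is a singleton, establishes the corollary for every $i \in \{1, \ldots, \ell\}$.
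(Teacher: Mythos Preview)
Your proposal is correct and follows essentially the same route as the paper: verify the side conditions of Corollary~\ref{componentprivacy} for each non-singleton component, then invoke Theorem~\ref{privbndrycmplx}. The paper organizes the key step slightly more cleanly by proving the direct implication ``$X_i \in \Psi_{R_i}$ forces $R_i$ to be a singleton'' (full column $\Rightarrow |Y_i|=1$ via attribute privacy $\Rightarrow |X_i|=1$ via association privacy), which avoids your upfront singleton/non-singleton case split and the attendant worry about the $|X_i|=1,\ |Y_i|\geq 2$ case.
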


Comment: When $\ell=2$ and each of $R_1$ and $R_2$ is a singleton, then
the Dowker complexes of $R$ itself, $\dowx$ and $\dowy$, are each
an instance of $\Szero$.

\begin{proof}
Consider $R_i$, for some $i \in \{1, \ldots, \ell\}$.

Suppose that $X_i\in\Psi_{R_{\scriptstyle i}}$.  Then some attribute
$y\in Y_i$ is shared by all individuals in $X_i$.  If there were any
other attributes in $Y_i$, then each of those would individually imply
$y$ in $R$.  Since $R$ preserves attribute privacy, $\abs{Y_i} = 1$.
Consequently, since $R$ also preserves association privacy, $\abs{X_i}
= 1$, so $R_i$ is a singleton.

If $R_i$ is not a singleton, then
$X_i\not\in\Psi_{R_{\scriptstyle i}}$ and similarly
$Y_i\not\in\Phi_{R_{\scriptstyle i}}$.

Consequently, Lemma~\ref{components} and
Corollary~\ref{componentprivacy} on page~\pageref{components}
tell us that $R_i$ is a nonvoid connected tight relation that
preserves both attribute and association privacy.
Theorem~\ref{privbndrycmplx} completes the proof.
\end{proof}

\paragraph{Comment:} \ The development leading to
Corollary~\ref{privbndrycmplxcomponents} used the language of
relations, privacy, and inference as proof tools, in part to build
intuition.  One can take an alternate, more directly simplicial and
combinatorial approach.  For instance, by counting vertices, maximal
simplices, and free faces that are just one vertex shy of being
maximal simplices, one can obtain an alternate proof of
Theorem~\ref{toomanyattrib} on page~\pageref{toomanyattrib}.

\clearpage
\subsection{Square Relations Preserve Privacy Symmetrically}
\markright{Square Relations Preserve Privacy Symmetrically}
\label{symmetric}

At the end of Appendix~\ref{uniqueident}, we observed that one could
perhaps strengthen the conclusions of Lemma~\ref{EqIdent} on
page~\pageref{EqIdent}.  According to the lemma, if a square relation
with no blank columns preserves attribute privacy, then each
individual is uniquely identifiable via the relation.  The proof of
the lemma further established that the relation necessarily has no
blank rows.  In fact, we will now prove that the relation also
preserves association privacy.

\vspace*{-0.05in}

\paragraph{Summary:}\ By Theorem~\ref{privbndrycmplx} and
Corollary~\ref{privbndrycmplxcomponents}, any nonvoid tight relation
preserving both attribute and association privacy must be a square
relation whose components are isomorphic to singletons, cyclic
staircase relations, or spherical boundary relations.  Complementing
this statement, by upcoming Theorem~\ref{squarerelations} and its dual
form, any nonvoid tight square relation must either preserve {\em
both\,} attribute and association privacy or fail to preserve both,
that is, allow some attribute inference {\em and\,} some association
inference.

\vspace*{0.15in}

\noindent We now state the theorem, but will need to develop some
tools before proving it.

\begin{theorem}[Privacy in Square Relations]\label{squarerelations}
Let $R$ be a relation on $\XxY$ with $\abs{X} = \abs{Y} > 1$.

\vso

If $R$ has no blank columns and preserves attribute privacy,
then these three conditions hold:

\vspace*{0.1in}

\hspace*{0.4in}\begin{minipage}{3.5in}
\begin{enumerate}
\addtolength{\itemsep}{-2pt}
\item[(i)] $R$ has no blank rows.
\item[(ii)] Every $x\in X$ is uniquely identifiable via $R$.
\item[(iii)] $R$ preserves association privacy.
\end{enumerate}
\end{minipage}
\end{theorem}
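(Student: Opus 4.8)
The plan is to obtain (i) and (ii) immediately and then prove (iii) by strong induction on $n = \abs{X} = \abs{Y}$. Statement (ii) is exactly Lemma~\ref{EqIdent}, so every $x\in X$ is uniquely identifiable via $R$. Given (ii), statement (i) is a one-line consequence: a blank row $Y_x = \emptyset$ would give $\psi_R(Y_x) = \psi_R(\emptyset) = X$, while unique identifiability demands $\psi_R(Y_x) = \{x\}$, which is impossible when $\abs{X} > 1$.

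Before (iii) I would record the rigidity forced by (i)--(ii). Unique identifiability makes the rows an antichain ($Y_x \subseteq Y_{x'} \Rightarrow x = x'$), and attribute privacy applied to a single attribute gives $\phi_R(X_y) = (\clsy)(\{y\}) = \{y\}$, making the columns an antichain as well; consequently no row and no column is full, and $\bigcap_x Y_x = \phi_R(X) = \emptyset$ together with $\bigcap_y X_y = \emptyset$. By the dual of Lemma~\ref{privacycolumns}, proving association privacy amounts to checking $(\clsx)(\sigma) = \sigma$ for every $\sigma \in \dowx$. The empty simplex is handled by $\bigcap_y X_y = \emptyset$ and singletons by unique identifiability, $(\clsx)(\{x\}) = \psi_R(Y_x) = \{x\}$, so only $\abs{\sigma}\geq 2$ remains. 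I would also first dispose of the disconnected case: by Lemma~\ref{components} together with Theorem~\ref{toomanyattrib} applied to each component (no component can have more attributes than individuals), every connected component is itself a square, column-blank-free, attribute-private relation, hence association-private by the induction hypothesis, and these reassemble since no column straddles two components. Thus it suffices to treat connected $R$, with base case $n=2$, where $R$ is forced to be the diagonal relation and is manifestly association-private.

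For connected $R$ with $n>2$ I would imitate the block decomposition in the proof of Lemma~\ref{EqIdent}. Fix an attribute $\bar y$, put $X_1 = X_{\bar y}$ and $X_2 = \sdiff{X}{X_1}$ (both nonempty, as there are no blank and no full columns), and split $Y = \{\bar y\} \cup Y_1 \cup Y_2$ with $Y_1$ the attributes other than $\bar y$ held by some individual of $X_1$. The relation $Q$ modelling $\lk(\dowy,\bar y)$ lives on $X_1 \times Y_1$ and the block $B = R|_{X_2 \times Y_2}$ lives on $X_2 \times Y_2$; by Lemmas~\ref{yLink}, \ref{yDel}, and \ref{ClIdLinkDel} each preserves attribute privacy and, after discarding the forced blank rows, has no blank columns. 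Theorem~\ref{toomanyattrib} then gives $\abs{Y_1}\leq\abs{X_1}$ and $\abs{Y_2}\leq\abs{X_2}$, which with the count $\abs{X_1}+\abs{X_2} = 1 + \abs{Y_1}+\abs{Y_2}$ forces $(\abs{X_1}-\abs{Y_1}) + (\abs{X_2}-\abs{Y_2}) = 1$; being nonnegative integers summing to $1$, exactly one block is square and the other has precisely one more individual than attribute. The square block is association-private by the induction hypothesis.

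The main obstacle, and the technical heart, is the reassembly: promoting association privacy of the square sub-block to association privacy of all of $R$. Verifying the dual of Lemma~\ref{privacycolumns} for $R$ splits according to whether the attribute witnessing a putative association inference lies in $\{\bar y\}$, in $Y_2$, or in $Y_1$. The $\{\bar y\}$- and $Y_2$-witnessed cases are confined to $X_1$ and to $X_2$ respectively (no $Y_2$-attribute touches $X_1$) and reduce to the structure of $X_1$ and to the block $B$; the genuinely hard case is a \emph{straddling} inference witnessed by a $Y_1$-attribute shared across $X_1$ and $X_2$, which no single sub-block sees in isolation. Ruling it out requires the dual form of Lemma~\ref{interplocal}, which ties the association closure $\clsqx$ of the attribute-link relation $Q$ to the closure $\clsx$ of $R$, combined with the block cardinality constraints and the antichain and identifiability facts recorded above --- in effect re-running, for the closure operator $\clsx$, the delicate $\abs{X_2}$-versus-$\abs{Y_2}$ case analysis that Lemma~\ref{EqIdent} performed for unique identifiability.
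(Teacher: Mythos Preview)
Your handling of (i) and (ii) and the reduction to the connected case are fine. The substantive divergence from the paper is in the inductive decomposition for (iii), and that is where your proposal has a real gap.

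You decompose by fixing an attribute $\bar y$ and passing to the link block $Q = R|_{X_1\times Y_1}$ and the complementary block $B = R|_{X_2\times Y_2}$. As you observe, exactly one of these is square, so the induction hypothesis applies to it. But the off-diagonal block $A = R|_{X_2\times Y_1}$ is \emph{not} forced to vanish. Concretely, in the $n=4$ spherical boundary relation with $\bar y$ any attribute, one gets $\abs{X_1}=\abs{Y_1}=3$, $\abs{X_2}=1$, $Y_2=\emptyset$, and $A$ is a full row. The dual of Lemma~\ref{interplocal} does give you $(\clsx)(\sigma')=(\clsqx)(\sigma')$ for $\sigma'\subseteq X_1$, so the square link block controls simplices inside $X_1$; but for $\sigma$ straddling $X_1$ and $X_2$ one has $\phi_R(\sigma)\subseteq Y_1$, and $\psi_R$ of that set is an intersection of columns each of which may meet both $X_1$ and $X_2$ through $A$. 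Neither sub-block sees this, and your sketch of how to rule out straddling inferences (``re-running the case analysis of Lemma~\ref{EqIdent}'') is not an argument --- that lemma's cases were tailored to identifiability, and no analogous mechanism for $\clsx$ is supplied.

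The paper sidesteps this entirely by using a different decomposition: it peels off $\Qmax$, the restriction to individuals with the maximum number of attributes (Definition~\ref{Qmax}, Lemma~\ref{Qmaxprivacy}). The point is that when $\Qmax\neq R$, the analogue of your block $A$ is forced to be \emph{empty}, so $R$ is a genuine disjoint union of $\Qmax$ and $B$ and the induction reassembles trivially; when $\Qmax=R$, all rows have equal size, Lemma~\ref{uniform} forces all columns to have that same size, and then Corollaries~\ref{ktwo}--\ref{kthree} classify $R$ as cyclic staircase or spherical boundary, both of which are known to preserve association privacy. Your $\bar y$-link decomposition lacks any such mechanism to kill the off-diagonal block, and without it the reassembly step is not merely ``the technical heart'' --- it is unproven.
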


\vspace*{0.2in}

\noindent We now develop the tools:

\vspace*{0.1in}

\begin{definition}[Individuals with Maximal Attributes]\label{Qmax}
\ Let $R$ be a relation on $\XxY$.
\ The \mydefem{restriction of $R$ to its maximally attributed individuals}
is the relation $\,\Qmax = R\,|_{\,\tX\times\tY}$,\, with

\vspace*{-0.1in}

\begin{eqnarray*}
  \kmax &\;=\;& \max_{x\in X}\,\abs{Y_x},\\[3pt]
\tX &\;=\;& \setdefbV{x\in X}{\abs{Y_x}=\kmax},\\[4pt]
\hbox{\em and}\quad \tY &\;=\;& \bigunion_{x\in\tX}Y_x.
\end{eqnarray*}
\end{definition}

\begin{lemma}[Privacy Preservation in $\Qmax$]\label{Qmaxprivacy}
Let $R$ be a relation on $\XxY$, with $\abs{X} \geq
\abs{Y} > 1$.

Suppose that $R$ is tight, that $R$ preserves attribute privacy, and
that every $x\in X$ is uniquely identifiable via $R$.
\quad Let $\kmax$ and $\Qmax$ be as in Definition~\ref{Qmax}.

Then $\kmax \geq 1$, $\Qmax$ is tight, $\Qmax$ preserves attribute
privacy, and every individual $\xbar\in\tX$ is uniquely identifiable
via $\Qmax$.
\end{lemma}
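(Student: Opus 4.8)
The plan is to dispatch the three structural claims directly and then reduce the attribute-privacy claim to a single sharp combinatorial observation. First I would record the witness formulas that $\Qmax = R|_{\tX\times\tY}$ inherits: since $Y_x\subseteq\tY$ for every $x\in\tX$, the row of $\Qmax$ indexed by $x\in\tX$ is exactly $Y_x$, and for $\gamma\subseteq\tY$ one computes $\psi_{\Qmax}(\gamma)=\psi_R(\gamma)\cap\tX$, while $\phi_{\Qmax}(\kappa)=\phi_R(\kappa)$ for nonempty $\kappa\subseteq\tX$. From these, $\kmax\geq 1$ is immediate (tightness of $R$ forbids blank rows, so every $\abs{Y_x}\geq 1$); $\Qmax$ is tight because its rows $Y_x$ are nonempty and its columns are nonempty by the very definition $\tY=\bigcup_{x\in\tX}Y_x$; and each $\xbar\in\tX$ is uniquely identifiable via $\Qmax$ since $\psi_{\Qmax}(Y_{\xbar})=\psi_R(Y_{\xbar})\cap\tX=\{\xbar\}\cap\tX=\{\xbar\}$, using unique identifiability in $R$ and $\xbar\in\tX$.

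For attribute privacy, I would first observe that all rows of $\Qmax$ have the same cardinality $\kmax$, so the maximal simplices of $\dowqmy$ are precisely the distinct full attribute sets $\{Y_{\xbar}:\xbar\in\tX\}$, none of which is a proper subset of another. By Lemma~\ref{privacyindiv} (applied to each $\xbar\in\tX$) together with Lemma~\ref{propersubsets}, it then suffices to show, for each $\xbar\in\tX$ and each $y\in Y_{\xbar}$, that the set $\gamma=Y_{\xbar}\setminus\{y\}$ does not close up under $\phi_{\Qmax}\circ\psi_{\Qmax}$; this, combined with the fact that every simplex of $\dowqmy$ lies in some $Y_{\xbar}$, yields full attribute privacy. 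The key claim is that such a $\gamma$ always has a \emph{second} witness inside $\tX$: there exists $\xbar''\in\tX$ with $\xbar''\neq\xbar$ and $\gamma\subseteq Y_{\xbar''}$. To produce one candidate I would invoke Lemma~\ref{noproperident}: since $R$ preserves attribute privacy, no proper subset of $Y_{\xbar}$ identifies $\xbar$ in $R$, so $\psi_R(\gamma)$ contains some $\xbar''\neq\xbar$ with $\gamma\subseteq Y_{\xbar''}$.

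The main obstacle is that this $R$-witness $\xbar''$ need not a priori lie in $\tX$, and this is exactly where maximality of $\kmax$ and the full unique-identifiability hypothesis must combine. Since $\gamma\subseteq Y_{\xbar''}$, we have $\kmax-1=\abs{\gamma}\leq\abs{Y_{\xbar''}}\leq\kmax$. If $\abs{Y_{\xbar''}}=\kmax-1$, then $Y_{\xbar''}=\gamma\subsetneq Y_{\xbar}$, which forces $\xbar\in\psi_R(Y_{\xbar''})$ and contradicts the unique identifiability of $\xbar''$ in $R$; hence $\abs{Y_{\xbar''}}=\kmax$ and $\xbar''\in\tX$, so $\xbar''\in\psi_{\Qmax}(\gamma)$. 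Once both $\xbar$ and $\xbar''$ lie in $\psi_{\Qmax}(\gamma)$, I would finish by a cardinality count: $Y_{\xbar}$ and $Y_{\xbar''}$ are distinct sets of size $\kmax$ both containing the $(\kmax-1)$-element set $\gamma$, so $Y_{\xbar}\cap Y_{\xbar''}=\gamma$, whence $(\phi_{\Qmax}\circ\psi_{\Qmax})(\gamma)\subseteq\phi_{\Qmax}(\{\xbar,\xbar''\})=Y_{\xbar}\cap Y_{\xbar''}=\gamma$. This rules out attribute inference at every such $\gamma$ and completes the argument.
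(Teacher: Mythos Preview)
Your proof is correct and shares its core with the paper's argument: for each $\xbar\in\tX$ and each $y\in Y_{\xbar}$, the codimension-one face $\gamma=Y_{\xbar}\setminus\{y\}$ has a second witness $\xbar''$ in $R$, and the combination of the cardinality bound $\abs{Y_{\xbar''}}\leq\kmax$ with unique identifiability of $\xbar''$ forces $\abs{Y_{\xbar''}}=\kmax$, hence $\xbar''\in\tX$. The packaging differs: the paper routes this through Theorem~\ref{privacysingle} (the sphere characterization of individual privacy), first reading off from $R$ that $\Phi_Q=\partial(Y_{\xbar})$ for the link relation $Q$ of $\xbar$ in $\dowx$, then transporting the boundary-complex witnesses into $\tX$ by the same cardinality argument, and finally invoking Theorem~\ref{privacysingle} in reverse for $\Qmax$. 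Your route stays with the closure operator and Lemma~\ref{privacyindiv}, avoiding the link/sphere machinery entirely; the paper's route connects the result to the geometric narrative of the surrounding section. The paper also handles $\kmax=1$ as a separate base case (showing $R$ is then square diagonal), whereas your argument absorbs it uniformly, using tightness to rule out $\abs{Y_{\xbar''}}=0$.
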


\begin{proof}
Since neither $X$ nor $Y$ is empty and since $R$ is tight, $\kmax \geq
1$.  Consequently, neither $\tX$ nor $\tY$ is empty in the definition
of $\Qmax$, from which it follows that $\Qmax$ is tight by
construction.

Suppose $\kmax=1$.  Since $R$ has no blank rows, every individual in
$X$ has a single attribute in $Y$.  By unique identifiability,
distinct individuals have distinct attributes.  Consequently,
$\abs{X}=\abs{Y}$.  So $R$ is isomorphic to a square diagonal
relation, and $\Qmax=R$.  The lemma's assertions therefore hold.

\vst

Henceforth, assume that $\kmax > 1$.  \quad
Let $\xbar\in\tX \subseteq X$.

\vst

Observe that $\xbar$ is uniquely identifiable via $\Qmax$, since
$\xbar$ is uniquely identifiable via $R$, $Y_{\xbar} \subseteq
\tY$, and
$\psimax(Y_{\xbar}) \;=\; \psi_R(Y_{\xbar}) \inter \tX
  \;=\; \{\xbar\} \inter \tX \;=\; \{\xbar\}.\phantom{\Big|}$

\vspace*{0.05in}

By assumption, $R$ preserves attribute privacy, every $x\in X$ is
uniquely identifiable via $R$, and $\abs{X} > 1$.  Consequently,
Theorem~\ref{privacysingle} on page~\pageref{privacysingleAppPage}
says that $\dowqy=\bndry{(Y_{\xbar})}$, with $Q$ modeling $\lk(\dowx,
\xbar)$.  As in the proof of Lemma~\ref{PrivacyLinks} on
page~\pageref{PrivacyLinks}, this means there exist distinct
vertices $x_1, \ldots, x_\kmax$ in $\dowqx$ such that $\tY_{\!1},
\ldots, \tY_{\!\kmax}$ are the maximal simplices of $\dowqy$, with
$\tY_{\!i} = Y_{x_{\scriptstyle i}} \inter Y_{\,\xbar}$, and
$\abs{\tY_{\!i}}=\kmax-1$, for $i=1, \ldots, \kmax.$ \phantom{\Big|}
(Aside: Here, $\dowqx$ could contain additional vertices.)

\vst

Since each $x_i$ is uniquely identifiable via $R$, $Y_{x_{\scriptstyle
i}} \not\subseteq Y_{\xbar}$.  Bearing in mind the definition of
$\kmax$, this means each $Y_{x_{\scriptstyle i}}$ contains exactly one
attribute in $\sdiff{Y}{Y_{\xbar}}$.  Consequently,
$\abs{Y_{x_{\scriptstyle i}}}=\kmax$ and each $x_i$ is an individual
in $\tX$.

\vsr

We therefore see that each $x_i$ is a vertex as well of $\dowqpx$ and
that $\dowqpy=\bndry{(Y_{\xbar})}$, with $Q^\prime$ now modeling
$\lk(\dowqmx, \xbar)$.  We also see that $\tX$ must contain at least
$\kmax+1$ individuals, so $\abs{\tX} > 1$.
Theorem~\ref{privacysingle} then says that $\Qmax$ preserves
attribute privacy for $\xbar$.  Since $\xbar$ is arbitrary in $\tX$,
that means $\Qmax$ preserves attribute privacy generally.
\end{proof}

\vsr

\begin{lemma}[Square Uniform Relations]\label{uniform}
Let $R$ be a relation on $\XxY$, with $\abs{X} = \abs{Y} > 1$.

Suppose that $R$ is tight and that $R$ preserves attribute privacy.

Suppose further that every row has exactly $k$ nonblank entries, with
$k\geq 1$.

Then every column has exactly $k$ nonblank entries.
\end{lemma}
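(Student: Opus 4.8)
My plan is to prove Lemma~\ref{uniform} by a double-counting argument on the number of nonblank entries in $R$, combined with a uniqueness argument that forces every column to have the same size. First I would observe that since every row has exactly $k$ nonblank entries and $\abs{X}=\abs{Y}=n$, the total number of nonblank entries in $R$ is exactly $kn$. If every column also had size $k$, the column count would likewise be $kn$, which is consistent; the content of the lemma is therefore that the columns are \emph{uniform}, not merely that their total is $kn$. So the real work is to rule out uneven column sizes, and here is where attribute privacy and the square/tight hypotheses must enter.

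\textbf{Reducing to maximal columns via the dual of $\Qmax$.}
The natural approach is to exploit the link structure already developed for rows. Since $R$ is tight, preserves attribute privacy, and is square with $\abs{X}=\abs{Y}=n>1$, Theorem~\ref{squarerelations}(ii)—or more carefully, Lemma~\ref{EqIdent} on page~\pageref{EqIdent}, since that is what is available without circularity—tells us that every $x\in X$ is uniquely identifiable via $R$. With unique identifiability in hand, I would apply Theorem~\ref{privacysingle} to an arbitrary individual $\xbar$: the link relation $Q$ modeling $\lk(\dowx,\xbar)$ has attribute complex $\dowqy=\partial(Y_{\xbar})=\partial(\text{a }k\text{-set})$, a boundary sphere on exactly $k$ vertices. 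As in the proof of Lemma~\ref{PrivacyLinks}, this boundary structure produces exactly $k$ distinct neighbors $x_1,\dots,x_k$ of $\xbar$, each sharing $k-1$ attributes with $\xbar$. The uniqueness of these neighbors (guaranteed because $R$ preserves association privacy—or, if association privacy is not yet available, because the dual of Lemma~\ref{privacycolumns} rules out one column being contained in another) is what pins down the local geometry at $\xbar$.

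\textbf{Propagating uniformity through connectivity.}
Having established that each row has a rigid local neighborhood of size $k$, I would pass to the columns by dualizing. The cleanest route is Corollary~\ref{columnsizeequal}'s style of argument: if $\{x,x'\}$ is an edge in $\dowx$, then $x'$ generates a maximal simplex $Y_x\cap Y_{x'}$ of size $k-1$ in the attribute complex of $x$'s link, and symmetrically $x$ generates a maximal simplex of the same size in $x'$'s link, forcing $\abs{Y_x}=\abs{Y_{x'}}$. Since every row already has size $k$, this is automatic; the symmetric statement I actually need is that \emph{columns} inherit equal size, which I get by running the identical argument with the roles of $X$ and $Y$ interchanged—using that Dowker duality makes the hypotheses self-dual once we know $R$ is tight and square. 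Then Lemma~\ref{connectedcplx} (connectedness of $\dowx$, or component-wise via Lemma~\ref{components}) lets me conclude that all columns in a given component share a common size; a final total-entry count $kn=\sum_y\abs{X_y}$ over uniform-sized columns forces that common column size to equal $k$ exactly.

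\textbf{The main obstacle.}
The step I expect to be hardest is establishing the dual statement cleanly without assuming association privacy, since association privacy is precisely what Theorem~\ref{squarerelations}(iii) is trying to prove and Lemma~\ref{uniform} is a tool \emph{toward} that theorem. I must therefore be careful to derive the column-uniformity using only attribute privacy plus unique identifiability plus tightness, invoking Lemma~\ref{privacycolumns} and Theorem~\ref{privacysingle} but \emph{not} presupposing the very symmetry I am building toward. The delicate point is that Theorem~\ref{privacysingle} characterizes attribute privacy of an individual via a sphere condition on its link; getting the corresponding column-side control amounts to showing that the $k$ neighbors $x_1,\dots,x_k$ of $\xbar$ are genuinely distinct and exhaust the relevant columns, which relies on unique identifiability (each $Y_{x_i}\not\subseteq Y_{\xbar}$) rather than on association privacy. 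If that distinctness argument goes through, the rest is bookkeeping with the global entry count.
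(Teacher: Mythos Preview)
Your setup is right and you correctly reach the point where, via Lemma~\ref{EqIdent} and Theorem~\ref{privacysingle}, each individual $\xbar$ has link attribute complex $\partial(Y_{\xbar})$ and hence $k$ distinct neighbors $x_1,\dots,x_k$ with $\abs{Y_{x_i}\cap Y_{\xbar}}=k-1$. But the next step---``running the identical argument with the roles of $X$ and $Y$ interchanged''---is not available: the hypotheses are \emph{not} self-dual. You only have attribute privacy, not association privacy, so you cannot invoke Corollary~\ref{columnsizeequal} or its dual (those sit under the standing hypothesis of Lemma~\ref{PrivacyLinks}, which assumes both). Your ``main obstacle'' paragraph correctly flags this circularity, but the proposed workaround via connectivity never closes the gap: to conclude that adjacent columns have equal size you would need the dual of the link-sphere characterization, i.e., Theorem~\ref{privacysingle} applied to an attribute, which again requires association privacy.

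The paper avoids all of this by extracting the needed column information directly from the link of $\xbar$, without ever dualizing. Fix any $y\in Y$ and pick $\xbar$ with $(\xbar,y)\in R$ (possible since $R$ is tight). Among the $k$ neighbors $x_1,\dots,x_k$ of $\xbar$, exactly one (say $x_j$) has $Y_{x_j}\cap Y_{\xbar}=Y_{\xbar}\setminus\{y\}$; the remaining $k-1$ neighbors all retain $y$ in their intersection with $Y_{\xbar}$, hence all lie in $X_y$. Together with $\xbar$ itself this gives $\abs{X_y}\geq k$. Now the double count
\[
nk \;=\; \sum_{x\in X}\abs{Y_x} \;=\; \sum_{y\in Y}\abs{X_y} \;\geq\; nk
\]
forces $\abs{X_y}=k$ for every $y$. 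No connectivity, no dualization, no component decomposition is needed. You had the neighbors $x_1,\dots,x_k$ in hand; the missing move was to read off the column lower bound from them rather than pivot to a symmetry argument.
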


\begin{proof}
By Lemma~\ref{EqIdent} on page~\pageref{EqIdent}, every $x\in X$ is
uniquely identifiable via $R$.

\vst

We first claim that $\abs{X_y} \geq k$ for every $y\in Y$.  To see
this, let $y\in Y$ be arbitrary.  Pick some $x \in X$ such that
$(x,y)\in R$.  Such an $x$ exists since $R$ has no blank columns.
Since $R$ preserves attribute privacy for $x$, we can argue as in the
proof of Lemma~\ref{Qmaxprivacy}, concluding that at least $k-1$ other
individuals in $X$ must share attribute $y$ with $x$.  So $\abs{X_y}
\geq k$.

\vst

Counting the total number of nonblank entries in $R$ in two ways, we
obtain:

$$n\,k 
  \;=\; \sum_{x\in X}\,\abs{Y_x}
  \;=\; \sum_{y\in Y}\,\abs{X_y}
  \;\geq\; \sum_{y\in Y}\,k
  \;=\; n\,k, \qquad \hbox{with $n=\abs{X}=\abs{Y}$}.$$

Thus $\abs{X_y}=k$ for every $y\in Y$.
\end{proof}

\vsr

\begin{corollary}\label{kone}
Assume the hypotheses of Lemma~\ref{uniform} and that $k=1$.

Then $R$ is isomorphic to a square diagonal relation.
\end{corollary}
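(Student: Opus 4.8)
The plan is to feed the hypotheses directly into Lemma~\ref{uniform} and then read off the structure of $R$. Since we are assuming the hypotheses of Lemma~\ref{uniform} together with $k=1$, that lemma tells us that every column of $R$ has exactly one nonblank entry. Combined with the standing hypothesis that every row has exactly $k=1$ nonblank entry, this yields $\abs{Y_x}=1$ for every $x\in X$ and $\abs{X_y}=1$ for every $y\in Y$. First I would use the row condition to define a set map $f:X\rightarrow Y$ sending each individual $x$ to the unique attribute it possesses, i.e.\ the single element of $Y_x$. This is well-defined precisely because $\abs{Y_x}=1$ (tightness alone guarantees $Y_x\neq\emptyset$, and $k=1$ pins down the cardinality).

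Next I would show that $f$ is a bijection. Injectivity follows from the column condition supplied by Lemma~\ref{uniform}: if $f(x)=f(x')=y$, then $x,x'\in X_y$, and since $\abs{X_y}=1$ we must have $x=x'$. Because $\abs{X}=\abs{Y}=n$ is finite and $f$ is an injection between finite sets of the same cardinality, $f$ is automatically surjective, hence a bijection. I would then package this as an isomorphism of relations: enumerate $X=\{x_1,\ldots,x_n\}$ and relabel the attributes by setting $y_i = f(x_i)$, noting that surjectivity of $f$ guarantees $\{y_1,\ldots,y_n\}=Y$ with no repetitions. Under this labeling $(x_i,y_j)\in R$ if and only if $j=i$, which is exactly a square diagonal relation (the generalization to $n$ rows of the relation displayed on page~\pageref{twoelementdiag}). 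Thus the identity map on $X$ together with the bijection $y_i\mapsto y_i$ exhibits $R$ as isomorphic to a square diagonal relation.

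There is essentially no obstacle here; the corollary is a short consequence of Lemma~\ref{uniform} combined with a counting/bijection argument. The only point requiring a modicum of care is confirming that it is the \emph{column} condition — not merely the raw hypothesis $k=1$ on rows — that forces injectivity of $f$, so that the conclusion genuinely uses the full strength of Lemma~\ref{uniform}. (Invoking unique identifiability via Lemma~\ref{EqIdent} would give an alternate route to injectivity, but is not needed once the column count is in hand.)
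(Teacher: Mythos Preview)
Your proof is correct. The paper's own proof takes the alternate route you flag in your final parenthetical: it cites the proof of Lemma~\ref{uniform} for the fact that every $x\in X$ is uniquely identifiable (this comes from Lemma~\ref{EqIdent}), and then invokes the $\kmax=1$ paragraph in the proof of Lemma~\ref{Qmaxprivacy}, where unique identifiability is used to conclude that distinct individuals have distinct attributes. So the paper gets injectivity of your map $f$ from unique identifiability, whereas you get it from the column count $\abs{X_y}=1$ delivered by the conclusion of Lemma~\ref{uniform}. Your argument is slightly more self-contained in that it uses only the \emph{statement} of Lemma~\ref{uniform} rather than reaching into the proofs of two earlier lemmas; the paper's version has the virtue of reusing an argument already written out verbatim. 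Either way the content is a one-line bijection, and the two routes are interchangeable here.
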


\begin{proof}
As we saw in the proof of Lemma~\ref{uniform}, every $x\in X$ is
uniquely identifiable via $R$.  The argument in the proof of
Lemma~\ref{Qmaxprivacy}, for $\kmax=1$, therefore establishes this
corollary.
\end{proof}

\begin{corollary}\label{ktwo}
Assume the hypotheses of Lemma~\ref{uniform}.

Suppose further that $R$ is connected and that $k=2$.

Then $R$ is isomorphic to a cyclic staircase relation.
\end{corollary}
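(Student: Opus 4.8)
The plan is to reduce the statement to a routine fact about graphs: a finite connected graph in which every vertex has degree two is a single cycle. First I would apply Lemma~\ref{uniform} with $k=2$ to conclude that, in addition to every row having exactly two nonblank entries, every column of $R$ also has exactly two nonblank entries. Writing $n=\abs{X}=\abs{Y}$, this says that when $R$ is viewed as the undirected bipartite graph on the disjoint vertex sets $X$ and $Y$ (as in Definition~\ref{connected}), every vertex---whether in $X$ or in $Y$---has degree exactly two.

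Next I would invoke connectedness. A finite graph all of whose vertices have degree two is a disjoint union of cycles; being connected by hypothesis, $R$ is therefore a single cycle. Since the graph is bipartite with parts $X$ and $Y$, this cycle has even length and alternates strictly between $X$-vertices and $Y$-vertices. As each part has exactly $n$ elements, the cycle has length $2n$ and visits each of the $n$ individuals and each of the $n$ attributes exactly once.

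The remaining step is to read off the claimed isomorphism by traversing the cycle. I would choose any edge of the cycle, label its $Y$-endpoint $y_1$ and its $X$-endpoint $x_1$, then walk around the cycle, labeling the vertices in order of visitation as $y_1, x_1, y_2, x_2, \ldots, y_n, x_n$ before returning to $y_1$. By construction each $x_i$ is then adjacent to precisely $y_i$ and $y_{i+1}$ (indices taken modulo $n$, so that $x_n$ is adjacent to $y_n$ and $y_1$), which is exactly the incidence pattern of the cyclic staircase relation displayed on page~\pageref{staircase}. These labelings constitute bijections $X \to \{x_1,\ldots,x_n\}$ and $Y \to \{y_1,\ldots,y_n\}$ carrying $R$ onto a cyclic staircase relation, i.e.\ the desired relation isomorphism.

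The one point requiring care---rather than genuine difficulty---is the bookkeeping in this last step: I must verify that traversing a $2n$-cycle really does produce the adjacency $x_i \sim \{y_i, y_{i+1}\}$ and that no $X$- or $Y$-label is repeated, which follows because each vertex is visited exactly once and has degree two. Attribute privacy and tightness enter only through Lemma~\ref{uniform} (and, implicitly, through the unique identifiability guaranteed by Lemma~\ref{EqIdent} that its proof relies on); once the degree-two and connectedness conditions are in hand, the conclusion is purely combinatorial, so I do not anticipate a substantive obstacle beyond stating the standard cycle characterization cleanly.
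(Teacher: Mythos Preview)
Your argument is correct and slightly different in flavor from the paper's. Both proofs begin with Lemma~\ref{uniform} to obtain that every row and every column has exactly two nonblank entries. From there, you pass to the bipartite incidence graph of Definition~\ref{connected}: every vertex has degree two, the graph is connected, hence it is a single $2n$-cycle, and traversing it yields the staircase labeling. The paper instead works inside the Dowker complexes $\dowx$ and $\dowy$: it first records unique identifiability of both individuals and attributes, then invokes the $k=2$ paragraph from the proof of Theorem~\ref{privbndrycmplx}, where each complex is one-dimensional, each vertex is incident to exactly two edges (by duality together with unique identifiability), and connectedness forces a linear cycle. Your route is more elementary---a single well-known graph-theory fact does all the work once Lemma~\ref{uniform} is in hand---whereas the paper's route recycles an argument already established for Theorem~\ref{privbndrycmplx} and stays within the simplicial framework used throughout.
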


\begin{proof}
As we saw in the proof of Lemma~\ref{uniform}, every $x\in X$ is
uniquely identifiable via $R$.

Observe as well that, in a dual sense, each $y\in Y$ is uniquely
identifiable via $R$, since $R$ has no blank columns and preserves
attribute privacy: \ $\phi_R(X_y) = (\clsy)(\ys) = \ys$.

By Lemma~\ref{uniform}, all rows and columns of $R$ have exactly two
nonblank entries.

Consequently, the argument in the proof of
Theorem~\ref{privbndrycmplx} on page~\pageref{privbndrycmplx}, for
$k=2$, establishes this corollary.
\end{proof}

\vst

\begin{corollary}\label{kthree}
Assume the hypotheses of Lemma~\ref{uniform}.

Suppose further that $R$ is connected and that $k\geq 3$.
\ Let $n=\abs{X}=\abs{Y}$.

Then $k=n-1$ and $R$ is isomorphic to a spherical boundary relation.
\end{corollary}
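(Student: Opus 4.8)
The plan is to show that the blank entries of $R$ are forced to form a single diagonal, so that $R$ is full‑minus‑a‑diagonal, i.e.\ a spherical boundary relation (page~\pageref{staircase}), and in particular $k=n-1$. Write $m=n-k$ for the common number of blank entries in each row; the whole content of the corollary is that $m=1$ (for $k\le 2$ the parallel Corollaries~\ref{kone} and \ref{ktwo} already settle matters). First I would assemble the setup. By Lemma~\ref{uniform} every column also has exactly $k$ entries, so $R$ is bi‑regular. By Lemma~\ref{EqIdent} every $x\in X$ is uniquely identifiable, and dually $\phi_R(X_y)=(\clsy)(\ys)=\ys$ for each $y$ because $R$ preserves attribute privacy. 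Since all rows have the same size $k$ and are distinct (unique identifiability), they are pairwise incomparable, and likewise for the columns; hence $\dowy$ and $\dowx$ are each pure of dimension $k-1$ with exactly $n$ facets, indexed respectively by the individuals and the attributes.

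Next I would extract the local structure from Theorem~\ref{privacysingle}. For each $x$, attribute privacy for $x$ together with unique identifiability gives $\dowqy=\partial(Y_x)$, where $Q$ models $\lk(\dowx,x)$. Reading off the $k$ maximal simplices $Y_x\setminus\ys$ of $\partial(Y_x)$, each must occur as a row of $Q$, forcing, for every $y\in Y_x$, an individual $x'$ with $Y_{x'}\inter Y_x=Y_x\setminus\ys$, and hence $Y_{x'}=(Y_x\setminus\ys)\union\{z\}$ for a single new attribute $z\notin Y_x$. Phrased through the complement sets $C_x:=Y\setminus Y_x$ (each of size $m$), this is a \emph{single‑swap} property: for every $C_x$ and every $y\notin C_x$ there is a member $C_{x'}=(C_x\setminus\{z\})\union\ys$ of the family. (Equivalently, this is exactly the no‑free‑face condition that Lemmas~\ref{nofreefacesPrivacy} and \ref{privacyNofreefaces} identify with attribute privacy.)

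The crux is to force $m=1$. Fixing a facet $Y_x$, each element of $Y$ lies in exactly $m$ complement sets (since $\abs{Y\setminus X_y}\cdots=n-k=m$), so a double count gives
\[
\sum_{x'\neq x}\abs{C_x\inter C_{x'}}\;=\;m(m-1).
\]
The single‑swap property supplies, for each of the $n-m$ elements $y\notin C_x$, at least one neighbor (a member sharing exactly $m-1$ elements and having $y$ as its unique outside element); distinct $y$ give distinct neighbors, so there are at least $n-m$ neighbors, each contributing $m-1$ to the sum. Hence $(n-m)(m-1)\le m(m-1)$, i.e.\ $n\le 2m$ whenever $m\ge 2$. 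I would then eliminate the residual window $2\le m$, $n\le 2m$ (equivalently $k\le m$) by combining: (i) the scarcity of $m$‑subsets of $Y_x$ disjoint from $C_x$ — there are only $\binom{n-m}{m}$ of them, which at the extreme $n=2m$ leaves room for a single disjoint partner while the counting demands more, a direct contradiction; (ii) connectedness of $\dowy$ (Lemma~\ref{connectedcplx}), which forbids the family from splitting into the complementary blocks the counting otherwise requires; and (iii) the full attribute‑privacy implication of Lemma~\ref{privacycolumns}, applied not only to ridges but to smaller faces, which rules out the remaining design‑like configurations (the Fano‑ and short‑cycle‑type families, which in fact carry free faces). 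This yields $m=1$; then the map sending each $x$ to its unique blank attribute is a bijection $X\to Y$ (injective since equal blank attributes would force equal rows, surjective by counting), so the blanks form a diagonal, $\dowy=\partial(Y)$, $\dowx=\partial(X)$, and $R$ is isomorphic to a spherical boundary relation with $k=n-1$.

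The main obstacle is precisely the last elimination. The tempting route is to mimic the block decomposition in the proof of Theorem~\ref{privbndrycmplx}, but that argument runs through Lemma~\ref{PrivacyLinks}, which presupposes association privacy — exactly what the surrounding Theorem~\ref{squarerelations} is still trying to prove, so invoking it here would be circular. The real asymmetry is that attribute privacy rigidly pins down $\dowy$ (no free faces, and every individual link a literal boundary sphere), while it says nothing combinatorial about $\dowx$; consequently one cannot recover the clean ``each column meets $X_{\bar y}$ in $k-1$ entries'' step by a symmetric link argument, and the regime $2\le m\le n/2$ must instead be excluded from the global antichain/no‑free‑face constraints together with connectedness, as sketched above.
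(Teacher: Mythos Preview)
Your setup and the double count $\sum_{x'\neq x}\abs{C_x\cap C_{x'}}=m(m-1)$ are correct, and the bound $(n-m)(m-1)\le m(m-1)$ follows. The gap is the elimination of the range $m\ge 2$, $k\le m$: steps (i)--(iii) are placeholders, not arguments. In (i) you do not say what ``the counting demands more'' of; in (iii) you invoke Lemma~\ref{privacycolumns} ``on smaller faces'' without specifying which column-intersections you form and which column they are forced to sit inside. As written, nothing in your sketch actually excludes, say, $k=3$, $m=4$, $n=7$; you note such design-like families carry free faces, but that observation is the whole content of the elimination and you have not supplied it.

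You are right that Lemma~\ref{PrivacyLinks} is unavailable here, but the paper does \emph{not} need association privacy. The missing local step is this: since every column has exactly $k$ entries (Lemma~\ref{uniform}), for each $y_j\in Y_x$ one has $X_{y_j}=\{x,x_1,\dots,x_k\}\setminus\{x_j\}$ exactly, so the $k$ link-neighbors $x_1,\dots,x_k$ of $x$ are the \emph{only} individuals meeting $Y_x$. The paper then proves that all $x_i$ share the same outside attribute $y^*\in Y\setminus Y_x$: if $y$ is $x_2$'s outside attribute and $x_1$ lacks $y$, then $\emptyset\neq X_{y_3}\cap X_y\subseteq X_{y_1}$ (the only member of $X_{y_3}$ outside $X_{y_1}$ is $x_1$, which is not in $X_y$), so $\{y_3,y\}\Rightarrow y_1$, contradicting attribute privacy via Lemma~\ref{privacycolumns}. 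This is precisely the concrete instance of your step~(iii), and it uses $k\ge 3$ to have a third attribute $y_3$ available. Once all $x_i$ share $y^*$, the $(k{+}1)\times(k{+}1)$ block on $\{x,x_1,\dots,x_k\}\times(Y_x\cup\{y^*\})$ is disconnected from the rest of $R$, and connectedness forces $n=k+1$.
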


\begin{proof}
As we saw in the proof of Lemma~\ref{uniform}, every $x\in X$ is
uniquely identifiable via $R$.

Pick some such $x$ and let $Q$ model $\lk(\dowx, x)$.  We can again
argue as we did in the proof of Lemma~\ref{Qmaxprivacy} (and
elsewhere), that there exist distinct individuals (vertices) $x_1,
\ldots, x_k$ in $\dowqx$ such that $\tY_{\!1}, \ldots, \tY_{\!k}$ are
the maximal simplices of $\dowqy$, with $\tY_{\!i} =
Y_{x_{\scriptstyle i}} \inter Y_{\,x}$, and $\abs{\tY_{\!i}}=k-1$, for
$i=1, \ldots, k$.  (This time there are exactly $k$ individuals in
$\dowqx$, since every column of $R$ has exactly $k$ nonblank entries,
by Lemma~\ref{uniform}.)

Since every row of $R$ has exactly $k$ nonblank entries, each $x_i$
has one additional attribute in $\sdiff{Y}{Y_x}$.  We claim that this
additional attribute is the same $y$ for all $x_i$.  Given that claim
and the general row and column cardinality constraints, $R$ must be
isomorphic to the decomposition shown in Figure~\ref{RblocksQ}.  \
(This figure and the next indicate blank entries either by blanks or
by explicit ``$0$''s.)

\begin{figure}[h]
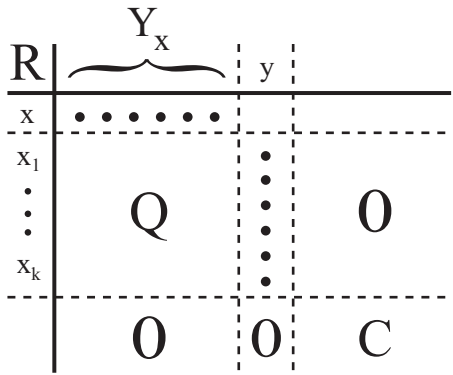

\begin{center} 
\vspace*{-0.1in}
\ifig{RblocksQ}{scale=0.5}
\end{center}
\vspace*{-0.2in}
\caption[]{Relation $R$ decomposed as for the proof of
  Corollary~\ref{kthree}.}
\label{RblocksQ}
\end{figure}

Since $R$ is connected, $C$ cannot exist and the corollary follows,
somewhat as in the proof of Theorem~\ref{privbndrycmplx} as shown in
Figure~\ref{Rblocksdiag} on page~\pageref{Rblocksdiag}.

In order to establish the claim, suppose that $Y_x = \{y_1, \ldots,
y_k\}$.  Suppose further that for each $i=1, \ldots, k$, individual
$x_i$ has all the attributes of $Y_x$ except for $y_i$.  Now let $y$
be $x_2$'s attribute outside $Y_x$.  We can assume without loss of
generality that $x_1$ does not have this attribute, and then derive a
contradiction, as follows:

Let $\Ximply = X_{y_3} \,\inter\, X_y$.
The intersection is well-defined since $k \geq 3$.  Moreover, $x_2 \in
\Ximply$, since $x_2$ has attributes $y_3$ and $y$.  However, $x_1
\not\in \Ximply$, since $x_1$ does not have attribute $y$.

\vst

Observe that $X_{y_1} = \{x, x_2, x_3, \ldots, x_k\}$
\ and \ $X_{y_3} = \sdiff{\{x, x_1, x_2, \ldots, x_k\}}{\{x_3\}}$.

\vst

Thus $\emptyset \neq \Ximply = X_{y_3} \inter X_y \subseteq X_{y_1}$.
\ In other words, attributes $y_3$ and $y$ imply attribute $y_1$,
contradicting the assumption that $R$ preserves attribute privacy.
\end{proof}

\vspace*{0.2in}

\noindent We turn now to the proof of Theorem~\ref{squarerelations},
which we had stated previously on page~\pageref{squarerelations}:

\vspace*{-0.05in}

\begin{proof}
Part (i) follows from the Subclaim on page~\pageref{subclaim} and
part (ii) follows from Lemma~\ref{EqIdent} on page~\pageref{EqIdent}.
\quad We therefore focus on proving part (iii), assuming parts (i) and
(ii) hold:

\vspace*{0.1in}

The proof is by induction on $n=\abs{X}=\abs{Y}$.

\vspace*{0.1in}

I. The base case $n=2$ means $R$ is isomorphic to a standard two
element diagonal relation as on page~\pageref{EqIdent}, which
preserves association privacy.

\vspace*{0.1in}

II. For the induction step, assume that, for some $n > 2$, part (iii)
of the theorem holds for all relations with $X$ and $Y$ spaces of size
strictly less than $n$ (and bigger than 1).  We need to establish part
(iii) for all relations with $X$ and $Y$ spaces of size $n$.

\vst

As we observed in the proof of Lemma~\ref{Qmaxprivacy}, if $\kmax=1$
in Definition~\ref{Qmax}, then every individual has exactly one
attribute and $R$ is isomorphic to a square diagonal relation, hence
preserves association privacy.  We therefore assume that $\kmax>1$ for
the rest of the proof.

\vst

Let $\Qmax$ be as in Definition~\ref{Qmax} and consider the
decomposition of $R$ as in Figure~\ref{QmaxR}.

\begin{figure}[h]
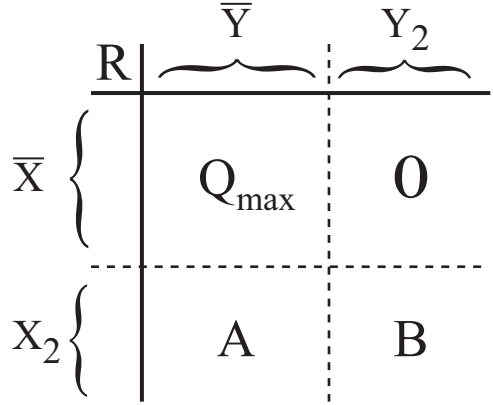

\begin{center} 
\ifig{QmaxR}{scale=0.5}
\end{center}
\vspace*{-0.2in}
\caption[]{Relation $R$ decomposed into blocks by $\Qmax$.}
\label{QmaxR}
\end{figure}

Here $X_2 = \sdiff{X}{\tX}$ and $Y_2 = \sdiff{Y}{\tY}$, with $\tX$ and
$\tY$ as in Definition~\ref{Qmax}.  Then $A$ is the restriction of $R$
to $X_2 \times \tY$ and $B$ is the restriction of $R$ to $X_2 \times
Y_2$.

Given parts (i) and (ii), Lemma~\ref{Qmaxprivacy} tells us that
$\Qmax$ is tight and preserves attribute privacy.  By
Theorem~\ref{toomanyattrib} on page~\pageref{toomanyattrib}, we see
that $\abs{\tX}\geq\abs{\tY}$ and therefore that
$\abs{X_2}\leq\abs{Y_2}$, since $\abs{X}=\abs{Y}$.

\clearpage

Let us look at some cases:

\vspace*{-0.05in}

\begin{itemize}
\item \underline{$\abs{Y_2}=\abs{X_2}=1$}: \ Then $B$ is a singleton,
  so $A=\emptyset$, since $R$ preserves attribute privacy.  The
  induction hypothesis applies to $\Qmax$, telling
  us that $\Qmax$ preserves association privacy.  Since $R$ is the
  disjoint union of $\Qmax$ and $B$, both nonvoid, we see that $R$
  must also preserve association privacy.

\item \underline{$\abs{Y_2}=\abs{X_2}>1$}: \ Arguing as on
  page~\pageref{uniqueIdentCases}, we see that $B$ preserves attribute
  privacy.  Lemmas~\ref{EqIdent} and \ref{privacycolumns}, on
  pages~\pageref{EqIdent} and \pageref{privacycolumns}, respectively,
  then imply that $A=\emptyset$.  The induction hypothesis applies to
  each of $\Qmax$ and $B$, since neither is now a singleton (since
  $\kmax > 1$).  Again, $R$ is a disjoint union of these two
  relations, so we see that $R$ preserves association privacy.  (One
  can formalize that argument by using the dual version of
  Lemma~\ref{privacycolumns} on page~\pageref{privacycolumns}.)

\item \underline{$\abs{Y_2}>\abs{X_2}\geq 1$}: \ This case cannot occur,
  since $B$ would preserve attribute privacy but have more attributes
  than individuals (see again also Theorem~\ref{toomanyattrib} on
  page~\pageref{toomanyattrib}).

\item \underline{$\abs{X_2}=0$}: \ Then $Y_2 = \emptyset$ and
  $\Qmax=R$.

  If $R$ has more than one connected component, then one can apply the
  induction hypothesis to each component separately.  (In order to
  apply the induction hypothesis, one should first make a small
  argument that each component is a tight square relation, contains
  more than one entry, and preserves attribute privacy.  This is
  straightforward.)  One concludes that each component preserves
  association privacy and therefore that $R$ preserves association
  privacy.

  Otherwise, $R$ is square, tight, connected, and preserves attribute
  privacy.  Furthermore, every row of $R$ has exactly $\kmax$ entries.
  By assumption, $\kmax > 1$ in this part of the proof.  So
  Corollaries~\ref{ktwo} and \ref{kthree} tell us that $R$ is
  isomorphic to either a cyclic staircase relation or a spherical
  boundary relation.  Thus $R$ preserves association privacy.
\end{itemize}

\vspace*{-0.3in}

\end{proof}

\clearpage
\section{Poset Chains}
\markright{Chains in Posets and Links}
\label{posetchains}

Recall Definition~\ref{galoislattice}, on
page~\pageref{galoislattice}, of the Galois lattice $\PRplus$
associated with a relation $R$, and Definition~\ref{iars}, on
page~\pageref{iars}, defining informative attribute release sequences.
In this appendix we will explore connections between these two
concepts.

\subsection{Maximal Chains and Informative Attribute Release Sequences}

\hspace*{0.24in}Let $R$ be a relation on $\XxY$, with both
$\hspt{}X$ and $Y\mskip-1.5mu$ nonempty:

\vspace*{0.05in}

\hspace*{0.2in}\begin{minipage}{5.75in}

\vst

Suppose $\{(\sigma_k, \gamma_k) < \cdots < (\sigma_1, \gamma_1) <
(\sigma_0, \gamma_0)\}$, with $k\geq 1$, is a maximal chain in
$\PRplus$.

\vst

Then, \ for $1 \leq i \leq k$, \ $\sigma_i \subsetneq \sigma_{i-1}$
and $\gamma_i \supsetneq \gamma_{i-1}$.

\vst

Also, $\sigma_0 = X$ and $\gamma_k = Y$, so
$\gamma_0 = \phi_R(X)$ and $\sigma_k = \psi_R(Y)$.

\vst

Consequently, $\hspt\gamma_0 \neq \emptyset\hspt$ if and only if
$\mskip1mu{}X\mskip-2mu\in\dowx$, and $\sigma_k \neq \emptyset\hspt$
if and only if $\mskip1.5mu{}Y\mskip-2mu\in\dowy$.

\end{minipage}

\vspace*{0.1in}

We sometimes speak of a {\em maximal chain at and above} $(\sigma,
\gamma)$, by which we mean a chain $\{(\sigma, \gamma) < \cdots <
(\sigma_1, \gamma_1) < (\sigma_0, \gamma_0)\}$ in $\PRplus$ that is
maximal among all chains in $\PRplus$ containing $(\sigma, \gamma)$
as least element.  Such a chain is a prefix of a full maximal chain in
$\PRplus$ (``prefix'' with respect to our subscript ordering, which
starts at the top of a poset and moves downward).

\vspace*{0.1in}

Recall the following lemma, previously stated on
page~\pageref{chaintoiars} in
Section~\ref{informativerelease}:\label{chaintoiarsAppPage}

\setcounter{currentThmCount}{\value{theorem}}
\setcounter{theorem}{\value{ChainToIarsLemma}}
\begin{lemma}[Informative Attributes from Maximal Chains]
Let $R$ be a relation on $\XxY$, with both $\hspt{}X\!$ and
$\hspt{}Y\!$ nonempty.
Suppose $\{(\sigma_k, \gamma_k) < \cdots < (\sigma_1, \gamma_1) <
(\sigma_0, \gamma_0)\}$, with $k\geq 1$, is a maximal chain in
$\PRplus$.

\vso

Define $y_1, \ldots, y_k$ by selecting some $y_i \in
\gamma_i\setminus\gamma_{i-1}$, for each $i=1, \ldots, k$.

\vso

Then $y_1, \ldots, y_k$ is an informative attribute release sequence for
$R$.

\vso

Moreover, $(\clsy)(\{y_1, \ldots, y_i\}) = \gamma_i$, for each $i = 0, 1,
\ldots, k$.
\end{lemma}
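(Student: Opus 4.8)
The plan is to establish the ``Moreover'' claim first---that $(\clsy)(\{y_1, \ldots, y_i\}) = \gamma_i$ for $0 \le i \le k$---by induction on $i$, and then read off the informative property as an immediate consequence. Throughout I write $\chi_i = \{y_1, \ldots, y_i\}$, so $\chi_0 = \emptyset$. The base case $i=0$ is just the identity $(\clsy)(\emptyset) = \phi_R(\psi_R(\emptyset)) = \phi_R(X) = \gamma_0$, using $\psi_R(\emptyset) = X$ (page~\pageref{phipsiAppdef}) together with the fact that $\gamma_0 = \phi_R(X)$ for a maximal chain.

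For the inductive step, assume $(\clsy)(\chi_{i-1}) = \gamma_{i-1}$. First I would pin down the two easy inclusions. Since $y_j \in \gamma_j \subseteq \gamma_i$ for every $j \le i$, we have $\chi_i \subseteq \gamma_i$; as $\gamma_i = \phi_R(\sigma_i)$, Corollary~\ref{mappedsimplex} gives $(\clsy)(\gamma_i) = \gamma_i$, and since $\phi_R \circ \psi_R$ is an order-preserving closure operator (Lemmas~\ref{upward} and~\ref{idempotent}) this yields $(\clsy)(\chi_i) \subseteq \gamma_i$. In the other direction, $\chi_i \supseteq \chi_{i-1}$ gives $(\clsy)(\chi_i) \supseteq (\clsy)(\chi_{i-1}) = \gamma_{i-1}$ by the inductive hypothesis, while $y_i \in (\clsy)(\chi_i)$ by the upward property of Lemma~\ref{upward}. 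Hence
\[
\gamma_{i-1} \cup \{y_i\} \;\subseteq\; (\clsy)(\chi_i) \;\subseteq\; \gamma_i .
\]

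The main obstacle, and the crux of the argument, is upgrading this sandwich to an equality, and this is where maximality of the chain enters. Set $\gamma' = (\clsy)(\chi_i)$ and $\sigma' = \psi_R(\chi_i)$, so that $\gamma' = \phi_R(\sigma')$ by definition. I would check that $(\sigma', \gamma')$ is a genuine element of $\PR$: the dual of Corollary~\ref{mappedsimplex} gives $(\clsx)(\sigma') = \sigma'$, whence $\psi_R(\gamma') = \psi_R(\phi_R(\sigma')) = \sigma'$, so both of the defining equations for $\PR$ hold; and both components are nonempty because $\chi_i \subseteq \gamma_i \in \dowy$ forces $\chi_i \in \dowy$ and hence $\sigma' = \psi_R(\chi_i) \neq \emptyset$, while $y_i \in \gamma'$. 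Now the left inclusion above is strict, since $y_i \in \gamma' \setminus \gamma_{i-1}$, so in the poset order $(\sigma_i, \gamma_i) \le (\sigma', \gamma') < (\sigma_{i-1}, \gamma_{i-1})$. Because the given chain is maximal in $\PRplus$, no element of $\PRplus$ can lie strictly between the consecutive members $(\sigma_i, \gamma_i)$ and $(\sigma_{i-1}, \gamma_{i-1})$; since $(\sigma', \gamma')$ lies at or above $(\sigma_i, \gamma_i)$ and strictly below $(\sigma_{i-1}, \gamma_{i-1})$, it must equal $(\sigma_i, \gamma_i)$. Therefore $\gamma' = \gamma_i$, completing the induction.

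Finally, the informative property follows immediately from the ``Moreover'': for each $1 \le i \le k$, the inductive result at index $i-1$ gives $(\clsy)(\{y_1, \ldots, y_{i-1}\}) = \gamma_{i-1}$, and $y_i \in \gamma_i \setminus \gamma_{i-1}$ says precisely that $y_i \notin (\clsy)(\{y_1, \ldots, y_{i-1}\})$, which is the defining condition of Definition~\ref{iars} (the case $i=1$ reducing to $y_1 \notin \phi_R(X)$). I expect the only delicate bookkeeping to be confirming that $(\sigma', \gamma')$ really sits in $\PR$ rather than being an adjoined top or bottom of $\PRplus$; this is safe because $\gamma' \supsetneq \gamma_{i-1} \supseteq \gamma_0 = \phi_R(X)$ keeps it strictly below the top, and its two nonempty, closure-fixed components place it squarely in $\PR$.
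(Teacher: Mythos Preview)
Your proof is correct and follows essentially the same route as the paper's: induction on $i$, with base case $(\clsy)(\emptyset)=\phi_R(X)=\gamma_0$, an inductive sandwich placing $(\clsy)(\chi_i)$ strictly above $\gamma_{i-1}$ and at or below $\gamma_i$, and an appeal to maximality of the chain to force equality with $\gamma_i$. The one cosmetic difference is that the paper first reduces $\psi_R(\chi_i)$ to $\psi_R(\gamma_{i-1}\cup\{y_i\})$ via an explicit intersection computation, whereas you sandwich $(\clsy)(\chi_i)$ directly and spend a bit more care verifying that $(\sigma',\gamma')$ lands in $\PRplus$; both arrive at the same maximality step.
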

\setcounter{theorem}{\value{currentThmCount}}

\begin{proof}
Establishing the ``Moreover'' also establishes the ``iars'' assertion.

The proof is by induction on $i$.

For the base case, $i=0$ and we need to show that $(\clsy)(\emptyset) =
\gamma_0$.

Calculating, $(\clsy)(\emptyset) = \phi_R(X) = \gamma_0$, by our earlier
comments about maximal chains.

\vst

For the induction step, we assume that, for some $1 \leq i \leq k$, the
assertion holds for indices smaller than $i$ and we need to show the
assertion holds for $i$.  First, observe:

\vspace*{-0.2in}

$$\psi_R(\{y_1, \ldots, y_i\})
  \;=\;
  \psi_R(\{y_1, \ldots, y_{i-1}\}) \;\inter\; X_{y_{\scriptstyle i}}
  \;=\;
  \psi_R(\gamma_{i-1}) \;\inter\; X_{y_{\scriptstyle i}}
  \;=\;
  \psi_R(\gamma_{i-1} \union \{y_i\}).
$$

(The middle equality follows from the induction hypothesis and a dual
  version of Corollary~\ref{mappedsimplex} from
  page~\pageref{mappedsimplex}, specifically because $(\clsy)(\{y_1,
  \ldots, y_{i-1}\}) = \gamma_{i-1}$ and $\clsx \circ \psi_R = \psi_R$.)

\vspace*{0.1in}

Since $\gamma_{i-1} \subsetneq \gamma_{i-1} \union \{y_i\} \subseteq
\gamma_i$,

\vspace*{-0.1in}

$$\gamma_{i-1}
  \;=\; (\clsy)(\gamma_{i-1})
  \;\subsetneq\; (\clsy)(\gamma_{i-1} \union \{y_i\})
  \;\subseteq\; (\clsy)(\gamma_i)
  \;=\; \gamma_i.$$

By maximality of the original chain and the nature of elements in
$\PRplus$, we see that $(\clsy)(\gamma_{i-1} \union \{y_i\}) =
\gamma_i$, so $(\clsy)(\{y_1, \ldots, y_i\}) = (\clsy)(\gamma_{i-1}
\union \{y_i\}) = \gamma_i$.
\end{proof}

\clearpage

Here is a partial converse (also previously stated in
Section~\ref{informativerelease}):

\setcounter{currentThmCount}{\value{theorem}}
\setcounter{theorem}{\value{IarsToChainLemma}}
\begin{lemma}[Chains from Informative Attributes]
Let $R$ be a relation on $\XxY$, with both $\hspt{}X$ and $\hspt{}Y\!$
nonempty.
Suppose $y_1, \ldots, y_k$ is an informative attribute release sequence
for $R$, with $k \geq 1$.

\vso

Let $\gamma_i = (\clsy)(\{y_1, \ldots, y_i\})$ and $\sigma_i =
\psi_R(\gamma_i)$, for $i=1, \ldots, k$.

\vso

Let $\gamma_0 = \phi_R(X)$.
\ Then $\{(\sigma_k, \gamma_k) < \cdots < (\sigma_1, \gamma_1) < (X,
\gamma_0)\}$ is a chain in $\PRplus$.
\end{lemma}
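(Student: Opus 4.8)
The plan is to show three things in turn: that the closures $\gamma_i$ grow strictly, that each listed pair genuinely lies in $\PRplus$, and — the one new wrinkle relative to Lemma~\ref{chaintoiars} — that the endpoint $i=k$ behaves correctly even when $\{y_1,\dots,y_k\}$ fails to be a simplex. Throughout I would write $\chi_i = \{y_1, \ldots, y_i\}$ for $0 \le i \le k$, so $\chi_0 = \emptyset$ and $\gamma_i = (\clsy)(\chi_i)$; note that $\gamma_0 = (\clsy)(\emptyset) = \phi_R(X)$ agrees with the hypothesis, so the definition of $\gamma_i$ is uniform across $i = 0, \ldots, k$.

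First I would establish $\gamma_0 \subsetneq \gamma_1 \subsetneq \cdots \subsetneq \gamma_k$. Monotonicity of $\clsy$ (inclusion-preserving, being a composite of the inclusion-reversing maps $\phi_R$ and $\psi_R$ of Lemma~\ref{orderreversing}) gives $\gamma_{i-1} \subseteq \gamma_i$ from $\chi_{i-1} \subseteq \chi_i$. For strictness, the informative condition of Definition~\ref{iars} gives $y_i \notin (\clsy)(\chi_{i-1}) = \gamma_{i-1}$, whereas $y_i \in \chi_i \subseteq (\clsy)(\chi_i) = \gamma_i$ by Lemma~\ref{upward}; thus $y_i \in \gamma_i \setminus \gamma_{i-1}$. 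Since the order on $\PR$, and on $\PRplus$ together with its adjoined extremes, compares elements by reverse inclusion of their attribute components, this strict chain of containments is exactly the required strict ordering $(\sigma_k,\gamma_k) < \cdots < (\sigma_1,\gamma_1) < (X,\gamma_0)$ — and in particular all listed elements are pairwise comparable and distinct — once membership in $\PRplus$ is known.

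Next I would verify membership. The top pair $(X, \gamma_0) = (X, \phi_R(X))$ is exactly $\oneR$: by Definition~\ref{galoislattice} the top of $\PRplus$ is $(X, \phi_R(X))$ when $\phi_R(X) \neq \emptyset$ and the adjoined $(X,\emptyset)$ otherwise. For $1 \le i \le k-1$, applying the ``Almost a Simplex'' lemma to the prefix sequence $y_1,\dots,y_{i+1}$ (itself informative) shows $\chi_i \in \dowy$, so $\sigma_i = \psi_R(\gamma_i) \neq \emptyset$; moreover $\gamma_i \supseteq \chi_i \neq \emptyset$, and $(\sigma_i,\gamma_i) \in \PR$ because $\sigma_i = \psi_R(\gamma_i)$ by definition while $\phi_R(\sigma_i) = (\clsy)(\gamma_i) = (\clsy)\big((\clsy)(\chi_i)\big) = \gamma_i$ by idempotence of $\clsy$ (Lemma~\ref{idempotent}). (Here one also records $\sigma_i = \psi_R(\chi_i)$, using the Galois identity $\psi_R \circ \phi_R \circ \psi_R = \psi_R$ noted in the remark closing the proof of Lemma~\ref{idempotent}.)

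The main obstacle is the endpoint $i=k$, and I would dispose of it by a short case split. If $\chi_k \in \dowy$, the previous paragraph applies verbatim and $(\sigma_k,\gamma_k) \in \PR$. If $\chi_k \notin \dowy$, then $\psi_R(\chi_k) = \emptyset$, so $\gamma_k = \phi_R(\emptyset) = Y$ and $\sigma_k = \psi_R(Y)$; since the bottom element $\zeroR$ of $\PRplus$ always carries attribute component $Y$ — it is $(\psi_R(Y), Y)$ when $Y \in \dowy$ and the adjoined $(\emptyset, Y)$ otherwise — this forces $(\sigma_k,\gamma_k) = \zeroR \in \PRplus$. Either way $(\sigma_k,\gamma_k) \in \PRplus$, which together with the strict $\gamma$-containments of the second paragraph completes the proof that the listed pairs form a chain in $\PRplus$. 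I expect this endpoint bookkeeping — distinguishing when a pair is an interior element of $\PR$ from when it is the adjoined $\zeroR$ — to be the only delicate point; the remaining steps are routine manipulations of the closure operators.
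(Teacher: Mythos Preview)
Your proof is correct and follows essentially the same approach as the paper's: verify membership of each $(\sigma_i,\gamma_i)$ in $\PRplus$, then use the informative condition to obtain strictness of the chain. Your version is considerably more explicit than the paper's---you argue strictness via the $\gamma$-components rather than the $\sigma$-components, and you spell out the endpoint case $i=k$ (splitting on whether $\chi_k\in\dowy$) that the paper absorbs into the phrase ``by construction''---but the underlying argument is the same.
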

\setcounter{theorem}{\value{currentThmCount}}

Comment:  The resulting chain need not be maximal.

\begin{proof}
Observe that each $(\sigma_i, \gamma_i)\in\PRplus$ by construction, so
we need to establish the total ordering.
Letting $\sigma_0 = X$, we need to show that $\sigma_i \subsetneq
\sigma_{i-1}$, for each $i=1,\ldots,k$.

Since $\{y_1, \ldots, y_i\} \supseteq \{y_1, \ldots, y_{i-1}\}$, we
see that $\sigma_i \subseteq \sigma_{i-1}$.  If $\sigma_i =
\sigma_{i-1}$, then also $\gamma_i = \gamma_{i-1}$, contradicting the
fact that $y_i\in\gamma_i\setminus\gamma_{i-1}$ (which is true by the
nature of informative attribute release sequences).
\end{proof}

As a corollary to Lemmas~\ref{chaintoiars} and \ref{iarstochain}, one
sees that every informative attribute release sequence (iars) for $R$ is
a subsequence of an iars derived from a maximal chain in $\PRplus$.
(Technically, one needs to show that any nonempty subsequence of an iars
is itself an iars.  And one needs to show that extending any chain
obtained via Lemma~\ref{iarstochain} to a maximal chain retains the
original iars as a subsequence of one subsequently obtainable via
Lemma~\ref{chaintoiars}.  All that is straightforward.)

\subsection{Chains and Links}

We are interested in understanding how chains and informative
attribute release sequences behave as one passes to links.  (Small
caution: whereas we were looking at chains in $\PRplus$ before, we
focus here on $\PR$ (and $\PQ$).)

\begin{lemma}[Chains in Links]\label{linkchain}
Let $R$ be a relation on $\XxY$, with both $X\!$ and $\hspt{}Y\!$
nonempty, and suppose $(\sigma, \gamma)\in \PR$.
Let $Q$ be the relation modeling $\lk(\dowx, \sigma)$.  Then

\vspace*{-0.05in}

$$\PQ \;=\;
  \setdef{(\sdiff{\sigma^\prime}{\sigma},\, \gamma^\prime)}{%
          (\sigma, \gamma) < (\sigma^\prime, \gamma^\prime) \in \PR}.$$

\end{lemma}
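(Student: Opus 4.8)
The plan is to exhibit an explicit order-respecting correspondence between the two sets and verify it preserves the defining conditions of $\PQ$ and $\PR$. The natural map sends an element $(\sigma',\gamma')\in\PR$ lying strictly above $(\sigma,\gamma)$ to the pair $(\sdiff{\sigma'}{\sigma},\,\gamma')$, with inverse sending $(\kappa,\eta)\in\PQ$ to $(\kappa\cup\sigma,\,\eta)$. Since $(\sigma,\gamma)\in\PR$, we have $\gamma=\phi_R(\sigma)$ and $\sigma=\psi_R(\gamma)$, so $Q$ is exactly the relation of Definition~\ref{linksigma} modeling $\lk(\dowx,\sigma)$, namely $Q=R|_{\tX\times\gamma}$ with $\tX=\bigcup_{y\in\gamma}(\sdiff{X_y}{\sigma})$, and $\tX\inter\sigma=\emptyset$ by construction.

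The key tools are the link witness formulas of page~\pageref{psiLinkformulas}, dualized to links of individuals. First I would record the two dualized identities, valid when $\tX\neq\emptyset$: for every nonempty $\eta\subseteq\gamma$, $\psi_Q(\eta)=\sdiff{\psi_R(\eta)}{\sigma}$, and hence $\psi_R(\eta)=\psi_Q(\eta)\cup\sigma$ (using that $\sigma=\psi_R(\gamma)\subseteq\psi_R(\eta)$ since $\psi_R$ is inclusion-reversing); and for every $\kappa\subseteq\tX$, $\phi_Q(\kappa)=\phi_R(\kappa\cup\sigma)$.

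For the inclusion $\supseteq$: given $(\sigma',\gamma')\in\PR$ with $(\sigma,\gamma)<(\sigma',\gamma')$, one has $\gamma'\subsetneq\gamma$ and $\sigma\subsetneq\sigma'$, with $\sigma'=\psi_R(\gamma')$ and $\gamma'=\phi_R(\sigma')$. Applying the first dualized formula with $\eta=\gamma'$ gives $\psi_Q(\gamma')=\sdiff{\sigma'}{\sigma}$, which is nonempty since $\sigma\subsetneq\sigma'$; this simultaneously shows $\gamma'\in\dowqy$, that $\sdiff{\sigma'}{\sigma}$ is a nonempty simplex of $\dowqx$, and that $\sdiff{\sigma'}{\sigma}=\psi_Q(\gamma')$. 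Applying the second formula with $\kappa=\sdiff{\sigma'}{\sigma}\subseteq\tX$ and using $\sigma\subseteq\sigma'$ gives $\phi_Q(\sdiff{\sigma'}{\sigma})=\phi_R(\sigma')=\gamma'$. Hence $(\sdiff{\sigma'}{\sigma},\gamma')$ meets all defining conditions of $\PQ$. Conversely, for $\subseteq$: given $(\kappa,\eta)\in\PQ$, set $\sigma'=\kappa\cup\sigma$ and $\gamma'=\eta$. The second formula gives $\phi_R(\sigma')=\phi_Q(\kappa)=\eta=\gamma'$, and the first (via $\psi_R(\eta)=\psi_Q(\eta)\cup\sigma$) gives $\psi_R(\gamma')=\kappa\cup\sigma=\sigma'$; both are nonempty, so $(\sigma',\gamma')\in\PR$. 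Since $\kappa\neq\emptyset$ and $\kappa\inter\sigma=\emptyset$, we get $\sigma\subsetneq\sigma'$, i.e.\ $(\sigma,\gamma)<(\sigma',\gamma')$, with $\sdiff{\sigma'}{\sigma}=\kappa$ and $\gamma'=\eta$, as required.

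The main obstacle, and the place demanding care, is the bookkeeping around the empty simplex and the degenerate case $\tX=\emptyset$, where Definition~\ref{linksigma} forces $\dowqx=\dowqy=\{\emptyset\}$ and the witness formulas lose their membership-testing role. I would dispatch this case first: when $\tX=\emptyset$ one has $X_y=\sigma$ for every $y\in\gamma$, so $\psi_R(\gamma')=\sigma$ for every nonempty $\gamma'\subseteq\gamma$; consequently no element of $\PR$ lies strictly above $(\sigma,\gamma)$, and $\PQ=\emptyset$, so both sides of the claimed identity are empty. The formula-based argument above then runs under the standing assumption $\tX\neq\emptyset$, applying the dualized identities only to the nonempty simplices for which they were stated.
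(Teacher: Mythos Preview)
Your proof is correct and follows essentially the same approach as the paper: both establish the bijection via the dualized link witness formulas $\psi_Q(\eta)=\sdiff{\psi_R(\eta)}{\sigma}$ and $\phi_Q(\kappa)=\phi_R(\kappa\cup\sigma)$, checking the two inclusions in turn. The paper handles the degenerate case $\tX=\emptyset$ in a comment preceding the proof rather than inside it, but your in-proof treatment of that case is accurate and slightly more self-contained.
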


Comments:

\vspace*{-0.1in}

\begin{itemize}

\item $Q$ is the restriction of $R$ to $\tX \times \gamma$, with
  $\tX = \bigunion_{y \in \gamma}\sdiff{X_y}{\sigma}$, as per
  Definition~\ref{linksigma} on page~\pageref{linksigma}.

\item $\PQ$ could be empty.  This occurs precisely when $(\sigma,
  \gamma)$ is a maximal element of $\PR$, which occurs precisely when
  $\lk(\dowx, \sigma) = \{\emptyset\}$, which occurs precisely when
  $\tX = \emptyset$.

\item If $\sigma = X$, then $\lk(\dowx, \sigma) = \{\emptyset\}$ and
  so $\PQ=\emptyset$, given Definition~\ref{linksigma} on
  page~\pageref{linksigma}.  (For future reference, observe that
  $P_{\qsg}$ is undefined when $\sigma = X$ and $\qsg$ is given by
  Definition~\ref{restrictedlink} on page~\pageref{restrictedlink}.
  See also page~\pageref{PRAppDeff} for comments about the
  doubly-labeled poset.)

\item $\PQ$ never contains the element $\zeroQ$ of $\PQplus$.  Indeed,
  $\zeroQ=(\emptyset, \gamma)$, corresponding to $(\sigma, \gamma)$ in
  $\PR$.  That value is consistent with the idea of
  Lemma~\ref{interplocal} on page~\pageref{interplocal} that one has
  ``localized to $\sigma$ upon observing $\gamma$''.  (See also
  Definition~\ref{DefMinIdent} on page~\pageref{DefMinIdent}.)

\item $\PQ$ could contain the element $\oneQ=(\tX, \chi)$ of $\PQplus$,
  for some $\chi \subsetneq \gamma$.  That happens precisely when
  $\tX\neq\emptyset$ and all individuals in $\tX$ share an attribute of
  $\gamma$, in which case $\chi\neq\emptyset$.

\end{itemize}

\begin{proof}
The proof relies on dual versions of the formulas appearing in
the top half of page~\pageref{psiLinkformulas}.

\vspace*{0.05in}

I.  Suppose $(\kappa, \eta) \in \PQ$.  So $\kappa\neq\emptyset$ and
    $\eta\neq\emptyset$.  Also, $\dowqx = \lk(\dowx, \sigma)$, so
    $\kappa\inter\sigma=\emptyset$ and $\kappa\union\sigma\in\dowx$.
    Let $\sigma^\prime = \kappa\union\sigma$.  So $\sigma \subsetneq
    \sigma^\prime$.  We can take $\gamma^\prime$ to be $\eta$ since
    $\eta = \phi_Q(\kappa) = \phi_R(\sigma^\prime)$.  Note that
    $\psi_R(\gamma^\prime) = \psi_Q(\eta) \union \sigma = \kappa \union
    \sigma = \sigma^\prime$.  We have shown that $(\sigma^\prime,
    \gamma^\prime)\in\PR$ and $(\sigma, \gamma) < (\sigma^\prime,
    \gamma^\prime)$.

\vspace*{0.05in}

II. Suppose $(\sigma^\prime, \gamma^\prime)\in\PR$ and $(\sigma, \gamma)
    < (\sigma^\prime, \gamma^\prime)$.  So $\sigma \subsetneq
    \sigma^\prime$ and $\gamma \supsetneq \gamma^\prime$.
    Let $\kappa=\sdiff{\sigma^\prime}{\sigma}$.  Note that
    $\kappa\neq\emptyset$ and $\gamma^\prime\neq\emptyset$.  Moreover,
    $\kappa\in\lk(\dowx,\sigma)$, so $\tX\neq\emptyset$.

    Verifying correspondence: $\phi_Q(\kappa) = \phi_R(\sigma^\prime) =
    \gamma^\prime$ and $\psi_Q(\gamma^\prime) =
    \psi_R(\gamma^\prime)\setminus\sigma =
    \sdiff{\sigma^\prime}{\sigma} = \kappa$.

    We have shown that $(\sigma^\prime\setminus\sigma,\, \gamma^\prime)\in\PQ$.
\end{proof}

\begin{corollary}[Order Preservation]\label{preserveorder}
Let $R$ and $Q$ be as in Lemma~\ref{linkchain}, with $(\sigma,
\gamma)\in\PR$.

Then $(\sigma, \gamma) < (\sigma_1, \gamma_1) < (\sigma_2, \gamma_2)$ in
$\PR$ if and only if \,$(\sigma_1\setminus\sigma,\, \gamma_1) <
(\sigma_2\setminus\sigma,\, \gamma_2)$ in $\PQ$.
\end{corollary}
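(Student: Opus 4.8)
The plan is to leverage Lemma~\ref{linkchain} directly, since it already gives us an exact description of $\PQ$ in terms of elements of $\PR$ lying strictly above $(\sigma,\gamma)$. The correspondence established there is the assignment $(\sigma^\prime,\gamma^\prime) \mapsto (\sdiff{\sigma^\prime}{\sigma},\,\gamma^\prime)$, valid for every $(\sigma^\prime,\gamma^\prime)\in\PR$ with $(\sigma,\gamma)<(\sigma^\prime,\gamma^\prime)$. So the corollary is essentially the statement that this correspondence is order-preserving in both directions. First I would fix $(\sigma_1,\gamma_1)$ and $(\sigma_2,\gamma_2)$ with $(\sigma,\gamma)<(\sigma_i,\gamma_i)\in\PR$ (both strictly above $(\sigma,\gamma)$, which is exactly the hypothesis needed for Lemma~\ref{linkchain} to apply), and note that the images $(\sdiff{\sigma_i}{\sigma},\,\gamma_i)$ therefore lie in $\PQ$.

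The core of the argument is then purely set-theoretic. Recall that the partial order on any doubly-labeled poset is given by $(\kappa_1,\eta_1)\leq(\kappa_2,\eta_2)$ iff $\kappa_1\subseteq\kappa_2$ (equivalently $\eta_1\supseteq\eta_2$). So in $\PR$, the relation $(\sigma_1,\gamma_1)<(\sigma_2,\gamma_2)$ means $\sigma_1\subsetneq\sigma_2$, while in $\PQ$ the relation $(\sdiff{\sigma_1}{\sigma},\,\gamma_1)<(\sdiff{\sigma_2}{\sigma},\,\gamma_2)$ means $\sdiff{\sigma_1}{\sigma}\subsetneq\sdiff{\sigma_2}{\sigma}$. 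Thus the whole corollary reduces to the elementary equivalence
$$\sigma_1\subsetneq\sigma_2 \quad\Longleftrightarrow\quad \sdiff{\sigma_1}{\sigma}\subsetneq\sdiff{\sigma_2}{\sigma},$$
under the standing hypotheses that $\sigma\subseteq\sigma_1$ and $\sigma\subseteq\sigma_2$ (both inclusions hold since $(\sigma,\gamma)<(\sigma_i,\gamma_i)$ forces $\sigma\subsetneq\sigma_i$). The second I would carry out is verifying this equivalence: because $\sigma$ is a common subset of both $\sigma_1$ and $\sigma_2$, set subtraction by $\sigma$ restricts to an order-isomorphism between the subsets of $X$ containing $\sigma$ and the subsets of $\sdiff{X}{\sigma}$, so strict inclusion is preserved and reflected. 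The inclusion $\sigma_1\subseteq\sigma_2$ gives $\sdiff{\sigma_1}{\sigma}\subseteq\sdiff{\sigma_2}{\sigma}$ immediately; for strictness, any $x\in\sdiff{\sigma_2}{\sigma_1}$ satisfies $x\notin\sigma$ (since $\sigma\subseteq\sigma_1$) and hence witnesses $\sdiff{\sigma_2}{\sigma}\neq\sdiff{\sigma_1}{\sigma}$, and conversely.

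I do not anticipate a serious obstacle here; the result is a bookkeeping consequence of Lemma~\ref{linkchain}. The one point requiring a little care is making sure both endpoints $(\sigma_i,\gamma_i)$ genuinely lie above $(\sigma,\gamma)$ so that the bijection of Lemma~\ref{linkchain} is applicable to each of them---this is what licenses passing between $\PR$-membership and $\PQ$-membership in the first place. The mildest subtlety, which I would mention explicitly, is that the partial order may be read off either coordinate ($\sigma$ via $\subseteq$ or $\gamma$ via $\supseteq$), so the $\gamma$-coordinates are carried along unchanged and need no separate argument. Having both directions of the set-subtraction equivalence in hand, the biconditional of the corollary follows at once.
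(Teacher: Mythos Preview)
Your proposal is correct and follows essentially the same approach as the paper: invoke Lemma~\ref{linkchain} for the membership correspondence between $\PR$-elements above $(\sigma,\gamma)$ and elements of $\PQ$, then verify the elementary set-theoretic fact that subtracting a common subset $\sigma$ preserves and reflects strict inclusion. The paper's proof is simply a terser version, recording exactly the two implications (a) $\sigma\subsetneq\sigma_1\subsetneq\sigma_2$ gives $\emptyset\neq\sigma_1\setminus\sigma\subsetneq\sigma_2\setminus\sigma$ and (b) $\emptyset\neq\kappa_1\subsetneq\kappa_2$ with $\kappa_2\cap\sigma=\emptyset$ gives $\sigma\subsetneq\kappa_1\cup\sigma\subsetneq\kappa_2\cup\sigma$, which is precisely your order-isomorphism observation unpacked.
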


\begin{proof}
By Lemma~\ref{linkchain} and because:\\[3pt]
\hspace*{1in}(a) $\sigma \subsetneq \sigma_1 \subsetneq
     \sigma_2$ \ implies \ $\emptyset\,\neq\,\sigma_1\setminus\sigma
     \,\subsetneq\, \sigma_2\setminus\sigma$\,;\\[3pt]
\hspace*{1in}(b) $\emptyset\neq\kappa_1
     \subsetneq \kappa_2$ \ and \ $\kappa_2 \inter \sigma = \emptyset$
     \; implies \ $\sigma \,\subsetneq\,
     (\kappa_1\union\sigma) \,\subsetneq\, (\kappa_2\union\sigma)$.
\end{proof}

\begin{corollary}[Maximal Chain Preservation]\label{preservemax}
Let $R$ and $Q$ be as in Lemma~\ref{linkchain}, with $(\sigma,
\gamma)\in\PR$.  Then
$\{(\sigma, \gamma) < (\sigma_k, \gamma_k) < \cdots < (\sigma_1,
\gamma_1)\}$ is a maximal chain at and above $(\sigma, \gamma)$ in $\PR$
if and only if
$\{(\sigma_k\setminus\sigma,\, \gamma_k) < \cdots <
(\sigma_1\setminus\sigma,\, \gamma_1)\}$ is a maximal chain in $\PQ$.
\end{corollary}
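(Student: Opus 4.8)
The plan is to leverage Corollary~\ref{preserveorder} (Order Preservation), which already establishes the order-isomorphism between chains at and above $(\sigma,\gamma)$ in $\PR$ and chains in $\PQ$. The bijection is given explicitly by $(\sigma^\prime,\gamma^\prime) \mapsto (\sdiff{\sigma^\prime}{\sigma},\gamma^\prime)$, via Lemma~\ref{linkchain}. The content of the present corollary beyond order-preservation is therefore purely about \emph{maximality}: I must show that the map sends maximal chains (in the specified sense) to maximal chains and conversely. Since the map is an order-isomorphism between the relevant subposets, maximality should be preserved in both directions essentially formally.

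First I would note the following setup. By Lemma~\ref{linkchain}, the assignment $h : (\sigma^\prime,\gamma^\prime)\mapsto(\sdiff{\sigma^\prime}{\sigma},\gamma^\prime)$ is a bijection between $\setdef{(\sigma^\prime,\gamma^\prime)\in\PR}{(\sigma,\gamma)<(\sigma^\prime,\gamma^\prime)}$ and $\PQ$. By Corollary~\ref{preserveorder}, $h$ is order-preserving and order-reflecting on this domain, hence an order-isomorphism onto $\PQ$. A ``maximal chain at and above $(\sigma,\gamma)$'' in $\PR$ is, by definition, a chain with least element $(\sigma,\gamma)$ that is maximal among all chains of $\PR$ containing $(\sigma,\gamma)$ as least element; equivalently, deleting $(\sigma,\gamma)$ yields a chain in the open interval above $(\sigma,\gamma)$ that cannot be refined by inserting any further element strictly between consecutive links or extended at the top. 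Under $h$, the elements above $(\sigma,\gamma)$ correspond exactly to $\PQ$, so a chain $\{(\sigma,\gamma)<(\sigma_k,\gamma_k)<\cdots<(\sigma_1,\gamma_1)\}$ in $\PR$ corresponds to the chain $\{h(\sigma_k,\gamma_k)<\cdots<h(\sigma_1,\gamma_1)\}$ in $\PQ$, i.e. to $\{(\sdiff{\sigma_k}{\sigma},\gamma_k)<\cdots<(\sdiff{\sigma_1}{\sigma},\gamma_1)\}$.

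Next I would argue the maximality equivalence. For the forward direction, suppose the $\PQ$-image chain is \emph{not} maximal. Then one can insert or append some element $(\kappa,\eta)\in\PQ$ to produce a strictly longer chain in $\PQ$. Applying $h^{-1}$ and prepending $(\sigma,\gamma)$, and using that $h^{-1}$ preserves and reflects order (Corollary~\ref{preserveorder}), this yields a strictly longer chain in $\PR$ with least element $(\sigma,\gamma)$, contradicting maximality at and above $(\sigma,\gamma)$. The converse is symmetric: a refinement or extension of the $\PR$-chain above $(\sigma,\gamma)$ maps under $h$ to a refinement or extension of the $\PQ$-chain. The only subtlety is at the top endpoint, where ``maximal in $\PQ$'' allows extending upward all the way to maximal elements of $\PQ$, while ``maximal at and above $(\sigma,\gamma)$ in $\PR$'' allows extending upward to maximal elements of $\PR$ above $(\sigma,\gamma)$; these correspond precisely because $h$ carries maximal elements of the interval to maximal elements of $\PQ$ and conversely.

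The main obstacle I anticipate is the careful handling of the endpoint conventions and degenerate cases flagged in the comments following Lemma~\ref{linkchain}: namely that $\PQ$ may be empty (when $(\sigma,\gamma)$ is already maximal in $\PR$, equivalently $\tX=\emptyset$), and that $\PQ$ never contains $\zeroQ$ but may contain $\oneQ$. When $\PQ=\emptyset$, the empty chain is vacuously the unique maximal chain in $\PQ$, and correspondingly the one-element chain $\{(\sigma,\gamma)\}$ is the unique maximal chain at and above $(\sigma,\gamma)$; I would verify this boundary instance explicitly so the stated biconditional holds even in the trivial case $k=0$. Apart from tracking these conventions, the argument is a direct transport of maximality across the order-isomorphism $h$, so no substantive new computation is required beyond citing Lemma~\ref{linkchain} and Corollary~\ref{preserveorder}.
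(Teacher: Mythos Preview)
Your proposal is correct and follows essentially the same approach as the paper: both invoke Lemma~\ref{linkchain} and Corollary~\ref{preserveorder} to establish the order-isomorphism between chains above $(\sigma,\gamma)$ in $\PR$ and chains in $\PQ$, and then observe that maximality transfers because any refinement or extension on one side can be carried to the other. Your version is more detailed in spelling out the bijection $h$, the contrapositive argument, and the degenerate case $\PQ=\emptyset$, but the underlying idea is identical.
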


\begin{proof}
By Lemma~\ref{linkchain} and Corollary~\ref{preserveorder}, we know that
$\{(\sigma, \gamma) < (\sigma_k, \gamma_k) < \cdots < (\sigma_1,
\gamma_1)\}$ is a chain extending upward from $(\sigma, \gamma)$ in
$\PR$ if and only if $\{(\sigma_k\setminus\sigma,\, \gamma_k) < \cdots <
(\sigma_1\setminus\sigma,\, \gamma_1)\}$ is a chain in $\PQ$.

Maximality follows for the same reason: Refine or extend a chain in one
poset and one can refine or extend the corresponding chain in the other
poset as well.
\end{proof}

\paragraph{Comment about ``length'':}\ Recall that the length of a
chain in a poset is one less than the number of elements in the chain.
We also speak of the {\em length} of an informative attribute release
sequence $y_1, \ldots, y_k$, which is $k$, the actual number of
attributes in the sequence.

In the context of Lemmas~\ref{chaintoiars} and \ref{iarstochain},
there is a happy alignment of definitions: The length $k$ of a longest
iars for $R$ is the length $\ell(\PRplus)$.

In thinking about poset lengths, bear in mind that $\ell(\PRplus)$ may
be any of $\ell(\PR)$, $\ell(\PR) + 1$, or $\ell(\PR) + 2$, depending on
whether the top and/or bottom elements of $\PRplus$ already lie in
$\PR$.

\begin{corollary}[Longest Localization Sequences]\label{linklengths}
Let $R$ be a relation on $\XxY$, with both $X$ and $\hspt{}Y\!$
nonempty, and suppose $(\sigma, \gamma)\in \PR$.
Let $Q$ be the relation modeling $\lk(\dowx, \sigma)$.

\vst

If $X\not\in\dowx$, then the length of a longest informative attribute
release sequence for localizing to $\sigma$ in $R$ is $\ell(\PQ) + 2$.
\quad
If $X\in\dowx$ and $\sigma\neq X$, then that length is $\ell(\PQ) + 1$.

\vst

(Note: If $\,\sigma=X\in\dowx$, then the length is 0; one can localize
to $X\!$ in $R$ without observation.)
\end{corollary}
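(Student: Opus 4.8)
The plan is to connect a longest informative attribute release sequence (iars) that localizes to $\sigma$ with the length of a maximal chain in $\PQplus$, then translate poset length between $\PQ$, $\PQplus$, and $\PR$. First I would invoke Lemma~\ref{iarstochain} and Lemma~\ref{chaintoiars} (and the corollary following them) to establish that a longest iars localizing to $\sigma$ in $R$ corresponds to a maximal chain in $\PRplus$ \emph{at and above} $(\sigma, \gamma)$, running from the top element $\oneR = (X, \gamma_0)$ down to $(\sigma, \gamma)$. The length of this chain is precisely the length of the desired iars, by the ``happy alignment of definitions'' noted just before the statement (each downward step selects one new attribute $y_i \in \gamma_i \setminus \gamma_{i-1}$).

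Next I would apply Corollary~\ref{preservemax} to transfer the counting problem into the link poset $\PQ$: a maximal chain at and above $(\sigma, \gamma)$ in $\PR$ corresponds bijectively to a maximal chain in $\PQ$, with $\sigma$ subtracted from each individual-component. However, the chain in $\PR$ has one extra element at the bottom, namely $(\sigma, \gamma)$ itself, which drops out of $\PQ$ (recall that $\PQ$ never contains $\zeroQ$, as noted in the comments to Lemma~\ref{linkchain}). I would then carefully relate the length of a longest iars to $\ell(\PQ)$ by passing through $\PQplus$. The key arithmetic is the bookkeeping described in the ``Comment about length'': $\ell(\PRplus)$ exceeds $\ell(\PR)$ by $0$, $1$, or $2$ depending on whether $\oneR$ and $\zeroR$ already lie in $\PR$. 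Here the relevant top element is $\oneR = (X, \phi_R(X))$; whether $X \in \dowx$ determines whether $\oneR \in \PR$, which is exactly the dichotomy in the statement.

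The two cases then split as follows. When $X \in \dowx$ (so $\phi_R(X) \neq \emptyset$, meaning every individual shares a common attribute), the top element $(X, \gamma_0)$ is already in $\PR$, so the maximal chain from the top down to $(\sigma, \gamma)$ has the same number of elements as the corresponding chain in $\PQ$ plus one accounting for $(\sigma,\gamma)$, giving iars length $\ell(\PQ) + 1$. When $X \not\in \dowx$, the element $(X, \emptyset)$ must be \emph{adjoined} as $\oneR$ in $\PRplus$ and is not itself in $\PR$; the first attribute released is then genuinely informative starting from $\phi_R(X) = \emptyset$, contributing one additional step, yielding iars length $\ell(\PQ) + 2$. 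The degenerate note ($\sigma = X \in \dowx$ gives length $0$) follows since $\PQ$ is then empty and no attribute need be released to localize to all of $X$.

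The main obstacle I anticipate is the precise off-by-one accounting, in particular correctly aligning the three notions of ``length'' (chain length as number-of-elements-minus-one in $\PQ$, chain length in $\PRplus$, and iars length as raw attribute count) across the boundary cases. I would verify the edge cases explicitly: when $\PQ = \emptyset$ (i.e., $(\sigma,\gamma)$ maximal in $\PR$, so $\sigma$ is uniquely a top-level localization), $\ell(\PQ) = -1$, and the formula $\ell(\PQ) + 2 = 1$ or $\ell(\PQ)+1 = 0$ should match the iars length of identifying $\sigma$ in a single step, consistent with the parenthetical note. Getting these conventions to cohere uniformly is the delicate part; the topological and order-theoretic content is already carried by Lemmas~\ref{chaintoiars}, \ref{iarstochain}, \ref{linkchain} and Corollary~\ref{preservemax}.
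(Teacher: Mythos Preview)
Your proposal is correct and follows essentially the same route as the paper: both arguments invoke Lemmas~\ref{chaintoiars} and~\ref{iarstochain} to identify a longest localizing iars with a longest maximal chain in $\PRplus$ at and above $(\sigma,\gamma)$, then use Corollary~\ref{preservemax} to transfer the count to $\PQ$, and finally split on whether $X\in\dowx$ to determine whether one or two elements ($\zeroQ$ alone, or $\zeroQ$ together with $\oneR$) must be added back. The paper likewise isolates the edge case $\PQ=\emptyset$ first, noting that this forces $X\notin\dowx$ when $\sigma\neq X$, exactly as your off-by-one verification anticipates.
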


Comment: If $\PQ$ does not contain the top element $\oneQ$ of
$\PQplus$, then $\ell(\PQ) + 2 = \ell(\PQplus)$, since $\PQ$ never
contains the bottom element $\zeroQ$.  This occurs precisely when no
attribute is shared by all the individuals in the link.  \ Also, if
$\sigma\subsetneq X \in \dowx$, then $\ell(\PQ) + 1 = \ell(\PQplus)$.

\begin{proof}
Let us address one special case first, namely when $\lk(\dowx,
\sigma)$ is an empty complex.  We only care about the situation in
which $\sigma$ is not all of $X$, which implies $X \not\in \dowx$. \
Observe that $\PQ$ is empty, so $\ell(\PQ) = -1$ and $\ell(\PQ) + 2 =
1$.  Observe further that any $y\in\gamma$ identifies $\sigma$, as
otherwise $\tX$ in the definition of $Q$ would not be empty.  \ So the
Corollary holds in this case.

\vspace*{0.05in}

Suppose $\lk(\dowx, \sigma)$ is not an empty complex and that
$X\not\in\dowx$.  Lemmas~\ref{chaintoiars} and \ref{iarstochain} imply
that a longest informative attribute release sequence for localizing
to $\mskip1mu\sigma\mskip-1mu$ comes from a longest maximal chain in
$\PRplus$ at and above $(\sigma,\gamma)$.  Thus, by
Corollary~\ref{preservemax}, this sequence arises from a maximal chain
in $\PQ$, augmented by considering also $\zeroQ$ and $\oneR$.  The
length of the chain in $\PQ$ is two shorter than that in $\PRplus$.
Why?  Because
$(\sigma,\gamma)\in\PRplus$$\phantom{R^{1^1}}\hspace*{-18pt}$ becomes
$\zeroQ\in\PQplus$, which is not present in $\PQ$, and because the top
element $\oneR=(X, \emptyset)\in\PRplus$ disappears altogether
($\oneQ$ may or may not be in $\PQ$).  \ \ So $\ell(\PQ) + 2$ gives the
correct length of the iars for $R$.$\phantom{R^{1^1}}$

\vspace*{0.05in}

Suppose $\lk(\dowx, \sigma)$ is not an empty complex but that
$\sigma\subsetneq X\in\dowx$.  The argument proceeds as before except
that now the top element of $\PRplus$ looks like $\oneR=(X,\gamma_0)$,
with $\gamma_0\neq\emptyset$.  It appears in $\PR$.  Consequently,
$\oneQ=(X\setminus\sigma,\,\gamma_0)$ and so $\oneQ$ also appears in
$\PQ$.  So a maximal chain in $\PQ$ is now only one shorter than a
corresponding maximal chain in $\PRplus$ at and above $(\sigma,
\gamma)$, meaning $\ell(\PQ) + 1$ gives the correct length of a
longest iars.
\end{proof}

\subsection{Isotropy}
\markright{Isotropy}
\label{isotropyApp}

We turn now to the proof of our isotropy sphere theorem, with the
theorem replicated here from earlier in the report.  Recall also
Definitions~\ref{iars}, \ref{isotropydef}, \ref{DefMinIdent}, and
\ref{restrictedlink} from
pages~\pageref{iars}--\pageref{restrictedlink}.

\setcounter{currentThmCount}{\value{theorem}}
\setcounter{theorem}{\value{IsotropyThm}}
\begin{theorem}[Isotropy = Minimal Identification = Sphere]
Let $R$ be a relation and suppose $\emptyset\neq\gamma\in\dowy$.
\ Let $\sigma = \psi_R(\gamma)$.
Then the following four conditions are equivalent:

\vspace*{0.1in}

\hspace*{0.4in}\begin{minipage}{3.5in}
\begin{enumerate}
\addtolength{\itemsep}{-1pt}
\item[(a)] $\gamma$ is isotropic.
\item[(b)] $\gamma$ is minimally identifying (for $\sigma$).
\item[(c)] $\dqx \;\homot\; \Skt$, with $k = \abs{\gamma}$.
\item[(d)] $\dqy \;=\; \partial(\gamma)$.
\end{enumerate}
\end{minipage}
\end{theorem}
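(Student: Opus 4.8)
The plan is to prove the four-way equivalence by first isolating the degenerate cases built into Definition~\ref{restrictedlink}, then proving, in the generic case, the chain (c)$\,\Leftrightarrow\,$(d), (d)$\,\Leftrightarrow\,$(b), and (b)$\,\Leftrightarrow\,$(a). The engine driving everything is a single witness formula for the restricted link: writing $Q = Q(\sigma,\gamma) = R|_{\tX \times \gamma}$ with $\tX = \bigcup_{y \in \gamma}(X_y \setminus \sigma)$, I would record as a short preliminary computation that for every nonempty $\chi \subseteq \gamma$,
$$\psi_Q(\chi) \;=\; \Bigl(\bigcap_{y\in\chi} X_y\Bigr) \cap \tX \;=\; \psi_R(\chi)\setminus\sigma,$$
using that $\tX \cap \sigma = \emptyset$ and that $\sigma = \psi_R(\gamma) \subseteq \psi_R(\chi)$ forces $\psi_R(\chi)\setminus\sigma \subseteq \tX$. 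This is the analogue, for the restricted link, of the formulas dualized on page~\pageref{psiLinkformulas}.

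From this formula one reads off, for nonempty $\chi \subseteq \gamma$, that $\chi \in \dqy \iff \psi_Q(\chi)\neq\emptyset \iff \psi_R(\chi)\supsetneq\sigma$ (the last step because $\psi_R(\chi)\supseteq\sigma$ always). Taking $\chi=\gamma$ gives $\psi_Q(\gamma)=\sigma\setminus\sigma=\emptyset$, so $\gamma\notin\dqy$ and $\dqy\subseteq\partial(\gamma)$. Hence condition (d) holds exactly when every proper nonempty $\chi\subsetneq\gamma$ satisfies $\psi_R(\chi)\supsetneq\sigma$; since $\psi_R(\emptyset)=X\supsetneq\sigma$ in the generic case, this is precisely clause (ii) of minimal identification, yielding (b)$\,\Leftrightarrow\,$(d). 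For (c)$\,\Leftrightarrow\,$(d), Dowker duality (Theorem~\ref{dowker}) gives $\dqx\homot\dqy$, and $\partial(\gamma)\homot\Skt$ with $k=\abs{\gamma}$ by the boundary-sphere fact (page~\pageref{bndrycpxDef}); since $\dqy$ is a subcomplex of $\partial(\gamma)$ on vertex set contained in $\gamma$ and no proper subcomplex of $\partial(\gamma)$ can be homotopic to $\Skt$, the equivalence $\dqx\homot\Skt \iff \dqy=\partial(\gamma)$ follows, mirroring the argument in Theorem~\ref{privacysingle}.

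The remaining equivalence (a)$\,\Leftrightarrow\,$(b) I would argue purely with closure operators. For (a)$\,\Rightarrow\,$(b), note first that isotropy forces $\sigma\neq X$, since $\sigma=X$ would put every $y\in\gamma\subseteq\phi_R(X)=(\clsy)(\emptyset)$, violating informativeness of the first released attribute; then, given a proper nonempty $\chi\subsetneq\gamma$, pick $y\in\gamma\setminus\chi$ and order $\gamma$ so that $\chi$ is released first, whence informativeness gives $y\notin(\clsy)(\chi)$, while $\psi_R(\chi)=\sigma$ would force $(\clsy)(\chi)=\phi_R(\sigma)\supseteq\gamma\ni y$, a contradiction, so $\psi_R(\chi)\supsetneq\sigma$. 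For (b)$\,\Rightarrow\,$(a), take any ordering $y_1,\dots,y_k$ and suppose $y_i\in(\clsy)(\{y_1,\dots,y_{i-1}\})$; since $\{y_1,\dots,y_{i-1}\}\subseteq\gamma\setminus\{y_i\}$ and $\clsy$ is inclusion-preserving, $y_i\in(\clsy)(\gamma\setminus\{y_i\})$, so $(\clsy)(\gamma\setminus\{y_i\})\supseteq\gamma$; applying $\psi_R$ and using $\psi_R\circ\clsy=\psi_R$ (Lemma~\ref{idempotent} and its comment) gives $\psi_R(\gamma\setminus\{y_i\})\subseteq\psi_R(\gamma)=\sigma$, hence $\psi_R(\gamma\setminus\{y_i\})=\sigma$ with $\gamma\setminus\{y_i\}\subsetneq\gamma$, contradicting minimal identification.

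The degenerate cases match those in the proof of Theorem~\ref{privacysingle}: when $\sigma=X$ all four conditions fail ($\dqx,\dqy$ are void, and isotropy and minimal identification fail as above), while when $\sigma\neq X$ but $\tX=\emptyset$ the complexes collapse to $\{\emptyset\}=\Smo$ and one checks all four hold simultaneously iff $\abs{\gamma}=1$ and fail simultaneously iff $\abs{\gamma}\geq 2$. I expect the main obstacle to be the backward direction (b)$\,\Rightarrow\,$(a): the telescoping chain $X=\psi_R(\chi_0)\supseteq\cdots\supseteq\psi_R(\chi_k)=\sigma$ does not by itself contradict a single informativeness failure, and the crux is recognizing that a failure at step $i$ propagates, via monotonicity of $\clsy$, to the codimension-one face $\gamma\setminus\{y_i\}$, where $\psi_R\circ\clsy=\psi_R$ converts it into the forbidden identification $\psi_R(\gamma\setminus\{y_i\})=\sigma$. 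Getting that propagation right, together with the bookkeeping of the empty-simplex and $\tX=\emptyset$ conventions, is where the real care is needed.
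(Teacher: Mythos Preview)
Your proposal is correct and essentially matches the paper's proof: the same witness formula $\psi_Q(\chi)=\psi_R(\chi)\setminus\sigma$, the same Dowker-duality argument for (c)$\Leftrightarrow$(d), and the same degenerate-case bookkeeping. The only organizational difference is that you route the equivalence through (a)$\Leftrightarrow$(b)$\Leftrightarrow$(d), proving (a)$\Leftrightarrow$(b) directly with closure operators, whereas the paper routes through (a)$\Leftrightarrow$(d)$\Leftrightarrow$(b); the underlying step---propagating an informativeness failure at index $i$ to the codimension-one face $\gamma\setminus\{y_i\}$ via monotonicity of $\clsy$ and then applying $\psi_R\circ(\clsy)=\psi_R$---is exactly what the paper does when it says one may ``rearrange the sequence further to establish that the last attribute is implied by all the others.''
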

\setcounter{theorem}{\value{currentThmCount}}

\begin{proof}
Observe that $\sigma\in\dowx$$\phantom{\Big|}\!$ and $\gamma \subseteq
(\clsy)(\gamma) = \phi_R(\sigma)$, so constructing $\qsg$ is valid.
Also, (a)--(d) each imply $\sigma \neq X$.
Finally, observe
that $\gamma\not\in\dqy$.  For if there were some $x\in\tX$ such
that $(x,y)\in\qsg\subseteq R$ for every $y\in\gamma$, then
$x\in\sigma$, but $\sigma$ is disjoint from $\tX$.

\vspace*{0.1in}

If $\abs{\gamma} = 1$, then $\Skt = \Smo = \{\emptyset\} =
\partial(\gamma)$.  Write $\gamma=\ys$.  Then $\gamma$ is isotropic if
and only if $y$ constitutes an informative attribute release sequence,
if and only if $y\not\in\phi_R(X)$.
\addtolength{\baselineskip}{1.5pt}
If $y\in\phi_R(X)$, then $\sigma=X$, so our conventions say
$\dqx\!=\emptyset\not\homot\{\emptyset\}$ and
$\dqy=\emptyset\neq\{\emptyset\}$.  Moreover, $\psi_R(\emptyset) =
\sigma$, so $\gamma$ is not minimally identifying.  If
$y\not\in\phi_R(X)$, then $\sigma=X_y\subsetneq X$ and $\tX=\emptyset$,
so both $\dqx$ are $\dqy$ are instances of $\{\emptyset\}$, by our
conventions.$\phantom{R^{1^1}}\hspace*{-12pt}$
Moreover, $X=\psi_R(\emptyset)\supsetneq\sigma$.
\ So we see that (a), (b), (c), (d) are all equivalent when
$\abs{\gamma} = 1$.  $\phantom{R^{1^1}}$
\addtolength{\baselineskip}{-1.5pt}

\vspace*{0.1in}

Henceforth assume that $\abs{\gamma} > 1$.  It will be convenient to
write $\gamma = \{y_1, \ldots, y_k\}$, with $k > 1$, and with the
attribute indexing chosen arbitrarily.

As we have observed elsewhere, (c) and (d) are equivalent by Dowker
duality and the fact that only a boundary complex can produce
$\Skt$ homotopy type when the underlying vertex set has size $k$.

\vspace*{0.1in}

We will first show that (a) implies (d) and (b):

\vst

\underline{Suppose that $\gamma$ is isotropic.}

\vst

We wish to show that all proper subsets of $\gamma$ are simplices in
$\dqy$.  Without loss of generality, consider $\{y_1, \ldots,
y_{k-1}\}$.  If we can show that $\psi_R(\{y_1, \ldots,
y_{k-1}\})\setminus\sigma \neq \emptyset$, then that provides an
$x\in\tX$ such that $(x,y_i)\in R$ for $i=1, \ldots, k-1$, thereby
establishing that $\{y_1, \ldots, y_{k-1}\} \in \dqy$.  It also
establishes that $\psi_R(\{y_1, \ldots, y_{k-1}\})\supsetneq\sigma$.
Since the ``missing attribute'' $y_k$ is arbitrary in $\gamma$, we see
that $\dqy = \partial(\gamma)$ and that $\gamma$ is minimally
identifying.

Suppose otherwise: $\psi_R(\{y_1, \ldots, y_{k-1}\}) = \sigma =
\psi_R(\gamma)$, so also $(\clsy)(\{y_1, \ldots, y_{k-1}\}) =
(\clsy)(\gamma)\supseteq\gamma$.  That says $y_k\in(\clsy)(\{y_1,
\ldots, y_{k-1}\})$, violating the assumption that any ordering of
$\gamma$ is an informative attribute release sequence.

\vspace*{0.15in}

We will now show that (d) implies (a):

\vst

\underline{Suppose that $\dqy = \partial(\gamma)$.}

\vst

If some ordering of $\gamma$ is not an informative attribute release
sequence, then we can rearrange the sequence further to establish that
the last attribute is implied by all the others, i.e., that $y_k \in
(\clsy)(\{y_1, \ldots, y_{k-1}\})$.  Arguing as we did in the proof of
Lemma~\ref{chaintoiars} on page~\pageref{chaintoiarsAppPage}, we obtain:

\vspace*{-0.2in}

\begin{eqnarray*}
\psi_R\big(\{y_1, \ldots, y_{k-1}\}\big)
    &=& (\clsx)\Big(\psi_R\big(\{y_1, \ldots, y_{k-1}\}\big)\Big)\\[2pt]
    &=& \psi_R\Big((\clsy)\big(\{y_1, \ldots, y_{k-1}\}\big)\Big)\\[2pt]
    &=& \psi_R\Big(\{y_k\} \union (\clsy)\big(\{y_1, \ldots, y_{k-1}\}\big)\Big)\\[2pt]
    &=& X_{y_{\scriptstyle k}} \;\inter\;
         \psi_R\Big((\clsy)\big(\{y_1, \ldots, y_{k-1}\}\big)\Big)\\[2pt]
    &=& X_{y_{\scriptstyle k}} \;\inter\;
         \psi_R\big(\{y_1, \ldots, y_{k-1}\}\big)\\[2pt]
    &=& \psi_R\big(\{y_1, \ldots, y_k\}\big)\\[2pt]
    &=& \psi_R(\gamma)\\[2pt]
    &=& \sigma.
\end{eqnarray*}

On the other hand, since $\{y_1, \ldots, y_{k-1}\}\in\dqy$, there is a
witness $x\in\tX$, meaning $x\in\psi_R(\{y_1, \ldots, y_{k-1}\})$, which
contradicts $\tX \inter \sigma = \emptyset$.

\vspace*{0.15in}

Finally, we will show that (b) implies (d):

\vst

\underline{Suppose that $\gamma$ is minimally identifying.}

Observe that $\psi_R(\{y_1, \ldots, y_{k-1}\}) \supsetneq \sigma$.  As
above, this establishes $\{y_1, \ldots, y_{k-1}\} \in \dqy$, from which
we conclude that $\dqy = \partial(\gamma)$, since the missing attribute
$y_k$ was arbitrary.
\end{proof}

\clearpage
\section{Many Long Chains}
\markright{Many Long Chains}
\label{manylongchains}

This appendix provides a proof of Theorem~\ref{manychains} from
page~\pageref{manychains}.

\vspace*{0.1in}

\noindent First, we need some tools:

\vspace*{0.1in}

\noindent Recall what it means for a poset to be almost a join-based
lattice from Definition~\ref{almostlattice} on
page~\pageref{almostlattice}.

\begin{definition}[Join Completion]\label{joincompletion}
Suppose $P$ is almost a join-based lattice.  Let $S$ be a subset of
$P$.  The \mydefem{bounded join-completion of $S$ in $P$} is the set
$\Sv$ defined by:
\end{definition}

\vspace*{-0.3in}

$$\Sv = \setdef{p\in P}{\hbox{$p \leq s$, some $s \in S$, and $p =
    s_1 \join \cdots \join s_m$, with each $s_i\in S$, and $m  \geq 1$}}.$$

\vspace*{-0.05in}

{\em Here and in the rest of this appendix, ``\,$\leq$'' and ``$<$''
refer to the partial order on $P$, while ``$\,\join$'' denotes the
resulting join operation on $P \union \{\topone\}$.}  \ $S$ and $\Sv$
inherit this partial order.

\vspace*{0.05in}

{\em We also define $\Smax$ to consist of all the maximal elements of
$S$ relative to the partial order inherited from $P$.}

\vspace*{0.25in}

\noindent The following facts will be useful.  Assume $S \subseteq P$,
with $P$ almost a join-based lattice.  Then:

\vspace*{-0.05in}

\begin{enumerate}
\label{joinfacts}

\item $\Sv$ is almost a join-based lattice.  The join operation for
  elements $p,q \in \Sv$ is given by: 

\vspace*{-0.1in}

 $$ p\, \join_{\Sv}\, q \; = \; \left\{\,
       \begin{aligned}
         & p \join q, & & \hbox{if $\;p \join q \;\leq\; s$, for some $s\in{S}$;}\\[4pt]
         & \topone,   & & \hbox{otherwise.}\\
       \end{aligned}\right.
 $$

\item $S \subseteq \Sv$ and $\Smax = \Svmax$.

\item $(\Sv)^{\join} = \Sv$.

\item If $T \subseteq S$, then $\Tv \subseteq \Sv$.

\item If $T \subseteq \Sv$ such that $\Smax\setminus{T} \neq \emptyset$,
      then $\Tv \subsetneq \Sv$.

\newcounter{factneq}
\setcounter{factneq}{\value{enumi}}

\vspace*{0.1in}

\item Let $\emptyset \neq T \subseteq S$.  Then the poset

\label{factsSjoin}
\newcounter{factST}
\setcounter{factST}{\value{enumi}}

\vspace*{-0.15in}

      $$S_T=\setdef{p \in \Sv}{p \leq t,\,\, \hbox{for all $t\in T$}}$$

\vspace*{-0.04in}

  is almost a join-based lattice. The join operation for elements
  $p,q \in S_T$ is given by:

\vspace*{-0.1in}

 $$ p\, \join_{S_T}\, q \; = \; \left\{\,
       \begin{aligned}
         & p \join q, & & \hbox{if $\;p \join q \;\leq\; t$,\, for \!{\em all}\, $t\in T$;}\\[4pt]
         & \topone,   & & \hbox{otherwise.}\\
       \end{aligned}\right.
 $$

\vspace*{0.1in}

\item Fact~{\the\value{factST}} holds as well for the poset
   $S^{\prime}_T=\setdef{\,p \in \Sv}{p < t,\,\, \hbox{for all $t\in T$}}$,\\[2pt]
   now using ``$<$'' in place of ``$\leq$'' throughout.

\newcounter{factSTlt}
\setcounter{factSTlt}{\value{enumi}}

\end{enumerate}

\vspace*{0.1in}

\begin{lemma}[Contractibility of Closed Semi-Intervals]\label{intervalcontract}
Suppose $\emptyset \neq T \subseteq S \subseteq P$, with $P$ almost a
join-based lattice.
Define the poset $S_T$ as in Fact~{\the\value{factST}} on
page~\pageref{joinfacts}.

If $S_T \neq \emptyset$, then $S_T$ is contractible.

\end{lemma}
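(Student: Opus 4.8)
The plan is to show that $S_T$ is in fact a genuine finite join semi-lattice (not merely \emph{almost} one, which is the content of the fact that $S_T$ is almost a join-based lattice), and then invoke the standard fact that a finite poset with a maximum element has a contractible order complex. The crucial point is that the hypothesis $T \neq \emptyset$ supplies a common upper bound inside $P$, which prevents joins of elements of $S_T$ from escaping to $\topone$.

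First I would establish the closure claim: for any $p, q \in S_T$, the join $p \join q$, taken in $P \union \{\topone\}$, actually lies in $S_T$. Fix any $t \in T$. Since $p, q \in S_T$ we have $p \leq t$ and $q \leq t$, so $t$ is an upper bound of $\{p, q\}$ inside $P$. Because $p \join q$ is by definition the least upper bound of $p$ and $q$ in $P \union \{\topone\}$, and $t < \topone$, it follows that $p \join q \leq t < \topone$; in particular $p \join q \in P$. Writing $p$ and $q$ as joins of elements of $S$ (possible since $p, q \in \Sv$), their join $p \join q$ is again a join of finitely many elements of $S$, and it satisfies $p \join q \leq t \in T \subseteq S$, so $p \join q \in \Sv$. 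Since the same argument applies to every $t \in T$, we get $p \join q \leq t$ for all $t \in T$, and hence $p \join q \in S_T$.

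With closure in hand, $S_T$ is a finite nonempty join semi-lattice, so the join of all its finitely many elements is itself an element $\hat m \in S_T$, a maximum. A finite poset with a maximum element $\hat m$ has order complex that is a cone over $\Delta(S_T \setminus \{\hat m\})$ with apex $\hat m$ (every chain extends by appending $\hat m$), hence contractible; this yields the lemma.

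All steps are short, and the only substantive one — the one I expect to require the most care — is the closure claim. Its subtlety lies entirely in confirming that $p \join q$ simultaneously stays below some element of $S$ (so as to land in $\Sv$, not just in $P$) and below every element of $T$; both follow from the single observation that each $t \in T$ is a common upper bound of $p$ and $q$, which is precisely where $T \neq \emptyset$ is used. Everything downstream — existence of the maximum, the cone structure, and contractibility — is standard poset topology and can be cited from \cite{tpr:wachs, tpr:bjorner}.
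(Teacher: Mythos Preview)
Your proof is correct and the closure argument is essentially identical to the paper's. The only difference is the final step: you conclude directly that $S_T$ has a maximum element (the join of all its members) and hence a cone-shaped order complex, whereas the paper formally invokes the noncomplemented-lattice criterion from \cite{tpr:bjorner} before remarking, as intuition, that $\Delta(S_T)$ is a cone with apex $\bigwedge T$---which is exactly your argument.
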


\begin{proof}

Suppose $p$ and $q$ are arbitrary elements of $S_T$.  Every element of
$T$ is an upper bound for both $p$ and $q$.  Since $T$ is not empty,
this means $p \join q$ exists in $P$ and $p \join q \leq t$ for all $t
\in T$.
Since $t\in S$, we have that $p \join q \in \Sv$ and thus $p \join q \in
S_T$ as well.  Consequently, the lattice $S_T \union \{\botzero,
\topone\}$ is noncomplemented, implying that $S_T$ is contractible, by a
fact on page~\pageref{noncomplemented}.
\end{proof}

\vsr

Intuitively: \ $\Delta(S_T)$ is a cone with apex $\bigwedge{T}$, the
meet in $\Sv$ of all the upper bounds $T$.

\vst

Caution: \ The lemma need {\em not}\, hold for $S^{\prime}_T$ as
defined in Fact~{\the\value{factSTlt}} on page~\pageref{joinfacts}.

\vspace*{0.1in}

We now specialize a topological tool to our current setting.  We refer
to the lemma as ``cycle tightening'' because we will apply the lemma
with $p\in\Smax$ and with $z$ a reduced homology generator for
$\Delta(P)$.  The lemma will allow us to move that generator downward
in $P$.

\begin{lemma}[Cycle Tightening]\label{tighten}
Let $P$ be almost a join-based lattice. 
Suppose $z$ is a nontrivial reduced $k$-cycle for $\Delta(P)$, i.e.,
$0 \neq z \in C_k(\Delta(P); \Z)$ and $\redbndry{z}=0$, for some $k
\geq 0$.

\vspace*{0.05in}

Define $S=\supp{z}\,$ and $\,K=\setdef{\tau\in\Delta(P)}{\tau\subseteq\Sv}$.

\vspace*{0.03in}

Let $p\in S$.

\vspace*{0.07in}

If $\widetilde{H}_{k-1}(\lk(K, p); \Z) = 0$, then there exists $\eta
\in C_{k+1}(\st(K,p); \Z)$ such that $p \notin \supp{z + \redbndry\eta}$,
now viewing $\eta \in C_{k+1}(\Delta(P); \Z)$.

\end{lemma}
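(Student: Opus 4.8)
The plan is to peel the vertex $p$ off the cycle $z$ by a cone construction over $\lk(K,p)$, and then use the homology hypothesis to cancel the leftover link cycle with a boundary. First I would verify that the whole construction lives inside $K$: since $S \subseteq \Sv$ (a listed fact on page~\pageref{joinfacts}), every simplex $\tau\in\supp{z}$ satisfies $\tau\subseteq S\subseteq\Sv$, so $\tau\in K$ and hence $z\in C_k(K;\Z)$. Likewise $\{p\}\subseteq\Sv$, so $\{p\}\in K$, which makes $\lk(K,p)$ and its cone $\st(K,p)$ (the closed star, with apex $p$) well-defined; note $\lk(K,p)$ is nonvoid, as it contains $\emptyset$.

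Next I would split $z$ according to incidence with $p$. Writing $p\cdot(-)$ for the cone map $\sigma\mapsto\{p\}\cup\sigma$ (with $p$ listed first, and $p\cdot\emptygen=\{p\}$ at the bottom level), I would express
\[ z \;=\; p\cdot c \,+\, z', \]
where $z'$ collects the oriented simplices of $z$ not containing $p$ (so $p\notin\supp{z'}$) and $c\in C_{k-1}(\lk(K,p);\Z)$ collects those that do. The sign-correct join-boundary identity $\redbndry(p\cdot c)=c-p\cdot\redbndry c$, valid in every dimension, then gives $0=\redbndry z = c+\redbndry z' - p\cdot\redbndry c$. Among these three summands only $-\,p\cdot\redbndry c$ is built from simplices containing $p$, so the vanishing of the total forces $p\cdot\redbndry c=0$; since $p\cdot(-)$ is injective on chains, $\redbndry c=0$, i.e. $c$ is a reduced $(k-1)$-cycle of $\lk(K,p)$.

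Then I would invoke the hypothesis $\widetilde{H}_{k-1}(\lk(K,p);\Z)=0$ to write $c=\redbndry d$ for some $d\in C_k(\lk(K,p);\Z)$, and set $\eta=p\cdot d$. By construction $\eta\in C_{k+1}(\st(K,p);\Z)$, and the same join identity yields $\redbndry\eta = d-p\cdot\redbndry d = d-p\cdot c$. Consequently $z+\redbndry\eta=(p\cdot c+z')+(d-p\cdot c)=z'+d$. Since neither $z'$ nor $d\in C_k(\lk(K,p);\Z)$ involves any simplex containing $p$, I conclude $p\notin\supp{z+\redbndry\eta}$, which is exactly the claim (viewing $\eta$ inside $C_{k+1}(\Delta(P);\Z)$, where $\redbndry$ agrees with its restriction to the subcomplex $\st(K,p)$).

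The only genuinely delicate point, and the one I expect to be the main obstacle, is the uniform treatment of the reduced complex at the ends of the dimension range. For $k=0$ the "link cycle" $c$ lives in $C_{-1}(\lk(K,p);\Z)=\Z\,\emptygen$ with coefficient the coefficient $n_p$ of $p$ in $z$, which is nonzero because $p\in S$; here the hypothesis $\widetilde{H}_{-1}(\lk(K,p);\Z)=0$ is precisely what guarantees that this nonzero reduced $(-1)$-cycle is $\redbndry d$ for a genuine $0$-chain $d$, i.e. that $\lk(K,p)$ actually carries a vertex to cone from. Packaging the cone boundary formula so that it holds verbatim at the $C_{-1}$ level (and checking injectivity of $p\cdot(-)$ there) is the one place requiring care; the rest is a direct computation.
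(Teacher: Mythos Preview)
Your proof is correct and takes a genuinely different route from the paper's. The paper argues abstractly via the long exact sequence of the pair $(W,A)=(\st(K,p),\lk(K,p))$: since $W$ is a cone, $\widetilde{H}_k(W)=0$, and combined with the hypothesis $\widetilde{H}_{k-1}(A)=0$ this forces $\widetilde{H}_k(W,A)=0$; the part $z_S$ of $z$ lying in $W$ is then a relative cycle and hence a relative boundary, yielding $\eta$. Your argument is the explicit chain-level unwinding of that same idea: you split $z=p\cdot c+z'$, use the cone boundary formula to see that $c$ is a link cycle, bound it by $d$ in the link, and set $\eta=p\cdot d$. What you gain is elementarity --- no long exact sequence, no relative chains, just the join identity $\redbndry(p\cdot c)=c-p\cdot\redbndry c$ and a direct cancellation. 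What the paper's version buys is conceptual brevity and a cleaner handling of the $k=0$ edge case (it just cites the pair sequence), whereas you correctly flag that the reduced $C_{-1}$ level needs a sentence of care. Both proofs rest on exactly the same homological content; yours simply makes the isomorphism $\widetilde{H}_k(\st(K,p),\lk(K,p))\cong\widetilde{H}_{k-1}(\lk(K,p))$ concrete rather than quoting it.
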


\begin{proof}

Let $W = \st(K,p)$ and $A = \lk(K,p)$.  Note that $A$ is not an empty
complex (that observation follows from the reduced homology assumption
when $k=0$ and the fact that $p$ is part of a simplex containing at
least one other element when $k>0$).

The long exact sequence for a pair \cite{tpr:munkres, tpr:hatcher}
therefore gives us the following exact sequence:

\vspace*{-0.1in}

$$
0 = \widetilde{H}_k(W; \Z) \longrightarrow \widetilde{H}_k(W, A;\, \Z) \longrightarrow \widetilde{H}_{k-1}(A; \Z) = 0
.$$

The left $0$ comes from $W$ being a cone and the right $0$ comes from
the lemma's hypotheses.   Consequently, $\widetilde{H}_k(W, A;\, \Z) = 0$.

Suppose $z = \sum_in_i\tau_i$, for some collection $\{\tau_i\}$ of
(oriented) $k$-simplices such that $n_i \neq 0$ for each $i$.  Let
$z_S$ consist of the part of $z$ that lies within $W$, so:

\vspace*{-0.12in}

$$\hspace*{2in}z_S = \sum_{\tau_{\scriptstyle i}\in{W}}n_i\tau_i
  \qquad\qquad \hbox{(with each $n_i$ and $\tau_i$ as in $z$)}.$$

Since $z$ is a reduced $k$-cycle with support in $\verts{K}$, $z_S$ is
a reduced relative $k$-cycle for the pair $(W, A)$.
\ 
Since $\widetilde{H}_k(W, A;\, \Z) = 0$, $z_S$ must be a reduced relative
  boundary, so there exists $\kappa \in C_{k+1}(W; \Z)$ such that $z_S
  = \redbndry\kappa + \gamma$, with $\gamma\in C_k(A; \Z)$.

Now let $\eta = -\kappa$ and view $\eta \in C_{k+1}(\Delta(P); \Z)$.

Observe that $\supp{z_S + \redbndry\eta} \subseteq \verts{A} \subseteq
  \verts{\dl(K,p)}$.  Consequently, $p \not\in \supp{z + \redbndry\eta}$.
\end{proof}

\begin{lemma}[Maximal Element Cardinality]\label{maxelemcard}
Let $P$ be almost a join-based lattice. 
Suppose $P$ has reduced integral homology in dimension $k \geq 0$,
that is, $\widetilde{H}_k(\Delta(P); \Z) \neq 0$.

\vsr

Let $S = \supp{z}$, with $z \in C_k(\Delta(P); \Z)$ a reduced homology
generator for $\widetilde{H}_k(\Delta(P); \Z)$.

\vsr

Then $\abs{\Smax} \geq k+2$.

\end{lemma}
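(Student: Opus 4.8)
The plan is to argue by contradiction: assuming $\abs{\Smax} \leq k+1$, I will show that $z$ is a reduced boundary in $\Delta(P)$, contradicting the hypothesis that $z$ generates the nonzero group $\widetilde{H}_k(\Delta(P); \Z)$. The entire argument will take place inside the order complex $K = \Delta(\Sv)$, which contains all of $z$ because $\supp{z} = S \subseteq \Sv$; moreover $z$ remains a reduced $k$-cycle when viewed in $K$, since $K$ is a subcomplex of $\Delta(P)$ closed under faces and the operator $\redbndry$ is inherited. It therefore suffices to prove that $\widetilde{H}_k(K; \Z) = 0$ whenever $\abs{\Smax} \leq k+1$: for then $z = \redbndry\eta$ for some $\eta \in C_{k+1}(K; \Z) \subseteq C_{k+1}(\Delta(P); \Z)$, so $z$ bounds in $\Delta(P)$.

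First I would build a cover of $K$ by contractible cones. Write $\Smax = \{p_1, \ldots, p_m\}$ with $m = \abs{\Smax}$, and for each $i$ let $K_i = \Delta\big((\Sv)_{\leq p_i}\big)$ be the order complex of the down-set $\{q \in \Sv : q \leq p_i\}$. Since $P$ is finite, every $q \in \Sv$ lies below some $s \in S$ by Definition~\ref{joincompletion}, hence below some maximal element $p_i$; applying this to the top element of a chain shows every simplex of $K$ lands in some $K_i$, so $K = \bigcup_{i=1}^m K_i$. Each poset $(\Sv)_{\leq p_i}$ has $p_i$ as a maximum element, so each $K_i$ is a cone with apex $p_i$ and is therefore contractible.

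The crux is identifying the intersections and invoking the Nerve Lemma. For a nonempty $T = \{p_{i_1}, \ldots, p_{i_\ell}\} \subseteq \Smax$, the intersection $K_{i_1} \cap \cdots \cap K_{i_\ell}$ consists exactly of the chains lying below every $p_{i_j}$, which is $\Delta(S_T)$ for the closed semi-interval $S_T = \setdef{q \in \Sv}{q \leq t \text{ for all } t \in T}$ defined on page~\pageref{joinfacts}. Because $\emptyset \neq T \subseteq S$, Lemma~\ref{intervalcontract} shows every nonempty such intersection is contractible. The Nerve Lemma (Appendix~\ref{prelim}) then gives $K \homot \nerveU$ for $\calU = \{K_1, \ldots, K_m\}$, and $\nerveU$ is a simplicial complex on $m$ vertices. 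Any complex on $m$ vertices has vanishing reduced homology in all dimensions $\geq m-1$ (its dimension is at most $m-1$, and if it attains dimension $m-1$ it is the full, contractible simplex), so $m \leq k+1$ forces $\widetilde{H}_k(\nerveU; \Z) = \widetilde{H}_k(K; \Z) = 0$, which is the desired contradiction; hence $\abs{\Smax} \geq k+2$.

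I expect the main obstacle to be the bookkeeping that makes this cover genuinely legitimate: verifying the equality $K = \bigcup_i K_i$ and, above all, checking that the $\ell$-fold intersections coincide precisely with the semi-intervals $S_T$ to which Lemma~\ref{intervalcontract} applies, rather than some larger or smaller poset. One must also confirm the low-dimensional and degenerate cases behave correctly within the reduced-homology framework — in particular $k=0$, where $\abs{\Smax}\le 1$ collapses $K$ to a single contractible cone and so kills $\widetilde{H}_0$, matching the $k+2=2$ bound — so that the dimension count and the passage from ``$\widetilde{H}_k(K)=0$'' to ``$z$ bounds'' are valid with reduced coefficients.
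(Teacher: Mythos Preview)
Your proposal is correct and follows essentially the same approach as the paper: both cover $\Delta(\Sv)$ by the cones $\Delta(S_{\{p_i\}})$ indexed by $\Smax$, identify the intersections as $\Delta(S_T)$, invoke Lemma~\ref{intervalcontract} and the Nerve Lemma, and finish with the vertex-count bound on the nerve. The only cosmetic difference is that you package the argument as a contradiction (assume $\abs{\Smax}\le k+1$ and kill $\widetilde{H}_k$) whereas the paper argues directly (the nerve inherits nontrivial $\widetilde{H}_k$, hence has at least $k+2$ vertices).
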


\begin{proof}

Since $S \subseteq \Sv$, we can view $z \in C_k(\Delta(\Sv); \Z)$.  If
there were to exist $\eta \in C_{k+1}(\Delta(\Sv); \Z)$ such that
$\redbndry\eta = z$, then $z$ would also be a reduced boundary in
$\Delta(P)$.  So, $\widetilde{H}_k(\Delta(\Sv); \Z) \neq 0$ and $z$ is
a reduced homology generator for $\Delta(\Sv)$.

\vspace*{0.1in}

Recall the notation $S_T$ in Fact~{\the\value{factST}} on
page~\pageref{factsSjoin}.  \ Observe that

\vspace*{-0.05in}

$$\bigunion_{\phantom{000}t\in{\Ssmax}}\Delta(S_{\{t\}}) \;=\; \Delta(\Sv).$$

To see this, first observe that the empty simplex $\emptyset$ appears
in both these sets.  Then:

\vspace*{-0.075in}

\begin{itemize}

\item[I.]  Suppose $\emptyset \neq \sigma \in \Delta(S_{\{t\}})$ for some
  $t\in\Smax$.  Being a chain in $S_{\{t\}}$, we can write $\sigma$ as
  $\{p_0 < p_1 < \cdots < p_\ell\}$, for some $\ell \geq 0$, with each $p_i
  \in \Sv$ (and $\,p_\ell \leq t \in \Smax \subseteq S \subseteq \Sv$).

  Consequently, $\sigma \in \Delta(\Sv)$ as well.

\item[II.]  Suppose $\emptyset \neq \sigma \in \Delta(\Sv)$.
  Then $\sigma = \{p_0 < p_1 < \cdots < p_\ell\}$, for some $\ell \geq 0$,
  with each $p_i \in \Sv$.
  By definition of $\Sv$ and $\Smax$, $p_\ell \leq s \leq t$, for some
  $s\in S$ and $t\in\Smax$.

  Consequently, $\sigma \in \Delta(S_{\{t\}})$ as well, for that $t$.

\end{itemize}

Similarly, one sees that, for any $\emptyset\neq T \subseteq S$,

\vspace*{-0.05in}

$$\biginter_{t\in{T}}\Delta(S_{\{t\}}) \;=\; \Delta(S_T).$$

The complex on the right is either an empty complex or it is
contractible, by Lemma~\ref{intervalcontract}.

A variation of the Nerve Lemma now implies that $\Delta(\Sv)$ and the
nerve of the simplicial complexes
$\big\{\Delta(S_{\{t\}})\big\}_{t\in\Ssmax}$ have the same homotopy
type (see Theorem 10.6(i) in \cite{tpr:bjorner}).

\vspace*{0.05in}

\addtolength{\baselineskip}{1.5pt}

\vst

Since $\Delta(\Sv)$ has reduced homology in dimension $k$, so does the
nerve of $\big\{\Delta(S_{\{t\}})\big\}_{t\in\Ssmax}$.

\vst

The nerve of $\big\{\Delta(S_{\{t\}})\big\}_{t\in\Ssmax}$ is
isomorphic to a simplicial complex with underlying vertex set $\Smax$.
In order for a simplicial complex to have reduced homology in
dimension $k$, with $k \geq 0$, the complex must have at least $k+2$
vertices.  Thus $\abs{\Smax}\geq k+2$.
\end{proof}

We now turn to the proof of the main theorem, the statement of which
is replicated here:

\setcounter{currentThmCount}{\value{theorem}}
\setcounter{theorem}{\value{manychainThm}}
\begin{theorem}[Many Maximal Chains]
Let $P$ be almost a join-based lattice.
Suppose $P$ has reduced integral homology in dimension $k \geq 0$,
that is, $\widetilde{H}_k(\Delta(P); \Z) \neq 0$.

Then there are at least $(k+2)!$ maximal chains in $P$ of length at
least $k$.
\end{theorem}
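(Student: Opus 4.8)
The plan is to induct on $k$, exploiting the factorization $(k+2)! = (k+2)\cdot(k+1)!$: I would produce at least $k+2$ distinct ``top'' elements, each serving as the maximum of at least $(k+1)!$ distinct maximal chains of length at least $k$. For the base case $k=0$, nonvanishing $\widetilde{H}_0(\Delta(P);\Z)$ means $\Delta(P)$ is disconnected, while Lemma~\ref{maxelemcard} gives $\abs{\Smax}\geq 2$ for the support $S=\supp{z}$ of a reduced homology generator $z$. Two distinct elements of $\Smax$ are incomparable in $P$ (being distinct maxima of the subposet $\Sv$, by the fact that $\Smax=\Svmax$, they cannot be comparable in $P$ without violating maximality), so extending each to a maximal chain of $P$ yields two distinct maximal chains, each of length at least $0$.

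For the inductive step I would first replace $z$ by a ``tight'' generator. Starting from a reduced homology generator $z$ with $S=\supp{z}$ and $K=\setdef{\tau\in\Delta(P)}{\tau\subseteq\Sv}=\Delta(\Sv)$, I check whether some $p\in\Smax$ has $\widetilde{H}_{k-1}(\lk(K,p);\Z)=0$. If so, Lemma~\ref{tighten} (Cycle Tightening) furnishes $\eta$ with $p\notin\supp{z+\redbndry\eta}$; the new support lies in $\Sv$ and omits the maximal element $p$, so by the strict-containment fact on page~\pageref{joinfacts} its bounded join-completion is strictly smaller than $\Sv$. Since $P$ is finite, iterating terminates at a generator $z^*$ whose support $S^*$ has the property that every $p\in(S^*)_{\max}$ satisfies $\widetilde{H}_{k-1}(\lk(K^*,p);\Z)\neq 0$, where $K^*=\Delta((S^*)^{\join})$. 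Throughout, $z^*$ stays homologous to $z$, hence remains a nontrivial cycle, so Lemma~\ref{maxelemcard} still gives $\abs{(S^*)_{\max}}\geq k+2$.

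Next I would interpret each such link. Since $(S^*)^{\join}$ is almost a join-based lattice and $p$ is one of its maximal elements, the order-complex identity $\lk(\Delta(Q),p)=\Delta(Q_{<p})*\Delta(Q_{>p})$ collapses (as $Q_{>p}=\emptyset$ gives $\Delta(Q_{>p})=\Smo$) to $\lk(K^*,p)=\Delta(P_p)$, with $P_p=\setdef{q\in(S^*)^{\join}}{q<p}$. By the fact on page~\pageref{joinfacts} that the strict ``below'' poset is almost a join-based lattice, $P_p$ is almost a join-based lattice, and $\widetilde{H}_{k-1}(\Delta(P_p);\Z)\neq 0$. The induction hypothesis then yields at least $(k+1)!$ maximal chains of $P_p$ of length at least $k-1$; prepending $p$ to each and extending to a maximal chain of $P$ produces maximal chains of $P$ of length at least $k$.

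Finally I would verify distinctness, yielding the count $(k+2)\cdot(k+1)!=(k+2)!$. Two distinct maximal chains $c_1\neq c_2$ of $P_p$ cannot extend to the same maximal chain $M$ of $P$: otherwise $c_1\cup c_2\subseteq M$ is a chain of $P_p$ containing the maximal chain $c_1$, forcing $c_1=c_2$. For distinct $p,p'\in(S^*)_{\max}$, the two elements are incomparable in $P$, so no maximal chain of $P$ contains both, and chains built over $p$ are disjoint from those built over $p'$. The main obstacle is the tightening argument of the second paragraph: making precise that one may always move the homology generator downward to a representative whose maximal support elements all have homologically nontrivial links, and that this descent is well-founded through the strict shrinkage of the bounded join-completion. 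The remaining bookkeeping --- the link identity and the two distinctness checks --- is routine once that representative is in hand.
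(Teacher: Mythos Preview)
Your proposal is correct and follows essentially the same approach as the paper's own proof: induction on $k$, iterated cycle tightening via Lemma~\ref{tighten} to reach a generator all of whose maximal support elements have homologically nontrivial links, identification of each such link as the order complex of the strict below-set (itself almost a join-based lattice), application of the induction hypothesis there, and the count $(k+2)\cdot(k+1)!$ via Lemma~\ref{maxelemcard} together with incomparability of the elements of $\Smax$. Your distinctness argument and your handling of the tightening termination match the paper's reasoning.
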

\setcounter{theorem}{\value{currentThmCount}}

\begin{proof}
The proof is by induction on $k$.

\vspace*{0.1in}

I. For the base case, $k=0$, observe that $\Delta(P)$ must have at
least two vertices that are incomparable in $P$, as otherwise
$\Delta(P)$ would be either empty or contractible.  Each vertex sits
inside a maximal chain of $P$.  The chains are different since the
vertices are incomparable.

\vspace*{0.15in}

II. For the induction step, assume that, for some $k \geq 1$, the
theorem holds for all relevant $P$ with reduced homology in dimension
$k-1$.  We need to establish the theorem for all relevant $P$ with
reduced homology in dimension $k$.

Let $z = \sum_in_i\tau_i$ be a reduced homology generator for
$\widetilde{H}_k(\Delta(P); \Z)$, with $n_i \neq 0$ for each $i$.

\vspace*{0.05in}

Define $S$ and $K$ by $S = \supp{z}$ and $K =
\setdef{\tau\in\Delta(P)}{\tau\subseteq\Sv}$.  Interpretation: \ $S$
is the support of the reduced homology generator $z$ and $K$ is the
subcomplex of $\Delta(P)$ formed by restricting to the bounded
join-completion of $z$'s support.

\vspace*{0.1in}

We now have an inner induction, which we will describe as an iterative
loop:

(Notation: superscript $(j)$ indicates the $j^{\hbox{\footnotesize th}}$
 iteration.)

\begin{enumerate}
\item Initialize with $z^{(0)} = z$, \ $S^{(0)} = S$, \ and $K^{(0)} = K$.

\item Suppose $z^{(j)}$, \ $S^{(j)}$, \ and $K^{(j)}$ have been
  defined, with $z^{(j)}$ a reduced homology generator for
  $\widetilde{H}_k(\Delta(P); \Z)$,$\phantom{\Big|}$
  and with $S^{(j)}$ and $K^{(j)}$
  similar in meaning to $S$ and $K$, now based on $z^{(j)}$.
  \ In particular, $z^{(j)}$ has support $S^{(j)}$ and all of
  $K^{(j)}$'s vertices lie in $(S^{(j)})^{\join}$.

  Pick some $p\in(S^{(j)})_{\max}$ such that
  $\widetilde{H}_{k-1}(\lk(K^{(j)}, p); \Z) = 0$.

  If no such $p$ exists, then the loop ends.

\item Otherwise, invoke Lemma~\ref{tighten} to find an $\eta \in
  C_{k+1}(\st(K^{(j)},p); \Z)$ such that $p \notin \supp{z^{(j)} + \redbndry\eta}$.

  Let $z^{(j+1)} = {z^{(j)} + \redbndry\eta}$, \ so $z^{(j+1)}$ is
  again a generator of reduced homology in dimension $k$.
  \ \ Further, let

\vspace*{-0.35in}

  $$\hspace*{0.85in}S^{(j+1)} = \supp{z^{(j+1)}}
  \hspace*{0.12in}\hbox{and}\hspace*{0.1in}
  K^{(j+1)} \;=\; \setdef{\tau\in\Delta(P)}{\tau\subseteq(S^{(j+1)})^{\join}}.$$

\end{enumerate}

Observe that $S^{(j+1)} \;\subseteq\; \supp{z^{(j)}} \union
\supp{\redbndry\eta} \;\subseteq\; (S^{(j)})^{\join}$.

\vso

On the other hand, $p \in (S^{(j)})_{\rm max}\setminus{S^{(j+1)}}$.
So by Fact~{\the\value{factneq}} on page~\pageref{joinfacts},
$(S^{(j+1)})^{\join} \subsetneq (S^{(j)})^{\join}$.

In other words, the possible vertex set for the simplicial complex
shrinks with each iteration, and so the loop must eventually end, $P$
being finite.

\vspace*{0.1in}

Given this iterative algorithm, we can now assume without loss of
generality that $\widetilde{H}_{k-1}(\lk(K, p); \Z) \neq 0$ for each
$p$ that is a maximal element in the support $S$ of the given
reduced homology generator $z$.

Observe that $\lk(K,p) = \setdef{\tau\in\Delta(P)}{\tau\subseteq\Sv
  \;\hbox{and $s < p$ for every $s\in\tau$}}$, when $p \in \Smax$.

\vspace*{0.1in}

Consequently, $\lk(K,p) = \Delta(Q_p)$, where $Q_p$ is the subposet of
  $P$ given by

\vspace*{-0.08in}

  $$Q_p \;=\; \setdef{s\in\Sv}{s < p}.$$

By Fact~{\the\value{factSTlt}} on page~\pageref{joinfacts}, $Q_p$ is
itself almost a join-based lattice.

$Q_p$ has reduced integral homology in dimension $k-1$, so by the
induction hypothesis, there are at least $(k+1)!$ maximal chains in
$Q_p$ of length at least $k-1$.  As the description of $Q_p$ makes
clear, we can extend each of these chains in $P$ by adding $\,p\,$ as
a top element, then further refine and/or extend each chain as needed
into a maximal chain in $P$. \label{manylongsubchains}
Distinct chains remain distinct after this augmentation since the
process only adds elements of $P$ that lie outside $Q_p$.

Consequently, we obtain, for each $p\in\Smax$, at least $(k+1)!$
distinct maximal chains in $P$ of length at least $k$, each touching
$p$.  A maximal chain in $P$ cannot contain more than one element of
$\Smax$, since such elements are necessarily incomparable.  Letting $p$
vary over $\Smax$ therefore produces at least $\abs{\Smax}\cdot(k+1)!$
distinct maximal chains in $P$ of length at least $k$.

By Lemma~\ref{maxelemcard}, $\abs{\Smax} \geq k+2$.  So $P$ contains
at least $(k+2)!$ distinct maximal chains of length at least $k$.
\end{proof}

Here are two corollaries, previously stated on
page~\pageref{holesreduceinference} in Section~\ref{bubbles}:

\vspace*{0.1in}

\setcounter{currentThmCount}{\value{theorem}}
\setcounter{theorem}{\value{holeinferCor}}
\begin{corollary}[Holes Reduce Inference]
Let $R$ be a nonvoid relation.  Suppose $\PR$ has reduced integral
homology in dimension $k \geq 0$.  Then there are at least $(k+2)!$
maximal chains in $\PR$ of length at least $k$.
\end{corollary}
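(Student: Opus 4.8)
The plan is to derive this corollary directly from Theorem~\ref{manychains} by recognizing the doubly-labeled poset $\PR$ as an instance of a poset that is almost a join-based lattice, in the sense of Definition~\ref{almostlattice}. Theorem~\ref{manychains} concerns an abstract poset $P$ satisfying $\widetilde{H}_k(\Delta(P); \Z) \neq 0$, so the only work is to check that $P = \PR$ meets its hypotheses and that the maximal chains it produces are the ones the corollary asks about.

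First I would verify that $\PR$ is almost a join-based lattice. By Definition~\ref{galoislattice}, the Galois lattice $\PRplus$ is a genuine bounded lattice, obtained from $\PR$ by adjoining a bottom element $\zeroR$ and a top element $\oneR$ whenever these are not already present; in particular, any two elements $p, q \in \PR$ have a least upper bound $p \join q$ in $\PRplus$. Adjoining a fresh topmost element $\topone$ to $\PR$, and declaring the join of $p$ and $q$ to be the $\PRplus$-join when that join lies in $\PR$ and $\topone$ otherwise, realizes $\PR \union \{\topone\}$ as a join semi-lattice. This is exactly the structure recorded for $\PR$ in Appendix~\ref{relationsApp}, page~\pageref{PRAppDeff}, so $\PR$ is almost a join-based lattice as required. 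Next I would translate the homology hypothesis: by the convention that a poset carries the topology of its order complex (Appendix~\ref{prelim}), the assumption that $\PR$ has reduced integral homology in dimension $k \geq 0$ is precisely $\widetilde{H}_k(\Delta(\PR); \Z) \neq 0$, matching the hypothesis of Theorem~\ref{manychains}. Applying that theorem with $P = \PR$ then yields at least $(k+2)!$ maximal chains of length at least $k$. Since the theorem counts maximal chains in $P$ itself, while the adjoined $\topone$ and $\botzero$ serve only to supply the join operation used inside its proof and never enter the counted chains, these are exactly maximal chains in $\PR$.

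I do not expect a genuine obstacle. The nonvoidness of $R$ guarantees that $\PR$ is defined, and the homology hypothesis forces $\PR$ to be nonempty, since an empty poset has reduced homology only in dimension $-1$. The single point needing care is the identification of the correct join structure on $\PR \union \{\topone\}$, and that is already furnished by the lattice operations of $\PRplus$ in Definition~\ref{galoislattice}; everything else is immediate invocation of Theorem~\ref{manychains}.
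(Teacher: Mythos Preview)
Your proposal is correct and follows essentially the same approach as the paper: verify that $\PR$ is almost a join-based lattice (using the join inherited from $\PRplus$) and then invoke Theorem~\ref{manychains} directly. The paper's proof is terser but identical in substance, adding only the observation that the homology hypothesis forces $\oneR \notin \PR$, which you handle equivalently by always adjoining a fresh $\topone$ as Definition~\ref{almostlattice} prescribes.
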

\setcounter{theorem}{\value{currentThmCount}}

\begin{proof}
The assertion follows from Theorem~\ref{manychains}, since $\PR$ is
almost a join-based lattice.

(The join operation is exactly that of $\PRplus$.  In particular, the
top element $\oneR$ of $\PRplus$ is not already in $\PR$, since $\PR$
has homology, so we may adjoin that as the upper bound $\topone$ for
$\PR$.)
\end{proof}

\vspace*{0.2in}

Recall informative attribute release sequences from
Section~\ref{iarsnarrative} and Appendix~\ref{posetchains}.

\setcounter{currentThmCount}{\value{theorem}}
\setcounter{theorem}{\value{holerecCor}}
\begin{corollary}[Holes Defer Recognition]
Let $R$ be a nonvoid relation and let $(\sigma, \gamma) \in \PR$.

\vspace*{0.05in}

Define $Q = Q(\sigma,\gamma)$ as per Definition~\ref{restrictedlink}
and recall Definition~\ref{DefIdentLengths}, from
pages~\pageref{DefIdentLengths}--\pageref{restrictedlink}.

\vspace*{0.05in}

Suppose $\PQ$ is well-defined and has reduced integral homology in
dimension $k \geq 0$.

\vspace*{0.05in}

Then there are at least $(k+2)!$ distinct informative attribute
release sequences $y_1, \ldots, y_\ell$ for $R$, each with $\ell \geq
k+2$, such that $\psi_R(\{y_1, \ldots, y_\ell\}) = \sigma$.
Consequently, $\rslow(\sigma) \geq k+2$.

\end{corollary}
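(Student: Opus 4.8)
The plan is to reduce the corollary to Theorem~\ref{manychains} (Many Maximal Chains) and then translate the resulting maximal chains in $\PQ$ into informative attribute release sequences for $R$ via the link-to-chain machinery of Appendix~\ref{posetchains}. The logical skeleton has three stages: first establish that $\PQ$ is almost a join-based lattice so the theorem applies; second invoke the theorem to get $(k+2)!$ maximal chains of length at least $k$ in $\PQ$; third lift each such chain to an iars for $R$ that localizes to $\sigma$, checking that distinct chains yield distinct sequences and that the lengths come out to at least $k+2$.

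First I would verify the hypotheses of Theorem~\ref{manychains}. By the Comment following the corollary, since $(\sigma, \gamma) \in \PR$ and $\PQ$ is well-defined with nontrivial homology, the relation $Q = Q(\sigma, \gamma)$ genuinely models $\lk(\dowx, \sigma)$; in particular $\sigma \neq X$. As recorded on page~\pageref{PRAppDeff}, any $\PQ$ is almost a join-based lattice (the join being that of $\PQplus$, with $\oneQ$ adjoinable as $\topone$ precisely because $\PQ$ has homology and so does not already contain its top). Applying Theorem~\ref{manychains} to $\PQ$ then yields at least $(k+2)!$ maximal chains in $\PQ$, each of length at least $k$, i.e., each containing at least $k+1$ elements.

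Next I would translate. Corollary~\ref{preservemax} (Maximal Chain Preservation) gives a bijection between maximal chains in $\PQ$ and maximal chains at and above $(\sigma, \gamma)$ in $\PR$, via $(\kappa, \eta) \mapsto (\kappa \union \sigma,\, \eta)$. Each such chain in $\PR$ has at least $k+2$ elements, since it gains the bottom element $(\sigma,\gamma)$ (corresponding to $\zeroQ$, absent from $\PQ$) and we may extend upward to $\oneR$; more carefully, I would track the bookkeeping of Corollary~\ref{linklengths} (Longest Localization Sequences), which shows a chain in $\PQ$ of length $m$ corresponds to a localizing iars of length $m+2$ or $m+1$ for $R$, depending on whether $X \in \dowx$. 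Each maximal chain then feeds into Lemma~\ref{chaintoiars} (Informative Attributes from Maximal Chains) to produce an iars $y_1, \ldots, y_\ell$ with $\psi_R(\{y_1, \ldots, y_\ell\}) = \sigma$ and $\ell \geq k+2$. Distinctness of the $(k+2)!$ sequences follows because distinct chains select attributes from disjoint step-differences $\gamma_i \setminus \gamma_{i-1}$, so the underlying chains are recoverable from the sequences via the ``Moreover'' clause of Lemma~\ref{chaintoiars} (which says $(\clsy)(\{y_1, \ldots, y_i\}) = \gamma_i$).

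The main obstacle I anticipate is the careful length bookkeeping and the distinctness argument, not any deep topology. Two subtleties need attention: whether the bottom element $\zeroQ = (\emptyset, \gamma)$ and top element $\oneQ$ lie in $\PQ$, which shifts length counts by $\pm 1$ (handled by the Comment after Corollary~\ref{linklengths} and the two cases $X \in \dowx$ versus $X \notin \dowx$); and ensuring the $(k+2)!$ chains remain distinct after the upward extension/refinement to full maximal chains in $\PR$, which holds because the refinement adds only elements outside the link subposet, exactly as argued at page~\pageref{manylongsubchains} in the proof of Theorem~\ref{manychains}. The final inequality $\rslow(\sigma) \geq k+2$ is then immediate from the definition of $\rslow$ (Definition~\ref{DefIdentLengths}) as the maximal length of a localizing iars, since we have exhibited such sequences of length at least $k+2$.
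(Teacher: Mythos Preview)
Your approach matches the paper's: apply Theorem~\ref{manychains} to $\PQ$, lift the resulting chains via Corollary~\ref{preservemax}, and extract iars via Lemma~\ref{chaintoiars}, with distinctness handled by its ``Moreover'' clause exactly as the paper does.

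There is, however, one point you leave unresolved. You correctly note that Corollary~\ref{linklengths} gives a localizing iars of length $m+2$ or $m+1$ ``depending on whether $X \in \dowx$,'' yet you then assert $\ell \geq k+2$ without saying which case actually holds; if the $m+1$ case occurred you would only get $\ell \geq k+1$. The paper closes this by observing that the homology hypothesis forces the favorable case: if $X \in \dowx$, then some attribute is shared by all of $X$, hence (since $\phi_R(X) \subseteq \phi_R(\sigma) = \gamma$) by all of $\tX$ as well, so $\oneQ \in \PQ$ and $\Delta(\PQ)$ would be a cone with apex $\oneQ$, contradicting the assumed reduced homology. Thus $X \notin \dowx$, $\oneR \notin \PR$, and the lift to a maximal chain at and above $(\sigma,\gamma)$ in $\PRplus$ always gains two new elements --- $(\sigma,\gamma)$ at the bottom and $\oneR$ at the top --- securing the $+2$. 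You actually have the key ingredient (your parenthetical that $\PQ$ ``does not already contain its top''), but you do not connect it to ruling out the case $X \in \dowx$, which is what the length count needs.
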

\setcounter{theorem}{\value{currentThmCount}}

\begin{proof}
By Corollary~\ref{holesreduceinference}, $\PQ$ contains at least
$(k+2)!$ maximal chains of length at least $k$.

\vsr

The rest of the argument is much like that in the proof of
Corollary~\ref{linklengths} from page~\pageref{linklengths}:

\vspace*{-0.1in}

\begin{itemize}

\addtolength{\itemsep}{-3pt}

\item Each maximal chain in $\PQ$ gives rise to a maximal chain in
$\PRplus$ at or above $(\sigma, \gamma)$.

\item Distinctness in $\PQ$ carries over to $\PRplus$.

\item In moving from $\PQ$ to $\PRplus$ one adds two elements:

\vspace*{-0.1in}

\begin{enumerate}

\item One adds $(\sigma, \gamma)$, corresponding to $\zeroQ$ in
$\PQplus$.

\item $\PQ$ has reduced homology, so no attribute is shared by all
individuals, either in $Q$ or in $R$.
\ One thus also adds the top element $\oneR$ of $\PRplus$.

\end{enumerate}

\end{itemize}

\vspace*{-0.05in}

Summarizing the length argument:\ Each distinct maximal chain in $\PQ$
of length at least $k$ gives rise to a distinct maximal chain at or
above $(\sigma, \gamma)$ in $\PRplus$ of length at least $k+2$, and
therefore a distinct informative attribute release sequence for $R$ of
length at least $k+2$.  By construction of $Q$ and by
Lemma~\ref{chaintoiars} on page~\pageref{chaintoiarsAppPage}, each such
iars identifies $\sigma$ in $R$.  So by
Definition~\ref{DefIdentLengths}, $\rslow(\sigma) \geq k+2$.

\vspace*{0.1in}

(How do we know that distinct maximal chains produce distinct iars?
 Because if two iars are the same, the chains must be the same, by the
 ``Moreover'' of Lemma~\ref{chaintoiars}.
 It is true that one may be able to obtain different iars from the same
 maximal chain, but our counting is over maximal chains, so provides a
 lower bound for the number of distinct iars.)
\end{proof}

\paragraph{Comment:}  Since $\PQ$ is well-defined and has reduced
 homology in nonnegative dimension, $Q$'s Dowker complexes are neither
 void nor empty.  Thus $\sigma\neq{X}$.  Along with the assumption
 $(\sigma, \gamma) \in \PR$, that means relation $Q(\sigma,\gamma)$
 models the link $\Lk(\dowx, \sigma)$.

\clearpage
\section{Obfuscating Strategies}
\label{obfuscatingstrategies}

Recall the discussion and terminology of Section~\ref{obfuscation}.

\vspace*{0.05in}

\noindent The primary goal of this appendix is to provide a proof of
Theorem~\ref{stratdelay} appearing on page~\pageref{stratdelay}.  In
addition, this appendix provides proof of some of the assertions in
the bullets on pages~\pageref{cycletheorems}--\pageref{cycletheoremsend}.

\vspace*{0.05in}

\noindent Once again, we first need to develop some tools:

\vspace*{0.05in}

\subsection{Source Complex}
\markright{Source Complex of a Graph with Uncertain Transitions}

Subsection~\ref{stratcomplex} introduced the strategy complex $\DG$ of
a graph $G=(V,\frakA)$.  Recall that every action $a\in\frakA$ has a
unique source state in $V$.  Given a set of actions $\calA \subseteq
\frakA$, we define the {\em start region} of $\calA$, denoted by
$\src(\calA)$, as

\vspace*{-0.1in}

$$\src(\calA) \;=\; \setdef{v\in V}{\hbox{$v$ is the source state of
    some action $a\in\calA$}}.$$ \label{SrcCmplxAppdef}

\vspace*{-0.05in}

One obtains another simplicial complex from $G$ via $\src$,
now on underlying vertex set $V$:

\vspace*{-0.05in}

$$\SG \;\; = \setdef{\src(\sigma)}{\sigma\in\DG}.$$

\vspace*{0.1in}

We refer to this complex as $G$'s {\em source complex}.

\vspace*{0.05in}

\begin{figure}[h]
\begin{center} 
\vspace*{0.05in}
\ifig{src214}{height=1.25in}
\end{center}
\vspace*{-0.3in}
\caption[]{Source complex for the graph of Figure~\ref{graph214} on
  page~\pageref{graph214}.}
\label{src214}
\end{figure}

The map $\src\!: \F(\DG) \rightarrow \F(\SG)$ is a homotopy
equivalence, so $\DG \homot \SG$ \cite{tpr:strategies, tpr:plans}.
Consequently, the source complex of a fully controllable graph $G$ is
equal to the boundary complex of the full simplex on the graph's state
space $V$, that is, $\SG = \bndry{(V)}$.  For the graph of
Figure~\ref{graph214} on page~\pageref{graph214}, the source complex
is the boundary of a triangle, as shown in Figure~\ref{src214}.

\vspace*{0.1in}

\begin{figure}[h]
\begin{center}
\begin{minipage}{1.8in}{$\begin{array}{c|ccc}
\hbox{\Large $B$} & 1 & 2 & 3 \\[2pt]\hline
\sigma_1 &      & \one & \one \\
\sigma_2 & \one &      & \one \\
\sigma_3 & \one & \one &      \\
\sigma_4 & \one & \one &      \\
\end{array}$}
\end{minipage}
\quad
\begin{minipage}{0.5in}{$\begin{array}{c}
\hbox{Goal} \\[2pt]\hline
1 \\
2 \\
3 \\
3 \\
\end{array}$}
\end{minipage}
\end{center}
\vspace*{-0.15in}
\caption[]{Relation $B$ describes the source complex $\SG$ of the graph
  of Figure~\ref{graph214}.  Each row describes the start region of a
  maximal simplex of $\DG$, with $\DG$ as in Figure~\ref{strat214} on
  page~\pageref{strat214}.  The rightmost column again shows each
  maximal strategy's goal. (See also Figure~\ref{Arel214} on
  page~\pageref{Arel214}.)}
\label{Brel214}
\end{figure}

In Lemma~\ref{dowkerstrats} on page~\pageref{dowkerstrats} we saw that
$\DG = \dowAy$ for the action relation $A$ defined there.  We now see
that $\SG = \Phi_B$, with relation $B$ as defined in the next lemma.  As
example, Figure~\ref{Brel214} shows relation $B$ for the graph of
Figure~\ref{graph214} from page~\pageref{graph214}.

\begin{lemma}\label{dowkerbegin}
Let $G=(V,\frakA)$ be a graph as discussed in
Section~\ref{obfuscation}.  \,Let \,$\maxDG$ be the set of maximal
simplices of $\,\DG$.
\ Define relation $B$ on $\maxDG \times V$ by
$B = \setdef{(\sigma, v)}{v \in \src(\sigma) \;\hbox{and}\; \sigma \in \maxDG}.$\quad

\vso

Then $\dowBy = \SG$.  
\end{lemma}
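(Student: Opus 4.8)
The plan is to prove the set equality $\dowBy = \SG$ directly by double inclusion, treating both sides as collections of simplices on the common vertex set $V$, exactly in the spirit of Lemma~\ref{dowkerstrats} (whose proof the text calls ``nearly definitional''). First I would record the two membership criteria I am matching. By the definition of the Dowker attribute complex (Definition~\ref{basicdefs}), a nonempty $\gamma \subseteq V$ lies in $\dowBy$ precisely when it has a witness, i.e., some $\sigma \in \maxDG$ with $(\sigma, v) \in B$ for every $v \in \gamma$; unwinding the definition of $B$, this says exactly $\gamma \subseteq \src(\sigma)$. On the other side, $\gamma \in \SG$ precisely when $\gamma = \src(\tau)$ for some $\tau \in \DG$. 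I would also note at the outset that both complexes are nonvoid---$V \neq \emptyset$ and $\maxDG \neq \emptyset$ since $\emptyset \in \DG$---so each contains the empty simplex, and that $\src$ is order-preserving, so $\tau \subseteq \sigma$ implies $\src(\tau) \subseteq \src(\sigma)$.

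For the inclusion $\SG \subseteq \dowBy$, I would take a nonempty $\eta = \src(\tau) \in \SG$. Since $\DG$ is a simplicial complex, $\tau$ is contained in some maximal simplex $\sigma \in \maxDG$, and monotonicity of $\src$ gives $\eta = \src(\tau) \subseteq \src(\sigma)$. Thus $\sigma$ witnesses $\eta$, so $\eta \in \dowBy$.

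For the reverse inclusion $\dowBy \subseteq \SG$, the key step---and the only one requiring more than unwinding definitions---is a section construction. Given a nonempty $\gamma \in \dowBy$, I would fix a witness $\sigma \in \maxDG$ with $\gamma \subseteq \src(\sigma)$. For each $v \in \gamma$, choose a single action $a_v \in \sigma$ whose source state is $v$ (possible precisely because $v \in \src(\sigma)$), and set $\tau = \{a_v : v \in \gamma\}$. Then $\tau \subseteq \sigma$, so $\tau \in \DG$, and because each chosen action contributes precisely its source $v$ and $\tau$ contains no other actions, one gets $\src(\tau) = \gamma$ exactly. Hence $\gamma = \src(\tau) \in \SG$. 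Taken together with the first inclusion, this also shows, as a byproduct, that the collection $\SG$ is downward closed (it coincides with a Dowker complex), so no separate verification that $\SG$ is a simplicial complex is needed.

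I expect the main (mild) obstacle to be the bookkeeping in the section construction: ensuring $\src(\tau)$ equals $\gamma$ on the nose rather than merely containing it, which is exactly why I pick one action per vertex of $\gamma$ instead of all of $\sigma$'s actions with source in $\gamma$. The empty-simplex and nonvoidness checks are routine, and everything else is definitional, matching the two witness conditions term for term.
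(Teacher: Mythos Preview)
Your proposal is correct and is precisely the ``nearly definitional'' double-inclusion argument the paper alludes to; the paper itself omits the proof entirely, simply remarking ``(Again, the proof is nearly definitional, so we omit it.)'' Your section construction for $\dowBy \subseteq \SG$ (picking one action per vertex of $\gamma$) is the natural way to fill in that direction, and your handling of the empty simplex and nonvoidness is appropriate.
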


\vspace*{-0.05in}

(Again, the proof is nearly definitional, so we omit it.)

\vspace*{0.02in}

(The ``$B$'' stands for ``Beginning'' --- while ``$S$'' for ``source''
might be desirable, we have already used $S$ to mean ``support''
elsewhere.)

\vspace*{0.1in}

How should we interpret the remaining Dowker complexes, $\dowAx$ and
$\dowBx$, for relations $A$ and $B$?  To answer this question, let us
look at the semantics of simplices in these complexes.  Suppose $\DG$
is not void or empty.  A nonempty simplex in $\dowAx$ represents a
{\em collection}\, of maximal simplices of $\DG$, namely maximal
simplices that have at least one action in common.  A nonempty simplex
in $\dowBx$ again represents a collection of maximal simplices of
$\DG$, now with at least one source state in common.  \ Thus $\dowAx
\subseteq \dowBx$.  \ Moreover, Dowker duality gives:

\begin{lemma}\label{inclusionequiv}
Let $G=(V,\frakA)$ be a graph as discussed in Section~\ref{obfuscation},
with $\mskip1mu{}V\!\neq\emptyset$.

Then the inclusion $\iota : \F(\dowAx) \rightarrow \F(\dowBx)$ is a
homotopy equivalence.

\end{lemma}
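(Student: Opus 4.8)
The plan is to recognize the inclusion $\iota$ as one of the two simplicial maps induced by a single morphism of relations, and then to invoke the homotopy machinery already developed for such morphisms. First I would introduce the set maps $f_X = \mathrm{id} : \maxDG \rightarrow \maxDG$ and $f_Y = \src : \frakA \rightarrow V$, the latter sending each action to its unique source state. To verify that the pair $f = (f_X, f_Y)$ is a morphism $A \rightarrow B$ in the sense of Definition~\ref{morphism}, I would check the single required condition: if $(\sigma, a) \in A$, meaning $a \in \sigma \in \maxDG$, then $\src(a) \in \src(\sigma)$, so $(f_X(\sigma), f_Y(a)) = (\sigma, \src(a)) \in B$. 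Both $A$ and $B$ are nonvoid because $V \neq \emptyset$ and $\frakA \neq \emptyset$, so the morphism machinery is available.

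Next I would identify the induced simplicial maps of Lemma~\ref{simpmaps}. Since $f_X$ is the identity on the shared vertex set $\maxDG$, the induced map $f_X : \dowAx \rightarrow \dowBx$ is exactly the inclusion $\iota$; it is well-defined precisely because $\dowAx \subseteq \dowBx$, as already noted in the text. The other induced map $f_Y : \dowAy \rightarrow \dowBy$ is the source map $\src : \DG \rightarrow \SG$, using $\dowAy = \DG$ and $\dowBy = \SG$ from Lemmas~\ref{dowkerstrats} and~\ref{dowkerbegin}.

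The heart of the argument is then Corollary~\ref{facehomotopy}(a), applied with $R = A$ and $Q = B$: it asserts that $f_X$ and $\psi_B \circ f_Y \circ \phi_A$ are homotopic poset maps $\F(\dowAx) \rightarrow \F(\dowBx)$. In our situation this reads
$$\iota \;\homot\; \psi_B \circ \src \circ \phi_A .$$
Each factor on the right is a homotopy equivalence: $\phi_A$ and $\psi_B$ are the Galois homotopy equivalences of Dowker duality (Theorem~\ref{dowker}), and $\src$ is a homotopy equivalence by the source-complex result recalled above. A composition of homotopy equivalences is again one, so $\psi_B \circ \src \circ \phi_A$ is a homotopy equivalence, and since $\iota$ is homotopic to it, $\iota$ itself is a homotopy equivalence.

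I expect the main obstacle to be bookkeeping rather than conceptual: confirming that the abstractly defined induced map $f_X$ coincides on the nose with $\iota$ (and $f_Y$ with $\src$), and reconciling the degenerate cases where $\DG$ is void or empty with the conventions fixing $\dowAx$, $\dowBx$ and the poset maps in those situations. It is worth emphasizing why Corollary~\ref{facehomotopy} is the right tool rather than strict commutativity: the containment $\src(\phi_A(\sigma)) \subseteq \phi_B(\iota(\sigma))$ underlying Lemma~\ref{containment} is in general strict, since a common source state of several maximal strategies need not be the source of a common action. Hence the square of Figure~\ref{morphismdiagram} commutes only up to homotopy, and it is exactly the homotopy commutativity of that square that delivers the result.
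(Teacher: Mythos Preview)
Your proof is correct and rests on the same underlying containment as the paper's, but you package it differently. The paper argues directly: it compares the two poset maps $\iota \circ \psi_A$ and $\psi_B \circ \src$ from $\F(\DG)$ to $\F(\dowBx)$, verifies by hand that $(\iota \circ \psi_A)(\sigma) \subseteq (\psi_B \circ \src)(\sigma)$ for each nonempty $\sigma$ (because $\sigma \subseteq \sigma'$ implies $\src(\sigma) \subseteq \src(\sigma')$), and concludes via \cite{tpr:bjorner} that the two maps are homotopic; since $\psi_A$, $\psi_B$, and $\src$ are homotopy equivalences, so is $\iota$. You instead recognize $(\mathrm{id}_{\maxDG}, \src)$ as a morphism $A \rightarrow B$ and invoke Corollary~\ref{facehomotopy}, whose proof is precisely the general form of the paper's inequality: Lemma~\ref{containment} specializes here to $\src(\phi_A(\tau)) \subseteq \phi_B(\iota(\tau))$, which after applying $\psi_B$ yields the same comparability. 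Your route is more conceptual and exhibits this lemma as an instance of the morphism framework of Section~\ref{category}; the paper's route is self-contained and avoids forward reference to machinery whose proofs live in the later Appendix~\ref{morphismappendix}. One small point: your nonvoidness check assumes $\frakA \neq \emptyset$, which the lemma itself does not; but as you anticipate under ``bookkeeping,'' and as the paper's own comment after the lemma acknowledges, the case $\frakA = \emptyset$ is a degenerate situation handled by convention.
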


Comment: The assumption $V\!\mskip-1mu\neq\emptyset$ means $\DG$ and
$\SG$ are not void, so relation $B$ is not void.  If
$\mskip2mu{}V\!\neq\emptyset$ but $\frakA=\emptyset$, then technically
relation $A$ is void, but is is convenient to think of it as an
instance of the empty relation instead, with associated empty Dowker
complexes.

\begin{proof}
Consider the following diagram:

\vspace*{-0.12in}

{\Large $$
\begin{CD}
\F(\dowAx)   @. \;\xhookrightarrow{\phantom{01}\iota\phantom{10}}\;  @. \F(\dowBx)\phantom{.} \\
@A{\psi_A}AA @.         @A{\psi_B}AA  \\
\F(\dowAy)   @.         @.       \F(\dowBy)\phantom{.} \\[2pt]
\hspace*{4pt}\roteq       @.         @.   \hspace*{4pt}\roteq\phantom{.}     \\[-6pt]
\F(\DG)      @. \;\xrightarrow{\phantom{0}\src\phantom{0}}\;  @. \F(\SG).  \\
\end{CD}
$$}

\vspace*{0.02in}

Recall that $\psi_A$, $\psi_B$, and $\src$ are homotopy equivalences.

Let $\maxDG$ denote the maximal simplices of $\DG$.
Observe the following, for each $\sigma\in\F(\DG)$:

\begin{center}
\begin{minipage}{4in}

 $(\iota\circ\psi_A)(\sigma) =
  \setdef{\sigma^{\prime}\in\maxDG}{\sigma\subseteq\sigma^{\prime}}$.

\vspace*{0.1in}

 $(\psi_B\circ\src)(\sigma) =
  \setdef{\sigma^{\prime}\in\maxDG}{\src(\sigma)\subseteq\src(\sigma^{\prime})}$.

\vspace*{0.1in}

 If $\sigma\subseteq\sigma^{\prime}$, then $\src(\sigma)\subseteq\src(\sigma^{\prime})$.

\end{minipage}
\end{center}

Consequently, $(\iota\circ\psi_A)(\sigma) \leq
(\psi_B\circ\src)(\sigma)$ for every $\sigma\in\F(\DG)$, where
``$\leq$'' refers to the partial order on $\F(\dowBx)$.

We conclude that the two order-reversing poset maps $\iota\circ\psi_A$
and $\psi_B\circ\src$ are homotopic (see \cite{tpr:bjorner}, Theorem
10.11) and therefore that $\iota$ is a homotopy equivalence.
\end{proof}

\begin{lemma}\label{posetequiv}
Let $G=(V,\frakA)$ be a graph as discussed in Section~\ref{obfuscation},
with $\mskip1mu{}V\!\neq\emptyset$. \\
Then $\,\src$ induces a homotopy equivalence of
posets $\mskip2mu\PA \rightarrow \PB$ with explicit formula

\vspace*{-0.1in}

    $$(\tau, \sigma) \mapsto 
    \big((\psi_B\circ\src)(\sigma), (\phi_B\circ\psi_B\circ\src)(\sigma)\big).$$

\end{lemma}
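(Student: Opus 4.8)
The plan is to factor the asserted map through the Dowker attribute complexes and exhibit it as a composite of three homotopy equivalences that are already in hand. Recall from Definition~\ref{defPR} and the discussion on page~\pageref{PRAppDeff} that for any nonvoid relation $R$ one may identify $\PR$ with the image $(\phi_R\circ\psi_R)(\F(\dowy))$ of the attribute-side closure operator, the identification sending a pair $(\sigma,\gamma)$ to its attribute component $\gamma$ (and recovering $\sigma=\psi_R(\gamma)$). Applying this to both $A$ and $B$, I would regard $\PA$ as living over the closure-stable simplices of $\F(\dowAy)=\F(\DG)$ and $\PB$ as the image of $\phi_B\circ\psi_B$ inside $\F(\dowBy)=\F(\SG)$.

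I would then define $\Theta:\PA\to\PB$ as the composite
$$\PA \xrightarrow{\;j\;} \F(\dowAy) \xrightarrow{\;\src\;} \F(\dowBy) \xrightarrow{\;\phi_B\circ\psi_B\;} \PB,$$
where $j$ sends $(\tau,\sigma)$ to $\sigma$, the map $\src$ is the start-region map, and $\phi_B\circ\psi_B$ is viewed as a map onto its image $\PB$. Each factor is a homotopy equivalence: $j$ is the order-reversing bijection of $\PA$ onto the closure image $(\phi_A\circ\psi_A)(\F(\dowAy))$ followed by the inclusion of that image into $\F(\dowAy)$, and since a closure operator induces a homotopy equivalence between a poset and its image (Appendix~\ref{prelim}), that inclusion is a homotopy equivalence; $\src$ is a homotopy equivalence by the results of \cite{tpr:strategies,tpr:plans} recalled on page~\pageref{SrcCmplxAppdef}; and $\phi_B\circ\psi_B$ is a closure operator, hence a homotopy equivalence onto $\PB$. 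Because order complexes are insensitive to reversing a poset's order, the order-reversing factor $j$ induces an isomorphism of order complexes and so causes no difficulty. Composing, $\Theta$ is a homotopy equivalence of posets.

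Finally I would check that $\Theta$ agrees with the stated formula and is well defined. For $(\tau,\sigma)\in\PA$ one has $\emptyset\neq\sigma\in\DG$, so $\src(\sigma)$ is a nonempty simplex of $\SG=\dowBy$ and $\psi_B(\src(\sigma))\neq\emptyset$; thus $\phi_B(\psi_B(\src(\sigma)))$ is a genuine attribute component, whose associated individual component is $\psi_B\big(\phi_B(\psi_B(\src(\sigma)))\big)=\psi_B(\src(\sigma))$ by the Galois identity $\psi_B\circ\phi_B\circ\psi_B=\psi_B$ (the dual of Corollary~\ref{mappedsimplex}). This exhibits the image as the pair $\big((\psi_B\circ\src)(\sigma),\,(\phi_B\circ\psi_B\circ\src)(\sigma)\big)\in\PB$, exactly as claimed, with the degenerate cases (for instance $\frakA=\emptyset$, where $\DG=\SG=\{\emptyset\}$ and $\PA=\PB=\emptyset$) handled directly. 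The main obstacle is bookkeeping rather than depth: one must keep straight that the attribute-component identifications reverse order while $\src$ preserves it, and confirm that the composite, read through these identifications, yields precisely the $\psi_B\circ\src$ and $\phi_B\circ\psi_B\circ\src$ components of the formula. The homotopy-commuting square of Lemma~\ref{inclusionequiv} provides a useful consistency check on these identifications.
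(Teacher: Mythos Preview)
Your proposal is correct and takes essentially the same approach as the paper: both factor the map as the projection $\PA\to\F(\dowAy)=\F(\DG)$ onto the attribute component, followed by $\src$, followed by the closure $\phi_B\circ\psi_B$ onto $\PB$, and argue that each factor is a homotopy equivalence (the outer two by closure-operator arguments, the middle one by the cited fact that $\src$ is a homotopy equivalence). The paper writes the chain out a bit more explicitly with separate inclusion arrows and observes the composite is order-preserving overall, whereas you handle the order-reversal by noting that order complexes are insensitive to it; these are cosmetic differences.
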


\begin{proof}
Let $\clAy$ denote the image of the closure operator
$\phi_A \circ \psi_A : \F(\dowAy) \rightarrow \F(\dowAy)$
and let $\clBy$ denote the image of the closure operator
$\phi_B \circ \psi_B : \F(\dowBy) \rightarrow \F(\dowBy)$.
We then have the following diagram of homotopy equivalences:

\vspace*{-0.1in}

$$\PA \xrightarrow{\phantom{01}\pi_2\phantom{10}}
  \clAy \xhookrightarrow{\phantom{01}\iota\phantom{10}}
    \F(\dowAy) = \F(\DG) \xrightarrow{\phantom{0}\src\phantom{0}}
      \F(\SG) = \F(\dowBy) \xrightarrow{\phi_B \,\circ\, \psi_B}
        \clBy \xhookrightarrow{\phantom{01}\iota\phantom{10}}
          \PB.$$

\vspace*{0.05in}

(Here $\pi_2$ is projection onto the second coordinate, i.e.,
$\pi_2(\tau,\sigma)=\sigma$, and each of the occurrences of $\iota$ is
an inclusion.)

\vspace*{0.05in}

The composition of all these maps is an order-preserving poset map
with the specified formula.  The overall map is a homotopy equivalence
because each of its constituent maps is a homotopy equivalence.
\end{proof}

\begin{corollary}\label{posetformula}
If $G$ is fully controllable in Lemma~\ref{posetequiv}, then the
formula for the poset map becomes
    $(\tau, \sigma) \mapsto 
    \big((\psi_B\circ\src)(\sigma), \,\src(\sigma)\big).$
\end{corollary}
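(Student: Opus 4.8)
\textbf{Proof plan for Corollary~\ref{posetformula}.}
The plan is to specialize the general formula from Lemma~\ref{posetequiv} under the added hypothesis that $G$ is fully controllable, and to show that the two composite maps $\phi_B\circ\psi_B\circ\src$ and $\src$ agree on the relevant simplices. Recall from Lemma~\ref{posetequiv} that $\src$ induces the homotopy equivalence of posets $\PA \rightarrow \PB$ given by
$$(\tau, \sigma) \mapsto
    \big((\psi_B\circ\src)(\sigma), (\phi_B\circ\psi_B\circ\src)(\sigma)\big).$$
So the only thing that changes between the general and the fully controllable case is the second component, and the entire task reduces to verifying that $(\phi_B\circ\psi_B\circ\src)(\sigma) = \src(\sigma)$ whenever $(\tau,\sigma)\in\PA$.

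First I would recall, from the discussion preceding Lemma~\ref{inclusionequiv}, that $\dowBy = \SG$, and that when $G$ is fully controllable Theorem~\ref{controllability} together with the homotopy equivalence $\DG\homot\SG$ gives $\SG = \bndry{(V)}$. In other words $\dowBy$ is the boundary complex of the full simplex on $V$. The key structural consequence is that $\dowBy$ has \emph{no free faces}, so by Lemma~\ref{nofreefacesPrivacy} on page~\pageref{nofreefacesPrivacy}, relation $B$ preserves attribute privacy; equivalently, the closure operator $\phi_B\circ\psi_B$ is the identity on $\dowBy$ (viewed on the face poset together with the empty simplex, as per Definition~\ref{defAttribPriv}).

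Next I would check that the argument to which we apply this identity is genuinely a simplex in $\dowBy$. For $(\tau,\sigma)\in\PA$, the element $\sigma$ is a nonempty simplex of $\dowAy = \DG$, and $\src(\sigma)$ is by definition its start region, hence a simplex of $\SG = \dowBy$. Therefore $\src(\sigma)\in\dowBy$, and since $\phi_B\circ\psi_B$ is the identity on $\dowBy$ we obtain
$$(\phi_B\circ\psi_B\circ\src)(\sigma) \;=\; (\phi_B\circ\psi_B)(\src(\sigma)) \;=\; \src(\sigma).$$
Substituting this into the Lemma~\ref{posetequiv} formula yields the claimed map $(\tau,\sigma)\mapsto\big((\psi_B\circ\src)(\sigma),\,\src(\sigma)\big)$, completing the proof.

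The main obstacle I anticipate is not in the algebra but in correctly invoking the chain of identifications $\dowBy=\SG=\bndry{(V)}$ under full controllability, and in being careful that $\phi_B\circ\psi_B$ being an identity is exactly the statement that $B$ preserves attribute privacy; I would want to confirm that $\src(\sigma)$ is nonempty (which holds since $\sigma\neq\emptyset$ and every action has a source state, so every nonempty strategy has a nonempty start region) so that we are applying the closure operator on an honest nonempty simplex rather than on the empty simplex, though the extension to the empty simplex in Definition~\ref{defAttribPriv} covers that edge case as well. Everything else is a direct substitution.
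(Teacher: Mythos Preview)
Your proposal is correct and follows essentially the same approach as the paper: both argue that full controllability gives $\dowBy = \SG = \bndry{(V)}$, which has no free faces, so by Lemma~\ref{nofreefacesPrivacy} the closure operator $\phi_B\circ\psi_B$ is the identity, reducing the second component of the Lemma~\ref{posetequiv} formula to $\src(\sigma)$. Your version is somewhat more detailed in verifying that $\src(\sigma)$ lies in $\dowBy$ and is nonempty, but the core argument is identical.
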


\begin{proof}
Since $G$ is fully controllable, $\dowBy = \SG = \bndry{(V)} \homot
\Snt$, with $n=\abs{V}$.  So $\dowBy$ has no free faces, implying that
$\phi_B\circ\psi_B$ is the identity, by Lemma~\ref{nofreefacesPrivacy}
on page~\pageref{nofreefacesPrivacy}.
\end{proof}

{\bf Two Observations:} \ Suppose that $G$ is a fully controllable
graph $(V,\frakA)$, with both $V$ and $\frakA$ nonempty. \ (i) No
action can appear in all maximal simplices of $\DG$, as that would mean
$\DG$ would be a cone, so not homotopic to a sphere.  Consequently,
$\oneA=(\maxDG, \gamma)$ has $\gamma=\emptyset$ (recall that $\maxDG$
is the collection of all maximal simplices of $\DG$).  \ (ii) Even if
all actions of $\frakA$ appear individually as vertices of $\DG$,
$\zeroA=(\tau, \frakA)$ has $\tau=\emptyset$, since $\src(\frakA) = V$
and $V\notin\bndry{(V)}$.

\vspace*{0.1in}

\noindent These observations mean that $\PA$ does {\em not}\, contain
either the top element $\oneA$ or the bottom element $\zeroA$ of
$\PAplus$, when $G$ is fully controllable.

\vspace*{0.1in}

\subsection{Delaying Strategy Identification}
\markright{Delaying Strategy Identification}
\label{delaystratident}

We now turn to the proof of the main theorem, the statement of which
is replicated here:

\setcounter{currentThmCount}{\value{theorem}}
\setcounter{theorem}{\value{stratIDdelay}}
\begin{theorem}[Delaying Strategy Identification]
Let $G=(V,\frakA)$ be a fully controllable graph, with $n=\abs{V} > 1$.
Let $A$ be the relation constructed as in Lemma~\ref{dowkerstrats} on
page~\pageref{dowkerstrats} and let $\PA$ be its associated
doubly-labeled poset.  Then:

For each $v\in V$, there exists a maximal strategy $\sigma_v \in \DG$
for attaining singleton goal state $v$ such that $\PA$ contains at least
$(n-1)!$ distinct maximal chains for identifying $\sigma_v$, with each
chain consisting of at least $n-1$ elements.
\end{theorem}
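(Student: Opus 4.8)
The plan is to realize $\PA$ as a poset with spherical reduced homology and then extract the \emph{per-element} chain count that is buried inside the proof of Theorem~\ref{manychains}. By Lemma~\ref{dowkerstrats}, $\dowAy = \DG$, and since $G$ is fully controllable, Theorem~\ref{controllability} gives $\DG \homot \Snt$ with $n = \abs{V}$. Hence $\dowAy \homot \Snt$, and because the Dowker complexes and $\Delta(\PA)$ all share a homotopy type, $\Delta(\PA) \homot \Snt$, so $\widetilde{H}_{n-2}(\Delta(\PA); \Z) \neq 0$. Both $\PA$ and its opposite $\PAop$ are almost a join-based lattice in the sense of Definition~\ref{almostlattice}, since $\PAplus$ is a genuine lattice (adjoining a new $\topone$ to either poset recovers a join semi-lattice). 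I would run the counting argument on $\PAop$, for the following reason: the minimal elements of $\PA$ are exactly the pairs $(\{\sigma\}, \sigma)$ with $\sigma \in \maxDG$ — each maximal strategy, fully identified — and these are precisely the \emph{maximal} elements of $\PAop$. A maximal chain of $\PAop$ through such an element is, read in reverse, a maximal chain of $\PA$ having $(\{\sigma\}, \sigma)$ at its bottom, i.e. a chain that identifies the single strategy $\sigma$.

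Next I would invoke the sharper output of the proof of Theorem~\ref{manychains}: after the cycle-tightening loop, \emph{each} surviving maximal element $p$ in the support $\supp{z}$ of the homology generator $z$ yields at least $(k+1)!$ distinct maximal chains of length at least $k$ passing through $p$ (the penultimate paragraph of that proof), while Lemma~\ref{maxelemcard} guarantees at least $k+2$ surviving maximal support elements. Taking $k = n-2$, every such surviving maximal element of $\PAop$ supplies $(n-1)!$ maximal chains of length $\geq n-2$, i.e. with at least $n-1$ elements. It therefore remains to exhibit, for every state $v \in V$, a surviving maximal element $(\{\sigma_v\}, \sigma_v)$ whose strategy $\sigma_v$ has singleton goal $v$.

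Here I would bring in the source complex. By Lemma~\ref{dowkerbegin} together with Lemma~\ref{posetequiv} and Corollary~\ref{posetformula}, $\SG = \dowBy = \bndry{(V)}$ and $\PA \homot \PB \homot \SG$, the induced map sending $(\{\sigma\}, \sigma) \in \PA$ to an element of $\PB$ whose attribute set is $\src(\sigma)$. The maximal simplices of $\bndry{(V)}$ are exactly the sets $V \setminus \{v\}$, one for each $v$, and a maximal strategy $\sigma$ has singleton goal $v$ precisely when $\src(\sigma) = V \setminus \{v\}$ (it can move from every state except $v$). Transporting the fundamental $(n-2)$-cycle of $\bndry{(V)}$ back through these equivalences produces a generator $z$ of $\widetilde{H}_{n-2}(\Delta(\PA); \Z)$ whose support-maximal elements (in $\PAop$) map under $\src$ onto all the top simplices $V \setminus \{v\}$; choosing one preimage $(\{\sigma_v\}, \sigma_v)$ for each $v$ then exhibits maximal strategies with distinct singleton goals covering every state.

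The main obstacle is the interaction with tightening, and it is twofold. First, one must verify that each chosen $(\{\sigma_v\}, \sigma_v)$ survives the loop — equivalently, that its link carries nontrivial homology in dimension $n-3$. I would settle this through the recursive link structure that motivates the entire induction: by Lemma~\ref{linkchain}, the subposet of $\PA$ strictly above $(\{\sigma_v\}, \sigma_v)$ is the doubly-labeled poset of a relation modeling $\Lk(\dowAx, \sigma_v)$, and this link is itself the strategy complex of the fully controllable graph on the $n-1$ states obtained by committing to goal $v$; by Theorem~\ref{controllability} it is $\homot \sphere^{n-3}$, giving homology in dimension $(n-2)-1$, which is exactly the hypothesis ensuring these elements are never discarded and that the induction descends one state at a time. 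Second, one must keep the orientation bookkeeping among $\PA$, $\PAop$, $\PB$, and $\SG$ consistent while transporting the generator; the cleanest route is to re-run the manychains induction intrinsically on strategy complexes, so that at each level a singleton-goal strategy for the reduced graph is produced and the bounds $(n-1)!$ and length $n-1$ emerge directly rather than being chased through the equivalences. Assembling these pieces yields, for each $v \in V$, a maximal strategy $\sigma_v$ attaining $v$ together with at least $(n-1)!$ maximal chains of $\PA$, each with at least $n-1$ elements, that identify $\sigma_v$.
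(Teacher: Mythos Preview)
Your overall architecture is right and matches the paper: work in $\PAop$, use the per-element output of the proof of Theorem~\ref{manychains}, and tie the goal states $v$ to the source complex via Lemma~\ref{posetequiv} and Corollary~\ref{posetformula}. The gap is in how you connect each $v$ to a surviving support element.

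The paper reverses the order of your last two steps, and that reversal is what makes the argument go through. It first runs the tightening loop on \emph{any} generator $z$ of $\widetilde{H}_{n-2}(\Delta(\PAop);\Z)$, and only afterward pushes the (already tightened) $z$ forward through the homotopy equivalence $\srceq:\Delta(\PAop)\to\sd(\bndry{(V)})$. Because $\srceq_*$ is an isomorphism on reduced homology, $\srceq_*(z)$ is a generator of $\widetilde{H}_{n-2}(\sd(\bndry{(V)});\Z)$, and every such generator has support equal to the full vertex set of $\sd(\bndry{(V)})$, i.e.\ every nonempty proper subset of $V$. Hence for each $v$ there is some $q=(\tau_q,\sigma_q)\in S=\supp{z}$ with $\src(\sigma_q)=V\setminus\{v\}$. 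The paper does \emph{not} claim that $q$ is maximal in $S$, nor that $q$ has the form $(\{\sigma\},\sigma)$; instead it picks any $p\in\Smax$ with $q\le p$ (in the $\PAop$ order) and notes that every maximal chain through $p$ terminates at some $(\{\sigma\},\sigma)$ with $\sigma_q\subseteq\sigma$, which forces $\src(\sigma)=V\setminus\{v\}$ and hence $\sigma$ has singleton goal $v$.

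Your route instead tries to arrange that specific maximal elements $(\{\sigma_v\},\sigma_v)$ survive tightening, and this runs into two problems. First, ``survival'' in the loop is governed by the link in the restricted complex $K^{(j)}$ built from the current support $(S^{(j)})^{\join}$, not by the link in the full $\Delta(\PAop)$; your ``equivalently'' conflates these. Second, your identification of $\Lk(\dowAx,\sigma_v)$ with ``the strategy complex of the fully controllable graph on $n-1$ states obtained by committing to goal $v$'' is not established anywhere in the paper and is not evident: $\dowAx$ is a complex whose \emph{vertices} are maximal strategies and whose simplices are witnessed by shared actions, so its link at $\sigma_v$ is not a strategy complex of a subgraph in any direct sense. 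Your fallback of ``re-running the manychains induction intrinsically on strategy complexes'' is a sketch, not an argument. The paper's push-forward after tightening sidesteps all of this: it never needs to control which particular elements survive, only that the survivor's image still generates the sphere's homology and therefore has full support.
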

\setcounter{theorem}{\value{currentThmCount}}

\begin{proof}
Let $\PAop$ be $\PA$ but with the opposite partial order.  Then $\PAop$ is
almost a join-based lattice, with join operation for elements of $\PAop$
given by

\vspace*{-0.01in}

 $$ (\tau_1, \sigma_1) \,\join\, (\tau_2, \sigma_2)
          \; = \; \left\{\,
       \begin{aligned}
         & \big(\tau_1\inter\tau_2,\; (\phi_A\circ\psi_A)(\sigma_1\union \sigma_2)\big),
                      & & \hbox{when $\tau_1\inter\tau_2 \neq \emptyset$;}\\[4pt]
         & \topone,   & & \hbox{otherwise.}\\
       \end{aligned}\right.
 $$

\vspace*{0.07in}

The maximal elements of $\PAop$ are of the form $(\{\sigma\},
\sigma)$, with $\sigma$ varying over the maximal simplices of $\DG$.
Each minimal element of $\PAop$ is of the form
$\big(\psi_A(\tas), (\phi_A \circ \psi_A)(\tas)\big)$, with action $\ta$
some vertex of $\DG$.
\ (Aside: not every element of that form is necessarily minimal.)

\vspace*{0.06in}

Since $G$ is fully controllable, $\Delta(\PAop) \homot \Snt$.  So
$\Delta(\PAop)$ has reduced homology in dimension $k=n-2 \geq 0$.
\ By the proof of Theorem~\ref{manychains}, on
page~\pageref{manylongsubchains}, there exists a reduced homology
generator $z$ for $\Delta(\PAop)$, with support $S=\supp{z}$, such
that $\PAop$ contains, for each $p\in\Smax$, a collection of maximal
chains passing through $p$ with the following property: Even if one
merely considers the portions of the chains at and below $p$, the
collection contains at least $(n-1)!$ distinct such subchains and each
subchain has length at least $n-2$.  Each full chain, being maximal,
must be a path in $\PAop$ between some maximal element $(\{\sigma\},
\sigma)$ and some minimal element $\big(\psi_A(\tas), (\phi_A \circ
\psi_A)(\tas)\big)$.  Working upward from the bottom in $\PAplusop$
(which is equivalent to working downward from the top in $\PAplus$),
each such chain therefore gives rise to an informative action release
sequence for identifying $\sigma$, consisting of at least $n-1$
actions.  Moreover, there are at least $(n-1)!$ different such
sequences for that same strategy $\sigma$; we can hold fixed the
portion of any chain at and above $p$ in $\PAop$, while varying the
portion below $p$ in at least $(n-1)!\mskip0.5mu$ different ways.

\vso

Let $p\in\Smax$ and suppose $c$ is some maximal chain of $\PAop$ that
passes through $p$ and touches maximal element $(\{\sigma\},\, \sigma)$.
Pick $q\in{S}$, with $q \leq p$ (here, ``$\leq$'' is the partial order
on $\PAop$).  Write $q=(\tau_q, \sigma_q)$ and $p=(\tau_p, \sigma_p)$.
Even though $q$ may not be part of chain $c$, we can still conclude
that $\sigma_q \subseteq \sigma_p \subseteq \sigma$.  If additionally
$\src(\sigma_q) = V\setminus\{v\}$, then $\sigma$ at the top of $c$
must be a maximal strategy for attaining singleton goal state $v$.  In
order to prove the theorem, it is therefore enough to show that, for
any $v\in{V}$, some such $q\in{S}$ (and thus $p\in\Smax$) exists.

\vst

Recall the source relation $B$ from Lemma~\ref{dowkerbegin}.  Let
$\PBop$ be $\PB$ but with the opposite partial order.  Referring back
to the notation in the proof of Lemma~\ref{posetequiv}, and using the
fact that $G$ is fully controllable, one sees that
$\Delta(\PBop)\cong\Delta(\clBy)=\Delta(\F(\dowby))=\sd(\bndry{(V)})$,
with ``$\cong$'' meaning ``isomorphic'' and ``$\sd$'' meaning ``first
barycentric subdivision''.  The isomorphism holds by definition of
$\PB$.  The first equality holds because $\phi_B\circ\psi_B$ is the
identity when $G$ is fully controllable, as we saw in the proof of
Corollary~\ref{posetformula}.  The second equality amounts to the
definition of first barycentric subdivision, bearing in mind that
$\dowby=\SG=\bndry{(V)}$.

\vso

The homotopy equivalence of Lemma~\ref{posetequiv} carries over to
this setting as $\srceq : \Delta(\PAop) \rightarrow \sd(\bndry{(V)})$.
\ Corollary~\ref{posetformula} (or inspection of the diagram in the
proof of Lemma~\ref{posetequiv}) provides an explicit formula.  \
Specifically, for vertices $(\tau,\sigma)$ of
$\hspt\Delta(\PAop)$, one has
$\srceq(\tau,\sigma) = \src(\sigma)$.

\vst

Since $\srceq$ is a homotopy equivalence, the induced homomorphism
$\srceq_*$ on reduced homology
must map the reduced homology generator $z$ to a reduced homology
generator for the triangulated $(n\mskip-1mu{}-\mskip-1mu{}2)$-sphere
$\sd(\bndry{(V)})$.  Consequently, $\supp{\srceq_*(z)}$ must consist
of all vertices in $\sd(\bndry{(V)})$, meaning all nonempty proper
subsets of $V$.  In particular, for each $v\in{V}$, there is some $q =
(\tau_q, \sigma_q)\in\supp{z}$ such that $\src(\sigma_q) = \srceq(q) =
V\setminus\{v\}$, as desired.
\end{proof}

\clearpage
\subsection{Delaying Goal Recognition}
\label{delaygoalrecog}

The next lemma establishes the ``yes'' assertion in the bullet that
starts near the top of page~\pageref{cycletheoremsend}.

\vspace*{0.05in}

\begin{definition}[Complete Strategy]
Let $G=(V,\frakA)$ be a graph as discussed in Section~\ref{obfuscation}.
  A \mydefem{complete strategy for attaining state $v$} is a strategy
  $\sigma$ that has at least one action at every state other than $v$.
  \ In other words, $\sigma\in\DG$ and $\,\src(\sigma) = V\setminus\{v\}$.
\end{definition}

\vspace*{0.1in}

\begin{lemma}[Delaying Goal Recognition]\label{cyclechoices}
Let $G=(V,\frakA)$ be a fully controllable graph.  \  Let $n=\abs{V}$.
\ Suppose $n > 1$.  \ Let $s\in V$ be some desired goal state.

\vspace*{0.05in}

There exists a sequence of actions $a_1, a_2, \ldots, a_{n-1}$ in
$\frakA$ satisfying the following conditions:

\vspace*{0.1in}

\begin{minipage}{5in}
\begin{itemize}
\addtolength{\itemsep}{-3pt}

\item[(i)] $\{a_1, \ldots, a_{n-1}\}$ is a complete strategy for
  attaining $s$.

\item[(ii)] For each $i=1, \ldots, n-1$, let $\tau_i = \{a_1, \ldots,
  a_i\}$ and $W_i = \src(\tau_i)$.  Then, for each such $i$ and each
  $v \in V\setminus{W_i}$, there exists a complete strategy $\sigma$
  for attaining $v$, such that $\tau_i \subseteq \sigma \in \DG$.

\end{itemize}
\end{minipage}

\end{lemma}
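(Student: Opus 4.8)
The plan is to argue by induction on $n=\abs{V}$, reducing $G$ to a fully controllable quotient graph on $n-1$ states by committing to a single action at a single state, and then lifting a goal-delaying action sequence from the quotient back to $G$. The base case $n=2$ is direct: here $V=\{s,s_1\}$, and full controllability forces an action $a_1$ sourced at $s_1$ whose only target is $s$. The one-element sequence $a_1$ satisfies (i) because $\src(\{a_1\})=\{s_1\}=V\setminus\{s\}$ and a single non-self-looping action is acyclic, and it satisfies (ii) because $\tau_1=\{a_1\}$ is itself a complete strategy for the only uncommitted state $s$.

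For the inductive step I would first fix a state $s_1\neq s$ together with an action $a_1$ sourced at $s_1$ that lies in some complete strategy for $s$. Such a pair exists because, by Theorem~\ref{controllability}, the source complex satisfies $\SG=\bndry{(V)}$, so the facet $V\setminus\{s\}$ is the start region of a complete strategy $\sigma_s$ for $s$, and some state $s_1\in V\setminus\{s\}$ is the source of an action of $\sigma_s$ (I may assume $s_1$ is not among that action's targets, else it could not occur in an acyclic strategy). I then form the quotient graph $G'=G/a_1$ on state set $V'=V\setminus\{s_1\}$ by deleting $s_1$ and all actions sourced there, and rerouting every remaining action whose target set meets $s_1$ by replacing that occurrence of $s_1$ with the target set of $a_1$. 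The two claims to establish are: (a) $G'$ is fully controllable on its $n-1$ states, and (b) via the link $\lk(\DG,\{a_1\})$ there is a correspondence between strategies of $G'$ and strategies of $G$ containing $a_1$, under which a complete strategy for $v\in V'$ in $G'$ lifts to a complete strategy for $v$ in $G$ extending $\{a_1\}$, with source sets related by adjoining $s_1$.

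Granting (a), the induction hypothesis applied to $G'$ with the same goal $s$ yields actions $a_2,\ldots,a_{n-1}$ satisfying (i) and (ii) in $G'$, and I would verify that $a_1,a_2,\ldots,a_{n-1}$ works in $G$. Condition (i) holds because the sources of $a_2,\ldots,a_{n-1}$ exhaust $V'\setminus\{s\}=V\setminus\{s_1,s\}$, so adjoining the source $s_1$ of $a_1$ gives $\src(\{a_1,\ldots,a_{n-1}\})=V\setminus\{s\}$, while the correspondence (b) lifts acyclicity from $G'$ to $G$. For condition (ii), observe that $s_1\in W_i$ for every $i\geq 1$, so the uncommitted states of $G$ at step $i$ coincide with those of $G'$ at step $i-1$; the case $i=1$ is precisely the assertion that every $v\neq s_1$ remains a plausible goal, which follows from full controllability of $G'$ (each $v\in V'$ admits a complete strategy there) together with the lift in (b), and the cases $i\geq 2$ translate the inductive guarantee of (ii) for $G'$ through the same lift.

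The hard part will be establishing (a) and (b) rigorously: that committing to one action at one state of a fully controllable graph produces a fully controllable graph on one fewer state, and that the strategy complex of $G'$ is exactly the link $\lk(\DG,\{a_1\})$ with the expected source and goal bookkeeping. This requires unwinding the definition of acyclic strategies under the rerouting of nondeterministic target sets and appealing to the strategy-complex machinery of \cite{tpr:strategies, tpr:plans}; the delicate point is that rerouting can enlarge target sets and create apparent nondeterminism, so I must check that neither controllability nor acyclicity is disturbed. Notably, this route never assumes a cycle of deterministic or stochastic actions, so it applies to arbitrary fully controllable graphs, with the deterministic-cycle picture of Figure~\ref{graph214} being only an illustrative special case.
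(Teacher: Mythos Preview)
Your single-action quotient does not in general preserve full controllability, so claim~(a) fails as stated. Take $V=\{1,2,3\}$ with deterministic edges $e_1\!:1\to 2$, $e_2\!:2\to 3$, $e_3\!:3\to 1$, together with a nondeterministic action $a\!:1\to\{2,3\}$. This graph is fully controllable. With $s=3$, $s_1=1$, and $a_1=a$, the action $a$ lies in the complete strategy $\{a,e_2\}$ for $s$, so your hypotheses are met. But in $G'$ the rerouted $e_3$ becomes $3\to\{2,3\}$, which carries a self-loop and so cannot lie in any simplex of $\Delta_{G'}$; since $e_3$ is the only action sourced at $3$, there is no complete strategy for state $2$ in $G'$. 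Equivalently, back in $G$ there is no complete strategy for state $2$ containing $a$: any such strategy would need $e_3$ at state $3$, and $\{a,e_3\}$ already contains the cycle $1\to 3\to 1$. Hence releasing $a_1=a$ immediately rules out state $2$ as a goal, violating~(ii) at $i=1$. (Your claim~(b) is also inexact: $\lk(\DG,\{a\})$ contains $\{e_1\}$, since $\{a,e_1\}$ is acyclic, whereas $\Delta_{G'}$ does not, $e_1$ having been deleted; but only the lift direction of~(b) is needed, and that part is fine.)

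The paper's argument is structured differently and avoids this trap. Rather than committing to an arbitrary action, it first reduces to the deterministic/nondeterministic case and observes that full controllability there forces a \emph{deterministic} transition into every state; backchaining such transitions produces a directed cycle of deterministic actions. It then collapses that entire cycle to a single state and inducts on the quotient. The determinism of the cycle is exactly what keeps the quotient fully controllable and lets one interleave the cycle's actions with those obtained inductively. Your closing remark that your route ``never assumes a cycle of deterministic or stochastic actions'' is therefore where the plan breaks: some such structure is what makes the reduction go through, and an arbitrary (possibly nondeterministic) first action does not suffice.
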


\vspace*{0.1in}

\noindent {\bf Comments:}

\vso

(a) Condition (i) implies that no two of the actions $a_1, \ldots,
a_{n-1}$ have the same source state.

\vst

(b) Condition (ii) implies that an observer cannot predict the final
goal after seeing only a proper prefix of the sequence $a_1, a_2,
\ldots, a_{n-1}$.

\vst

(c) Condition (ii) further implies that the sequence $a_1, \ldots,
a_{n-1}$ forms an informative attribute release sequence for the
relation $A$ defined in Lemma~\ref{dowkerstrats} on
page~\pageref{dowkerstrats}.  Again, the reason is that an observer
cannot even predict any specific source state for the remaining
actions to be released after seeing only a proper prefix of the
sequence $a_1, a_2, \ldots, a_{n-1}$.

\vspace*{0.05in}

\begin{proof}
For the proof, we assume that $\frakA$ contains only deterministic and
nondeterministic actions, not stochastic ones.  The proof generalizes
to graphs that include stochastic actions (possibly in addition to
deterministic and nondeterministic actions), by an argument in
\cite{tpr:plans}.  The essence of that argument is that the source
complex of a graph does not change if one replaces stochastic
transitions by deterministic ones.

\vspace*{0.1in}

We sketch the rest of the proof, assuming all actions are deterministic
or nondeterministic.

\vspace*{0.1in}

Since $G$ is fully controllable, for each state in $V$ there must be a
deterministic transition {\em to} that state (from some other state).
Backchaining such transitions gives rise to a directed cycle of
deterministic actions, since the graph is finite.  If that cycle is
Hamiltonian, then we may choose $a_1, \ldots, a_{n-1}$ to be any
ordering of those $n$ deterministic actions except that we omit the
action whose source state is $s$.

Suppose instead that the directed cycle of deterministic actions
covers only a proper subset $W$ of the state space $V$.  Form a
quotient graph with state space $V^\prime = \{\wrep\} \union
\sdiff{V}{W}$, where $\wrep$ represents all of $W$ collapsed to a
point.  Inductively, the lemma's assertions hold for the quotient
graph.  One then needs to show how to combine the actions determined
by the quotient graph with the cycle on $W$ in order to satisfy the
lemma's assertions for the original graph $G$.  That argument is
straightforward if a bit tedious, so we omit it.
\end{proof}

\clearpage
\subsection{Hamiltonian Flexibility for Strategy Obfuscation}
\markright{Hamiltonian Flexibility for Strategy Obfuscation}
\label{hamiltoniangraph}

\noindent The next lemma establishes the Hamiltonian ``yes'' in the
bullet that starts near the bottom of page~\pageref{cycletheorems}.

\vspace*{0.05in}

\begin{definition}[Hamiltonian Action Cycle]
Let $G=(V,\frakA)$ be a graph as in Section~\ref{obfuscation},
  possibly with a mix of deterministic, nondeterministic, and
  stochastic actions.  Let $n=\abs{V}$ and assume $n > 1$.
  A sequence of actions $a_1, \ldots, a_n$ in $\frakA$ is a
  \mydefem{Hamiltonian cycle of actions}\, whenever all three of the
  following conditions hold:

\vspace*{0.1in}

\begin{minipage}{5.5in}
\begin{itemize}
\addtolength{\itemsep}{-3pt}
\item[(i)]  No two of the actions $a_1, \ldots, a_n$ have the same
  source state.

\item[(ii)] Each action $a_i$ is either deterministic or stochastic.

\item[(iii)] The source of action $a_{i+1}$ is a target of action
  $a_i$, for all $i=1, \ldots, n-1$, and the source of $a_1$ is a
  target of $a_n$.

\end{itemize}
\end{minipage}
\end{definition}

\paragraph{Observe:} Any proper subset of a Hamiltonian cycle of actions
  is a simplex in $\DG$.

(That observation requires understanding the definition of $\DG$ when
  stochastic actions are involved:  stochastic cycles are fine, so long
  as they are not recurrent.  See \cite{tpr:plans} for details.)

\vspace*{0.1in}

\begin{lemma}[Delaying Identification of a Given Strategy]\label{hamiltonianhelp}
Let $G=(V,\frakA)$ be a fully controllable graph.
\ Assume $\frakA$ contains a Hamiltonian cycle of actions $a_1, \ldots,
a_n$, with $n=\abs{V} > 1$.

\vso

Let $v\in V$ and suppose $\sigma_v$ is a maximal and complete strategy
in $\DG$ for attaining $v$.  Then $\sigma_v\!$ contains actions
$b_{n-1}, \ldots, b_1$ that constitute a complete strategy for
attaining $v$ and that form an informative attribute release sequence
for relation $A$.

\vso

(Recall: Relation $A$ was defined in Lemma~\ref{dowkerstrats} on
page~\pageref{dowkerstrats}; it models the maximal simplices of $\DG$ in
terms of their constituent actions.)

\end{lemma}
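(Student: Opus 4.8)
The plan is to reduce to a canonical labeling of states, then release one chosen action per non-goal state in the order dictated by the Hamiltonian cycle, exhibiting at each step an explicit ``escape'' strategy that witnesses informativeness.

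First I would relabel. Writing $s_i = \src(a_i)$, condition~(i) of a Hamiltonian action cycle makes $s_1, \ldots, s_n$ all of $V$, and condition~(iii) arranges them in the cyclic order $s_1 \to s_2 \to \cdots \to s_n \to s_1$, with $a_i$ carrying $s_i$ to a designated target $s_{i+1}$. By rotating indices I may assume without loss of generality that the given goal is $v = s_n$. Since $\sigma_v$ is complete for $v$, we have $\src(\sigma_v) = V \setminus \{v\} = \{s_1, \ldots, s_{n-1}\}$, so for each $i \in \{1, \ldots, n-1\}$ I may pick an action $b_i \in \sigma_v$ with $\src(b_i) = s_i$. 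The set $\{b_1, \ldots, b_{n-1}\}$ is a subset of the acyclic strategy $\sigma_v$, hence itself a strategy, and its start region is exactly $V \setminus \{v\}$; thus it is a complete strategy for attaining $v$, as required. It remains to show that releasing these in the order $b_{n-1}, b_{n-2}, \ldots, b_1$ is informative for relation $A$, i.e.\ (Definition~\ref{iars}) that $b_m \notin (\phi_A \circ \psi_A)(\{b_{m+1}, \ldots, b_{n-1}\})$ for each $m = n-1, \ldots, 1$.

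By the semantics of $\phi_A \circ \psi_A$ on the action complex $\dowAy = \DG$, the set $(\phi_A \circ \psi_A)(\tau)$ is the intersection of all maximal strategies containing $\tau$; so it suffices, for each $m$, to exhibit one maximal strategy $\hat\rho_m$ with $\{b_{m+1}, \ldots, b_{n-1}\} \subseteq \hat\rho_m$ and $b_m \notin \hat\rho_m$. I would build $\hat\rho_m$ to have singleton goal $s_m$: put $b_j$ at state $s_j$ for $m < j \le n-1$, the Hamiltonian action $a_n$ at $s_n$, and the Hamiltonian actions $a_1, \ldots, a_{m-1}$ at $s_1, \ldots, s_{m-1}$, leaving $s_m$ without an action. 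The claim is that this set $\rho_m$ is a strategy attaining $s_m$: from $s_1, \ldots, s_{m-1}$ the Hamiltonian arc marches into $s_m$; from $s_n$ one steps via $a_n$ into that arc; and from a high state $s_j$ ($j > m$) the action $b_j$, being part of $\sigma_v$, makes guaranteed progress toward $s_n$ under $\sigma_v$'s convergence ranking, so every outcome eventually leaves the high region and lands in the Hamiltonian arc (or at $s_m$). Extending $\rho_m$ to a maximal strategy $\hat\rho_m$ cannot introduce any action at $s_m$, since such an action feeds into a state from which $\rho_m$ returns to $s_m$, creating a cycle; hence $b_m$ (whose source is $s_m$) lies outside $\hat\rho_m$, while the prefix $\{b_{m+1}, \ldots, b_{n-1}\}$ is retained. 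The case $m = n-1$ has empty prefix, and $\hat\rho_{n-1}$ is just the complete strategy for $s_{n-1}$ obtained from the Hamiltonian cycle minus $a_{n-1}$, giving the required condition for the first release in Definition~\ref{iars}.

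The main obstacle is legitimizing these escape strategies when $\frakA$ contains nondeterministic and stochastic actions, where ``acyclic'' and ``attains $s_m$'' are statements about recurrence rather than about the raw edge set. For the chosen $b_j \in \sigma_v$ this is automatic, since all of their nondeterministic outcomes already converge to $s_n$ inside $\sigma_v$; the delicate part is the Hamiltonian tail, whose actions are deterministic or stochastic by condition~(ii) and may carry extra targets. Here I would lean on the given observation that every proper subset of a Hamiltonian action cycle is a simplex of $\DG$, together with the source-complex invariance cited in the proof of Lemma~\ref{cyclechoices} (from \cite{tpr:plans}), which lets one treat the stochastic Hamiltonian transitions as deterministic for the purpose of verifying that $\rho_m$ is a convergent strategy for $s_m$. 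Assembling these facts over $m = n-1, \ldots, 1$ yields the informative action release sequence $b_{n-1}, \ldots, b_1$, drawn entirely from $\sigma_v$, claimed by the lemma.
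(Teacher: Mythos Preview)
Your proposal is correct and follows essentially the same route as the paper: the same relabeling, the same selection of one action $b_i\in\sigma_v$ per non-goal source, the same release order $b_{n-1},\ldots,b_1$, and the same escape-strategy construction $\{a_1,\ldots,a_{s-1}\}\cup\{b_{s+1},\ldots,b_{n-1}\}\cup\{a_n\}$ (you use it only at $s=m$; the paper states it for every $s\in\{n\}\cup\{1,\ldots,i-1\}$, which additionally yields the goal-hiding Comments~(b) and~(c) following the lemma). Your explicit treatment of the ``extend to maximal'' step is fine and could be phrased more crisply via $\SG=\bndry{(V)}$: since $\src$ maps $\DG$ into $\bndry{(V)}$ in a fully controllable graph, no simplex of $\DG$ has start region $V$, so any maximal extension of a complete strategy for $s_m$ still omits $s_m$ from its start region and hence cannot contain $b_m$.
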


\begin{proof}
Let $v$ and $\sigma_v$ be as specified.

\vst

We can assume without loss of generality that $V=\{1, \ldots, n\}$,
that the source of $a_i$ is $i$ for all $i\in V$, and that $v = n$.

\vst

Now let $b_1, \ldots, b_{n-1}$ be any actions in $\sigma_v$ chosen so
that the source of $b_i$ is $i$, for $i = 1, \ldots, n-1$.
\ (If $b_i = a_i$ for some or all $i$, that is fine.)

\vso

Then $\{b_1, \ldots, b_{n-1}\}$ is itself a complete strategy for
attaining $v$.

\vso

We claim that the release order $b_{n-1}, \ldots, b_1$ constitutes an
informative attribute release sequence for relation $A$.
In fact, we will prove the stronger assertion:

\vspace*{0.1in}

\hspace*{0.35in}{\bf Claim:}\ \ \begin{minipage}[t]{4.8in}Pick some
$i\in\{1, \ldots, n\}$. \ Then:

For each $s\in\{n\}\union\{1, \ldots, i-1\}$, there exists a complete
strategy $\sigma_s\in\DG$ for attaining $s$, with $\{b_i, b_{i+1},
\ldots, b_{n-1}\}\subseteq\sigma_s$.

\vst

(Notation: $\{b_i, b_{i+1}, \ldots, b_{n-1}\}=\emptyset$ when $i=n$.
Similarly for other sets.)
\end{minipage}

\vspace*{0.1in}

Consequently, an observer cannot predict a specific source state for
the remaining actions to be released after seeing a proper prefix of
$b_{n-1}, \ldots, b_1$, so the action sequence is informative.

\vst

The claim certainly holds for $s=n$, using the original $\sigma_v$.
Now consider an $s\in\{1, \ldots, i-1\}$ and let
$\sigma_s = \{a_1, \ldots, a_{s-1}\}
  \union \{b_{s+1}, \ldots, b_{n-1}\}
  \union \{a_n\}$.
By arguments from \cite{tpr:plans}, $\sigma_s\in\DG$.  Finally,
observe that $\src(\sigma_s) = \sdiff{V}{\{s\}}$ and that $\sigma_s$
contains $\{b_i, b_{i+1}, \ldots, b_{n-1}\}$.
\end{proof}

{\bf Caution:}\ As mentioned on page~\pageref{chaininferencecaution},
just because $b_{n-1}, \ldots, b_1$ as produced by
Lemma~\ref{hamiltonianhelp} is an informative attribute release
sequence for $A$, that does not mean one should always release actions
in that fashion.  If the release protocol were so rigid, an adversary
familiar with the protocol would be able to infer much about the goal.
In particular, the target set of $a_{n-1}$ includes the goal state, so
if that action is deterministic and if always $b_{n-1}=a_{n-1}$, then
the adversary would be able to infer at least the goal from the first
action released.

\subsection{Example: A Rapidly Inferable Strategy}
\markright{Example: A Rapidly Inferable Strategy}
\label{shortiars}

\begin{figure}[h]
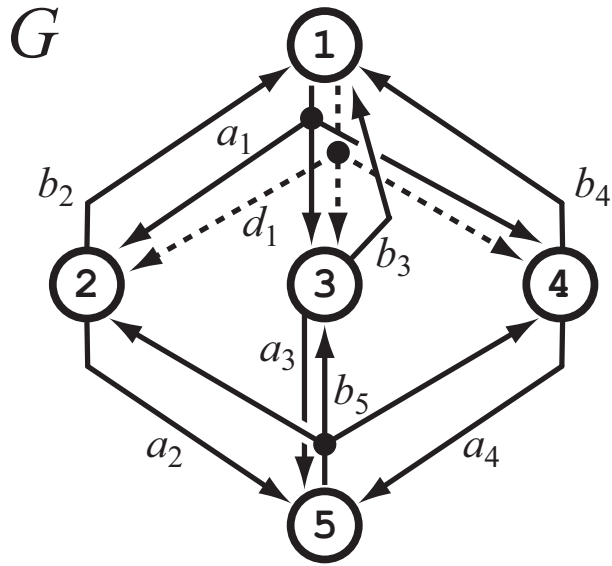

\begin{center}
\vspace*{-0.1in}
\ifig{graph238}{height=3in}
\end{center}
\vspace*{-0.3in}
\caption[]{A graph with five states $\{1,2,3,4,5\}$, six deterministic
actions $\{a_2, a_3, a_4, b_2, b_3, b_4\}$, two nondeterministic actions
$\{a_1, b_5\}$, and one stochastic action $\{d_1\}$.}
\label{graph238}
\end{figure}

Figure~\ref{graph238} shows a fully controllable graph on fives states.
The graph contains six deterministic actions, two nondeterministic
actions, and one stochastic action.  (See \cite{tpr:strategies,
tpr:plans} to learn more about graphs that contain deterministic,
nondeterministic, and stochastic actions.)
The action relation $A$ for the graph appears in Figure~\ref{Arel238}.

\begin{figure}[h]
\begin{center}
\begin{minipage}{3.5in}{$\begin{array}{c|ccccccccc}
       A     & a_1  & a_2  & a_3  & a_4  & d_1  & b_2  & b_3  & b_4  & b_5  \\[2pt]\hline
\sigma_1     &      &      &      &      &      & \one & \one & \one & \one \\[2pt]
\sigma_2     &      &      &      &      & \one &      & \one & \one & \one \\[2pt]
\sigma_3     &      &      &      &      & \one & \one &      & \one & \one \\[2pt]
\sigma_4     &      &      &      &      & \one & \one & \one &      & \one \\[2pt]
\sigma_{234} & \one &      &      &      & \one &      &      &      & \one \\[2pt]
\sigma_5     & \one & \one & \one & \one & \one &      &      &      &      \\[2pt]
\sigma_{54}  &      & \one & \one & \one & \one & \one & \one &      &      \\[2pt]
\sigma_{53}  &      & \one & \one & \one & \one & \one &      & \one &      \\[2pt]
\sigma_{52}  &      & \one & \one & \one & \one &      & \one & \one &      \\[2pt]
\sigma_{15}  &      & \one & \one & \one &      & \one & \one & \one &      \\[2pt]
\end{array}$}
\end{minipage}
\quad
\begin{minipage}{0.5in}{$\begin{array}{c}
\hbox{Goal} \\[2pt]\hline
1 \\[2pt]
2 \\[2pt]
3 \\[2pt]
4 \\[2pt]
\{2, 3, 4\} \\[2pt]
5 \\[2pt]
5 \\[2pt]
5 \\[2pt]
5 \\[2pt]
\{1,5\} \\[2pt]
\end{array}$}
\end{minipage}
\end{center}
\vspace*{-0.15in}
\caption[]{Relation $A$ describes the strategy complex for the graph
  of Figure~\ref{graph238} in terms of its maximal strategies and
  their constituent actions.  The rightmost column further shows each
  maximal strategy's goal.}
\label{Arel238}
\end{figure}

Strategy $\sigma_5$ is a maximal and complete strategy for attaining
state $\{5\}$, consisting of actions $\{a_1, a_2, a_3, a_4, d_1\}$.
Observe that a longest informative action release sequence for this
strategy has length 3.  For instance, $d_1, a_2, a_1$ is such a
sequence.  Why is there no informative action release sequence longer
than 3?  Answer: As soon as one releases any one of the three actions
$\{a_2, a_3, a_4\}$, an observer can infer that the strategy cannot
contain the action $b_5$ and therefore, being maximal, must contain the
other two actions in the set $\{a_2, a_3, a_4\}$ as well.

\vspace*{0.025in}

On page~\pageref{cycletheorems} we asked whether one can always find an
informative action release sequence of length $n-1$ for a maximal
complete strategy.  (Here $n$ is the number of states in the graph.)  We
see now that the answer is ``no'', not in general.  Key to the current
counterexample are two characteristics:

\begin{enumerate}
\item The graph contains three equivalent actions, namely $\{a_2, a_3,
  a_4\}$: The actions have completely identical columns in relation
  $A$.  One could prune the graph to remove redundant such actions
  while preserving full controllability, but this would not ensure a
  ``yes'' answer in general.  The reason is that action equivalence is
  merely one of an infinite family of inferences one can construct
  with graphs.  For instance, we could build a subgraph with an
  ``any-2-actions-imply-4'' character and construct a new
  counterexample.

\item The actions $a_1$ and $d_1$ both have source state $1$ and target
  states $\{2, 3, 4\}$, yet are not equivalent.  The difference is that
  $a_1$ is nondeterministic, meaning that potentially an adversary could
  determine the target state attained any time the action is executed.
  In contrast, $d_1$ is merely stochastic, meaning each target state has
  some fixed nonzero probability of occurring any time the action is
  executed, independent over repeated execution instances.

  If we disallowed the stochastic action $d_1$, full controllability
  would necessitate more deterministic actions in the graph, thereby
  changing both the collection of maximal simplices and consequent
  inferences.  For instance, we might replace $d_1$ by three
  deterministic transitions.  Doing so would similarly change the
  problematic strategy $\sigma_5$, providing more actions and
  increasing the length of an informative action release sequence for
  identifying $\sigma_5$.  We then would indeed obtain an informative
  action release sequence of length 4.  Alternatively, if we
  disallowed the nondeterministic action $a_1$, then the problematic
  strategy $\sigma_5$ would simply disappear.  We would not even need
  to add any other actions, as the graph would remain fully
  controllable.  Whether and when such alternatives are realistic is
  application dependent.  We leave for future work exploration of more
  detailed conditions under which each maximal complete strategy in a
  graph has an informative action release sequence of length at least
  $n-1$.  Such characterizations might be useful in designing systems
  that either are or are not obfuscation-friendly.
  Appendix~\ref{puregraphs} initiates this exploration with a
  discussion of pure graphs.
\end{enumerate}

\subsection{Pure Nondeterministic Graphs and Pure Stochastic Graphs}
\label{puregraphs}

Let $G=(V,\frakA)$ be a graph whose actions may have uncertain
outcomes.  We say that $G$ is {\em pure nondeterministic} if each
action in $\frakA$ is nondeterministic (for the purposes of this
definition, we regard a deterministic action as a special instance of
a nondeterministic action).  We say that $G$ is {\em pure stochastic}
if each action in $\frakA$ is stochastic (now viewing a deterministic
action as a special instance of a stochastic action).  We may simply
say that $G$ is {\em pure\,} if it is either pure nondeterministic or
pure stochastic.

\vst

One can prove that every maximal strategy in the strategy complex of a
fully controllable pure graph contains an informative action release
sequence of length at least $n-1$, with $n = \abs{V}$.  This result
means that one can release at least $n-1$ actions informatively before
an observer can identify the strategy.

\vst

We will omit proof details, merely sketch the approach.  First, one
can show that if the Hamiltonian cycle in Lemma~\ref{hamiltonianhelp}
is formed from deterministic actions, then every maximal strategy with
a multi-state goal actually contains an informative action release
sequence (iars) of length $n$.  More specifically, suppose $\sigma$ is
such a strategy.  The iars for $\sigma$ consists of all the
Hamiltonian cycle edges that are in $\sigma$, along with one action in
$\sigma$ for each cycle edge that is missing from $\sigma$.  At least
two such cycle edges are missing, since $\sigma$ has at least two goal
states.  Consequently, the cycle breaks nicely into at least two
intervals.  For each missing cycle edge $e$, let $I_e$ denote the
interval of states between $e$'s target and the next missing edge's
source (based on the circular ordering induced by the Hamiltonian
cycle).  Include the endpoints (these could be identical, which is
fine).  One knows that every minimal nonface of $\DG$ containing $e$
must include some other action whose source lies in $I_e$ and whose
targets do {\em not}\, all lie inside $I_e$.  One can therefore
release one such action from $\sigma$ informatively, in place of $e$.

\vst

Next, observe that every fully controllable pure nondeterministic
graph contains a hierarchical decomposition of nested directed cycles,
whose union covers $V$, such that each cycle is deterministic
Hamiltonian in the quotient graph formed when one regards each of that
cycle's subcycles as a singleton state.  There are some details to
verify, but, given a maximal strategy $\sigma$ in the graph's strategy
complex, this hierarchical decomposition and the previous Hamiltonian
result yield an informative action release sequence of length at least
$n-1$ for $\sigma$.

\vsr

The approach is different for pure stochastic graphs.  A nice property
of minimal nonfaces in $\DG$, when $G$ is pure stochastic, is that
they form irreducible recurrent Markov chains and thus define fully
controllable subgraphs of $G$.  Moreover, each nonempty proper subset
of a minimal nonface defines an isotropic simplex of actions with respect
to $G$'s action relation.  When $G$ itself is fully controllable, one
can patch such subgraphs together expansively.  In particular, given a
maximal strategy $\sigma$ in $\DG$, one can choose as building blocks
fully controllable subgraphs that each consist solely of actions in
$\sigma$ plus one action not in $\sigma$.  One starts by considering
an action that moves off a goal state of $\sigma$.  That action cannot
be in $\sigma$, so gives rise to a minimal nonface all of whose other
actions do lie in $\sigma$.  One then expands outward repeatedly.
Once no additional expansion is possible, one passes to a quotient
graph by identifying all states covered thus far.  Inductively, one
can then repeat the expansion in the quotient graph.  Again, there are
some details to verify, but ultimately this process covers the graph's
state space and produces an informative action release sequence of
length at least $n-1$ for $\sigma$.

\clearpage
\subsection{Strategy Obfuscation Summary}
\markright{Strategy Obfuscation Summary}

We summarize the key points of this appendix with the following
theorem:

\begin{theorem}[Obfuscation]
\label{stratsummary}
Let $G=(V,\frakA)$ be a fully controllable graph, with $n=\abs{V} > 1$.

\vspace*{0.05in}

Let $\mskip2mu\sigma\mskip-1mu$ be a maximal strategy in $\DG$.

\vspace*{-0.05in}

\begin{enumerate}
\item[(a)] If $\,G$ is pure, then $\,\sigma$ contains at least
  \,$n-1$ actions \,$a_1, \ldots, a_{n-1}$\, that form an informative
  attribute release sequence (iars) with respect to $G$'s action
  relation.

  (This means that releasing the actions in the order $a_1, \ldots,
  a_{n-1}$ reduces the possible maximal strategies consistent with the
  actions released each time an action is released, but prevents
  identification of $\,\sigma$ until at least all $\,n-1$ actions have
  been released.)

\item[(b)]  This result can fail if \hspt$G$ contains a mix of
  deterministic, nondeterministic, and stochastic actions.

\item[(c)] Even if \hspt$G$ contains such a mix, the following is true:

Let $v\in{V}$.  Then there exist maximal strategies $\,\sigma_v$ and
$\,\tau_v$ in $\DG$ such that each strategy is a complete strategy for
attaining state $v$ and:
\begin{enumerate}
\item[(i)] $\sigma_v$ contains at least $(n-1)!$ distinct iars of
  length at least $n-1$ each.  (These iars may or may not be
  permutations of the same underlying $n-1$ actions.)
\item[(ii)] $\tau_v$ contains an iars of length at least $n-1$ that
  narrows the possible goal states consistent with the actions
  released by at most one state with each action released.
\end{enumerate}
\end{enumerate}
Caution:  The release order of the actions in an iars need not
correspond to the order in which actions might be executed at runtime.
\end{theorem}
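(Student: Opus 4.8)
The plan is to read Theorem~\ref{stratsummary} as an assembly of results already proved (or, for part~(a), sketched) earlier in this appendix, and to supply only the small bridging observations that glue them together. The correspondence I would exploit is: part~(c)(i) is Theorem~\ref{stratdelay}; part~(c)(ii) is Lemma~\ref{cyclechoices}; part~(b) is witnessed by the counterexample of Appendix~\ref{shortiars}; and part~(a) packages the two results sketched in Appendix~\ref{puregraphs}. The one recurring bookkeeping fact I would isolate first is that a strategy's goal set is exactly the set of states carrying no action of the strategy, so any strategy whose goal is a singleton $\{v\}$ automatically has an action at every other state and is therefore complete for $v$. This lets me promote the singleton-goal strategies of Theorem~\ref{stratdelay} to complete strategies, as part~(c) demands.

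For the routine clauses I would argue as follows. For~(c)(i), apply Theorem~\ref{stratdelay} to obtain, for each $v\in V$, a maximal strategy $\sigma_v$ with singleton goal $v$ together with at least $(n-1)!$ distinct maximal chains in $\PA$ identifying $\sigma_v$, each of at least $n-1$ elements; the clarifying observation following that theorem, combined with Lemma~\ref{chaintoiars}, converts each chain into an informative action release sequence of length at least $n-1$, and the ``Moreover'' clause of Lemma~\ref{chaintoiars} guarantees distinct chains give distinct sequences. Completeness of $\sigma_v$ is the observation above. For~(c)(ii), set $s=v$ in Lemma~\ref{cyclechoices} to get actions $a_1,\dots,a_{n-1}$ forming a complete strategy for $v$ whose release order is informative (comment~(c) of that lemma); since the $a_i$ have pairwise distinct source states, $\abs{\src(\{a_1,\dots,a_i\})}=i$, and condition~(ii) of the lemma exhibits, for every $w\notin\src(\{a_1,\dots,a_i\})$, a complete strategy for $w$ extending the prefix, so each release removes at most one candidate goal --- the claimed narrowing. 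I would then take $\tau_v$ to be a greedy maximal extension of $\{a_1,\dots,a_{n-1}\}$ by actions whose source lies in $V\setminus\{v\}$; this is complete for $v$ and is in fact maximal in $\DG$, because no acyclic strategy can carry an action at every state (a never-stopping path in a finite graph forces a cycle), so no action at $v$ can be adjoined. Part~(b) I would dispatch by simply displaying the graph of Figure~\ref{graph238} and the strategy $\sigma_5=\{a_1,a_2,a_3,a_4,d_1\}$, noting that releasing any one of the equivalent actions $a_2,a_3,a_4$ excludes $b_5$ and forces the remaining two by maximality, so no informative sequence for $\sigma_5$ exceeds length $3<n-1=4$.

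The main obstacle is part~(a), whose two constituent claims were only sketched in Appendix~\ref{puregraphs} and must be made rigorous. For a pure nondeterministic graph I would construct a hierarchical decomposition of $V$ into nested directed cycles, each built from deterministic transitions and Hamiltonian in the quotient graph obtained by collapsing its subcycles to single states, then feed each level into Lemma~\ref{hamiltonianhelp} and splice the resulting length-$(n-1)$ informative sequences together across the quotient hierarchy. For a pure stochastic graph I would instead use the structural fact that a minimal nonface of $\DG$ is an irreducible recurrent Markov chain, hence a fully controllable subgraph whose proper subsets are isotropic in the action relation, and patch such subgraphs outward expansively --- beginning with an action leaving a goal state of the given maximal $\sigma$ and repeatedly passing to quotient graphs until the covered region exhausts $V$. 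The delicate points, where I expect the real work to sit, are verifying that these hierarchical and expansive constructions genuinely cover $V$ without forcing premature identification, and that each newly released action is not implied by those already released; here the invariance of the source complex under replacing stochastic transitions by deterministic ones, established in \cite{tpr:strategies,tpr:plans}, is the tool I would lean on, exactly as in the proofs of Lemmas~\ref{cyclechoices} and~\ref{hamiltonianhelp}.
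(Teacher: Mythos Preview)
Your proposal is correct and matches the paper's approach: the paper presents Theorem~\ref{stratsummary} as a summary statement with no separate proof, implicitly assembling exactly the pieces you identify (Theorem~\ref{stratdelay} for (c)(i), Lemma~\ref{cyclechoices} for (c)(ii), the example of Appendix~\ref{shortiars} for (b), and the sketches of Appendix~\ref{puregraphs} for (a)). Your bridging observations --- completeness of singleton-goal maximal strategies, and extending the complete strategy of Lemma~\ref{cyclechoices} to a maximal one via $\SG=\bndry{(V)}$ --- are the natural glue, and your candid acknowledgment that part~(a) rests on arguments only sketched in the paper is accurate.
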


\clearpage
\section{Morphisms and Lattice Generators}
\label{morphismappendix}

The aim of this appendix is to prove the claims of
Section~\ref{category}, ending with Theorem~\ref{latsurj}.  That
theorem shows how a surjective morphism of relations can use lattice
operations to fully cover its codomain's poset even when the poset
maps induced by the morphism are not themselves surjective.

\subsection{Morphisms}
\markright{Morphisms}
\label{morphismpropertiesApp}

Notation reminder: We frequently will be working with two relations:
$R$ is a relation on $\XR \times \YR$ and $Q$ is a relation on $\XQ
\times \YQ$.  In order to distinguish rows and columns between the two
relations, we also use notation of the form $\XRy$, $\YRx$, $\XQy$,
and $\YQx$.

Also, recall that a {\em set map\,} is a function between two sets.

\vspace*{0.1in}

Now recall the definition of {\em morphism} from page~\pageref{morphism}:

\setcounter{currentThmCount}{\value{theorem}}
\setcounter{theorem}{\value{morphismDef}}
\begin{definition}[Morphism]
Let $R$ be a relation on $\XR \times \YR$ and let $Q$ be a relation on
$\XQ \times \YQ$.
\ A \mydefem{morphism of relations} \ $f : R \rightarrow Q$ is a pair
of set maps:

\vspace*{-0.2in}

\begin{eqnarray*}
\fX &:& \XR \rightarrow \XQ\\[2pt]
\fY &:& \YR \rightarrow \YQ
\end{eqnarray*}

\noindent such that $\big(\fX(x), \,\fY(y)\big) \in Q$ whenever $(x,y) \in R$.
\end{definition}
\setcounter{theorem}{\value{currentThmCount}}

\vspace*{0.1in}

Throughout this appendix, {\tt{\char13}morphism{\char13}} refers to
Definition~\ref{morphism}.  When the time comes, we will refer to
{\tt{\char13}G-morphism{\char13}} explicitly (see again
Definitions~\ref{Gmorphism} and \ref{inducedgmorphisms} on
pages~\pageref{Gmorphism} and \pageref{inducedgmorphisms}).

\paragraph{Morphism Equality:}
Before proving properties about morphisms, we should give a notion of
morphism equality.  Suppose $g,h : R \rightarrow Q$ are two morphisms
of relations.  We say that $g=h$ if and only if
$\big(\gX(x), \,\gY(y)\big) = \big(\hX(x), \,\hY(y)\big)$ for all
$(x,y)\in R$.  In particular, we do not care what the constituent set
maps do on elements that are not relevant to the relations viewed as
sets of ordered pairs.  (Note: The condition stated is equivalent to
requiring $\gX(x)=\hX(x)$ and $\gY(y)=\hY(y)$ for all $(x,y)\in R$.)

\vspace*{0.1in}

The following lemma shows that the component maps of a morphism between
relations may be viewed as simplicial maps:

\setcounter{currentThmCount}{\value{theorem}}
\setcounter{theorem}{\value{SimplicialMorphisms}}
\begin{lemma}[Induced Simplicial Maps]
A morphism $f : R \rightarrow Q$ between nonvoid relations induces
simplicial maps between the Dowker complexes:

\vspace*{-0.2in}

\begin{eqnarray*}
\fX &:& \dowx \rightarrow \dowqx\\[2pt]
\fY &:& \dowy \rightarrow \dowqy
\end{eqnarray*}
\end{lemma}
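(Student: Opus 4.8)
The plan is to unwind the definitions on both sides. A morphism $f : R \rightarrow Q$ consists of set maps $\fX : \XR \rightarrow \XQ$ and $\fY : \YR \rightarrow \YQ$ satisfying the relationship-preservation condition $(\fX(x), \fY(y)) \in Q$ whenever $(x,y) \in R$. To show $\fY$ induces a simplicial map $\dowy \rightarrow \dowqy$, I must verify the simplicial-map condition from page~\pageref{simpcpxdef}: if $\gamma \in \dowy$, then $\fY(\gamma) \in \dowqy$. The argument for $\fX$ is entirely dual, so I would prove the $\fY$ case in detail and remark that $\fX$ follows by interchanging the roles of individuals and attributes (i.e., by applying the same reasoning to the transpose relation).

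First I would handle the empty simplex, which both Dowker complexes contain since $R$ and $Q$ are nonvoid (Definition~\ref{basicdefs}): trivially $\fY(\emptyset) = \emptyset \in \dowqy$. Then for a nonempty $\gamma = \{y_0, \ldots, y_k\} \in \dowy$, I would invoke the witness interpretation from Definition~\ref{basicdefs}: there exists some $x \in \XR$ that is a witness for $\gamma$, meaning $(x, y) \in R$ for every $y \in \gamma$. Applying the morphism condition to each such pair gives $(\fX(x), \fY(y)) \in Q$ for every $y \in \gamma$. Setting $\xbar = \fX(x)$, this says $\xbar$ is a witness in $Q$ for the set $\fY(\gamma) = \{\fY(y_0), \ldots, \fY(y_k)\}$, and hence $\fY(\gamma) \in \dowqy$ by the definition of the Dowker attribute complex.

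There is one subtlety to address explicitly, and it is the only place requiring care: the set $\fY(\gamma)$ may have strictly smaller cardinality than $\gamma$ if $\fY$ identifies two vertices of $\gamma$. This is harmless, because $\fY(\gamma)$ remains a \emph{set}, the single witness $\xbar$ still satisfies $(\xbar, \fY(y)) \in Q$ for all $y \in \gamma$ regardless of collisions, and a smaller nonempty set is still a legitimate simplex. I would note this matches the notational comment following the lemma statement, where $\fX(\sigma) = \{\fX(x_0), \ldots, \fX(x_k)\}$ is understood as the image set. Since simplicial maps are determined by their action on vertices (again page~\pageref{simpcpxdef}), verifying the simplex-image condition is exactly what is needed, and no further continuity or geometric check is required.

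I do not anticipate a genuine obstacle here; the lemma is essentially definitional, as the parenthetical remark in the excerpt already signals. The only thing to get right is the bookkeeping around the witness: the morphism condition must be applied pairwise across all $y \in \gamma$ using the \emph{same} witness $x$, so that its image $\fX(x)$ serves as a common witness for the entire image set $\fY(\gamma)$ in $Q$. Once that quantifier structure is laid out cleanly, the proof is complete, and the dual statement for $\fX$ requires no separate argument.
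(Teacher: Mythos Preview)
Your proposal is correct and matches the paper's own proof essentially line for line: handle the empty simplex using nonvoidness, then for a nonempty simplex take a witness, push each pair through the morphism condition, and observe that the image of the witness serves as a common witness for the image simplex. The only cosmetic difference is that the paper writes out the $\fX$ case and declares $\fY$ similar, whereas you do the reverse; your extra remark about possible collisions in $\fY(\gamma)$ is a harmless elaboration the paper leaves implicit in its set notation.
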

\setcounter{theorem}{\value{currentThmCount}}

\begin{proof}
We need to show that $\fX(\sigma)\in\dowqx$ for all $\sigma\in\dowx$.

\vst

If $\sigma=\emptyset$, then $\fX(\sigma) = \emptyset \in \dowqx$,
since $Q$ is nonvoid.

\vst

If $\sigma = \{x_1, \ldots, x_k\}$, then $\fX(\sigma) =
\{\fX(x_1), \ldots, \fX(x_k)\}$.

\vst

Since $\emptyset\neq\sigma\in\dowx$, there exists $y\in\YR$ such that
$(x,y)\in R$ for all $x\in\sigma$.  Thus $(\fX(x), \,\fY(y))\in Q$ for
all $x\in\sigma$, by the definition of morphism.  So $\fY(y)$ is a
witness for $\fX(\sigma)$ in $Q$, telling us $\fX(\sigma)\in\dowqx$.

\vspace*{0.1in}

The argument for the map $\fY : \dowy \rightarrow \dowqy$ is similar.
\end{proof}

\paragraph{Comment:} The nonvoid requirement is an artifact, arising
because we sometimes regard void relations as having empty rather than
void Dowker complexes, in the context of links (see
Definitions~\ref{linkgamma} and \ref{linksigma} on
page~\pageref{linkgamma}, Definition~\ref{restrictedlink} on
page~\pageref{restrictedlink}, the comments about void relations on
page~\pageref{voidrelation}, and the hypotheses of Lemma~\ref{yLink}
on page~\pageref{yLink}).  The nonvoid requirement of
Lemma~\ref{simpmaps} avoids having to worry about mapping from an
artificially empty complex into a void one.

\vspace*{0.05in}

\setcounter{currentThmCount}{\value{theorem}}
\setcounter{theorem}{\value{morphismproperties}}
\begin{lemma}[Morphism Properties]
Assume the notation from before and that all relevant relations are nonvoid.
Let $f : R \rightarrow Q$ be a morphism of relations.  Then:

\begin{itemize}

\item[(i)] $\fX$ {\em and} $\fY$ are injective set maps $\implies$ $f$ is
            injective $\iff$ $f$ is a monomorphism.

  \vspace*{0.05in}

\item [(ii)] $f$ surjective $\implies$ $f$ epimorphism $\iff$
  $\fX$ {\em and} $\fY$ are surjective set maps.

  \vspace*{0.05in}

  (Additional conditions for that last $\iff$: The $\Longrightarrow$
  direction assumes that $Q$ has no blank rows or columns, while the
  $\Longleftarrow$ direction assumes that $R$ has no blank rows or
  columns.)

\end{itemize}

\vspace*{-0.05in}

\noindent The two uni-directional implications $\implies$ above are strict.

\vspace*{0.075in}

\begin{itemize}

\item[(iii)] If $\fX : \dowx \rightarrow \dowqx$ is surjective and $Q$
  has no blank rows, then $\fX : \XR \rightarrow \XQ$ is surjective.

  Similarly for $\fY$, now assuming that $Q$ has no blank columns.

The converses need not hold.  Indeed, $f$ itself can be surjective but
the maps of simplicial complexes need not be (as we saw with the maps
of page~\pageref{MTproj} and as one can see with simpler examples as
well).

\item[(iv)] If $\fX : \XR \rightarrow \XQ$ is injective, then $\fX :
  \dowx \rightarrow \dowqx$ is injective.   The converse holds if $R$
  has no blank rows.

  Similarly for $\fY$, now assuming that $R$ has no blank columns for
  the converse.

\end{itemize}

\end{lemma}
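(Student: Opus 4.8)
The plan is to prove all four items by unwinding the definitions and, where the categorical notions of mono- and epimorphism are involved, by constructing small auxiliary relations that witness the characterizations. Throughout I will use two facts repeatedly: composition of morphisms is componentwise, so $(f\circ g)_X=\fX\circ\gX$ and $(f\circ g)_Y=\fY\circ\gY$ (one checks readily that this composite is again a morphism), and two parallel morphisms are equal precisely when their component maps agree on every ordered pair of the common relation. I expect the genuinely categorical implication in (ii) to be the one real obstacle; everything else is a direct unwinding.

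For (i), the implication ``$\fX,\fY$ injective $\implies f$ injective'' is immediate, since $f(x_1,y_1)=f(x_2,y_2)$ forces $\fX(x_1)=\fX(x_2)$ and $\fY(y_1)=\fY(y_2)$, hence $(x_1,y_1)=(x_2,y_2)$. For ``$f$ injective $\implies f$ monomorphism'', given $g,h:S\rightarrow R$ with $f\circ g=f\circ h$, each pair of $S$ has images under $g$ and $h$ lying in $R$; applying the injective $f$ to these (equal) images recovers $g=h$. For the converse I argue contrapositively: if distinct pairs $(x_1,y_1)\neq(x_2,y_2)$ of $R$ satisfy $f(x_1,y_1)=f(x_2,y_2)$, I take $S$ to be the one-pair relation $\{(\ast,\circ)\}$ and define $g,h:S\rightarrow R$ by $g(\ast,\circ)=(x_1,y_1)$ and $h(\ast,\circ)=(x_2,y_2)$; these are valid, distinct morphisms with $f\circ g=f\circ h$, contradicting left-cancellability. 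Strictness of the first implication is witnessed by any relation with a blank row, where $\fX$ may identify the two blank individuals without disturbing $f$ on pairs.

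For (ii), ``$f$ surjective $\implies f$ epimorphism'' is the dual one-line argument: each pair of $Q$ is $f$ of some pair of $R$, so $g\circ f=h\circ f$ forces $g=h$. The hard part will be the forward half of the last equivalence, ``$f$ epimorphism $\implies \fX,\fY$ surjective'' (with $Q$ tight), for which I use a duplication construction. If $x_0'\in\XQ$ is not in the image of $\fX$, I form $S$ from $Q$ by adjoining a clone $\tilde x_0$ of the row $x_0'$, namely $X^S=\XQ\cup\{\tilde x_0\}$ and $S=Q\cup\{(\tilde x_0,y'):(x_0',y')\in Q\}$. Let $g:Q\rightarrow S$ be the inclusion and let $h:Q\rightarrow S$ agree with $g$ except that $\hX(x_0')=\tilde x_0$; both are morphisms, they differ because $Q$ having no blank rows forces $x_0'$ into some pair, yet $g\circ f=h\circ f$ since the first coordinate of every $f(x,y)$ lies in $\mathrm{img}(\fX)$ and so avoids $x_0'$. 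This contradicts the epimorphism hypothesis, giving surjectivity of $\fX$, and dually $\fY$ using no blank columns. The reverse half, ``$\fX,\fY$ surjective $\implies f$ epimorphism'' (with $R$ tight), is then direct: if $g\circ f=h\circ f$, then every $x'\in\XQ$ equals $\fX(x)$ for some $x$, which sits in a pair of the blank-free $R$, whence $\gX(x')=\hX(x')$; arguing dually on attributes yields $g=h$. Strictness of ``$f$ surjective $\implies f$ epimorphism'' is shown by the diagonal relation mapping identically into the full relation, where the component maps are onto but the off-diagonal pairs of $Q$ are never hit.

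For (iii) and (iv), I translate between the set-map and simplicial levels. In (iii), since $Q$ has no blank rows every $x'\in\XQ$ is a vertex of $\dowqx$; surjectivity of the simplicial map $\fX$ sends some simplex onto $\{x'\}$, which forces $x'\in\mathrm{img}(\fX)$ as a set map, and dually for $\fY$. The converse fails by the M\"obius-to-tetrahedron maps of page~\pageref{MTproj}, where surjective component set maps induce non-surjective simplicial maps. In (iv), an injective set map carries distinct simplices to distinct simplices (an injective $\fX$ satisfies $\fX^{-1}(\fX(\sigma))=\sigma$, and $\emptyset$ is the only preimage of $\emptyset$), giving simplicial injectivity; conversely, when $R$ has no blank rows each singleton $\{x\}$ is a vertex, so a collision $\fX(x_1)=\fX(x_2)$ with $x_1\neq x_2$ would collapse two distinct vertices, contradicting simplicial injectivity, and the dual statement for $\fY$ uses the absence of blank columns.
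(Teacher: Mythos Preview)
Your proof is correct and mirrors the paper's argument in parts (i), (iii), (iv), and in most of (ii).  The one place you diverge is the implication ``$f$ epimorphism $\Rightarrow \fX,\fY$ surjective'': the paper uses a \emph{collapsing} witness rather than your \emph{cloning} one.  Specifically, to detect a missed $y^*\in\YQ$, the paper takes the tiny codomain $S=\{(I,\alpha),(I,\beta)\}$ and defines $g,h:Q\rightarrow S$ to agree everywhere except that $\gY(y^*)=\alpha$ while $\hY(y^*)=\beta$; tightness of $Q$ ensures $g\neq h$, while $y^*\notin\fY(\YR)$ ensures $g\circ f=h\circ f$.  Your construction instead enlarges $Q$ by a duplicate row and uses inclusion versus a swap.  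Both are standard and valid; the paper's is more economical (a two-element relation suffices), while yours keeps the witness morphisms close to identities, which makes the morphism check immediate.  Neither approach buys anything the other lacks for this lemma.
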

\setcounter{theorem}{\value{currentThmCount}}

\begin{proof}
We will prove the various implications.  Strictness, i.e., failure of
converses, where mentioned above, can be seen readily with simple
examples.

\vspace*{0.1in}

\noindent\underline{Part (i):}

\vspace*{0.05in}

\noindent (a) Let $\fX$ and $\fY$ be injective set maps.

\vspace*{0.05in}

Suppose $(\fX(x^\prime), \fY(y^\prime)) = (\fX(x), \fY(y))$.  Then
$\fX(x^\prime)=\fX(x)$, so $x^\prime=x$.

\vst

And $\fY(y^\prime)=\fY(y)$, so $y^\prime=y$.  So $f$ is injective as
a set map of ordered pairs.

\vspace*{0.1in}

\noindent (b) Let $f$ be injective as a set map of ordered pairs.

\vspace*{0.05in}

Suppose $g,h : S \rightarrow R$ are morphisms such that $f \circ g = f
\circ h$.

Suppose $(x,y) \in S$.
\ 
By assumption, \ $\big(\fX(\gX(x)),\, \fY(\gY(y))\big) \,=\, \big(\fX(\hX(x)),\, \fY(\hY(y))\big)$.

Since $f$ is injective, \ $(\gX(x),\, \gY(y)) \,=\; (\hX(x),\, \hY(y))$.

So $g = h$, by our notion of equality.  Consequently, $f$ is a monomorphism.

\clearpage

\noindent (c) Let $f$ be a monomorphism.

\vspace*{0.05in}

Suppose $f(x,y) = f(x^\prime, y^\prime)$ but
$(x,y)\neq(x^\prime,y^\prime)$.
Let $S$ be the relation consisting of the single element
$\{(I, \alpha)\}$, with $I$ and $\alpha$ new symbols:

\vspace*{-0.3in}

$$\hspace*{2in}\begin{array}{c|c}
S & \alpha \\[2pt]\hline
I & \one \\
\end{array}$$

Define two morphisms $g,h : S \rightarrow R$ by:

\vspace*{-0.05in}

$$\begin{matrix}
\gX : I \mapsto x,      & & & \hX : I \mapsto x^\prime, \cr
\gY : \alpha \mapsto y, & & & \hY : \alpha \mapsto y^\prime. \cr
\end{matrix}$$

Then $g\neq h$, but $f \circ g = f \circ h$, a contradiction.  So $f$
is injective.

\vspace*{0.15in}

\noindent\underline{Part (ii):}

\vspace*{0.05in}

\noindent (a) Let $f$ be surjective as a set map of ordered pairs.

\vspace*{0.05in}

Suppose $g,h : Q \rightarrow S$ are morphisms such that $g \circ f = h
\circ f$.

Suppose $(x^\prime, y^\prime) \in Q$.

By surjectivity, there exists $(x, y) \in R$ such that $(\fX(x),
\fY(y)) = (x^\prime, y^\prime)$.
\ So:

\vspace*{-0.25in}

$$\hspace*{0.25in}\big(\gX(x^\prime),\, \gY(y^\prime)\big)
  \,=\, \big(\gX(\fX(x)),\, \gY(\fY(y))\big)
  \,=\, \big(\hX(\fX(x)),\, \hY(\fY(y))\big)
  \,=\, \big(\hX(x^\prime),\, \hY(y^\prime)\big).$$

\vspace*{-0.05in}

Thus $g=h$ and we see that $f$ is an epimorphism.

\vspace*{0.12in}

\noindent (b) Assume $Q$ has no blank rows or columns and let $f$ be
an epimorphism.

\vspace*{0.05in}

Suppose $\fY$ is not surjective, so there exists $\ystar \in \YQ
\setminus (\fY(\YR))$.

Let $S$ be the relation consisting of two elements
$\{(I, \alpha), (I, \beta)\}$,
with $I$, $\alpha$, $\beta$ new symbols:

\vspace*{-0.1in}

$$\begin{array}{c|cc}
S & \alpha & \beta \\[2pt]\hline
I & \one   & \one \\
\end{array}$$

\vst

Define two morphisms $g,h : Q \rightarrow S$ by:

\vspace*{-0.05in}

$$\begin{array}{lclcl}
\gX(x) = I      & \quad\hbox{and}\quad & \hX(x) = I,      & & \hbox{for every $\;x\;\in\;\XQ$}; \\[6pt]
\gY(y) = \alpha & \quad\hbox{and}\quad & \hY(y) = \alpha, & & \hbox{for every $\;y\;\in\;\YQ\setminus\{\ystar\}$}; \\[3pt]
\gY(\ystar) = \alpha & \quad\hbox{and}\quad & \hY(\ystar) = \beta. & & \\
\end{array}$$

\vst

Since $\ystar\in\YQ$ and $Q$ has no blank columns there is at least
one $\xstar\in\XQ$ such that $(\xstar, \ystar)\in Q$.  So $g \neq h$.

Observe that $g \circ f = h \circ f$ since $\ystar$ does not appear in
the image of $\fY$, contradicting $f$ being an epimorphism.

\vst

The argument showing that $\fX$ is surjective is similar.

\vspace*{0.1in}

\noindent (c) Assume $R$ has no blank rows or columns and let $\fX$
and $\fY$ be surjective.

\vspace*{0.05in}

Suppose $g,h : Q \rightarrow S$ are morphisms such that $g \circ f = h
\circ f$.

Suppose $(x,y)\in Q$.  We need to show that $\gX(x)=\hX(x)$ and
$\gY(y)=\hY(y)$, as that means $g=h$, given our definition of equality.
We will make the argument for the $X$ coordinate; the $Y\mskip-2mu$
argument is similar.

Since $\fX$ is surjective, there exists $\xbar\in\XR$ such that
$\fX(\xbar)=x$.  Since $R$ has no blank rows, there exists
$\ybar\in\YR$ such that $(\xbar, \ybar)\in R$.

Since $(g \circ f)(\xbar, \ybar) = (h \circ f)(\xbar, \ybar)$, one
obtains  $\gX(x) = g_X(f_X(\xbar)) = h_X(f_X(\xbar)) = \hX(x)$.

\clearpage

\noindent\underline{Part (iii):}

\vspace*{0.05in}

Suppose $Q$ has no blank rows and suppose $\fX : \dowx \rightarrow
\dowqx$ is surjective as a simplicial map.

\vspace*{0.05in}

Suppose $x\in \XQ$.  Since $Q$ has no blank rows, $\{x\}$ is a vertex
of $\dowqx$, so there is some simplex $\sigma\in\dowx$ such that
$\fX(\sigma) = \{x\}$, with $\fX$ viewed as a simplicial map.
Necessarily, $\sigma\neq\emptyset$, so pick some $\xbar\in\sigma$.
Then $\fX(\xbar) = x$, with $\fX$ now viewed as a set map.  Thus the
set map $\fX : \XR \rightarrow \XQ$ is surjective.

\vspace*{0.05in}

The argument for $\fY$ assuming $Q$ has no blank columns is similar.

\vspace*{0.2in}

\noindent\underline{Part (iv):}

\vspace*{0.05in}

\noindent (a) Let $\fX$ be injective as a set map $\XR \rightarrow \XQ$.
\ 
Consider $\fX$ as a simplicial map $\dowx \rightarrow \dowqx$.

\vspace*{0.05in}

Suppose $\fX(\sigma) = \kappa = \fX(\tau)$, with $\sigma,\tau\in\dowx$
and $\kappa\in\dowqx$.

\vso

If $\kappa=\emptyset$, then necessarily $\sigma=\tau=\emptyset$.
Otherwise, $\sigma\neq\emptyset$ and $\tau\neq\emptyset$, so
let $x\in\sigma$.  Then $\fX(x)\in\kappa$.  So there exists
$x^\prime\in\tau$ such that $\fX(x^\prime)=\fX(x)$.  Since $\fX$ is
injective as a set map, that says $x^\prime=x$.  Thus
$\sigma\subseteq\tau$.  A similar argument shows the reverse
inclusion, so $\sigma=\tau$.  Thus $\fX$ is injective as a simplicial
map.

\vspace*{0.1in}

\noindent (b) Assume $R$ has no blank rows and let $\fX$ be injective
as a simplicial map $\dowx \rightarrow \dowqx$.

\vspace*{0.05in}

Consider $\fX$ as a set map $\XR \rightarrow \XQ$ and suppose
$\fX(x)=\fX(x^\prime)$.  Since $R$ has no blank rows, both $\{x\}$ and
$\{x^\prime\}$ are vertices in $\dowx$.  That means
$\fX(\{x\})=\fX(\{x^\prime\})$ when we view $\fX$ as a simplicial map,
so $\{x\}=\{x^\prime\}$ by injectivity, i.e., $x=x^\prime$.  So we see
that $\fX$ is injective as a set map.

\vspace*{0.05in}

A similar argument holds for the assertions regarding $\fY$.
\end{proof}

\subsection{G-Morphisms}
\markright{G-Morphisms}
\label{gmorphisms}

Recall the material of Section~\ref{gmorphismsection}, starting on
page~\pageref{gmorphismsection}.

\setcounter{currentThmCount}{\value{theorem}}
\setcounter{theorem}{\value{containmentlemma}}
\begin{lemma}[Witness Containment]
Let $f : R \rightarrow Q$ be a morphism of nonvoid relations.  Then:

\vspace*{0.1in}

\qquad
\begin{minipage}{3.7in}
\begin{itemize}

\item[(a)] $(\fY \circ \phi_R)(\sigma) \;\subseteq\; (\phi_Q \circ \fX)(\sigma)$,
           \ for every $\sigma \in \dowx$,

\item[(b)] $(\fX \circ \psi_R)(\gamma) \;\subseteq\; (\psi_Q \circ \fY)(\gamma)$,
           \ for every $\gamma \in \dowy$.

\end{itemize}
\end{minipage}
\end{lemma}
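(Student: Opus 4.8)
The plan is to prove both containments by a direct element-chase, unwinding the intersection formulas for $\phi$ and $\psi$ (from page~\pageref{phipsiAppdef}) together with the single defining property of a morphism, namely that $(\fX(x),\fY(y))\in Q$ whenever $(x,y)\in R$. The two parts are formally dual under the interchange of individuals with attributes (equivalently, rows with columns), so I would prove (a) in full and then read off (b) from the mirror-image computation, replacing $\phi_R,\sigma,\YRx$ by $\psi_R,\gamma,\XRy$ throughout.

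For part (a), fix $\sigma\in\dowx$. By Lemma~\ref{simpmaps} the image $\fX(\sigma)$ lies in $\dowqx$, so both sides of (a) are genuine subsets of $\YQ$ and the compositions are well-typed. Unwinding the formulas, $\phi_R(\sigma)=\biginter_{x\in\sigma}\YRx$, while $\phi_Q(\fX(\sigma))=\biginter_{x\in\sigma}Y^Q_{\fX(x)}$, the latter equality holding because, as $x$ ranges over $\sigma$, its image $\fX(x)$ ranges over $\fX(\sigma)$ (idempotence of intersection absorbs any coincidences when $\fX$ is not injective). Now take any $z\in(\fY\circ\phi_R)(\sigma)$, so that $z=\fY(y)$ for some $y\in\phi_R(\sigma)$. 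Then $(x,y)\in R$ for every $x\in\sigma$, whence the morphism property yields $(\fX(x),\fY(y))\in Q$, i.e. $\fY(y)\in Y^Q_{\fX(x)}$, for every $x\in\sigma$. Intersecting over $x\in\sigma$ gives $z=\fY(y)\in\phi_Q(\fX(\sigma))$, which is exactly the containment claimed.

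Part (b) follows by the symmetric argument: for $\gamma\in\dowy$ write $\psi_R(\gamma)=\biginter_{y\in\gamma}\XRy$ and $\psi_Q(\fY(\gamma))=\biginter_{y\in\gamma}X^Q_{\fY(y)}$; then take $w=\fX(x)$ with $x\in\psi_R(\gamma)$, use $(x,y)\in R$ for all $y\in\gamma$ and the morphism property to get $\fX(x)\in X^Q_{\fY(y)}$ for each such $y$, and intersect to conclude $w\in\psi_Q(\fY(\gamma))$.

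I do not anticipate a deep obstacle; the content is essentially bookkeeping. The two points that do require care are the following. First, the edge cases: I would adopt the empty-intersection convention $\phi_R(\emptyset)=\YR$, $\phi_Q(\emptyset)=\YQ$ (and dually for $\psi$), under which the element-chase goes through verbatim for $\sigma=\emptyset$ — the hypothesis ``$(x,y)\in R$ for all $x\in\sigma$'' being vacuous and the right-hand intersection collapsing to all of $\YQ$, so that $\fY(\YR)\subseteq\YQ$ trivially holds. Second, one must keep track of \emph{why} only containment and not equality is asserted: $\phi_Q(\fX(\sigma))$ gathers \emph{every} witness in $\YQ$ for the image simplex $\fX(\sigma)$, and $Q$ may supply witnesses that are not of the form $\fY(y)$ for any witness $y$ of $\sigma$ in $R$, so the reverse inclusion can genuinely fail. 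That asymmetry is precisely what makes the square of Figure~\ref{morphismdiagram} only \emph{almost} commutative, and it is the crux to state correctly rather than to overcome.
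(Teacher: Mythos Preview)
Your proposal is correct and follows essentially the same element-chase as the paper's own proof: treat $\sigma=\emptyset$ via the convention $\phi_R(\emptyset)=\YR$, then for nonempty $\sigma$ pick $y\in\phi_R(\sigma)$, apply the morphism property to obtain $(\fX(x),\fY(y))\in Q$ for every $x\in\sigma$, conclude $\fY(y)\in\phi_Q(\fX(\sigma))$, and dualize for part~(b). Your additional remarks about idempotence of intersection under a non-injective $\fX$ and about why only containment holds are accurate elaborations but are not needed for the argument to go through.
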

\setcounter{theorem}{\value{currentThmCount}}

\begin{proof}
Observe that
$(\fY \circ \phi_R)(\emptyset) = \fY(\YR) \subseteq \YQ =
\phi_Q(\emptyset) = (\phi_Q \circ \fX)(\emptyset)$.

\vspace*{0.05in}

Now let $\emptyset \neq \sigma \in \dowx$.  Let
$y\in\phi_R(\sigma)\neq\emptyset$.  Then $(x,y)\in R$ for every
$x\in\sigma$.  Thus $(\fX(x), \fY(y))\in Q$ for every $x\in\sigma$.
So $\fY(y) \in \phi_Q(\fX(\sigma))$.  This is true for all
$y\in\phi_R(\sigma)$, telling us $\fY(\phi_R(\sigma)) \subseteq
\phi_Q(\fX(\sigma))$.

\vspace*{0.05in}

The argument for assertion (b) is similar.
\end{proof}

\setcounter{currentThmCount}{\value{theorem}}
\setcounter{theorem}{\value{homotopicfacemaps}}
\begin{corollary}[Homotopic Face Maps]
Let $f : R \rightarrow Q$ be a morphism of nonvoid relations.  Then:

\qquad
\begin{minipage}{4.75in}
\begin{itemize}

\item[(a)] $\fX$ and $\,\psi_Q \circ \fY \circ \phi_R$ are homotopic
  poset maps $\,\Fdowx \rightarrow \Fdowqx$,

\item[(b)] $\fY$ and $\,\phi_Q \circ \fX \circ \psi_R$ are homotopic
  poset maps $\,\Fdowy \rightarrow \Fdowqy$.

\end{itemize}
\end{minipage}
\end{corollary}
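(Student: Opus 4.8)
The plan is to reduce both parts to a standard poset homotopy principle: two \emph{order-preserving} poset maps $P \to P'$ that are comparable pointwise (one dominating the other) induce homotopic simplicial maps on the order complexes (see \cite{tpr:bjorner}, Theorem 10.11), precisely the tool later invoked in the proof of Lemma~\ref{inclusionequiv}. For part (a) I would therefore carry out three steps: confirm that $\fX$ and $\psi_Q \circ \fY \circ \phi_R$ are genuine order-preserving poset maps $\Fdowx \rightarrow \Fdowqx$, establish the pointwise containment $\fX(\sigma) \subseteq (\psi_Q \circ \fY \circ \phi_R)(\sigma)$ for every $\sigma \in \Fdowx$, and then apply the principle.

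For well-definedness and order type, I would first note that $\fX$ is order-preserving because it is a simplicial map (Lemma~\ref{simpmaps}) and carries nonempty simplices to nonempty simplices. For the composite, given nonempty $\sigma \in \dowx$ one has $\phi_R(\sigma) \in \Fdowy$, then $\fY(\phi_R(\sigma)) \in \Fdowqy$ by Lemma~\ref{simpmaps}, and finally $\psi_Q$ sends this nonempty simplex to a nonempty simplex of $\dowqx$; hence the composite lands in $\Fdowqx$. Tracking order types, $\phi_R$ is inclusion-reversing (Lemma~\ref{orderreversing}), $\fY$ is order-preserving, and $\psi_Q$ is inclusion-reversing (dual of Lemma~\ref{orderreversing}), so the two reversals cancel and the composite is order-preserving.

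For the comparability I would apply $\psi_Q$ to the Witness Containment inequality, $(\fY \circ \phi_R)(\sigma) \subseteq (\phi_Q \circ \fX)(\sigma)$ from Lemma~\ref{containment}(a). Since $\psi_Q$ is inclusion-reversing this gives $(\psi_Q \circ \fY \circ \phi_R)(\sigma) \supseteq (\psi_Q \circ \phi_Q)(\fX(\sigma))$. Because $\psi_Q \circ \phi_Q = \clsqx$ is a closure operator, the dual of Lemma~\ref{upward} yields $(\psi_Q \circ \phi_Q)(\fX(\sigma)) \supseteq \fX(\sigma)$; chaining the two containments produces $\fX(\sigma) \subseteq (\psi_Q \circ \fY \circ \phi_R)(\sigma)$, that is, $\fX(\sigma) \leq (\psi_Q \circ \fY \circ \phi_R)(\sigma)$ in $\Fdowqx$. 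The homotopy principle then delivers part (a).

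Part (b) I would obtain by the dual argument: apply $\phi_Q$ to Lemma~\ref{containment}(b), use that $\phi_Q$ is inclusion-reversing and that $\phi_Q \circ \psi_Q = \clsqy$ is a closure operator to conclude $\fY(\gamma) \subseteq (\phi_Q \circ \fX \circ \psi_R)(\gamma)$ for every $\gamma \in \Fdowy$, and invoke the principle again. The only genuinely delicate points are bookkeeping rather than conceptual: keeping the inclusion-reversals straight so the composites come out order-preserving rather than order-reversing, and confirming that the composites stay within the face posets of nonempty simplices so that the homotopy principle actually applies. I expect this bookkeeping to be the main, though modest, obstacle.
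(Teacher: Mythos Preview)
Your proposal is correct and follows essentially the same route as the paper: apply $\psi_Q$ to the Witness Containment inequality of Lemma~\ref{containment}(a), use that $\psi_Q\circ\phi_Q$ is a closure operator, and invoke the comparability homotopy principle (\cite{tpr:bjorner}, Theorem~10.11). The only cosmetic difference is that the paper records the intermediate homotopy $(\psi_Q\circ\fY\circ\phi_R)\simeq(\psi_Q\circ\phi_Q\circ\fX)$ and then cites that $\clsqx$ is homotopic to the identity, whereas you chain the two containments first and apply the principle once; your added verification of well-definedness and order-preservation is a welcome bit of bookkeeping the paper leaves implicit.
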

\setcounter{theorem}{\value{currentThmCount}}

\begin{proof}
Let $\sigma\in\Fdowx$.

\vspace*{0.05in}

By Lemma~\ref{containment},
$(\fY \circ \phi_R)(\sigma) \;\subseteq\; (\phi_Q \circ \fX)(\sigma)$.

\vspace*{0.05in}

Therefore
$(\psi_Q \circ \fY \circ \phi_R)(\sigma) \;\supseteq\; (\psi_Q \circ \phi_Q \circ \fX)(\sigma)$.

\vso

So $(\psi_Q \circ \fY \circ \phi_R)$ and $(\psi_Q \circ \phi_Q \circ \fX)$
are homotopic maps (see \cite{tpr:bjorner}, Theorem 10.11).

\vso

Since $\clsqx$ is homotopic to the identity on $\Fdowqx$, part (a)
follows.

\vst

The proof of (b) is similar.
\end{proof}

\vspace*{0.05in}

\setcounter{currentThmCount}{\value{theorem}}
\setcounter{theorem}{\value{homotopicposetmaps}}
\begin{corollary}[Homotopic G-Morphisms]
Let $f : R \rightarrow Q$ be a morphism of nonvoid relations.  The
induced G-morphisms, as given by the poset maps $\fXg, \fYg : \PR
\rightarrow \PQ$ of Definition~\ref{inducedgmorphisms} on
page~\pageref{inducedgmorphisms}, are homotopic.
\end{corollary}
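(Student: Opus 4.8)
The plan is to realize both induced G-morphisms as order-preserving poset maps $\PR \rightarrow \PQ$ that are pointwise comparable, and then invoke the standard fact used repeatedly in this report (cf.\ \cite{tpr:bjorner}, Theorem~10.11) that two comparable order-preserving poset maps are homotopic. First I would note that each of $\phi_R, \psi_R, \phi_Q, \psi_Q$ is order-reversing while the simplicial maps $\fX, \fY$ are order-preserving, so both $\fXg = (\psi_Q \circ \fY \circ \phi_R)|_{\PR}$ and $\fYg = (\phi_Q \circ \fX \circ \psi_R)|_{\PR}$ are order-preserving. The substantive work is then to compare $\fXg(\sigma,\gamma)$ and $\fYg(\sigma,\gamma)$ inside $\PQ$ for an arbitrary $(\sigma, \gamma) \in \PR$.

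Next I would evaluate the two maps using $\sigma = \psi_R(\gamma)$ and $\gamma = \phi_R(\sigma)$. One gets $\fXg(\sigma,\gamma) = \psi_Q(\fY(\gamma))$, a nonempty simplex of $\dowqx$, whose associated element of $\PQ$ is $(\kappa_X, \phi_Q(\kappa_X))$ with $\kappa_X = \psi_Q(\fY(\gamma))$; this is a genuine element of $\PQ$ since $\kappa_X$ lies in the image of $\psi_Q$ and is therefore fixed by $\clsqx$ (the dual of Corollary~\ref{mappedsimplex}). Dually, $\fYg(\sigma,\gamma) = \phi_Q(\fX(\sigma))$ is a nonempty simplex of $\dowqy$, whose associated element of $\PQ$ has individual component $\kappa_Y = \psi_Q(\phi_Q(\fX(\sigma))) = (\clsqx)(\fX(\sigma))$.

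The key step uses Lemma~\ref{containment}(b): since $\psi_R(\gamma) = \sigma$, we have $\fX(\sigma) = (\fX \circ \psi_R)(\gamma) \subseteq (\psi_Q \circ \fY)(\gamma) = \kappa_X$. Applying the order-preserving closure operator $\clsqx$ and using that $\kappa_X$ is a fixed point gives $\kappa_Y = (\clsqx)(\fX(\sigma)) \subseteq (\clsqx)(\kappa_X) = \kappa_X$. Reading the partial order of $\PQ$ off the individual component, the inclusion $\kappa_Y \subseteq \kappa_X$ says exactly that $\fYg(\sigma,\gamma) \leq \fXg(\sigma,\gamma)$. Since $(\sigma, \gamma)$ was arbitrary, $\fYg \leq \fXg$ pointwise, and comparability of the two order-preserving poset maps yields the desired homotopy.

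The main obstacle I anticipate is bookkeeping rather than depth: one must be careful that the two compositions genuinely land in $\PQ$ (not merely in a face poset), that the images are nonempty so that they are legitimate poset elements, and that the comparison is performed consistently in one component (individuals) while respecting the opposite inclusion convention for the attribute component. A secondary sanity check is consistency with Corollary~\ref{facehomotopy}: one could alternatively route through that corollary together with the homotopy equivalences $\PR \simeq \Fdowx$ and $\PQ \simeq \Fdowqx$, but the direct comparison via Lemma~\ref{containment} is cleaner and avoids transporting homotopies across those equivalences.
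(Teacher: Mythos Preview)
Your proposal is correct and follows essentially the same approach as the paper: establish pointwise comparability $\fYg(\sigma,\gamma) \leq \fXg(\sigma,\gamma)$ via Lemma~\ref{containment}, then invoke the standard result that comparable order-preserving poset maps are homotopic. The only cosmetic difference is that the paper applies part~(a) of Lemma~\ref{containment} and then the order-reversing map $\psi_Q$ directly, whereas you apply part~(b) and then the closure operator $\clsqx$ together with the fixed-point property of $\kappa_X$; both routes yield the same inclusion $\sigma'' \subseteq \sigma'$.
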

\setcounter{theorem}{\value{currentThmCount}}

\paragraph{}$\phantom{0}$

\vspace*{-0.4in}

\label{posethomotopyAppPage}
\begin{proof}
See Figure~\ref{morphismdiagram} on page~\pageref{morphismdiagram} for
the underlying maps comprising the G-morphisms.  The G-morphisms are
defined as follows:

\vspace*{0.1in}

For all $(\sigma,\gamma)\in\PR$:

\vspace*{-0.1in}

$$\fXg(\sigma,\gamma) \;=\;
  (\sigma^\prime, \gamma^\prime), \quad
\hbox{with}\quad \sigma^\prime \;=\; (\psi_Q \circ \fY \circ \phi_R)(\sigma)
\quad \hbox{and}\quad \gamma^\prime \;=\; \phi_Q(\sigma^\prime).$$

\vspace*{-0.15in}

$$\fYg(\sigma,\gamma) \;=\;
  (\sigma^{\prime\prime}, \gamma^{\prime\prime}), \quad
\hbox{with}\quad \gamma^{\prime\prime} \;=\; (\phi_Q \circ \fX \circ \psi_R)(\gamma)
\quad \hbox{and}\quad \sigma^{\prime\prime} \;=\; \psi_Q(\gamma^{\prime\prime}).$$

\vspace*{0.1in}

These definitions make sense because $\fX$ and $\fY$ map nonempty
simplices to nonempty simplices and because the images of $\psi_Q$ and
$\phi_Q$ may be viewed as lying in $\PQ$, by
Corollary~\ref{mappedsimplex} on page~\pageref{mappedsimplex}.
(Similarly, the images of $\psi_R$ and $\phi_R$ may be viewed as lying
in $\PR$ --- In fact, as used above, these maps are simply switching
between the $\sigma$ and $\gamma$ components (``labels'') of the given
element $(\sigma, \gamma)$ in $\PR$.) Observe that $\fXg$ and $\fYg$ are
order-preserving poset maps.

\vspace*{0.1in}

Applying Lemma~\ref{containment} and since $(\sigma,\gamma)\in\PR$:

\vspace*{-0.1in}

$$(\fY \circ \phi_R)(\sigma) 
   \;\subseteq\; (\phi_Q \circ \fX)(\sigma)
   \;=\; (\phi_Q \circ \fX \circ \psi_R)(\gamma) 
   \;=\; \gamma^{\prime\prime}.$$

Consequently:

\vspace*{-0.15in}

$$\sigma^\prime 
  \;=\; (\psi_Q \circ \fY \circ \phi_R)(\sigma)
  \;\supseteq\; \psi_Q(\gamma^{\prime\prime})
  \;=\; \sigma^{\prime\prime}.$$

\vspace*{0.05in}

So the maps are homotopic (see \cite{tpr:bjorner}, Theorem 10.11).
\end{proof}

\subsection{Lattice Generators}
\markright{Lattice Generators}
\label{latticegen}

We turn now to the main result.

\vspace*{0.05in}

(Recall that a relation is {\em tight} when it has no blank rows or
columns.)

\begin{lemma}[Generators in Image]\label{generatorimages}
Let $f : R \rightarrow Q$ be a surjective morphism between nonvoid
tight relations.

\vst

Suppose $q \in \PQ$ is of the form $\big(\XQy,\, (\clsqy)(\ys)\big)$,
for some $y\in\YQ$.

\vspace*{0.05in}

Then there exist $q_1, \ldots, q_k$ in the image of $\fXg : \PR
\rightarrow \PQ$, with $k\geq 1$, such that $q=\bigvee_{i=1}^kq_i$.

\vst

(Here, $\bigvee$ is the join operation of $\PQplus$.)
\end{lemma}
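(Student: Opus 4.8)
The plan is to realize $q$ as a join, indexed by the individuals witnessing the column of $y$, of the $\fXg$-images of the ``closure'' elements attached to suitable preimages. Write $q = \big(\XQy,\, (\clsqy)(\ys)\big)$, so its attribute label is $(\clsqy)(\ys) = \phi_Q(\XQy) = \biginter_{x'\in\XQy}\phi_Q(\{x'\})$, using $\psi_Q(\ys)=\XQy$. Since $Q$ is tight, $\XQy\neq\emptyset$, so this index set is nonempty and $k\geq 1$. Because $f$ is a surjective morphism between tight relations, Lemma~\ref{morphismprop}(ii) makes $\fX$ and $\fY$ surjective set maps; more to the point, surjectivity of $f$ as a set map of ordered pairs lets me lift each pair $(x',y)\in Q$, for $x'\in\XQy$, to a pair $(x_{x'},\ybar_{x'})\in R$ with $\fX(x_{x'})=x'$ and $\fY(\ybar_{x'})=y$. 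Lifting the attribute $y$ itself, not merely the individual $x'$, is the step that makes the argument close.

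First I would attach to each $x'\in\XQy$ the element $p_{x'} = \big((\clsx)(\{x_{x'}\}),\, \phi_R(\{x_{x'}\})\big)$ of $\PR$; it lies in $\PR$ because closures of generators are fixed points, by idempotency (Corollary~\ref{mappedsimplex}), so its components are consistent. Its attribute label is $\phi_R(\{x_{x'}\})$, which contains $\ybar_{x'}$ since $(x_{x'},\ybar_{x'})\in R$. Setting $q_{x'}=\fXg(p_{x'})$, Definition~\ref{inducedgmorphisms} gives $q_{x'}$ the individual label $\psi_Q\big(\fY(\phi_R(\{x_{x'}\}))\big)$ and hence the attribute label $\gamma'_{x'} = (\clsqy)\big(\fY(\phi_R(\{x_{x'}\}))\big)$.

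The core of the proof is a two-sided sandwich on these attribute labels. On one side, $\ybar_{x'}\in\phi_R(\{x_{x'}\})$ gives $y=\fY(\ybar_{x'})\in\fY(\phi_R(\{x_{x'}\}))$, and since $\clsqy$ is inclusion-preserving, $(\clsqy)(\ys)\subseteq\gamma'_{x'}$. On the other side, the morphism property forces $\fY(\phi_R(\{x_{x'}\}))\subseteq\phi_Q(\{x'\})$, whence $\gamma'_{x'}\subseteq(\clsqy)(\phi_Q(\{x'\}))=\phi_Q(\{x'\})$, again by Corollary~\ref{mappedsimplex}. Taking the join in $\PQplus$ of the $q_{x'}$ intersects their attribute labels, so the label of $\bigvee_{x'\in\XQy}q_{x'}$ is squeezed between $(\clsqy)(\ys)$ and $\biginter_{x'\in\XQy}\phi_Q(\{x'\}) = \phi_Q(\XQy) = (\clsqy)(\ys)$; the two bounds coincide, pinning the join's attribute label to exactly $(\clsqy)(\ys)$. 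Since an element of $\PQ$ is determined by its attribute label, the join equals $q$.

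I expect the main obstacle to be the upper bound, namely ruling out that the join overshoots $q$. A naive attempt through the individual-set labels stalls, since the join combines those by set union followed by the closure $\clsqx$, which is awkward to cap from above. The resolution, and the reason I route the whole argument through the attribute labels, is that join acts on them by plain intersection, and $\biginter_{x'\in\XQy}\phi_Q(\{x'\})$ collapses exactly to $\phi_Q(\XQy)$ by the defining formula for $\phi_Q$. The only remaining care is the degenerate cases: when $k=\abs{\XQy}=1$ the ``join'' is a single term and one simply reads off $\gamma'_{x'}=(\clsqy)(\ys)$ from the same sandwich, and one should note that $(\clsqy)(\ys)\ni y$ is nonempty so the join is a genuine element of $\PQ$ rather than being forced up to $\oneQ$.
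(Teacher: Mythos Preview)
Your proof is correct and takes a genuinely different route from the paper's. The paper indexes by the \emph{attribute preimages} $\{y_1,\ldots,y_k\}=\fY^{-1}(\{y\})$, forms the column-generated elements $p_i=\big(\XRyi,\,(\clsy)(\{y_i\})\big)\in\PR$, and works on the \emph{individual} side: it shows each $\sigma_i\subseteq\XQy$ and then, by lifting pairs $(\xbar,y)$ through surjectivity of $f$, that $\XQy\subseteq\bigcup_i\sigma_i$, finishing with the closure $\clsqx$. You instead index by the \emph{individuals} $x'\in\XQy$, form the row-generated elements $p_{x'}=\big((\clsx)(\{x_{x'}\}),\,\phi_R(\{x_{x'}\})\big)$ via lifts $(x_{x'},\ybar_{x'})$ of $(x',y)$, and argue on the \emph{attribute} side with the sandwich $(\clsqy)(\{y\})\subseteq\gamma'_{x'}\subseteq\phi_Q(\{x'\})$. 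Your route exploits that the join in $\PQplus$ acts on attribute labels by plain intersection, so $\bigcap_{x'\in\XQy}\phi_Q(\{x'\})=\phi_Q(\XQy)$ closes the squeeze immediately without an extra closure step; the paper's route performs the same pair-lifting but has to pass through $\clsqx$ to finish. Both arguments rest on surjectivity of $f$ as a map of ordered pairs (not merely of $\fX$ and $\fY$ separately), and both produce a valid join representation of $q$, though with different index sets.
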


\begin{proof}
By Lemma~\ref{morphismprop}(ii), the component functions $\fX : \XR
\rightarrow \XQ$ and $\fY : \YR \rightarrow \YQ$ are surjective.
\quad
Since $\fY$ is surjective, $\fYinv(\ys) = \{y_1, \ldots, y_k\}
\subseteq \YR$, for some $k \geq 1$.

\vspace*{0.1in}

For each $i=1,\ldots,k$, observe and define the following:

\begin{itemize}

\addtolength{\itemsep}{-1pt}

\item Since $R$ has no blank columns, $\XRyi \neq \emptyset$, so
$\big(\XRyi,\, (\clsy)(\ysi)\big) \in \PR$.

\item Define $\sigma_i$ as the ``$\sigma^\prime$-component'' of
$\,\fXg\big(\XRyi,\, (\clsy)(\ysi)\big)$, meaning:
                 
\vspace*{-0.15in}

$$\sigma_i
    \;=\; (\psi_Q \circ \fY \circ \phi_R)\big(\XRyi\big)
    \;=\; \psi_Q(\gamma)
    \;=\; \biginter_{\ybar\in\gamma}\XQybar,
          \quad \hbox{with $\gamma = \fY\big((\clsy)(\ysi)\big)$}.$$

\vspace*{-0.1in}

\item Observe that $y\in\gamma$, since $y=\fY(y_i)$ and
  $y_i\in(\clsy)(\ysi)$.  Therefore $\sigma_i \subseteq \XQy$.

\item Define $q_i = (\sigma_i, \gamma_i) \in \PQ$, with $\gamma_i =
  \phi_Q(\sigma_i)$.  So $q_i$ is in the image of $\fXg : \PR
  \rightarrow \PQ$.

\end{itemize}

We need to show that $q = \bigvee_{i=1}^kq_i$.  \quad Expanding, we see:

$$\bigvee_{i=1}^kq_i 
   \;\;=\;\; \bigg(\big(\clsqx\big)\Big(\bigunion_{i=1}^k\sigma_i\Big),\;\;
                            \biginter_{i=1}^k\gamma_i\bigg).$$

\vspace*{0.05in}

By the third bullet above, $\bigunion_{i=1}^k\sigma_i \subseteq \XQy$, so:

$$\bigunion_{i=1}^k\sigma_i
  \;\subseteq\;
  \big(\clsqx\big)\Big(\bigunion_{i=1}^k\sigma_i\Big) 
  \;\subseteq\;
     (\clsqx)\big(\XQy)
  \;=\; \XQy.$$

\vspace*{0.05in}

We will establish $\XQy \subseteq \bigunion_{i=1}^k\sigma_i$, thereby
completing the proof.

\vspace*{0.1in}

Let $\xbar\in\XQy$.
\ So $(\xbar, y) \in Q$.

\vst

By surjectivity of $f$, there exists $(\xhat, \yhat) \in R$ such that
$\fX(\xhat)=\xbar$ and $\fY(\yhat)=y$.

\vspace*{0.05in}

Now $\yhat=y_j$, for some $j\in\{1, \dots, k\}$, as defined earlier.
Thus $\xhat\in\XRyj$.

\vst

Consequently, for every $z\in(\clsy)(\ysj)$,
$(\xhat,z)\in R$ and so $(\fX(\xhat), \fY(z))\in Q$.

\vsr

That means $(\xbar,\ybar)\in Q$ for every $\ybar\in\fY\big((\clsy)(\ysj)\big)$.

\vsr

Therefore, $\xbar\in\sigma_j\subseteq\bigunion_{i=1}^k\sigma_i$
and we conclude that $\XQy \subseteq \bigunion_{i=1}^k\sigma_i$.

\vsr

(Note: $\XQy$ need not lie in a single $\sigma_j$, since $j$ depends on $\xbar$.)
\end{proof}

\begin{corollary}\label{directimage}
Assume the hypotheses of Lemma~\ref{generatorimages}.

Suppose further that for some $y_i\in\fYinv(\ys)$,
$(\clsy)(\ysi)=\ysi$.

Then $q$ is itself in the image of $\fXg : \PR \rightarrow \PQ$.
\end{corollary}

\begin{proof}
In the proof of Lemma~\ref{generatorimages}, we see that now
$\fY\big((\clsy)(\ysi)\big) = \ys$, so $\sigma_i = \XQy$.
\end{proof}

\paragraph{Comment:} Corollary~\ref{directimage} helps to explain the
example of pages~\pageref{surjqueststop} and \pageref{surjectivity},
in which a surjective morphism generated the entire poset of its
codomain even though the induced maps on the Dowker complexes were not
surjective.  Namely:

\vst

In the M\"obius relation $M$ of page~\pageref{moebius}, singletons are
unmoved by the closure operators.  In the tetrahedral relation $T$ of
page~\pageref{tetrahedra2}, maximal simplices are dual to singletons.
Intersections of maximal simplices in the tetrahedral relation generate
all of $\PT$.  These maximal simplices come from dualizing images of
singletons of the M\"obius relation.  \ Consequently:

\vspace*{-0.05in}

\begin{itemize}

\item Even though one merely has $\fX(\{1,2,5\})=\{1,4\}$, one further
sees that $\fXg(\{1,2,5\}, \tas) = (\{1,3,4\}, \tas)$.  The G-morphism
$\fXg$ therefore supplies the apparently uncovered simplex $\{1,3,4\}$
of $\dowTx$.
\ \  Similarly, $\fYg$ supplies $\{\ta,\tb,\td\}$ in $\dowTy$.

\item Previously, in Table~\ref{MTposetmaps} on
page~\pageref{MTposetmaps}, we saw that the element $(13, \ta\tc)$ of
$\PT$ did not itself appear in the images of the maps $\fYg$ and
$\fXg$.  However, $(13, \ta\tc)$ {\em \hspc{}does\,} appear as the
join or meet of elements in the images:

\vspace*{-0.25in}

\begin{eqnarray*}
(13, \ta\tc) &\,=\,& (134, \ta) \,\meet\, (123, \tc),
     \quad\hbox{with both arguments to $\meet$ in the image of $\fXg$;}\\[2pt]
(13, \ta\tc) &\,=\,& (1, \ta\tb\tc) \,\join\, (3, \ta\tc\td),
     \quad\hbox{with both arguments to $\join$ in the image of $\fYg$.}\\
\end{eqnarray*}

\end{itemize}

\vspace*{-0.1in}

\noindent More generally, the following theorem describes the process:

\setcounter{currentThmCount}{\value{theorem}}
\setcounter{theorem}{\value{LatticeGenerators}}
\begin{theorem}[Lattice Surjectivity]
Let $R$ and $Q$ be nonvoid tight relations.  
Suppose $f : R \rightarrow Q$ is a surjective morphism (in the sense
of Definition~\ref{morphism}).
 \ For any $q \in \PQ$:

\vspace*{-0.05in}

$$ q \;=\; \bigwedge_j \bigvee_i q_{ji}, \quad \hbox{with each
  $q_{ji}$ in the image of $\fXg : \PR \rightarrow \PQ$,}$$

\vspace*{-0.05in}

$$ q \;=\; \bigvee_k \bigwedge_\ell q^{\prime}_{k\ell}, \quad \hbox{with each
  $q^\prime_{k\ell}$ in the image of $\fYg : \PR \rightarrow \PQ$.}$$

\vspace*{0.05in}

(Here, $\bigvee$ and $\bigwedge$ are the lattice operations of $\PQplus$.)

\end{theorem}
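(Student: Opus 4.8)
The plan is to reduce the general element $q\in\PQ$ to the single-attribute generators already handled by Lemma~\ref{generatorimages}, exploiting the lattice structure of $\PQplus$. Write $q=(\sigma,\gamma)$, so that $\sigma=\psi_Q(\gamma)$ and $\gamma=\phi_Q(\sigma)$, both nonempty. The first observation is that every such $q$ is a meet, in $\PQplus$, of ``attribute generators''. Specifically, for each $y\in\gamma$ set $q_y=\big(\XQy,\,(\clsqy)(\ys)\big)$; this lies in $\PQ$ because $\psi_Q(\ys)=\XQy$, because $(\clsqy)(\ys)=\phi_Q(\XQy)$, and because $\XQy$ is fixed by $\clsqx$ (the dual of Corollary~\ref{mappedsimplex}).

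Next I would compute the meet explicitly. By the meet formula of Definition~\ref{galoislattice} and the identity $\psi_Q(\gamma_1)\inter\psi_Q(\gamma_2)=\psi_Q(\gamma_1\union\gamma_2)$ (immediate from $\psi_Q(\chi)=\biginter_{y\in\chi}\XQy$), the meet $\bigwedge_{y\in\gamma}q_y$ has individual component $\biginter_{y\in\gamma}\XQy=\psi_Q(\gamma)=\sigma$, and therefore attribute component $\phi_Q(\sigma)=\gamma$. Hence $q=\bigwedge_{y\in\gamma}q_y$, expressing $q$ as a meet of attribute generators.

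Then I would apply Lemma~\ref{generatorimages}: each $q_y$ equals a join $\bigvee_i q_{yi}$ with every $q_{yi}$ in the image of $\fXg$. Substituting yields $q=\bigwedge_{y\in\gamma}\bigvee_i q_{yi}$, the first claimed meet-of-joins form. For the second formula I would run the Dowker-dual argument: writing $\gamma=\phi_Q(\sigma)=\biginter_{x\in\sigma}\YQx$, every $q\in\PQ$ is the join $\bigvee_{x\in\sigma}\big((\clsqx)(\{x\}),\,\YQx\big)$ of ``individual generators'', and the dual of Lemma~\ref{generatorimages} (exchanging the roles of $X$ and $Y$, of $\phi_Q$ and $\psi_Q$, of $\join$ and $\meet$, and of $\fXg$ and $\fYg$) writes each individual generator as a meet of elements in the image of $\fYg$, giving $q=\bigvee_{x\in\sigma}\bigwedge_\ell q'_{x\ell}$.

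The genuine content sits entirely inside Lemma~\ref{generatorimages}, which already uses surjectivity and tightness (via Lemma~\ref{morphismprop}(ii)) to cover each attribute generator by joins of $\fXg$-images; the two remaining steps are purely lattice-theoretic reductions. The main point to check carefully is that the meet computation stays inside $\PQ$ rather than collapsing to $\zeroQ$ — which it does precisely because the computed individual component $\biginter_{y\in\gamma}\XQy$ equals $\sigma\neq\emptyset$, so $(\sigma,\gamma)\in\PQ$ — and, dually, that the join does not rise to $\oneQ$. I would also confirm that the dualization is legitimate: $\fXg$ and $\fYg$ are genuinely interchanged under Dowker duality by Definition~\ref{inducedgmorphisms}, so the dual statement really does produce $\fYg$-images.
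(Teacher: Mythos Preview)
Your proposal is correct and follows essentially the same approach as the paper's proof: decompose $q$ as $\bigwedge_{y\in\gamma}q_y$ with $q_y=\big(\XQy,(\clsqy)(\ys)\big)$, invoke Lemma~\ref{generatorimages} to write each $q_y$ as a join of $\fXg$-images, and then dualize for the second formula. The paper's version is slightly terser, but your additional verifications (that $q_y\in\PQ$, that the meet lands back on $q$, and that dualization legitimately swaps $\fXg$ with $\fYg$) are all valid and match what the paper leaves implicit.
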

\setcounter{theorem}{\value{currentThmCount}}

\vspace*{0.025in}

\begin{proof}
Write $q=(\sigma,\gamma)\in\PQ$.   \quad Then
$\sigma=\psi_Q(\gamma)=\biginter_{y\in\gamma}\XQy$.

\vspace*{0.1in}

So $q \;=\; \bigwedge_{y\in\gamma}q_y$, with each $q_y\in\PQ$ of the
form $\big(\XQy,\, (\clsqy)(\ys)\big)$.

\vspace*{0.1in}

By Lemma~\ref{generatorimages}, for each $y\in\gamma$, we have that
$q_y = \bigvee_{\!i}q_{y,i}$ with each $q_{y,i}$ in the image of $\fXg :
\PR \rightarrow \PQ$ and with $i$ in some finite index set ${\cal
I}(y)$.  $\phantom{\Big|}$ \ Thus:

\vspace*{-0.05in}

$$q = \bigwedge_{y\,\in\,\gamma} \;\, \bigvee_{\phantom{1}i\,\in\,{\cal I}(y)}\,q_{y,i}.$$

\vst

The other form follows by dualizing the previous arguments.
\end{proof}

\clearpage
\section{A Few More Examples}
\label{moreexamples}

\subsection{Local Spheres versus Global Contractibility}
\markright{Local Spheres versus Global Contractibility}
\label{duncehat}

The reader may wonder whether preservation of attribute privacy always
requires a relation to exhibit homology in its Dowker complexes.  The
answer is that links of individuals must have homology, by
Theorems~\ref{privacysingle} and \ref{privacymultiple} on
page~\pageref{privacysingle}, but the overall relation need not.

\begin{figure}[h]
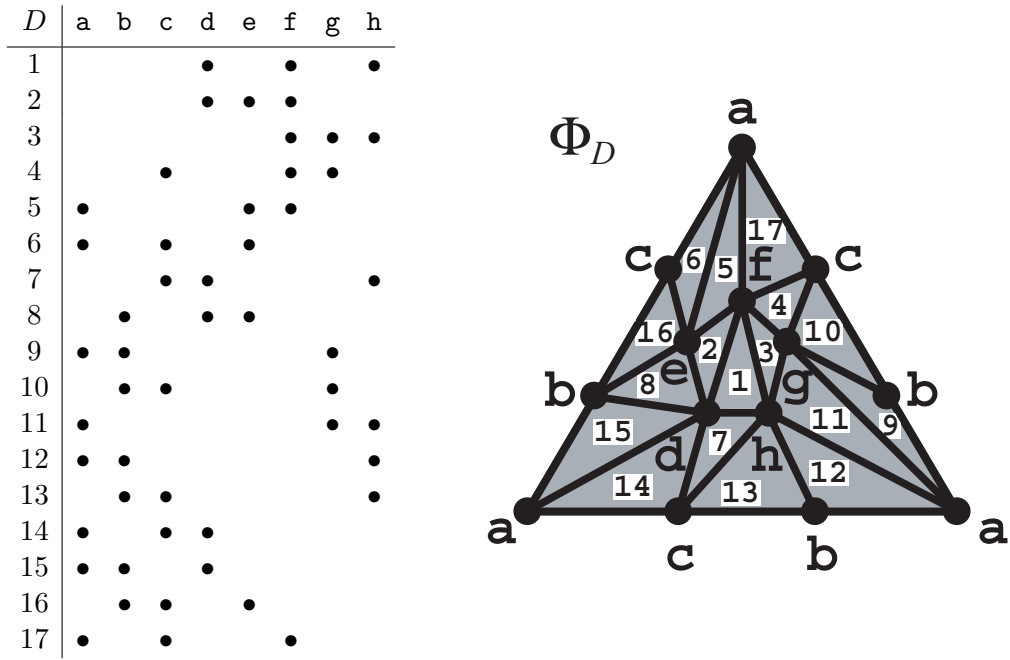

\begin{center} 
\hspace*{0in}\begin{minipage}{2in}{$\begin{array}{c|cccccccc}
 D &  \ta &  \tb &  \tc &  \td &  \te &  \tf &  \tg & \tth \\[2pt]\hline
 1 &      &      &      & \one &      & \one &      & \one \\
 2 &      &      &      & \one & \one & \one &      &      \\
 3 &      &      &      &      &      & \one & \one & \one \\
 4 &      &      & \one &      &      & \one & \one &      \\
 5 & \one &      &      &      & \one & \one &      &      \\
 6 & \one &      & \one &      & \one &      &      &      \\
 7 &      &      & \one & \one &      &      &      & \one \\
 8 &      & \one &      & \one & \one &      &      &      \\
 9 & \one & \one &      &      &      &      & \one &      \\
10 &      & \one & \one &      &      &      & \one &      \\
11 & \one &      &      &      &      &      & \one & \one \\
12 & \one & \one &      &      &      &      &      & \one \\
13 &      & \one & \one &      &      &      &      & \one \\
14 & \one &      & \one & \one &      &      &      &      \\
15 & \one & \one &      & \one &      &      &      &      \\
16 &      & \one & \one &      & \one &      &      &      \\
17 & \one &      & \one &      &      & \one &      &      \\
\end{array}$}
\end{minipage}
\hspace*{0.4in}
\begin{minipage}{2.9in}
\ifig{dunce_withgen}{width=2.75in}
\end{minipage}
\end{center}
\vspace*{-0.1in}
\caption[]{Relation $D$ and its Dowker complex $\dowdy$.  The complex
  is a triangulation of the Dunce Hat, a contractible space (the
  seemingly bounding edges actually touch, as suggested by the vertex
  labels).  The Dunce Hat has no free faces, indicating that $D$
  preserves attribute privacy. \ (Vertices of $\dowdy$ are attributes.
  Triangles are labeled with their generating individuals.)}
\label{dunce}
\end{figure}

Consider for example the relation $D$ of Figure~\ref{dunce}.  There
are 17 individuals, each with three attributes.  The figure also shows
$\dowdy$.  We can see that there are no free faces, so the relation
preserves attribute privacy by Lemma~\ref{nofreefacesPrivacy} on
page~\pageref{nofreefacesPrivacy}.  Moreover, each link $\lk(\dowdx,
x)$ is homotopic to a circle $\Sone$.  Indeed, viewed from attribute
space, that link is exactly the boundary of a triangle for each
individual.  Figure~\ref{duncelink10} shows such a link for individual
\#10.  The link relation has a large number of individuals but only
three attributes.  So Theorem~\ref{privacysingle} holds and there is
homology in the link.  There is however no homology in the attribute
complex of the relation $D$ itself; the simplicial complex $\dowdy$ is
a triangulation of the Dunce Hat, a nontrivially contractible space.

Although $R$ preserves attribute privacy, it does not preserve
association privacy.  For example: Individuals \#1 and \#12 share
exactly one attribute (namely $\tth$), but do so with four additional
individuals (namely \#3, \#7, \#11, and \#13).  If attributes
represent shared dinners, then in some cases one can infer all the
guests at a dinner after having seen as few as two guests.  (Attribute
privacy means that one cannot definitively infer additional dinners
attended by a guest simply from having observed that guest at a
particular dinner or two.)

\begin{figure}[t]
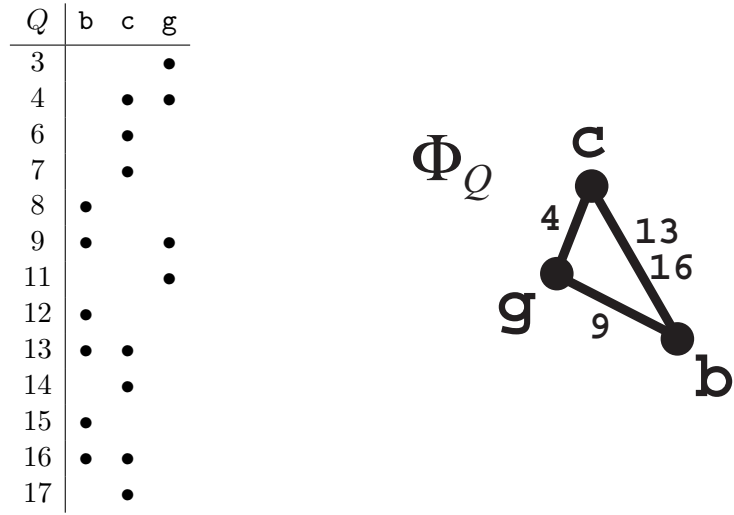

\begin{center} 
\hspace*{0.3in}\begin{minipage}{1.2in}{$\begin{array}{c|ccc}
 Q &  \tb &  \tc &  \tg \\[2pt]\hline
 3 &      &      & \one \\
 4 &      & \one & \one \\
 6 &      & \one &      \\
 7 &      & \one &      \\
 8 & \one &      &      \\
 9 & \one &      & \one \\
11 &      &      & \one \\
12 & \one &      &      \\
13 & \one & \one &      \\
14 &      & \one &      \\
15 & \one &      &      \\
16 & \one & \one &      \\
17 &      & \one &      \\
\end{array}$}
\end{minipage}
\hspace*{0.8in}
\begin{minipage}{1.6in}
\ifig{duncelink10_withgen}{width=1.7in}
\end{minipage}
\end{center}
\vspace*{-0.1in}
\caption[]{Relation $Q$ represents $\lk(\dowdx, 10)$ from
  Figure~\ref{dunce}.  Also shown is the attribute Dowker complex
  $\dowqy$.  It is the boundary of a triangle, so homotopic to $\Sone
  = \Skt$.  Since individual \#10 has three attributes and $1 = 3 - 2
  $, that means relation $R$ preserves attribute privacy for
  individual \#10.  (Vertices of $\dowqy$ are attributes.  Edges are
  labeled with their generating individuals.  Notice that the edge
  $\{\tb, \tc\}$ is generated by two individuals.  Whereas most edges
  in $\dowdy$ are shared by only two triangles, edge $\{\tb, \tc\}$ is
  shared by three triangles; it is one of those edges that are glued
  to two others in the Dunce Hat representation.  --- Individuals who
  generate just vertices are not shown in the drawing of $\dowqy$ here.)}
\label{duncelink10}
\end{figure}

\subsection{Disinformation}
\markright{Disinformation}
\label{disinformation}

Privacy loss is possible when there is a free face in the relevant
Dowker complex.  One way to preserve privacy is to eliminate such free
faces.  Earlier in the report, we studied morphisms between relations
as a possible way to transform data so as to reduce privacy loss.
Ideally, for attribute privacy, the goal of such a transformation is
to map onto a relation whose attribute complex has no free faces.  We
saw that such transformations need not always exist, for topological
reasons, unless one is willing to introduce discontinuities, that is,
discard knowledge of some relationships in the underlying spaces.

\vspace*{0.1in}

Alternatively, one could imagine embedding a relation within another
that does preserve privacy.  Of course, at the extreme, one simply
embeds the given relation in a huge relation that looks like a perfect
sphere.  Now there is privacy but the same mechanism that provides
privacy reduces utility.  Nonetheless, one has not discarded
relationships, merely surrounded them with disinformation.  We saw an
example of that early on, when we added a single attribute to relation
$H$ in the example of Section~\ref{ToyExample}, in order to remove the
inference that someone had cancer.  If one has a separate mechanism for
discerning fake entries from legitimate entries, then one can see past
the disinformation --- in the earlier example that would entail having a
(presumably safely encrypted) memory of which single entry in the
relation is false.

\vspace*{0.1in}

\begin{figure}[h]
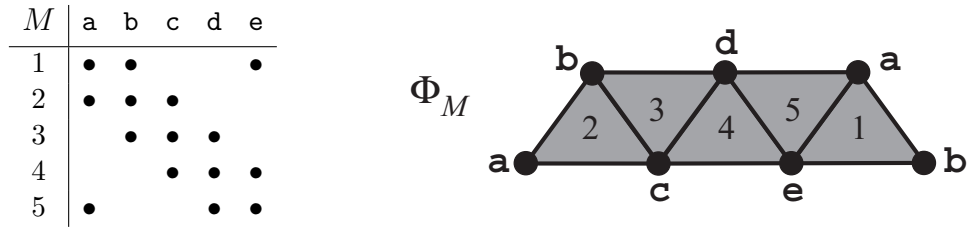

\begin{center}
\quad
\begin{minipage}{1.5in}{$\begin{array}{c|ccccc}
\hbox{\large $M$} & \ta  & \tb  & \tc  & \td  & \te  \\[2pt]\hline
              1   & \one & \one &      &      & \one \\
              2   & \one & \one & \one &      &      \\
              3   &      & \one & \one & \one &      \\
              4   &      &      & \one & \one & \one \\
              5   & \one &      &      & \one & \one \\
\end{array}$}
\end{minipage}
\hspace*{0.5in}
\begin{minipage}{3in}
\ifig{moebius_phi}{scale=0.5}
\end{minipage}
\end{center}
\vspace*{-0.1in}
\caption[]{Relation $M$ revisited, along with its attribute complex $\dowMy$.}
\label{moebiusdup}
\end{figure}

Figure~\ref{moebiusdup} revisits our earlier M\"obius strip relation,
showing the relation $M$ and its attribute complex $\dowMy$.  Loss of
attribute privacy occurs when someone observes two attributes that
form a free edge on the boundary of the M\"obius strip, such as the
edge $\{\tb, \td\}$.  Given the relation, the observer can then infer
a third attribute and identify the underlying individual, in this case
infer attribute $\tc$ and identify individual \#3.

In order to preserve attribute privacy, one might consider adding
decoy individuals whose so-called attributes include those edges,
making the edges nonfree, thus removing that inference mechanism.
Relation $MM$ in Figure~\ref{moebiusdouble} does so by doubling the
number of individuals.

\begin{figure}[h]
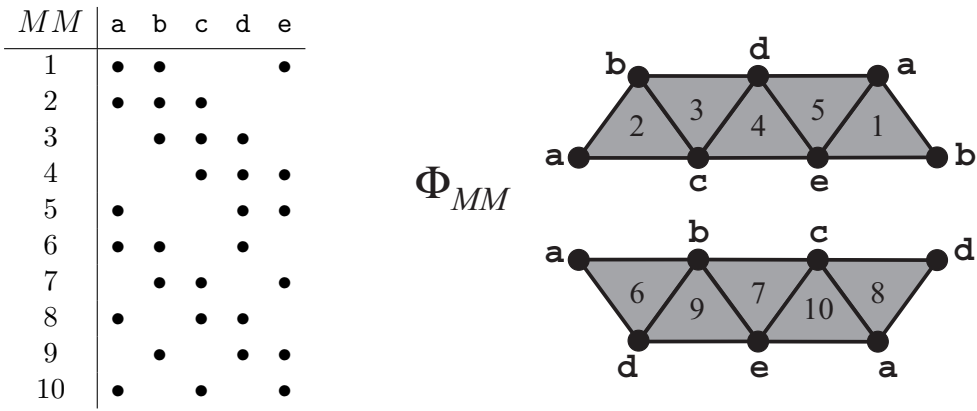

\begin{center}
\quad
\begin{minipage}{1.5in}{$\begin{array}{c|ccccc}
\hbox{\large $MM$} & \ta  & \tb  & \tc  & \td  & \te  \\[2pt]\hline
              1   & \one & \one &      &      & \one \\
              2   & \one & \one & \one &      &      \\
              3   &      & \one & \one & \one &      \\
              4   &      &      & \one & \one & \one \\
              5   & \one &      &      & \one & \one \\
              6   & \one & \one &      & \one &      \\
              7   &      & \one & \one &      & \one \\
              8   & \one &      & \one & \one &      \\
              9   &      & \one &      & \one & \one \\
              10  & \one &      & \one &      & \one \\
\end{array}$}
\end{minipage}
\hspace*{0.55in}
\begin{minipage}{3in}
\ifig{moebiusdouble}{scale=0.45}
\end{minipage}
\end{center}
\vspace*{-0.1in}
\caption[]{Relation $MM$ adds five decoy individuals. The attribute
  complex $\dowMMy$ entails gluing two M\"obius strips together.}
\label{moebiusdouble}
\end{figure}

The additional five individuals form their own M\"obius strip.  The
figure therefore describes the overall attribute complex $\dowMMy$ as
two M\"obius strips, with edges shared between the two strips, as
suggested by the vertex labels.  The overall attribute complex amounts
to gluing the two M\"obius strips together, boundary to zigzag spine.
The resulting attribute complex is the 2-skeleton of the full complex on
the attribute set $\{\ta, \tb, \tc, \td, \te\}$.  It therefore is
homotopic to a wedge sum of four 2-spheres:
$\Stwo \join \Stwo \join \Stwo \join \Stwo$.

Each of $\dowMMy$'s edges is now shared by three triangles.
There are no free faces.

No attribute inference is possible. \ (Association inference is possible.)

Moreover, the complex is sufficiently isotropic that one cannot say {\em
\hspace*{-0.3pt}a priori\hspace*{1.3pt}} which individuals are real
and which are decoys, even if one knows that there might be decoys.
Of course, the curator of the relation likely would want some secure
mechanism to separate truth from fiction, that is, to peel apart the
gluing.  Regardless, real individuals may be identified via $MM$ upon
seeing all their attributes (and only then).

\subsection{Insufficient Representation}
\label{insufficientrep}

In this subsection we show that if there are fewer than $2^k$
individuals in a nonvoid relation with $2k$ attributes that model $k$
bits for each individual, then the relation cannot preserve attribute
privacy for everyone.  The reason is that fewer than $2^k$ individuals
amounts to removing some generating simplices from the potential
attribute complex
$\Szero * \mskip2mu \Szero * \cdots * \mskip2mu \Szero$, thereby
creating free faces in $\dowy$.  By similar intuition, it may be
possible to preserve attribute privacy even if there are fewer than,
say, $3^k$ individuals in a relation with $3k$ attributes representing
$k$ trivalent pieces of information.  After all, bits are a special
case of tri-values, so one can preserve attribute privacy with certain
$2^k$ individuals.  Thinking simplicially, the potential attribute
complex for tri-values is
$(\Szero \join \mskip2mu \Szero) * (\Szero \join \mskip2mu \Szero) * \cdots * (\Szero \join \mskip2mu \Szero)$.
Removing some generating simplices from that space does not
necessarily create free faces, as one can see by simple example.

\begin{definition}[Binary Attribute Pair]
By a \mydefem{binary attribute pair} we mean two mutually exclusive
attributes, written $y$ and $\ybar$.  No individual can have both
attributes.  Moreover, in what follows we will assume that every
individual has exactly one attribute from any such pair.
\end{definition}

\begin{lemma}[Privacy Requires Many Individuals]
Suppose $Y = \{y_1, \ybar_1, y_2, \ybar_2, \ldots, y_k, \ybar_k\}$,
with $\,\{y_i, \ybar_i\}$ a binary attribute pair, for $i=1, \ldots, k$,
and $k \geq 1$.

Let $R$ be a relation on $\XxY$, with $X\!\neq\emptyset$, such that
every individual $x\in X$ has as attributes exactly one of $\{y_i,
\ybar_i\}$, for each $i = 1, \ldots, k$. \quad
Let $n$ be the number of distinct rows of $R$.

\vst

Then $R$ preserves attribute privacy if and only if $\hspt{}n=2^k$.

\end{lemma}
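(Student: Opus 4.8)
The plan is to identify the individuals of $R$ with points of the Boolean hypercube and then read off both implications from the simplicial structure of $\dowy$. Since each individual $x$ carries exactly one attribute from each pair $\{y_i, \ybar_i\}$, its row $Y_x$ is determined by a string $w(x) \in \{0,1\}^k$, with bit $i$ equal to $1$ when $x$ has $y_i$ and $0$ when $x$ has $\ybar_i$. Writing $A(w) \subseteq Y$ for the $k$-element attribute set named by a string $w$, the distinct rows of $R$ correspond exactly to the set $P = \{\, w(x) : x \in X \,\} \subseteq \{0,1\}^k$ of strings that actually occur, so $n = |P|$. Because a simplex of $\dowy$ may contain at most one attribute from each mutually-exclusive pair, every maximal simplex of $\dowy$ has the form $A(w)$ for some $w \in P$ and hence has size exactly $k$, and conversely each $w \in P$ generates such a maximal simplex. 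In particular $n \leq 2^k$ always, with equality precisely when every string occurs.

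For the direction $n = 2^k \Rightarrow$ privacy, I would show $\dowy$ has no free faces and then invoke Lemma~\ref{nofreefacesPrivacy}. Assuming all $2^k$ strings occur, let $\gamma$ be a non-maximal simplex of $\dowy$. Then $|\gamma| < k$, so $\gamma$ omits some pair $i$ entirely; since every string is present, both $\gamma \cup \{y_i\}$ and $\gamma \cup \{\ybar_i\}$ are simplices, and each extends to a maximal simplex $A(w)$. These two maximal simplices differ in coordinate $i$, hence are distinct, so $\gamma$ lies in at least two maximal simplices and is not a free face. Maximal simplices are never free faces, and the empty simplex lies below all $2^k \geq 2$ maximal simplices, so $\dowy$ has no free faces whatsoever, giving attribute privacy.

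For the converse I would argue the contrapositive, exhibiting an inference directly when $n < 2^k$. Since $P$ is then a nonempty proper subset of the vertex set of the connected hypercube graph on $\{0,1\}^k$, there is a boundary edge: a pair of strings $u, v$ at Hamming distance one with $u \in P$ and $v \notin P$. Let $i$ be the coordinate in which they differ, say $u$ carries $y_i$ and $v$ carries $\ybar_i$, and set $\gamma = A(u) \cap A(v)$, the $(k-1)$-element set of attributes on which they agree. Then any individual in $\psi_R(\gamma)$ agrees with $u$ on every pair except $i$; on pair $i$ it has $y_i$ or $\ybar_i$, but the latter would force its string to equal $v \notin P$, which is impossible. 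Hence every such individual has row $A(u)$, so $(\clsy)(\gamma) = \phi_R(\psi_R(\gamma)) = A(u) = \gamma \cup \{y_i\} \supsetneq \gamma$, an attribute inference, and $R$ does not preserve attribute privacy.

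The string encoding and the observation that all maximal simplices have size $k$ do most of the work, so the only genuine step is the hypercube boundary-edge observation in the converse; everything else is bookkeeping. The one subtlety to watch is duplicate rows: several individuals may share the string $u$, but since they have identical rows they contribute the same attribute set to the intersection defining $\phi_R(\psi_R(\gamma))$, leaving the closure computation unchanged. I expect the main obstacle to be purely expository, namely stating the correspondence cleanly enough that the free-face count in the forward direction and the closure computation in the converse both follow immediately.
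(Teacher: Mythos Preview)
Your proof is correct. The forward direction ($n=2^k \Rightarrow$ privacy) matches the paper's: both flip a bit to show that every proper face of a maximal simplex lies in a second maximal simplex, then invoke Lemma~\ref{nofreefacesPrivacy}.

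For the converse you take a genuinely different route. The paper argues that attribute privacy together with unique identifiability (obtained after passing to distinct rows) forces $\dowy$ to have no free faces via Lemma~\ref{privacyNofreefaces}; it then observes that each codimension-one face $\gamma\setminus\{y\}$ of a maximal simplex $\gamma$ can only be completed by $\gamma$ and its single-bit flip $\gamma'$, and iterates this to generate all $2^k$ maximal simplices. Your contrapositive is more direct: you use connectivity of the Hamming graph to locate a missing string $v$ adjacent to a present string $u$, and then compute explicitly that $(\clsy)(A(u)\cap A(v)) = A(u)$, exhibiting a nontrivial closure without ever mentioning free faces or unique identifiability. This buys you a shorter argument that avoids both Lemma~\ref{privacyNofreefaces} and the transitive-closure step; the paper's approach, on the other hand, stays within the free-face framework that governs the rest of the document and makes the bit-flip symmetry do all the work in both directions.
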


\begin{proof}
Observe that each row of $R$ has exactly $k$ nonblank entries, so each
maximal simplex of $\dowy$ consists of exactly $k$ vertices.
Moreover, no row of $R$ is contained in another row unless the two
rows are identical.  We may therefore assume, without loss of
generality, that all rows of $R$ are distinct and incomparable.
Consequently, every $x \in X$ is uniquely identifiable.  We can think
of each individual $x\in X$ as defining a unique and identifying
$k$-bit number, with one bit per binary attribute pair, as determined
by that individual's row, $Y_x$.  All possible $k$-bit numbers are
represented by $X$ if and only if $n=2^k$.

\vspace*{0.1in}

I.  \underline{Suppose that $n=2^k$.}

\vspace*{0.05in}

Showing that $\dowy$ contains no free faces would establish that $R$
preserves attribute privacy, by Lemma~\ref{nofreefacesPrivacy} on
page~\pageref{nofreefacesPrivacy}.  To show that $\dowy$ contains no
free faces, it is enough to show that, for every maximal
$\gamma\in\dowy$ and every $y\in\gamma$, the simplex
$\gamma\setminus\ys$ is contained in some maximal simplex of $\dowy$
other than just $\gamma$.

Write $\chi = \gamma\setminus\ys$.  Since $y$ is part of a binary
attribute pair, we can construct a new set $\gamma^\prime$ from
$\gamma$ by replacing $y$ with its ``opposite''.  Specifically:
$\gamma^\prime = \chi \union \{y_i\}$, if $y=\ybar_i$; and
$\gamma^\prime = \chi \union \{\ybar_i\}$, if $y=y_i$.
Since $n=2^k$, there is an $x\in X$ for which $Y_x=\gamma^\prime$.  So
$\gamma^\prime\in\dowy$, telling us $\chi$ is not free.

II.  \underline{Suppose that that $R$ preserves attribute privacy.}

\vspace*{0.05in}

By Lemma~\ref{privacyNofreefaces} on page~\pageref{privacyNofreefaces},
$\dowy$ contains no free faces.

Let $\gamma$ be a maximal simplex of $\dowy$ and $y\in\gamma$.  Define
$\chi = \gamma\setminus\ys$.  Construct $\gamma^\prime$ as in part I
above.
Consider the collection
$\Gamma = \setdef{\eta\in\dowy}{\chi\subsetneq\eta}$.
The only possible set that might be in $\Gamma$ besides $\gamma$ is
$\gamma^\prime$.  Since $\dowy$ contains no free faces, $\Gamma =
\{\gamma, \gamma^\prime\}$.

Now vary $y$ across $\gamma$ and then repeat the process for all
$\gamma^\prime$ thus constructed.  The transitive closure of this
process generates $2^k$ distinct maximal simplices in $\dowy$, each of
which corresponds to a unique $x\in X$.  So $n=2^k$.
\end{proof}

\subsection{A Structural Inference Example: Passengers on Ferries}
\markright{A Structural Inference Example: Passengers on Ferries}
\label{ferries}

\begin{figure}[h]
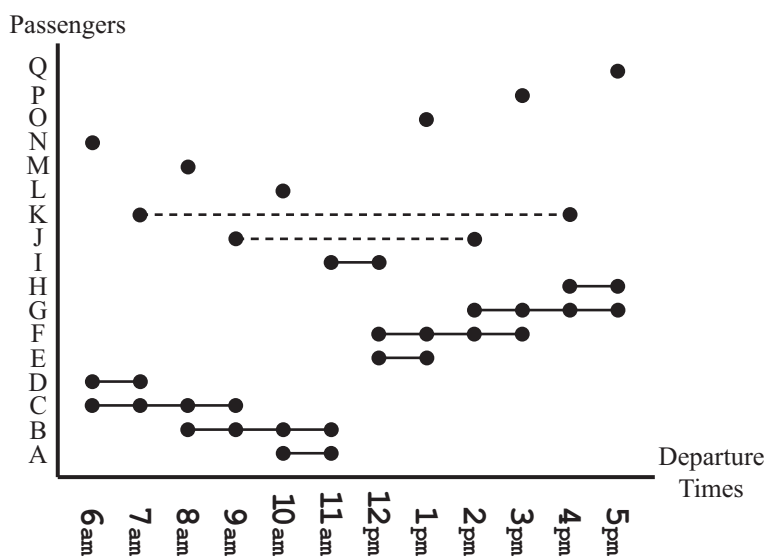

\vspace*{-0.15in}
\begin{center}
\ifig{crossingsR}{scale=0.5}
\end{center}
\vspace*{-0.25in}
\caption[]{This time series represents 17 different passengers
  on 12 different ferry crossings.  Each dot represents a passenger on
  a crossing.  As a visual aid, solid lines connect multiple crossings
  by the same passenger at consecutive departure times, while dashed
  lines connect multiple crossings by the same passenger at
  non-consecutive departure times.}
\label{crossings}
\end{figure}

Imagine a commuter ferry that crosses back and forth between downtown
and an island.  Passengers pay electronically as they enter the ferry,
so there is a record of who is on which crossing.  Figure
\ref{crossings} shows a hypothetical time series for 12 crossings
during a day in which 17 passengers took the ferry, some of whom
crossed several times.  Figure~\ref{crossingsPsi} shows the
corresponding $\dowx$ complex: vertices are individuals; each triangle
represents a particular crossing.  (Each ferry crossing had three
passengers in this simplified example.)

\begin{figure}[h]
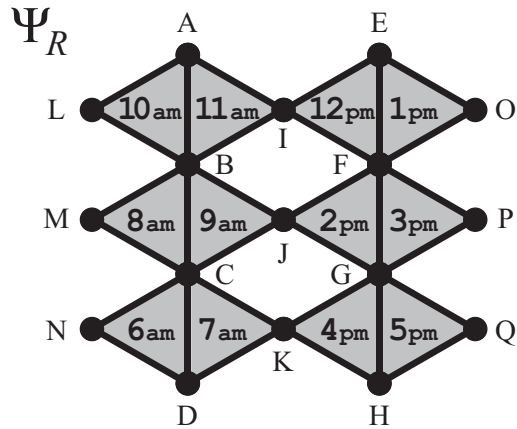

\begin{center}
\hspace*{-0.5in}\ifig{ferry_complex_updated2}{scale=0.5}
\end{center}
\vspace*{-0.3in}
\caption[]{Simplicial complex $\dowx$ determined by viewing the
  time series of Figure~\ref{crossings} as a relation $R$.
  Vertices represent passengers, labeled with letters.  Triangles
  represent ferry crossings, labeled with departure times.}
\label{crossingsPsi}
\end{figure}

\begin{figure}[h]
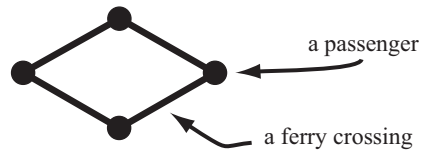

\vspace*{0.1in}
\begin{center}
\hspace*{0.8in}\ifig{waitress_observations}{scale=0.5}
\end{center}
\vspace*{-0.25in}
\caption[]{A waitress's observations of passengers drinking
  coffee together at various times, represented by a simplicial
  complex.  Vertices represent unknown but distinct passengers.  Edges
  represent unknown but distinct crossing times.}
\label{observations}
\end{figure}

The waitress in the ferry's coffee shop observes four individuals
ordering coffee and conversing during the day, appearing in pairs on
four different crossings.  She remembers seeing four distinct pairs,
but does not remember the crossing times.  Who are the individuals?

It is convenient to also represent the waitress's observations as a
simplicial complex.  Figure~\ref{observations} does so.  Vertices are
now the four unknown individuals; edges are their (unknown) common
crossing times.
One can interpret who the individuals are by embedding the complex of
Figure~\ref{observations} into the complex of Figure~\ref{crossingsPsi},
using injective maps in both the passenger and time domains.  There are
exactly two such embeddings (modulo index permutations), given by the two
ways one can wrap a rectangle around the two holes in the complex of
Figure~\ref{crossingsPsi}.  (Those are the only two ``diamonds''
touching four different crossing times.)
Thus the individuals are either
$\{\hbox{C}, \hbox{G}, \hbox{J}, \hbox{K}\}$
or
$\{\hbox{B}, \hbox{F}, \hbox{I}, \hbox{J}\}$,
as indicated by Figure~\ref{imbeddings}.
Either way, one knows for sure that individual ``\hbox{J}'' twice had
a conversation over coffee that day.

\begin{figure}[h]
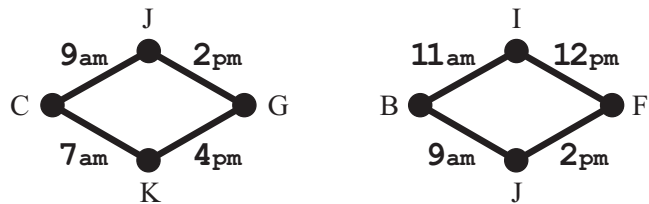

\begin{center}
\ifig{observation_imbeddings}{scale=0.5}
\end{center}
\vspace*{-0.25in}
\caption[]{The two possible embeddings of the complex of
  Fig.~\ref{observations} into the complex of
  Fig.~\ref{crossingsPsi}.}
\label{imbeddings}
\end{figure}

Moreover, each of these embeddings places a time ordering on the
embedded edges, from which one can make inferences as to who might
have transmitted information to whom.  For instance, for the embedding
involving individuals
$\{\hbox{B}, \hbox{F}, \hbox{I}, \hbox{J}\}$, one sees that individual
``\hbox{J}'' could have been both the initial source and final
recipient of information.


\end{document}
